\documentclass[a4paper,11pt,english,reqno]{amsart}
\usepackage{amsmath,amssymb,amsfonts,dsfont,amsthm,upgreek,bm}    
\usepackage[utf8]{inputenc}
\usepackage[english]{babel}
\usepackage[T1]{fontenc} 
\usepackage{graphicx,xcolor}
\usepackage{float}
\usepackage[font=small,labelfont=bf]{caption}
\usepackage[a4paper, margin=2.4cm]{geometry}
\usepackage{tikz,pgfplots}
\usepackage{tkz-fct}
\usepgfplotslibrary{polar}

\usepackage{enumerate}
\newcommand{\stkout}[1]{\ifmmode\text{\sout{\ensuremath{#1}}}\else\sout{#1}\fi}
\usepackage{scalerel,stackengine}
\stackMath
\newcommand\reallywidecheck[1]{%
\savestack{\tmpbox}{\stretchto{%
  \scaleto{%
    \scalerel*[\widthof{\ensuremath{#1}}]{\kern-.6pt\bigwedge\kern-.6pt}%
    {\rule[-\textheight/2]{1ex}{\textheight}}
  }{\textheight}%
}{0.6ex}}%
\stackon[1pt]{#1}{\scalebox{-0.8}{\tmpbox}}%
}

\newcommand{\defeq}{\stackrel{\mathrm{def}}{=}}
\newcommand{\prob}{\mathbf{P}}
\newcommand{\erw}{\mathds{E}}
\newcommand{\e}{\mathrm{e}}

\newcommand{\cG}{\mathcal{G}}

\newcommand{\tr}{\mathrm{tr\,}}

\newcommand{\supp}{\mathrm{supp\,}}

\newcommand{\dist}{\mathrm{dist\,}}
\newcommand{\diag}{\mathrm{diag}}

\newcommand{\C}{\mathds{C}}
\newcommand{\R}{\mathds{R}}
\newcommand{\T}{\mathds{T}}
\newcommand{\N}{\mathds{N}}
\newcommand{\Z}{\mathds{Z}}

\newcommand{\wind}{d}
\newcommand{\wt}{\widetilde}
\newcommand{\ol}{\overline}
\newcommand{\ul}{\underline}

\newcommand{\wh}{\widehat}
\newcommand{\vep}{\varepsilon}

\newcommand{\gD}{\mathfrak{D}}
\newcommand{\gX}{\mathfrak{X}}
\newcommand{\gY}{\mathfrak{Y}}
\newcommand{\gx}{\mathfrak{x}}
\newcommand{\gy}{\mathfrak{y}}

\newcommand{\gF}{\mathfrak{F}}

\newcommand{\gT}{\mathfrak{T}}
\newcommand{\gP}{\mathfrak{P}}
\newcommand{\gC}{\mathfrak{C}}
\newcommand{\gp}{\mathfrak{p}}

\newcommand{\gI}{\mathfrak{I}}
\newcommand{\gd}{\mathfrak{d}}

\newcommand{\sE}{\mathsf{E}}
\newcommand{\sF}{\mathsf{F}}

\newcommand{\E}{\mathbf{E}}

\newcommand{\upa}{\upalpha}

\newcommand{\D}{\mathds{D}}

\newcommand{\cB}{\mathcal{B}}
\newcommand{\cN}{\mathcal{N}}
\newcommand{\cF}{\mathcal{F}}
\newcommand{\cP}{\mathcal{P}}
\newcommand{\cZ}{\mathcal{Z}}
\newcommand{\cS}{\mathcal{S}}
\newcommand{\cT}{\mathcal{T}}
\newcommand{\cL}{\mathcal{L}}
\newcommand{\cI}{\mathcal{I}}
\newcommand{\cJ}{\mathcal{J}}
\newcommand{\cA}{\mathcal{A}}
\newcommand{\cY}{\mathcal{Y}}
\newcommand{\cX}{\mathcal{X}}

\newcommand{\cC}{\mathcal{C}}
\newcommand{\cU}{\mathcal{U}}
\newcommand{\cW}{\mathcal{W}}

\newcommand{\sgn}{\operatorname{sgn}}

\newcommand{\COMP}[1]{ \check{#1} }

\renewcommand{\ge}{\geqslant}
\renewcommand{\geq}{\geqslant}
\renewcommand{\le}{\leqslant}
\renewcommand{\leq}{\leqslant}
\newcommand{\abbr}[1]{{\sc\lowercase{#1}}}

\newtheorem{thm}{Theorem}
\newtheorem{cor}[thm]{Corollary}

\newtheorem{prop}[thm]{Proposition}
\newtheorem{lem}[thm]{Lemma}
\newtheorem{assu}[thm]{Assumption}
\newtheorem{assump}[thm]{Assumption}

\newtheorem{defn}[thm]{Definition}
\newtheorem{rem}[thm]{Remark}
\theoremstyle{remark}

\numberwithin{equation}{section}
\numberwithin{thm}{section}

\setcounter{tocdepth}{1}
\usepackage{hyperref}
\hypersetup{pdfborder=0 0 0, 
	    colorlinks=true,
	    citecolor=blue,
	    linkcolor=blue,
	    urlcolor=blue,
	    pdfauthor={Anirban Basak, Martin Vogel, Ofer Zeitouni}
	   }
\title{Localization of eigenvectors of non-Hermitian banded noisy Toeplitz matrices}
\author{Anirban Basak}
\address[Anirban Basak]{International Centre for Theoretical Sciences, Tata Institute of Fundamental Research, Bangalore, India.}
\email{anirban.basak@icts.res.in}
\author{Martin Vogel}
\address[Martin Vogel]{Institut de Recherche Math{\'e}matique Avanc{\'e}e - UMR 7501, 
Universit{\'e} de Strasbourg et CNRS, 7 rue René-Descartes, 67084 Strasbourg Cedex, France.}
\email{vogel@math.unistra.fr}
\author{Ofer Zeitouni}
\address[Ofer Zeitouni]{Department of Mathematics, 
Weizmann Institute of Science, POB 26, Rehovot 76100, Israel
and Courant Institute, New York University,
251 Mercer St, New York, NY 10012, USA.}
\email{ofer.zeitouni@weizmann.ac.il}
 \date{\today}
 \keywords{Spectral theory; non-self-adjoint operators; random perturbations}
\subjclass[2010]{47A10, 47B80, 47H40, 47A55, 60B20}
\begin{document}
\begin{abstract}
We prove localization with high probability on sets of size of 
order $N/\log N$ for the eigenvectors 
of non-Hermitian finitely banded $N\times N$ Toeplitz matrices $P_N$ 
subject to small 
random perturbations, in a very general setting. As perturbation we consider 
$N\times N$ random matrices with independent entries of zero mean, finite 
moments, and which satisfy an appropriate
anti-concentration bound. We show via 
a Grushin problem that an eigenvector for a given eigenvalue $z$ is well 
approximated by a random linear combination of the singular vectors of  
$P_N-z$ corresponding to its small singular values. 
We prove precise probabilistic 
bounds on the local distribution of the eigenvalues of the perturbed matrix
and provide a detailed analysis of the 
singular vectors to conclude the localization result.
 \end{abstract}
 \maketitle
 \setcounter{tocdepth}{1}
%
\section{Introduction and statement of results}\label{int}
\subsection{The setting}\label{sec:setting}
The spectrum of non-Hermitian operators is inherently sensitive to tiny 
perturbations due to the fact that their resolvent may be large even far 
away from the spectrum. This is in stark contrast to the Hermitian case, where 
due to the spectral theorem, the {norm of the} resolvent is effectively controlled by the 
distance of the spectral parameter to the spectrum. This spectral instability 
of non-Hermitian operators, although traditionally an adversary for numerical 
analysis \cite{Tr97,TrEm05}, has recently shown itself at the origin of beautiful new results 
in a variety of contexts. For instance in the theory of non-linear partial differential equations 
for instance, spectral instability may help to explain the blow up in finite time of 
solutions to certain non-linear diffusion equations which, when solely studying 
the spectrum of the linearized operator, were expected to have stable solutions 
\cite{SaSc05,RaZw,Ga12}. 
\par
In mathematical physics non-Hermitian operators appear in a large variety 
of subjects, such as open quantum systems \cite{NoZw07,NoZw09,NoSjZw14}. 
In quantum mechanics, the study of scattering systems 
\cite{En83,HeSj86,SiSo87,SjZw91,Me95,DyZw19} naturally leads to the concept 
of \emph{quantum resonances} which can be described by the eigenvalues of a 
non-Hermitian operator obtained from
a complex deformation \cite{SjZw91} of a 
Hermitian quantum Hamiltonian. In physical models an ``ideal'' operator can 
be perturbed by many different sources, some of which are uncontrolled by 
experimentalists. To account for these error terms disorder is introduced and, 
in view of the phenomenon of spectral instability, 
it is therefore relevant to investigate the influence of 
random perturbations 
on the spectral data of non-Hermitian operators. 
The recent works \cite{Sj14,Kl16,Dr18} 
investigate for instance the distribution of the quantum resonances of 
random Schr\"odinger operators such as the 
celebrated Anderson model \cite{An58}.

\par
In this paper we consider large deterministic non-Hermitian $N\times N$ Toeplitz 
matrices $P_N$ {with} small {additive} random perturbations. It was 
shown in a series of recent results that the spectra of such matrices, apart from 
finitely many fluctuating outliers \cite{BPZ,BPZ1,BZ,SjVo19a,SjVo19b}, mimic the 
absolutely continuous spectra of 
the associated infinite dimensional Laurent operator on $\Z$. This is particularly 
striking since a perturbation of size {$O(N^{-\infty})$} is sufficient to produce this effect, 
whereas the spectrum of the unperturbed matrix is far from the spectrum of the Laurent 
operator.   
\par
The aim of this paper is to discuss the eigenvectors associated with the eigenvalues 
of such perturbed Toeplitz matrices. Are the eigenvectors \emph{localized} or 
\emph{delocalized}? The precise meanings of these notions vary over 
different subjects, however, they all serve to capture how much an $\ell^2$ normalized 
eigenvectors \emph{concentrates on} or \emph{spreads out over} certain parts of its support. 

In random matrix theory there are several ways of testing for localization or delocalization 
of $\ell^2$ normalized eigenvectors $\psi$. One way is by comparing their $\ell^p$ norms 
for $2<p\leq +\infty$ with $N^{1/p-1/2}$. Complete delocalization is said to occur when 
$\|\psi\|_p \, = O( N^{1/p-1/2})$ (up to some logarithmic factors) since $N^{1/p -1/2}$ 
is the $\ell^p$ norm of the fully delocalized vector 
$(N^{-1/2},\dots,N^{-1/2})$. Conversely localized eigenvectors have a large $\ell^p$ norm, 
as for instance the fully localized vector $(0,0,1,0,\dots,0)$ has $\ell^p$ norm equal to one. 
These notions were used for instance to prove delocalization via optimal $\ell^p$ bounds of 
the eigenvectors of Wigner matrices \cite{ESY09a,ESY09b}, for non-Hermitian random 
matrices \cite{RV}, and for the adjacency matrix of Erd{\H o}s-R\'enyi graphs \cite{EKYY13}. 
Recently,  localization and delocalization of eigenvectors for the adjacency matrix 
of \textit{critical} Erd{\H o}s-R\'enyi graphs were established in \cite{ADK}.
\par
There is a complementary notion of delocalization, known as \textit{no-gaps delocalization}, 
which asserts that for any subset $I \subset [N]$, with $|I|$ reasonably large, one has 
$\|\psi\|_{\ell^2(I)} \gtrsim |I|$ (again allowing for logarithmic factors). Recently, such delocalizations 
have been established for Wigner matrices and matrices with independent and identically 
distributed (i.i.d.) entries (cf.~\cite{RV, LO, LT}).
%
%
\par
In the field of \emph{quantum chaos} \cite{Sa92}, in the setting of Hermitian 
{pseudo-differential} operators, 
localization and delocalization of normalized eigenvectors are 
studied via their associated 
\textit{semiclassical defect measures}. 
Translated to the matrix setting \cite{AnSa19}, we note that $\sum_{x=1}^N|\psi(x)|^2\delta_x$, where $\delta_x$ denotes the Dirac measure at $x$, 
defines a probability measure. One says that \emph{quantum ergodicity} 
occurs when $\sum_{x=1}^Na(x)|\psi(x)|^2$ is close to $\frac{1}{N}\sum_{x=1}^Na(x)$ 
for most eigenvectors $\psi$, and \emph{uniquely quantum ergodicity} occurs when this holds 
for all eigenvectors. In contrast, \emph{scarring} occurs when we have concentration of the form 
$\sum_{x\in\Lambda}|\psi(x)|^2\geq 1-\varepsilon$, $\varepsilon>0$, of the eigenvector 
on some small set $\Lambda$. On the other hand, if $\|\psi\|_{p} \asymp N^{f(p)}$ for 
some $f(p) \ne 1/2 - 1/p$, then the eigenstate $\psi$ is termed to be \textit{non-ergodic} and 
\textit{multi-fractal} \cite{LAKS}. These notions were recently applied to 
the study
of the 
eigenfunctions of the discrete Laplacian on large regular graphs \cite{An17,AnMa15},  and to the proof of
delocalization of eigenvectors of generalized Wigner matrices \cite{BoYa17}. 
See also \cite{Be20} for results on deformed Wigner matrices.
\\
\par
\emph{
In this paper we prove that the eigenvectors of non-selfadjoint Toeplitz matrices subject 
to small random perturbations localize on a set of cardinality $N/\log N$ in the sense that 
they scar on a set of size $N/\log N$ with probability close to one.} 
\\
\par
To the best of our knowledge, this is the first instance where localization results are 
proved in the setting of noisy perturbations of non-Hermitian matrices.
\par
It will be seen below that for eigenpairs $(\lambda, \psi)$,
the length of the localizing 
set for $\psi$ and the rate of decay of the slowest decaying \textit{pure state} associated 
to $\lambda$ have the same order of magnitude (see Remark \ref{rem:pstate}). On the 
other hand, pure states can be related to the {\em Lyapunov spectra} of the associated 
{\em transfer matrices} (cf.~\cite[Section 1.2]{BPZ}). 
Therefore, the reader may note that our result on the localization has the same flavor 
as the one predicted in the case of the \textit{random Schr\"odinger operator} on a strip, where 
it is conjectured that the rate of decay of eigenfunctions is neither slower nor faster than 
the one prescribed by the slowest \textit{Lyapunov exponent} (see \cite{GS} and the 
references therein).
\subsection{The results}
Let $N_{\pm}\in\Z$ {be} such that  $-N_- \leq N_+$ and either 
$N_+\neq 0$ or $N_-\neq 0$. 
Let $a_i\in \C${, $i\in\Z$,} be such that $a_{N_+}\neq 0$, $a_{-N_-}\neq 0$, and $a_i=0$
for $i\not\in [-N_{-},N_+]$. 
Introduce the symbol $p(\zeta)=\sum_{-N_-}^{N_+} a_j \zeta^{-j}$ and
the associated
$N\times N$ \textit{Toeplitz} matrix $P_N$  with entries
$P_N(i,j)=a_{i-j}$, that is
\begin{equation}\label{eq-intro}
 	P_N = \begin{pmatrix}
	  a_0 & a_{-1} & \dots & a_{-N_{-}} & \dots \\
	  a_{1} & a_0 & a_{-1} & \dots& \dots\\
		\vdots & \ddots & \ddots & \ddots&\vdots\\
		a_{N_+}&\dots&\dots &\dots&\dots\\
		\vdots&\ddots&\ddots&\ddots&\vdots&\\
		\dots&\dots&a_{N_+}&\dots&a_0
	\end{pmatrix}. 
      \end{equation}
      (We refer to Section \ref{Sec:AnaTO} for an introduction to 
      the terminology, especially with respect to symbols.)
\par
We consider in this paper noisy perturbations of $P_N$ of the form
\begin{equation}
  \label{eq-noisy}
  P_{N,\gamma}^Q =P_N+N^{-\gamma} Q_N,\quad \gamma>1,
\end{equation}
with $Q_N$ an $N\times N$ (random) matrix satisfying Assumptions 
\ref{assump:mom} and \ref{assump:anticonc} below. The first assumption is on the
existence of finite moments. 
\begin{assump}\label{assump:mom}
Let $\{Q_{i,j}\}_{i,j=1}^N$ be the entries of the $N \times N$ noise matrix $Q=Q_N$. 
\begin{enumerate}
\item[(i)] The entries of $Q$ are jointly independent and have zero mean.
\item[(ii)]
 For any $h \in \N$ there exists an absolute constant $\mathfrak{C}_h < \infty$ such that
 \[\max_{i,j =1}^N \erw[|Q_{i,j}|^{2h}] \le \mathfrak{C}_h.\] 
\end{enumerate}
\end{assump}
For notation convenience, we take the sequence $\mathfrak{C}_h$ increasing in $h$. 
To introduce the second assumption, recall L\'{e}vy's concentration function, defined 
for any complex-valued random variable $\mathds{X}$ and $\varepsilon >0$ by 
\begin{equation}
  \label{eq-levy}
\cL(\mathds{X},\varepsilon) \defeq \sup_{w \in \C} \prob(|\mathds{X}-w| \le \varepsilon).
\end{equation}
\begin{assump}
\label{assump:anticonc}
Assume that there exist absolute constants $\upeta  \in (0,{1}]$ and $C_{\ref{assump:anticonc}}<\infty$, such that
\begin{equation}\label{eq:levy-bd}
{\cL(Q_{i,j},\varepsilon) \le C_{\ref{assump:anticonc}} \varepsilon^{1+\upeta}},
\end{equation}
 for all sufficiently small $\varepsilon>0$, uniformly for all $N$ and
 $i,j \in\{1,2,\ldots, N\}$.
\end{assump}
\noindent (The standard example of a noise matrix satisfying Assumptions 
 \ref{assump:mom} and \ref{assump:anticonc} is the complex Ginibre matrix, 
 i.e.~with i.i.d. entries that are standard complex Gaussian variables.)

It was recently shown in \cite{BPZ, BPZ1, SjVo19b, SjVo19a} that all
but $o(N)$ of the eigenvalues $\{\lambda_i^N\}$
of $P_{N,\gamma}^Q$ lie in a small neighborhood
of the curve $p(S^1)$, where $S^1:=\{z\in \C: |z|=1\}$; in fact, it was shown 
in those references that 
the empirical measure of eigenvalues of  $P_{N,\gamma}^Q$,
\begin{equation}\label{eq:L-N}
L_N:=N^{-1}\sum_{i=1}^N \delta_{\lambda_i^N},
\end{equation}
converges weakly
to the push forward of the uniform measure on $S^1$ by $p$. As part of our
study, we will obtain more precise information, and show (see Theorem
 \ref{theo-location} and Sections \ref{sec-separation}-\ref{sec:bulk-eig}) 
that most of the
eigenvalues lie in certain neighborhoods of width of order $\log N/N$ 
that are separated from $p(S^1)$ by distance of the same order.

Our goal in this paper is to study the eigenvectors associated with 
the latter (random) eigenvalues. Roughly speaking, we will show that those 
we will show that those eigenvalues $\hat{z}$ away from certain isolated bad points 
of $p(S^1)$ have corresponding eigenvectors which are close to a random linear 
combination of  the eigenvectors $e_j$ of $(P_N-\hat{z}I)^*(P_N-\hat{z}I)$ associated 
with its smallest eigenvalues. 
In particular we will show that this random linear 
combination of vectors localizes at scale $N/\log N$. 
To state our results precisely requires the introduction of some machinery, which we
now do.
\par
Sometimes,  the symbol $p$ possesses a natural contraction,
defined as follows. Set
\begin{equation}\label{eq:gpintro}
  {\sf g}(p):={\rm gcd}\{ |j| : j \ne 0 \text{ and } a_j \ne 0\}.
\end{equation}
If ${\sf g}_0:={\sf g}(p)>1$ then $p(\zeta)=q_p(\zeta^{ {\sf g}_0})$
for some Laurent polynomial $q_p$. If ${\sf g}_0=1$ then $q_p=p$. For $\vep>0$ 
and a set $\cB\subset \C$, $\cB^\vep$ denotes the $\vep$-blow up of $\cB$, that is 
the Minkowski sum of the sets $\cB$ and  $D(0,\vep)$, the open disc of radius 
$\vep$ centered at zero. 
\begin{defn}[Set of bad points]
  \label{def-badintro}
Let $\cB_1$ be the collection of self intersection points 
of $q_p(S^1)$, and let
$\cB_2$  be the set of {branch points}, 
i.e.~points $z$ 
where the Laurent polynomial $p(\cdot)-z$ has double roots. Set $\cB_p:=\cB_1\cup\cB_2$ and $\cG_{p,\vep}:=p(S^1)\setminus \cB_p^\vep$.
\end{defn}
\noindent 
In Definition \ref{def-badintro}, a point $w\in \C$ is a self intersection point of $q_p(S^1)$ 
if there exist $\zeta_1\neq \zeta_2\in S^1$ so that $q_p(\zeta_1)=q_p(\zeta_2)=w$.
\par
Throughout the paper, we make the following assumption on the symbol $p$.
\begin{assu}
  \label{assu-symbol}
  The symbol $p$ satisfies 
    $a_{-N_-},a_{N_+}\neq 0$, and $\cB_1$ is a finite set.
\end{assu}
Under Assumption \ref{assu-symbol}, $\cB_p$ is a finite set. Indeed, $\cB_2$ is precisely 
the set of all $z$'s such that the discriminant  of the polynomial 
$\zeta \mapsto \zeta^{N_+} p(\zeta)-z$ vanishes, and \cite[Lemma 11.4]{BoGr05} yields that 
$\cB_2$ is  a finite set. We note that by \cite{KK19}, unless $N_-=N_+$ and 
$|a_{-N_-}|=|a_{N_+}|$,  $\cB_1$ has cardinality bounded above by $(N_++N_--1)^2$, so 
symbols avoiding this situation satisfy Assumption \ref{assu-symbol}. For $z\in \C$ let 
\begin{equation*}
	\wind(z)={\rm ind}_{p(S^1)}(z)
\end{equation*}
denote the winding number of the curve $p(S^1)$ around $z$. 
We now describe the collection of eigenvalues of interest to us. 
For $0<\vep,C<\infty$ and $N$ large enough so that $2C\log N/N<\vep$, set
\begin{equation}
  \label{eq-Omegadef}
  \Omega(\vep,C,N):=\{z\in \C: C^{-1} \log N/N<
  {\mathrm{dist}}(z,\cG_{p,\vep})<C\log N/N, \wind(z)\neq 0\},
\end{equation}
where for a set $\cB \subset \C$ and $w \in \C$ we denote ${\mathrm{dist}}(w, \cB):=\inf_{w' \in \cB} |w-w'|$. 
Let $\mathcal{N}_{\Omega(\vep,C,N),N,\gamma}:=|\{\lambda_i^N\in\Omega(\vep,C,N)\}|$ 
denote the number of eigenvalues of $P_{N,\gamma}^Q$ that lie in $\Omega(\vep,C,N)$.
\par
The following theorem, a combination of the convergence of $L_N$ discussed above and 
Theorems \ref{thm:no-outlier}, \ref{thm:sep-spec-curve}, and \ref{thm-thintube} below, shows that
\textit{most} eigenvalues of $P_{N,\gamma}^Q$ lie in $\Omega(\vep,C,N)$ for appropriate $\vep,C$. 
\begin{thm}
  \label{theo-location}
  Let Assumptions \ref{assump:mom}, \ref{assump:anticonc}, and \ref{assu-symbol} hold. Fix $\mu>0$ and $\gamma>1$.
  Then there exist $ 0< \vep_{\ref{theo-location}}, C_{\ref{theo-location}} < \infty$ (depending on $\gamma,\mu$ and $p$ only) 
  so that 
  \begin{equation}
    \label{eq-finalloc}
    \prob\big(\mathcal{N}_{\Omega(\vep_{\ref{theo-location}},C_{\ref{theo-location}},N),N,\gamma}< (1-\mu) N\big)\to_{N\to\infty} 0.
  \end{equation}
\end{thm}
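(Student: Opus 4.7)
The plan is to show that with high probability at most $\mu N$ eigenvalues of $P_{N,\gamma}^Q$ lie outside $\Omega(\vep,C,N)$, by decomposing the complement into four pieces and bounding each. Writing $\Omega(\vep,C,N)^c$ (within a bounded region containing $p(S^1)$) as the union of (i) the set at distance $>C\log N/N$ from $p(S^1)$ (\emph{outliers}); (ii) the portion of the $C\log N/N$-neighbourhood of $p(S^1)$ that meets $\cB_p^{\vep}$ (\emph{bad-point region}); (iii) the $C^{-1}\log N/N$-neighbourhood of $\cG_{p,\vep}$ (\emph{too close to the curve}); and (iv) the remaining part of the $C\log N/N$-tube around $\cG_{p,\vep}$ restricted to $\{\wind(z)=0\}$, it suffices to show that each of these sets contains at most $(\mu/4)N$ eigenvalues with probability $1-o(1)$.

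Three of these pieces are handled directly by the theorems cited in the statement. Theorem \ref{thm:no-outlier} yields, for $C$ large enough, at most $o(N)$ eigenvalues in (i) with probability $1-o(1)$ (in fact none, but we only need $o(N)$ here). Theorem \ref{thm:sep-spec-curve} provides the lower separation from $p(S^1)$ and, in particular, bounds by $o(N)$ the number of eigenvalues falling in (iii) once $C$ is fixed large enough. Theorem \ref{thm-thintube} locates most eigenvalues in a thin tube of width $O(\log N/N)$ on the ``inside'' of $p(S^1)$, which directly controls (iv) by bounding the number of eigenvalues of the perturbed matrix on the zero-winding side. The constant $C_{\ref{theo-location}}$ is taken large enough so that the three preceding statements apply simultaneously.

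The remaining piece (ii) is controlled via the weak convergence $L_N\Rightarrow\mu_p$ recalled in the excerpt, where $\mu_p$ is the push-forward of the uniform measure on $S^1$ by $p$. By Assumption \ref{assu-symbol} combined with \cite[Lemma 11.4]{BoGr05}, $\cB_p$ is finite, so $p^{-1}(\cB_p)\subset S^1$ is finite and hence $\mu_p(\cB_p^{\vep})\to 0$ as $\vep\to 0$. I fix $\vep=\vep_{\ref{theo-location}}$ small enough that $\mu_p(\cB_p^{2\vep})<\mu/4$, picking $\vep$ so that $\partial\cB_p^{2\vep}$ is a $\mu_p$-null set (a generic value works). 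For $N$ sufficiently large, the set (ii) is contained in $\cB_p^{2\vep}$, and the portmanteau characterisation of weak convergence applied to $L_N\Rightarrow\mu_p$ yields $L_N(\cB_p^{2\vep})<\mu/4+o(1)$ with probability $1-o(1)$. Summing the four bounds proves \eqref{eq-finalloc}.

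The main obstacle is not in this assembly but in the three inputs: the genuinely delicate point is the thin-tube statement (Theorem \ref{thm-thintube}), which requires combining the Grushin-problem analysis of $P_N-z$ with the anti-concentration Assumption \ref{assump:anticonc} to show that eigenvalues concentrate on the side of $p(S^1)$ where $\wind(z)\neq 0$, at precisely the scale $\log N/N$. The only subtlety in the synthesis above is choosing $\vep$ and $C$ compatibly across the three theorems so that all four pieces are simultaneously controlled; this is possible because the defining scales of $\Omega(\vep,C,N)$ match the natural scales arising in Theorems \ref{thm:sep-spec-curve} and \ref{thm-thintube}.
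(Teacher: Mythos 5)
Your decomposition-and-union-bound strategy matches the paper's, and your treatment of the bad-point piece (ii) via weak convergence is essentially the paper's (the paper invokes \cite[Theorem 1.2]{BPZ1} directly, which is that weak convergence). However, you have swapped the roles of Theorems \ref{thm:no-outlier} and \ref{thm-thintube}, and the swap creates a genuine gap. Theorem \ref{thm:no-outlier} only asserts the absence of eigenvalues in $\cS_0^{\wh c_{\gamma,\wt\vep_0}\log N/N}\setminus(\cB_1^{\wt\vep_0}\cup\cB_2^{\wt\vep_0})$ — the \emph{zero-winding} region and its thin blow-up. It says nothing about points at distance $>C\log N/N$ from $p(S^1)$ that sit inside a non-trivial loop (where $\wind\neq 0$); such outliers genuinely occur and are not ``none'' — they are precisely the fluctuating outliers studied in \cite{BZ}. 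So your parenthetical ``(in fact none)'' for piece (i) is false, and the theorem you cite does not control (i). What controls (i) is Theorem \ref{thm-thintube}, which gives $|\cN_\upa|\le C_{\ref{thm-thintube}}N/\upa$ with high probability; this is $O(N)$, not $o(N)$, and one must choose $\upa$ (equivalently $C_{\ref{theo-location}}$) large enough that $C_{\ref{thm-thintube}}/\upa<\mu/4$. Conversely, piece (iv) — the part of the $C\log N/N$-tube with $\wind(z)=0$ — is close to $p(S^1)$ and hence lies in $\cS_0^{\wh c_\gamma\log N/N}$; Theorem \ref{thm-thintube} (which you cite for (iv)) does not distinguish winding number and only bounds the count \emph{far} from the curve, so it cannot control (iv). Piece (iv) needs Theorem \ref{thm:no-outlier}.

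A second, smaller omission: Theorem \ref{thm:sep-spec-curve} is stated in terms of the forbidden tubes $\wh\cT^d_{\gamma',\vep_0}$, which are defined through the moduli of the roots of $p_z(\cdot)$, not directly through $\dist(z,\cG_{p,\vep})$. To conclude that there are no eigenvalues at distance $<C^{-1}\log N/N$ from $\cG_{p,\vep}$ (piece (iii)) one needs the geometric Lemma \ref{lem:tube-geo}(iii)--(v), which translates a small $z$-neighborhood of $p(S^1)$, away from $\cB_p$, into a union of tubes $\cT^d$. This translation is used in the paper and is essential; without it, Theorems \ref{thm:no-outlier} and \ref{thm:sep-spec-curve} do not directly yield the distance-based statement required by the definition of $\Omega(\vep,C,N)$.
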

Theorem \ref{theo-location} may be of independent interest 
since it improves upon previous 
results \cite{BPZ, BPZ1, SjVo19b, SjVo19a} by providing a much sharper estimate
on the position of 
the eigenvalues of $P_{N,\gamma}^Q$. We refer the reader to Sections 
\ref{sec-separation}--\ref{sec:bulk-eig} for its proof. In what follows, we fix $\mu>0$, $\gamma>1$ and consider the
$\vep_{\ref{theo-location}}$ and $C_{\ref{theo-location}}$ determined by Theorem \ref{theo-location}. We then consider
eigenvalues $\lambda_i^N\in \Omega(\vep_{\ref{theo-location}},C_{\ref{theo-location}},N)$. By Theorem 
\ref{theo-location}, \textit{most} eigenvalues are of this type. Notice also that for any $z\in \Omega(\vep_{\ref{theo-location}},C_{\ref{theo-location}},N)$, we have that $d=\wind(z)\neq 0$.
\par
\begin{figure}[ht]
  \centering
   \begin{minipage}[b]{0.45\linewidth}
   \includegraphics[width=\textwidth]{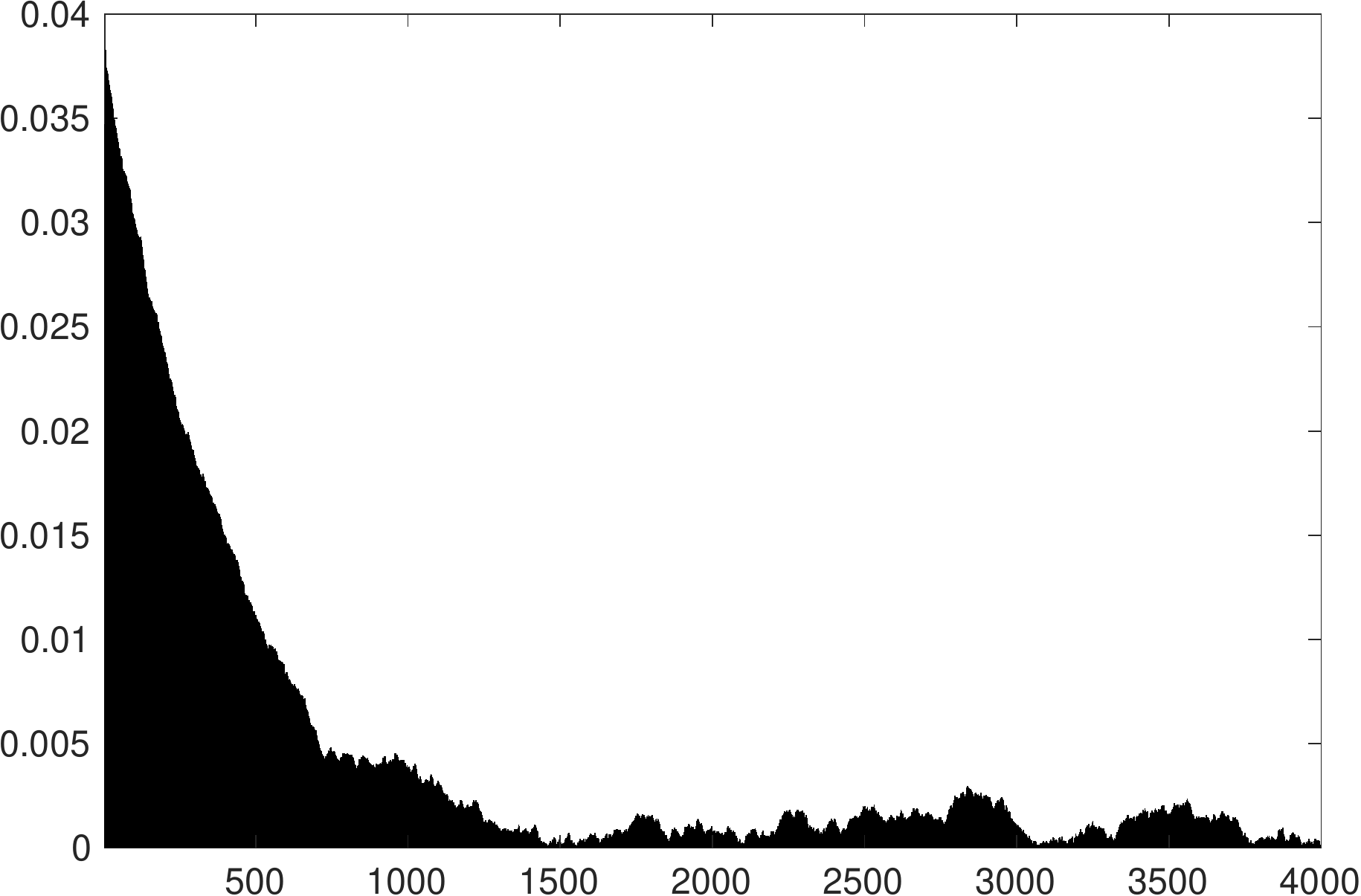}
 \end{minipage}
 \hspace{1cm}
 \begin{minipage}[b]{0.313\linewidth}
  \includegraphics[width=\textwidth]{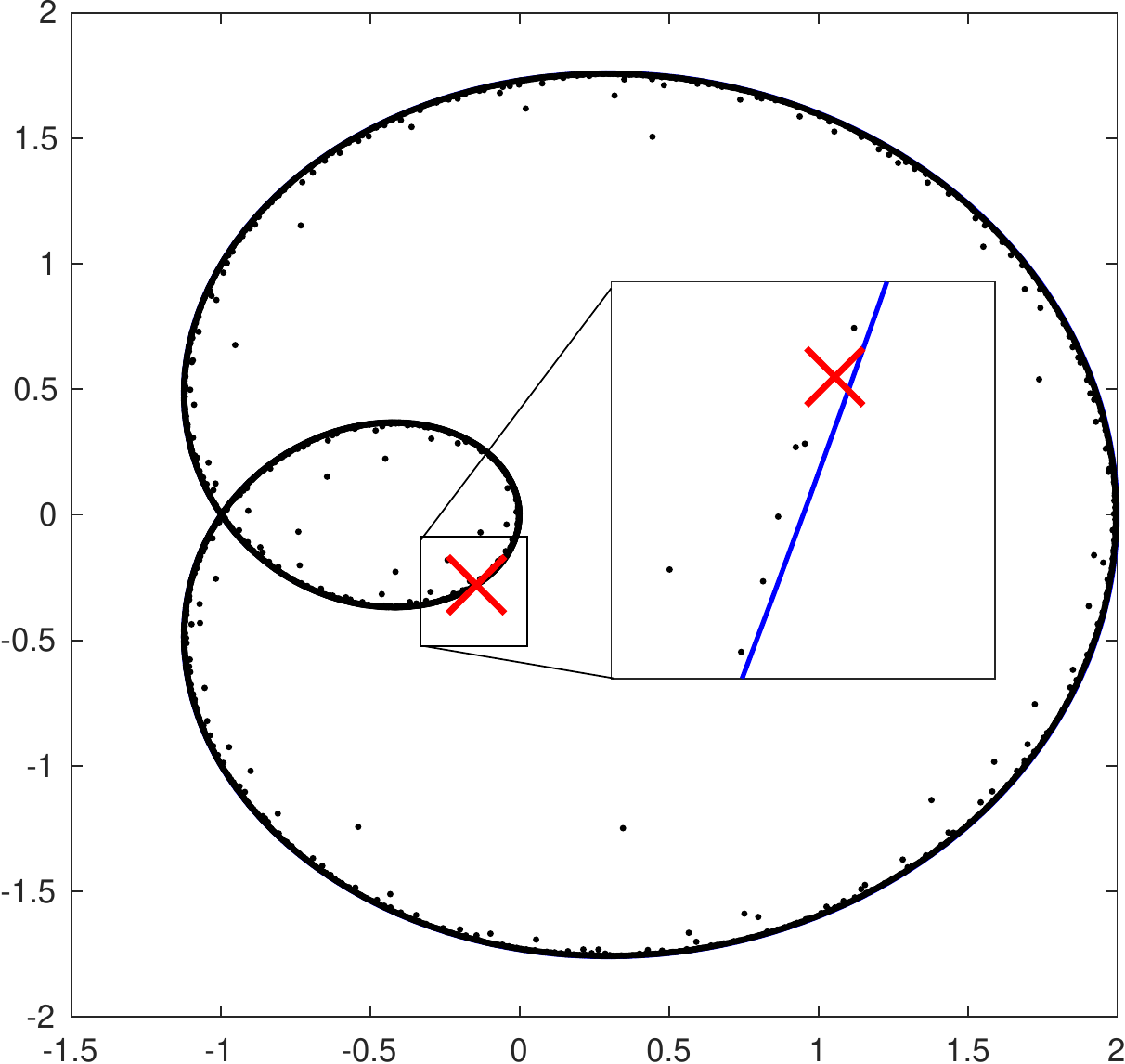}
 \end{minipage}
 \\[1.5ex]
 \begin{minipage}[b]{0.46\linewidth}
   \includegraphics[width=\textwidth]{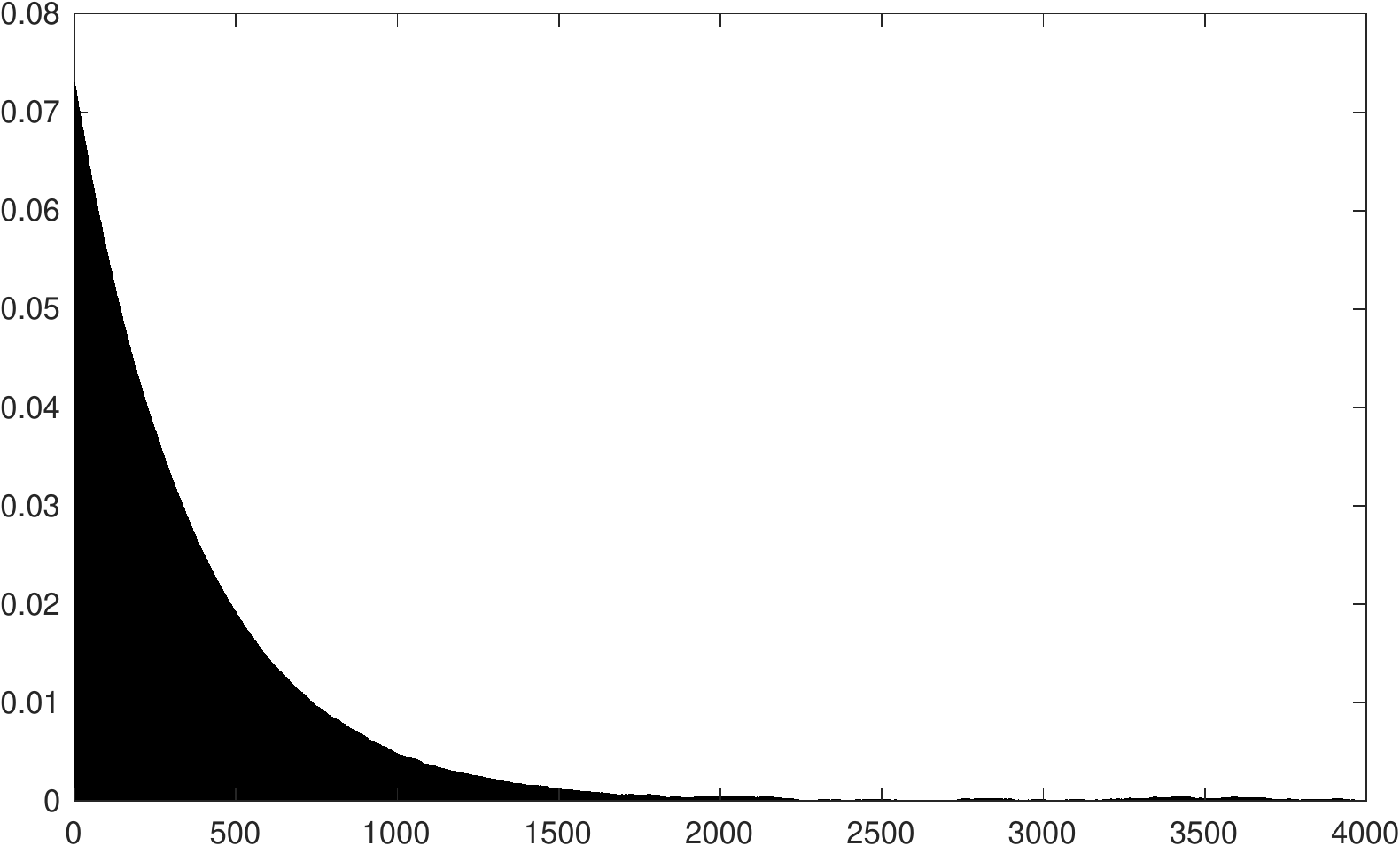}
 \end{minipage}
  \hspace{1cm}
 \begin{minipage}[b]{0.3\linewidth}
  \includegraphics[width=\textwidth]{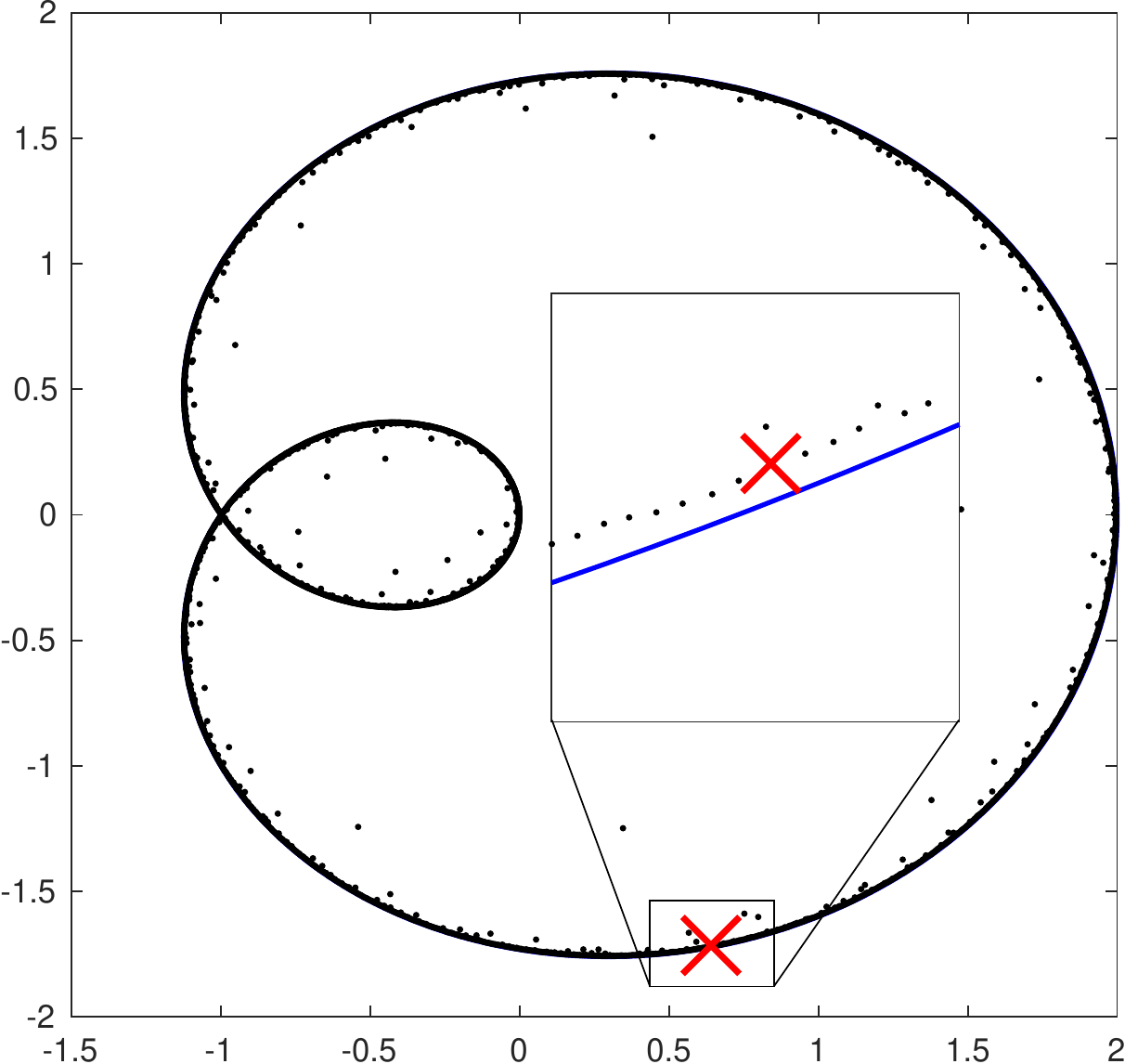}
 \end{minipage}
 \\[1.5ex]
 \begin{minipage}[b]{0.47\linewidth}
   \includegraphics[width=\textwidth]{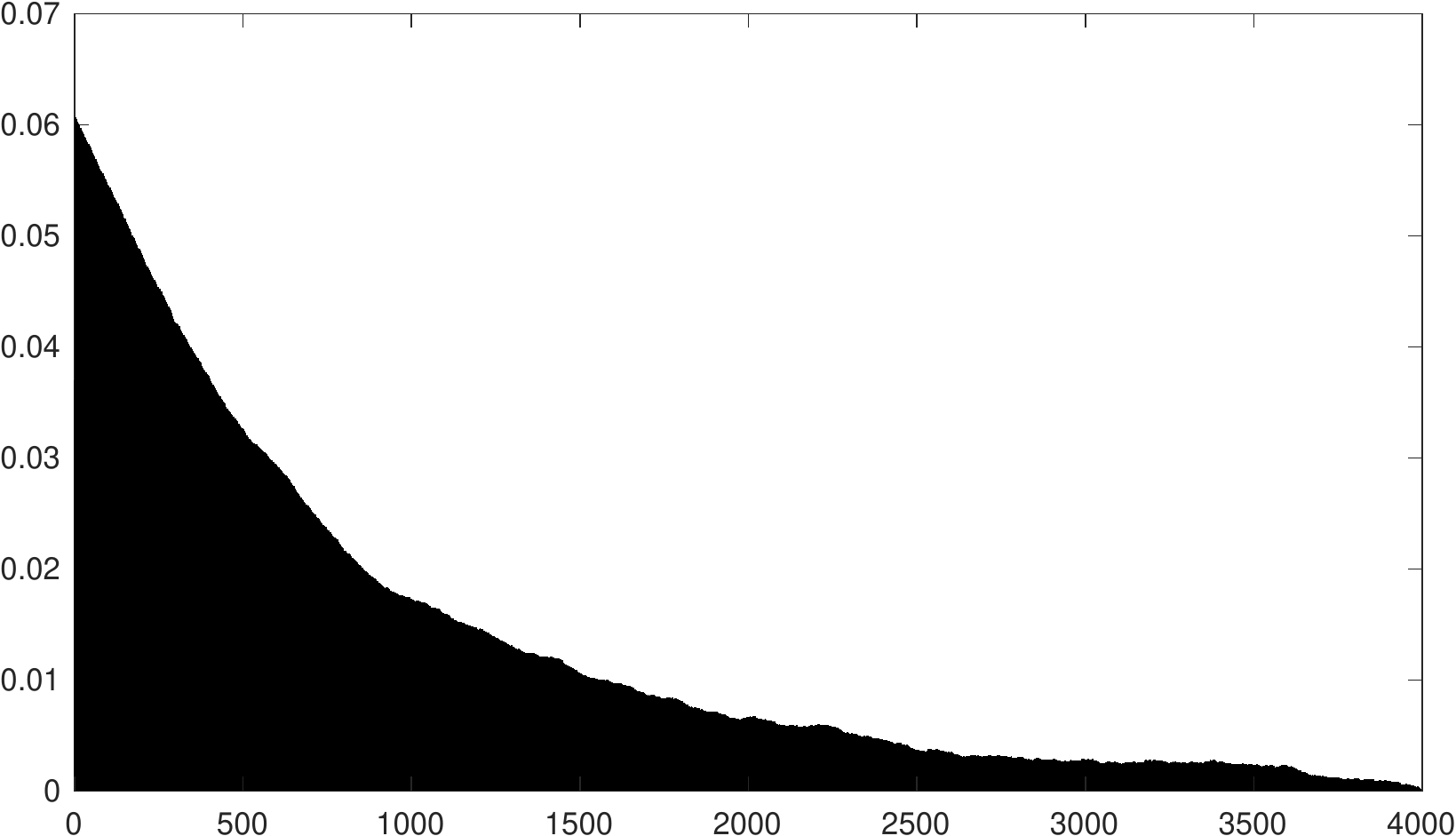}
 \end{minipage}
  \hspace{1cm}
 \begin{minipage}[b]{0.3\linewidth}
  \includegraphics[width=\textwidth]{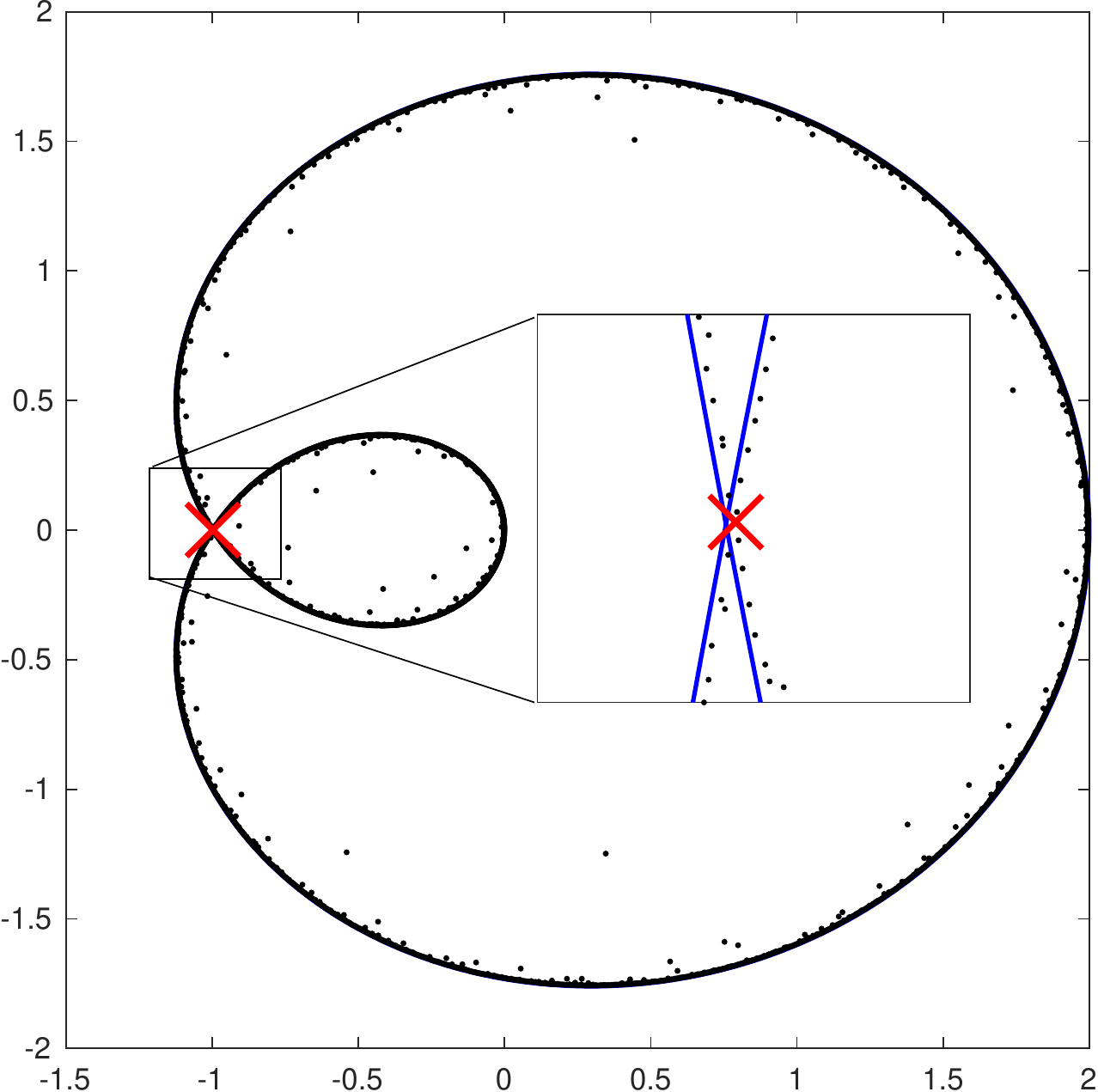}
 \end{minipage}
 \caption{Eigenvectors (left panel) and eigenvalues (right panel) for 
 $N=4000$, $\gamma=1.2$ and symbol $\zeta+\zeta^{2}$.
Plotted are the moduli of the entries of the eigenvector that
 corresponds to the eigenvalue marked with a red $\times$. 
 The top two rows correspond to situations covered by Theorem 
 \ref{thm:main};
 note the localization, which occurs at scale $N/\log N$.
 The bottom row is not covered by Theorem \ref{thm:main}, because
 the chosen eigenvalue is at vanishing distance from $\cB_1$.
}
  \label{fig1}
\end{figure}
The main result of this paper is the following description of the 
(right) eigenvectors of $P_{N,\gamma}^Q$.
\begin{thm}
  \label{thm:main}
  Fix $\vep_{\ref{theo-location}},C_{\ref{theo-location}}$ and the notation as above.\par
1.  The following occurs with probability approaching one as $N\to\infty$.
For each $\hat z\in \Omega(\vep_{\ref{theo-location}},C_{\ref{theo-location}},N)$ which is an  eigenvalue
  of $P_{N,\gamma}^Q$,  let $v=v(\hat z)$ denote the corresponding (right) 
  eigenvector, normalized so that $\|v\|_2=1$. Then there
  exists a vector $w$ with 
  $\|w\|_2=1$ such that
  \begin{equation}
    \label{eq-main1}
    \|v-w\|_2=o(1),
  \end{equation}
 and  a constant $c>0$, depending on $\gamma$, so that for any $\ell\in [N]$,
  \begin{equation}
    \label{eq-main2}
    \begin{array}{ll}
      \|w\|_{\ell^2([\ell,N])}
      \leq \e^{-c\ell\log N/N}/c 
      , & \mbox{\rm if $d>0$},\\
     \|w\|_{\ell^2([1,N-\ell])}
      \leq \e^{-c\ell\log N/N}/c 
      , & \mbox{\rm if $d<0$}.
    \end{array}
  \end{equation}
  The vector $w$ can be taken as a (random) linear combination of the 
  $|d|$ eigenvectors of $(P_N-\hat zI)^*(P_N-\hat zI)$ corresponding to the
  $|d|$ smallest eigenvalues. 
  \par
2. Fix $z_0=z_0(N)\in \Omega(\vep_{\ref{theo-location}},C_{\ref{theo-location}},N)$ deterministic, 
$C_0$, $\wt C_0$ large, and $\eta>0$ small. Then, there exist constants 
$c_1=c_1(\eta,C_0, \wt C_0)$ and $c_0=c_0(\gamma) \in (0,1)$, 
with $c_0 \to 1$ as $\gamma \to 1$ and $c_0 \to 0$ as $\gamma \to \infty$,  so that, with probability 
at least $1-\eta$, for every $\hat z=\lambda_i^N\in D(z_0,C_0\log N/N)$, any 
$0<\ell\leq \ell'\le \wt C_0 N/\log N$ satisfying $\ell'-\ell>N^{c_0}$ and all large $N$,
 \begin{equation}
    \label{eq-main3}
    \begin{array}{ll}
      \|w\|_{\ell^2([\ell,\ell'])}^2
      \geq c_1 (\ell'-\ell)\log N/N, & \mbox{\rm if $d>0$},\\
      \|w\|_{\ell^2([N-\ell',N-\ell])}^2
    \geq c_1 (\ell'-\ell)\log N/N, & \mbox{\rm if $d<0$}.
    \end{array}
  \end{equation}
 Further, for any $0<c'\leq \wt C_0$,
 \begin{equation}
    \label{eq-main4}
    \begin{array}{ll}
      \|v\|_{\ell^2([1,c'N/\log N])}^2
      \geq c'c_1/2, & \mbox{\rm if $d>0$},\\
      \|v\|_{\ell^2([N-c'N/\log N,N])}^2
    \geq c'c_1/2,  & \mbox{\rm if $d<0$}.
    \end{array}
  \end{equation}
\end{thm}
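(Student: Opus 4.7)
The plan is to set up a Grushin problem reducing the eigenvalue equation for $P_{N,\gamma}^Q$ to a finite-dimensional linear algebra problem of size $|d|\times|d|$, where $d=\wind(\hat z)$. For $\hat z\in \Omega(\vep_{\ref{theo-location}},C_{\ref{theo-location}},N)$, the unperturbed Toeplitz matrix $P_N-\hat z$ has exactly $|d|$ exponentially small singular values. Let $\{e_j(\hat z)\}_{j=1}^{|d|}$ and $\{f_j(\hat z)\}_{j=1}^{|d|}$ be orthonormal bases of the associated right and left singular subspaces, and set $R_-:\C^{|d|}\to\C^N$, $\alpha\mapsto\sum_j\alpha_j e_j$, $R_+:\C^N\to\C^{|d|}$, $u\mapsto(\langle f_j,u\rangle)_j$. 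Then the block operator
\[
\cP(\hat z):=\begin{pmatrix} P_N-\hat z & R_- \\ R_+ & 0 \end{pmatrix}
\]
is boundedly invertible with inverse $\begin{pmatrix} E & E_+ \\ E_- & E_{-+} \end{pmatrix}$, and a direct computation shows that $\hat z$ is an eigenvalue of $P_{N,\gamma}^Q$ precisely when the perturbed effective matrix $E_{-+}^Q(\hat z):=E_{-+}(\hat z)-N^{-\gamma}E_-(\hat z)QE_+(\hat z)+O(N^{-2\gamma})$ (from a Neumann expansion) has nontrivial kernel; for any unit $\alpha\in\ker E_{-+}^Q(\hat z)$ the corresponding eigenvector is $v=E_+^Q(\hat z)\alpha/\|E_+^Q(\hat z)\alpha\|_2$. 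Since the small singular values are exponentially small, $E_+(\hat z)$ differs from $R_-$ only by exponentially small corrections, so $v=w+o(1)$ with $w:=\sum_j\alpha_j e_j(\hat z)$. This proves \eqref{eq-main1} and identifies $w$ as the linear combination of small singular vectors of $(P_N-\hat zI)^*(P_N-\hat zI)$ asserted in Part 1.

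For the decay bound \eqref{eq-main2}, one exploits the explicit structure of the $e_j(\hat z)$: when $d>0$ they can be realized as linear combinations of the exponentially decaying quasi-modes $x\mapsto \zeta_k(\hat z)^x$ built from the $N_+$ roots $\zeta_k(\hat z)$ of $p(\zeta)=\hat z$ that lie strictly inside the unit disc. A quantitative continuity estimate for these roots (valid because $\hat z\in\Omega(\vep_{\ref{theo-location}},C_{\ref{theo-location}},N)$ stays away from the branch set $\cB_2$ and at distance $\asymp \log N/N$ from $p(S^1)$) yields $|\zeta_k(\hat z)|\leq 1-c\log N/N$ for each relevant root. Accounting for the $\ell^2$-normalization of each $e_j$, this gives $\|e_j\|_{\ell^2([\ell,N])}\lesssim e^{-c\ell\log N/N}$; then \eqref{eq-main2} follows from $\|w\|_{\ell^2([\ell,N])}\leq\sum_j|\alpha_j|\,\|e_j\|_{\ell^2([\ell,N])}$ together with $|d|=O(1)$. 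The case $d<0$ is symmetric, using outside roots.

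For Part 2 the plan is to combine a geometric analysis of the Gram matrix $M_{[\ell,\ell']}(\hat z)$ of the restrictions $e_j|_{[\ell,\ell']}$ with a probabilistic non-degeneracy statement for $\alpha=\alpha(\hat z)$. Writing $\|w\|_{\ell^2([\ell,\ell'])}^2=\langle\alpha,M_{[\ell,\ell']}(\hat z)\alpha\rangle$, the quasi-mode representation of the previous paragraph yields $M_{[\ell,\ell']}(\hat z)\succeq c(\ell'-\ell)(\log N/N)\,I$ once $\ell'-\ell\gg N^{c_0}$, the threshold $N^{c_0}$ being dictated by the averaging needed to wash out the oscillations of the $\zeta_k^x$ and to control off-diagonal entries via the separation of the inside roots. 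It then remains to show that $\alpha(\hat z)$ is uniformly non-degenerate on $D(z_0,C_0\log N/N)$ with probability $\geq 1-\eta$. This is achieved by combining the linear dependence of $E_{-+}^Q(z)$ on $Q$ with the anti-concentration hypothesis \eqref{eq:levy-bd}, after a net argument on $D(z_0,C_0\log N/N)$ of polynomial cardinality; the $o(1)$-probability event from Theorem \ref{theo-location} absorbs the few eigenvalues escaping this region. Finally, \eqref{eq-main4} is obtained by partitioning $[1,c'N/\log N]$ into $\asymp c'N/(N^{c_0}\log N)$ consecutive intervals of length $\sim N^{c_0}$ and summing \eqref{eq-main3}. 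The main obstacle is this probabilistic step: extracting quantitative non-degeneracy of the random $\alpha(\hat z)$ uniformly over the (random) eigenvalue location $\hat z$ while relying only on the weak anti-concentration at scale $\vep^{1+\upeta}$. The threshold $N^{c_0}$, with $c_0\to 1$ as $\gamma\to 1$ and $c_0\to 0$ as $\gamma\to\infty$, precisely reflects the competition between the perturbation size $N^{-\gamma}$ and the length scale on which $M_{[\ell,\ell']}$ becomes effectively diagonal.
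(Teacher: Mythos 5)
Your setup for Part 1 is in the spirit of the paper (a Grushin reduction and the identification of $w$ as a combination of the $|d|$ bottom singular vectors $e_j$), though you should be careful that the perturbative correction $E_+^\delta - E_+$ is only polynomially small (of order $N^{3/2-\gamma}$, or requiring the refined Hilbert--Schmidt estimates for $\gamma\in(1,3/2]$), not exponentially small as you claim; and that a net argument in $z$ is needed since $\hat z$ is random.

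The serious gap is in Part 2, where you assert that the Gram matrix $M_{[\ell,\ell']}(\hat z)$ of the restrictions $e_j|_{[\ell,\ell']}$, $j=1,\dots,|d|$, satisfies $M_{[\ell,\ell']}\succeq c\,(\ell'-\ell)(\log N/N)\,I$ once $\ell'-\ell\gg N^{c_0}$. This is false. The $|d|$ bottom singular vectors split into two groups with very different decay rates: the first $M_0=|d|-m^0_{\mathrm{sign}(d)}$ of them (built from roots bounded strictly away from $S^1$, cf.\ \eqref{qm4.0b}--\eqref{qm4.0c} and Proposition \ref{prop:SmallSG}) decay exponentially at a constant rate, so for $\ell\gtrsim\log N$ one has $\|e_j\|_{\ell^2([\ell,\ell'])}^2 = O(e^{-c\ell})\ll (\ell'-\ell)\log N/N$. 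Consequently the top-left $M_0\times M_0$ block of $M_{[\ell,\ell']}$ degenerates, and your uniform lower bound cannot hold. Only the last $m^0_{\mathrm{sign}(d)}$ singular vectors decay at rate $\asymp\log N/N$ and contribute to $\ell^2$-mass on $[\ell,\ell']$. A Gram-matrix bound alone cannot therefore produce \eqref{eq-main3}: one must additionally show that the eigenvector $w=\sum_j\alpha_j e_j$ puts a non-negligible fraction of its $\ell^2$-mass on the slowly decaying modes, i.e., that $\sum_{j>M_0}|\alpha_j|^2$ is bounded below uniformly in $N$ (this is precisely the role of \eqref{eq-lb} in the paper). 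That lower bound is where the real work lies, and it is genuinely probabilistic: using the second Grushin identity $E_-^\delta P_z^\delta + E_{-+}^\delta R_+=0$, the coefficients $\alpha_j$ with $j\le M_0$ are shown to be coupled to the others through the $M_0\times M_0$ random matrix $\hat A_{ij}=f_i^*Qe_j$, whose smallest singular value must be bounded below with high probability simultaneously over a net of $z$'s (Lemma \ref{lem-morningheri}), via the anti-concentration estimate of Lemma \ref{lem:anti-conc} together with a Kolmogorov-chaining argument. Your plan mentions anti-concentration only vaguely, and directs it at ``non-degeneracy of $\alpha$'' rather than at the specific two-scale obstruction; as stated it does not close this gap. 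Your heuristic for the threshold $N^{c_0}$ (``averaging to wash out oscillations'') is also off: in the paper the threshold arises from requiring the main term $(\ell'-\ell)\log N/N$ to dominate the quasimode-approximation error $\epsilon_N=O(\log N/N)+O(N^{-2C_\gamma}(\log N)^2)$ coming from Propositions \ref{prop:sgvSpan} and \ref{prop:sgvSpan3}, which is where the $\gamma$-dependence of $c_0$ enters.
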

\vskip5pt
Theorem \ref{thm:main} shows a \textit{localization} phenomenon, 
numerically illustrated in the examples of Figure \ref{fig1}: 
for all eigenvalues in the good regions, the corresponding 
eigenvectors localize at scale $N/\log N$, and 
for most eigenvalues, this is the scale at which the 
eigenvector is ``spread out''. 
(Contrast the situation with the regime $\gamma<1$, where
delocalization is observed in simulations, see Figure \ref{fig1a};
we discuss predictions for that regime in Section \ref{sec:gam-l-1},
after we introduce relevant notions and in particular the relevant
{\em Grushin problem}.)
\par
Building on Theorem \ref{thm:main}, equipped with a local estimate 
on the number of eigenvalues $\lambda_i^N$ in regions of diameters $O(\log N/N)$ 
(see Theorem \ref{thm-thintube2}) and applying a Fubini type argument, 
one can show that except for an arbitrarily small fraction of the eigenvalues, 
the corresponding eigenvectors $v(\lambda_i^N)$ localize at 
scale $N/\log N$. In particular, we prove the following result.
\begin{cor}\label{cor:main} 
Let Assumptions \ref{assump:mom}, \ref{assump:anticonc}, and \ref{assu-symbol} hold. Then, for 
any $\mu>0$ there exists  $\mu_1,\mu_2>0$ so that
with $|\supp_{\mu_1}(v)|:=\min \{ |I|: \|v\|_{\ell^2(I)}>1-\mu_1\}$,
\[\limsup_{N \to \infty}\frac1N \E\#\{i: \supp_{\mu_1}(v(\lambda_i^N))<\mu_2 N/\log N\}\leq \mu.\]
\end{cor}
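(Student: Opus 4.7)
The plan is to combine Theorem \ref{thm:main} with the local eigenvalue count of Theorem \ref{thm-thintube2} via a Fubini-type argument applied to a covering of the good region $\Omega := \Omega(\vep_{\ref{theo-location}}, C_{\ref{theo-location}}, N)$. By Theorem \ref{theo-location} applied with parameter $\mu/3$, with probability tending to one at most $(\mu/3)N$ eigenvalues of $P_{N,\gamma}^Q$ lie outside $\Omega$; their contribution to $\E[\#\{\text{bad}\}]$ is therefore bounded by $(\mu/3)N + o(N)$. Since $\Omega$ is a tube of width $O(\log N/N)$ around the bounded-length curve $p(S^1)$, one can cover it by $K_N = O(N/\log N)$ discs $D_k := D(z_k, r_N)$ with deterministic centres $z_k \in \Omega$ and $r_N = C_0' \log N/N$.

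For each $k$ I would apply Part~2 of Theorem \ref{thm:main} with $z_0 = z_k$ and a fixed constant $\eta_0 = \eta_0(\mu)$ (to be chosen), producing an event $G_k$ of probability $\geq 1 - \eta_0$. On $G_k$, every eigenvector $v$ associated with an eigenvalue $\hat z \in D_k$ is $\ell^2$-close, to within $o(1)$, to a random linear combination $w$ of at most $|d| \leq N_+ + N_-$ singular vectors of $P_N - \hat z$ corresponding to the smallest singular values. From the structural description of these singular vectors established in the earlier sections of the paper (each is, up to exponentially small error, a normalized truncated geometric sequence $\zeta_j^x$ with $|\zeta_j| = 1 - O(\log N/N)$), one obtains the pointwise bound $\|w\|_\infty = O(\sqrt{\log N/N})$. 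Consequently, for any $I \subset [N]$ with $|I| \leq \mu_2 N/\log N$,
\[
\|v\|_{\ell^2(I)} \leq \|w\|_{\ell^2(I)} + \|v - w\|_2 \leq C\sqrt{\mu_2} + o(1),
\]
which is strictly less than $1 - \mu_1$ once $\mu_2 = \mu_2(\mu_1)$ is chosen sufficiently small. Hence no eigenvector with eigenvalue in $D_k$ is bad on $G_k$.

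On the complementary event $G_k^c$ the count of bad eigenvectors in $D_k$ is bounded by $X_k := \#\{i: \lambda_i \in D_k\}$, whose second moment is controlled by Theorem \ref{thm-thintube2} as $\E[X_k^2] = O((\log N)^2)$. Cauchy--Schwarz then gives $\E[X_k \mathbf{1}_{G_k^c}] = O(\log N \sqrt{\eta_0})$, and the Fubini step of summing over the $K_N = O(N/\log N)$ discs yields $O(N\sqrt{\eta_0})$; this is at most $(\mu/2)N$ for $\eta_0$ small enough depending on $\mu$. Combined with the outside-$\Omega$ contribution, $\limsup_{N\to\infty} N^{-1}\E[\#\{\text{bad}\}] \leq \mu$, as desired. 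The main obstacle I foresee is the structural step that converts Theorem \ref{thm:main} Part~2, which provides anti-concentration on intervals of length at least $N^{c_0}$, into an anti-concentration statement on the arbitrary subsets $I$ appearing in the definition of $\supp_{\mu_1}$; this requires the sharp $\ell^\infty$ control on the low-lying singular vectors of $P_N - \hat z$, a non-trivial consequence of the Toeplitz/Grushin analysis developed in the preceding sections of the paper.
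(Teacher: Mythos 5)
Your Fubini/covering scaffolding is sound and closely parallels the paper's (the paper covers $\Omega(\vep_{\ref{theo-location}}, C_{\ref{theo-location}}, N)$ by $O(N/\log N)$ discs of radius $O(\log N/N)$, controls the eigenvalue count in each disc by Theorem~\ref{thm-thintube2}, and sums over the net), but the structural step inside each disc has a genuine gap.

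You assert that each of the $|d|$ small singular vectors of $P_N - \hat z$ is ``up to exponentially small error, a normalized truncated geometric sequence $\zeta_j^x$ with $|\zeta_j| = 1 - O(\log N/N)$,'' and you deduce $\|w\|_\infty = O(\sqrt{\log N/N})$. This is false in general. As set up in Assumption~\ref{As:Omega}, Propositions~\ref{lem:TruncEV} and~\ref{prop:sgvSpan3}, only $m^0_{\mathrm{sign}(d)}$ of the $|d|$ small singular vectors (those indexed by $I_{\mathrm{sign}(d),2}$, corresponding to roots within distance $O(\log N/N)$ of $S^1$) are of the $\log N/N$-scale geometric type you describe. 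The remaining $M_0 = |d| - m^0_{\mathrm{sign}(d)}$ of them (indexed by $I_{\mathrm{sign}(d),1}$) are built from roots $\zeta_j$ with $|\zeta_j| \le 1/C_\beta < 1$ bounded away from $1$; they decay at a fixed rate and have $\|e_j\|_\infty \asymp 1$, not $O(\sqrt{\log N/N})$. If the random coefficient vector $\alpha$ loaded nearly all of its $\ell^2$-mass onto these $M_0$ modes, $w$ would indeed concentrate on a set of size $O(1)$ and your bound $\|v\|_{\ell^2(I)} \le C\sqrt{\mu_2} + o(1)$ for $|I| \le \mu_2 N/\log N$ would fail.

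What rescues the argument — and what the paper actually uses — is the anti-concentration statement \eqref{eq-lb} in the proof of Theorem~\ref{thm:main}, proved via Lemma~\ref{lem-morningheri}: with probability $\ge 1 - \eta$, uniformly over $z$ in the disc, $\sum_{j > M_0} |\alpha_j|^2 > c_0 > 0$. This caps $\|\sum_{j \le M_0} \alpha_j e_j\|_2 \le \sqrt{1 - c_0} + o(1)$, and only then does a triangle inequality combined with the $\ell^2(I)$ bound \eqref{eqApt1} (or equivalently the $\ell^\infty$ bound you wanted, but applied only to the $j > M_0$ modes) force $|I| \ge \mu_2 N/\log N$ when $\|v\|_{\ell^2(I)} > 1 - \mu_1$. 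Your self-identified ``main obstacle'' points at the right family of estimates but mislocates the difficulty: the issue is not an $\ell^\infty$ bound on all low-lying singular vectors (which cannot hold), but the need to decompose $w$ according to $M_0$ versus $|d| - M_0$ and to invoke the matrix anti-concentration of Lemma~\ref{lem-morningheri} to show that the exponentially localized part of $w$ cannot carry almost all the mass.
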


\begin{figure}[ht]
  \centering
  \begin{minipage}[b]{0.45\linewidth}
   \includegraphics[width=\textwidth]{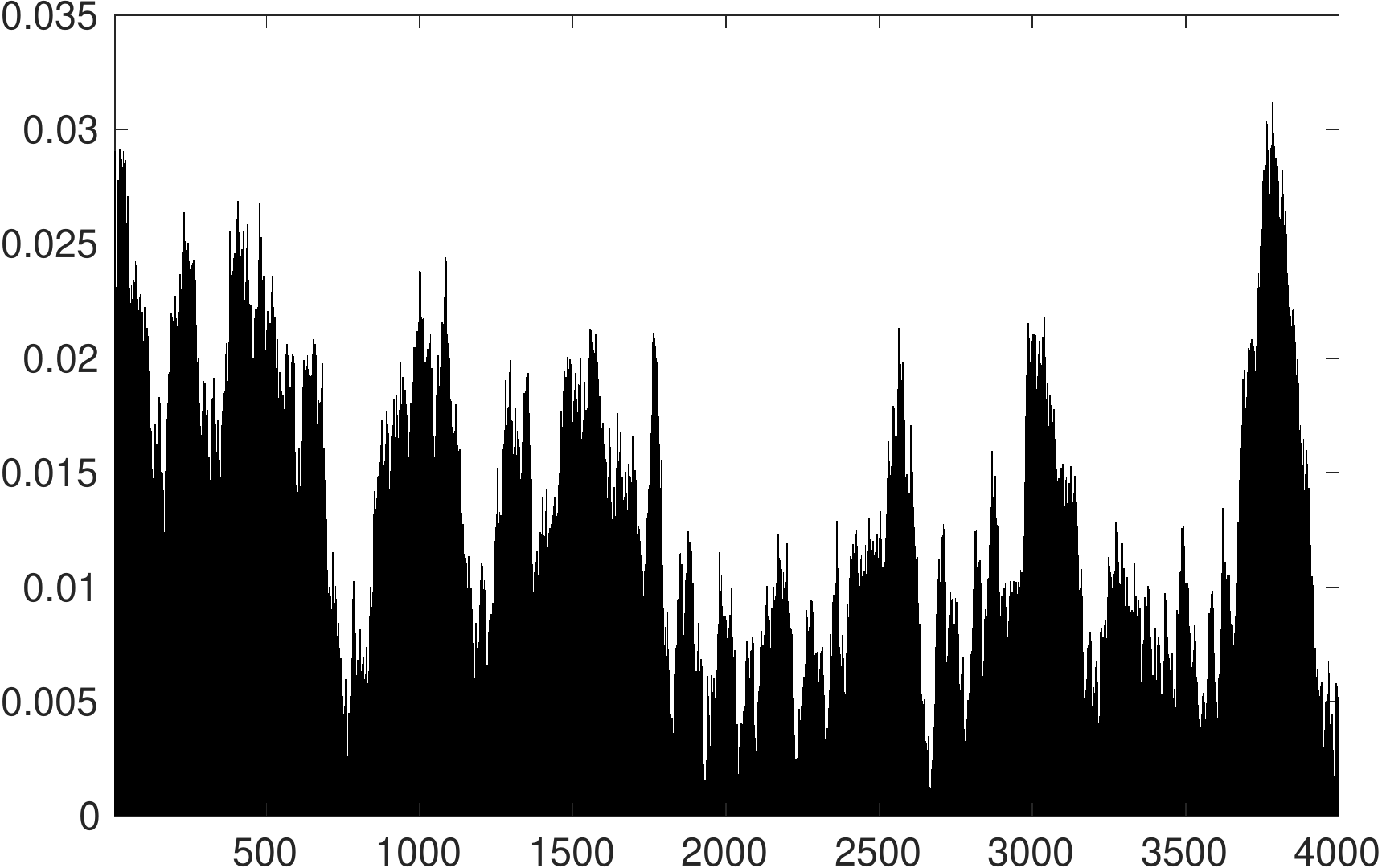}
 \end{minipage}
 \hspace{1cm}
  \begin{minipage}[b]{0.29\linewidth}
  \includegraphics[width=\textwidth]{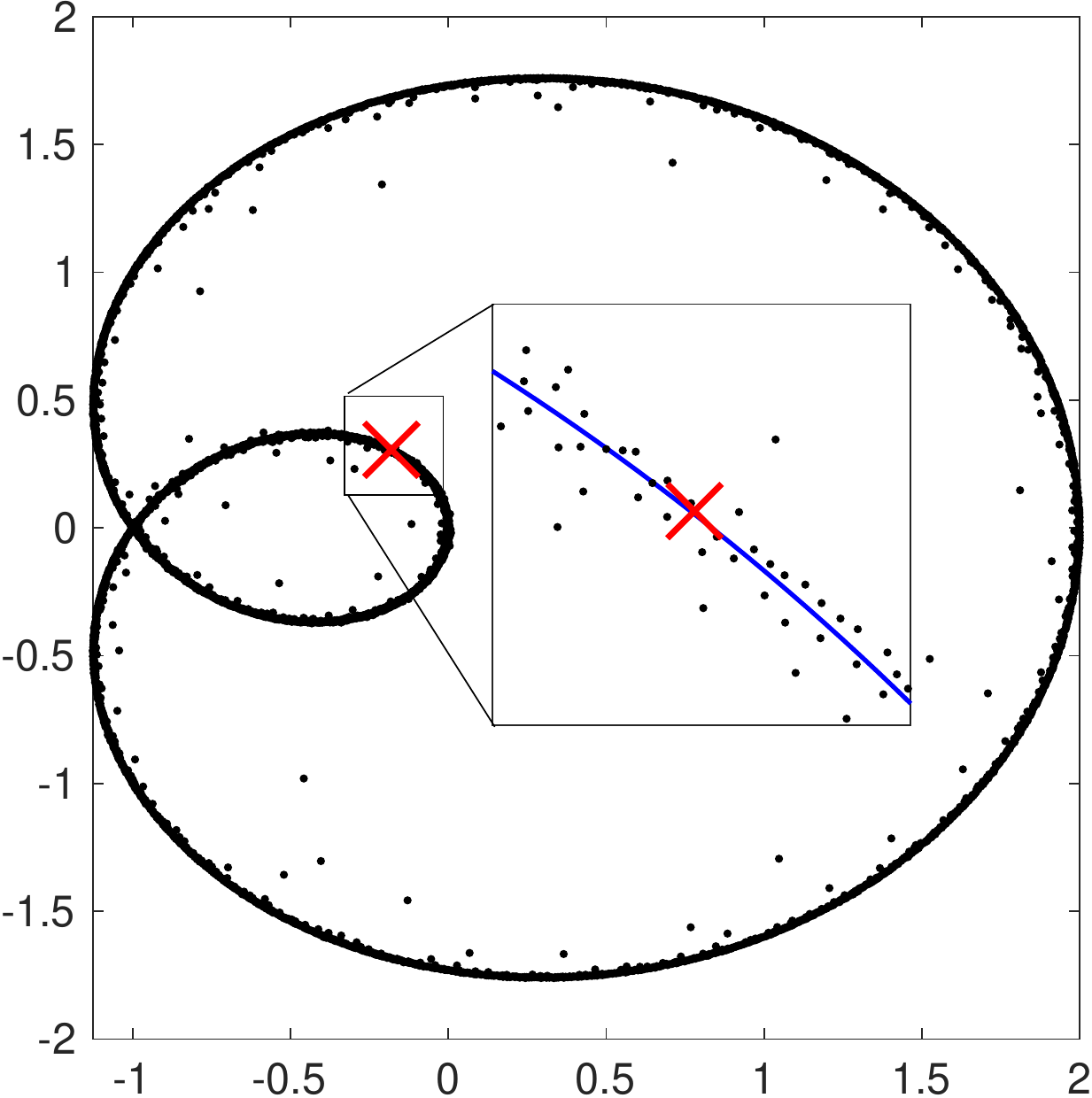}
 \end{minipage}
\\[1.5ex]
  \begin{minipage}[b]{0.455\linewidth}
   \includegraphics[width=\textwidth]{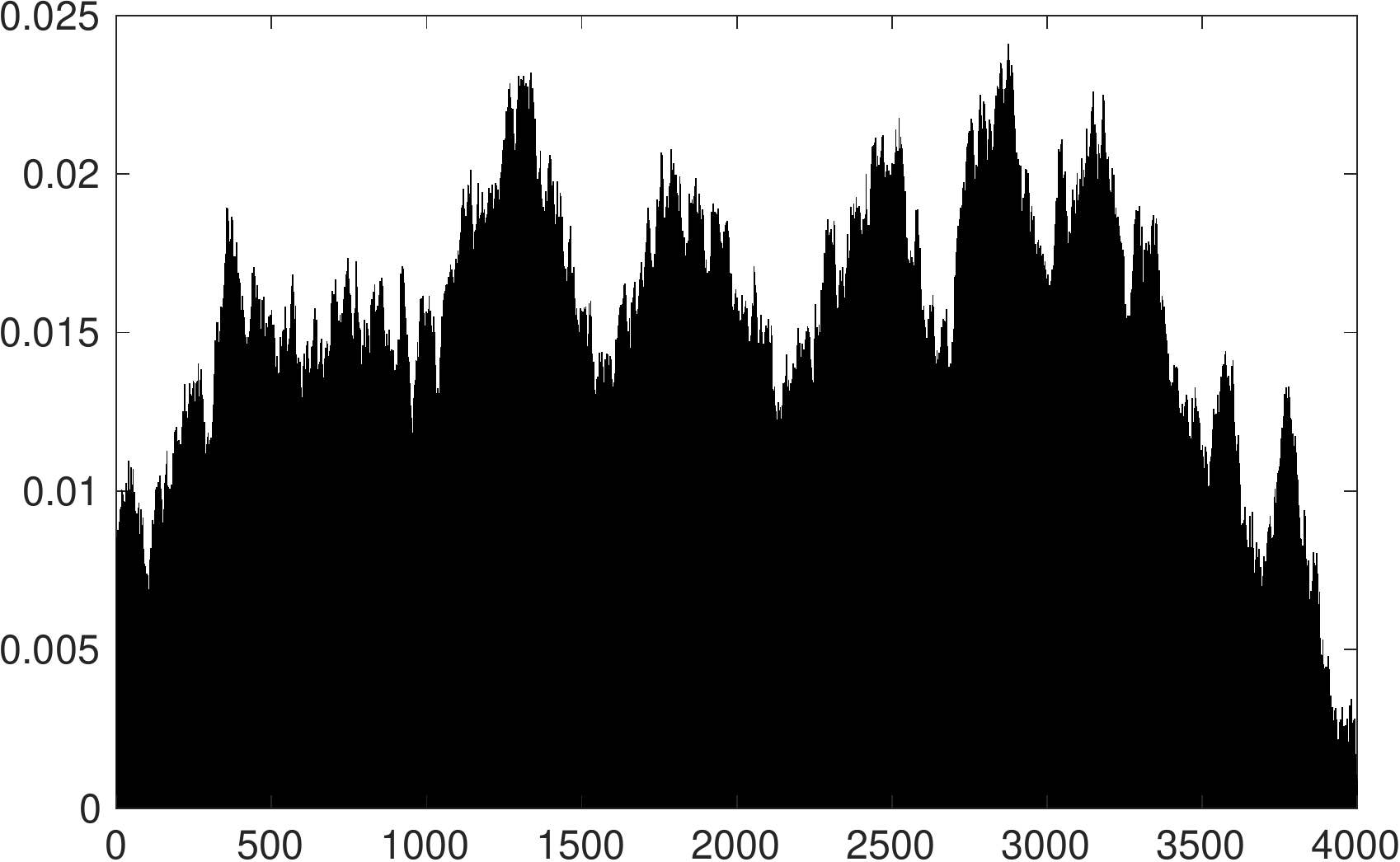}
 \end{minipage}
 \hspace{1cm}
  \begin{minipage}[b]{0.29\linewidth}
  \includegraphics[width=\textwidth]{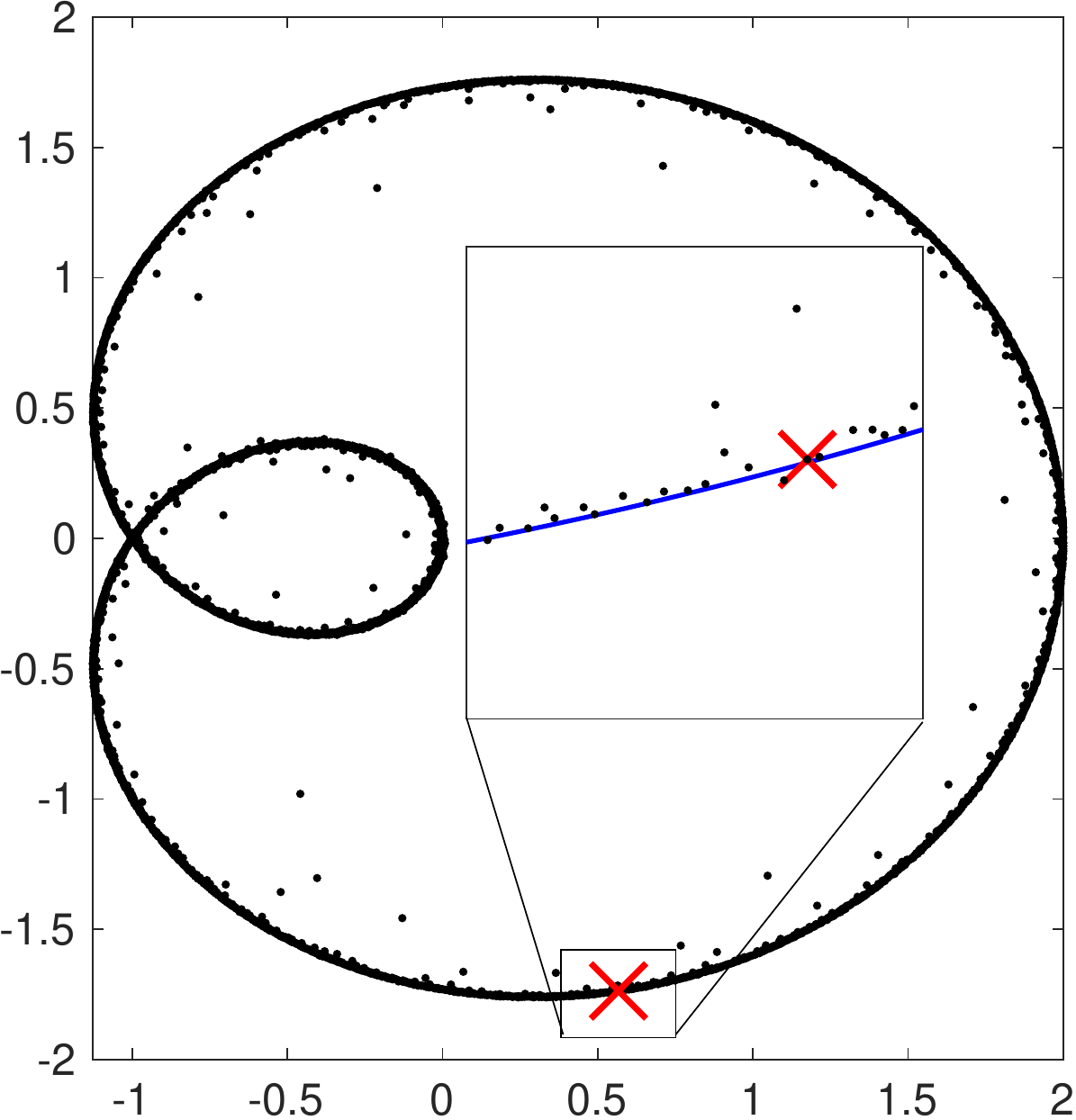}
\end{minipage}
 \caption{Eigenvectors (left panel) and eigenvalues (right panel) for 
 $N=4000$, $\gamma=0.8$ and symbol $\zeta+\zeta^{2}$.
 These cases are not covered by Theorem \ref{thm:main}.
 Note the  stark difference with the corresponding situations in Figure 
 \ref{fig1}, in both the location of eigenvalues relative to $p(S^1)$ and
 in the localization properties of eigenvectors.}
  \label{fig1a}
\end{figure}
$\phantom{.}$
\\[2ex]
\noindent
Theorem \ref{thm:main} states that the eigenvectors of $P_{N,\gamma}^Q$ corresponding 
to most eigenvalues $\hat{z}$ can be approximated by a random linear combination 
of the eigenvectors $e_j$ of $(P_N-\hat{z}I)^*(P_N-\hat{z}I)$ associated with its $|d|$ smallest 
eigenvalues. These are precisely the right {singular vectors} of $P_N-\hat{z}I$ associated with 
its $|d|$ smallest singular values. 
However, these singular vectors $e_j$ are (in general) difficult objects to study and 
do not admit an easy description. 
Therefore, we will
 approximate these singular vectors with certain {\em quasimodes} of 
 the operator 
$P_N$. The term quasimode for $P_N$ and a (quasi-)eigenvalue $z$ refers to 
a approximate 
$\ell^2$-normalized eigenvectors $\psi\in\C^N$ of $P_N-z$ in the sense that 
\begin{equation*}
	\| (P_N-z)\psi \| \to 0, \quad N\to \infty.
\end{equation*}
In the literature quasimodes are also referred to as 
\emph{pseudo-eigenvectors} 
or \emph{pseudomodes}. In Section \ref{sec:QM}, we describe 
a $|d|$-dimensional
space of quasimodes associated with $P_N-zI$, and in Section \ref{sec-singular}, we show 
that the eigenvectors $e_j$ of $(P_N-zI)^*(P_N-zI)$ associated with its $|d|$ smallest 
eigenvalues are close to these quasimodes. We refer to Propositions 
\ref{prop:QM}  and \ref{prop:sgvSpan} for the precise construction, which we do not repeat 
here. We emphasize however that the \emph{construction} of the $e_j$-s depends on $p$, $N$  and 
$z$ only and not on $Q$ (even if eventually the value of $z$ to which it will be applied will 
depend on $Q$).
\par
The upper bound \eqref{eq-main2} is  due to the decaying nature of these $|d|$ linearly independent quasimodes either to the left (when $d<0$) or to the right 
(when $d>0$). 
These quasimodes decay exponentially quickly, $|u(n)| \asymp \e^{-rn}$ or 
$|u(n)| \asymp \e^{-r(N-n)}$, however, at different rates $r>0$. Out of these $|d|$ 
quasimodes the first $(|d|-{\sf g}_0)$ (recall \eqref{eq:gpintro}) quasimodes decay 
at a constant rate $r>0$, resulting in them being completely localized to a point, i.e.~either on the left or right hand side of the interval $[1,N]$. In contrast, the rest decay at a rate 
$r\asymp \log N/N$, which implies that they localize at a scale $N/\log N$. In contrast,
the lower bound \eqref{eq-main3} follows upon showing that $w$ has a non-negligible 
projection (in $\ell^2$) onto $\cS$, the subspace spanned by the last ${\sf g}_0$ 
quasimodes that decay precisely at rate $\log N/N$.
\par
Note that Theorem \ref{thm:main} and Corollary \ref{cor:main} establish absences 
of quantum ergodicity and no-gaps delocalization, and show that the semiclassical 
defect measure in this setting is the Dirac measure at zero or one, depending on 
whether $d$ is positive of negative.
\par
One may wonder whether the assumption $\hat z  \in D(z_0, C_0 \log N/N)$ 
in the second part of Theorem \ref{thm:main} is optimal. It will be clear from the proof 
that to derive \eqref{eq-main3} one needs to control the supremum of the {random field} 
$z \mapsto \wt f(z)^t Q \wt e(z)$ for some $\wt e(z), \wt f(z) \in \C^N$ such that the 
Lipschitz norms of the functions $z \mapsto \wt e(z)$ and $z \mapsto \wt f(z)$ are 
$O(N/\log N)$ and $O(1)$, respectively. It is then standard to check that the supremum of 
the field $\{\wt e(z)^t Q \wt f(z)\}$ can only be bounded by an $O(1)$ quantity 
in discs of radius $O(\log N/N)$. 
The boundedness of this random field is crucial in deriving that $w$ has non-negligible 
projection onto $\cS$. Repeating the same reasoning one can also observe that 
for all $\hat z$ simultaneously in the good region, with probability approaching one, the 
$\ell^2$ norm of the projection of $w$ onto $\cS$ is at least of the order $\log N/N$, and 
we believe that this is the correct picture. 

\subsection{Connection to pseudospectra and pseudo-eigenvectors} Roughly speaking, the pseudospectrum of an operator represent the spectrum of the operator when subjected to the worst-case perturbation. However, here we study the spectrum under a {\em typical perturbation}.
Nevertheless, in many scenarios the spectrum of random perturbations of a non-self-adjoint operator closely resembles the {\em pseudospectral level lines} of the unperturbed operator (cf.~\cite[Section 1.3]{BPZ} and the references therein). There have been attempts to understand pseudospectral properties of Toeplitz matrices. It was proved that the $\vep$-pseudospectrum of  an unperturbed Toeplitz matrix converges to an $\vep$-neighborhood of the spectrum of the limiting Toeplitz operator \cite{RT}. On the other hand, in \cite{BoGr04} it is shown that for $P_N$ any {\em asymptotically good} pseudo-eigenvector must be {\em asymptotically strongly localized}, meaning that almost all its mass is carried either by the subset $[1,j_N]$ or the subset $[N-j_N, N]$ for any sequence $\{j_N\}$ such that $j_N \to \infty$ as $N \to \infty$. Note the contrast with the localization behavior of the eigenvectors of the noisy version of $P_N$:~they localize with $j_N \asymp N/\log N$. 

For a more general model, the twisted Toeplitz matrices, a special case of the Berezin-Toeplitz quantization of the two dimensional torus, it has been shown in \cite{TC} that if certain {\em (anti-)twist condition} is satisfied by the 
\emph{symbol} of the operator, then the pseudo-eigenvectors are localized in the form of {\em localized wave packets}. This has been generalized in \cite{BU}
to the
Berezin-Toeplitz quantizations of compact symplectic K\"ahler manifolds. 
In the field of microlocal analysis of pseudo-differential operators,
the analogue of this (anti-)twist condition is the 
non-vanishing of the Poisson bracket of the real and imaginary parts of the \emph{principal symbol} of the operator. This condition, also known as
H\"ormander's
commutator condition \cite{Ho85}, is a corner stone in the now classical theory of local (non-)solvability of partial differential equations. In the works \cite{D1, D2, D3, Zw01,NSjZw04,Pr08} it was linked via a WKB construction to the construction of wave packet pseudomodes of Schr\"odinger operators with complex potentials, 
and more generally to nonselfadjoint pseudo-differential operators. 
\subsection{Extensions} 
A natural extension of our results would be to the region $\gamma\in (1/2,1]$, see
Section \ref{sec:gam-l-1} for a discussion.
We mention a couple of additional 
potentially interesting extensions, of broad interest.
\\[2ex]
\noindent
\textbf{Sup-norm delocalization versus no-gaps delocalization.} As already mentioned in Section \ref{sec:setting},
there are two complementary notions of delocalizations in the random matrix literature. It is shown in \cite{BLo, RVnogap} that both these notions of delocalization hold for Wigner matrices under various assumptions on it entries. There is no reason to believe that these two notions of delocalizations should hold simultaneously in any given setting.

Indeed, from the proof of Theorem \ref{thm:main} it follows that when $P_N$ is a Jordan block, i.e.~its symbol is $p(\zeta)=\zeta$, and $\gamma >3/2$ we have that $\|v\|_\infty \asymp \sqrt{{\log N}/{N}}$ for an eigenvector $v$ corresponding to a bulk eigenvalue. See Remark \ref{rem:sup-v-nogap} for further details. {\em Thus, in this simple setting, the eigenvectors corresponding to most of the eigenvalues are completely delocalized according to the sup-norm criterion. However, they do not satisfy no-gaps delocalization. }
It is worth investigating whether one indeed has that $\|v\|_\infty \asymp \sqrt{{\log N}/{N}}$ for all $\gamma >1$ and any finitely banded $P_N$. 
\\[2ex]
\noindent
\textbf{Localization for the outlier eigenvalues and multi-fractal structure.} 
Based on simulations and some heuristic arguments,
we predict that the eigenvector $\psi$ corresponding to an eigenvalue 
residing at a distance of order $N^{\upalpha -1}$, $\upa \in (0,1]$,
from the spectral curve localizes
at scale $N^{1-\upa}$. This shows in particular
that for such a $\psi$ one has 
$\|\psi\|_p \asymp N^{(\upa-1) \cdot (\frac12 -\frac1p)}$, 
establishing that such eigenvectors are multi-fractal. 
The same reasoning shows that the eigenvectors corresponding to 
outlier eigenvalues, i.e.~those are at a distance of order one from $p(S^1)$,
would be completely localized. That is, most of their
mass is carried by  finitely many entries. 
It seems plausible that the methods of our current work
could be adapted to prove these results.

\subsection{Structure of the paper}
In Section \ref{sec-Grushin}, we introduce a Grushin problem for 
a matrix $P$, which allows us to represent  the null space of $P$, 
if non-empty,
in terms of a certain resolvent expansion, 
see Lemma \ref{lem-basicinverse}. Section \ref{res} then provides a 
sketch of the proof of Theorem \ref{thm:main}.
Section \ref{Sec:AnaTO} discusses results on Toeplitz and related operators that are used later in the paper.  Sections \ref{sec-separation}-\ref{sec:bulk-eig}
are devoted to estimates on the location of ``most'' 
eigenvalues of $P_{N,\gamma}^Q$, which together yield Theorem 
\ref{theo-location}. The following Section \ref{sec-resolvent} 
derives estimates on the resolvent of of $P_N-zI$, for appropriate $z$ 
in the ``good'' region. Section \ref{sec:QM} is devoted to 
the study of quasimodes of Toeplitz matrices, which are the building blocks for the eigenvectors of $P_{N,\gamma}^Q$ and eventually give the vector $w$
in the statement of Theorem \ref{thm:main}. Section 
\ref{sec-sing} gives estimates on the singular values and vectors of $P_N-z$, 
and bounds on various norms of matrices appearing in the Grushin problem.
Finally, Section \ref{sec-mainproof} collects all preparatory material and provides the proof of Theorem \ref{thm:main}.

\subsection{Notation}\label{sec:notation}
We use the following set of notation throughout this paper. The notation  
 $a \ll b$ means that 
$Ca \leq  b$ for some sufficiently large constant $C>0$. 
Writing $a \asymp b$ means 
that there exists a constant $C>1$ such that $C^{-1} a \leq b \leq Ca$. The notation 
$a \gtrsim b$ is used to denote that $a \ge C^{-1} b$, while we write $a = o(b)$ to denote 
$a \le C^{-1}b$ for {\em all} $C< \infty$. Constants $C, \wt C>0$ 
denoted explicitly in the proofs may change value from line to line,
while constants such as $C_{\ref{assump:anticonc}}$ will be kept fixed throughout. 
The notation $f = O(N)$ 
means that there exists a constant $C>0$ (independent of $N$) such that $|f| \leq C N$. 
When we want to emphasize that the constant $C>0$ depends on some parameter 
$k$, then we write $C_k$, or with the above notation $O_k(N)$. 
\par
We use the standard notation of ceiling and floor: for $x \in \R$ we write $\lceil x\rceil  := \max\{ n\in \Z; n\geq x\}$ and $\lfloor x\rfloor  := \max\{ n\in \Z; n\leq x\}$, respectively. For $m,n \in \Z$ the notation $[m,n]$ is used to denote the discrete interval $\{m, m+1, \ldots, n\}$ and we use the shorthand $[n]$ to denote the discrete interval $[1,n]$.
\par
We identify $\ell^2(K) \simeq \ell^2_K := \{u \in \ell^2(\Z); \supp u \subset K\}$
and we will frequently identify $\ell^2_{[0,N-1]}\simeq \C^N$ 
so the norm $\| \cdot \|$ and Hermitian scalar product $\langle \cdot | \cdot \rangle$ on $\C^N$ 
will be the ones of $\ell^2_{[0,N-1]}$. When the Euclidean norm and inner products are considered over some specific discrete interval $I$ we write $\| \cdot\|_{\ell^2(I)}$ and $\langle \cdot \mid \cdot \rangle_{\ell^2(I)}$. The notation $\|\cdot\|_\infty$ is used to denote the supremum norm for a vector. For a matrix $A$ the notations $\|A\|$ and $\|A\|_{{\rm HS}}$ denote its operator norm, and Hilbert-Schmidt norms, respectively. For a vector $v\in \C^N$ we write $v^*\in(\C^N)^*$ for its natural dual with respect to the Hermitian scalar product, i.e.  
$v^*u = \langle u | v \rangle$ for any $v\in\C^N$.

For a $N\times N$ matrix $A$ we denote by 
	$t_1(A) \leq \dots \leq t_N(A)$ the eigenvalues of $\sqrt{A^*A}$. 
	The singular values 
	$s_1(A) \geq \cdots \geq s_N(A)$ of $A$ are then given by 
	\begin{equation}\label{qm24.0}
		s_{N-n+1}(A) = t_n(A), \quad n=1,\dots, N. 
	\end{equation}
For brevity, we sometimes write $s_{\max}(A)$ and $s_{\min}(A)$ for the maximum and the minimum singular values of $A$. We let $I_d$ denote the identity matrix, viewed as element in $\C^{d\times d}$.

The following standard conventions are followed:
\begin{equation}\label{p0.0}
	a \lor b \defeq \max\{a,b\}, ~a,b\in\R, \qquad \prod_1^0 a_j =1, \qquad \text{ and } \qquad \sum_1^0 a_j = 0.\notag
\end{equation}
We also use the Dirac notation
$\delta_{n,m} ={\bf I}(m=n)$. 
The notation $\Theta(\cdot)$ denotes the Heaviside function, that is $\Theta(x) =0$ when $x\leq 0$, and $\Theta(x)=1$ for $x>0$.
  For $z\in \C$ and $r>0$, we write $D(z,r)\subset \C$ 
  for the (open) disc of radius $r$ centered at $z$. For $\vep>0$ and a set $\cB\subset \C$,
   $\cB^\vep$ denotes the $\vep$-blow up of the set $\cB$, that is
   the Minkowski sum of the sets $\cB$ and  $D(0,\vep)$. The notation $\cB \dot{\cup} \cB'$ is 
   used to denote a disjoint union. For a set $D \subset \C$ we use both ${\rm cl}(D)$ and $\ol{D}$ to denote its closure.
  The complement of  an event $\cA$ is denoted $\cA^\complement$.
   
 \subsection*{Acknowledgements} We thank Elliot Paquette and Nick Trefethen for useful discussions 
 at the beginning of this project, and thank Nicholas Cook for comments on an early draft of this paper. 
 The research of AB was partially supported by DAE Project no.~RTI4001 via ICTS, 
 the Infosys Foundation via the Infosys-Chandrashekharan Virtual Centre 
 for Random Geometry, an Infosys--ICTS Excellence Grant, 
 a Start-up Research Grant (SRG/2019/001376) and a 
 MATRICS grant (MTR/2019/001105) from Science and Engineering Research Board. The
 research of MV was partially supported by a CNRS Momentum 2017 grant and by the Agence Nationale de la Recherche under the grant ANR-20-CE40-0017. 
The research of OZ was partially supported by an Israel Science Foundation grant \# 421/20
and by the European Research Council (ERC) under the European Union's Horizon 2020 research and innovation programme (grant agreement No.~692452).  Part of this work was carried out when AB and MV visited the Mathematics department of the Weizmann Institute of Science whose hospitality is gratefully acknowledged. 

\section{The Grushin Problems}
\label{sec-Grushin}
We begin by setting up, in some generality, a well-posed Grushin problem, 
based on \cite{Vo19c,HaSj08}, see also \cite{SjVo19a,SjVo19b}.  It, and its 
behavior under perturbations, will play a crucial role in our analysis. 
\par
Roughly speaking, the Grushin problem amounts to replacing an operator of 
interest by an enlarged bijective system. In the context of 
linear partial differential equations, the
study of such enlarged system of operators 
can be traced back to Grushin \cite{Gr},
where it was used to study hypoelliptic 
operators. In a different setting, such an enlarged system was used
by Sj\"ostrand 
\cite{Sj73}, whose notation we use.
It has also been quite useful in bifurcation 
theory, numerical analysis, and for treatments of spectral problems arising in 
electromagnetism and quantum mechanics. See the review paper \cite{SZ}.
\subsection{Grushin problem for the unperturbed operator}\label{sec:GrPr1}
Let $P$ be a  complex $N\times N$-matrix.
(In our application, we will often take $P=P_N-zI$ where $P_N$ is 
the (deterministic) 
Toeplitz 
matrix with symbol $p$ and $z$ is a \textit{random} parameter close to the
spectral curve $p(S^1)$. Then, all objects implicitly depend on $z$, 
and we supress this dependence in notation when not needed.)
Let 
 \begin{equation}\label{gp1}
	0\leq t_1^2 \leq \cdots \leq t_{N}^2 
\end{equation}
denote the eigenvalues of $P^*P$ with associated orthonormal basis 
of eigenvectors $e_1,\dots,e_{N}\in \C^N$. The spectra of $P^*P$ 
and $PP^*$ are equal and we can find an orthonormal basis 
$f_1,\dots,f_{N}\in \C^N$ of eigenvectors of $PP^*$ associated with the 
eigenvalues \eqref{gp1} such that 
 \begin{equation}\label{gp2}
	P^* f_i = t_i e_i, \quad Pe_i = t_i f_i, \quad i=1,\dots, N.
\end{equation}
Let $0 < \alpha \ll 1$ and let $M>0$ be the number of singular values $t_i
\in 
[0,\alpha]$, i.e. 
 \begin{equation}\label{gp2.1}
   0 \leq t_1 \leq \cdots \leq t_M \leq \alpha < t_{M+1}\leq \cdots\leq t_N.
\end{equation}
Let $\delta_i$, $1\leq i \leq M$, denote an orthonormal basis of $\C^M$.
Put 
\begin{equation}\label{gp5}
R_+:=\sum_{i=1}^M \delta_i\circ  e_i^*, \quad  R_-:=\sum_{i=1}^M f_i\circ\delta_i^*,
\end{equation}
Then the Grushin problem 
\begin{equation}\label{gp6}
\mathcal{P}: = \begin{pmatrix}
			P & R_- \\ R_+ & 0 \\
		\end{pmatrix} : \C^N\times \C^M \longrightarrow  \C^N\times \C^M 
\end{equation}
is bijective. To see this we take $(v,v_+)\in \C^N\times \C^M$ and proceed to solve
 \begin{equation}\label{gp7}
		\mathcal{P}
		 \begin{pmatrix} u \\ u_- 
		\end{pmatrix} =
		 \begin{pmatrix} v \\ v_+
		\end{pmatrix}.
\end{equation}
We write $u= \sum_1^{N} u(j) e_j$ and $v= \sum_1^{N} v(j) f_j$. Similarly, we express 
$u_-,v_+$ in the basis $\delta_1,\dots,\delta_M$. 
The relation \eqref{gp2} then shows 
that \eqref{gp7} is equivalent to 
\begin{equation*}
		\begin{cases}
			\sum_1^{N} t_i u(i) f_i + \sum_1^M u_-(j)f_j = \sum_1^{N} v(j) f_j, \\ 
			u(j) = v_+(j), \quad j =1, \dots, M,
		\end{cases}
\end{equation*}
which can be written as 
\begin{equation}\label{gp7.1}
		\begin{cases}
		 \qquad  \quad t_i u(i)   = 
		v(i), \qquad \qquad\qquad i=M+1,\ldots,N,  \\[1.5ex] 
			 \begin{pmatrix} t_i & 1 \\ 1 & 0 \\
			\end{pmatrix}
			 \begin{pmatrix} u(i) \\ u_-(i)\\
			\end{pmatrix}
			= 
			 \begin{pmatrix} v(i) \\ v_+(i)
			\end{pmatrix}, \quad i=1,\dots, M.
		\end{cases}
\end{equation}
Since 
\begin{equation*}
	 \begin{pmatrix} t_i & 1 \\ 1 & 0 \\
	\end{pmatrix}^{-1} 
	=
	 \begin{pmatrix} 0& 1 \\ 1 & -t_i  \\
	\end{pmatrix},
\end{equation*}
we see that 
\begin{equation}\label{gp7.2}
		\mathcal{P}^{-1}= :\mathcal{E} =
		\begin{pmatrix} 
		E  & E_+\\ 
		E_- & E_{-+}\\
		\end{pmatrix} 
\end{equation}
with
\begin{equation}\label{gp8}
\begin{split}
		&E = \sum_{M+1}^{N} \frac{1}{t_i} e_i \circ f_i, \quad 
	         E_+ = \sum_1^M e_i \circ \delta_i^*,  \quad
		E_- =\sum_1^M  \delta_i\circ f_i^*, \quad \text{ and } \quad
		 E_{-+} = - \sum_1^M t_j \delta_j\circ\delta_j^*.
\end{split}
\end{equation}
From \eqref{gp2.1} and \eqref{gp8} it follows that we have the following norm estimates
\begin{equation}\label{gp9}
	\|E(z) \| \leq \frac{1}{\alpha}, \quad \| E_{\pm } \| =1, \quad 
	\| E_{-+}\| \leq \alpha.
\end{equation}
Next, we recall a general fact on well-posed Grushin problems. 
\begin{lem}
  \label{lem-basicinverse}
	Let $\mathcal{H}$ be an $N$-dimensional complex Hilbert space, and 
	let $N\geq M>0$. Suppose that 	
	 \begin{equation*}
		\mathcal{P} = \begin{pmatrix} 
		P & R_- \\ 
		R_+ & 0\\
		\end{pmatrix} :  \mathcal{H} \times \C^M \longrightarrow  \mathcal{H} \times \C^M
	\end{equation*}
	is a bijective matrix of linear operators, with inverse 
	 \begin{equation*}
		\mathcal{E}
		=
		 \begin{pmatrix} 
		E  & E_+\\ 
		E_-& E_{-+}\\
		\end{pmatrix}.
	\end{equation*}
	Then, $E_+:  \mathcal{N}(E_{-+}) \to \mathcal{N}(P)$ is bijective with inverse 
	$R_+ \!\upharpoonright_{\mathcal{N}(P)}$, and 
	$E_-^*: \mathcal{N}(E_{-+}^*) \to \mathcal{N}(P^*)$ is bijective with inverse 
	$R_-^* \!\upharpoonright_{\mathcal{N}(P^*)}$.
\end{lem}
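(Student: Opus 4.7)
The plan is the following. The hypothesis that $\mathcal{P}$ and $\mathcal{E}$ are inverse to each other unpacks into eight block identities, of which I will only need four. From $\mathcal{P}\mathcal{E} = I_{\mathcal{H}\oplus\C^M}$ I would extract
\begin{equation*}
P E_+ + R_- E_{-+} = 0, \qquad R_+ E_+ = I_M,
\end{equation*}
and from $\mathcal{E}\mathcal{P} = I_{\mathcal{H}\oplus\C^M}$ I would extract
\begin{equation*}
E P + E_+ R_+ = I_{\mathcal{H}}, \qquad E_- P + E_{-+} R_+ = 0.
\end{equation*}

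First I would check that the two maps $E_+$ and $R_+$ land in the correct subspaces. If $u_+\in \mathcal{N}(E_{-+})$, then $P(E_+ u_+) = -R_- E_{-+} u_+ = 0$, so $E_+$ maps $\mathcal{N}(E_{-+})$ into $\mathcal{N}(P)$. Conversely, if $u\in \mathcal{N}(P)$, then $E_{-+}(R_+ u) = -E_- P u = 0$, so $R_+$ maps $\mathcal{N}(P)$ into $\mathcal{N}(E_{-+})$. This step uses the $(2,1)$-block identity from $\mathcal{E}\mathcal{P}$ (not the one from $\mathcal{P}\mathcal{E}$), and is the only place where one must be a bit careful about which of the eight identities is being invoked.

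Next I would verify that these two restricted maps are mutual inverses. The identity $R_+E_+ = I_M$ holds on all of $\C^M$, hence in particular on $\mathcal{N}(E_{-+})$, giving $R_+\circ E_+ = \mathrm{id}$ on $\mathcal{N}(E_{-+})$. Applying $EP + E_+R_+ = I_{\mathcal{H}}$ to any $u\in\mathcal{N}(P)$ yields $u = E_+(R_+ u)$, whence $E_+\circ R_+ = \mathrm{id}$ on $\mathcal{N}(P)$. Combined with the target-space check above, this proves the first bijectivity statement.

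For the adjoint statement I would simply invoke the first one for the Grushin problem
\begin{equation*}
\mathcal{P}^* = \begin{pmatrix} P^* & R_+^* \\ R_-^* & 0 \end{pmatrix}, \qquad (\mathcal{P}^*)^{-1} = \mathcal{E}^* = \begin{pmatrix} E^* & E_-^* \\ E_+^* & E_{-+}^* \end{pmatrix}.
\end{equation*}
Under the relabelling $(P,R_-,R_+,E_+,E_{-+})\leadsto (P^*,R_+^*,R_-^*,E_-^*,E_{-+}^*)$, the first part of the lemma applied to this new Grushin problem gives directly that $E_-^*:\mathcal{N}(E_{-+}^*)\to\mathcal{N}(P^*)$ is bijective with inverse $R_-^*\!\upharpoonright_{\mathcal{N}(P^*)}$, as required. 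There is no substantive obstacle in any of these steps; the whole lemma is bookkeeping on top of a well-posed Grushin problem, and the only thing to watch is the direction of the block identity used to land $R_+$ into $\mathcal{N}(E_{-+})$.
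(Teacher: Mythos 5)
Your proof is correct and follows essentially the same route as the paper's: the same four block identities from $\mathcal{P}\mathcal{E}=I$ and $\mathcal{E}\mathcal{P}=I$ are used to first establish that $E_+$ and $R_+$ map $\mathcal{N}(E_{-+})$ and $\mathcal{N}(P)$ into one another, and then that they are mutual inverses there; the adjoint statement is handled exactly as the paper does, by relabelling the adjoint Grushin problem.
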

\begin{proof}
	From $\mathcal{P}\mathcal{E}=1$, we get that $PE_+ + R_-E_{-+} =0$ and so 
	 \begin{equation}\label{gp:lem1.1}
		E_+:\mathcal{N}(E_{-+}) \to \mathcal{N}(P).
	\end{equation}
	Similarly, we get from $\mathcal{E} \cP=1$ the equation $E_- P + E_{-+}R_+ =0$, and hence
	 \begin{equation}\label{gp:lem1.2}
		R_+:  \mathcal{N}(P) \to \mathcal{N}(E_{-+}).
	\end{equation}
	The identity $EP + E_+R_+=1$ yields that $E_+R_+=1$ on $\mathcal{N}(P)$, 
	which, together with $R_+E_+=1$, shows that \eqref{gp:lem1.1}, \eqref{gp:lem1.2}, are 
	bijective and inverser to each other. 
	The proof of the second claim is similar, one can follow the same arguments applied to 
	$\mathcal{P}^*\mathcal{E}^*=\mathcal{E}^* \cP^*=1$.
\end{proof}
\subsection{Grushin problem for the perturbed operator}
Now we turn to the perturbed operator 
\begin{equation}\label{gpp1}
	P^\delta:=P+\delta Q, \quad 0 \leq \delta \ll 1.
\end{equation}
where $Q$ is a complex $N\times N$-matrix (eventually, random). 
Let $R_{\pm}$ be 
as in \eqref{gp5}, and put 
\begin{equation}\label{gpp2}
\mathcal{P}^{\delta}: = \begin{pmatrix}
			P^{\delta} & R_- \\ R_+ & 0 \\
		\end{pmatrix} : \C^N\times \C^M \longrightarrow  \C^N\times \C^M 
\end{equation}
with $\mathcal{P}=\mathcal{P}^0$. Applying $\mathcal{E}$ (see \eqref{gp7.2}) from
the right to \eqref{gpp2} yields 
\begin{equation}\label{gpp3}
  \mathcal{P}^{\delta}\mathcal{E} = I_{N+M} + 
\begin{pmatrix}
			\delta Q E &  \delta Q E_+\\ 0 & 0 
		\end{pmatrix}. 
\end{equation}
Suppose that $(I+\delta Q E)$ is invertible. It is then 
straightforward to check that $\mathcal{P}^\delta$ is invertible, with
inverse
\begin{equation}\label{gpp5a}
	(\mathcal{P}^{\delta})^{-1} 
=: \mathcal{E}^{\delta} = \begin{pmatrix} 
		E^{\delta}  & E^{\delta}_+\\ 
		E^{\delta}_-& E^{\delta}_{-+}
		\end{pmatrix},
\end{equation}
where
\begin{equation}
  \label{eq-Edelta}
E^\delta = E (I+ \delta Q E)^{-1}, \quad E_-^\delta= E_- (I+ \delta Q E)^{-1},
\end{equation}
\begin{equation}
\label{eq-march1a}
E^\delta_{-+}= E_{-+} -E_- (I+\delta Q E)^{-1} \delta Q E_+,
\end{equation} 
and
\begin{equation}
\label{eq-march1b}
E^\delta_+= E_+-E(I+\delta Q E)^{-1}\delta Q E_+.
\end{equation}
We note that if one takes $P=P_N-zI_N$ with $z$ an eigenvalue of $P_N+\delta
Q$, then Lemma \ref{lem-basicinverse}  applied to $P^\delta$ gives a
convenient description of the null-space of $P$, which is precisely 
the eigenspace of $P_N+\delta Q$ corresponding to the eigenvalue $z$. 
This observation  will be a crucial part of our analysis, see \eqref{eq:algeb-id} below.
\begin{rem}
 Under the additional assumption that
\begin{equation}\label{gpp4}
	2\delta \|Q\| \alpha^{-1} \leq 1,
\end{equation}
which will occur in our setup of $Q$ as in Assumption 
\ref{assump:mom} if $\delta=N^{-\gamma}$ with $\gamma>3/2$ 
(and with $\alpha\,\asymp\, N^{-1}$),
we obtain by a Neumann series argument that
\begin{equation}\label{gpp5}
	\mathcal{E}^{\delta}= \begin{pmatrix} 
		E^{\delta}  & E^{\delta}_+\\ 
		E^{\delta}_-& E^{\delta}_{-+}
		\end{pmatrix} 
	= \mathcal{E} + \sum_{n=1}^{\infty}(-\delta)^n 
	\begin{pmatrix}
			E(Q E)^n &   (EQ )^{n}E_+\\  E_-(Q  E)^n& E_-(Q E)^{n-1}Q E_+ \\
	\end{pmatrix},
\end{equation}
where by \eqref{gpp4}, \eqref{gp9}, 
 \begin{equation}\label{gpp6}
 \begin{split}
		&\| E^{\delta} \| = \|  E( 1+ \delta Q E)^{-1} \| \leq 2 \|E\| \leq 2 \alpha^{-1}, \\
		&\| E_+^{\delta} \| = \|  ( 1+ \delta Q E)^{-1}E_+ \| \leq 2 \|E_+\| \leq 2,  \\
		&\| E_-^{\delta} \| = \| E_- ( 1+ \delta Q E)^{-1} \| \leq 2\|E_-\| \leq 2,  \\
		&\| E_{-+}^{\delta} -E_{-+}\| = 
		\| E_- ( 1+ \delta Q E)^{-1}\delta Q E_+ \| \leq 2 \|\delta Q  \| \leq  \alpha.  \\
\end{split}
\end{equation}
In particular,  in that case,
\begin{equation}\label{gpp7}
	E_+^{\delta} = E_+ - \delta Q  E( 1 + O(\delta \| Q \| \alpha^{-1} ) )E_+
\end{equation}
and 
\begin{equation}\label{gpp8}
	E_{-+}^{\delta} = E_{-+} - \delta E_-( 1 + O(\delta \| Q \| \alpha^{-1} )) Q E_+.
\end{equation}
\end{rem}
\section{Structure of the proof of Theorem \ref{thm:main}}
\label{res}
A key ingredient for the proof Theorem \ref{thm:main} is Theorem \ref{theo-location}. The proof of the latter result splits into two parts:~In the first part we show that all eigenvalues must be separated from $p(S^1)$ by a distance of the order $\log N/N$. At a very high level it involves an expansion of the determinant of $P_{N, \gamma}^Q-zI_N$, with $z \in \C$, identifying the dominant term in that expansion, and showing that the dominant cannot be equal to zero (with probability approaching one) when $z$ is in the vicinity of the spectral curve. We refer the reader to Section \ref{sec-separation}-\ref{sec:thm-sep-spec-curve} for further details on these steps. The second part of Theorem \ref{theo-location} requires us to show that most of the eigenvalues must be within a distance $O(\log N/N)$ from spectral curve, again with probability approaching one. This is achieved by an application of Jensen's formula together with upper and lower bounds on the log-potential of $L_N$ (see \eqref{eq:L-N}). See Section \ref{sec:bulk-eig} for details.
\par
In the remainder of this section we describe the structure of the proof
of Theorem \ref{thm:main}, 
taking for granted Theorem \ref{theo-location} and various 
technical estimates. 
The proof of Theorem \ref{thm:main} 
begins with the Grushin problem 
for $P^\delta_z=P_{N,\gamma}^Q-zI$, see \eqref{gpp5a}, 
for $\delta=N^{-\gamma}$,
$z$ which is
roughly an eigenvalue, and $M=|{\rm ind}_{p(S^1)}(z)|$ (this will lead to 
$t_{M+1}\, \gtrsim \, \log N/N$ and
$\alpha$ bounded below by a constant multiple  of $\log N/N$, see Proposition \ref{prop:SmallSG}).
To keep track of the dependence on $z$, throughout this section we write $E(z), E_+(z)$, etc. 
To 
relate the null-space of $P^\delta_z$ with the null space of $E^{\delta}_{-+}(z)$ we will use 
Lemma \ref{lem-basicinverse} in an indirect manner: As in its proof note that from \eqref{gpp2} 
and \eqref{gpp5a},
\begin{equation}\label{eq:algeb-id}
E^\delta(z)P^\delta_z+E_+^\delta(z)R_+(z)=I \quad \text{ and } \quad E_-^\delta(z) P_z^\delta + E_{-+}^\delta(z) R_+(z) =0.
\end{equation}
If $z=\hat z$ were an eigenvalue of $P_{N,\gamma}^Q$ with corresponding normalized eigenvector
$v$ 
then, with notation
as in Section \ref{sec-Grushin} and recalling the definition of $E_+^\delta(z)$,
we would obtain from \eqref{eq:algeb-id} that
since $\{e_i(\hat z)\}$ forms an orthonormal basis of $\C^N$, 
\begin{eqnarray}
\sum\nolimits_{i=M+1}^N (e_i(\hat z)^* v) \cdot e_i(\hat z)&= &  (I-E_+(\hat z)R_+(\hat z)) v\nonumber\\
& =&(I-E_+^\delta(\hat z)R_+(\hat z)) v - E(\hat z)(I+\delta QE(\hat z))^{-1} \delta QE_+(\hat z) R_+(\hat z)v \nonumber\\
&=&E(\hat z) (I+\delta QE(\hat z))^{-1} \delta QE_+(\hat z) R_+(\hat z)v. \label{eq:ef-to-pm-1}
\end{eqnarray}
Consider first the case where $\gamma$ is large ($\gamma>3/2$ will do). Since 
$\|Q\|=O(N^{1/2+\upepsilon})$, for any $\upepsilon >0$, with high probability, we obtain that $N^{-\gamma} \|Q\|\alpha^{-1}=o(1)$ and therefore \eqref{gpp4} holds. 
Using then \eqref{gpp7}-\eqref{gpp8}, 
the projection of $v$ on $\mbox{\rm span}(e_i(z), i\geq M+1)$ is negligible,
which yields 
the first
part of Theorem \ref{thm:main}.

To see the second part, still in the case of
large $\gamma$ (here we will need $\gamma >2$) and $z=\hat z$, we obtain from \eqref{eq:algeb-id} that 
\begin{eqnarray}\label{eq:coeff-eM}
0 &=& - E_{-+}^\delta(\hat z) R_+(\hat z) v 
 =
 - E_{-+}(\hat z)R_+(\hat z) v + \delta E_-(\hat z) Q E_+(\hat z) R_+(\hat z) x \\
 &&-
 \delta^2 E_-(\hat z) (I+\delta Q E(\hat z))^{-1} Q E(\hat z) Q E_+(\hat z) R_+(\hat z) v, 
 \nonumber
\end{eqnarray}
where we also have used the resolvent expansion. By the same reasoning as above, the third term in \eqref{eq:coeff-eM}
turns out to be of order
$o(\delta)$, hence negligible compared to the first two terms. Therefore, recalling the definitions of $E_\pm(z), E_{-+}(z)$, and $R_+(z)$ we obtain that,
with $a_j= (e_j(\hat z)^* v)$,
\begin{multline}
- E_{-+}(\hat z)R_+(\hat z) v + \delta E_-(\hat z) Q E_+(\hat z) R_+(\hat z) v\\
 = \sum_{i=1}^M a_i t_i \delta_i + \delta \sum_{i=1}^M \left[\sum_{j=1}^M 
 a_j  (f_i(\hat z)^* Q e_j(\hat z))\right] \delta_i
 =o(\delta). \label{eq:coeff-eM-dom}
\end{multline}
Now, again by Proposition \ref{prop:SmallSG}, there exists $M>M_0\geq 0$ so that $t_j$ decay exponentially in $N$ for $j\in [M_0]$. Thus, 
we obtain from \eqref{eq:coeff-eM-dom}
that for $\gamma$ large, 
\[  \left\|\sum_{i=1}^{M_0}
\left[\sum_{j=1}^M a_j \cdot (f_i(\hat z)^* Q e_j(\hat z))\right] \delta_i\right\|=o(1).\]
Assume now that $a_j=o(1)$ for $j=M_0+1,\ldots,M$. Using a basic chaining argument we would then conclude that
\[
\left\|\sum_{i=1}^{M_0}
\left[\sum_{j=1}^{M_0} a_j \cdot (f_i(\hat z)^* Q e_j(\hat z))\right] \delta_i\right\|= \| a^{{\sf T}} A\|=o(1),\]
where $A$ is the $M_0\times M_0$ matrix with entries  $A_{i,j}=  f_i(\hat z)^* Q e_j(\hat z)$. If $\hat z$ were deterministic, 
we would have that
the smallest singular value of $A$ is $o(1)$ 
and this would lead to a contradiction.
Since $\hat z$ is actually  random, we will proceed by using the fact that the functions $f_i,e_i$ are localized, which makes the minimal singular value of $A$ continuous in $z$.  

When $\gamma\in (1,2]$, we cannot use in \eqref{eq:ef-to-pm-1} and \eqref{eq:coeff-eM} an a-priori bound of the form \eqref{gpp4}. Instead, we use a lower bound on the minimum singular
value of  $I+\delta Q E$, see \eqref{lem-10.7}, and the resolvent expansion to replace $(I+\delta QE)^{-1}$ by
$\sum_{i=0}^L (-\delta QE)^i + (-\delta QE)^{L+1}(I-\delta QE)^{-1}$ for an appropriate $L$. Proposition \ref{prop:bd-terms-resolvent-exp} and
the a-priori bounds on the minimal singular value of $I+\delta QE$ suffice to control the sum, and again we use a net in order to work with deterministic $z$'s.

To carry out this program necessitates a fair amount of auxillary results. We need a-priori estimates for the singular values and associated quasimodes of $P_N -zI$ for $z$ close to the location
of eigenvalues of $P_{N,\gamma}^Q$. So we begin in Section \ref{Sec:AnaTO} with a fairly detailed analysis of the singular values and vectors of $P_N-zI$ (for general $z$), and relate the 
singular values to the winding number of $p$ around $z$ (we will stay away of  the bad set $\cB_{p}$ of Definition \ref{def-badintro}, so that the winding number is locally constant and
the distance of $z$ from $p(S^1)$ is controlled).  In Sections 
\ref{sec-separation}-\ref{sec:bulk-eig} we provide precise estimates for the range of $z$s that we need to consider, that is, for the location of the 
eigenvalues of $P_{N,\gamma}^Q$.  Those precise
estimatess are then used in Section \ref{sec:QM} in constructing 
 the quasimodes of $(P_N-zI)(P_N-zI)^*$, in terms of the roots of the symbol $p-z$. These quasimodes are then used in Section \ref{sec-sing}, where we show that the 
eigenvectors are appropriate linear combinations of the localized quasimodes.

\subsection{The case $\gamma<1$ - discussion and speculations}\label{sec:gam-l-1} 
We end this section with some brief remarks concerning $\gamma<1$.
In that regime, the single entries of $\delta Q=
N^{-\gamma} Q$ are larger than $N^{-1}$,
and in particular are asymptotically larger than the distance of the eigenvalue
from the spectral curve. In particular, when writing the Grushin problem
\eqref{gpp2}, one is forced to take $M$ growing with $N$ (in fact, essentially
$M\, \asymp \, N^{2(1-\gamma)}$; This is forced by the requirement
that $\|E_-(\delta Q) E_+\|< M/N$). The resulting eigenvector of $P^\delta$
are expected to 
be a combination of the $M$ bottom quasimodes, with random coefficients.
Since the quasimodes oscillate at scale $N/M=N^{2\gamma-1}$, 
the combination is expected to converge to a $\gamma$-dependent
Gaussian process with 
correlation length of that scale. The simulations in Figure \ref{fig1a} are in 
line with this picture, although proving it require ideas going beyond the methods of this paper.

\section{Analysis of Toeplitz matrices}\label{Sec:AnaTO}
In this section we begin with presenting some fundamental results about the calculus of 
Toeplitz matrices. In particular we will focus on symbols 
given by Laurent polynomials 
and we will discuss the quantization procedure which maps such a 
symbol to an operator acting on functions on $\Z$, $\ell^2(\Z)$, $\ell^2([0,\infty[)$, and  
$\ell^2(\Z/N\Z)$. 
\subsection{Toeplitz matrices}
We begin by recalling some well known facts about Toeplitz matrices, see for instance 
\cite{BoGr05} and the references therein. 
\par
For a $u\in \ell^2(\Z)$ we define the Fourier transform by 
\begin{equation}\label{a1.0}
	\mathcal{F}u(\xi) = \sum_{n\in\Z} u_n \e^{-in\xi}, \quad \xi \in \R/ 2\pi \Z,
\end{equation}
so that $\mathcal{F}: \ell^2(Z)\to L^2(\R/ 2\pi \Z, \frac{d\xi}{2\pi})$ is unitary, 
and 
\begin{equation}\label{a1.1}
	\mathcal{F}^{-1}(f)(n) = f_n=  \frac{1}{2\pi} \int_{\R/ 2\pi \Z}f(\xi )\e^{in\xi} d\xi. 
\end{equation}
\par
We consider the symbol class of continuous functions on the unit circle $S^1$ 
with absolutely convergent Fourier series called the Wiener algebra 
\begin{equation}\label{a1}
	W:= \{ a \in C(S^1); ~ \sum_{n\in\Z} |a_n| < + \infty \}. 
\end{equation}

Using the Fourier transform we  can represent a symbol $p \in W$ by 
\begin{equation}\label{a2}
	p(\e^{i\xi}) = \sum_{n\in \Z} p_n \, \e^{-in\xi}, \quad \xi\in \R/ 2\pi \Z. 
\end{equation}
We can \textit{quantize}
the symbol $p\in W$ by 
\begin{equation}\label{a5}
	\mathrm{Op}(p) \defeq \mathcal{F}^{-1}\, p\,\mathcal{F}.
\end{equation}
Using Parseval's equality it is easy to see that $p(\tau): \ell^2(\Z) \to \ell^2(\Z)$ is a bounded 
operator; it is also straight forward to check that
the $\ell^2$ adjoint of $\mathrm{Op}(p)$ is given by 
\begin{equation}\label{a6.1}
	\mathrm{Op}(p)^*= \mathrm{Op}(\overline{\widetilde{p}}), \quad \widetilde{p}(\zeta)  \defeq 
	 {p}(1/\zeta).
\end{equation}
Let $\tau$ denote the right shift operator $(\tau\psi)(n) = \psi(n-1)$ on $\ell^2(\Z)$ or more 
generally on functions $\psi : \Z \to \C$. Notice that $\tau \e^{i n \theta} = \e^{-i\theta} \e^{i n \theta} $, so 
the symbol of $\tau$ is given by $\e^{-i\xi} = 1/\zeta$, $\zeta \in S^1$. So we can express \eqref{a5} 
as well as 
\begin{equation}\label{a6}
	\mathrm{Op}(p)= \sum_{n\in \Z} p_n \, \tau^{n}.
\end{equation}
which can be seen to act more generally on function $\psi : \Z \to \C$. 
\par
Let $K\subset \Z$ be a finite subset or an infinite interval. We identify 
$\ell^2(K) \simeq \ell^2_K := \{u \in \ell^2(\Z); \supp u \subset K\}$, and 
we define 
\begin{equation}\label{ev0}
 	P_K \defeq P_K(p)\defeq \mathbf{1}_K \mathrm{Op}(p) \mathbf{1}_K : \ell^2_K \to \ell^2_K. 
\end{equation}
When $K$ is finite we call $P_K$ a \textit{Toeplitz matrix} and when $K$ is infinite, 
for example $K=]-\infty,0]$ or $K=[0,\infty[$ we call $P_K$ an \textit{infinite Toeplitz matrix}. 
For simplicity we will sometimes write $P_N=P_N(p)$ when $K=[0, N-1]$ and $P(p)$ 
when $K=[0,\infty[$. 
\\
\par
Notice that the matrix elements of $P_N$ are given by 
\begin{equation}\label{a7.4}
 	P_N(\nu,\mu) = p_{\nu-\mu}, \quad \nu,\mu \in [0,N-1].
\end{equation}
A counter part to the Toeplitz matrices are the \textit{Hankel matrices}. Let $R$
be the reflection  operator defined by $(R\psi)(n) = \psi(-n)$ on functions 
$\psi: \Z\to \C$. Let 
\begin{equation}
  \label{eq-defchi} \chi_n(\zeta) = \zeta^n, \qquad  \zeta\in S^1, n\in \Z. 
\end{equation}
For $a\in W$ we define the Hankel matrix of $a$ by 
\begin{equation}\label{a7.2}
 	H(a) \defeq \sum_{1}^{\infty} a_n H(\chi_n), \quad H(\chi_n) 
	= \mathbf{1}_{[0,\infty[} \tau^{n-1}R \mathbf{1}_{[0,\infty[}.
\end{equation}
Represented as infinite matrices we see that Toeplitz matrices carry the 
same entry on the diagonals, whereas Hankel matrices carry the same 
entry on the anti-diagonals, for instance 
\begin{equation}\label{a7.3}
 	T(a) = \begin{pmatrix}
		a_0 & a_{-1} & a_{-2} & \dots \\
		a_{1} & a_0 & a_{-1} & \dots \\ 
		a_2 & a_1 & a_0 & \dots \\
		\dots & \dots & \dots & \dots \\
	\end{pmatrix}, 
	\quad 
	H(a) = \begin{pmatrix}
		a_1 & a_{2} & a_{3} & \dots \\
		a_{2} & a_3 & \dots & \dots \\ 
		a_3 & \dots & \dots & \dots \\
		\dots & \dots & \dots & \dots \\
	\end{pmatrix}.
\end{equation}
Note that $H(\chi_n)=0$ for $n\leq 0$, and that $H(\chi_n)$ has rank $n$ for $n\geq 1$.
\\
\par
The quantization procedures $p \mapsto \mathrm{Op}(p)$ and $p \mapsto P_K(p)$ are clearly linear. 
Composition of such operators is however more complicated. Given $a,b \in W$ we have that 
\begin{equation}\label{a7.1}
 	\mathrm{Op}(ab) = \mathrm{Op}(a) \mathrm{Op}(b).
\end{equation}
However, the composition of two Toeplitz matrices is in general not a Toeplitz matrix but only a Toeplitz 
matrix modulo two products of Hankel matrices, see for instance \cite[Proposition 3.10]{BoGr05}. 
Given $a,b\in W$ we have,
with $\Pi_N = \mathbf{1}_{[0,N-1]}$ and $\widetilde{\Pi}_N = \Pi_N H(\chi_N) \Pi_N$,  that 
\begin{equation}\label{a7}
 	T_N( ab) = T_N(a)T_N(b) + \Pi_N H(a) H(\widetilde{b}) \Pi_N 
	+ \widetilde{\Pi}_N H(\widetilde{a}) H(b)\widetilde{\Pi}_N.
\end{equation}
In this paper we mainly consider the operator 
 \begin{equation}\label{int0}
	\mathrm{Op}(p)= \sum_{-N_-}^{N_+} a_j \tau^j, \quad a_{-N_-}, a_{-N_-+1},\dots,a_{N_+}\in \C, 
	~a_{\pm N_\pm}\neq 0
\end{equation}
acting on $\ell^2(\Z)$ or, more generally, on functions $\psi : \Z \to \C$, whose coefficients 
satisfy Assumption \ref{assu-symbol}, and with symbol $p$ given by 
 \begin{equation}\label{int1}
	\C\backslash\{0\} \ni \zeta \mapsto p(\zeta) = \sum_{-N_-}^{N_+} a_j \zeta^{-j}.
\end{equation}
\subsection{Circulant matrices}
In this section we discuss circulant matrices,  which are
close relatives
of Toeplitz matrices that play an important role in our analysis.

For $N\geq 1$ and $p\in W$ we consider $\mathrm{Op}(p)$ \eqref{a6} acting on $\ell^2(\Z/ N\Z)$ which 
we identify with the space of $N$-periodic functions on $\Z$. To distinguish this case from 
the other operators considered in this paper, we write 
\begin{equation}\label{c1}
 		P_{\Z/N\Z} \defeq \mathrm{Op}(p): \ell^2(\Z/ N\Z) \to \ell^2(\Z/ N\Z).
\end{equation}
For $\nu \in \Z/N\Z$ 
we get that 
\begin{equation*}
 		(P_{\Z/N\Z} u)(\nu) = \sum_{n\in\Z} u(\nu - n) 
		= \sum_{\mu \in \Z/N\Z} \sum_{k\in \Z} p_{\nu - \mu + kN}\, u(\mu),
\end{equation*}
so the matrix elements of $P_{\Z/N\Z} $ are given by 
\begin{equation}\label{c3}
 		P_{\Z/N\Z}(\nu,\mu) =  \sum_{k\in \Z} p_{\nu - \mu + kN}, \quad \mu,\nu \in \Z/N\Z.
\end{equation}
Identifying $[0,N-1] \simeq \Z/N\Z$, we see by \eqref{a7.4} that for $p$ as in \eqref{int1}, 
\begin{equation}\label{c5}
 		P_{\Z/N\Z}(\nu,\mu) - P_{N}(\nu,\mu) =  p_{\nu - \mu + N} +  p_{\nu - \mu- N} 
		\defeq B(\nu,\mu), \quad 
		\mu,\nu \in [0,N-1], 
\end{equation}
where $B$ is of rank $N_+ + N_-$. Using the discrete Fourier transform, with 
$$\mathcal{F}_N = N^{-1/2} (\exp(-2\pi i n m/ N))_{0\leq n,m \leq N-1}$$
 we may 
represent $P_{\Z/N\Z}$ by 
\begin{equation}\label{c4}
 		P_{\Z/N\Z} = \mathcal{F}_N^* \,\diag ( p(e^{2\pi i l/N});0 \leq l \leq N-1) \, \mathcal{F}_N,
\end{equation}
which immediately shows that the spectrum of $P_{\Z/N\Z} $ is 
given by $\sigma(P_{\Z/N\Z} ) = \{p(e^{2\pi i l/N});0 \leq l \leq N-1) \}$. 
\subsection{Roots of a Laurent polynomial}
In this section we discuss the roots 
of the Laurent polynomial 
 \begin{equation}\label{lp1}
	p(\zeta)= \sum_{-\mathfrak{N}_-}^{\mathfrak{N}_+} a_j \zeta^{-j}, \quad -\mathfrak{N}_- \leq \mathfrak{N}_+, 
	\quad  a_{-\mathfrak{N}_-}, a_{-\mathfrak{N}_-+1},\dots,a_{\mathfrak{N}_+}\in \C, 
	\quad a_{\pm\mathfrak{N}_\pm }\neq 0.
\end{equation}
It will be convenient to extend the symbol $p$ to a holomorphic map 
$\widetilde{p}: \widehat{\C} \to \widehat{\C}$ 
on the extended complex plane $\widehat{\C} \defeq \C \,\dot{\cup}\, \{ \infty\}$. Here $\widehat{\C}$ is 
a complex manifold, equipped with the topology given by $U\in\widehat{\C}$ open if either $U\subset \C$ open 
or $\widehat{\C}\backslash U \subset \C$ is compact, and with the equivalence class of holomorphic 
atlases represented by $\{ \mathrm{id}:\C\to\C, ~\phi: \widehat{\C} \backslash\{0\} \to \C\}$ where 
$\phi(\infty) = 0$ and $\phi(z) =1/z$ for $z\notin\{ 0,\infty\}$. In what follows we will drop the tilde notation and 
denote by $p$ also the extension.

We exclude the case of constant Laurent polynomials, i.e. we assume that 
 \begin{equation}\label{lp1.1}
	 \mathfrak{N}_{\pm} >0 \text{ when } \mathfrak{N}_{\mp} =0.
\end{equation}
We say that $\zeta \in  \C \, \dot{\cup} \{\infty\}$ is a root of \eqref{lp1} when 
 \begin{equation}\label{p0}
	p(\zeta)= \sum_{-\mathfrak{N}_-}^{\mathfrak{N}_+} a_j \zeta^{-j} =0,
\end{equation}
and, by using the change of coordinates $\zeta=1/\omega$, we have that $\zeta = \infty $ 
is a root if $\omega=0$ is a root of $p(1/\omega)$. We keep in mind that in the sequel we will 
be interested in symbols whose coefficients $a_j$ may depend on a spectral parameter $z\in\C$. 
For instance the coefficient of order zero of the symbol $p(\zeta)-z$ is $a_0 -z$, and all the other 
coefficients remain independent of $z$.

\begin{lem}\label{lem:root1} The Laurent polynomial \eqref{lp1}, assuming \eqref{lp1.1}, has 
$\mathfrak{N}=\mathfrak{N}_+\lor 0 + \mathfrak{N}_-\lor 0$ roots 
in $ \C\, \dot{\cup} \,\{ \infty\} $.
Assume that $0\notin p(S^1)$, and let
   \begin{equation}\label{at2}
	\zeta_1^+,\dots,\zeta_{m_+}^+ \text{ denote the roots in } D(0,1),
\end{equation}
and
\begin{equation}\label{at3}
	\zeta_1^-,\dots,\zeta_{m_-}^- \text{ denote the roots in } (\C\,\dot{\cup} \,\{\infty\} )\backslash \overline{D(0,1)},
\end{equation}
where $m_+ + m_- = \mathfrak{N}$ (working with the convention that when $m_+=0$ resp. $m_-=0$, 
we have no roots \eqref{at2} resp. \eqref{at3}).
We can distinguish the following three cases:

\par
\textbf{Case 1} If $\mathfrak{N}_{\pm} \geq 0$, then we have that 
\begin{equation}\label{at4.2.0}
0< |\zeta_1^+|\leq \cdots \leq |\zeta_{m_+}^+| < 1 < |\zeta_1^-| \leq \cdots \leq |\zeta_{m_-}^-| < +\infty
\end{equation}
\par
\textbf{Case 2} When $\mathfrak{N}_+ <0$, then we have 
that $\infty$ is not a root but $0$ is a root of \eqref{p0} of multiplicity 
$m_0\in[1\lor|\mathfrak{N}_+|,|\mathfrak{N}_-|]$ . In this case, we have that
\begin{equation}\label{at4.2}
	0 =|\zeta_1^+| = \cdots = |\zeta_{m_0}^+| < |\zeta_{m_0 +1}^+|\leq \cdots \leq |\zeta_{m_+}^+|
	 < 1 < |\zeta_1^-| \leq \cdots \leq |\zeta_{m_-}^-| < +\infty.
\end{equation}
\par
\textbf{Case 3} When $\mathfrak{N}_- <0$, then $0$ is no root, but $\infty$ is a root of multiplicity 
$m_\infty\in[1\lor|\mathfrak{N}_-|,|\mathfrak{N}_+|],$ and we may order the roots as 
\begin{equation}\label{at4.1}
	0< |\zeta_1^+|\leq \cdots \leq |\zeta_{m_+}^+| < 1 < |\zeta_1^-| \leq \cdots \leq |\zeta^-_{m_--m_{\infty}}| < +\infty
=|\zeta_{m_- - m_{\infty}+1}^-| = \dots = |\zeta_{m_-}^-|.
\end{equation}
\end{lem}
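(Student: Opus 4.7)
The strategy is to convert the Laurent polynomial into an ordinary polynomial and then apply the fundamental theorem of algebra. I would set
\[
Q(\zeta) := \zeta^{\mathfrak{N}_+} p(\zeta) = \sum_{k=0}^{\mathfrak{N}_+ + \mathfrak{N}_-} a_{\mathfrak{N}_+ - k}\, \zeta^k.
\]
Since the hypothesis $-\mathfrak{N}_- \leq \mathfrak{N}_+$ ensures $\mathfrak{N}_+ + \mathfrak{N}_- \geq 0$, $Q$ is a genuine polynomial of degree exactly $\mathfrak{N}_+ + \mathfrak{N}_-$ with leading coefficient $a_{-\mathfrak{N}_-}\neq 0$ and constant term $Q(0)=a_{\mathfrak{N}_+}\neq 0$. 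Hence by the fundamental theorem of algebra, $Q$ has exactly $\mathfrak{N}_+ + \mathfrak{N}_-$ roots in $\C\setminus\{0\}$ (counted with multiplicity), and these are precisely the roots of $p$ in $\C\setminus\{0\}$.

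To account for the ``edge'' roots at $0$ and $\infty$, I would split into the three cases of the statement, first noting that $\mathfrak{N}_+ + \mathfrak{N}_- \geq 0$ forces at most one of $\mathfrak{N}_\pm$ to be negative. In Case 1 ($\mathfrak{N}_\pm \geq 0$), neither $0$ nor $\infty$ is a root: when $\mathfrak{N}_+>0$ the extended map $p$ has a pole at $0$, while if $\mathfrak{N}_+=0$ then $p(0)=a_0=a_{\mathfrak{N}_+}\neq 0$; the situation at $\infty$ is symmetric. Thus all $\mathfrak{N}_+ + \mathfrak{N}_-=\mathfrak{N}$ roots lie in $\C\setminus\{0\}$. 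In Case 2 ($\mathfrak{N}_+<0$), the factorization $p(\zeta)=\zeta^{|\mathfrak{N}_+|}Q(\zeta)$ together with $Q(0)\neq 0$ shows that $0$ is a root of exact multiplicity $|\mathfrak{N}_+|$, which lies in $[1\vee|\mathfrak{N}_+|,|\mathfrak{N}_-|]$ because $\mathfrak{N}_-\geq|\mathfrak{N}_+|$; meanwhile $p(\zeta)\sim a_{-\mathfrak{N}_-}\zeta^{\mathfrak{N}_-}\to\infty$ as $\zeta\to\infty$ (since $\mathfrak{N}_->0$), so $\infty$ is not a root. The total count becomes $|\mathfrak{N}_+|+(\mathfrak{N}_+ + \mathfrak{N}_-)=\mathfrak{N}_-=\mathfrak{N}$. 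Case 3 follows from Case 2 by the involution $\zeta\mapsto 1/\zeta$, which interchanges the roles of $0$ and $\infty$ and swaps $\mathfrak{N}_+$ with $\mathfrak{N}_-$.

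Finally, the hypothesis $0\notin p(S^1)$ is precisely the statement that no root of $p$ lies on $S^1$. Consequently the finite nonzero roots partition into those in $D(0,1)$ and those in $\C\setminus\overline{D(0,1)}$, which I would label as in \eqref{at2}--\eqref{at3}, obtaining the strict inequalities around $1$ of \eqref{at4.2.0} in Case 1. In Case 2, the $|\mathfrak{N}_+|$-fold root at $0$ is automatically the innermost cluster, yielding the ordering \eqref{at4.2}; in Case 3, the $|\mathfrak{N}_-|$-fold root at $\infty$ is the outermost cluster, yielding \eqref{at4.1}.

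The proof is essentially algebraic bookkeeping, and I do not anticipate a serious obstacle. The two minor points requiring care are the extension of $p$ to a holomorphic map on $\widehat\C$ so that ``root at $\infty$'' acquires a precise meaning (via the chart $\omega=1/\zeta$ already introduced in the excerpt), and keeping straight which coefficients, namely $a_{\pm\mathfrak{N}_\pm}$, are forced to be nonzero by Assumption \ref{assu-symbol} versus the interior coefficients that may vanish.
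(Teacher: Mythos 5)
Your proof is correct and follows essentially the same route as the paper: reduce to an ordinary polynomial by multiplying by $\zeta^{\mathfrak{N}_+}$, count roots via the fundamental theorem of algebra, analyze $0$ and $\infty$ separately, and handle Case~3 by the involution $\zeta\mapsto 1/\zeta$. The one small divergence is that you work with a single polynomial $Q(\zeta)=\zeta^{\mathfrak{N}_+}p(\zeta)$ uniformly across the cases, whereas the paper rewrites $p$ as a polynomial separately in each case; as a byproduct your argument pins down the multiplicities at $0$ and $\infty$ exactly (as $|\mathfrak{N}_+|$ and $|\mathfrak{N}_-|$, forced by the nonvanishing of $a_{\mathfrak{N}_+}$ and $a_{-\mathfrak{N}_-}$), which is slightly sharper than the interval bounds $m_0\in[1\vee|\mathfrak{N}_+|,|\mathfrak{N}_-|]$ and $m_\infty\in[1\vee|\mathfrak{N}_-|,|\mathfrak{N}_+|]$ stated in the lemma, and is consistent with the paper's formula \eqref{c2.3} once one corrects what appears to be a sign typo there ($a_{-j}$ for $a_j$).
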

\begin{proof}
1. If $\mathfrak{N}_{\pm} \geq 0$, we see that neither $0$ nor $\infty$ can be a root of \eqref{p0}, 
which therefore has the same roots as
 \begin{equation}\label{p0.1}
	\zeta^{\mathfrak{N}_+} p(\zeta)= \sum_{0}^{\mathfrak{N}_++\mathfrak{N}_-} a_{\mathfrak{N}_+-j} \zeta^{j} =0,
\end{equation}
a polynomial of degree $\mathfrak{N}_+ + \mathfrak{N}_-$. We order 
its $\mathfrak{N}_+ + \mathfrak{N}_-$ roots, counted with 
their multiplicities, as in \eqref{at2}, \eqref{at3}, with 
$m_+ + m_- = \mathfrak{N}_+ + \mathfrak{N}_-$, and we conclude \eqref{at4.2.0}.
\\
\par
2. When $\mathfrak{N}_+ < 0$ then \eqref{p0} is given by
\begin{equation}\label{p0.3}
	p(\zeta)= \sum_{|\mathfrak{N}_+|}^{\mathfrak{N}_-} a_{-j} \zeta^{j} =0,
\end{equation}
a polynomial of degree $\mathfrak{N}_-$ and so we have $\mathfrak{N}_-$ roots in $\C$ and 
$\infty$ cannot be a root. It follows from \eqref{p0.3} that $0$ is a root with multiplicity 
\begin{equation}\label{c2.3}
	m_{0} = \min\{ j \in \{ |\mathfrak{N}_+|,\dots, \mathfrak{N}_-\} ; a_j -z\delta_{j,0} \neq 0\},
\end{equation}
where we recall the Dirac  notation from Section \ref{sec:notation}. 
Notice that 
$\max\{1,|\mathfrak{N}_+|\} \leq m_{0} \leq |\mathfrak{N}_-| $. 
In this case, we denote the roots as in \eqref{at2}, \eqref{at3} and \eqref{at4.2}.
\\
\par
3. If $\mathfrak{N}_-<0$ then \eqref{p0} is given by
\begin{equation}\label{p0.5}
	p(\zeta)= \sum_{|\mathfrak{N}_-|}^{\mathfrak{N}_+} a_{j} \zeta^{-j}  =0.
\end{equation} 
So $0$ is not a root, however, $\infty$ maybe be one. Performing the change of variables 
$\omega = 1/\zeta$ we see that $p(1/\omega)$ satisfies the 
assumptions of \text{Step 2} with the roles $\mathfrak{N}_-$ and $\mathfrak{N}_+$ exchanged and roots $\omega_j^{\pm} = 1/\zeta_j^{\pm}$. Hence, we conclude \eqref{at4.1} when Case 3 holds. 
\end{proof}
\subsection{Kernels of $P_{[0,\infty[}-z$ and $P_{]-\infty,0]}-z$}
In this section we discuss how to construct the elements of the kernels of 
$P_{[0,\infty[}-z$ and $P_{]-\infty,0]}-z$, see \eqref{ev0}, where $p$ is as in \eqref{int1}, 
\eqref{int0}, $z\in \C\backslash p(S^1)$. 
\\
\par
Recall  that the index of a Fredholm operator $A$ on a Hilbert space is given by 
\begin{equation*}
		\mathrm{Ind}(A) = \dim \mathcal{N}(A) - \dim \mathcal{N}(A^*).
\end{equation*}
Here $\mathcal{N}(A)$ denotes the kernel of $A$, and note that 
the kernel of $A^*$ is isomorphic to the cokernel of $A$. 
We know from 
\cite[Theorem 1.9]{BoGr05}, \cite{SjVo19a} that $P_{K} -z$, for $K= ]-\infty,0]$ and $[0,\infty[$, 
are Fredholm operators, and that the winding 
number $\mathrm{ind}_{p(S^1)}(z)$ of the curve $p(S^1)$ around $z$  is related to the 
Fredholm index of $P_K-z$  as follows:
\begin{equation}\label{at9.1}
		\mathrm{Ind}(P_{[0,\infty[} -z) = \mathrm{ind}_{p(S^1)}(z), 
		\quad
		\mathrm{Ind}(P_{]-\infty,0]} -z) =  -\mathrm{ind}_{p(S^1)}(z).
\end{equation}
%
%
%
Using Lemma \ref{lem:root1} one can express the winding number of $p(S^1)$ around $z$ by
\begin{equation}\label{at9.0}
	 \mathrm{ind}_{p(S^1)}(z) =  \frac{1}{2\pi i} \int_{S^1} \frac{d}{d\eta} \log ( p(\eta) -z ) d\eta =
	 m_+ - (N_+\lor 0) =( N_-\lor 0) - m_- .
\end{equation}
\begin{rem}\label{rem1} 
If the symbol $p-z$ satisfies Case $2$ of Lemma \ref{lem:root1}, then $ \mathrm{ind}_{p(S^1)}(z) = m_+>0$, and if it satisfies Case $3$, then $ \mathrm{ind}_{p(S^1)}(z) = - m_- < 0$.
\par
When comparing with the literature, for instance \cite{BoGr05}, then the right hand sides of the equations in \eqref{at9.0} and \eqref{at9.1} have the opposite sign. This discrepancy comes from our choice 
to write the Fourier series \eqref{a1.0} with a minus instead of a plus sign in the exponential.
\end{rem}
Our aim is now to give explicit expressions for the kernel vectors 
of $P_{[0,\infty[}-z$ and $P_{]-\infty,0]}-z$ depending on the roots of 
the symbol $p-z$. We will only treat the case when the symbol satisfies 
the assumptions of Case 1 of Lemma \ref{lem:root1} since the other 
two cases are non-generic. Furthermore, we will only work in 
the case where all roots of $p-z$ are simple. We will see that under 
these assumptions the kernel vectors will be given by exponential 
solutions as in \eqref{at9} below, corresponding to the roots in 
$\C\backslash \{0\}$.
\subsubsection{Exponential solutions}
We begin with a slightly more general discussion on exponential solutions. 
Let $\zeta\in\C\backslash\{0\}$ be a root of \eqref{p0} with multiplicity $\mathrm{mult(\zeta)}$, 
then the exponential functions 
\begin{equation}\label{at8}
\Z\ni \nu \mapsto f_{\zeta ,k}(\nu )\defeq \nu^k(\zeta ^\nu ),\ 0\leq k\leq
\mathrm{mult\,}(\zeta )-1
\end{equation}
are solutions to 
\begin{equation}\label{exp.1}
\mathrm{Op}(p)f_{\zeta ,k}=0, \text{ on } \Z,
\end{equation}
and these functions are linearly independent, by the following proposition. 
\begin{prop}[{\cite[Proposition 3.1]{SjVo19a}}]\label{at:prop1}
Let $\zeta _1,...,\zeta _{\mathfrak{N}}\in \C\setminus \{0 \}$
be distinct numbers and let $1\le m_j<\infty $, $1\le j \le \mathfrak{N}$.
The functions $f_{\zeta _j,k}\,: \Z\to \C$, $1\le j\le \mathfrak{N}$, $0\le
k\le m_j-1$ are linearly independent. More precisely, if $K\subset \Z$
is an interval with $\# K=m_1+m_2+...+m_{\mathfrak{N}}$, then ${{f_{\zeta
      _j,k}}_\vert}_{K}$ form a basis in $\ell^2(K)$. 
\end{prop}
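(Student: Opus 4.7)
The plan is to reduce the finite-dimensional claim to a global linear independence statement on $\Z$, exploiting that the $f_{\zeta_j,k}$ all satisfy a common linear recurrence.

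First I would observe that each $f_{\zeta_j,k}$ satisfies $P(S)f=0$, where $S$ is the right-shift $(Sf)(\nu)=f(\nu+1)$ and
\[
P(x)=\prod_{j=1}^{\mathfrak{N}}(x-\zeta_j)^{m_j}.
\]
Indeed, the computation $(S-\zeta_j)(\nu^k\zeta_j^\nu)=\zeta_j[(\nu+1)^k-\nu^k]\zeta_j^\nu$ shows that $(S-\zeta_j)$ lowers the degree of the polynomial factor by one, so $(S-\zeta_j)^{m_j}f_{\zeta_j,k}=0$ for $0\le k\le m_j-1$. Consequently every linear combination $g=\sum_{j,k}c_{j,k}f_{\zeta_j,k}$ satisfies $P(S)g=0$, a linear recurrence of order $M:=m_1+\cdots+m_{\mathfrak{N}}$. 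Its constant coefficient is $\pm\prod_j\zeta_j^{m_j}\neq 0$ since each $\zeta_j\in\C\setminus\{0\}$, so the recurrence is invertible in both directions. Hence $g$ is uniquely determined by its values on any block of $M$ consecutive integers, and $g|_K=0$ with $\#K=M$ forces $g\equiv 0$ on $\Z$.

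Second I would establish the global linear independence: if $g(\nu)=\sum_{j=1}^{\mathfrak{N}} p_j(\nu)\zeta_j^\nu\equiv 0$ on $\Z$, with $p_j(\nu):=\sum_{k=0}^{m_j-1}c_{j,k}\nu^k$, then each $p_j$ is identically zero. Fix $j^*$ and apply the operator $T:=\prod_{j\neq j^*}(S-\zeta_j)^{m_j}$ to $g$. Since $(S-\zeta_j)^{m_j}$ annihilates the $j$th summand, $Tg=T[p_{j^*}(\nu)\zeta_{j^*}^\nu]$. The identity
\[
(S-\zeta)\bigl(p(\nu)\zeta_{j^*}^\nu\bigr)=\bigl[\zeta_{j^*}p(\nu+1)-\zeta\, p(\nu)\bigr]\zeta_{j^*}^\nu
\]
shows that for $\zeta\neq\zeta_{j^*}$ the polynomial factor retains its degree, its leading coefficient being multiplied by $(\zeta_{j^*}-\zeta)$. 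Iterating gives $T[p_{j^*}(\nu)\zeta_{j^*}^\nu]=\widetilde{p}_{j^*}(\nu)\zeta_{j^*}^\nu$ with $\deg\widetilde{p}_{j^*}=\deg p_{j^*}$ and leading coefficient equal to that of $p_{j^*}$ times $\prod_{j\neq j^*}(\zeta_{j^*}-\zeta_j)^{m_j}\neq 0$. From $Tg\equiv 0$ and $\zeta_{j^*}\neq 0$ we deduce $\widetilde{p}_{j^*}\equiv 0$, hence $p_{j^*}\equiv 0$; this is true for every $j^*$.

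Combining the two steps: any linear relation $\sum c_{j,k}f_{\zeta_j,k}|_K=0$ in $\ell^2(K)$ extends by the recurrence argument to a vanishing relation on all of $\Z$, which by the second step forces all $c_{j,k}=0$. This gives linear independence of $M$ vectors in the $M$-dimensional space $\ell^2(K)$, hence they form a basis. The only mildly subtle point is tracking how $T$ acts on $p_{j^*}(\nu)\zeta_{j^*}^\nu$; I would expect this degree-preservation lemma, together with the nonvanishing of $P(0)=\pm\prod_j\zeta_j^{m_j}$, to be the only nontrivial ingredients of the argument.
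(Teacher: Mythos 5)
Your proof is correct and complete. The two ingredients you isolate — that every linear combination $g=\sum c_{j,k}f_{\zeta_j,k}$ satisfies the order-$M$ recurrence $P(S)g=0$ with $P(0)=\pm\prod_j\zeta_j^{m_j}\neq 0$ (so $g$ is determined by its values on any $M$ consecutive integers), and that applying the degree-preserving operator $T=\prod_{j\neq j^*}(S-\zeta_j)^{m_j}$ isolates the $j^*$-th block while multiplying the leading coefficient of $p_{j^*}$ by the nonzero product $\prod_{j\neq j^*}(\zeta_{j^*}-\zeta_j)^{m_j}$ — are precisely what is needed, and both are handled correctly. One small remark: the paper does not actually supply its own proof of this proposition; it is quoted verbatim from \cite[Proposition 3.1]{SjVo19a}. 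The argument you give is the standard one for this classical fact about exponential solutions of linear constant-coefficient difference equations, and is almost certainly the one used there; in any case it is self-contained and sound. An alternative, more algebraic packaging would be to observe that the $M\times M$ matrix of the restrictions $f_{\zeta_j,k}|_K$ is a confluent Vandermonde matrix whose determinant is $\pm\prod_j\zeta_j^{m_j(\min K)}\cdot\prod_{i<j}(\zeta_i-\zeta_j)^{m_im_j}\cdot\prod_j\prod_{\ell<m_j}\ell!$, manifestly nonzero; your recurrence argument avoids the determinant computation at the cost of the separate degree-tracking lemma, which is a reasonable trade.
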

This result together with Lemma \ref{lem:root1} immediately yield the 
following result. 
\begin{prop}[{\cite[Proposition 3.7]{SjVo19a}}]\label{prop:expSol}
Let $p$ be as in \eqref{lp1}, \eqref{lp1.1} with $\mathfrak{N}_{\pm} \geq 0$. Suppose 
that $0 \notin p(S^1)$, that all roots of $p(\zeta)$ are simple and ordered as in 
\eqref{at4.2.0}.
\par
Then, the space of exponential solutions to $\mathrm{Op}(p)u =0$ is of dimension 
$m_+ + m_-$, and the general  solution is of the form 
\begin{equation}\label{at9}
		u(\nu) = \sum_{j=1}^{m_+}a_{j}^+(\zeta_j^+)^\nu 
		+ \sum_{j=1}^{m_-}a_{j}^-(\zeta_j^-)^\nu, 
		\quad a^{\pm}_{j} \in \C. 
\end{equation}
The subspace of solutions decaying as $\nu\to +\infty$ is given by 
\begin{equation}\label{at10}
		a_{j}^{-} =0, \quad \text{for } ~ 
		0 \leq j \leq m_-
\end{equation}
and the subspace of solutions decaying as $\nu\to -\infty$ is given by 
\begin{equation}\label{at10b}
		a_{j}^{+} =0, \quad \text{for } 1 \leq j\leq m_{+}
\end{equation}
\end{prop}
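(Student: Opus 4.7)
The plan is to deduce Proposition \ref{prop:expSol} as a direct consequence of Lemma \ref{lem:root1} and Proposition \ref{at:prop1}, together with a dimension count for the scalar linear recurrence $\mathrm{Op}(p) u = 0$. Under the hypothesis $\mathfrak{N}_\pm \geq 0$ combined with $0 \notin p(S^1)$, Case 1 of Lemma \ref{lem:root1} applies and furnishes exactly $\mathfrak{N}_+ + \mathfrak{N}_- = m_+ + m_-$ roots of $p$ in $\C \setminus \{0\}$, and the simplicity assumption makes them pairwise distinct.

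Next, I would observe that for any root $\zeta \in \C \setminus \{0\}$ one has $\mathrm{Op}(p) \zeta^\nu = p(\zeta) \zeta^\nu = 0$, so each of the $m_+ + m_-$ functions $f_{\zeta_j^\pm, 0}(\nu) = (\zeta_j^\pm)^\nu$ lies in the kernel of $\mathrm{Op}(p)$ acting on $\C^{\Z}$, and Proposition \ref{at:prop1} guarantees their linear independence. To see that they in fact span the full kernel, I would rewrite $\mathrm{Op}(p) u = 0$ as the two-sided recurrence $\sum_{j=-\mathfrak{N}_-}^{\mathfrak{N}_+} a_j u(\nu - j) = 0$ and use $a_{\pm \mathfrak{N}_\pm} \neq 0$ to solve forward and backward, so that any solution on $\Z$ is uniquely determined by its values on any window of $\mathfrak{N}_+ + \mathfrak{N}_-$ consecutive integers. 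Hence the solution space has dimension exactly $m_+ + m_-$ and the produced exponentials form a basis, yielding \eqref{at9}.

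For the decay statements I would invoke a short Vandermonde argument. Suppose that $u$ of the form \eqref{at9} decays as $\nu \to +\infty$, and split $u = u^+ + u^-$ according to whether the associated roots lie inside or outside $S^1$. The piece $u^+$ automatically decays at $+\infty$ because each $|\zeta_j^+| < 1$, so also $u^-(\nu) = \sum_{j=1}^{m_-} a_j^- (\zeta_j^-)^\nu \to 0$. Writing $\bigl( u^-(\nu + k) \bigr)_{k=0}^{m_- - 1} = V_0 \bigl( (\zeta_j^-)^\nu a_j^- \bigr)_{j=1}^{m_-}$ with $V_0 = \bigl( (\zeta_j^-)^k \bigr)_{k,j}$ a nonsingular Vandermonde matrix (because the $\zeta_j^-$ are pairwise distinct), one deduces that $(\zeta_j^-)^\nu a_j^- \to 0$ for every $j$; since $|\zeta_j^-| > 1$, this is possible only if $a_j^- = 0$, proving \eqref{at10}. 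The symmetric argument, exchanging the roles of $\zeta_j^+$ and $\zeta_j^-$ and letting $\nu \to -\infty$, delivers \eqref{at10b}.

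The main obstacle is not substantial: once Lemma \ref{lem:root1} and Proposition \ref{at:prop1} are in hand, the statement reduces to a recurrence dimension count and a Vandermonde decay argument, both standard. The only point requiring a moment of care is that in the decay step one should not be tempted to look at the root of maximal modulus alone, which may not be unique; using the full Vandermonde structure on a window of length $m_-$ (respectively $m_+$) sidesteps this issue cleanly.
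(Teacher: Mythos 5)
Your proof is correct and takes essentially the same approach as the paper's, which simply cites Lemma \ref{lem:root1} and Proposition \ref{at:prop1} for the dimension count and the form \eqref{at9}, then disposes of the decay claims in one line by noting $|\zeta_j^+|<1<|\zeta_j^-|$. Your explicit recurrence-window dimension count and the Vandermonde argument for the converse direction of the decay claims are just spelled-out details that the paper treats as obvious; they are both correct.
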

\begin{proof}
	The statement on the dimension and \eqref{at9} are an immediate consequence of 
	Lemma \ref{lem:root1} and Proposition \ref{at:prop1}. The last statement is a consequence 
	of the fact that $|\zeta_j^+|<1$ and $|\zeta_j^-|>1$. 
\end{proof}
\subsubsection{Eigenvectors in Case 1}
We recall \cite[Proposition 3.6]{SjVo19a} in a slightly modified form to fit our somewhat different 
notation.
\begin{prop}\label{ev:prop1}
	Let $p$ be as in \eqref{lp1}, \eqref{lp1.1} with $\mathfrak{N}_{\pm} \geq 0$. 
	Let $K\subset\Z$ be an interval of length $\leq \mathfrak{N}_+ +\mathfrak{N}_-$. 
	Any function $u:~K\to \C$ can be extended to a solution $\widetilde{u}:~\Z \to \C$ 
	to $\mathrm{Op}(p)\widetilde{u}=0$. The space of such extensions is affine of 
	dimension $\mathfrak{N}_+ +\mathfrak{N}_- - \# K$. In particular the extension is unique 	
	when $\mathfrak{N}_+ +\mathfrak{N}_- =\# K$. 
\end{prop}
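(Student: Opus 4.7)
The plan is to reduce the statement to the elementary observation that the equation $\mathrm{Op}(p)\widetilde{u}=0$ is a two-sided linear recurrence of order $\mathfrak{N}_++\mathfrak{N}_-$ whose leading coefficients on both ends are invertible. Writing the equation at an index $n\in\Z$ gives
\begin{equation*}
(\mathrm{Op}(p)\widetilde{u})(n)=\sum_{j=-\mathfrak{N}_-}^{\mathfrak{N}_+}a_j\,\widetilde{u}(n-j)=0,
\end{equation*}
so the value $\widetilde{u}(n+\mathfrak{N}_-)$ is determined (via division by $a_{-\mathfrak{N}_-}\neq 0$) by the $\mathfrak{N}_++\mathfrak{N}_-$ preceding values $\widetilde{u}(n-\mathfrak{N}_+),\dots,\widetilde{u}(n+\mathfrak{N}_--1)$, and symmetrically $\widetilde{u}(n-\mathfrak{N}_+)$ is determined (via $a_{\mathfrak{N}_+}\neq 0$) by the $\mathfrak{N}_++\mathfrak{N}_-$ subsequent values. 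The hypothesis $\mathfrak{N}_\pm\geq 0$ is what ensures that we indeed have genuine two-sided propagation.

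First I would fix an auxiliary interval $J\subset\Z$ of length exactly $\mathfrak{N}_++\mathfrak{N}_-$. Iterating the two one-step recurrences above in opposite directions, every prescribed function $v:J\to\C$ extends uniquely to a solution $\widetilde{v}:\Z\to\C$ of $\mathrm{Op}(p)\widetilde{v}=0$. Letting $V$ denote the space of all such solutions, the restriction map $\rho_J:V\to\C^J$ is therefore a linear isomorphism, so $\dim V=\mathfrak{N}_++\mathfrak{N}_-$. (Alternatively, in the generic case this dimension count matches Propositions \ref{at:prop1} and \ref{prop:expSol}, but the recurrence argument avoids any hypothesis on the roots of $p$.)

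Next, given the target interval $K$ with $\#K\leq\mathfrak{N}_++\mathfrak{N}_-$, I would enlarge $K$ to an interval $\widetilde{K}\supset K$ of length exactly $\mathfrak{N}_++\mathfrak{N}_-$, define $v:\widetilde{K}\to\C$ by $v|_K=u$ and $v$ arbitrary on $\widetilde{K}\setminus K$, and apply the first step to obtain a solution $\widetilde{u}\in V$ with $\widetilde{u}|_K=u$. Hence the restriction map $\rho_K:V\to\C^K$ is surjective.

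Finally, the set of extensions of $u$ is the fiber $\rho_K^{-1}(u)$, which is a coset of $\ker\rho_K$ and therefore affine. By rank-nullity together with the first step,
\begin{equation*}
\dim\ker\rho_K=\dim V-\#K=\mathfrak{N}_++\mathfrak{N}_--\#K,
\end{equation*}
giving the claimed dimension; when $\#K=\mathfrak{N}_++\mathfrak{N}_-$ the kernel is trivial and the extension is unique. There is no real obstacle here: the only subtle point is to notice that both endpoint coefficients of the recurrence are nonzero, so propagation works in both directions, which is exactly the content of the standing assumption $a_{\pm\mathfrak{N}_\pm}\neq 0$ combined with $\mathfrak{N}_\pm\geq 0$.
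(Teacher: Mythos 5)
Your proof is correct, and it is essentially the argument one expects: the paper recalls this as \cite[Proposition 3.6]{SjVo19a} without reproducing a proof, but the subsequent discussion (extending a kernel element of $P_{[0,\infty[}-z$ to $\Z$ by ``solving \eqref{at11} ... for $\nu=-1,-2,\dots$'') makes clear that the underlying mechanism is exactly the two-sided order-$(\mathfrak{N}_++\mathfrak{N}_-)$ recurrence with invertible end coefficients $a_{-\mathfrak{N}_-}$ and $a_{\mathfrak{N}_+}$ that you spell out. Your organisation --- first identify $V=\ker\mathrm{Op}(p)$, show $\rho_J:V\to\C^J$ is an isomorphism for $\#J=\mathfrak{N}_++\mathfrak{N}_-$, then extend $K$ to such a $J$ and use rank--nullity --- is clean and complete; you correctly observe that nothing about the roots of $p$ is needed (in contrast with the dimension count via Propositions \ref{at:prop1}--\ref{prop:expSol}, which require hypotheses like $0\notin p(S^1)$ and simplicity). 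One small remark: when verifying that forward and backward propagation yield a genuine solution on all of $\Z$, it is worth noting explicitly that the equation at index $n$ is enforced by construction for every $n\in\Z$ --- forward steps handle $n\geq\mathfrak{N}_++\min J$ and backward steps handle the rest, with no equation double-counted and none omitted --- which is what makes $\rho_J$ injective as well as surjective. This is implicit in ``extends uniquely'' but deserves a sentence if one is being fully rigorous.
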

Let $p$ be as in \eqref{int0}, \eqref{int1} and let $z\in \C\backslash p(S^1)$. We assume that the 
symbol $p(\zeta) -z$ satisfies Case 1 of Lemma \ref{lem:root1}
(with $p(\zeta)-z$ replacing $p(\zeta)$ in \eqref{lp1}).
This translates to 
\begin{equation}\label{eqn:NC1}
\begin{split}
	&\bullet \quad N_{\pm} > 0, \\
	&\bullet \quad \text{or } z\neq a_0 \text{ when } N_- = 0, \text{ or } z\neq 0 \text{ when }N_- < 0, \\
	&\bullet \quad \text{or }  z\neq a_0 \text{ when } N_+ = 0, \text{ or }  z\neq 0 \text{ when } N_+ < 0.
\end{split}
\end{equation}
By Lemma \ref{lem:root1}, we have $m_++m_- = N_+\lor 0 + N_-\lor 0$ roots of $p(\zeta)-z$.
\\
\par 
%
We now  turn to the operators $P_{[0,\infty[}-z$ and $P_{]-\infty,0]}-z$. The two cases 
are similar, so we focus on the first one and  identify $[0,\infty[ \simeq \N$ whenever 
convenient. The following discussion is a modified version of the one in \cite{SjVo19a}, 
presented here for the reader's convenience.
\\
\par
Let $u\in\ell^2(\N)$ be so that $(P_{[0,\infty[} -z) u = 0$ on $\N$. When $N_+ >0$ we put 
\begin{equation}\label{at11.1}
		u(-{N_+})= \dots = u(-1)=0, 
\end{equation}
and when $N_+\leq 0$, we do not put \eqref{at11.1}. Then we see that, for $\nu =0,1,\dots$,
\begin{equation}\label{at11}
\left( \sum_{-N_-}^{N_+} a_j \tau^j -z\right) u(\nu) = 0.
\end{equation}
\par
Continuing, we then know how to extend $u\!\upharpoonright_{[-(N_+\lor 0 ),\infty[}$ to a function 
$\widetilde{u}:\Z\to \C$ satisfying  $(\mathrm{Op}(p) -z)\widetilde{u}=0$ on $\Z$, by  solving 
\eqref{at11} with $u$ replaced by $\widetilde{u}$ and for $\nu =-1,-2,\dots$. More precisely, 
the equation for $\nu =-1$ defines uniquely $\widetilde{u}(-N_+\lor 0  -1)$ and the 
next one gives $\widetilde{u}(-N_+\lor 0  -2)$. Continuing in this way we get a solution 
$\widetilde{u}$ of $(\mathrm{Op}(p) -z)\widetilde{u} =0$ on $\Z$. Since we are in Case 1, Proposition 
\ref{prop:expSol} implies that $\widetilde{u}$ is
of the form \eqref{at9}. 
\par
Since $u\in \ell^2(\N)$, it follows by \eqref{at10} that 
\begin{equation}\label{at13}
		\widetilde{u}(\nu) = \sum_{j=1}^{m_+}a_{j}^+(\zeta_j^+)^\nu.
\end{equation}
 When $N_+ >0$ we have by construction that 
 $\widetilde{u}(\nu) = u(\nu)=0$ for $\nu \in [-N_+,-1]$, which implies 
that the coefficients  $a_j^+$ in \eqref{at13} are determined by 
\begin{equation}\label{at14}
\begin{split}
	&0=\mathfrak{A} \begin{pmatrix} a_{1}^+ \\ \vdots \\ a_{m_+}^+ \end{pmatrix},  
	\quad \mathfrak{A}=(\mathfrak{A}_1^+,\dots,\mathfrak{A}_{m_+}^+) \in \C^{N_+\times m_+},
	\\
	& \mathfrak{A}_j^+ = ( (\zeta_j^+)^\nu)_{-N_+ \leq \nu\leq -1}, \quad 
	\text{for } j=1,\dots, m_+.
\end{split}
\end{equation}
Notice that $\mathfrak{A}$ is a rectangular matrix of size $N_+\times m_+$. Recall from Proposition \ref{at:prop1} 
that the exponential functions \eqref{at8} restricted to an interval $K\Subset \Z$ of 
length $|K| = m_+$ are linearly independent. Thus, the linear system in the first line of \eqref{at14} 
has $m_+ - N_+$ linearly independent solutions if $m_+ - N_+\geq 0$, and none when $m_+ - N_+ <0$. 
This implies that $\dim \mathcal{N}(P_{[0,+\infty[}-z) = (m_+-N_+)\lor 0$.
\par
Similarly, when $N_+\leq 0$, we have 
no constraints on the coefficients in \eqref{at13}, which yields 
$m_+\lor 0$ linearly independent solutions, so  $\dim \mathcal{N}(P_{[0,+\infty[}-z) = m_+\lor 0$. Thus 
\begin{equation}
	\dim \mathcal{N}(P_{[0,+\infty[}-z)  = (m_+-N_+\lor 0)\lor 0.
\end{equation}
Similarly one also obtains the corresponding statements for the kernel of $(P_{]-\infty,0]}-z)$ with the $N_+$, $m_+$ replaced by $N_-$, $m_-$. Summing up what we have proven so far, we get in view of \eqref{at9.1}, \eqref{at9.0}, see \cite{SjVo19a} for similar statements:
\begin{prop}\label{at:prop3}
Let $p$ be as in \eqref{int0}, \eqref{int1}. Let $z\in\C\backslash p(S^1)$. Assume 
that the assumptions of Case 1 in Lemma \ref{lem:root1} hold for $p(\zeta)-z$, see also 
\eqref{eqn:NC1}. Then 
\begin{equation*}
		\dim \mathcal{N}\left(P _{[0,+\infty[}-z\right) = \mathrm{ind}_{p(S^1)}(z) \lor 0
\end{equation*}
and $u\in \mathcal{N}\left(P _{[0,+\infty[}-z\right) $ if and only if $u\in\ell^2([0,\infty[)$ is such that $\widetilde{u}$ defined above is of the form
\begin{equation}\label{at:prop3.1}
		\widetilde{u}(\nu) = \sum_{j=1}^{m_+}a_{j}^+(\zeta_j^+)^\nu, \quad a^+_{j} \in\C,
\end{equation}
with $a^+_{j}$ additionally satisfying \eqref{at14} when $N_+ >0$. Furthermore, 
\begin{equation*}
		\dim \mathcal{N}\left(P_{]-\infty,0]}-z\right) = (-\mathrm{ind}_{p(S^1}(z) )\lor 0.
\end{equation*}
and $u\in \mathcal{N}\left(P_{]-\infty,0]}-z\right) $ if and only if $u\in\ell^2(]-\infty,0]$ is of the form
\begin{equation}\label{at:prop3.1b}
		\widetilde{u}(\nu) = \sum_{j=1}^{m_-}a_{j}^-(\zeta_j^-)^\nu, \quad a^-_{j} \in\C, 
\end{equation}
with $a^-_{j}$ additionally satisfying the analog of \eqref{at14} (with $m_+,N_+,\zeta_j^+$ 
replaced by $m_-,N_-,\zeta_j^-$ and the corresponding vectors $\mathfrak{A}_j^-$ indexed 
over $1\leq \nu \leq N_-$), when $N_- >0$. 
\end{prop}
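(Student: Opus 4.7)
The plan is to follow the construction already sketched just before the proposition: given a putative kernel vector $u$ of $P_{[0,\infty[}-z$, extend it to a solution $\widetilde u$ of $(\mathrm{Op}(p)-z)\widetilde u=0$ on all of $\Z$, identify $\widetilde u$ with an exponential ansatz from Proposition \ref{prop:expSol}, and then read off the dimension of solutions in $\ell^2$ from a linear algebra count. I will focus on $P_{[0,\infty[}-z$; the statement for $P_{]-\infty,0]}-z$ is analogous after replacing $\tau$ with $\tau^{-1}$ (equivalently, after the change of variables $\omega = 1/\zeta$ that swaps the roles of the $\zeta_j^\pm$ and of $N_\pm$).

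First, I would take $u\in\mathcal{N}(P_{[0,\infty[}-z)\subset\ell^2(\N)$. When $N_+>0$ I set $u(-N_+)=\cdots=u(-1)=0$; when $N_+\leq0$ no auxiliary values are needed. Using the recurrence coming from $(\mathrm{Op}(p)-z)\widetilde u(\nu)=0$ solved for the leftmost term, this data uniquely determines an extension $\widetilde u:\Z\to\C$ satisfying $(\mathrm{Op}(p)-z)\widetilde u=0$ on all of $\Z$ (the coefficient $a_{N_+}\neq0$ ensures the recurrence is invertible in that direction). Under Case 1, $0\notin p(S^1)$ and all roots of $p-z$ are nonzero, so Proposition \ref{prop:expSol} applies and gives
\begin{equation*}
\widetilde u(\nu)=\sum_{j=1}^{m_+}a_j^+(\zeta_j^+)^\nu+\sum_{j=1}^{m_-}a_j^-(\zeta_j^-)^\nu.
\end{equation*}
Since $u\in\ell^2(\N)$ and $|\zeta_j^-|>1$, the decay criterion \eqref{at10} forces $a_j^-=0$ for all $j$, yielding \eqref{at:prop3.1}.

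Next I would impose the remaining constraints. If $N_+>0$, the requirement $\widetilde u(\nu)=0$ for $\nu\in[-N_+,-1]$ translates precisely into the rectangular linear system \eqref{at14}, $\mathfrak{A}(a_1^+,\ldots,a_{m_+}^+)^{\sf T}=0$, where $\mathfrak{A}\in\C^{N_+\times m_+}$. Conversely, any vector $(a_1^+,\ldots,a_{m_+}^+)$ solving this system produces, via \eqref{at:prop3.1}, an $\ell^2$ solution of $(P_{[0,\infty[}-z)u=0$, so the kernel is in bijection with $\mathcal{N}(\mathfrak{A})$. By Proposition \ref{at:prop1}, any $m_+$ of the exponential functions $\nu\mapsto(\zeta_j^+)^\nu$ restricted to an interval of length $m_+$ are linearly independent; hence $\mathrm{rank}\,\mathfrak{A}=\min(N_+,m_+)$ and
\begin{equation*}
\dim\mathcal{N}(P_{[0,\infty[}-z)=(m_+-N_+)\vee 0.
\end{equation*}
If $N_+\leq 0$, there are no constraints and the dimension equals $m_+\vee0$. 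Combining both cases gives $\dim\mathcal{N}(P_{[0,\infty[}-z)=(m_+-N_+\vee0)\vee0$, which by \eqref{at9.0} equals $\mathrm{ind}_{p(S^1)}(z)\vee0$.

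The main technical subtlety, I expect, is ensuring that the linear independence argument behind $\mathrm{rank}\,\mathfrak{A}=\min(N_+,m_+)$ is correctly applied: Proposition \ref{at:prop1} is stated for intervals of length equal to the total root count, so one must argue that the submatrix $\mathfrak{A}$ (rows indexed by $[-N_+,-1]$, columns by the distinct simple roots $\zeta_j^+$) has full rank whenever $N_+\leq m_+$, which follows from a Vandermonde-type determinant computation using that the $\zeta_j^+$ are distinct and nonzero. Once this is in hand, the statement for $P_{]-\infty,0]}-z$ follows by the symmetry $p(\zeta)\mapsto p(1/\zeta)$, which exchanges $(N_+,m_+,\zeta_j^+)\leftrightarrow(N_-,m_-,\zeta_j^-)$ and flips the sign of the winding number, matching \eqref{at9.1}.
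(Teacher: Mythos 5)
Your proposal is correct and follows essentially the same route as the paper: extend a kernel vector to an exponential solution on $\Z$ via the recurrence (with the auxiliary zeros when $N_+>0$), invoke Proposition~\ref{prop:expSol} and the decay constraint \eqref{at10} to keep only the $\zeta_j^+$ part, then identify the kernel with $\mathcal{N}(\mathfrak{A})$ and count dimensions using the linear independence of exponential restrictions (your Vandermonde remark is exactly what underlies Proposition~\ref{at:prop1}, which the paper cites for the rank computation). The paper handles $P_{]-\infty,0]}-z$ by the same symmetric argument, just as you indicate.
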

\section{Separation of the eigenvalues from the spectral curve}
\label{sec-separation}
As discussed in the introduction, in our setup all but $o(N)$ of the
eigenvalues of $P_{N,\gamma}^Q$ lie in a small neighborhood of $p(S^1)$.
In this section we obtain  complementary,
 precise estimates on the location of (all) the eigenvalues for the finitely banded case:
 we will show that if the perturbation is small
 in the sense that $\gamma >1$, then the eigenvalues avoid certain $N$-dependent regions, referred to as \textit{forbidden tubes} (see Definition \ref{dfn:tubes} below), around the spectral curve. 
 These tubes are determined in terms of the roots of the Laurent polynomial $p_z(\cdot) =0$, where 
 \begin{equation}\label{eq:p_z}
p_z(\zeta) \defeq 	p(\zeta)-z= \sum_{-N_-}^{N_+} a_j \zeta^{-j} -z =0,
\end{equation}
with $\zeta \in \C \dot{\cup} \{ \infty\}$ and $z \in \C$. 
As before, we will exclude the trivial case $N_\pm =0$. Furthermore, if $N_+ <0$ then we 
modify $p$ by setting $a_j =0$ for $j= N_++1, \ldots, 0$. This allows us, without loss of generality, to assume that $N_+ \ge 0$.

To state the main result of this section we need to 
introduce some notation.  With the goal of being consistent
with the notation of \cite{BZ}, 
we write $\{\eta_j(z)\}_{j=1}^{\wt m}$ for 
the negative of the roots of $p_z(\cdot)=0$ 
arranged in a non-increasing order of their moduli. 
For $z \notin p(S^1)$ we let $m_+=m_+(z)$ denote
the number of roots of $p_z(\cdot)$ that are in ${D(0,1)}$, 
while the number of roots in 
$(\C \cup \{\infty\}) \setminus \ol{D(0,1)}$ is denoted 
$m_-=m_-(z)$. Therefore,
\[
\wt m \defeq m_+ + m_- = N_+ + \max \{N_-, 0\},
\]
and \[
|\eta_1(z)| \ge |\eta_2(z)| \ge \cdots \ge |\eta_{m_-}(z) | > 1 > |\eta_{m_-+1}(z)| \ge \cdots \ge  |\eta_{\wt m}(z)|.
\]
Compared with the previous notation, see e.g.~Lemma \ref{lem:root1},
we have that $\eta_i=-\zeta^-_{m_--i+1}$ for $i=1,\ldots,m_-$ and 
$\eta_{m_-+i}=-\zeta^+_{m_+-i+1}$ for $i=1,\ldots,m_+$.
Recall the notation ${\sf g}(\cdot)$ from \eqref{eq:gpintro}. The parameter ${\sf g}(\cdot)$  will determine the width of forbidden tubes,
see Remark \ref{rmk:g_0}.

The definition of the forbidden 
tubes also involves 
the winding number of $p(S^1)$. For $d \in \Z$ we set
\begin{equation}\label{eq:cS-d}
\cS_d \defeq \{z \in \C \setminus p(S^1): m_+(z) - N_+ = d\}. 
\end{equation}
Note that for $z \in \cS_d$,
the winding number ${\rm ind}_{p(S^1)}(z)$ equals  $d$. Also observe that on $\cS_d$ the functions $z \mapsto m_\pm(z)$ are constants and those common values are determined by $d$ and $N_\pm$. 

 Recall that 
${\rm cl}(A)$ denotes the closure of a set $A\subset \C$.
We can now define the forbidden tubes.
\begin{defn}[Forbidden tubes]\label{dfn:tubes}
Let $p(\cdot)$ be a Laurent polynomial. 
For $z \in \C$, let $p_z(\cdot)$ be as in \eqref{eq:p_z}, and write
${\sf g}(p)={\sf g}_0\in \Z$. Fix $\varepsilon_0 >0$, $\gamma' >1$, and $\varepsilon_0' >0$ such that $\varepsilon_0' < \vep_0$. 

\begin{itemize}
  \item For $d\in [0,\wt m]\cap \Z$ we set $ \cT_{\varepsilon_0', \vep_0}^d \defeq \cT_{\vep_0',\varepsilon_0}^{d, (1)}  \cup \cT_{ \vep_0', \varepsilon_0}^{d, (2)}$, where for $d >0$ 
\begin{multline*}
\cT_{\vep_0', \varepsilon_0}^{d, (1)} \defeq {\rm cl}\Big({\Big\{z \in {\cS_d}:  \max\left\{{ |\eta_{m_-+{\sf g}_0+1}(z)|,  |\eta_{m_-}(z)|^{-1}} \right\} \le 1 -\varepsilon_0,}\\ 
 {1 - \vep_0'} \le { |\eta_{m_-+{\sf g}_0}(z)| \le    |\eta_{m_-+1}(z)| }< 1\Big\}\Big),
\end{multline*}
\noindent
$\cT_{\vep_0', \varepsilon_0}^{0, (1)}=\emptyset$, and 
\begin{multline*}
\cT_{\vep_0', \varepsilon_0}^{d, (2)} \defeq \Big\{z \in {\cS_d}:  \max\left\{ |\eta_{m_-+1}(z)|,  |\eta_{m_--{\sf g}_0}(z)|^{-1} \right\} \le 1 -\varepsilon_0, \\
1-\vep_0'  \le  |\eta_{m_--{\sf g}_0+1}(z)|^{-1} \le    |\eta_{m_-}(z)|^{-1} < 1\Big\}.
\end{multline*}
\item
For $d \in[-\wt m,0)\cap \Z$ we set $ \cT_{ \varepsilon_0', \vep_0}^d \defeq \cT_{\vep_0', \varepsilon_0}^{d, (1)}  \cup \cT_{ \vep_0', \varepsilon_0}^{d, (2)}$, where
\begin{multline*}
\cT_{\vep_0', \varepsilon_0}^{d, (1)} \defeq {\rm cl}\Big(\Big\{z \in {\cS_d}:  \max\left\{ |\eta_{m_-+1}(z)|,  |\eta_{m_- - {\sf g}_0}(z)|^{-1}  \right\} \le 1 -\varepsilon_0, \\
1-\vep_0'  \le   |\eta_{m_- - {\sf g}_0+1}(z)|^{-1} \le   
|\eta_{m_-}(z)|^{-1} < 1 \Big\},
\end{multline*}
\begin{multline*}
\cT_{ \vep_0',\varepsilon_0}^{d, (2)} \defeq \Big\{z \in {\cS_d}: \max\left\{|\eta_{m_-+{\sf g}_0+1}(z)|,  |\eta_{m_-}(z)|^{-1} \right\} \le 1 -\varepsilon_0, \\
1 - \vep_0' \le   |\eta_{m_-+{\sf g}_0}(z)| \le    |\eta_{m_-+1}(z)| < 1 \Big\}.
\end{multline*}
\end{itemize}
For all $d$, 
set 
$
\wh \cT^{d, (s)}_{\gamma',\vep_0} \defeq \cT^{d, (s)}_{(\gamma'-1) \log N/N, \vep_0},  s \in \{1,2\},$
and write
$ \wh \cT^d_{\gamma', \vep_0} \defeq \wh \cT^{d, (1)}_{\gamma',\vep_0} \cup \wh \cT^{d, (2)}_{\gamma',\vep_0}.  $
\end{defn}
Note that $\wh \cT^d_{\gamma',\vep_0}$ is a (union of)
tubes of vanishing width, whereas the width of 
$\cT^d_{\vep_0',\vep_0}$ is small but fixed.
When the choices of the parameters $\gamma', \vep_0, \vep_0'$ are clear from the context we will suppress the dependence  of the tubes on these parameters, and
 write $\cT^d$, $\wh \cT^d$, $\cT^{d,(s)}$, and $\wh \cT^{d, (s)}$, for $s \in \{1,2\}$.

\begin{rem}
The map $z \mapsto \{\eta_j(z)\}_{j=1}^{\wt m}$ is continuous in the {\em symmetric product topology} \cite[Appendix 5, Theorem 4A]{W72}. Therefore, the map $z \mapsto (|\eta_1(z)|, |\eta_2(z)|, \ldots, |\eta_{\wt m}(z)|)$ is continuous as well. This implies in particular that for $d>0$ and any $z \in \cT^{d,(1)}_{\vep_0,\vep_0}$, one has that
\[
\max\left\{{ |\eta_{m_-+{\sf g}_0+1}(z)|,  |\eta_{m_-}(z)|^{-1}} \right\} \le 1 -\varepsilon_0, \quad \text{ and } \quad 
 {1 - \vep_0'} \le { |\eta_{m_-+{\sf g}_0}(z)| \le    |\eta_{m_-+1}(z)| } \le  1,
\]
where $m_\pm$ are determined by $d$ and $N_\pm$. When $d < 0$ a similar assertion holds for $z \in \cT^{d, (1)}_{\vep_0, \vep_0'}$. 
\end{rem}

\begin{rem}
While proving the results in this section, and Sections \ref{sec:thm-sep-spec-curve} and \ref{sec:bulk-eig} we will need to consider $\vep_0$ and $\vep_0'$ such that $\vep_0' /\vep_0$ is a small fraction depending only on the degree of the Laurent polynomial $p(\cdot)$. The specific choices will be spelled out during the proofs. 
\end{rem}

We  illustrate Definition \ref{dfn:tubes} in some  examples. Consider 
first the simplest setup when the Toeplitz matrix $P_N$ is the 
Jordan block $J_N$ given by
$(J_N)_{i,j} = {\bf 1}_{j=i+1}$ for $i, j \in [N]$. In this case, $\cT^{1,(1)}= \ol{D(0,1)}\setminus D(0, 1-\vep_0')$, whereas 
$\cT^{0,(2)}$ is $\ol{D(0,1+\vep_0')}\setminus \ol{D(0,1)}$. 
The rest of the tubes are empty. 
When the spectral curve $p(S^1)$ is a Lima\c{c}on, i.e.~$P_N=J_N+J_N^2$ has
symbol 
$p(\zeta)=\zeta+\zeta^2$,
the union of the tubes is a region around the entire spectral curve,
except for a small neighborhood, determined by $\vep_0$, 
around the point on $p(S^1)$ where the curve intersects itself. 
See Figure \ref{fig:1}.
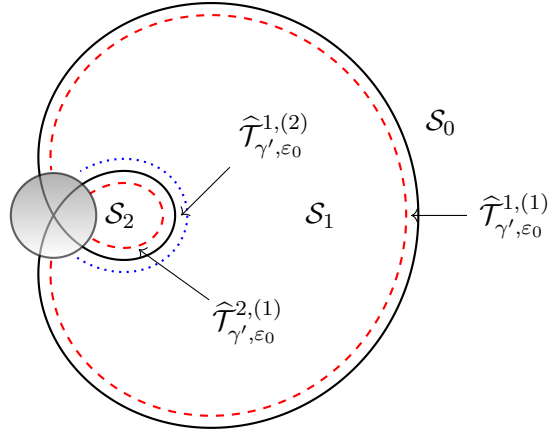
\begin{figure}
\begin{tikzpicture}[scale=1.6]
\draw[thick,variable=\t,domain=0:360,samples=360]
  plot ({cos(\t)+cos(2*\t)},{sin(\t)+sin(2*\t)});
\draw[thick,red,dashed,variable=\t,domain=252:468,samples=360]
plot({cos(\t)+cos(2*\t)- 0.1*(cos(\t)+2*cos(2*\t))/sqrt(5+4*cos(\t)},{sin(\t)+sin(2*\t)- 0.1*(sin(\t)+2*sin(2*\t))/sqrt(5+4*cos(\t)});
\draw[thick,red,dashed,variable=\t,domain=134:227,samples=100]
plot({cos(\t)+cos(2*\t)- 0.1*(cos(\t)+2*cos(2*\t))/sqrt(5+4*cos(\t)},{sin(\t)+sin(2*\t)- 0.1*(sin(\t)+2*sin(2*\t))/sqrt(5+4*cos(\t)});
\draw[thick,blue,dotted,variable=\t,domain=133:226,samples=120]
plot({cos(\t)+cos(2*\t)+0.1*(cos(\t)+2*cos(2*\t))/sqrt(5+4*cos(\t)},{sin(\t)+sin(2*\t)+0.1*(sin(\t)+2*sin(2*\t))/sqrt(5+4*cos(\t)});
\shadedraw[thick,opacity=0.6](-1,0) circle (10pt);
\node[right, xshift=110pt] {$\wh \cT_{\gamma',\vep_0}^{1,(1)}$};

\node[right, xshift=20pt, yshift=30pt] {$\wh \cT_{\gamma',\vep_0}^{1,(2)}$};

\node[right, xshift=10pt, yshift=-40pt] {$\wh \cT_{\gamma',\vep_0}^{2,(1)}$};

\node[right, xshift=90pt,yshift=35pt] {$\cS_0$};

\node[right, xshift=45pt] {$\cS_1$};

\node[right, xshift=-30pt] {$\cS_2$};
\draw[->] (2.4,0)--(1.94,0);
\draw[->] (0.45,0.4)--(0.05,0);
\draw[->] (0.28,-0.7)--(-0.3,-0.28);
\end{tikzpicture}
\caption{The different tubes for the  Lima\c{c}on, where
$p(\zeta)=\zeta+\zeta^2$, $N_+=0$, and $d=0,1,2$ according to whether $z$ is outside the curve, inside the
larger loop, or inside the inner loop. The shaded disc 
is $\cB_1^{\wt \vep_0}$, for some $\wt \vep_0 >0$ (recall Definition \ref{def-badintro}). 
Theorem \ref{thm:no-outlier} precludes the appearance of
eigenvalues
of small 
random additive perturbations of $P_N=J_N+J_N^2$ in a region away of the shaded area that includes
$\cS_0\cup\wh \cT_{\gamma',\vep_0}^{1,(1)}$,
while Theorem  \ref{thm:sep-spec-curve} handles $\wh \cT_{\gamma',\vep_0}^{1,(2)}\cup \wh \cT_{\gamma',\vep_0}^{2,(1)}$.}
\label{fig:1}
\end{figure}

Below is the first main result of this section. Hereafter, for any $z \in \C$ we use the shorthand $P_z^\delta:= P_{N,\gamma}^Q- zI_N$.

\begin{thm}\label{thm:no-outlier}
Fix $\gamma>1$. Set $\delta = N^{-\gamma}$. Let $p(\cdot)$ be a Laurent polynomial and $P_N$ be the $N \times N$ Toeplitz matrix with symbol $p(\cdot)$, and the entries of the noise matrix $Q$ satisfies Assumption \ref{assump:mom}. Then, for any $\wt \vep_0>0$ there exists some constant $\wh c_{\gamma, \wt \vep_0} >0$, depending on $\wt \vep_0$ and $\gamma$, such that  
\[
\lim_{N \to \infty} \prob  \left( \exists z \in \cS_0^{\wh c_{\gamma, \wt\vep_0} \log N/N}\setminus (\cB_1^{\wt \vep_0} \cup \cB_2^{\wt \vep_0}): \det(P_z^\delta)=0 \right) = 0.
\] 
\end{thm}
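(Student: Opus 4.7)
The plan is to show that $\det(P_z^\delta) \neq 0$ simultaneously for all $z$ in the specified region, with probability tending to $1$, by combining a deterministic exponential lower bound on $|\det(P_N - zI_N)|$ with a probabilistic upper bound on the perturbative correction induced by $\delta Q$.

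First I would exploit that for $z \in \cS_0 \setminus \cB_p^{\wt\vep_0}$ the winding number $\wind(z) = 0$, so by Proposition \ref{at:prop3} the half-line Toeplitz operator $P_{[0,\infty[}-z$ has trivial kernel and cokernel. The condition $z \notin \cB_p^{\wt\vep_0}$ ensures that the roots $\{\eta_j(z)\}$ of $p_z$ are simple and bounded away from $S^1$ by a margin controlled by $\wt\vep_0$ and $\dist(z,p(S^1))$. A Widom/Jensen-type formula for banded-Toeplitz determinants then yields
\[
|\det(P_N - zI_N)| \geq |a_{-N_-}|^N \prod_{|\eta_j(z)| > 1} |\eta_j(z)|^N \cdot (1 - o(1)),
\]
which, by the winding-$0$ counting, grows exponentially in $N$; by continuity of the roots in $z$ this bound persists on the enlargement $\cS_0^{\wh c \log N/N} \setminus \cB_p^{\wt\vep_0}$ provided $\wh c_{\gamma,\wt\vep_0}$ is chosen small enough.

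Next I would write $\det(P_z^\delta) = \det(P_N - zI_N)\cdot\det(I_N + \delta(P_N - zI_N)^{-1} Q)$, valid on the event that the first factor is nonzero. The second factor is controlled via $\log\det(I+A) = \tr\log(I+A)$: using the moment bounds on $Q$ from Assumption \ref{assump:mom} together with $\|(P_N - zI_N)^{-1}\| \lesssim 1/\dist(z,p(S^1))$ derived from the previous step, the variance of $\delta\,\tr((P_N-zI_N)^{-1}Q)$ is $o(1)$ and higher-order terms in the expansion are super-polynomially small when $\gamma>1$; hence this second factor is at least $1/2$ with probability $1 - N^{-K}$ for any fixed $K$, at each deterministic $z$. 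To uniformize, observe that $z\mapsto \det(P_z^\delta)$ is a polynomial of degree $N$ in $z$, so on the high-probability event $\{\|Q\| \leq N^{1/2+o(1)}\}$ it is Lipschitz with constant $e^{O(N)}$ on bounded sets; a union bound over an $e^{-CN}$-net of the region (of cardinality $e^{O(N)}$) then upgrades the pointwise estimate to a simultaneous one.

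The main obstacle is the transition across $p(S^1)$: for $z \in \cS_0^{\wh c \log N/N} \setminus \cS_0$ the winding becomes $\pm 1$ and the Szegő-type exponential lower bound on $|\det(P_N - zI_N)|$ breaks down because a single root of $p_z$ has just crossed $S^1$. There I would invoke the Grushin problem of Section \ref{sec-Grushin} with $M = 1$: by Lemma \ref{lem-basicinverse} and \eqref{eq-march1a}, invertibility of $P_z^\delta$ reduces to the nonvanishing of the scalar
\[
E_{-+}^\delta(z) = -t_1(z) - \delta f_1(z)^* Q e_1(z) + O\bigl(\delta^2 \|Q\|^2 \|(P_N - zI_N)^{-1}\|\bigr),
\]
where $t_1(z) \gtrsim N^{-c\wh c}$, for some $c>0$, is the unique small singular value produced by the crossed root. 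Choosing $\wh c$ sufficiently small relative to $\gamma - 1 > 0$ ensures $t_1(z) \gg \delta$, and since $f_1(z)^* Q e_1(z)$ has bounded moments by Assumption \ref{assump:mom}, the same net argument shows $E_{-+}^\delta(z)\neq 0$ with probability tending to $1$, completing the proof.
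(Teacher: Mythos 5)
Your approach is genuinely different from the paper's. The paper proves Theorem \ref{thm:no-outlier} via the determinant expansion $\det(P_z^\delta)=\sum_{k=0}^N\det_k(z)$ of \eqref{eq;spec-rad-new}, showing uniformly over the relevant tubes that the sum of non-dominant terms is $o(1)$ (Theorem \ref{thm:prob-non-dom-small}, whose proof relies on the skew-Schur-polynomial bound for Toeplitz minors, Proposition \ref{prop:toep-minor}), while the deterministic dominant term $\wh\det_0(z)$ is bounded away from zero by Widom's formula (Lemma \ref{lem:det-0}(b)). Your proposal instead factors $\det(P_z^\delta)=\det(P_N-z)\det(I+\delta(P_N-z)^{-1}Q)$, controls the second factor by a $\log\det$ Neumann expansion, and handles the thin layer of nonzero winding by a Grushin problem.

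The gap is that both the Neumann expansion and the Grushin expansion you invoke only converge when $\gamma>3/2$, not on the full range $\gamma>1$ asserted by the theorem. Concretely, write $A=\delta(P_N-z)^{-1}Q$. For $z$ at distance of order $\log N/N$ from $p(S^1)$ in $\cS_0$ one has $\|(P_N-z)^{-1}\|\asymp N/\log N$ (this is Theorem \ref{thm:resBound} with $\wh m_0=0$, which must actually be proved, not read off from the determinant bound), and with high probability $\|Q\|\asymp N^{1/2}$, so
\[
\|A\|\;\asymp\; N^{-\gamma}\cdot \frac{N}{\log N}\cdot N^{1/2}\;=\;\frac{N^{3/2-\gamma}}{\log N},
\]
which is not $o(1)$ for $\gamma\in(1,3/2]$. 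Hence the series $\log\det(I+A)=\sum_{k\geq1}\frac{(-1)^{k+1}}{k}\tr(A^k)$ need not converge, and the claim that "higher-order terms in the expansion are super-polynomially small when $\gamma>1$" is false; even the Hilbert--Schmidt bound gives $\E\|A\|_{\rm HS}^2\asymp \delta^2 N\|(P_N-z)^{-1}\|_{\rm HS}^2\asymp N^{3-2\gamma}/\log N$, which diverges for $\gamma<3/2$. The same obstacle hits the Grushin-problem step: the resolvent factor $(I+\delta QE(z))^{-1}$ cannot be expanded to first order with an $O(\delta^2\|Q\|^2\|E\|)$ error when $\delta\|Q\|\|E\|$ is not small (cf.\ \eqref{gpp4}, which the paper explicitly notes requires $\gamma>3/2$, and Lemma \ref{lem-10.7}, which gives only a polynomial lower bound on $s_{\min}(I+\delta QE)$ for $\gamma\in(1,3/2]$). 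The paper's determinant-level decomposition together with the skew Schur bound on Toeplitz minors is precisely what lets it push the separation result down to $\gamma>1$, because that bound captures cancellations in minors that the crude operator-norm estimate misses. Two minor points: the "$|\det(P_N-z)|$ grows exponentially" remark is a red herring (what matters is that the perturbative correction is $o(1)$, not the absolute size of the unperturbed determinant); and your lower bound "$t_1(z)\gtrsim N^{-c\wh c}$" is off by a factor $N^{-1}$ — the correct scaling from \eqref{sgv2a} is $t_1\gtrsim (\log N/N)N^{-C_\alpha}$ — though the conclusion you draw from it ($t_1\gg\delta$ for $\wh c$ small) is qualitatively right.
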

\begin{rem}\label{rem:c-gamma-explain}
The proof of Theorem \ref{thm:no-outlier} yields that one can take
\begin{equation}
  \label{eq-cgammaeps}
\wh c_{\gamma, \wt \vep_0} = \frac{\gamma'-1}{\sup_{(p(S^1))^{\wt \vep_0}\setminus \cB_2^{\wt \vep_0}} \max_{j \in [\wt m]} |\frac{d}{dz}\eta_j(z)|},
\end{equation}
for any $\gamma '$ such that $1 < \gamma' < \gamma$. By the Implicit Function Theorem we have that
\begin{equation}\label{eq:root-implicit}
\sup_{\D} \max_{j \in [\wt m]} \left|\frac{d}{dz}\eta_j(z)\right| = O(1),
\end{equation}
for any bounded domain $\D$ such that $\dist(\D, \cB_2) >0$.
Therefore, if we {\em impose the additional assumption} 
that $\dist(p(S^1), \cB_2) >0$, 
then for $\wt \vep_0 >0$ sufficiently small the 
constant $\wh c_{\gamma, \wt \vep_0}$ in Theorem \ref{thm:no-outlier} can be chosen to be free of $\wt \vep_0$. 
In particular, in the simplest case when $P_N=J_N$, the
elementary Jordan block,
Theorem \ref{thm:no-outlier} holds with any $\wh c_{\gamma, \wt \vep_0}=\wh c_\gamma < (\gamma -1)$. 
We note that in that example
with $Q_N$ a complex Ginibre matrix,
Davies and Hager \cite{DaHa09} showed that for $\gamma \ge 7$, with high probability
all eigenvalues are contained in  $D(0,1- (\gamma -3) {\log N}/{N})$.
Thus, Theorem \ref{thm:no-outlier} provides a sharper 
result for general finitely banded Toeplitz matrices, under a fairly generic assumption on $Q_N$.   
\end{rem}

\begin{rem}\label{rmk:g_0}
Let ${\sf g}_0 \in \N$ and consider the symbol 
$p(\zeta)= \zeta^{{\sf g}_0}$. Then  the empirical measures of the eigenvalues of
random perturbations of the corresponding Toeplitz matrices
converge to the uniform measure on $S^1$, see
\cite{BPZ, BPZ1, SjVo19a}. However, using \eqref{eq-cgammaeps},
one obtains from Theorem 
\ref{thm:no-outlier} that for $\gamma >1$, with high
probability the eigenvalues must be inside the disk of radius $1- {\sf g}_0 \cdot{(\gamma' -1) \log N}/{N}$, for any $\gamma' < \gamma$. The radius of the disk can be shown to tight upto  $o(\log N/N)$. 
Thus, although the parameter ${\sf g}(p)$ plays no role in determining the limit of the empirical measure of the eigenvalues of randomly perturbed Toeplitz matrices, it indeed plays a role in determining the separation of the eigenvalues from the limiting spectral curve. 
\end{rem}

Theorem \ref{thm:no-outlier} shows that all eigenvalues of $P^\delta$ must be at a distance at least $\wh c_{\gamma,\wt \vep_0} \log N/N$ from the portion of $p(S^1)$ that intersects ${\rm cl}({S}_0)$. To prove the localization of the eigenvectors we need to extend this picture to the regions with non-zero winding number. This is done in the result below.  

\begin{thm}\label{thm:sep-spec-curve}
Fix $\varepsilon_0 >0$, $\gamma'+1>\gamma > \gamma' >1$ and $d \in [-\wt m,\wt m] \cap \Z$. Assume that 
the  noise matrix $Q$ satisfies Assumptions \ref{assump:mom} and \ref{assump:anticonc}. Then, for all $\wt \vep_0>0$, we have 
\[
\lim_{N \to \infty} \prob \left( \exists z \in \wh \cT_{\gamma',\varepsilon_0}^d\setminus \cB_2^{\wt \vep_0}: \det(P_z^\delta)=0 \right) = 0. 
\]
\end{thm}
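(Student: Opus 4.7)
The plan is to reduce the claim to an anti-concentration estimate for a small random determinant, via the Grushin problem of Section \ref{sec-Grushin}.

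First, I would set $M = |d|$ and apply the Grushin problem to $P = P_N - zI$ with threshold $\alpha$ of order $\log N/N$. The singular-value and quasimode analysis of Sections \ref{sec:QM}--\ref{sec-sing} (in particular Proposition \ref{prop:SmallSG}) ensures that for $z \in \wh \cT_{\gamma',\vep_0}^d \setminus \cB_2^{\wt \vep_0}$ the $|d|$ smallest singular values $t_1(z) \le \cdots \le t_{|d|}(z)$ of $P_N - zI$ lie in $[0,\alpha]$ while $t_{|d|+1}(z) > \alpha$. The Schur-complement identity (applied to $\mathcal{E}^\delta = (\mathcal{P}^\delta)^{-1}$) then gives that $\det(P^\delta_z) = 0$ if and only if $\det(E^\delta_{-+}(z)) = 0$, so it suffices to show that with probability tending to one,
\[
E^\delta_{-+}(z) = -\diag(t_j(z))_{j=1}^{|d|} - \delta E_-(z)\,(I + \delta Q E(z))^{-1}\, Q E_+(z)
\]
is invertible for all $z \in \wh \cT_{\gamma',\vep_0}^d \setminus \cB_2^{\wt \vep_0}$.

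Next, I would expand $(I + \delta QE)^{-1}$ via a truncated Neumann series of length $L = L(\gamma, \gamma')$, controlling the tail by a high-probability lower bound on $s_{\min}(I + \delta QE)$; this is the $\gamma \in (1,2]$ strategy indicated in Section \ref{res}, using Proposition \ref{prop:bd-terms-resolvent-exp}. This yields $E^\delta_{-+}(z) = -D - \delta A(z,Q) + R(z,Q)$, where $D = \diag(t_j(z))$, $A(z,Q)_{ij} = f_i(z)^* Q e_j(z)$ and $R$ collects higher-order corrections. The Leibniz expansion gives
\[
\det(-D-\delta A) = (-1)^{|d|} \sum_{S \subseteq [|d|]} \delta^{|S|} \Big(\prod_{j \notin S} t_j(z)\Big) \det A(z,Q)_{S,S},
\]
and the tube conditions combined with $\gamma > \gamma' > 1$ force a scale dichotomy: each $t_j(z)$ is either exponentially small in $N$ (hence $\ll \delta$) or of size at least $\sim N^{-(\gamma'-1)} \gg \delta$. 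Setting $T = T(z) = \{j : t_j(z) \ll \delta\}$, the choice $S = T$ gives the dominant term, and all other contributions, as well as the remainder $R$, are smaller by a polynomial factor $N^{-\Omega(1)}$.

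The core probabilistic step is to lower-bound $|\det A(z,Q)_{T,T}|$ at a fixed $z$. Exposing the entries of $Q$ entering $A_{T,T}$ one row at a time and applying Assumption \ref{assump:anticonc}, together with the quasimode structure from Sections \ref{sec:QM}--\ref{sec-sing} (each $e_j(z), f_i(z)$ for $i,j \in T$ has non-negligible $\ell^2$-mass on an interval of length either $O(1)$ or $N/\log N$), would give $\prob(|\det A(z,Q)_{T,T}| \le \vep) \le C \vep^{\upeta'}$ for some $\upeta' = \upeta'(\upeta,|d|) > 0$. Combined with the deterministic estimate $\prod_{j \notin T} t_j(z) \gtrsim N^{-{\sf g}_0(\gamma'-1)}$ and the a priori control of $R$, this gives $|\det E^\delta_{-+}(z)| > 0$ with probability $1 - o_N(1)$ at any fixed $z$. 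The per-point estimate is extended to the entire set by discretizing $\wh \cT_{\gamma',\vep_0}^d \setminus \cB_2^{\wt \vep_0}$ with an $N^{-C}$-net, using a polynomial Lipschitz estimate for $z \mapsto E^\delta_{-+}(z)$ on the high-probability event $\|Q\| \le N^{1/2+o(1)}$, and taking a union bound; the case $d<0$ and the tubes of type $(2)$ follow by reflection (exchanging $P_N$ and $P_N^*$). The hardest part will be making the dominant-term identification and the anti-concentration step uniform across the tube: the singular values $t_j(z)$ span many scales and the subset $T(z)$ may change as $z$ varies, and when ${\sf g}_0 > 1$ the marginal singular vectors are spread-out quasimodes rather than standard basis vectors, so the non-degeneracy of $A_{T,T}$ relies on the refined structural information of Section \ref{sec-sing} rather than merely on a Lévy bound for individual entries.
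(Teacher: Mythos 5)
Your proof runs the Grushin machinery of Section~\ref{sec-Grushin}, reducing $\det(P_z^\delta)=0$ to the non-degeneracy of the effective $|d|\times|d|$ matrix $E^\delta_{-+}(z)$, and then hoping to anti-concentrate the small block $A_{T,T}$ via the quasimode structure of Sections~\ref{sec:QM}--\ref{sec-sing}. The paper's proof of Theorem~\ref{thm:sep-spec-curve} does something entirely different: it expands $\det(P_z^\delta)=\sum_k\det_k(z)$ via the Cauchy--Binet theorem, bounds the minors $\gD(X,Y,z)$ through the skew Schur polynomial representation of Toeplitz minors (Sections~\ref{sec:bd-gD}--\ref{sec:toep-minor-skew-schur}), controls the non-dominant $\det_k$ by high moments (Lemmas~\ref{lem:sec-mom-small-k}--\ref{lem:high-mom-small-k}), and anti-concentrates the single dominant term $\det_{k_0}$ (Theorem~\ref{thm:dom-lbd}). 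The Grushin problem appears in the paper only later, in the proof of Theorem~\ref{thm:main}, where $z$ is already known to live in $\Omega(\vep_{\ref{theo-location}},C_{\ref{theo-location}},N)$ and hence at distance $\geq C_{\ref{theo-location}}^{-1}\log N/N$ from $p(S^1)$.

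\textbf{The gap.} The spectral-gap and quasimode analysis you invoke is valid only under Assumption~\ref{As:Omega}, which via \eqref{qm4.0} requires the roots of $p-z$ to satisfy
\[
p^{-1}(z)\cap\bigl(S^1+D(0,C_\gamma N^{-1}\log N)\bigr)=\emptyset
\]
for a \emph{fixed} $C_\gamma>0$. The forbidden tube $\wh\cT_{\gamma',\vep_0}^d$, by contrast, consists precisely of $z$'s whose ${\sf g}_0$ ``marginal'' roots lie at distance \emph{at most} $(\gamma'-1)\log N/N$ from $S^1$; the tube therefore contains $z$'s whose roots are at distance $o(\log N/N)$ (say $N^{-3/2}$) from $S^1$, violating \eqref{qm4.0} for every fixed $C_\gamma$. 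For such $z$ the upper bound $t_{|d|}\leq C(\log N/N)N^{-C_\gamma}$ in Proposition~\ref{prop:SmallSG} degenerates to $t_{|d|}\lesssim \log N/N$, which is of the same order as the lower bound for $t_{|d|+1}$; there is no spectral gap at level $|d|$, the Grushin threshold $\alpha$ cannot be inserted, $\|E(z)\|$ can exceed $\delta^{-1}$, and the invertibility of $I+\delta QE(z)$ (equivalently, of $\mathcal{P}^\delta$) breaks down. In short, the singular-value splitting on which your whole reduction rests simply is not available on the inner part of the tube, which is exactly where the separation statement has content. A subsidiary concern: Lemma~\ref{lem-morningheri} as proved gives $s_{\min}(\hat A)>c_\eta$ with probability $1-\eta$ for a \emph{fixed} $\eta$, over a single disc of radius $O(\log N/N)$; covering the whole tube (length $O(1)$) and making the anti-concentration threshold, the net cardinality, and the $\gamma-\gamma'$ scale-gap all cohere is precisely the calibration that the paper performs through Theorem~\ref{thm:dom-lbd} and Lemma~\ref{lem:der-bd-dom}, by choosing the threshold to be a fractional power $\asymp(\vep_0')^{3\upeta/4}$ of the tube-width parameter $\vep_0'=N^{-(\gamma-\gamma')/8}$, and your sketch does not supply an analogue of that arithmetic.
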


While proving Theorems \ref{thm:no-outlier} and 
  \ref{thm:sep-spec-curve}, for convenience, we will avoid neighborhoods of bad points as defined in
Definition \ref{def-badintro}. With additional efforts, it seems possible to extend our results also to neighborhoods of some 
bad points, in particular those in $\cB_1$. We do not purse that direction here. 

\begin{rem}
  Theorems \ref{thm:no-outlier} and 
  \ref{thm:sep-spec-curve} hold for $\gamma >1$. 
 We believe, and simulations suggest (see Figure \ref{fig1a}), 
 that this threshold for $\gamma$ is sharp. 
 That is, for $\gamma <1$ 
 one may find eigenvalues in ${\rm cl}({\cS}_0)$ with non-negligible 
 probability. Furthermore, the width of the tube, namely a constant multiple of
 ${\log N}/{N}$, is also sharp. See also Theorem \ref{thm-thintube}. 
\end{rem}
\begin{rem}
The papers
\cite{SjVo16, SjVo19b, SjVo19a} provide complementary estimates on the location of the eigenvalues around the curve $p(S^1)$, showing
that for the finitely banded case, $\gamma$ sufficiently large and any $\updelta_0>0$, all but $O(N^{2\updelta_0})$ eigenvalues reside inside a tube around the spectral curve of width $O(N^{-\updelta_0})$, with high probability. 
\end{rem}

\begin{rem}
The proofs of Theorems \ref{thm:no-outlier} and \ref{thm:sep-spec-curve} yield quantitative bounds on probabilities of the events considered. From the proof it follows that, for any $K_\star \in \N$, the probability that there is no eigenvalue in $\cS_0^{\wh c_{\gamma, \wt \vep_0} \log N/N}\setminus (\cB_1^{\wt \vep_0} \cup \cB_2^{\wt \vep_0})$ can be bounded by $O(N^{-K_\star})$. If additional assumptions on the moments of entries of $Q$ are imposed then this bound can be further improved. For example, the assumption that the entries of $Q$ have a uniform stretched exponential tail decay would imply a stretched exponential tail decay of the probability as well. 
On the other hand, the proof of Theorem \ref{thm:sep-spec-curve} yields that the probability that there are no eigenvalues of $P_z^\delta$ in $\wh \cT_{\gamma',\varepsilon_0}^d\setminus \cB_2^{\wt \vep_0}$ can be bounded by $O(N^{-(\gamma-\gamma')\upeta/8})$, for $\upeta>0$ as in Assumption 
\ref{assump:anticonc}. 
\end{rem}

\begin{rem}
The determinant of a matrix and its transpose are the same. Furthermore, if $Q$ satisfies Assumptions \ref{assump:mom} and \ref{assump:anticonc} then so does $Q^{\sf T}$. Therefore, whenever $P_N$ is a lower triangular Toeplitz matrix, to prove Theorems \ref{thm:no-outlier} and \ref{thm:sep-spec-curve} we can work with $P_N^{\sf T}$. Thus,
while proving these two theorems, without loss of generality, we may and will assume that $N_- \ge 1$. 
\end{rem}

\subsection{Decomposition of the determinant}  In this section
we decompose the determinant of $P_z^\delta$ into the sum of
homogeneous polynomials $\{{\det}_k(z)\}_{k=0}^N$ in
the entries of the noise matrix $Q$, see
\eqref{eq;spec-rad-new} and \eqref{eq:P_k-z} below. We
then suitably preprocess ${\det}_k(z)$  
so that various bounds on those terms 
can be derived,
leading to a proof of Theorems 
\ref{thm:no-outlier} and \ref{thm:sep-spec-curve}. 
At a high level, this decomposition and some of the 
preprocessing steps are as in
\cite{BPZ1, BZ}, where these were used
to study the empirical measure of the eigenvalues 
and the process of
outlier eigenvalues, respectively. However, in \cite{BPZ1, BZ} 
it was sufficient to derive bounds 
when $z$ avoids an {\em $N$-independent} 
region around the spectral curve $p(S^1)$. Here, we need to consider
an {\em $N$-dependent} region around it, and this requires 
better bounds and a more sophisticated analysis
than in \cite{BPZ1, BZ}, especially in the proof of Theorem  
\ref{thm:sep-spec-curve}.

For $k \in [N]$ we set
 \begin{equation}\label{eq:P_k-z}
{\det}_k(z)={\det}_{k,N}(z):= \sum_{\substack{X, Y \subset [N]\\ |X|=|Y|=k}} (-1)^{\sgn(\sigma_{X}) \sgn(\sigma_{Y})} \det (P_z[{X}^c; {Y}^c]) N^{-\gamma k} \det (Q[X; Y]),
\end{equation}
where for a matrix $A$ the notation 
 $A[X; Y]$ denotes the sub-matrix of $A$ induced by the rows in $X$ and columns in $Y$, ${X}^c:= [N]\setminus X$, ${Y}^c := [N]\setminus Y$, and for $Z \in \{X, Y\}$, $\sigma_Z$ is the permutation on $[N]$ which places all the elements of $Z$ before all the elements of ${Z}^c$, but preserves the order of the elements within the two sets. Additionally denote 
 ${\det}_0(z):= \det (P_z)$. 
 From \cite[Lemma A.1]{BPZ1} it follows that 
\begin{equation}\label{eq;spec-rad-new}
\det(P_z+N^{-\gamma} Q) = \sum_{k=0}^N {\det}_k(z).
\end{equation}

We next represent
$\{{\det}_k(z)\}_{k=1}^N$ as linear combinations of products of determinants of certain bidiagonal and upper triangular Toeplitz matrices with coefficients that are determinants of sub-matrices of $Q$. 
If $P_z$ is an upper triangular matrix, as $\{-\eta_j(z)\}_{j=1}^{\wt m}$  are the roots of the equation $p_z(\cdot)=0$, it is immediate 
to check that 
\begin{equation}
  \label{eq:uTprod}
P_z =a_{-N_-} \cdot  \prod_{j=1}^{\wt m} (J_N+\eta_j(z)I_N), 
\end{equation}
where we recall that
$J_N$ is the elementary Jordan block
given by $(J_N)_{i,j} = \delta_{i+1,j}$ for $i, j \in [N]$,  using the
Dirac notation from Section \ref{sec:notation}. Then the desired representation is a consequence of the Cauchy-Binet theorem. 
For a general Toeplitz matrix, \eqref{eq:uTprod}
does not hold. However, as in \cite{BPZ1}, 
$P_z= P_{N}(z)$ can be viewed as a certain sub-matrix of an upper triangular finitely banded Toeplitz matrix with a slightly larger dimension, and 
 one can use a product form for the latter 
coupled with the Cauchy-Binet theorem. 
To use this idea efficiently  we borrow the following definition from \cite{BPZ1}. As already alluded to above, without loss of generality, we will assume that $N_+ \ge 0$ and $N_- \ge 1$. 

\begin{defn}[Toeplitz with a shifted symbol]\label{dfn:toep-shifted}
Let $P_N(p)$ be a Toeplitz matrix with the finite symbol 
\[
p(\tau)=\sum_{j=-N_-}^{N_+} a_j \tau^j, \quad N_+\geq 0, N_-\geq 1. 
\]
For $n> N_+ + N_-$, $z \in \C$  and
$\bar N_+, \bar N_- \in \N_0$ such that $\bar N_+ + \bar N_-=N_+ +N_-\defeq \widetilde N$,
let $P_n(p, z; \bar N_-)$ denote the $n \times n$ Toeplitz matrix with symbol
\[
\widetilde p(\tau) \defeq \sum_{j=-N_-}^{N_+} a'_j \tau^{j+( N_-  - \bar N_-)},
\]
where $a'_j := a_j - z \delta_{j,0}$, $j=-N_-, -N_-+1, \ldots, N_+$. 
\end{defn}

From Definition \ref{dfn:toep-shifted}
it follows that
\begin{equation}\label{eq:P-wtN}
  P_{\wh N_{+}}(p,z;\widetilde N)=\begin{bmatrix}
a_{N_+} &\cdots &a_0-z& \cdots & a_{-N_-}&0&\cdots& 0\\
0& a_{N_+} & & a_0-z & &\ddots& & \vdots\\
\vdots & \ddots &\ddots &&\ddots&&\ddots & \vdots\\
\vdots & & \ddots & \ddots &&\ddots& & a_{-N_-}\\
\vdots & &  & \ddots & \ddots & &\ddots & \vdots\\
\vdots && &  &\ddots& \ddots & & a_0 -z\\
\vdots & & & &&  \ddots & \ddots & \vdots \\
 0 & \cdots & \cdots & \cdots&\cdots &\cdots & 0 & a_{N_+}
\end{bmatrix}
, \qquad \wh N_{+}:=N+N_+.
\end{equation}
Therefore, 
\[
P_N(p_z)=P_N(p, z;N_-) = P_{\wh N_{+}}( p, z; \widetilde N) [[N]; [\wh N_+]\setminus [N_+]]. 
\]
Recalling 
\eqref{eq:P_k-z} and writing $S+ \ell$ for the 
Minkowski sum of the sets $S$ and $\{\ell\}$, we obtain that 
\begin{eqnarray}\label{eq:det-decompose-1}
{\det}_k(z) 
&= &\sum_{\substack{X, Y \subset [N]\\ |X|=|Y|=k}} (-1)^{\sgn(\sigma_{X}) \sgn(\sigma_{Y})} \gD(X,Y)  \cdot N^{-\gamma k} \cdot \det (Q[X; Y]),
\end{eqnarray}
where 
\begin{equation}\label{eq:gD-def}
\gD(X,Y)=\gD(X,Y,z) \defeq \det (P_{\wh N_+}(p, z; \widetilde N)[{X}^c; {Y}^c+N_+]).
\end{equation}
To derive upper 
bounds on $\{{\det}_k(z)\}_{k=0}^N$ we will use
Assumption \ref{assump:mom} to compute high moments  of the 
former (and also use Markov's inequality). By \eqref{eq:det-decompose-1},
this requires the computation of high moments of
$\gD(X,Y)$ for any $X, Y \subset [N]$.
To this end, we first simplify the expression for $\gD(X,Y)$. Because
$P_{\wh N_+}(p, z;\widetilde N)$ is an upper triangular Toeplitz matrix and
$\{\eta_j(z)\}_{j=1}^{\wt m}$ are the negative of the roots of the 
equation $p_z(\cdot)=0$,  we obtain as in \eqref{eq:uTprod} that
\begin{align}\label{eq:P_N-prod-0}
P_{\wh N_+}(p, z;\widetilde N)  & = \sum_{\ell=0}^{\widetilde N} (a_{N_+ - \ell} - z \delta_{\ell, N_+}) J_{\wh N_+}^\ell 
= a_{-N_-} \prod_{j=1}^{\widetilde m} (J_{\wh N_+}+ \eta_j(z) I_{\wh N_+}).
\end{align}
We split the product in \eqref{eq:P_N-prod-0} into three blocks:~in the first block we consider the product over the roots that are greater than one in moduli and separated from one, in the second block we will have product over the roots that are close to one in moduli, and in the third we consider the product over the rest. 
To write this decomposition efficiently we set 
\begin{equation}\label{eq:wh-m1}
\wh m_1 =\wh m_1(z) \defeq \left\{\begin{array}{ll} m_- & \mbox{ for } z \in \cT^{d, (1)},\\
m_- - {\sf g}_0 & \mbox{ for } z \in \cT^{d, (2)},
\end{array}
\right.
\end{equation}
and introduce, recalling \eqref{eq:gpintro} and that ${\sf g}(p)={\sf g}_0$,
\begin{equation}\label{eq:cP-z}
\cP_z =\cP_{\wh N_+}(z) \defeq \prod_{j=\wh m_1+1}^{\wh m_1+{\sf g}_0}
(J_{\wh N_+}+ \eta_j(z) I_{\wh N_+}).
\end{equation}
(Note that if ${\sf g}_0=1$ then 
$\cP_z$ is a bidiagonal matrix. The case ${\sf g}_0>1$, in which 
$\cP_z$ is an upper triangular finitely banded Toeplitz matrix with
bandwidth ${\sf g}_0$, requires a 
special combinatorial
analysis contained in Section \ref{sec:toep-minor-skew-schur} below.)
We now obtain from \eqref{eq:P_N-prod-0} that
\begin{equation}\label{eq:P_N-prod}
P_{\wh N_+}(p, z;\widetilde N) = a_{-N_-} \prod_{j=1}^{\wh m_1} (J_{\wh N_+}+ \eta_j(z) I_{\wh N_+}) \cdot \cP_z \cdot \prod_{j=\wh m_1 +{\sf g}_0+1}^{\widetilde m} (J_{\wh N_+}+ \eta_j(z) I_{\wh N_+}),
\end{equation}
where, as is our convention, an empty product equals one. 
Notice that in  the special case ${\sf g}_0=1$,
the decompositions \eqref{eq:P_N-prod-0} and \eqref{eq:P_N-prod} 
are identical. 
Before going further let us explain the advantage of the decomposition \eqref{eq:P_N-prod} over \eqref{eq:P_N-prod-0}.

When $z$ avoids a {\em $N$-independent} 
region around the spectral curve all the roots 
of $p_z(\cdot)=0$ are also bounded 
away from the unit circle. 
For such $z$'s using the decomposition \eqref{eq:P_N-prod-0} appropriate bounds on quantities analogous to $\gD(X,Y,z)$ were derived in \cite{BPZ1, BZ}.  Under the setup of Theorem \ref{thm:sep-spec-curve}, 
when $\gamma>{\sf g}_0$, suitable adaptations of the arguments 
in \cite{BPZ1, BZ} yield good
bounds on $\gD(X,Y,z)$, and these adaptations fail when $\gamma\leq
{\sf g}_0$.
At a high level this is due to the fact that the arguments in \cite{BPZ1, BZ}, while applying the Cauchy-Binet theorem to obtain a bound on $\gD(X,Y,z)$, bound each of the roots $\{-\eta_j(z)\}_{j=1}^{\wt m}$ by their moduli. Since, for any $z \in \cT^d$, there are ${\sf g}_0$ many roots close to one in moduli, this requires 
$\gamma > {\sf g}_0$. To push the analysis to the case
${\sf g}_0\geq 
\gamma>1$, we will use that the roots of the symbol of the
Toeplitz matrix $\cP_z$  are 
those of $p_z(\cdot)=0$
that are close to the unit circle. 
Instead of bounding each of these roots by their moduli,
obtaining a bound on minors  of $\cP_z$ by combinatorial means yields
cancellations.
Therefore, \eqref{eq:P_N-prod} is more useful than
\eqref{eq:P_N-prod-0} in the proof of 
Theorem \ref{thm:sep-spec-curve}, when $1<\gamma\leq {\sf g}_0$. 

We now turn to deriving a tractable representation of $\gD(X,Y,z)$. Set 
\begin{equation}\label{eq:whm-2}
\wh m_2 =\wh m_2(z) := \wt m - \wh m_1(z) -{\sf g}_0. 
\end{equation}
Fix $k \in [N]$, and sets $\{X_i\}_{i=1}^{\wh m_1+1}$ and $\{Y_{i'}\}_{i'=1}^{\wh m_2+1}$ such that $|X_i| =k+N_+$, for $i \in [\wh m_1+1]$, and $|Y_{i'}| =k+N_+$, for $i' \in [\wh m_2+1]$.
Write 
\begin{equation}
\label{eq-Lnew}
X_i :=\left\{ x_{i,1} < x_{i,2} < \cdots < x_{i,k+N_+} \right\}, \quad \cX_k :=(X_1, X_2, \ldots, X_{\wh m_1 +1}),
\end{equation}
\begin{equation}
\label{eq-Lnew-1}
Y_{i'} :=\left\{ y_{i',1} < y_{i',2} < \cdots < y_{i',k+N_+} \right\}, \quad \text{ and } \quad \cY_k :=(Y_1, Y_2, \ldots, Y_{\wh m_2 +1}).
\end{equation}
Set
\begin{equation}
\gD_1(\cX_k)= \gD_1(\cX_k, z) :=\prod_{i=1}^{\wh m_1} \det\left((J_{\wh N_+} + \eta_i(z) I_{\wh N_+})[\COMP{X}_i; \COMP{X}_{i+1}]\right)
\end{equation}
and
\begin{equation}
\gD_2(\cY_k)= \gD_2(\cY_k, z) :=  \prod_{i'=1}^{\wh m_2} \det\left((J_{\wh N_+} + \eta_{i'+\wh m_1+{\sf g}_0}(z) I_{\wh N_+})[\COMP{Y}_{i'}; \COMP{Y}_{i'+1}]\right),
\end{equation}
where $\COMP{Z}:=[\wh N_+]\setminus Z$ for any $Z \subset [\wh N_+]$. We emphasize the notational difference between $\COMP{Z}$ and $Z^c$. The former will be used to write the complement of $Z$ when viewed as a subset of $[\wh N_+]$, whereas for the latter $Z$ will be viewed as a subset of $[N]$.

Now  using the Cauchy-Binet theorem and the representation \eqref{eq:P_N-prod} we find that 
\begin{equation}\label{eq:gD}
\gD(X,Y)=\sum_{\cX_k, \cY_k} a_{-N_-}^{N-k} \cdot \gD_1(\cX_k) 
\cdot \det(\cP_z [\COMP{X}_{\wh m_1+1};\COMP{Y}_1 ]) \cdot \gD_2(\cY_k),
\end{equation}
where the sum above is taken over $(\cX_k, \cY_k)$ such that 
\begin{equation}\label{eq:X-Y-constraint}
X_1=  X \cup [\wh N_+]\setminus [N] \qquad \text{ and } \qquad Y_{\wh m_2+1}=  (Y+N_+) \cup [N_+]. 
\end{equation}
Next using  \cite[Lemma A.3]{BPZ1} we find that
\begin{multline}\label{eq:det-bidiagonal}
\det( (J_{\wh N_+} + \eta_i(z) I_{\wh N_+}) [\COMP{X}_i; \COMP{X}_{i+1}]) = \eta_i(z)^{x_{i+1,1}-1} \cdot \left(\prod_{\ell=2}^{k+N_+} \eta_i(z)^{x_{i+1,\ell}-x_{i,\ell}-1} \right) \\
\cdot \eta_i(z)^{\wh N_+ - x_{i,k+N_+}} 
\cdot {\bf 1} \left\{ x_{i+1,\ell} \le x_{i,\ell} < x_{i+1, \ell+1}, \ell \in [k+N_+]\right\},
\end{multline}
where we have set $x_{i+1,k+N_+ +1}=\wh N_++1$ for convenience. Thus, in light of \eqref{eq:det-bidiagonal}, we may restrict the sum in \eqref{eq:gD} over $\cX_k$ belonging to
\begin{equation}\label{eq:L-k}
\wh L_k^{(1)} := \{ \cX_k:  \, 1 \le x_{i+1,1} \le x_{i,1} < x_{i+1,2} \le x_{i,2} < \cdots < x_{i+1,k+N_+} \le x_{i,k+N_+} \le \wh N_+\}. 
\end{equation}
It will also be convenient to further partition the set of all $\cX_k \in \wh L_k^{(1)}$ so that $\gD_1(\cX_k)$ is constant 
in each block of that partition. 
To this end, 
for any ${\bm \ell}:= (\ell_1,\ell_2,\ldots,\ell_{\wh m_1+1})$ with $0 \le \ell_i \le \wh N_+$ for 
$i \in [\wh m_1]$, and $k\in[N]$, we let
\begin{multline*}
L_{{\bm \ell},k}^{(1)}:=\{ \cX_k \in \wh L_k^{(1)}:  \\
\, x_{i+1,1}+ \sum_{j=2}^{k+N_+}(x_{i+1,j}-x_{i,j-1})+ (\wh N_+-x_{i,k+N_+}) =\ell_i+k+N_+, \text{ for all } i=1,2,\ldots,\wh m_1 \}.
\end{multline*}
If  $\cX_k \in L_{{\bm \ell}, k}^{(1)}$ for some ${\bm \ell}$, by \eqref{eq:det-bidiagonal}, we have that 
\begin{equation}\label{eq:prod-decomp}
\prod_{i=1}^{\wh m_1} \det\left((J_{\wh N_+} + \eta_i(z)I_{\wh N_+})[\COMP{X}_i; \COMP{X}_{i+1}]\right) = \prod_{i=1}^{\wh m_1} \eta_i(z)^{\ell_i}.
\end{equation}
Recall that for $z \in \cT^d$ the roots $\{\eta_i(z)\}_{i=1}^{\wh m_1}$ are greater than one in moduli and bounded away from one. Thus, for large values of $\{\ell_i\}_{i=1}^{\wh m_1}$ the \abbr{RHS} of \eqref{eq:prod-decomp} will be exponentially (in $N$) large. It would be useful to factor
out this exponential factor. So, using the observation that 
\[
x_{i+1,1}+  \sum_{j=2}^{k+N_+}(x_{i+1,j}-x_{i,j-1}) +  \sum_{j=1}^{k+N_+}(x_{i,j}-x_{i+1,j}) + (\wh N_+- x_{i,k+N_+}) = \wh N_+,
 \]
we have the following equivalent representation of $L_{{\bm \ell}, k}^{(1)}$:
  \begin{equation}\label{eq:L-ell-k}
L_{{\bm \ell},k}^{(1)}= \Big\{ \cX_k \in \wh L_k:  \, \sum_{j=1}^{k+N_+}(x_{i,j}-x_{i+1,j}+1) =\hat \ell_i; \ i \in [\wh m_1]\Big\},
\;\mbox{\rm where} \;\hat \ell_i := \wh N_+- \ell_i.
\end{equation}
Since $x_{i+1,j} \le x_{i,j}$ for any $i \in [\wh m_1] $, and $j \in [k+N_+]$, we further note from \eqref{eq:L-ell-k} that
\begin{equation}\label{eq:hat-ell-lbd}
\hat \ell_i \ge k+N_+, \qquad \text{ for } i=1,2,\ldots, \wh m_1. 
\end{equation}
This lower bound will be used later in the proof. 

Equipped with the above set of notation we now find that, for any $\cX_k \in L_{{\bm \ell}, k}^{(1)}$, 
\begin{equation}\label{eq:det-prod-decompose}
\gD_1(\cX_k) = \prod_{i=1}^{\wh m_1} \eta_i(z)^{\wh N_+} \cdot \prod_{i=1}^{\wh m_1} \eta_i(z)^{-\hat \ell_i}.
\end{equation}
Thus we indeed have that $\gD_1(\cX_k)$ is constant 
for $\cX_k \in L_{{\bm \ell}, k}^{(1)}$.
We now adopt a similar decomposition of the set of indices $\cY_k$ so that again inside each of the blocks $\gD_2(\cY_k)$ will remain the same. This necessitates the following notation:~By a similar reasoning to the above the sum over $\cY_k$ in \eqref{eq:gD} can be restricted to the following set
\begin{equation}\label{eq:L-k-Y}
\wh L_k^{(2)} := \{ \cY_k:  \, 1 \le y_{i'+1,1} \le y_{i',1} < y_{i'+1,2} \le y_{i',2} < \cdots < y_{i'+1,k+N_+} \le y_{i',k+N_+} \le \wh N_+\}. 
\end{equation}
Next, for ${\bm q}:= (q_1, q_2, \ldots, q_{\wh m_2})$ with $0 \le q_{i'} \le \wh N_+$ for $i' \in [\wh m_2]$ we write 
\begin{multline}\label{eq:L-ell-k2}
L_{{\bm q},k}^{(2)}:= \{ \cY_k \in \wh L_k^{(2)}:  \\
\, y_{i'+1,1}+ \sum_{j=2}^{k+N_+}(y_{i'+1,j}-y_{i',j-1})+ (\wh N_+-y_{i',k+N_+}) =q_{i'}+k+N_+, \text{ for } i'\in [\wh m_2] \}.
\end{multline}
Using \cite[Lemma A.3]{BPZ1} we observe that for $\cY_k \in L_{{\bm q},k}^{(2)}$,
\begin{equation}\label{eq:det-prod-decompose-Y}
\gD_2(\cY_k) = \prod_{i'=1}^{\wh m_2} \eta_{i'+\wh m_1 +{\sf g}_0}(z)^{q_{i'}}.
\end{equation}
Now upon recalling the restriction on $X_1$ and $Y_{\wh m_2+1}$ from   \eqref{eq:X-Y-constraint}, we deduce from \eqref{eq:gD}, \eqref{eq:det-prod-decompose}, and \eqref{eq:det-prod-decompose-Y} that, for any $X, Y \subset [N]$ such that $|X|=|Y|=k$, 
we have  the representation
\begin{multline}\label{eq:P-k-decompose}
\gD(X,Y, z)= a_{-N_-}^{N-k} \cdot 
 \Bigg[ \sum_{\cX_k \in L_{k}^{(1)}} \sum_{\cY_k \in L_{k}^{(2)}}  \gD_1(\cX_k) 
\cdot \det(\cP_z [\COMP{X}_{\wh m_1+1};\COMP{Y}_1 ]) \cdot \gD_2(\cY_k) \Bigg]\\
= a_{-N_-}^{N-k} \cdot \prod_{i=1}^{\wh m_1} \eta_i(z)^{\wh N_+} \cdot \Bigg[\sum_{{\bm \ell}, {\bm q}} \sum_{\cX_k \in L_{{\bm \ell},k}^{(1)}} \sum_{\cY_k \in L_{{\bm q},k}^{(2)}} \prod_{i=1}^{\wh m_1} \eta_i(z)^{-\hat \ell_i} \\
 \cdot \det(\cP_z[\COMP{X}_{\wh m_1+1}; \COMP{Y}_1]) \cdot \prod_{i'=1}^{\wh m_2} \eta_{i'+\wh m_1 + {\sf g}_0}(z)^{q_{i'}}\cdot {\bf 1}_{X_1=  X \cup [\wh N_+]\setminus [N] } \cdot {\bf 1}_{Y_{\wh m_2+1}=  (Y+N_+) \cup [N_+]}\Bigg].
\end{multline}

\subsection{Bound on $\gD(X,Y)$}\label{sec:bd-gD}
In this section we state bounds on $\gD(X,Y,z)$ which will be used in Section \ref{sec:high-mom} to compute high moments of the non-dominant terms in the expansion $\det(P_z^\delta)$. While stating (and deriving) such bounds it will be convenient to scale $\gD(X,Y,z)$ (and $\det_k(z)$) by the order of magnitude of the dominant term in the expansion of $\det(P_z^\delta)$. Therefore, for $z \in \cT^d$ we define
 \begin{equation}
  \label{eq:fancyK-1}
 \mathfrak{K}(z) = \mathfrak{K}(z, d, \vep_0', \vep_0):= a_{-N_-}^N \cdot N^{-\gamma \wh d} \cdot \prod_{j=1}^{\wh m_1 +{\sf g}_0} \eta_j(z)^{N},
 \end{equation}
 where  $\wh m_1$ is as in \eqref{eq:wh-m1} and
 \begin{equation}\label{eq:wh-d}
\wh d = \wh d(z) := \left\{ \begin{array}{ll}
d - {\sf g}_0 & \mbox{ if } z \in \cT^{d, (1)},\\
d & \mbox{ if } z \in \cT^{d, (2)}.
\end{array}
\right.
\end{equation}
Recalling \eqref{eq:wh-m1} and \eqref{eq:whm-2},
this implies  that 
\begin{equation}\label{eq:whdm}
\wh d(\cdot) = \wh m_2(\cdot) - N_+ \qquad \text{ on } \cT^d.
\end{equation}
For $z \in \cT^d$, further set 
\begin{equation}\label{eq:hatP-k}
\widehat {\det}_k (z):= {\det}_k (z)/\mathfrak{K}(z), \qquad k=0, 1,2,\ldots, N,
\end{equation}
and for any $X, Y \subset[N]$ such that $|X|=|Y|=k$, 
\begin{equation}\label{eq:wh-gD-dfn}
\wh \gD =\wh \gD(X,Y,z) := \frac{\gD(X,Y,z)}{N^{\gamma(\wh m_2 - N_+)} \cdot \mathfrak{K}(z)} = \frac{\gD(X,Y,z)}{a_{-N_-}^N \cdot \prod_{j=1}^{\wh m_1 +{\sf g}_0} \eta_j(z)^N},
\end{equation}
where the last equality is due to 
\eqref{eq:fancyK-1} and \eqref{eq:whdm}.

The following is the first main result of this subsection. Its proof spans over this and the next subsection. 
\begin{lem}\label{lem:comb-bound-tube-1}
Let $p(\cdot)$ be a Laurent polynomial  with $N_+\geq 0, N_-\geq 1$ 
and ${\sf g}(p)={\sf g}_0\in \N$. Fix $\vep_0', \vep_0 >0$ such that $\vep_0'/\vep_0$ is sufficiently small. Fix $X, Y \subset [N]$ such that $|X|=|Y|= k$ so that
$X =\{x_1 < x_2 < \cdots <  x_k\}$ and
$Y= \{y_1 < y_2 < \cdots < y_k\}.$
Assume $\wh d \ge 0$.
Then we have the following bounds on $\wh\gD(X,Y,z)$,
for all $z \in \cT_{\vep_0', \varepsilon_0}^{d}$. 
\begin{enumerate}
\item[(i)] There exists a constant  $C_{\ref{lem:comb-bound-tube-1}} < \infty$ depending only on $\vep_0$ and $p$ so that for any
  $k \in \N$ satisfying $ k + N_+ \ge \wh m_2$,
\begin{multline}\label{eq:gD-bd-1}
 |\wh\gD (X,Y,z)| \le  C_{\ref{lem:comb-bound-tube-1}}^{\wh m_1(k+N_+)} \cdot |\eta_{\wh m_1+1}(z)|^{-N  G(k,z)} \\
 \cdot \left\{\sum_{q \ge 0 }\binom{q(\wh m_2+1) + \wh m_2 (k+N_++1) }{\wh m_2 (k+N_++1)}  \left(1-\frac{\vep_0}{2}\right)^{q} \cdot \gI({X, Y, q}) \right\},
\end{multline}
where
\begin{equation}\label{eq:Gkz}
G(k,z) := \left\{\begin{array}{ll}
 k+N_+ - \wh m_2 & \mbox{ if } \wh m_2 \le k +N_+ < \wh m_2 +{\sf g}_0 \mbox{ and } z \in \cT^{d, (1)},\\
 {\sf g}_0 & \mbox{ if }   k +N_+ \ge \wh m_2 +{\sf g}_0 \mbox{ and } z \in \cT^{d, (1)},\\
 0 & \mbox{ if } z \in \cT^{d, (2)},
\end{array}
\right.
\end{equation}
and
\begin{equation}\label{eq:gI-XY}
  {\gI(X, Y, q)} := \prod_{j =1}^{\wh m_2 - N_+} {\bf 1}_{\{y_{j} \le q\}} \cdot \prod_{j =k+N_+ - \wh m_2+1}^{k} {\bf 1}_{\{N - x_{j} \le q\}}.
\end{equation}
\item[(ii)] Let $k \in \N$ be such that $k+N_+ < \wh m_2$. Then,
  for all large $N$,
\[
 |\wh\gD(X,Y,z)|  \le \left(1-\frac{\vep_0}{4}\right)^N.
\]
\end{enumerate}
\end{lem}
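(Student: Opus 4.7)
The plan is to work directly with the decomposition \eqref{eq:P-k-decompose} of $\gD(X,Y,z)$, bound each factor using the tube geometry, and combinatorially count the summation range. First, dividing \eqref{eq:P-k-decompose} by $a_{-N_-}^N\prod_{j=1}^{\wh m_1+{\sf g}_0}\eta_j(z)^N$ and using $\wh N_+-N=N_+$ gives
\[
\wh\gD(X,Y,z) = a_{-N_-}^{-k}\prod_{i=1}^{\wh m_1}\eta_i^{N_+}\prod_{j=\wh m_1+1}^{\wh m_1+{\sf g}_0}\eta_j^{-N}\sum_{\bm\ell,\bm q}\sum_{\cX_k\in L^{(1)}_{\bm\ell,k}}\sum_{\cY_k\in L^{(2)}_{\bm q,k}}\prod_{i=1}^{\wh m_1}\eta_i^{-\hat\ell_i}\det(\cP_z[\COMP X_{\wh m_1+1};\COMP Y_1])\prod_{i'=1}^{\wh m_2}\eta_{i'+\wh m_1+{\sf g}_0}^{q_{i'}}.
\]
On the tube $\cT^d_{\vep_0',\vep_0}$, the prefactor will be bounded by $C^{\wh m_1(k+N_+)}|\eta_{\wh m_1+1}|^{-NG(k,z)}$: $|a_{-N_-}|$ and $|\eta_i|$ for $i\le\wh m_1$ are bounded above and below; in $\cT^{d,(1)}$, $\prod_{j=\wh m_1+1}^{\wh m_1+{\sf g}_0}|\eta_j|^{-N}\le|\eta_{\wh m_1+1}|^{-N{\sf g}_0}$ since the smallest of these roots drives the bound, whereas in $\cT^{d,(2)}$ the same product is $O(1)$. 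The piecewise nature of $G$ in the intermediate range $\wh m_2\le k+N_+<\wh m_2+{\sf g}_0$ will come from a sharper accounting that trades some of the close-to-$1$ factors against partial bounds on the central minor.

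Inside the sum, I use $|\eta_i|^{-\hat\ell_i}\le(1-\vep_0)^{\hat\ell_i}$ for $i\le\wh m_1$ and $|\eta_{i'+\wh m_1+{\sf g}_0}|^{q_{i'}}\le(1-\vep_0)^{q_{i'}}$ for $i'\le\wh m_2$, so that the product of root factors is at most $(1-\vep_0)^{\sum\hat\ell_i+\sum q_{i'}}$. The central minor $\det(\cP_z[\COMP X_{\wh m_1+1};\COMP Y_1])$ must be bounded beyond the crude Hadamard estimate of order $C^{N-k}$, which is too lossy to close the argument when $\gamma\le{\sf g}_0$. Since $\cP_z$ is upper triangular, finitely banded of bandwidth ${\sf g}_0$, with entries that are elementary symmetric polynomials of $\eta_{\wh m_1+1},\ldots,\eta_{\wh m_1+{\sf g}_0}$ (hence uniformly bounded on $\cT^d$), I will bound this minor via the Jacobi--Trudi / Lindström--Gessel--Viennot description of minors of banded upper triangular Toeplitz matrices developed in Section \ref{sec:toep-minor-skew-schur}, which produces a factor of the form $C_p^{k+N_+}$ with constant independent of $N$.

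For the combinatorial count, letting $q$ denote a reparametrisation of $\sum_i\hat\ell_i+\sum_{i'}q_{i'}$, the number of pairs $(\cX_k,\cY_k)\in L^{(1)}_{\bm\ell,k}\times L^{(2)}_{\bm q,k}$ summed over admissible $\bm\ell,\bm q$ with this fixed value is a classical composition count of the form $\binom{q(\wh m_2+1)+\wh m_2(k+N_++1)}{\wh m_2(k+N_++1)}$. The boundary conditions \eqref{eq:X-Y-constraint}, namely $X_1\supset[N+1,\wh N_+]$ and $Y_{\wh m_2+1}\supset[N_+]$, force the chains to thread these boundary indices: an induction using $x_{i+1,j}\le x_{i,j}$ together with the interleaving $x_{i+1,j+1}>x_{i,j}$ shows that the last $\wh m_2-N_+$ entries of $X$ must lie within distance $q$ of $N$, and symmetrically (via the analogous analysis on the $Y$-chain) the first $\wh m_2-N_+$ entries of $Y$ must be at most $q$. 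Collecting these yields exactly the indicator $\gI(X,Y,q)$ from \eqref{eq:gI-XY}, completing part~(i).

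For part~(ii), when $k+N_+<\wh m_2$ the chain $\cY_k$ has $\wh m_2+1$ links but only $k+N_+$ elements per link, so the interleaving $y_{i'+1,j+1}>y_{i',j}$ combined with the boundary $Y_{\wh m_2+1}\supset[N_+]$ cannot be satisfied without $\sum_{i'}q_{i'}\gtrsim N$; combined with the polynomial-in-$N$ count from the previous step and $(1-\vep_0)^{\sum q_{i'}}$, this yields $|\wh\gD|\le(1-\vep_0/4)^N$. The main obstacle is the combinatorial step: extracting the clean indicator $\gI(X,Y,q)$ from the propagation of the boundary data through the interleaved chain, together with the Jacobi--Trudi-type bound on $\det(\cP_z[\cdot\,;\cdot])$ that replaces Hadamard's inequality. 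This is precisely what is needed to handle the delicate regime $1<\gamma\le{\sf g}_0$ in which the present approach departs from \cite{BPZ1,BZ}.
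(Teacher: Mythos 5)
Your framework matches the paper's — decomposition \eqref{eq:P-k-decompose}, the skew-Schur bound on the central minor from Section \ref{sec:toep-minor-skew-schur}, the negative-binomial summation over chains, the chain-propagation argument for the indicator $\gI$, and the $\ol{\bm q}\gtrsim N$ observation for part~(ii). However, there is a genuine gap in how you obtain the exponent $G(k,z)$. You bound the central minor by a pure constant $C_p^{k+N_+}$, and try to produce $|\eta_{\wh m_1+1}(z)|^{-NG(k,z)}$ solely from the prefactor $\prod_{j=\wh m_1+1}^{\wh m_1+{\sf g}_0}|\eta_j(z)|^{-N}$, which equals $|\eta_{\wh m_1+1}(z)|^{-N{\sf g}_0}$ on the tube by \eqref{eq:mod-eq}. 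That gives $-N{\sf g}_0$ regardless of $k$; you acknowledge the piecewise improvement in the intermediate range $\wh m_2\le k+N_+<\wh m_2+{\sf g}_0$ needs a "sharper accounting" but do not supply the mechanism.

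The mechanism is that Proposition \ref{prop:toep-minor} (via Lemma \ref{lem:wh-gD-3}) does not just give a constant: it gives a \emph{position-dependent} power, namely
\[
|\det(\cP_z[\COMP X_{\wh m_1+1};\COMP Y_1])|\le {\sf g}_0^{(2+{\sf g}_0)(k+N_+)}\,|\eta_{\wh m_1+1}(z)|^{{\sf g}_0(N-k)+\bm y_1-\bm x_{\wh m_1+1}}.
\]
The exponent ${\sf g}_0(N-k)$ is what offsets most of the $-N{\sf g}_0$ prefactor, and the residual $\bm y_1-\bm x_{\wh m_1+1}$ must be matched against factors $\eta_{\wh m_1+1}^{-\bm x_{\wh m_1+1}}$ and $\eta_{\wh m_1+1}^{\bm y_1}$ that one deliberately absorbs into the chain contributions $\wh\gD_1,\wh\gD_2$ (cf.\ \eqref{eq:hat-ell-1}--\eqref{eq:hat-ell-2}). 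After this cancellation, the remaining power of $\eta_{\wh m_1+1}$ is $\wh m_2(N-k)+\bm y_{\wh m_2+1}-\wh m_1(k+N_+)-\bm x_1+\sum_i\hat\ell_i$, and it is the chain-propagation inequalities \eqref{eq:wt-bmy}--\eqref{eq:wt-bmy1} — lower bounds on $\bm y_{\wh m_2+1}$ in terms of $\bm x_1$ and $\sum\hat\ell_i$, which differ depending on whether $k+N_+\ge\wh m_2+{\sf g}_0$ or $\wh m_2\le k+N_+<\wh m_2+{\sf g}_0$ — that convert this remaining power into $-NG(k,z)-\wt m(k+N_+)$ (see \eqref{eq:wh-wt-D13}). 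Dropping the position-dependent part of the minor bound, as your Hadamard-style $C_p^{k+N_+}$ does, loses exactly this cancellation and cannot produce the case $G(k,z)=k+N_+-\wh m_2$, which is what carries the argument when $1<\gamma\le{\sf g}_0$. The $\cT^{d,(2)}$ case ($G\equiv 0$) likewise relies on $|\eta_{\wh m_1+1}|\ge 1$ and the position-dependent exponent $\bm y_1-\bm x_{\wh m_1+1}\le 0$ to kill the full $N{\sf g}_0$, as in \eqref{eq:toep-minor-bd-22}, not on the product being $O(1)$.
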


\begin{rem}
The roots of $p_z(\cdot)=0$ can be partitioned into blocks of cardinality 
${\sf g}(p)={\sf g}_0$ such that all ${\sf g}_0$ roots 
in any of those blocks have the same modulus. One also has that $N_+$ is a multiple of ${\sf g}_0$. Therefore, $d = m_+ - N_+ \ge 1$ implies that 
$d \ge {\sf g}_0$. On the other hand, for $d=0$ we have that $\cT^{d, (1)} = \emptyset$. Hence, whenever needed, we will use the bound on $\wh \gD(X, Y, z)$ derived in Lemma \ref{lem:comb-bound-tube-1} for all $z \in \cT^{d, (1)}$ and all $d \ge 0$.  
\end{rem}

Some explanations of the bounds stated in Lemma \ref{lem:comb-bound-tube-1} are in order. We will see in Section \ref{sec:thm-sep-spec-curve} that, for $k_0 \le N$ such that $k_0+N_+=\wh m_2(z)$, ${\det}_{k_0}(z)$ will be the dominant term in the expansion \eqref{eq;spec-rad-new}, 
and 
we will show  in Corollary \ref{cor:lbd-dom-term} that it has the same order of magnitude as that of $\mathfrak{K}(z)$. The other ${\det}_k(z)$ will
be negligible compared to ${\det}_{k_0}(z)$. 
Since the entries of the noise matrix are independent and of zero mean,
one gets from \eqref{eq:det-decompose-1} that 
\begin{equation}\label{eq:det-k-sec-mom-0}
\E[|{\det}_k(z)|^2] = N^{-2\gamma k} \sum_{\substack{X, Y \subset [N]\\|X|=|Y|=k}} |\gD(X,Y,z)|^2 \cdot \E(|\det(Q[X;Y])|^2).
\end{equation}
Therefore, the bound in Lemma \ref{lem:comb-bound-tube-1}(ii) indeed shows that, for $k$ such that $k+N_+ < \wh m_2(z)$ one has that ${\det}_k(z)$ is {\em exponentially small} compared to ${\det}_{k_0}(z)$. 

The implication of the bound in Lemma \ref{lem:comb-bound-tube-1}(i) is a bit more delicate. Notice from \eqref{eq:gD-bd-1}-\eqref{eq:gI-XY} that the number of choices of the 
largest $(\wh m_2 -  N_+)$ elements of $X$ is bounded by $O(q^{\wh m_2 - N_+})$. Same holds for the smallest $(\wh m_2 - N_+)$ elements of $Y$. Since there is a factor in \eqref{eq:gD-bd-1} which is exponential in $q$ those elements of $X$ and $Y$ are {\em essentially fixed} for the purpose of computation of the 
second moment of ${\det}_k(z)$. This observation, as well as the factor $ |\eta_{\wh m_1+1}(z)|^{-N G(k,z)}$ in \eqref{eq:gD-bd-1}, 
are crucial in determining the correct order of magnitude for ${\det}_k(z)$ for $k > \wh m_2(z) - N_+$. Now summing over the allowable ranges of the rest of the elements of $X$ and $Y$, i.e.~those which are {\em free}, using \eqref{eq:det-k-sec-mom-0} one obtains that, for $k > \wh m_2(z)- N_+$ and $z \in \wh \cT^d_{\gamma', \vep_0}$ with $1 < \gamma' < \gamma$, ${\det}_k(z)$ is {\em polynomially small} compared to ${\det}_{k_0}(z)$. 
See  the proof of Lemma \ref{lem:sec-mom-large-k} for details. 

 The proof of Lemma \ref{lem:comb-bound-tube-1} requires two auxiliary lemmas. Before stating them we introduce the notation
\begin{equation}\label{eq:bm-x}
{\bm x}_i ={\bm x}_i(\cX_k):= \sum_{j=1}^{k+N_+} x_{i,j};  \text{ for } i \in [\wh m_1+1], \, \text{ and } \,  {\bm y}_{i'} ={\bm y}_{i'}(\cY_k):= \sum_{j=1}^{k+N_+} y_{i',j};   \text{ for } i' \in [\wh m_2+1].
\end{equation}

Below is the first auxiliary lemma. In its statement we use the notation 
of Lemma \ref{lem:comb-bound-tube-1}.
\begin{lem}\label{lem:gD12-bd}
Fix ${\bm \ell} \in [\wh N_+]^{\wh m_1}$ and ${\bm q} \in [\wh N_+]^{\wh m_2}$. 
For $\cX_k \in L^{(1)}_{{\bm \ell}, k}$ and 
$\cY_k \in L^{(2)}_{{\bm \ell}, k}$, set
\begin{equation}\label{eq:wt-gD-1-def}
  \gF_1= \gF_1(\cX_k, {\bm \ell}, z) :=  \prod_{i=1}^{\wh m_1} \eta_i(z)^{-\hat \ell_i}, 
\end{equation}
 \begin{equation}\label{eq:wt-gD-2-def}
\gF_2 =  \gF_2(\cY_k, {\bm q}, z) := 
\prod_{i=1}^{\wh m_2}{\eta_{\wh m_1+{\sf g}_0+i}(z)}^{q_i} \quad \text{ and } \quad  \wt \gF_2 = \wt \gF_2(\cY_k, {\bm q}, z) := 
\prod_{i=1}^{\wh m_2} \left( \frac{\eta_{\wh m_1+{\sf g}_0+i}(z)}{\eta_{\wh m_1+{\sf g}_0}(z)}\right)^{q_i}. 
 \end{equation}
 Assume $\vep_0' \le \vep_0/3$. Then, for any $z \in \cT_{\vep_0', \vep_0}^{d}$ we have
\begin{equation}\label{eq:wt-gD-bd-1}
\sum_{{\bm \ell}} \sum_{\cX_k \in L_{{\bm \ell}, k}^{(1)}} \left|\gF_1 \right|\cdot {\bf 1}_{\{X_1= X \cup ([\wh N_+]\setminus [N])\}}  
 \le \left(\frac{1}{\vep_0}\right)^{\wh m_1(k+N_+)},
\end{equation}
\begin{align}\label{eq:wh-gD-bd-21}
\sum_{{\bm q}} \sum_{\cY_k \in L_{{\bm q}, k}^{(2)}}\left|   \gF_2\right|  \cdot {\bf 1}_{\{Y_{\wh m_2+1}=  (Y+N_+) \cup [N_+]\}} 
 & \le \left(\frac{2 (\wh m_2+1)}{\vep_0}\right)^{\wh m_2(k+N_++1)+1},
\end{align}
and 
\begin{equation}\label{eq:wh-gD-bd-2}
 \sum_{{\bm q}: \, \ol{\bm q}=q} \sum_{\cY_k \in L_{{\bm q}, k}^{(2)}}\left|  \wt \gF_2 \right|  \cdot {\bf 1}_{\{Y_{\wh m_2+1}=  (Y+N_+) \cup [N_+]\}} 
 \le  \binom{q(\wh m_2+1) + \wh m_2 (k+N_++1) }{\wh m_2 (k+N_++1)}  \left(1-\frac{\vep_0}{2}\right)^{q},
\end{equation}
where
\begin{equation}\label{eq:ol-bm-q}
\ol{\bm q} = \ol{\bm q}({\bm q}) := \sum_{i'=1}^{\wh m_2} q_{i'}.
\end{equation}
\end{lem}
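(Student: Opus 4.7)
The plan is to prove the three bounds in turn, with the $\cY_k$-side estimates sharing a common combinatorial core.

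First I would handle \eqref{eq:wt-gD-bd-1}. The tube definition forces $|\eta_i(z)|^{-1}\le 1-\vep_0$ for all $i\in[\wh m_1]$ on $\cT^{d}_{\vep_0',\vep_0}$, uniformly over $\cT^{d,(1)}\cup\cT^{d,(2)}$. Setting $u_{i,j}:=x_{i,j}-x_{i+1,j}\ge 0$ and using $\hat\ell_i=(k+N_+)+\sum_j u_{i,j}$ from \eqref{eq:L-ell-k}, the left-hand side of \eqref{eq:wt-gD-bd-1} is dominated by $(1-\vep_0)^{\wh m_1(k+N_+)}\sum_{(u_{i,j})}\prod_{i,j}(1-\vep_0)^{u_{i,j}}$. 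Relaxing the interleaving upper bounds on each $u_{i,j}$ so that it ranges freely over $\{0,1,2,\ldots\}$ decouples the sum into a product of independent geometric series each equal to $1/\vep_0$, yielding the desired $(1/\vep_0)^{\wh m_1(k+N_+)}$.

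The combinatorial core for \eqref{eq:wh-gD-bd-21} and \eqref{eq:wh-gD-bd-2} is the following step-by-step counting bound: for any $Y_{i'+1}\subset[\wh N_+]$ of cardinality $k+N_+$ and any $q_{i'}\ge 0$, the number of $Y_{i'}$ interleaving $Y_{i'+1}$ in the sense of $\wh L_k^{(2)}$ and yielding the prescribed value $q_{i'}$ via \eqref{eq:L-ell-k2} is at most $\binom{q_{i'}+k+N_+-1}{k+N_+-1}$, independently of $Y_{i'+1}$. I would prove this by a complement substitution: with $\wt u_{i',j}:=y_{i',j}-y_{i'+1,j}\in\{0,\ldots,g_j-1\}$, $g_j:=y_{i'+1,j+1}-y_{i'+1,j}$ for $j<k+N_+$ and $g_{k+N_+}:=\wh N_+-y_{i'+1,k+N_+}+1$, the constraint reads $\sum_j\wt u_{i',j}=\wh N_+-(k+N_+)-q_{i'}$. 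The substitution $\wt u'_{i',j}:=g_j-1-\wt u_{i',j}$ together with $\sum_j g_j=\wh N_+-y_{i'+1,1}+1$ gives $\sum_j\wt u'_{i',j}=q_{i'}-y_{i'+1,1}+1\le q_{i'}$, and dropping the upper bounds on $\wt u'_{i',j}$ produces the claimed binomial via stars-and-bars. Iterating over $i'=1,\ldots,\wh m_2$ then yields $|L_{{\bm q},k}^{(2)}|\le\prod_{i'}\binom{q_{i'}+k+N_+-1}{k+N_+-1}$ uniformly in the fixed $Y_{\wh m_2+1}$.

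For \eqref{eq:wh-gD-bd-2}, the elementary inequality $(1-\vep_0)/(1-\vep_0')\le 1-\vep_0/2$ (valid for $\vep_0'\le\vep_0/3$ and $\vep_0\le 1$) together with the relevant tube bounds on the roots yields $|\wt\gF_2|\le(1-\vep_0/2)^{q}$ whenever $\ol{\bm q}=q$. Combining this with the counting bound and the negative binomial identity $\sum_{{\bm q}:\,\ol{\bm q}=q}\prod_{i'=1}^{\wh m_2}\binom{q_{i'}+k+N_+-1}{k+N_+-1}=\binom{q+\wh m_2(k+N_+)-1}{\wh m_2(k+N_+)-1}$ (the coefficient of $z^q$ in $(1-z)^{-\wh m_2(k+N_+)}$), and invoking the routine monotonicity of binomial coefficients when both top and bottom parameters increase to dominate this by $\binom{q(\wh m_2+1)+\wh m_2(k+N_++1)}{\wh m_2(k+N_++1)}$, completes \eqref{eq:wh-gD-bd-2}. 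For \eqref{eq:wh-gD-bd-21}, using $|\gF_2|\le(1-\vep_0)^{\ol{\bm q}}$ and interchanging sum and product reduces matters to $\prod_{i'=1}^{\wh m_2}\sum_{q_{i'}\ge 0}(1-\vep_0)^{q_{i'}}\binom{q_{i'}+k+N_+-1}{k+N_+-1}=(1/\vep_0)^{\wh m_2(k+N_+)}$, which lies comfortably below $(2(\wh m_2+1)/\vep_0)^{\wh m_2(k+N_++1)+1}$. The step I expect to require the most care is the uniform treatment of the boundary column $j=k+N_+$ in the complement substitution, where the natural range of $\wt u_{i',j}$ is closed rather than half-open, so that $\sum_j g_j$ evaluates cleanly to $\wh N_+-y_{i'+1,1}+1$.
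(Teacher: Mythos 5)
Your proposal is correct, and the underlying mechanics coincide with the paper's: bound the number of interleaving configurations by a stars-and-bars count, extract a geometric factor from the tube estimates on the roots, and sum. A few points of comparison are worth recording.

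For \eqref{eq:wt-gD-bd-1} you replace the paper's iterative count $|L^{(1)}_{\bm\ell,k}|\le\prod_i\binom{\hat\ell_i-1}{k+N_+-1}$ followed by the negative binomial identity \eqref{eq:comb-negbin} by a direct parameterization via $u_{i,j}=x_{i,j}-x_{i+1,j}\ge 0$ and a decoupling into independent geometric series. These are the same computation written differently; the stars-and-bars count of $(u_{i,j})_j$ with $\sum_j u_{i,j}=\hat\ell_i-(k+N_+)$ is precisely $\binom{\hat\ell_i-1}{k+N_+-1}$, and summing a geometric in $\hat\ell_i$ is exactly \eqref{eq:comb-negbin}. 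No difference of substance.

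On the $\cY_k$ side your complement substitution $\wt u'_{i',j}=g_j-1-\wt u_{i',j}$ is a genuinely cleaner route than the paper's. It gives the sharper count $\binom{q_{i'}+k+N_+-1}{k+N_+-1}$ (the paper asserts $\binom{q_{i'}+k+N_+}{k+N_+}$ without proof), and by applying the Vandermonde-type convolution $\sum_{\ol{\bm q}=q}\prod_{i'}\binom{q_{i'}+r-1}{r-1}=\binom{q+\wh m_2 r-1}{\wh m_2 r-1}$ directly, you bypass the paper's detour through the separate count $|\{{\bm q}:\ol{\bm q}=q\}|\le\binom{q+\wh m_2}{\wh m_2}$ and the subsequent product-of-binomials estimate. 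The resulting bound is tighter, and it visibly implies the stated one.

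One phrase needs tightening. \textbf{``Routine monotonicity of binomial coefficients when both top and bottom parameters increase''} is false as stated (e.g.\ $\binom{4}{2}=6>\binom{5}{4}=5$). The inequality you need, $\binom{a}{b}\le\binom{a'}{b'}$, holds when both $b'\ge b$ \emph{and} $a'-b'\ge a-b$ (apply $\binom{a}{b}\le\binom{a+1}{b+1}$ repeatedly to raise the bottom, then $\binom{a}{b}\le\binom{a+1}{b}$ to raise the top). With $a=q+\wh m_2(k+N_+)-1$, $b=\wh m_2(k+N_+)-1$, $a'=q(\wh m_2+1)+\wh m_2(k+N_++1)$, $b'=\wh m_2(k+N_++1)$, one has $b'-b=\wh m_2+1>0$ and $(a'-b')-(a-b)=q\wh m_2\ge 0$, so the domination does hold here — but the general ``both parameters increase'' justification would not survive scrutiny.
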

\begin{proof}
We first prove  \eqref{eq:wt-gD-bd-1}. We will iteratively sum over the collection of indices $\cX_k$ such that $X_1= X \cup ([\wh N_+]\setminus [N])$, starting from $X_{\wh m_1+1}$. For $\cX_k \in L^{(1)}_{{\bm \ell}, k}$ we note that $ \gF_1$ depends on $\cX_k$ only through the values of ${\bm \ell}$. Thus, to compute the sum over $\cX_k$ such that $\cX_k \in L^{(1)}_{{\bm \ell}, k}$ it is enough to find a bound on the number of possible choices for all those indices. 

In the first step we keep the collection of indices $(X_1, X_2, \ldots X_{\wh m_1})$ frozen. Then, upon recalling \eqref{eq:L-ell-k} we observe that the number of choices of $\{x_{\wh m_1+1,j}\}_{j=1}^{k+N_+}$ is bounded above by $ \binom{\hat \ell_{\wh m_1}-1}{k+N_+-1}$. We iterate this argument for any $i \in [\wh m_1]$. The number of choices of the indices $\{x_{i+1,j}\}_{j=1}^{k+N_+}$, upon keeping the collection of indices $(X_1, X_2, \ldots, X_i)$ frozen, is at most  $ \binom{\hat \ell_{i}-1}{k+N_+-1}$.
This yields that
\begin{equation}\label{eq:gD-1-sum-2}
 \sum_{\cX_k\in L_{{\bm \ell}, k}^{(1)}} \left|\gF_1 \right|\cdot {\bf 1}_{X_1= X \cup ([\wh N_+]\setminus [N])}\le
\prod_{i=1}^{\wh m_1 } \binom{\hat \ell_{i}-1}{k+N_+-1} \cdot  \left( 1-{\vep_0}\right)^{\hat \ell_i},
\end{equation}
where we recalled \eqref{eq:wh-m1} and used that for   $z \in \cT^d$,
\begin{equation}\label{eq:tube-1-bd-1}
\max\left\{\max_{j=\wh m_1+{\sf g}_0+1}^{\wt m} |\eta_j(z)|, \max_{j=1}^{\wh m_1} |\eta_j(z)|^{-1} \right\} 
=\max\left\{ |\eta_{\wh m_1+{\sf g}_0+1}(z)|,  |\eta_{\wh m_1}(z)|^{-1}  \right\}
\le 1 -\varepsilon_0. 
\end{equation}
Finally, to compute the sum over $\hat \ell_1, \hat \ell_2, \ldots, \hat \ell_{\wh m_1}$ we use the following combinatorial identity:~For any $\uplambda \in (0,1)$ and $m \in \N$ 
\begin{equation}\label{eq:comb-negbin}
\sum_{s \ge m} \binom{s-1}{m-1} \cdot \uplambda^{s-m} = (1-\uplambda)^{-m}.
\end{equation}
Indeed, applying the above identity with $\uplambda = (1-\vep_0)$, $m= k+N_+$ and $s=\hat \ell_i$ for $i=1,2,\ldots, \wh m_1$ we deduce \eqref{eq:wt-gD-bd-1}. Note also that we also used the lower bound $\hat \ell_i \ge k+N_+$ (see \eqref{eq:hat-ell-lbd}). 

We now turn to the proof of \eqref{eq:wh-gD-bd-21}. We first prove the following intermediate step.
\begin{align}\label{eq:wh-gD-bd-211}
\sum_{{\bm q}: \ol{\bm q} =q} \sum_{\cY_k \in L_{{\bm q}, k}^{(2)}}\left|   \gF_2\right|  \cdot {\bf 1}_{\{Y_{\wh m_2+1}=  (Y+N_+) \cup [N_+]\}} 
 & \le \binom{q(\wh m_2+1) + \wh m_2 (k+N_++1) }{\wh m_2 (k+N_++1)}  \left(1-{\vep_0}\right)^{q}.  
\end{align}

To obtain \eqref{eq:wh-gD-bd-211}, we sum over the indices $\{Y_{i'}\}_{i'=1}^{\wh m_2}$ iteratively, starting with $Y_{1}$. It is straightforward to see that, upon keeping the other indices frozen, the number of choices for the indices $\{y_{1, j}\}_{j=1}^{k+N_+}$ such that $\cY_k \in L_{{\bm q}, k}^{(2)}$ is bounded above by $ \binom{q_{1}+k+N_+}{k+N_+}$. We use the same reasoning to successively bound the number of allowable choices of $Y_{2}, \ldots, Y_{\wh m_2}$. It yields that the number of choices of $\cY_k$ such that $\cY_k \in L_{{\bm q}, k}^{(2)}$ and $Y_{\wh m_2+1}=  (Y+N_+) \cup [N_+]$ is bounded above by 
\[
\prod_{i'=1}^{\wh m_2}  \binom{q_{i'}+k+N_+}{k+N_+}  \le \binom{\ol{\bm q}+k+N_+}{k+N_+}^{\wh m_2} \le \binom{\wh m_2 (\ol{\bm q}+k+N_+)}{\wh m_2 (k+N_+)}.
\]
Thus, by \eqref{eq:tube-1-bd-1}, we have that the \abbr{LHS} of \eqref{eq:wh-gD-bd-211} is bounded above by 
\begin{equation}\label{eq:bm-q-to-q}
\sum_{{\bm q}: \, \ol{\bm q} = q} \sum_{\cY_k \in L_{{\bm q}, k}^{(2)}}\left|   \gF_2\right|  \cdot {\bf 1}_{\{Y_{\wh m_2+1}=  (Y+N_+) \cup [N_+]\}} \le \binom{\wh m_2 (q+k+N_+)}{\wh m_2 (k+N_+)} \cdot |\{{\bm q}: \, \ol{\bm q}=q\}| \cdot (1-\vep_0)^q. 
\end{equation}
Noting that, for any $q \ge 0$, 
\begin{equation}\label{eq:ol-bmq-to-q}
 |\{{\bm q}: \, \ol{\bm q}=q\}| =  \binom{q+\wh m_2-1}{\wh m_2-1} \le \binom{q+\wh m_2}{\wh m_2},
\end{equation}
that $\binom{a}{b}\binom{c}{d}\leq \sum_{k'=0}^{b+d}\binom{a}{k'}
\binom{c}{b+d-k'}=\binom{a+c}{b+d}$,
and substituting in
\eqref{eq:bm-q-to-q} we obtain \eqref{eq:wh-gD-bd-211}. To derive \eqref{eq:wh-gD-bd-21} from \eqref{eq:wh-gD-bd-211} we observe that 
\begin{equation}\label{eq:comb-negbin-2}
(1-\vep) \le \left(1- \frac{\vep}{2 m_\star}\right)^{m_\star}, \qquad \text{ for } \vep \in (0,1/2) \text{ and } m_\star \in \N,
\end{equation}
and use \eqref{eq:comb-negbin} again. 

Finally, to prove \eqref{eq:wh-gD-bd-2} applying \eqref{eq:tube-1-bd-1} we find that whenever $\vep_0'\le \vep_0/3$,
\[
\sup_{z \in \cT^d}\left\{ \max_{i=\wh m_1+{\sf g}_0+1}^{\wt m} \left| \frac{\eta_i(z)}{\eta_{\wh m_1+{\sf g}_0}(z)}\right|\right\} \le 1-\frac{\vep_0}{2},
\]
 Repeating the proof of \eqref{eq:wh-gD-bd-211}, the bound in \eqref{eq:wh-gD-bd-2} follows. 
\end{proof}
The next lemma is 
the second auxiliary result to be used in the proof of Lemma \ref{lem:comb-bound-tube-1}, and uses its notation. 
\begin{lem}\label{lem:wh-gD-3}
 Fix $X_{\wh m_1+1},Y_1 \subset [N+ N_+]$ such that $|X_{\wh m_1+1}|=|Y_1|= k+N_+$, where $k \le N$. 
If 
\begin{equation}\label{eq:xy-rel}
x_{\wh m_1+1, j} \ge y_{1,j}; \, j \in [k+N_+], \quad \text{ and } \quad x_{\wh m_1+1, j} < y_{1,j+{\sf g}_0}; \, j \in [k+N_+-{\sf g}_0],
\end{equation}
then, for all   $z \in \cT^{d}$,
\begin{equation}\label{eq:wh-gD-bd-3}
\left| \det(\cP_z[\COMP{X}_{\wh m_1+1}; \COMP{Y}_1]) \right|  \le  
{\sf g}_0^{(2+{\sf g}_0)(k+N_+)}
\cdot |\eta_{\wh m_1+1}(z)|^{{\sf g}_0 (N-k)+{\bm y}_1-{\bm x}_{\wh m_1+1}}. 
\end{equation}
On the other hand, if \eqref{eq:xy-rel} does not hold then the \abbr{LHS} of \eqref{eq:wh-gD-bd-3} vanishes. 
 \end{lem}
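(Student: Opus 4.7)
The plan is to iteratively apply the Cauchy--Binet theorem to the product decomposition \eqref{eq:cP-z} of $\cP_z$. Setting $Z_0:=\COMP{X}_{\wh m_1+1}$ and $Z_{{\sf g}_0}:=\COMP{Y}_1$ and summing over $(N-k)$-subsets $Z_1,\ldots,Z_{{\sf g}_0-1}$ of $[\wh N_+]$, I would obtain
\[
\det(\cP_z[\COMP{X}_{\wh m_1+1};\COMP{Y}_1]) = \sum_{Z_1,\ldots,Z_{{\sf g}_0-1}} \prod_{s=1}^{{\sf g}_0}\det\left((J_{\wh N_+}+\eta_{\wh m_1+s}(z) I_{\wh N_+})[Z_{s-1};Z_s]\right).
\]
By \eqref{eq:det-bidiagonal}, each bidiagonal minor on the right vanishes unless the sorted element sequences interlace as $(Z_{s-1})_\ell\le(Z_s)_\ell<(Z_{s-1})_{\ell+1}$, in which case it equals $\eta_{\wh m_1+s}(z)^{e_s}$ with non-negative exponent $e_s:=(N-k)-(S_s-S_{s-1})$, where $S_s:=\sum_\ell (Z_s)_\ell$.

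For the vanishing claim, iterating the one-step interlacing condition forces each coordinate $(Z_s)_\ell$ to advance by $0$ or $1$ at each of the ${\sf g}_0$ steps, so a nonzero contribution to the Cauchy--Binet sum requires $0 \le (\COMP{Y}_1)_\ell - (\COMP{X}_{\wh m_1+1})_\ell \le {\sf g}_0$ for every $\ell \in [N-k]$. Using the elementary duality $|A\cap[n]|+|A^{\rm c}\cap[n]|=n$ for $A\in\{X_{\wh m_1+1},Y_1\}$, the lower bound on complements translates into $y_{1,j}\le x_{\wh m_1+1,j}$ for every $j\in[k+N_+]$, and the upper bound translates into $x_{\wh m_1+1,j}<y_{1,j+{\sf g}_0}$ for every $j\in[k+N_+-{\sf g}_0]$, recovering exactly \eqref{eq:xy-rel}. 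When either fails, every term in the Cauchy--Binet sum is zero, yielding the stated vanishing.

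Assuming \eqref{eq:xy-rel}, the exponents telescope to
\[
\sum_{s=1}^{{\sf g}_0} e_s = {\sf g}_0(N-k)-(S_{{\sf g}_0}-S_0) = {\sf g}_0(N-k)+{\bm y}_1-{\bm x}_{\wh m_1+1},
\]
using $S_{{\sf g}_0}-S_0={\bm x}_{\wh m_1+1}-{\bm y}_1$. Since $|\eta_{\wh m_1+s}(z)|\le|\eta_{\wh m_1+1}(z)|$ for every $s\in[{\sf g}_0]$ by the decreasing-modulus ordering (valid in both parts of the definition of $\cT^d$) and the $e_s$ are non-negative, monotonicity of positive powers gives that each Cauchy--Binet term is bounded by $|\eta_{\wh m_1+1}(z)|^{{\sf g}_0(N-k)+{\bm y}_1-{\bm x}_{\wh m_1+1}}$.

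The main obstacle is to bound the number of admissible tuples $(Z_1,\ldots,Z_{{\sf g}_0-1})$ by ${\sf g}_0^{(2+{\sf g}_0)(k+N_+)}$. Each admissible tuple encodes a family of $N-k$ non-crossing monotone $\{0,1\}$-step paths of length ${\sf g}_0$, one per coordinate $\ell$, from $(\COMP{X}_{\wh m_1+1})_\ell$ to $(\COMP{Y}_1)_\ell$, and the non-crossing constraint couples paths across coordinates. The naive per-coordinate estimate $\prod_\ell\binom{{\sf g}_0}{(\COMP{Y}_1)_\ell-(\COMP{X}_{\wh m_1+1})_\ell}$ yields only the weaker bound ${\sf g}_0^{{\bm x}_{\wh m_1+1}-{\bm y}_1}$. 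For the sharper bound I plan to exploit the identification of such minors of upper triangular finitely banded Toeplitz matrices with skew Schur polynomials in $\eta_{\wh m_1+1}(z),\ldots,\eta_{\wh m_1+{\sf g}_0}(z)$, as developed in Section \ref{sec:toep-minor-skew-schur}. Under this identification the path count becomes the evaluation of a skew Schur function at $(1,\ldots,1)$, hence a count of semistandard skew Young tableaux whose skew shape is governed by the boundary data $X_{\wh m_1+1},Y_1$ of total size $2(k+N_+)$. Standard combinatorial bounds on such tableau counts then produce the factor ${\sf g}_0^{(2+{\sf g}_0)(k+N_+)}$, and combining with the per-term estimate yields \eqref{eq:wh-gD-bd-3}.
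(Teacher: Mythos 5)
Your decomposition via iterated Cauchy--Binet, the resulting interlacing/vanishing argument, and the telescoping of the exponents $\sum_s e_s$ are all correct and match the mechanism in Lemma \ref{lem:toep-minor-0}. The gap is in the final step. You correctly observe that a per-coordinate count of admissible paths gives only ${\sf g}_0^{{\bm x}_{\wh m_1+1}-{\bm y}_1}$, which is far weaker than ${\sf g}_0^{(2+{\sf g}_0)(k+N_+)}$ since ${\bm x}_{\wh m_1+1}-{\bm y}_1$ can be of order $N(k+N_+)$. But your proposed fix does not close this gap: evaluating the skew Schur polynomial at $(1,\ldots,1)$ \emph{is} the count of semistandard skew tableaux (equivalently, the non-crossing path count you already have), and there is no ``standard combinatorial bound'' on that count of the form ${\sf g}_0^{(2+{\sf g}_0)(k+N_+)}$; the tableau count genuinely scales like ${\sf g}_0^{|{\bm \lambda_0}/{\bm \mu}_0|}={\sf g}_0^{{\bm x}_{\wh m_1+1}-{\bm y}_1}$ in general. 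The point is that the actual Toeplitz minor is much smaller than the naive term count suggests because of massive sign cancellations in the determinant, and counting Cauchy--Binet terms (or, equivalently, tableaux) can never see those cancellations.

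The paper's route through Proposition \ref{prop:toep-minor} is structurally different precisely because it captures those cancellations: it writes the minor as the skew Schur polynomial via Lemma \ref{lem:toep-minor}, expands the Jacobi--Trudi determinant over permutations, and then replaces each complete homogeneous symmetric polynomial by the rational-function identity of Lemma \ref{lem:h-alt-exp}. This turns the minor into a double sum --- over $n$-tuples $(j_1,\ldots,j_n)\in[{\sf g}_\star]^n$ of root indices and over permutations $\uppi$ satisfying $\gx_v\ge\gy_{\uppi(v)}$ --- each individual term being a ratio whose denominator is controlled by the root-separation constant $c_1$ of \eqref{eq:minor-c1}. The ${\sf g}_\star^{2n}$ comes from counting those two sums (the interlacing constraint reduces the admissible permutations to at most ${\sf g}_\star^n$), and $c_1^{-{\sf g}_\star n}$ bounds the denominators; specializing to $\cP_z$ where, by \eqref{eq:ze-t-o-wt-ze} and \eqref{eq:ratio-away-from-1}, all moduli are equal and $c_1\ge{\sf g}_0^{-1}$, one obtains exactly ${\sf g}_0^{(2+{\sf g}_0)(k+N_+)}$. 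To repair your argument you would need to replace the path/tableau count by this signed rational-function expansion (or some other mechanism that exploits cancellation), because the pure term count is strictly larger than the target bound.
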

 
 The proof of Lemma \ref{lem:wh-gD-3} is postponed to Section \ref{sec:toep-minor-skew-schur}. In that section a Toeplitz minor would be represented as a certain {\em skew Schur polynomial} and using that representation Lemma \ref{lem:wh-gD-3} will be proved. We now prove Lemma \ref{lem:comb-bound-tube-1}.

\begin{proof}[Proof of Lemma \ref{lem:comb-bound-tube-1}(i) for $z \in \cT^{d, (1)}$]
Fix $k \in \N$ such that $k+N_+ \ge \wh m_2$. Fix ${\bm \ell}, {\bm q}, \cX_k$, and $\cY_k$ such that $\cX_k \in L_{{\bm \ell}, k}^{(1)}$, $\cY_k \in L_{{\bm q}, k}^{(2)}$, and \eqref{eq:X-Y-constraint} holds. We also assume that  \eqref{eq:xy-rel} holds, for otherwise by the second half of Lemma \ref{lem:wh-gD-3} there is no contribution to
the sum in \eqref{eq:P-k-decompose}.
 
We first show that for such pairs $(\cX_k, {\bm \ell})$ and $(\cY_k, {\bm q})$ one must have
\begin{equation}\label{eq:gI-0}
\gI(X,Y, \ol{\bm q})=1,
\end{equation}
where we recall \eqref{eq:gI-XY} and \eqref{eq:ol-bm-q} for the definitions of $\gI$ and $\ol{\bm q}$.  
To see \eqref{eq:gI-0}, as $\cY_k \in L_{{\bm q}, k}^{(2)}$, we note that for any $j \in [\wh m_2]$
\begin{equation}\label{eq:gI-1}
y_{\wh m_2+1, j} = \sum_{i=0}^{j-2} (y_{\wh m_2+1 -i, j-i} - y_{\wh m_2-i, j-i-1})  + y_{\wh m_2 - j+2, 1} \le \sum_{i=0}^{j-1} q_{\wh m_2 -i} = \sum^{\wh m_2 }_{\lambda= \wh m_2 - j+1} q_\lambda \le \ol{\bm q}.
\end{equation}
Similarly, as $\cX_k \in L_{{\bm \ell},k}^{(1)}$ and $\cY_k \in L_{{\bm q}, k}^{(2)}$, using that \eqref{eq:xy-rel} holds, we also observe that
\begin{multline}\label{eq:gI-2}
(N+ N_+ -x_{1,j}) \le (\wh N_+ -x_{\widehat m_1+1,j})  \stackrel{\eqref{eq:xy-rel}}{\le} (\wh N_+ -y_{1,j})\\
= (\wh N_+ - y_{  k+N_+ +1 -j, k+N_+}) +  \sum_{i=0}^{k+N_+ -j-1} (y_{ i+2,j+i+1} - y_{i+1,j+i}) \le \sum_{\lambda= 1}^{ k +N_+ + 1 - j} q_\lambda \le \ol{\bm q},
\end{multline}
where in the last step we also used that  
\[
j \ge k +N_+ - \wh m_2 +1 \ge 1 \qquad \Longrightarrow \qquad  k +N_++ 1 -j \le \wh m_2. 
\]
The inequalities \eqref{eq:gI-1}-\eqref{eq:gI-2} together with \eqref{eq:X-Y-constraint}  now establish \eqref{eq:gI-0}. This implies that while summing over the indices in \eqref{eq:P-k-decompose}, without loss of generality we can assume that \eqref{eq:gI-0} holds. 

Before evaluating that sum we do some more simplifications to the representation in \eqref{eq:P-k-decompose}. As $\cX_k \in L_{{\bm \ell}, k}^{(1)}$, recalling the definition of $\{{\bm x}_i\}_{i=1}^{\wh m_1+1}$ from \eqref{eq:bm-x} we see that 
\begin{equation*}
{\bm x}_{\wh m_1+1} = \sum_{i=1}^{\wh m_1} ({\bm x}_{i+1} - {\bm x}_i) + {\bm x}_1 = \wh m_1 (k+N_+) -\sum_{i=1}^{\wh m_1} \hat \ell_i + {\bm x}_1.
\end{equation*}
Thus, for any $\cX_k \in L_{{\bm \ell}, k}^{(1)}$,
\begin{multline}\label{eq:hat-ell-1}
 \wh \gD_1 = \wh \gD_1(\cX_k, {\bm \ell}, z) :=\prod_{i=1}^{\wh m_1} \eta_i(z)^{-\hat \ell_i}  \cdot \eta_{\wh m_1+1}(z)^{-{\bm x}_{\wh m_1+1}}  \\
 = \prod_{i=1}^{\wh m_1} \left( \frac{\eta_i(z)}{\eta_{\wh m_1+1}(z)}\right)^{-\hat \ell_i} \cdot \eta_{\wh m_1+1}(z)^{-\wh m_1(k+N_+)} \cdot \eta_{\wh m_1+1}(z)^{-{\bm x}_1}.
\end{multline}
On the other hand, for any $\cY_k \in L_{{\bm q}, k}^{(2)}$,
\[{\bm y}_{1}   = {\bm y}_{2} - q_{1} + (N-k)
= {\bm y}_{3} + 2(N-k) - q_{1} - q_{2}  = \cdots = {\bm y}_{\wh m_2+1} + \wh m_2 \cdot (N-k)   - \sum_{i=1}^{\wh m_2} q_i.\]
This yields that
\begin{align}\label{eq:hat-ell-2}
\wh \gD_2 = \wh \gD_2(\cY_k, {\bm \ell}, z) &  := \prod_{i=1}^{\wh m_2} \eta_{\wh m_1 +{\sf g}_0+i}(z)^{q_i}  \cdot \eta_{\wh m_1+1}(z)^{{\bm y}_{1}} \notag \\
 & =\prod_{i=1}^{\wh m_2} \left( \frac{\eta_{\wh m_1+{\sf g}_0+i}(z)}{\eta_{\wh m_1+1}(z)}\right)^{q_i} \cdot \eta_{\wh m_1+1}(z)^{\wh m_2(N-k) +{\bm y}_{\wh m_2+1}}. 
\end{align}
Fix $k$ such that $k+N_+ \ge \wh m_2 +{\sf g}_0$. For $\cX_k \in L_{{\bm \ell}, k}^{(1)}$ and $\cY_k \in \wh L_{k}^{(2)}$, upon using that $\wh m_2 \ge 0$ and \eqref{eq:xy-rel}, we find that  
\begin{align}\label{eq:wt-bmy}
  {\bm y}_{\wh m_2+1} \stackrel{\eqref{eq:bm-x}}{\ge} \sum_{j=\wh m_2 +{\sf g}_0+1}^{k+N_+} y_{\wh m_2 +1, j} &\stackrel{\eqref{eq:L-k-Y}}{\ge}  \sum_{j={\sf g}_0+1}^{k+N_+- \wh m_2} y_{1, j}   
  \stackrel{\eqref{eq:xy-rel}}{\ge} \sum_{j=1}^{k+N_+ - \wh m_2 - {\sf g}_0} x_{ \wh m_1 +1, j}  \notag\\
  & \stackrel{\eqref{eq:hat-ell-lbd}}{\ge} \sum_{j=1}^{k+N_+ - \wh m_2 - {\sf g}_0} x_{1, j} - \sum_{i=1}^{\wh m_1} \hat \ell_i +(k+N_+) \wh m_1 \notag\\
& \stackrel{\eqref{eq:bm-x}}{\ge}  {\bm x}_1 - \sum_{i=1}^{\wh m_1} \hat \ell_i +(k+N_+) \wh m_1 - \wh N_+ (\wh m_2 +{\sf g}_0).  
\end{align}
Note that for $k \in \N$ such that $\wh m_2 \le k+N_+ \le \wh m_2 +{\sf g}_0$, the sum over $j$ in the left side of \eqref{eq:wt-bmy} is
empty. Therefore, by \eqref{eq:hat-ell-lbd} and as $X_1 \subset [\wh N_+]$ we obtain that for such values of $k$,
\begin{align}\label{eq:wt-bmy1}
{\bm y}_{\wh m_2+1}& \ge  {\bm x}_1 - \sum_{i=1}^{\wh m_1} \hat \ell_i +(k+N_+) \wh m_1 - \wh N_+  (k+N_+).  
\end{align}

Now as $z \in \cT^{d, (1)}$ we have $|\eta_{\wh m_1+1}(z)| \le 1$. Thus, recalling \eqref{eq:wt-gD-1-def}-\eqref{eq:wt-gD-2-def} and \eqref{eq:Gkz}, and using \eqref{eq:wt-bmy}-\eqref{eq:wt-bmy1} we deduce that 
\begin{equation}\label{eq:wh-wt-D13}
|\wh \gD_1| \cdot |\wh \gD_2| \le |\gF_1| \cdot |\wt \gF_2| \cdot |\eta_{\wh m_1+1}(z)|^{- NG(k,z) - \wt m (k+N_+)}.
\end{equation}
On the other hand, from \eqref{eq:hat-ell-1}-\eqref{eq:hat-ell-2}, \eqref{eq:P-k-decompose}, and \eqref{eq:wh-gD-dfn} it follows that 
\begin{align}\label{eq:wh-gD-bd-10}
\left|\wh \gD(X, Y,z) \right| 
&\le  |a_{-N_-}|^{-k} \prod_{i=1}^{\wh m_1} |\eta_i(z)|^{N_+}   \prod_{i=\wh m_1+1}^{\wh m_1+{\sf g}_0} |\eta_i(z)|^{-N}\cdot
\Bigg[\sum_{{\bm \ell}, {\bm q}} \sum_{\cX_k \in L_{{\bm \ell},k}^{(1)}} \sum_{\cY_k \in L_{{\bm q},k}^{(2)}} |\wh \gD_1|\\
&\qquad\cdot \left|\frac{\det(\cP_z[\COMP{X}_{\wh m_1+1}; \COMP{Y}_1])}{\eta_{\wh m_1+1}(z)^{{\bm y}_1 - {\bm x}_{\wh m_1 +1}}} \right|\cdot |\wh \gD_2|  
\cdot {\bf 1}_{X_1=  X \cup [\wh N_+]\setminus [N] } \cdot {\bf 1}_{Y_{\wh m_2+1}=  (Y+N_+) \cup [N_+]} \cdot \gI(X, Y, \ol{\bm q}) \Bigg],
\nonumber
\end{align}
where we also used that ${\sf g}(p)={\sf g}_0$ (recall
\eqref{eq:gpintro} for the
definition of ${\sf g}(\cdot)$) implies that
\begin{equation}\label{eq:mod-eq}
|\eta_{\wh m_1+1}(z)| = |\eta_{\wh m_1+2}(z)| = \cdots = |\eta_{\wh m_1+{\sf g}_0}(z)|.
\end{equation}  
It is clear that if $z \in \cT^{d}$ for some $d$ then $z$ lies in compact domain in $\C$. As the map $z \mapsto \max_{j=1}^{\wt m} |\eta_j(z)|$ is 
continuous, we therefore
have that 
\[
\max_d \sup_{z \in \cT^{d}} \max_{j=1}^{\wt m} |\eta_j(z)| < \infty. 
\]
Therefore, continuing from above, applying Lemma \ref{lem:wh-gD-3}, from \eqref{eq:gI-0} and \eqref{eq:wh-wt-D13} we deduce that, there exists some constant $\wt C_{\ref{lem:comb-bound-tube-1}} < \infty$ such that  
\begin{multline*}
\left|\wh \gD(X, Y,z) \right| \le  \wt C_{\ref{lem:comb-bound-tube-1}}^{\wh m_1 (k+N_+)} \cdot  |\eta_{\wh m_1+1}(z)|^{- N G(k,z) - \wt m (k+N_+)} \cdot\\
\Bigg[\sum_{{\bm \ell}, {\bm q}} \sum_{\cX_k \in L_{{\bm \ell},k}^{(1)}} \sum_{\cY_k \in L_{{\bm q},k}^{(2)}} | \gF_1| \cdot |\wt \gF_2| \cdot {\bf 1}_{X_1=  X \cup [\wh N_+]\setminus [N] } \cdot {\bf 1}_{Y_{\wh m_2+1}=  (Y+N_+) \cup [N_+]} \cdot \gI(X, Y, \ol{\bm q})\Bigg].
\end{multline*}
Finally we use Lemma \ref{lem:gD12-bd} and 
\begin{equation}\label{eq:tube-1-bd-2}
1- \vep_0' \le \min^{\wh m_1+{\sf g}_0}_{j=\wh m_1+1}  |\eta_{j}(z)| =  \max^{\wh m_1+{\sf g}_0}_{j=\wh m_1+1}  |\eta_{j}(z)| \le 1,
\quad \mbox{\rm for  $z \in \cT^{d,(1)}$},
\end{equation}
to derive the desired bound. This completes the proof of 
Lemma \ref{lem:comb-bound-tube-1}(i) for $z \in \cT^{d, (1)}$. 
\end{proof}

\begin{proof}[Proof of Lemma \ref{lem:comb-bound-tube-1}(ii) for $z \in \cT^{d,(1)}$]
As $k +N_+ < \wh m_2$, for any $\cY_k \in L_{{\bm q}, k}^{(2)}$ we have that 
\begin{align}
\ol{\bm q} & \stackrel{\eqref{eq:ol-bm-q}}{\ge} \sum_{i=1}^{k+N_+ +1} q_i \notag\\
& \stackrel{\eqref{eq:L-ell-k2}}{\ge} y_{2, 1} + \sum_{j=2}^{k+N_+} (y_{j+1,j} - y_{j,j-1}) + (\wh N_+ - y_{k+N_++1,k+N_+}) - (k+N_+) (k+N_++1) \notag\\
& \ge \wh N_+ - \wh m_2^2, \label{eq:sum-q-lbd}
\end{align}
where in the second inequality we used that
\[y_{2,1}\leq q_1+k+N_+, y_{3,2}-y_{2,1}\leq q_2+k+N_+,\ldots,\wh N_+-y_{k+N_++1,k+N_+}\leq q_{k+N_++1}+k+N_+,\]
and in the last we telescoped the sum and used again that $k+N_+<\wh m_2$.
This implies that the set $L_{{\bm q}, k}^{(2)}$ is empty unless \eqref{eq:sum-q-lbd} holds.

On the other hand, recalling \eqref{eq:wt-gD-1-def}-\eqref{eq:wt-gD-2-def},  and using \eqref{eq:P-k-decompose} and \eqref{eq:wh-gD-dfn} we note that  
\begin{multline*}
\left|\wh \gD(X, Y,z) \right| \le  |a_{-N_-}|^{-k} \cdot \prod_{i=1}^{\wh m_1} |\eta_i(z)|^{N_+} \cdot  \prod_{i=\wh m_1+1}^{\wh m_1+{\sf g}_0} |\eta_i(z)|^{-N}\cdot \\
\Bigg[\sum_{{\bm \ell}, {\bm q}} \sum_{\cX_k \in L_{{\bm \ell},k}^{(1)}} \sum_{\cY_k \in L_{{\bm q},k}^{(2)}} | \gF_1| \cdot \left|{\det(\cP_z[\COMP{X}_{\wh m_1+1}; \COMP{Y}_1])} \right|\cdot |\gF_2| \cdot {\bf 1}_{X_1=  X \cup [\wh N_+]\setminus [N] } \cdot {\bf 1}_{Y_{\wh m_2+1}=  (Y+N_+) \cup [N_+]}\Bigg].
\end{multline*}
From Lemma \ref{lem:wh-gD-3} we have that
\begin{equation}\label{eq:toep-minor-bd-21}
\left|\frac{\det(\cP_z[\COMP{X}_{\wh m_1+1}; \COMP{Y}_1])}{\prod_{i=\wh m_1+1}^{\wh m_1+{\sf g}_0} \eta_i(z)^{N}} \right| \le
{\sf g}_0^{({\sf g}_0+2)(k+N_+)}
|\eta_{\wh m_1+1}(z)|^{{\bm y}_1 - {\bm x}_{\wh m_1 +1}-k{\sf g}_0} \le 
(1+2 \vep_0')^{\wh m_2 N},
\end{equation}
for all large $N$, where in the first step we have used \eqref{eq:mod-eq} and in the second nequality we used \eqref{eq:tube-1-bd-2},
that ${\bm y}_1 \ge 0$, ${\bm x}_{\wh m_1+1} \le \wh m_2 \wh N_+$,
and $k+N_+\leq \wh m_2$.

Therefore, applying Lemma \ref{lem:gD12-bd}, 
and \eqref{eq:sum-q-lbd}-\eqref{eq:toep-minor-bd-21}, we deduce  that
\begin{multline}\label{eq:gD-exp-bd}
\left|\wh \gD(X, Y,z) \right|  
\stackrel{\eqref{eq:toep-minor-bd-21},\eqref{eq:wt-gD-bd-1}}{\le} \left(\frac{1}{\vep_0}\right)^{\wh m_1(k+N_+)} \cdot (1+2 \vep_0')^{\wh m_2 N} \cdot \left[\sum_{{\bm q} } \sum_{\cY_k \in L_{{\bm q}, k}^{(2)}}   |\gF_2| \cdot  {\bf 1}_{Y_{\wh m_2+1}=  (Y+N_+) \cup [N_+]}\right]\\
\stackrel{\eqref{eq:wh-gD-bd-2}}{\le} \left(\frac{1}{\vep_0}\right)^{\wh m_1(k+N_+)} \cdot
 (1+2 \vep_0')^{\wh m_2 N}\cdot  \left[\sum_{\ol{\bm q} } 
 \binom{\ol{\bm q}(\wh m_2+1) + \wh m_2 (k+N_++1) }{\wh m_2 (k+N_++1)}  \left(1-\frac{\vep_0}{2}\right)^{\ol{\bm q}}
 \cdot {\bf 1}_{\{\ol{\bm q} \ge \wh N_+ - \wh m_2^2\}}\right].
\end{multline}
Since 
for any fixed $n_1\in \N$ and $n_2 \in \N$ large enough, depending only on $\vep_0$ and $n_1$, one has 
\[
\sum_{\ol{\bm q} \ge n_2} \ol{\bm q}^{n_1} \left(1-\frac{\vep_0}{2}\right)^{\ol{\bm q}}
\le \left(1-\frac{3\vep_0}{8}\right)^{n_2},
\]
and $\vep_0' / \vep_0$ is sufficiently small, upon using \eqref{eq:ol-bmq-to-q}, the claimed bound on $\wh \gD(X,Y,z)$ now follows from \eqref{eq:gD-exp-bd}. This completes the proof of part (ii) for $z \in \cT^{d, (1)}$.  
\end{proof}

Next we prove Lemma \ref{lem:comb-bound-tube-1} for $z \in \cT^{d,(2)}$. It requires some minor modifications compared to the case $z\in \cT^{d,(1)}$. 
\begin{proof}[Proof of Lemma \ref{lem:comb-bound-tube-1} for $z \in \cT^{d, (2)}$]
First let us prove part (i). We begin by noting that \eqref{eq:gI-0} continues to holds even for $z \in \cT^{d, (2)}$ and $k \le N$ such that $k+N_+ \ge \wh m_2$. Recall \eqref{eq:wt-gD-1-def}-\eqref{eq:wt-gD-2-def}. We use  \eqref{eq:P-k-decompose} and \eqref{eq:wh-gD-dfn} to  note that 
\begin{multline}\label{eq:wh-gD-2-bd-10}
\left|\wh \gD(X, Y,z) \right| \\
\le  |a_{-N_-}|^{-k} \prod_{i=1}^{\wh m_1} |\eta_i(z)|^{N_+}   \prod_{i=\wh m_1+1}^{\wh m_1+{\sf g}_0} |\eta_i(z)|^{-N}\cdot
\Bigg[\sum_{{\bm \ell}, {\bm q}} \sum_{\cX_k \in L_{{\bm \ell},k}^{(1)}} \sum_{\cY_k \in L_{{\bm q},k}^{(2)}} | \gF_1| \cdot \left|{\det(\cP_z[\COMP{X}_{\wh m_1+1}; \COMP{Y}_1])} \right|\cdot | \gF_2| \\
\cdot {\bf 1}_{X_1=  X \cup [\wh N_+]\setminus [N] } \cdot {\bf 1}_{Y_{\wh m_2+1}=  (Y+N_+) \cup [N_+]} \cdot \gI(X, Y, {\bm q}) \Bigg].
\end{multline}
As $z \in \cT^{d,(2)}$ implies that $|\eta_{\wh m_1 +1}(z)| \ge 1$, applying Lemma \ref{lem:wh-gD-3}, and using that ${\bm y}_1 \le {\bm x}_{\wh m_1+1}$, we obtain that
\begin{equation}\label{eq:toep-minor-bd-22}
  \left|\frac{\det(\cP_z[\COMP{X}_{\wh m_1+1}; \COMP{Y}_1])}{\eta_{\wh m_1+1}(z)^{N {\sf g}_0}}\right| \le {\sf g}_0^{{\sf g}_0+2(k+N_+)}.
\end{equation}
Thus, from \eqref{eq:wh-gD-2-bd-10}, we now derive that 
\begin{multline*}
\left|\wh \gD(X, Y,z) \right| \le  |a_{-N_-}|^{-k} \cdot{\sf g}_0^{{\sf g}_0+2(k+N_+)} \cdot \prod_{i=1}^{\wh m_1} |\eta_i(z)|^{N_+}\cdot\\
\Bigg[\sum_{{\bm \ell}, {\bm q}} \sum_{\cX_k \in L_{{\bm \ell},k}^{(1)}} \sum_{\cY_k \in L_{{\bm q},k}^{(2)}} | \gF_1| \cdot | \gF_2| 
\cdot {\bf 1}_{X_1=  X \cup [\wh N_+]\setminus [N] } \cdot {\bf 1}_{Y_{\wh m_2+1}=  (Y+N_+) \cup [N_+]} \cdot \gI(X, Y, {\bm q}) \Bigg].
\end{multline*}
The claimed bound on $\wh \gD(X, Y,z)$ now follows upon using Lemma 
\ref{lem:gD12-bd}, in the same way as for $z\in \cT^{d,(1)}$. This completes the proof of part (i) of the lemma for $z \in \cT^{d, (2)}$. 

We now turn to the proof of the second part. To this end, we note that all the steps of the proof of Lemma \ref{lem:comb-bound-tube-1}(ii) continues to hold under the current setup, except for the bound \eqref{eq:toep-minor-bd-21}. In fact, under our current setup, the bound \eqref{eq:toep-minor-bd-21} can be improved to \eqref{eq:toep-minor-bd-22}. Thus, repeating rest of the arguments in the proof of Lemma \ref{lem:comb-bound-tube-1}(ii) used for $z\in \cT^{d,(1)}$,
the proof of the second part of this lemma completes for $z \in \cT^{d, (2)}$. 
\end{proof}

\subsection{Toeplitz minors as skew Schur polynomials}\label{sec:toep-minor-skew-schur}
The goal of this section is to prove Lemma \ref{lem:wh-gD-3}. This will follow from the following bound on minors of finitely banded Toeplitz matrices. 
\begin{prop}\label{prop:toep-minor}
Let ${\sf g}_\star, \wh N \in \N$. Fix a compact domain $\D \subset \C$ and let $z \in \D$. Let $\wh T_{\wh N}(z) = \wh T_{\wh N}(\wh p_z)$ be the $\wh N \times \wh N$ Toeplitz matrix with symbol 
\begin{equation}\label{eq:wh-p}
\wh p_z(\tau) := \sum_{j=0}^{{\sf g}_\star} \wh a_{-j}(z) \tau^{-j}, \quad a_0(z), a_{-1}(z), \ldots, a_{-{\sf g}_\star}(z) \in \C; \, a_0(z) \ne 0, \, a_{-{\sf g}_\star}(z)=1\, \forall z \in \D. 
\end{equation}
Denote $\{\wh \eta_j(z)\}_{j=1}^{{\sf g}_\star}$ to be the roots of the polynomial $\wh p_z(\cdot)=0$. Assume that, for some constant $c_1 \in (0,1)$, 
\begin{equation}\label{eq:minor-c1}
\inf_{z \in \D} \min_{k \ne j} \left| 1 - \frac{\wh \eta_j(z)}{\wh \eta_k(z)}\right| \ge c_1.
\end{equation}

Then for any 
\[
\gX := \{\gx_1 < \gx_2 < \cdots < \gx_n\} \subset [\wh N] \qquad \text{ and } \qquad  \gY := \{\gy_1 < \gy_2 < \cdots < \gy_n\} \subset [\wh N]
\]
we have that  
\begin{equation}\label{eq:toep-minor}
\left|\det (\wh T_{\wh N}(z) [\COMP{\gX}, \COMP{\gY}]) \right| \\
\le c_1^{-{\sf g}_\star n}  {\sf g}_\star^{2n} \cdot \prod_{j=1}^{{\sf g}_\star} |\wh \eta_j(z)|^{\wh N -n} \cdot \left(\min_{j=1}^{{\sf g}_\star}|\wh \eta_j(z)|\right)^{\bar \gy - \bar \gx} \cdot \prod_{i=1}^n {\bf 1}_{\{\gx_i \ge \gy _i\}} \cdot \prod_{i=1}^{n-{\sf g}_\star} {\bf 1}_{\{\gx_i < \gy _{i+{\sf g}_\star}\}},
\end{equation}
where $\COMP{\gX} := [\wh N] \setminus \gX$, $\COMP{\gY} := [\wh N] \setminus \gY$,
\[
\bar \gx := \sum_{i=1}^n \gx_i, \qquad \text{ and } \qquad  \bar \gy := \sum_{i=1}^n \gy_i.
\]
\end{prop}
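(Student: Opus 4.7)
My plan is to factor $\wh T_{\wh N}(z)$ into ${\sf g}_\star$ bidiagonal pieces via Vieta's formulas, iterate Cauchy--Binet to express the minor as a sum over chains of intermediate subsets of $[\wh N]$, evaluate each bidiagonal contribution explicitly, and finally control the resulting signed sum by recognizing it as a (skew) Schur polynomial and exploiting the root-separation hypothesis \eqref{eq:minor-c1}.

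Concretely, by Vieta the symbol factorizes as $\wh p_z(\zeta)=\wh a_0(z)\prod_{i=1}^{{\sf g}_\star}(1-\wh \eta_i(z)\zeta^{-1})$, giving
\[\wh T_{\wh N}(z)=\wh a_0(z)\prod_{i=1}^{{\sf g}_\star}\bigl(I_{\wh N}-\wh \eta_i(z)\sigma\bigr),\qquad \sigma:=J_{\wh N}^T.\]
Iterating Cauchy--Binet $({\sf g}_\star-1)$ times yields
\[\det\wh T_{\wh N}[\COMP\gX,\COMP\gY]=\wh a_0(z)^{\wh N-n}\sum_{\gZ_0=\gX,\gZ_1,\ldots,\gZ_{{\sf g}_\star}=\gY}\prod_{i=1}^{{\sf g}_\star}\det(I_{\wh N}-\wh \eta_i\sigma)[\COMP\gZ_{i-1},\COMP\gZ_i],\]
each $\gZ_i\subset[\wh N]$ of cardinality $n$. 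By \cite[Lemma~A.3]{BPZ1} (the analog of \eqref{eq:det-bidiagonal}), each bidiagonal minor equals a signed power of $\wh \eta_i$ with exponent $|\bar\gZ_i-\bar\gZ_{i-1}|$, multiplied by an indicator forcing the $\ell$-th element of $\gZ_i$ to differ from that of $\gZ_{i-1}$ by $0$ or $1$. Composing ${\sf g}_\star$ such interlacings then produces the two indicator factors in \eqref{eq:toep-minor}: one imposing the entrywise ordering between $\gX$ and $\gY$, and one bounding the total shift by ${\sf g}_\star$ (equivalently, forcing $\gy_{\ell+{\sf g}_\star}$ to strictly exceed $\gx_\ell$, using strict monotonicity of the two sequences).

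What remains is to bound the chain sum, which is a polynomial of total degree $|\bar\gy-\bar\gx|$ in $(\wh \eta_1,\ldots,\wh \eta_{{\sf g}_\star})$. Via the Lindstr\"om--Gessel--Viennot interpretation, this sum is identified with a skew Schur polynomial $s_{\lambda/\mu}(-\wh \eta_1,\ldots,-\wh \eta_{{\sf g}_\star})$ for partitions $\lambda,\mu$ of size-difference $|\bar\gy-\bar\gx|$ read off from $\gX,\gY$. I would then apply the Jacobi bialternant (Weyl) formula to express $s_{\lambda/\mu}$ as a quotient of alternants whose denominator is the Vandermonde $\prod_{i<j}(\wh \eta_j-\wh \eta_i)$: the separation hypothesis \eqref{eq:minor-c1} bounds this denominator from below by $c_1^{\binom{{\sf g}_\star}{2}}\prod_k|\wh \eta_k|^{\binom{{\sf g}_\star}{2}}$, producing the factor $c_1^{-{\sf g}_\star n}$ after accounting for all powers; a crude count of the contributing monomials furnishes the ${\sf g}_\star^{2n}$ combinatorial constant; and a monomial bound on the numerator alternant, combined with $|\wh a_0(z)|^{\wh N-n}=\prod_j|\wh \eta_j|^{-(\wh N-n)}$ coming from the Step~1 factorization, yields the $\prod_j|\wh \eta_j|^{\wh N-n}(\min_j|\wh \eta_j|)^{\bar\gy-\bar\gx}$ prefactor.

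The main obstacle is precisely this last step. The naive Cauchy--Binet bound, obtained by taking absolute values term by term, produces $(\max_j|\wh \eta_j|)^{\bar\gy-\bar\gx}$ rather than the sharper $(\min_j|\wh \eta_j|)^{\bar\gy-\bar\gx}$ demanded by \eqref{eq:toep-minor}. This improvement is invisible at the level of individual chains and materializes only after the alternating signs reassemble into the Vandermonde denominator; quantifying the cancellation precisely enough that the separation constant $c_1$ governs the blow-up --- without destroying the $\prod_j|\wh \eta_j|^{\wh N-n}$ prefactor --- is the combinatorial-algebraic crux of the proof.
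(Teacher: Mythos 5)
Your high-level plan --- factor via Vieta, iterate Cauchy--Binet, recognize the chain sum as a skew Schur polynomial, and then control the cancellation through the root separation \eqref{eq:minor-c1} --- matches the spirit of the paper's proof: the skew Schur identification is Lemma \ref{lem:toep-minor} (quoted from Maximenko--Moctezuma-Salazar \cite{MM}, though rederivable via the Lindstr\"om--Gessel--Viennot route you describe), and the indicator factors in \eqref{eq:toep-minor} are precisely Lemma \ref{lem:toep-minor-0}, proved by the interlacing argument you sketch. The gap lies in the closing step. The Jacobi bialternant (Weyl) formula $s_\nu(t_1,\dots,t_w)=\det(t_j^{\nu_i+w-i})_{i,j}/\det(t_j^{w-i})_{i,j}$ is an identity for \emph{non-skew} Schur polynomials in $w$ variables with $\ell(\nu)\le w$; it does not apply to ${\bm s}_{{\bm\lambda}_0/{\bm\mu}_0}$ directly, and here the partitions have length $n$ while the polynomial is evaluated at only ${\sf g}_\star$ variables $\wh\xi_j:=\wh\eta_j(z)^{-1}$, with $n$ typically much larger than ${\sf g}_\star$. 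Expanding the skew Schur into ordinary Schurs via Littlewood--Richardson reintroduces a number of summands that grows polynomially in $\wh N$, which would destroy the $\wh N$-independent constant ${\sf g}_\star^{2n}c_1^{-{\sf g}_\star n}$. Your diagnosis of the crux is also inverted: since $\bar\gy-\bar\gx\le 0$ on the support of the indicators, a bound by $(\max_j|\wh\eta_j|)^{\bar\gy-\bar\gx}$ would be \emph{smaller} than $(\min_j|\wh\eta_j|)^{\bar\gy-\bar\gx}$ and hence \emph{stronger}, not weaker; the real danger is that the naive chain count is of polynomial order in $\wh N$.

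The missing tool is Jacobi's closed form for the complete homogeneous symmetric polynomial (Lemma \ref{lem:h-alt-exp}),
\[
{\sf h}_r(t_1,\dots,t_w)=\sum_{j=1}^w\frac{t_j^{r+w-1}}{\prod_{k\ne j}(t_j-t_k)},
\]
applied entry by entry inside the Jacobi--Trudi representation ${\bm s}_{{\bm\lambda}_0/{\bm\mu}_0}=\det\bigl[{\sf h}_{\gx_v-\gy_u}(\wh\xi_1,\dots,\wh\xi_{{\sf g}_\star})\bigr]_{u,v=1}^n$. This replaces each entry ${\sf h}_{\gx_v-\gy_u}$, which as a polynomial has on the order of $\wh N^{{\sf g}_\star-1}$ monomials, by a sum of exactly ${\sf g}_\star$ fractions whose denominators carry the separation constant: writing $\prod_{k\ne j}(\wh\xi_j-\wh\xi_k)=\wh\xi_j^{{\sf g}_\star-1}\prod_{k\ne j}\bigl(1-\wh\eta_j/\wh\eta_k\bigr)$, each fraction is at most $c_1^{-({\sf g}_\star-1)}\bigl(\max_j|\wh\xi_j|\bigr)^{r}$ for $r\ge0$ by \eqref{eq:minor-c1}. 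Expanding the $n\times n$ determinant over permutations (only ${\sf g}_\star^n$ of which survive, because ${\sf h}_r=0$ for $r<0$) and over the ${\sf g}_\star^n$ choices of $j$-indices, a crude term-by-term bound then already delivers \eqref{eq:toep-minor}, with $\max_j|\wh\xi_j|=(\min_j|\wh\eta_j|)^{-1}$ and $|\wh a_0(z)|=\prod_j|\wh\eta_j(z)|$ supplying the remaining prefactors. Substituting this mechanism for your bialternant step closes the argument.
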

\begin{rem}
The bound on the order of magnitude of Toeplitz minors derived via Proposition \ref{prop:toep-minor} can be seen to be optimal when the roots of $\wh p_z(\cdot)=0$ have the same moduli. However, it is suboptimal when the roots have different moduli. For example, consider the case ${\sf g}_\star =2$ and choose $z \in \C$ such that $|\wh \eta_1(z)| / |\wh \eta_2(z)| \ge (1+\vep')$ for some $\vep'>0$. 
To see the sub optimality of the bound now let $\gX=\{\wh N-1, \wh N\}$ and $\gY=\{1,2\}$.

Since we will apply Proposition \ref{prop:toep-minor} for a Toeplitz matrix for which the roots of its symbols have the
same moduli, we have not tried to derive a version of Proposition \ref{prop:toep-minor} that captures the optimal order of magnitude even when roots have different moduli. 
\end{rem}

As already mentioned in Section \ref{sec:bd-gD},
to prove Proposition \ref{prop:toep-minor} (and hence Lemma \ref{lem:wh-gD-3}) we will use the fact a Toeplitz minor can be expressed as certain {\em skew Schur polynomial}, 
see Remark \ref{rem-history} below for historical
background. To state the relevant result and carry out the proof of Proposition \ref{prop:toep-minor} we need to borrow some notations from the theory of symmetric functions. The references \cite{Macd, Stan2} are excellent resources for this purpose.

We start with the definition of {\em complete homogeneous symmetric polynomials}.  

\begin{defn}[Complete homogeneous symmetric polynomial]\label{def:h}
Fix $w, {\sf r} \in \N$. The ${\sf r}$-th complete homogeneous polynomial in $w$ variables, to be denoted by ${\sf h}_{\sf r}(\cdot)$, is the sum of all monomials of total degree ${\sf r}$. That is, for $(t_1, t_2, \ldots, t_w) \in \R^n$, 
\[
{\sf h}_{\sf r}(t_1,t_2, \ldots, t_{w}) := \sum_{1 \le s_1 \le s_2 \le \cdots \le s_{{\sf r}} \le w} \prod_{u=1}^{{\sf r}} t_{s_u}.
\] 
We further set ${\sf h}_0 := 1$ and ${\sf h}_{{\sf r}} := 0$ for ${\sf r} <0$. 
\end{defn}

We next proceed to define the notion of skew Schur polynomials.

\begin{defn}[Skew partitions and skew Schur polynomials]\label{def:sckew-schur}
Fix $w,m \in \N$. A vector
${\bm \lambda} = (\lambda_1, \lambda_2, \ldots, \lambda_m)$ with
$\lambda_i \in \Z$, $i \in [m]$, and $\lambda_1 \ge \lambda_2 
\ge \cdots \ge \lambda_m \ge 0$ is called an integer partition (or in short a partition), of length 
$\ell({\bm \lambda})=m$ and weight 
$|{\bm \lambda}|=\sum_{i=1}^m \lambda_i$.

For two partitions ${\bm \mu}$ and ${\bm \lambda}$ we write ${\bm \mu} \subseteq {\bm \lambda}$ if $\ell({\bm \mu}) \le \ell({\bm \lambda})$ and $\mu_s \le \lambda_s$ for all $s \in [\ell({\bm \mu})]$. In that case, the pair $({\bm \lambda}, {\bm \mu})$ is called a skew partition and is often denoted by ${\bm \lambda}/{\bm \mu}$. 

Given a skew partition ${\bm \lambda}/{\bm \mu}$ we define the skew Schur polynomial in $w$ variables as follows:
\begin{equation}\label{eq:skew-schur}
{\bm s}_{{\bm \lambda}/{\bm \mu}}(\cdot) := \det \left[({\sf h}_{\lambda_u - \mu_v - u+v}(\cdot))_{u,v=1}^{\ell({\bm \lambda})}\right],
\end{equation}
where $\{{\sf h}_{\cdot} (\cdot)\}$ are the complete homogeneous symmetric polynomials in $w$ variables as in Definition \ref{def:h}, and the partition ${\bm \mu}$ is extended to have length $\ell({\bm \lambda})$ by appending zeros. When ${\bm \mu}$ is an empty partition the polynomial ${\bm s}_{{\bm \lambda}/\emptyset}$ is called a Schur polynomial and is written as ${\bm s}_{{\bm \lambda}}$. Thus, for any given partition ${\bm \lambda}$, we have 
\begin{equation}\label{eq:schur-poly}
{\bm s}_{{\bm \lambda}} = \det \left[({\sf h}_{\lambda_u - u+v})_{u,v=1}^{\ell({\bm \lambda})}\right]. 
\end{equation}
\end{defn}

\begin{rem}
Skew Schur polynomials admit a combinatorial description in terms
of {\em skew semistandard Young tableaux}. 
The equivalence of these two definitions is due to the {\em first Jacobi-Trudi identity} (see \cite[Theorem 7.16.1]{Stan2}). Since we do not require the notion of Young tableaux elsewhere in this paper we have chosen \eqref{eq:skew-schur} as the definition of the skew Schur polynomial ${\bm s}_{{\bm \lambda}/{\bm \mu}}$. 
We refer the reader to \cite[Chapter 7.10]{Stan2} for a 
detailed overview of these matters.
\end{rem}

The following identity, known already to Jacobi (see \cite[Exercise 7.4]{Stan2} or \cite[Theorem 3.2]{Cornelius})
provides an efficient representation of complete homogeneous symmetric polynomials.
\begin{lem}\label{lem:h-alt-exp}
Let ${\sf r}, w \in \N$ and $(t_1, t_2, \ldots, t_w) \in \R^n$. If $\{t_s\}_{s=1}^w$ are pairwise distinct then 
\[
{\sf h}_{{\sf r}}(t_1, t_2, \ldots, t_w) = \sum_{j=1}^w \frac{t_j^{{\sf r}+w-1}}{\prod_{k \in [w]\setminus \{j\}} (t_j -t_k)}.
\]
\end{lem}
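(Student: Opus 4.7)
The plan is to derive this identity by partial-fraction expansion of a suitable rational function and the residue theorem on the Riemann sphere. I would introduce
\[
f(u) := \frac{u^{{\sf r}+w-1}}{\prod_{j=1}^w (u-t_j)}, \qquad u \in \C.
\]
Since the $t_j$ are pairwise distinct, $f$ has simple poles at $u=t_j$ with residue $t_j^{{\sf r}+w-1}/\prod_{k\neq j}(t_j-t_k)$, so the right-hand side of the identity is precisely $\sum_{j=1}^w \mathrm{Res}_{u=t_j} f(u)$.

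Next I would compute the residue at infinity by expanding $f$ as a Laurent series in $1/u$. Using the standard generating-function identity for complete homogeneous symmetric polynomials, $\prod_{j=1}^w (1-t_j z)^{-1} = \sum_{m \ge 0} {\sf h}_m(t_1,\ldots,t_w)\, z^m$, applied with $z = 1/u$, one obtains
\[
f(u) = u^{{\sf r}-1} \sum_{m \ge 0} {\sf h}_m(t_1,\ldots,t_w)\, u^{-m}, \qquad |u| > \max_j |t_j|.
\]
Reading off the coefficient of $u^{-1}$ shows $\mathrm{Res}_{u=\infty} f = -{\sf h}_{\sf r}(t_1,\ldots,t_w)$. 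The identity then follows from the fact that the sum of all residues of a rational function on the Riemann sphere vanishes:
\[
\sum_{j=1}^w \mathrm{Res}_{u=t_j} f(u) \;=\; -\,\mathrm{Res}_{u=\infty} f(u) \;=\; {\sf h}_{\sf r}(t_1,\ldots,t_w).
\]

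The argument is essentially a three-line residue computation and I do not anticipate any real obstacle. The only point requiring a small amount of care is the sign convention for the residue at infinity, equivalently orienting a large circle correctly when applying Cauchy's theorem; this is standard. As a sanity check, one could alternatively recognize this as one of the dual forms of Lagrange interpolation, or invoke the Jacobi bialternant formula $s_{({\sf r})} = {\sf h}_{\sf r}$ for a single-row Schur polynomial and expand the resulting determinant along the first column, but the residue calculation above seems the most direct route.
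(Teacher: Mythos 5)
Your proof is correct. Note that the paper does not actually prove this lemma; it simply cites it as a classical identity of Jacobi, pointing to \cite[Exercise 7.4]{Stan2} and \cite[Theorem 3.2]{Cornelius}. So there is no ``paper's own proof'' to compare against, and your residue computation supplies a clean, self-contained argument where the paper gives only a reference.

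The argument itself is sound. Since the $t_j$ are pairwise distinct, $f(u)=u^{{\sf r}+w-1}/\prod_{j=1}^w(u-t_j)$ has simple poles at each $t_j$ with residue equal to the $j$-th summand on the right-hand side. For $|u|>\max_j|t_j|$ you have
\[
f(u)=u^{{\sf r}-1}\prod_{j=1}^w\bigl(1-t_j/u\bigr)^{-1}=\sum_{m\ge 0}{\sf h}_m(t_1,\ldots,t_w)\,u^{{\sf r}-1-m},
\]
so the coefficient of $u^{-1}$ is ${\sf h}_{\sf r}$ and $\operatorname{Res}_{u=\infty}f=-{\sf h}_{\sf r}$. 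Since the total residue of a rational function on $\widehat{\C}$ vanishes (this holds even though $\deg$ numerator $\ge \deg$ denominator here, because the residue at infinity is defined via the $u^{-1}$ coefficient of the Laurent expansion, or equivalently $-\operatorname{Res}_{v=0}\,v^{-2}f(1/v)$), the identity follows. This is in fact the same residue proof one finds in the Cornelius reference; your alternative remark that it is a dual form of Lagrange interpolation, or the Jacobi bialternant for a one-row Schur polynomial, is also accurate.

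One trivial remark: the statement in the paper writes $(t_1,\ldots,t_w)\in\R^n$, which is a typo for $\R^w$; your proof in fact works verbatim over $\C^w$ with the $t_j$ distinct, which is the generality in which it is later applied (the roots $\wh\eta_j(z)^{-1}$ are complex).
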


The next lemma, which is a key to the proof of Proposition \ref{prop:toep-minor}, states that a Toeplitz minor can be expressed in terms of skew Schur polynomials. 

\begin{lem}[{\cite[Theorem 2.1]{MM}}]\label{lem:toep-minor}
Consider the setup of Proposition \ref{prop:toep-minor}. Then 
\[
\det (\wh T_{\wh N}(z)[\COMP{\gX}; \COMP{\gY}]) = (-1)^{|{\bm \lambda}_0| + |{\bm \mu}_0|} \cdot \wh a_0(z)^{\wh N-n}  \cdot {\bm s}_{{\bm \lambda_0}/{\bm \mu}_0}(\wh \eta_1(z)^{-1}, \wh \eta_2(z)^{-1}, \ldots, \wh \eta_{{\sf g}_*}(z)^{-1}),
\]
where
\[
{\bm \lambda_0} := (\wh N -n +1- \gy_1, \wh N - n +2-\gy_2, \ldots, \wh N - \gy_n)  \text{ and }   {\bm \mu_0} := (\wh N - n +1- \gx_1, \wh N - n+ 2-\gx_2, \ldots, \wh N - \gx_n).
\]
\end{lem}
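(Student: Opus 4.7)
Since $\wh T_{\wh N}(z)$ is an upper triangular Toeplitz matrix with symbol
$$\wh p_z(\tau) = \wh a_0(z) \prod_{j=1}^{{\sf g}_\star}\bigl(1 - \wh \eta_j(z)\tau^{-1}\bigr),$$
one obtains immediately, in the spirit of \eqref{eq:uTprod}, the matrix factorization
\begin{equation*}
  \wh T_{\wh N}(z) \;=\; \wh a_0(z) \prod_{j=1}^{{\sf g}_\star}\bigl(I_{\wh N} - \wh \eta_j(z)\,J_{\wh N}\bigr).
\end{equation*}
Substituting this into the minor of interest and repeatedly invoking Cauchy--Binet, the plan is to expand $\det(\wh T_{\wh N}(z)[\COMP{\gX};\COMP{\gY}])$ as $\wh a_0(z)^{\wh N - n}$ times a sum over chains of intermediate $(\wh N - n)$-subsets $\COMP{\gX}=Z_1, Z_2,\ldots,Z_{{\sf g}_\star+1}=\COMP{\gY}$ of $[\wh N]$ of products of minors of the bidiagonal matrices $I_{\wh N}-\wh \eta_j(z)J_{\wh N}$.

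Next, each of these bidiagonal minors can be evaluated by a direct computation exactly paralleling \eqref{eq:det-bidiagonal}. Passing to the complementary $n$-subsets $\gZ_j := [\wh N]\setminus Z_j$, the minor associated with the pair $(Z_j;Z_{j+1})$ vanishes unless $\gZ_j$ and $\gZ_{j+1}$ interlace in a prescribed way, in which case it evaluates to $(-\wh \eta_j(z))^{r_j}$ with $r_j$ recording the total displacement between $\gZ_j$ and $\gZ_{j+1}$. Viewing each $\gZ_j$ as a configuration of $n$ particles on $[\wh N]$ and stepping from $\gZ_1=\gY$ to $\gZ_{{\sf g}_\star+1}=\gX$, the surviving chains are in bijection with families of $n$ non-intersecting monotone lattice paths going from the positions $\gY$ to the positions $\gX$ in exactly ${\sf g}_\star$ steps, each weighted by an appropriate monomial in $\{\wh \eta_j(z)\}$.

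The Lindstr\"om--Gessel--Viennot lemma then collapses this signed sum over non-intersecting path families into a single $n \times n$ determinant whose $(u,v)$-entry is the generating function for a single monotone path of prescribed length, i.e.\ the complete homogeneous symmetric polynomial
$${\sf h}_{\lambda_{0,u}-\mu_{0,v}-u+v}\bigl(\wh \eta_1(z)^{-1},\ldots,\wh \eta_{{\sf g}_\star}(z)^{-1}\bigr),$$
with ${\bm \lambda}_0$ and ${\bm \mu}_0$ read off from $\gY$ and $\gX$ exactly as in the statement. By the definition \eqref{eq:skew-schur} this is precisely ${\bm s}_{{\bm \lambda}_0/{\bm \mu}_0}(\wh \eta_1(z)^{-1},\ldots,\wh \eta_{{\sf g}_\star}(z)^{-1})$. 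The overall sign $(-1)^{|{\bm \lambda}_0|+|{\bm \mu}_0|}$ comes from collecting the minus signs from $(-\wh \eta_j(z))^{r_j}$ and from passing from $\wh \eta_j(z)$ to $\wh \eta_j(z)^{-1}$ in the generating function (equivalently, from the factor $\wh a_0(z)^{\wh N-n}$ absorbing the appropriate powers and using that $\prod_j \wh \eta_j = (-1)^{{\sf g}_\star}/\wh a_0$).

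The main obstacle is purely bookkeeping: tracking precisely which powers of $\wh \eta_j(z)$ and which signs accumulate under Cauchy--Binet and the LGV cancellation, and verifying that the resulting determinant of ${\sf h}$'s matches the Jacobi--Trudi presentation of ${\bm s}_{{\bm \lambda}_0/{\bm \mu}_0}$ with the particular shift $\wh N - n +k -\gy_k$, $\wh N - n +k -\gx_k$. Since the identity is classical in the theory of Toeplitz minors and is already proved in the cited reference, it is by far most economical simply to invoke \cite[Theorem 2.1]{MM} rather than reprove it from scratch.
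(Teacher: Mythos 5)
The paper itself does not prove this lemma: it is stated with the citation to \cite[Theorem 2.1]{MM}, followed only by a remark reconciling notation. Your proposal ends by invoking exactly that citation, so on the final step the two agree. What you add is an outline of how one \emph{would} prove the identity from scratch: factor $\wh T_{\wh N}(z)$ into bidiagonal Jordan factors, apply Cauchy--Binet, interpret the surviving chains as non-intersecting lattice path families, and then use Lindstr\"om--Gessel--Viennot to collapse the signed sum into the Jacobi--Trudi determinant of complete homogeneous symmetric polynomials, which is ${\bm s}_{{\bm \lambda}_0/{\bm \mu}_0}$ by definition \eqref{eq:skew-schur}. This is indeed the modern route to Toeplitz-minor$=$skew-Schur identities, and it is conceptually sound.

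One detail in the sketch would trip you up if you actually carried it out: the factorization you write, $\wh T_{\wh N}(z) = \wh a_0(z)\prod_{j}(I_{\wh N} - \wh\eta_j(z)J_{\wh N})$, has the reciprocal of the root in the wrong place. From the paper's own manipulation in the proof of Lemma \ref{lem:toep-minor-0}, $\wh T_{\wh N}(z) = \prod_{j}(J_{\wh N} - \wh\eta_j(z) I_{\wh N})$, which by \eqref{eq:prod-root} rewrites as $\wh a_0(z)\prod_{j}(I_{\wh N} - \wh\eta_j(z)^{-1} J_{\wh N})$. Using $\wh\eta_j$ instead of $\wh\eta_j^{-1}$ in the factors would, after the LGV step, produce a skew Schur polynomial evaluated at $\{\wh\eta_j\}$ rather than at $\{\wh\eta_j^{-1}\}$, contradicting the statement. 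With that correction the sketch goes through and matches the cited theorem; without carrying it out, your decision to lean on \cite{MM} is the same one the paper makes.
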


\begin{rem}
The reader may note that the expression for the Toeplitz minor stated in Lemma \ref{lem:toep-minor} is somewhat different than the one in \cite[Theorem 2.1]{MM}. This is due to the fact that \cite{MM} expresses the minor in terms of the roots of the polynomial $\ol{p}_z(\tau) := \tau^{{\sf g}_\star} \cdot \wh p_z(\tau)$, where $\wh p_z(\cdot)$ is as in \eqref{eq:wh-p}, while  $\{\wh \eta_j(z)\}_{j=1}^{{\sf g}_\star}$ are the roots of $\wh p_z(\cdot)=0$ which are indeed the reciprocals of those of $\ol{p}_z(\cdot)=0$. 
\end{rem}

\begin{rem}
  \label{rem-history}
It was first noted by Bump and Diaconis \cite{BD} that any Toeplitz minor can be expressed in terms of skew Schur polynomials.  They expressed such minors as an integral of certain functions involving skew Schur polynomials over the unitary group (see \cite[Theorem 3]{BD}). Later Alexandersson in \cite{Al} expressed minors of triangular Toeplitz matrices as certain skew Schur polynomials (see \cite[Proposition 3]{Al}). Maximenko and Moctezuma-Salazar \cite{MM} gave 
a different proof of such a representaion
for non-triangular banded Toeplitz matrices. Here we use the formulation of \cite{MM} which is the most convenient for our setup. 
\end{rem}

Before proving Proposition \ref{prop:toep-minor} we make the following elementary observation. 

\begin{lem}\label{lem:toep-minor-0}
Consider the setup of Proposition \ref{prop:toep-minor}. If $\gX, \gY \subset [\wh N]$ are such that either there exists an $i \in [n]$ for which $\gx_i < \gy_i$ or there exists an $i \in [n - {\sf g}_\star]$ such that $\gx_i \ge \gy_{i+{\sf g}_\star}$, then 
\[
\det (\wh T_{\wh N}(z)[\COMP{\gX}; \COMP{\gY}])=0. 
\]
\end{lem}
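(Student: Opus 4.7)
The plan is to prove the lemma directly from the banded upper triangular structure of $\wh T_{\wh N}(z)$, using a pigeonhole argument rather than invoking Lemma \ref{lem:toep-minor}. By \eqref{eq:wh-p}, the matrix element $(\wh T_{\wh N}(z))_{u,v}$ vanishes unless $0 \le v - u \le {\sf g}_\star$. Writing $\COMP{\gX} = \{\alpha_1 < \cdots < \alpha_{\wh N - n}\}$ and $\COMP{\gY} = \{\beta_1 < \cdots < \beta_{\wh N - n}\}$, the Leibniz expansion then shows that $\det(\wh T_{\wh N}(z)[\COMP{\gX};\COMP{\gY}])$ can be nonzero only if there exists a permutation $\sigma$ of $[\wh N - n]$ with
\[
\alpha_j \le \beta_{\sigma(j)} \le \alpha_j + {\sf g}_\star \qquad \text{for every } j \in [\wh N - n].
\]
The task is to show that neither hypothesis of the lemma is compatible with the existence of such a $\sigma$; the main technical step is translating each hypothesis, phrased in terms of the ordered elements of $\gX$ and $\gY$, into a combinatorial obstruction on $\alpha$ and $\beta$.

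For the first hypothesis, suppose $\gx_i < \gy_i$ for some $i \in [n]$, and set $k := \gy_i - i$. Since $\gx_i \ge i$ (as the $i$-th strictly increasing positive integer), $\gy_i > \gx_i \ge i$ forces $k \ge 1$, while the bound $\gy_i \le \wh N - n + i$ (the $i$-th smallest of $n$ elements in $[\wh N]$) gives $k \le \wh N - n$. Counting yields $|\COMP{\gY} \cap [1, \gy_i - 1]| = (\gy_i - 1) - (i - 1) = k$, hence $\beta_k \le \gy_i - 1$, while $\gx_1, \dots, \gx_i \in [1, \gy_i - 1]$ gives $|\COMP{\gX} \cap [1, \gy_i - 1]| \le k - 1$, hence $\alpha_k \ge \gy_i$. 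By monotonicity, every $j \ge k$ and $l \le k$ satisfy $\alpha_j \ge \alpha_k > \beta_k \ge \beta_l$, so any admissible $\sigma$ would have to map $\{k, k+1, \dots, \wh N - n\}$ injectively into $\{k+1, \dots, \wh N - n\}$, which is impossible by pigeonhole.

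For the second hypothesis, suppose $\gx_i \ge \gy_{i+{\sf g}_\star}$ for some $i \in [n - {\sf g}_\star]$, and set $v := \gy_{i+{\sf g}_\star}$. Since $\gx$ is strictly increasing, at most $i - 1$ entries of $\gX$ lie in $[1, v - 1]$, giving $|\COMP{\gX} \cap [1, v - 1]| \ge v - i$; combined with $|\COMP{\gX}| = \wh N - n$ this forces $v - i \le \wh N - n$, so $S := \{1, \dots, v - i\}$ is a nonempty subset of $[\wh N - n]$ (using $v \ge i + {\sf g}_\star \ge i + 1$). For each $j \in S$ we have $\alpha_j \le v - 1$, so any admissible $\sigma$ would satisfy $\beta_{\sigma(j)} \le \alpha_j + {\sf g}_\star \le v + {\sf g}_\star - 1$, i.e.~$\sigma(S) \subseteq \{l : \beta_l \le v + {\sf g}_\star - 1\} = \COMP{\gY} \cap [1, v + {\sf g}_\star - 1]$. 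On the other hand $\gy_1, \dots, \gy_{i+{\sf g}_\star}$ all lie in $[1, v] \subseteq [1, v + {\sf g}_\star - 1]$, yielding $|\COMP{\gY} \cap [1, v + {\sf g}_\star - 1]| \le (v + {\sf g}_\star - 1) - (i + {\sf g}_\star) = v - i - 1 < |S|$, a direct violation of the injectivity of $\sigma$. The only delicate point is the bookkeeping between orderings on $\gX, \gY$ and their complements; once the correct set $S$ is identified, both contradictions reduce to short counting arguments.
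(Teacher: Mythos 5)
Your proof is correct, but it takes a genuinely different and more elementary route than the paper's. The paper factors $\wh T_{\wh N}(z) = \prod_{j=1}^{{\sf g}_\star}(J_{\wh N} - \wh\eta_j(z) I_{\wh N})$, applies the Cauchy--Binet theorem, and then invokes \cite[Lemma A.3]{BPZ1}, which characterizes when a minor of a bidiagonal matrix is nonzero via an interlacing condition; if either hypothesis of the lemma holds, the interlacing must fail at some step of the chain, so every term in the Cauchy--Binet sum vanishes. You avoid both the factorization and the auxiliary bidiagonal lemma entirely: you read the band structure directly off the symbol \eqref{eq:wh-p}, reduce the question to the existence of a permutation $\sigma$ with $\alpha_j \le \beta_{\sigma(j)} \le \alpha_j + {\sf g}_\star$, and kill that existence by a counting/pigeonhole argument on the ordered complements $\COMP{\gX}$, $\COMP{\gY}$. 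I checked the bookkeeping: under the first hypothesis, $|\COMP{\gY}\cap[1,\gy_i-1]| = k$ and $|\COMP{\gX}\cap[1,\gy_i-1]| \le k-1$ do give $\beta_k \le \gy_i - 1 < \gy_i \le \alpha_k$, forcing $\sigma$ to map a $(\wh N - n - k + 1)$-element set into a $(\wh N - n - k)$-element set; and under the second, $|S| = v - i$ while $|\COMP{\gY}\cap[1,v+{\sf g}_\star-1]| \le v - i - 1$, again a pigeonhole violation. The inequalities $1 \le k \le \wh N - n$ and $1 \le v - i \le \wh N - n$ ensure the relevant index sets are nonempty, so the boundary case $n = \wh N$ (empty minor) is excluded automatically. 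The trade-off: the paper's version is shorter once \cite[Lemma A.3]{BPZ1} and the bidiagonal factorization (which it uses again elsewhere) are in place, whereas yours is self-contained, uses nothing beyond the Leibniz formula and counting, and exposes exactly which combinatorial obstruction kills the determinant.
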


\begin{proof}
We note that $\wh T_{\wh N}(z)$ is an upper triangular Toeplitz matrix with the symbol $\wh p_z(\cdot)$ (see \eqref{eq:wh-p}). As $\{\wh \eta_j\}_{j=1}^{{\sf g}_\star}$ are the roots of $\wh p(1/\cdot)=0$, it follows that
\[
\wh T_{\wh N}(z) = \prod_{j=1}^{{\sf g}_\star} (J_{\wh N} - \wh \eta_j(z) I_{\wh N}).
\] 
Therefore, by the Cauchy-Binet theorem we have that
\begin{equation}\label{eq:minor-non-zero}
\det (\wh T_{\wh N}(z) [\COMP{\gX}; \COMP{\gY}])= \sum_{\ell=2}^{{\sf g}_\star} \sum_{\substack{Z_\ell \subset [\wh N]\\ |Z_\ell| =n}} \prod_{j=1}^{{\sf g}_\star} \det( (J_{\wh N} -\wh \eta_j(z) I_{\wh N})[ \COMP{Z}_j; \COMP{Z}_{j+1}]), 
\end{equation}
where we set $Z_1 = \gX$ and $Z_{{\sf g}_\star+1} = \gY$. For $\ell \in [{\sf g}_\star +1]$, we  index the elements of $Z_\ell$ as follows:
\[
Z_\ell := \{ z_{\ell,1} < z_{\ell,2} < \cdots < z_{\ell,n} \}.
\]
By \cite[Lemma A.3]{BPZ1} we note that
\[
\det( (J_{\wh N} -\wh \eta_\ell(z) I_{\wh N})[ \COMP{Z}_\ell; \COMP{Z}_{\ell+1}]) \ne 0
\]
if and only if 
\begin{equation}\label{eq:interlace}
z_{\ell+1, 1} \le z_{\ell,1} < z_{\ell+1,2} \le z_{\ell,2} < \cdots < z_{\ell+1, n} \le z_{\ell,n} \le n.
\end{equation}
Therefore, in \eqref{eq:minor-non-zero} we can restrict the sum over $\{Z_2, Z_3, \ldots, Z_{{\sf g}_\star}\}$ such that \eqref{eq:interlace} holds for all $\ell \in [{\sf g}_\star]$. On the other hand, if \eqref{eq:interlace} holds for all $\ell \in [{\sf g}_\star]$, as $Z_1=\gX$ and $Z_{{\sf g}_\star+1}=\gY$, we derive that for all $i \in [n]$
\begin{equation*}\label{eq:interlace-1}
\gy_i = z_{{\sf g}_\star+1,i} \le z_{{\sf g}_\star, i} \le \cdots \le z_{1,i} = \gx_i,
\end{equation*}
and for any $i \in [n - {\sf g}_\star]$
\begin{equation*}\label{eq:interlace-2}
\gx_i = z_{1,i} < z_{2,i+1} < z_{3,i+2} < \cdots < z_{{\sf g}_\star+1, i+{\sf g}_\star} = \gy_{i+{\sf g}_\star}.
\end{equation*}
Therefore, if either of the above two conditions are violated for some $i \in [n]$, then there must be an $\ell \in [{\sf g}_\star]$ such that \eqref{eq:interlace} is violated. In that case, the sum $\{Z_2, Z_3, \ldots, Z_{{\sf g}_\star}\}$ in \eqref{eq:minor-non-zero} is an empty sum and hence $\det (\wh T_{\wh N}(z) [\COMP{\gX}; \COMP{\gY}])=0$. 
\end{proof} 

We are now ready to prove Proposition \ref{prop:toep-minor}. 

\begin{proof}[Proof of Proposition \ref{prop:toep-minor}]
 From \eqref{eq:skew-schur} we see that
\begin{equation}\label{eq:skew-schur-1}
{\bm s}_{{\bm \lambda_0}/{\bm \mu}_0} (\cdot)= \det \left[\left( {\sf h}_{\gx_v - \gy_u}(\cdot)\right)_{u,v=1}^n \right], 
\end{equation}
where ${\bm \lambda_0}$ and ${\bm \mu_0}$ are as in Lemma \ref{lem:toep-minor}. Set $\ul{\wh \xi} := (\wh \xi_1, \wh \xi_2, \ldots, \wh \xi_{{\sf g}_\star})$, where $\wh \xi_i := \wh \eta_i^{-1}(z)$ for $i \in [n]$. Using
\eqref{eq:skew-schur-1} in the first equality and 
Lemma \ref{lem:h-alt-exp} in the second, 
we have that 
\begin{multline}\label{eq:skew-schur-2}
{\bm s}_{{\bm \lambda_0}/{\bm \mu}_0} (\ul{\wh \xi}) = \sum_{\uppi} \sgn(\uppi) \prod_{v=1}^n {\sf h}_{\gx_v - \gy_{\uppi(v)}}(\ul{\wh \xi}) \\
=\sum_{j_1, j_2, \ldots, j_n=1}^{{\sf g}_\star} \prod_{u=1}^n \frac{\wh \xi_{j_u}^{{\sf g}_*-1}}{\prod_{k_u \in [{\sf g}_\star]\setminus \{j_u\}} (\wh \xi_{j_u} - \wh \xi_{k_u})} \cdot \left[\sum_{\uppi} \sgn(\uppi) \cdot \prod_{v=1}^n \wh \xi_{j_v}^{\gx_v - \gy_{\uppi(v)}} \cdot \prod_{v=1}^n {\bf 1}_{\{\gx_v \ge \gy_{\uppi(v)}\}}\right],
\end{multline}
where the sum in the first step and the innermost sum in the second step are over all permutations $\uppi$ on $[n]$, and in the last step we also used the fact that ${\sf h}_{\sf r}(\cdot)=0$ for ${\sf r} <0$. 

We claim that there are at most ${\sf g}_\star^n$ many permutations $\uppi$ such that the summand in the innermost sum in \eqref{eq:skew-schur-2} is nonzero. That is, there are at most ${\sf g}_\star^n$ many permutations $\uppi$ such that
\begin{equation}\label{eq:uppi}
\gx_v \ge \gy_{\uppi(v)}, \quad \text{ for all } v \in [n]. 
\end{equation}
Equipped with this claim, upon crudely bounding each term appearing in the sums in \eqref{eq:skew-schur-2} by its maximum, it now follows that 
\begin{equation}\label{eq:skew-schur-3}
\left| {\bm s}_{{\bm \lambda_0}/{\bm \mu}_0} (\ul{\wh \xi}) \right| \le {\sf g}_\star^{2n} \left(\max_{j=1}^{{\sf g}_\star} \left\{ \prod_{k \in [{\sf g}_\star]\setminus \{j\}} \left|\frac{1}{1 - (\wh \xi_k /\wh \xi_j)}\right| \right\}\right)^n \cdot \left(\max_{j=1}^{{\sf g}_\star} |\wh \xi_j|\right)^{\bar \gx - \bar \gy} \le  {\sf g}_\star^{2n} \cdot c_1^{-{\sf g}_\star n} \cdot \left(\max_{j=1}^{{\sf g}_\star} |\wh \xi_j|\right)^{\bar \gx - \bar \gy}, 
\end{equation}
where the last inequality is due to \eqref{eq:minor-c1}. 
Now, using Lemmas \ref{lem:toep-minor} and \ref{lem:toep-minor-0}, and noting that
\begin{equation}\label{eq:prod-root}
\wh a_0(z) = (-1)^{{\sf g}_\star} \prod_{j=1}^{{\sf g}_\star} \wh \eta_j(z), 
\end{equation}
we obtain \eqref{eq:toep-minor} 
from \eqref{eq:skew-schur-3}.

Thus, to finish the proof it remains to prove the claim regarding the number of permutations $\uppi$ satisfying \eqref{eq:uppi}. To do that, by Lemma \ref{lem:toep-minor-0}, without loss of generality, we may assume that $\gx_v < \gy_{v+{\sf g}_\star}$ for all $v \in [n - {\sf g}_\star]$. 

Now we bound the number of choices of $\uppi$ satisfying \eqref{eq:uppi} as follows: 

\begin{itemize}
\item As $\gx_1 < \gy_{1+{\sf g}_\star}$, $\gy_1 < \gy_2< \cdots < \gy_n$,  and $\gx_1 \ge \gy_{\uppi(1)}$ we find that $\uppi(1) \in [{\sf g}_\star]$. That is, the number of choices for $\uppi(1)$ is ${\sf g}_\star$.

\item Having chosen $\uppi(1)$ we now choose $\uppi(2)$. Using the same argument as above we note that $\uppi(2) \in [{\sf g}_\star+1]$. As $\uppi(1) \in [{\sf g}_\star] \subset [{\sf g}_\star+1]$ the number of choices for $\uppi(2)$ is
  again at most ${\sf g}_\star$. 

\item Continuing from the above, we find that for any given $v \in [n - {\sf g}_\star]$ we must have $\uppi(v) \in [v+ {\sf g}_\star-1]$. On the other hand $\uppi(1), \uppi(2), \ldots, \uppi(v-1) \in [v+{\sf g}_\star -2]$. Thus the number of choices for $\uppi(v)$ is at most ${\sf g}_\star$.

\item Finally $\uppi(n-{\sf g}_\star+1), \uppi(n-{\sf g}_\star+2), \ldots, \uppi(n)$ can be chosen in at most ${\sf g}_\star ! (\le {\sf g}_\star^{{\sf g}_\star})$ ways.
\end{itemize}
This proves the claim and hence we have the desired bound. 
\end{proof}

We end this section with the proof of Lemma \ref{lem:wh-gD-3}. This is a direct consequence of Proposition \ref{prop:toep-minor}.

\begin{proof}[Proof of Lemma \ref{lem:wh-gD-3}]
We recall from \eqref{eq:cP-z} that $\cP_z$ is an upper triangular Toeplitz matrix of dimension $\wh N_+$ such that the roots of its symbols are $\{-\eta_j(z)\}_{j=\wh m_1+1}^{\wh m_1 +{\sf g}_0}$. Note that ${\sf g}(p)={\sf g}_0$ (see \eqref{eq:gpintro} for a definition of ${\sf g}(\cdot)$) implies that there exists $\wt \eta(z) \in \C$ such that
\begin{equation}\label{eq:ze-t-o-wt-ze}
\eta_{\wh m_1+j}(z) = \wt \eta(z) \cdot e^{2 \pi \sqrt{-1} j/{\sf g}_0}, \qquad j \in [{\sf g}_0].
\end{equation}
Upon recalling the definition of the tube $\cT^d$ we see that 
\begin{equation*}
1-\frac{\vep_0}{3}  \le \inf_{z \in \cT^d}   |\eta_{\wh m_1+{\sf g}_0}(z)| \le  \sup_{z \in \cT^d}   |\eta_{\wh m_1+1}(z)| \le 1+\frac{\vep_0}{3}.
\end{equation*}
This further implies that for any $z \in \cT^d$ we have that $\wt \eta(z) \ne 0$. Therefore, we indeed have that
\begin{equation}\label{eq:ratio-away-from-1}
  \min_{i \ne j \in [{\sf g}_0]}\left| \frac{\eta_{\wh m_1+i}(z)}{\eta_{\wh m_1+j}(z)} -1\right| \ge {\sf g}_0^{-1}. 
\end{equation}
Recall also, see e.g. \eqref{eq:mod-eq}, that 
\begin{equation*}
|\eta_{\wh m_1+1}(z)| = |\eta_{\wh m_1+2}(z)| = \cdots = |\eta_{\wh m_1+{\sf g}_0}(z)|.
\end{equation*}  
 As $|X_{\wh m_1+1}| = |Y_1|=k+N_+$ the bound \eqref{eq:wh-gD-bd-3}, as well as the fact that that the \abbr{LHS} of \eqref{eq:wh-gD-bd-3} equals zero when \eqref{eq:xy-rel} is violated, is now immediate from Proposition \ref{prop:toep-minor}.
 This completes the proof. 
\end{proof}

\section{Proofs of the separation Theorems \ref{thm:no-outlier} and \ref{thm:sep-spec-curve}}\label{sec:thm-sep-spec-curve}

In this section, we complete the proofs of Theorems \ref{thm:no-outlier} and \ref{thm:sep-spec-curve}, based on 
Lemma \ref{lem:comb-bound-tube-1}.
The proof is split into four parts:~first, upon employing Lemma \ref{lem:comb-bound-tube-1}, we derive correct order of magnitudes (of moments) of 
${\det}_k(z)$, per fixed $z$ and $k$ corresponding to non-dominant terms,
in the regions of interest. Next, in order to control 
$\sup_z \det_k(z)$, we  
derive bounds on the moments on the supremum of the derivative of $\det_k(z)$. 
Combining these steps
and
a covering argument, we get the desired bounds for $\sup_z \det_k(z)$ 
for non-dominant terms in the expansion \eqref{eq;spec-rad-new}. The third part of the proof derives a uniform lower bound on the dominant term, using
Assumption \ref{assump:anticonc}. In the last step,
we combine the above ingredients with
geometric information on the forbidden regions
to deduce Theorems \ref{thm:no-outlier} and \ref{thm:sep-spec-curve}.

\subsection{Step 1: moment 
bounds on the non-dominant terms (per fixed $z$)}\label{sec:high-mom}
In this section we derive bounds on the moments of ${\det}_k(z)$.
Recall \eqref{eq;spec-rad-new},
\eqref{eq:hatP-k}, \eqref{eq:wh-m1}, and \eqref{eq:whm-2}, and note that $\wh m_1$ and $\wh m_2$ (which depend on $z$)
are constant on the tubes $\cT^{d, (1)}$ and $\cT^{d, (2)}$ with fixed $d$. 
We first consider the case of {\em small} $k$. 
\begin{lem}\label{lem:sec-mom-small-k}
Let Assumption \ref{assump:mom} hold.
Fix $ \vep_0' ,\vep_0>0$ such that $\vep_0' / \vep_0$ is
sufficiently small. Fix $d \ge 0$ and $s \in \{1,2\}$. Then, for all large $N$,
\begin{equation}\label{eq:exp-decay-mom}
\max_{k < \wh m_2 - N_+} \sup_{z \in \cT^{d, (s)}_{\vep_0', \vep_0}} \E\left[ \left|\wh{\det}_k (z)\right|^2 \right] \le \left(1 -\frac{\vep_0}{8}\right)^N.
\end{equation}
\end{lem}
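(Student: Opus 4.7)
The strategy is to open up the second moment of $\det_k(z)$ using the independence of the entries of $Q$, estimate the resulting expectation by an elementary Leibniz expansion, and then feed in the pointwise bound on $\wh\gD(X,Y,z)$ from Lemma \ref{lem:comb-bound-tube-1}(ii). All of the combinatorial work is already done in that lemma; what remains is a routine second-moment bookkeeping.

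First I would expand \eqref{eq:det-decompose-1} and exploit the fact that, since the entries of $Q$ are independent with mean zero (Assumption \ref{assump:mom}), the random variables $\{\det(Q[X;Y])\}_{|X|=|Y|=k}$ are pairwise uncorrelated: any cross term in the double Leibniz sum contains a factor $Q_{ij}$ (or $\overline{Q_{ij}}$) that does not appear in the conjugate partner, forcing its expectation to vanish. This yields the diagonal identity \eqref{eq:det-k-sec-mom-0}. Applying exactly the same idea one level deeper to the expansion $\det(Q[X;Y])=\sum_\sigma \sgn(\sigma)\prod_i Q_{x_i,y_{\sigma(i)}}$ collapses the squared sum to $\sigma=\tau$, giving
\[
\E\bigl[|\det(Q[X;Y])|^2\bigr]=\sum_{\sigma}\prod_{i=1}^{k}\E\bigl[|Q_{x_i,y_{\sigma(i)}}|^2\bigr]\le k!\,\mathfrak{C}_1^{k}.
\]

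Next I would invoke Lemma \ref{lem:comb-bound-tube-1}(ii), which for $k+N_+ < \wh m_2$ and all large $N$ provides the uniform pointwise bound $|\wh\gD(X,Y,z)|\le (1-\vep_0/4)^{N}$ on $\cT^{d,(s)}_{\vep_0',\vep_0}$. Unfolding \eqref{eq:wh-gD-dfn} and \eqref{eq:fancyK-1} to pass from $\gD$ to $\mathfrak{K}$ gives $|\gD(X,Y,z)|^2/|\mathfrak{K}(z)|^2 = N^{2\gamma\wh d}|\wh\gD(X,Y,z)|^2$, where $\wh d = \wh m_2 - N_+$ by \eqref{eq:whdm}. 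Using the crude count $\binom{N}{k}^2 \le N^{2k}/(k!)^2$ for the number of $(X,Y)$-pairs and assembling the pieces with \eqref{eq:det-k-sec-mom-0}, I obtain
\[
\E\bigl[|\wh{\det}_k(z)|^2\bigr]\;\le\;\frac{N^{\,2k(1-\gamma)+2\gamma\wh d}\,\mathfrak{C}_1^{k}}{k!}\,\bigl(1-\tfrac{\vep_0}{4}\bigr)^{2N},
\]
uniformly in $z\in\cT^{d,(s)}_{\vep_0',\vep_0}$ and in $k<\wh m_2-N_+$.

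Finally, both $k$ and $|\wh d|$ are bounded by a constant depending only on $p$ (namely $\wt m$), so the prefactor in front of the exponential is polynomial in $N$. Absorbing any such polynomial factor into a slightly weaker exponential, $N^{O_p(1)}(1-\vep_0/4)^{2N}\le (1-\vep_0/8)^{N}$ for all large $N$, delivers \eqref{eq:exp-decay-mom}. There is no genuine obstacle in this argument; the only subtle point is that one must verify at the outset that the pairwise-uncorrelated structure of the $\det(Q[X;Y])$ survives in the complex setting (which is why we require $\E Q_{ij}=0$ together with $\E\overline{Q_{ij}}=0$), and that the constraint $k<\wh m_2-N_+$ from the lemma statement is precisely the regime in which Lemma \ref{lem:comb-bound-tube-1}(ii) is applicable.
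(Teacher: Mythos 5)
Your proposal is correct and follows the same route as the paper: expand $\wh{\det}_k$, use independence and mean zero to collapse the cross terms to the diagonal (this is precisely \eqref{eq:det-non-zero}/\eqref{eq:det-k-sec-mom-0}), bound $\E|\det(Q[X;Y])|^2$, feed in Lemma \ref{lem:comb-bound-tube-1}(ii), and absorb the polynomial prefactor (using $k<\wh m_2-N_+\le\wt m$) into the exponential. Your version is in fact slightly more careful in writing $\mathfrak{C}_1^k$ rather than $\mathfrak{C}_1$ for the moment factor, a harmless discrepancy since $k$ is bounded by a constant depending only on $p$.
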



\begin{proof}
Fix $s \in \{1,2\}$. Since the entries of $Q$ are independent with zero mean and bounded variance, we get  that for any $\mathbb{X}_*, \mathbb{Y}_*, \mathbb{X}', \mathbb{Y}' \subset [N]$
\begin{equation}\label{eq:det-non-zero}
\E \left[\det(Q[\mathbb{X}_*; \mathbb{Y}_*] )\cdot \overline{\det(Q[\mathbb{X}'; \mathbb{Y}'] )}\right] \leq  \mathfrak{C}_1  \left\{\begin{array}{ll}
 k! & \mbox{if } \mathbb{X}_*= \mathbb{X}', \, \mathbb{Y}_*=\mathbb{Y}', \text{ and } |\mathbb{X}_*|= |\mathbb{Y}_*| =k,\\
0 & \mbox{ otherwise}.
\end{array}
\right.
\end{equation}
Therefore, upon recalling \eqref{eq:det-decompose-1}, \eqref{eq:hatP-k}-\eqref{eq:wh-gD-dfn}, and \eqref{eq:det-k-sec-mom-0} we find that 
\begin{equation}\label{eq:sec-mom-expand}
\E\left[ \left|\wh{\det}_k(z)\right|^2 \right] \leq \mathfrak{C}_1 N^{2\gamma (\wh m_2 - N_+ - k)} \cdot k! \sum_{\substack{X, Y \subset [N]\\ |X|=|Y|=k}} |\wh \gD(X,Y,z)|^2. 
\end{equation}
Since there are at most $\binom{N}{k}^2$ choices for the sets $X, Y \subset [N]$ such that $|X|=|Y|=k$, the claimed upper bound now follows from Lemma \ref{lem:comb-bound-tube-1}(ii). This completes the proof. 
\end{proof}

Next we derive bound on the second moment of ${\det}_k(z)$ for $k\ge \wh m_2 - N_+$. 

\begin{lem}\label{lem:sec-mom-large-k}
In the setup of Lemma \ref{lem:sec-mom-small-k}, we have the following bounds, for all large $N$.
\begin{enumerate}

\item[(i)]  Fix $\gamma'$ with $1 < \gamma' < \gamma$. Then for any $k \in \N$ such that $ \wh m_2 - N_+ \le k \le N$ we have
\[
 \sup_{z \in \wh \cT^{d, (s)}_{\gamma',\vep_0}} \E\left[ \left|\wh{\det}_k(z)\right|^2 \right] =O\left( N^{-(\gamma - \gamma') \cdot (k +N_+ - \wh m_2)}\right), \quad s=1,2.
\]

\item[(ii)]For  $k_0= \wh m_2 - N_+$ we have 
\[
 \sup_{z \in  \cT^{d, (s)}_{\vep_0',\vep_0}} \E\left[ \left|\wh{\det}_{k_0}(z)\right|^2 \right]  = O(1), \quad s=1,2.
\]

\item[(iii)] For any $k \in \N$ such that $ \wh m_2 - N_+ \le k \le N$ we have
\[
 \sup_{z \in  \cT^{d, (2)}_{\vep_0',\vep_0}} \E\left[ \left|\wh{\det}_k(z)\right|^2 \right] =O\left( N^{-(\gamma - 1) \cdot (k +N_+ - \wh m_2)}\right).
\]

\end{enumerate}
\end{lem}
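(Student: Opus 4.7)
The plan is to combine the second-moment bound \eqref{eq:sec-mom-expand}--\eqref{eq:det-non-zero} with Lemma \ref{lem:comb-bound-tube-1}(i), sum carefully over the index pairs $(X,Y)$, and treat each of the three cases via the geometry of the forbidden tubes. Set $m := k + N_+ - \wh m_2 \geq 0$ and $s := \wh m_2 - N_+$; since $s$ is bounded in terms of $p$, the relation $k = m + s$ means $k$ and $m$ differ by a constant. Writing $A(q)$ for the summand inside the braces of \eqref{eq:gD-bd-1}, Lemma \ref{lem:comb-bound-tube-1}(i) gives
\[
|\wh \gD(X,Y,z)|^2 \leq C^k |\eta_{\wh m_1+1}(z)|^{-2NG(k,z)}\Bigl(\sum_{q\geq 0}A(q)\gI(X,Y,q)\Bigr)^2.
\]
Using the identity $\gI(X,Y,q)\gI(X,Y,q') = \gI(X,Y,q\wedge q')$ and swapping orders of summation, the sum over $(X,Y)$ becomes $\sum_{q,q'}A(q)A(q')\#\{(X,Y):\gI(X,Y,q\wedge q')=1\}$. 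Inspection of \eqref{eq:gI-XY} shows $\gI$ constrains $s$ extreme elements of each of $X$ and $Y$ to lie in intervals of length $q\wedge q'$, leaving $m$ free elements in each of $X,Y$, so the cardinality is at most $\binom{q\wedge q'}{s}^2\binom{N}{m}^2$. Bounding $(q\wedge q')^{2s}\leq q^s(q')^s$ reduces the $(q,q')$-sum to $(\sum_q A(q) q^s)^2$; a Stirling-type split of the binomial $\binom{q(\wh m_2+1)+c}{c}$ in $A(q)$ at $q = c := \wh m_2(k+N_++1)$, combined with the exponential decay $(1-\vep_0/2)^q$, yields $\sum_q A(q) q^s \leq C_1^k k^s$ with $C_1 = C_1(p, \vep_0)$ independent of $N$. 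Finally $k!\binom{N}{m}^2 \leq k^s N^{2m}/m!$ (from $k! = (m+s)! \leq m!\, k^s$ and $\binom{N}{m}\leq N^m/m!$), giving the master estimate
\[
\E[|\wh{\det}_k(z)|^2] \leq C^k k^{3s}\,\frac{1}{m!}\,N^{-2(\gamma-1)m}\,|\eta_{\wh m_1+1}(z)|^{-2NG(k,z)}.
\]

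The three parts then follow by examining the root factor on each tube. On $\cT^{d,(2)}_{\vep_0',\vep_0}$ one has $G(k,z)\equiv 0$, so the root factor drops. Part (ii) is the $m=0$ case, where $k=s$ is bounded in terms of $p$ and the master estimate directly gives $O(1)$. For part (iii) with $m\geq 1$, the master bound reads $C^k k^{3s} N^{-2(\gamma-1)m}/m!$: for bounded $m$ (hence bounded $k$) the prefactor is $O(1)$ and $N^{-2(\gamma-1)m}$ provides $N^{-(\gamma-1)m}$ decay with room to spare, while for large $m$ Stirling yields $C^k/m!\to 0$ super-exponentially and the excess factor $N^{-(\gamma-1)m}$ easily absorbs the polynomial $k^{3s}$ for $N$ above a threshold depending on $(p,\gamma,\vep_0')$. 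For part (i), on the thin tube $\wh\cT^{d,(1)}_{\gamma',\vep_0}$ Definition \ref{dfn:tubes} yields $|\eta_{\wh m_1+1}(z)|\geq 1-(\gamma'-1)\log N/N$, hence $|\eta_{\wh m_1+1}(z)|^{-2NG(k,z)}\leq N^{2(\gamma'-1)G(k,z)(1+o(1))}$. Since $G(k,z)=\min(m,{\sf g}_0)$, the net exponent $-2(\gamma-1)m+2(\gamma'-1)G(k,z)$ is at most $-2(\gamma-\gamma')m$ when $m\leq{\sf g}_0$ and at most $-2(\gamma-1)m+2(\gamma'-1){\sf g}_0 \leq -(\gamma-\gamma')m$ when $m\geq{\sf g}_0$ (using $\gamma>\gamma'$); absorbing the prefactor as before gives the claimed $O(N^{-(\gamma-\gamma')m})$. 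The $s=2$ case of part (i) follows from part (iii) together with $\wh\cT^{d,(2)}_{\gamma',\vep_0}\subset\cT^{d,(2)}_{\vep_0',\vep_0}$ and $N^{-(\gamma-1)m}\leq N^{-(\gamma-\gamma')m}$.

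The main obstacle is establishing $\sum_q A(q)q^s \leq C_1^k k^s$ with a constant $C_1$ independent of $N$; the binomial coefficient $\binom{q(\wh m_2+1)+c}{c}$ in $A(q)$ is polynomial of degree $c = O(k)$ in $q$, so only a careful Stirling-type analysis (bounding the binomial by $(2e)^c$ for $q\leq c/(\wh m_2+1)$ and by $(2eq(\wh m_2+1)/c)^c/c!$ for larger $q$, then summing against $(1-\vep_0/2)^q$ via the estimate $\int q^{c+s}(1-\vep_0/2)^q\,dq\leq C(c+s)!/\vep_0^{c+s+1}$ together with $(c+s)^{c+s}/c^c\leq(c+s)^s e^s$) keeps this sum bounded by $C_1^k k^s$. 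Once this is in hand, the apparent competition between $C^k$ and $N^{-2(\gamma-1)m}$ is harmless since $k=m+s$ with $s$ fixed: for $N$ exceeding a threshold depending on $(p,\gamma,\gamma',\vep_0,\vep_0')$ one has $C<N^{\gamma-1}$, and the required exponential-in-$m$ decay emerges.
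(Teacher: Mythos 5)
Your proof is correct and follows essentially the same strategy as the paper: apply Lemma \ref{lem:comb-bound-tube-1}(i), count index pairs with $\gI=1$, sum the geometric binomial factors via \eqref{eq:comb-negbin}--\eqref{eq:comb-negbin-2}, and combine with \eqref{eq:sec-mom-expand}. Your handling of the squared $(q,q')$-sum via $\gI(X,Y,q)\gI(X,Y,q')=\gI(X,Y,q\wedge q')$ and $(q\wedge q')^{2s}\leq q^s(q')^s$ is a cosmetic variant of the paper's (which simply bounds one $\gI$ factor by one and sums the $q'$-sum separately), and the paper's coarser bound $G(k,z)\leq k+N_+-\wh m_2$ yields the exponent $N^{-2(\gamma-\gamma')m}$ directly, without your case split on $m$ versus ${\sf g}_0$.
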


\begin{rem}
Lemma \ref{lem:sec-mom-large-k}(ii) shows that the supremum of the second moment of $\wh \det_k(z)$, for $k=k_0$,  is well controlled in the tubes $\cT^d$ that are of small but fixed width,
 whereas Lemma \ref{lem:sec-mom-large-k}(i) shows that the same can be said for any $k >k_0$ only in the tubes $\wh \cT^d$ that are of of vanishing width. In part (iii) we see that one has such a control for $z \in \cT^{d, (2)}$ but, as will be seen during the course of the proof, those bounds fail for $z \in \cT^{d, (1)}\setminus \wh \cT^{d, (1)}$. 
\end{rem}


\begin{proof}[Proof of Lemma \ref{lem:sec-mom-large-k}]
We start with the proof of part (i), beginning with $s=1$. Denote
\begin{equation}\label{eq:gd-q}
\gd_q := \binom{q(\wh m_2+1) + \wh m_2 (k+N_++1) }{\wh m_2 (k+N_++1)}  \left(1-\frac{\vep_0}{2}\right)^{q}. 
\end{equation}
As
\begin{equation}\label{eq:tube-1-bd-2n}
|\eta_{\wh m_1+1}(z)| \ge 1 - (\gamma'-1)\frac{\log N}{N}, \qquad \mbox{ for } z \in \wh \cT^{d, (1)}_{\gamma', \vep_0}, 
\end{equation}
by Lemma \ref{lem:comb-bound-tube-1}(i) we have that
\begin{equation}\label{eq:sec-mom-high-k-1}
\sum_{\substack{X, Y \subset [N]\\ |X|=|Y|=k}} |\wh \gD(X,Y,z)|^2 \le 2 \wh  C_{\ref{lem:comb-bound-tube-1}}^{2\wt m (k+N_+)} \cdot N^{2(\gamma'-1) G(k,z)} \left[\sum_{\substack{X, Y \subset [N]\\ |X|=|Y|=k}} \sum_{q, q'  \ge 0} \gd_q \gd_{q'}  \cdot \gI(X, Y, q) \right].
\end{equation}
We claim that, for any $q \ge 0$,
\begin{equation}\label{eq:XY-q-bd}
\left| \{X, Y \subset [N]: |X|=|Y|=k \text{ and } \gI(X, Y,q )=1\}\right| \le (2\wh m_2)! \binom{q+2 \wh m_2}{2 \wh m_2} \cdot \binom{N}{k+N_+-\wh m_2}^2. 
\end{equation}
Indeed, upon recalling \eqref{eq:gI-XY}, we note that for a given $q\ge 0$, 
\[
\gI(X, Y, q) = 1 \qquad \Longrightarrow \qquad \max\left\{  \max_{j =k+N_+ - \wh m_2+1}^{k} (N - x_{j}), \max_{j=1}^{\wh m_2 -N_+} y_j  \right\} \le q. 
\] 
Thus the number of choices of $\{x_j\}_{j =k+N_+ - \wh m_2+1}^{k}$ and $\{y_j\}_{j=1}^{\wh m_2 - N_+}$ is at most 
\[
q^{2(\wh m_2 -N_+)} \le q^{2\wh m_2} \le (2\wh m_2)! \binom{q+2 \wh m_2}{2 \wh m_2}. 
\]
Now choose the remaining  elements of $X$ and $Y$ to obtain \eqref{eq:XY-q-bd}.

Next applying \eqref{eq:comb-negbin} and \eqref{eq:comb-negbin-2} we have 
\[
\sum_{q' \ge 0} \gd_{q'} \le \left( \frac{4 (\wh m_2 +1)}{\vep_0}\right)^{\wh m_2 (k+N_++1) +1}.
\]
On the other hand, \eqref{eq:comb-negbin}, \eqref{eq:comb-negbin-2}, and \eqref{eq:XY-q-bd} yield that
\[
\left[\sum_{\substack{X, Y \subset [N]\\ |X|=|Y|=k}} \sum_{q \ge 0} \gd_q  \cdot \gI(X, Y, q) \right] \le (2 \wh m_2)! \cdot \left( \frac{4 (\wh m_2 +2)}{\vep_0}\right)^{\wh m_2 (k+N_++3) +1} \cdot N^{2(k+N_+ - \wh m_2)} k^{2\wh m_2 } (k!)^{-2}. 
\]
Plugging in the last two bounds in \eqref{eq:sec-mom-high-k-1} and using that $G(k,z) \le k+N_+ - \wh m_2$, we deduce that
\[
\sum_{\substack{X, Y \subset [N]\\ |X|=|Y|=k}} |\wh \gD(X,Y,z)|^2 \le \ol{C}_1^{\wt m (k+N_+)} \cdot N^{2\gamma' (k+N_+ -\wh m_2)}\cdot k^{2\wh m_2 } \cdot (k!)^{-2},
\]
for some constant $\ol{C}_1 < \infty$. Upon using \eqref{eq:sec-mom-expand} part(i) of  the lemma follows for $s=1$. 

Turning to $s =2$ we observe that, as
$G(k,\cdot) \equiv 0$ on $\cT^{d, (2)}_{\vep_0', \vep_0}$ and $\cT_{\vep_0', \vep_0}^{d, (2)} \supset \wh \cT_{\gamma', \vep_0}^{d, (2)}$ for all large $N$, the upper bound \eqref{eq:sec-mom-high-k-1} continues to hold in  with $N^{2(\gamma'-1) G(k,z)}$ replaced by one. Thus, repeating the rest of the arguments we derive part (i) for $s=2$, as well as part (iii). The proof of part (ii) is exactly the same, where we again note that $G(k_0, \cdot) \equiv 0$ on $\cT^{d, (1)}_{\vep_0',\vep_0}$ and use that $k_0 +N_+ - \wh m_2=0$. This completes the proof of the lemma.
\end{proof}

While applying Theorem \ref{thm:sep-spec-curve} to derive the localization of the eigenvectors we will choose $1 < \gamma' < \gamma$ such that $(\gamma - \gamma')$ is (fixed but) arbitrarily close to zero. For such choice of parameters, if $k+N_+ - \wh m_2$ is {\em small} then Lemma \ref{lem:sec-mom-large-k} does not yield a sufficiently strong probability bound to carry out the covering argument. To overcome this caveat we control {\em high moments} of ${\det}_k(z)$ for such choices of $k$, which when applied together with Markov's inequality produce desired probability bounds. 

\begin{lem}\label{lem:high-mom-small-k}
Consider the same setup as in Lemma \ref{lem:sec-mom-large-k}. 
There exists  a constant $\wh C_{\ref{lem:high-mom-small-k}}< \infty$ (depending on $p(\cdot)$ and $\vep_0$  only) so that,
for all $h, k, K_0 \in \N$  such that $\wh m_2 - N_+ \le k \le K_0 -N_+$, we have
\begin{equation}\label{eq:high-mom-t1}
 \sup_{z \in \wh \cT^{d, (1)}_{\gamma',\vep_0}}\E\left[\left|\wh{\det}_k(z)\right|^{2h} \right] \le C_{\ref{lem:high-mom-small-k}}    N^{-2h (\gamma-\gamma')\cdot (k+N_+ - \wh m_2)}  
\end{equation}
and
\begin{equation}\label{eq:high-mom-t2}
 \sup_{z \in  \cT^{d, (2)}_{\vep_0',\vep_0}}\E\left[\left|\wh{\det}_k(z)\right|^{2h} \right] \le C_{\ref{lem:high-mom-small-k}}   N^{-2h (\gamma-1)\cdot (k+N_+ - \wh m_2)},
\end{equation}
where
\[
C_{\ref{lem:high-mom-small-k}}=C_{\ref{lem:high-mom-small-k}}(h,K_0,\wt m, \wh m_2) := \wh C_{\ref{lem:high-mom-small-k}}^{\wt m K_0 h} \cdot (K_0h)^{8 K_0h}\cdot  \gC_{2K_0h}.
\]
\end{lem}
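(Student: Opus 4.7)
The plan is to extend the second-moment analysis of Lemma \ref{lem:sec-mom-large-k} to the $2h$-th moment, keeping careful track of the combinatorics. First I would expand
\[
\E\left[\left|\wh{\det}_k(z)\right|^{2h}\right] = N^{2\gamma h(\wh m_2 - N_+ - k)} \sum_{\substack{(X_i,Y_i),(X'_i,Y'_i)\\ i\in[h]}} \prod_{i=1}^h \wh\gD(X_i, Y_i, z)\,\overline{\wh\gD(X'_i, Y'_i, z)} \cdot \mathcal{M},
\]
where the sum ranges over subsets of size $k$ in $[N]$ and $\mathcal{M} = \E\big[\prod_{i=1}^h \det(Q[X_i;Y_i])\overline{\det(Q[X'_i;Y'_i])}\big]$. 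Expanding each determinant as a signed sum over bijections $\sigma_i\colon X_i \to Y_i$, $\sigma'_i\colon X'_i \to Y'_i$, the quantity $\mathcal{M}$ decomposes into a sum over $2h$-tuples of such bijections of expectations of monomials in the entries of $Q$. By the joint independence and zero mean of the entries (Assumption \ref{assump:mom}(i)), such a monomial expectation vanishes unless every entry $Q_{a,b}$ appearing in it is repeated at least twice; by Assumption \ref{assump:mom}(ii) combined with H\"older's inequality, each nonzero monomial expectation is then bounded in absolute value by $\gC_{2hk} \le \gC_{2K_0 h}$.

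Next I would combine this with the bound in Lemma \ref{lem:comb-bound-tube-1}(i). In $\wh\cT^{d,(1)}_{\gamma',\vep_0}$, estimate \eqref{eq:tube-1-bd-2n} gives $|\eta_{\wh m_1+1}(z)|^{-N G(k,z)} \le N^{(\gamma'-1)(k+N_+ - \wh m_2)}$, while on $\cT^{d,(2)}_{\vep_0',\vep_0}$ one has $G(k,z) \equiv 0$ and $|\eta_{\wh m_1+1}(z)| \ge 1$. Thus per copy the factor $|\wh\gD(X_i, Y_i, z)|$ is bounded by $C_{\ref{lem:comb-bound-tube-1}}^{\wh m_1(k+N_+)}$ times either $N^{(\gamma'-1)(k+N_+-\wh m_2)}$ (case (1)) or $1$ (case (2)), times the truncated sum over $q_i \ge 0$ of $\gd_{q_i}\gI(X_i, Y_i, q_i)$ from \eqref{eq:gD-bd-1}.

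The main combinatorial step is to count the tuples compatible with the matching constraint on the combined bijections. The indicator $\gI(X_i, Y_i, q_i) = 1$ freezes $(\wh m_2 - N_+)$ largest entries of $X_i$ and smallest entries of $Y_i$ to lie within $q_i$ of the endpoints, contributing at most $q_i^{2\wh m_2}$ choices; the remaining $(k+N_+ - \wh m_2)$ ``free'' entries of each set then contribute at most $N^{k+N_+ - \wh m_2}$ choices. Summed over the $2h$ copies and over the $q_i$'s (using \eqref{eq:comb-negbin}--\eqref{eq:comb-negbin-2} to handle the exponential factor in each $q_i$), the total count of free configurations is bounded by a factor of $(hK_0)^{O(hK_0)} \cdot N^{2h(k+N_+ - \wh m_2)}$, the latter exponent being exactly what is needed to match the prefactor $N^{-2\gamma h(k+N_+ - \wh m_2)}$ against the $2h$ copies of $N^{(\gamma'-1)(k+N_+-\wh m_2)}$ and produce $N^{-2h(\gamma-\gamma')(k+N_+ -\wh m_2)}$. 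The matching constraint on the bijections, which enforces each $Q_{a,b}$-entry to be used an even number of times, does not affect the polynomial-in-$N$ count because any reduction in degrees of freedom is compensated by the number of labelings of matched edges, which is absorbed in the combinatorial prefactor; its role is only to allow bounding each monomial expectation by $\gC_{2K_0 h}$.

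The hard part will be packaging the combinatorial count with the correct power of $N$: one must argue that, despite the $h$-fold multiplication of sums in the expansion of $\mathcal{M}$, the effective number of independent free indices remains $2h(k+N_+ - \wh m_2)$, with the matching-pairing count over bijections contributing only polynomial-in-$hk$ loss (ultimately absorbed in the factor $(K_0 h)^{8 K_0 h}$). Once this bookkeeping is in place, the bounds \eqref{eq:high-mom-t1}-\eqref{eq:high-mom-t2} follow by combining prefactors exactly as in the proof of Lemma \ref{lem:sec-mom-large-k}(i) and (iii), the difference between the two tubes being only that on $\cT^{d,(2)}$ the factor $N^{(\gamma'-1) G(k,z)}$ is replaced by $1$, so one can take $\gamma' = 1$ there.
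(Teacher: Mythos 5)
Your overall plan mirrors the paper's — expand the $2h$-th moment over $2h$-tuples of index sets, note that the expectation vanishes unless every entry of $Q$ is repeated, bound the non-vanishing expectations by H\"older and Assumption~\ref{assump:mom}, then count configurations weighted by the $\wh\gD$ bound from Lemma~\ref{lem:comb-bound-tube-1}(i). But the central combinatorial step contains a genuine gap, and your written justification for it is internally inconsistent.

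You assert that ``the matching constraint on the bijections \dots\ does not affect the polynomial-in-$N$ count,'' and then state that the total count of free configurations is $N^{2h(k+N_+ - \wh m_2)}$. These two claims contradict each other. Without the matching constraint, the naive count is larger: each of the $2h$ sets $X^{(i)}$ has $k+N_+-\wh m_2$ free entries, and each of the $2h$ sets $Y^{(i)}$ also has $k+N_+-\wh m_2$ free entries, so choosing them independently gives $N^{4h(k+N_+-\wh m_2)}$, not $N^{2h(k+N_+-\wh m_2)}$. That extra factor of $N^{2h(k+N_+-\wh m_2)}$ would destroy the desired bound. The matching constraint is precisely what eliminates this factor: if the expectation is nonzero, the $2hk$ elements of $\bm X$ (taken as a multiset across the $2h$ copies) must be organized into a \emph{pair partition}, i.e.\ a partition where every block has size at least two. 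For a fixed pair partition one counts not the $2h(k+N_+-\wh m_2)$ free entries individually but the at most $h(k+N_+-\wh m_2)$ distinct values taken by the generating blocks, and similarly for $\bm Y$ — yielding exactly $N^{2h(k+N_+-\wh m_2)}$ (times a polynomial in $K_0h$ that counts the pair partitions and the non-generating blocks via the $q_i$ sums). Your sentence that ``any reduction in degrees of freedom is compensated by the number of labelings of matched edges'' has the logic reversed: the reduction is the point, it is \emph{not} compensated by anything, and the labeling count (number of pair partitions, $\le (K_0h)^{4K_0h}$) is an additional cost absorbed into the prefactor, not a trade-off against the power of $N$. Already at $h=1$ one can see this clearly: the second moment requires $X^{(1)}=X^{(2)}$ and $Y^{(1)}=Y^{(2)}$, which halves the exponent, and this halving is exactly the content of \eqref{eq:det-non-zero}. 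So while your final answer has the right power of $N$, the argument as written does not establish it; the missing ingredient is the pair-partition bookkeeping (generating vs.\ non-generating blocks, and counting values of blocks rather than elements) that the paper carries out.
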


\begin{proof}
We start with the proof of \eqref{eq:high-mom-t1}. Note that it suffices to show that for any $z \in \wh \cT^{d, (1)}_{\gamma',\vep_0}$,
\begin{equation}\label{eq:high-mom-gT}
\mathfrak{T}_k(z) := N^{2\gamma h ( k+N_+ -\wh m_2)}  \E\left[\left|\wh{\det}_k(z)\right|^{2h} \right] \le  \wh C_{\ref{lem:high-mom-small-k}}^{\wt m K_0 h} \cdot (K_0h)^{8 K_0h}\cdot  \gC_{2K_0h} \cdot N^{2h\gamma' (k+N_+ - \wh m_2)}. 
\end{equation}
\
Turning to prove \eqref{eq:high-mom-gT}, using \eqref{eq:det-decompose-1}, \eqref{eq:fancyK-1}, and  \eqref{eq:hatP-k}-\eqref{eq:wh-gD-dfn} we find that
\begin{multline}\label{eq:high-mom-1}
\gT_k(z) = \E\Bigg[\sum_{{\bm X}} \sum_{\bm Y} \prod_{i=1}^{2h} (-1)^{\sgn (\sigma_{X^{(i)}}) \sgn(\sigma_{Y^{(i)}})} \cdot \prod_{i=1}^h \wh \gD(X^{(i)}, Y^{(i)}, z) \cdot \prod_{i=1}^h \det (Q[X^{(i)}; Y^{(i)}]) \\
\cdot \prod_{i=h+1}^{2h} \ol{\wh \gD(X^{(i)}, Y^{(i)}, z)} \cdot \prod_{i=h+1}^{2h} \ol{\det (Q[X^{(i)}; Y^{(i)}])}\Bigg],
\end{multline}
where the sum is taken over 
\[
{\bm X} := \{X^{(1)}, X^{(2)}, \ldots, X^{(2h)}\} \qquad \text{ and } \qquad {\bm Y}  := \{Y^{(1)}, Y^{(2)}, \ldots, Y^{(2h)}\},
\] 
and for $i \in [2h]$,
\[
X^{(i)} := \{x^{(i)}_1 < x^{(i)}_2 < \cdots < x^{(i)}_k\} \subset [N] \qquad \text{ and } \qquad Y^{(i)} := \{y^{(i)}_1 < y^{(i)}_2 < \cdots < y^{(i)}_k\} \subset [N]. 
\] 

Associate to ${\bm X}$  the partition $\mathfrak{P}_{{\bm X}}$ determined by the equivalence relation
 $x^{(i')}_{j'}\sim x^{(i'')}_{j''}$  iff $x^{(i')}_{j'} = x^{(i'')}_{j''}$, for some $i' \ne i'' \in [2h]$ and $j', j'' \in [k]$. 
Similarly, associate to ${\bm Y}$ the partition $\gP_{{\bm Y}}$.

As the entries of $Q$ are independent and are of zero mean it is straightforward to observe that for any $({\bm X}, {\bm Y})$ such that either $\gP_{{\bm X}}$ or $\gP_{{\bm Y}}$ has an equivalence class of size one, the expectation of the summand on the \abbr{RHS} of \eqref{eq:high-mom-1} equals zero. 
Hence, to compute a bound on the \abbr{RHS} of \eqref{eq:high-mom-1} we only need to consider partitions $\gP_{{\bm X}}$ and $\gP_{{\bm Y}}$ such that each of their equivalence classes has size at least two. For brevity we term them {\em pair partitions}. Fix one such pair of partitions $(\gP, \wt \gP)$ and a pair $({\bm X}, {\bm Y})$ such that $\gP_{{\bm X}}= \gP$ and $\gP_{{\bm Y}}= \wt \gP$. 

For any $i \in [2h]$ the determinant $\det(Q[X^{(i)}; Y^{(i)}])$ is a linear combination of $k!$ terms each of which are products of $k$ independent entries of the sub matrix $Q[X^{(i)}; Y^{(i)}]$. Therefore denoting 
\begin{equation*}
\mathfrak{Q} := \E\left[ \left|\prod_{i=1}^{h} \det (Q[X^{(i)}; Y^{(i)}]) \cdot \ol{\prod_{i=h+1}^{2h} \det (Q[X^{(i)}; Y^{(i)}])}\right|\right],
\end{equation*}
by the triangle inequality, we note that $\mathfrak{Q}$ is a sum of at most $(k!)^{2h}$ terms of the form
\begin{equation}\label{eq:pre-holder-1}
\E\left[ \left|\prod_{w=1}^{b} Q_{u_w,v_w}^{c_w} \cdot \ol{Q_{u_w,v_w}^{c_w'}}\right|\right],
\end{equation}
for some $b \in \N$ and collections of positive integers $\{c_w\}_{w=1}^b$ and $\{c'_w\}_{w=1}^b$ such that 
\[
\sum_{w=1}^b c_w = \sum_{w=1}^b c'_w = kh,
\]
where $Q_{u_w, v_w}$ denotes the $(u_w, v_w)$-th entry of $Q$ and the collection $\{(u_w, v_w)\}_{w=1}^b$ is pairwise disjoint. Thus $\{Q_{u_w,v_w}\}_{w=1}^b$ are  jointly independent. 
Upon using H\"{o}lder's inequality and Assumption \ref{assump:mom}(ii)  it follows that \eqref{eq:pre-holder-1} is bounded above by 
\[
\prod_{w=1}^{b} \left(\E \left[|Q_{u_w,v_w}|^{2kh}\right]\right)^{\frac{c_w+c'_w}{2kh}} \le  \gC_{2kh} \le \gC_{2K_0h},
\]
where in the last step we use that $k \le K_0$ and  that $\gC_w$ is increasing in $w$. This, in turn implies that $\mathfrak{Q} \le K_0^{2K_0 h} \cdot \gC_{2K_0h}$. Plugging this bound in \eqref{eq:high-mom-1}, and applying Lemma \ref{lem:comb-bound-tube-1}(i), \eqref{eq:tube-1-bd-2n}, and that $G(k,z) \le k+N_+ - \wh m_2$, 
we now deduce that 
\begin{multline}\label{eq:high-mom-2}
\gT_k (z) \le 2 \wh  C_{\ref{lem:comb-bound-tube-1}}^{2h\wt m (k+N_+)} \cdot N^{2h (\gamma'-1) \cdot (k+N_+ -\wh m_2)} \cdot K_0^{2K_0 h} \cdot \gC_{2K_0h} \cdot \\
 \left\{\sum_{\gP, \wt \gP} \left[\sum_{{\bm q}_\#}  \sum_{\bm Y: \gP_{\bm Y} = \wt \gP}\sum_{\bm X: \gP_{\bm X} = \gP} \prod_{i=1}^{2h} \gd_{q^{(i)}} \cdot \gI(X^{(i)}, Y^{(i)}, q^{(i)})\right]\right\},
\end{multline}
where ${\bm q}_\# := (q^{(1)}, q^{(2)}, \ldots, q^{(2h)})$, and the outer sum is over all pair partitions $\gP$ and $\wt \gP$. 
To evaluate the sum in \eqref{eq:high-mom-2} we will compute the sum in a specific order. To execute this step we need a few definitions. 

We split the elements of ${\bm X}$ and ${\bm Y}$ into generating and non-generating elements as follows:~For any $i \in [2h]$ we term the collection of elements $\{x^{(i)}_j\}_{j=k +N_+ -\wh m_2+1}^k$ and $\{y^{(i)}_j\}_{j=1}^{\wh m_2 - N_+}$ {\em non-generating elements}. The rest of the elements of ${\bm X}$ and ${\bm Y}$ are termed {\em generating elements}. Let $(\gp_1, \gp_2, \ldots, \gp_b)$, for some $b >0$, be the equivalence classes of $\gP_{\bm X}$, the partition associated with ${\bm X}$. For $w \in [b]$, we say that $\gp_w$ is non-generating if it contains {\em at least one  non-generating} element of ${\bm X}$. Otherwise $\gp_w$ will be said to be generating. The same convention is adopted for ${\bm Y}$. 

We are now ready to derive the desired upper bound on \eqref{eq:high-mom-2}. We proceed as follows:

\begin{itemize}
\item Fix a couple of pair partitions $\gP$ and $\wt \gP$. Note that, as $k \le K_0$, the number of choices of $(\gP, \wt \gP)$ is bounded above by $(K_0 h)^{4 K_0 h}$.

\item We next evaluate the innermost sum in \eqref{eq:high-mom-2}, fixing ${\bm q}_\#$. Since $\gP_{\bm X} = \gP$ one only needs to sum over the possible (common) values of the equivalence classes of $\gP$. 

\item Let $(\gp_1, \gp_2, \ldots, \gp_b)$ be the equivalence classes of $\gP$ and without loss of generality assume that $(\gp_1, \gp_2, \ldots, \gp_{b_0})$ are generating for some $b_0 \le b$. Notice that the number of elements of ${\bm X}$ is $2kh$. If $\gP_{\bm X} = \gP$ for some ${\bm X}$ and $\gP$ is a pair partition then $b_0 \le (k+N_+ -\wh m_2) h$. 

\item As ${\bm X} \subset[N]$ the total number of possible values of $\{\gp_w\}_{w=1}^{b_0}$ is bounded above by $N^{b_0} \le N^{(k+N_+ - \wh m_2) h}$. 

\item On the other hand, upon recalling \eqref{eq:gI-XY} we find that 
\[
\prod_{i=1}^{2h} \gI(X^{(i)}, Y^{(i)}, q^{(i)}) =1 \qquad \Longrightarrow \qquad \gp_w \ge N - \ol{\bm q}_\#, \quad w \in [b]\setminus [b_0+1],
\]
where $\ol{\bm q}_\# := \sum_{i=1}^{2h} q^{(i)}$. Thus, the number of choices for the of non-generating equivalences classes is bounded above by $\ol{\bm q}_\#^b \le \ol{\bm q}_\#^{K_0h}$. 

\item We repeat the same idea as above to compute the second sum in \eqref{eq:high-mom-2}.

\end{itemize}

Putting all the above pieces together we now obtain from \eqref{eq:high-mom-2} that
\begin{equation}\label{eq:high-mom-3}
\gT_k(z) \le 2 \wh  C_{\ref{lem:comb-bound-tube-1}}^{2h\wt m K_0} \cdot (2K_0h)^{8K_0 h} \cdot \gC_{2K_0h} \cdot N^{2(k+N_+ - \wh m_2) \gamma' h}\left[\sum_{{\bm q}_\#}  \ol{\bm q}_\#^{K_0h}\prod_{i=1}^{2h}  \cdot \gd_{q^{(i)}} \right].
\end{equation}
Applying \eqref{eq:comb-negbin} and \eqref{eq:comb-negbin-2} one can note that the sum over ${\bm q}_\#$ in \eqref{eq:high-mom-3} is bounded by $\wt C^{\wt m K_0 h}$, for some constant $\wt C < \infty$, depending only on $\wh m_2$ and $\vep_0$. Together, this yields \eqref{eq:high-mom-gT} and completes the proof of
 \eqref{eq:high-mom-t1}.

To prove \eqref{eq:high-mom-t2} we simply note that, again by Lemma \ref{lem:comb-bound-tube-1}, \eqref{eq:high-mom-2} continues to hold for any $z \in \cT^{d, (2)}_{\vep_0',\vep_0}$ with $N^{2h (\gamma'-1) \cdot (k+N_+ -\wh m_2)}$ replaced by one. Therefore, repeating the rest of the arguments we obtain  \eqref{eq:high-mom-t2} and complete  the proof of the lemma.
\end{proof}

\subsection{Step 2: uniform upper bounds on non-dominant terms}\label{sec:der-bd}
In this section we derive uniform upper bounds on the non-dominant terms in the expansion \eqref{eq;spec-rad-new}. The following is the main result of this section. 

\begin{thm}\label{thm:prob-non-dom-small} Let Assumption \ref{assump:mom} hold.
Fix parameters $\gamma', \vep_0, \vep_0' , \wt \vep_0>0$ such that $(\gamma -1) \vee 1 < \gamma' < \gamma$ and $\vep_0' / \vep_0$ is sufficiently small. Let $d \ge 0$ . Then,
for all large $N$,
\begin{equation}\label{eq:prob-non-dom-small1}
\max_{s\in \{1,2\}} \prob\left[\sup_{z \in \wh \cT^{d, (s)}_{\gamma',\vep_0}} \left|\sum\limits_{k \ne \wh m_2 - N_+}\wh{\det}_k(z)\right| \ge N^{-(\gamma- \gamma')/4} \right] \le 1/N.
\end{equation}
 Furthermore, for all large $N$,
\begin{equation}\label{eq:prob-non-dom-small2}
 \prob\left[\sup_{z \in  \cT^{d, (2)}_{\vep_0',\vep_0}} \left|\sum\limits_{k \ne \wh m_2 - N_+}\wh{\det}_k(z)\right| \ge N^{-(\gamma- \gamma')/4} \right] \le 1/N. 
\end{equation}
\end{thm}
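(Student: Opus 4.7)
The approach combines a standard $\varepsilon$-net argument with the moment bounds established in Lemmas \ref{lem:sec-mom-small-k}--\ref{lem:high-mom-small-k}. Let $\cR$ denote the relevant tube (either $\wh\cT^{d,(s)}_{\gamma',\vep_0}$ for \eqref{eq:prob-non-dom-small1} or $\cT^{d,(2)}_{\vep_0',\vep_0}$ for \eqref{eq:prob-non-dom-small2}); in either case $\cR$ is contained in a compact subset of $\C$ of diameter $O(1)$ at positive distance from $\cB_2$. Setting $k_0 := \wh m_2 - N_+$, I would decompose
\begin{equation*}
\sum_{k \ne k_0} \wh{\det}_k(z) \;=\; \Sigma_{\mathrm{sm}}(z) + \Sigma_{\mathrm{mid}}(z) + \Sigma_{\mathrm{tail}}(z),
\end{equation*}
corresponding to the ranges $k < k_0$, $k_0 < k \le K_0$, and $k > K_0$ respectively, where $K_0$ is a large constant chosen depending on $(\gamma-\gamma')^{-1}$. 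The objective is to bound each piece uniformly in $z \in \cR$ by $\tfrac{1}{3} N^{-(\gamma-\gamma')/4}$ on an event of probability $1 - O(N^{-1})$.

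For the passage from pointwise to uniform control, fix a net $\mathcal{N} \subset \cR$ of mesh $N^{-M}$ with cardinality $O(N^{2M})$ for a large constant $M$ to be chosen. On the overwhelming-probability event $\{\|Q\| \le N\}$, each $\wh{\det}_k$ admits a deterministic Lipschitz bound $|\tfrac{d}{dz}\wh{\det}_k(z)| \le N^{L(p,k)}$ polynomial in $N$: indeed ${\det}_k(z)$ is a polynomial in $z$ of degree at most $N$, the roots $\eta_j(z)$ of $p_z$ are smooth on $\cR$ with derivatives $O(1)$ by \eqref{eq:root-implicit}, and $\mathfrak{K}(z)$ is bounded away from zero there. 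Choosing $M$ larger than the maximum of $L(p,k)$ over $k \le K_0$, it suffices to control $\sup_{z \in \cN}|\wh{\det}_k(z)|$. The term $\Sigma_{\mathrm{sm}}$ is then handled by Markov applied to the second-moment bound $(1-\vep_0/8)^N$ of Lemma \ref{lem:sec-mom-small-k} with a union bound over $\cN$ and over $k$, yielding an exponentially small contribution. For $\Sigma_{\mathrm{mid}}$, I would apply Lemma \ref{lem:high-mom-small-k} with $h$ chosen large enough that $h(\gamma-\gamma') > 2M+4$ (respectively $h(\gamma-1) > 2M+4$ for \eqref{eq:prob-non-dom-small2}); Markov with the $2h$-th moment gives
\begin{equation*}
\prob\bigl(|\wh{\det}_k(z)| > N^{-(\gamma-\gamma')(k-k_0)/2}\bigr) \le C_{\ref{lem:high-mom-small-k}}\cdot N^{-h(\gamma-\gamma')(k-k_0)}
\end{equation*}
per point, and summing over $\cN$ and over $k \in (k_0, K_0]$ gives $|\Sigma_{\mathrm{mid}}| \le 2 N^{-(\gamma-\gamma')/2}$ with the required probability. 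For $\Sigma_{\mathrm{tail}}$, the second-moment bounds of Lemma \ref{lem:sec-mom-large-k}(i),(iii) decay geometrically in $k-k_0$; combined with the Lipschitz estimate and the event $\{\|Q\| \le N\}$, this yields $|\Sigma_{\mathrm{tail}}| \le N^{-(\gamma-\gamma')(K_0-k_0)/4}$ uniformly, which is smaller than $\tfrac{1}{3}N^{-(\gamma-\gamma')/4}$ once $K_0 - k_0 \ge 2$.

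The main technical obstacle will be the uniform Lipschitz estimate on $\wh{\det}_k(z)$. Since $\mathfrak{K}(z)$ carries an $N$-th power of $\prod_j \eta_j(z)$, its logarithmic derivative is a priori of order $N$, so $|\tfrac{d}{dz}\wh{\det}_k(z)|$ could conceivably grow exponentially in $N$. The saving is that only a polynomial-in-$N$ Lipschitz constant is required to make the covering argument work at the mesh scale $N^{-M}$; this follows from \eqref{eq:root-implicit} together with the fact that the roots $\{\eta_j(z)\}_{j=1}^{\wh m_1+{\sf g}_0}$ appearing in $\mathfrak{K}$ remain uniformly bounded above and away from zero on $\cR$, and that the polynomial $\det_k(z)$ itself enjoys a crude bound of the form $N^{O(k)}\|Q\|^k$ on the considered event. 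Once the Lipschitz estimate is established, the argument for all three pieces proceeds cleanly by Markov and union bound as sketched above, and the three contributions together yield the claimed probability estimate $O(N^{-1})$.
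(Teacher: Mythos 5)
Your overall architecture — split the sum into small, intermediate, and tail ranges of $k$, control each pointwise by moment bounds, and extend to a uniform bound over the tube via a net — mirrors the paper's proof. The moment inputs you invoke (Lemmas \ref{lem:sec-mom-small-k}, \ref{lem:sec-mom-large-k}, \ref{lem:high-mom-small-k}) are the right ones. The one substantive divergence, which is exactly where the argument breaks, is the step you yourself flag as the "main technical obstacle": the claim that on $\{\|Q\|\le N\}$ each $\wh{\det}_k$ has a \emph{deterministic} Lipschitz constant that is polynomial in $N$.

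That claim fails for the tail. Your bound $|\det_k(z)|\le N^{O(k)}\|Q\|^k$ is not correct: from \eqref{eq:det-decompose-1}, $\det_k(z)$ is a sum over $\binom{N}{k}^2$ pairs $(X,Y)$, each contributing a Toeplitz minor $\det(P_z[X^c;Y^c])$ which by Hadamard can be as large as $C^{N-k}$, so the crude worst-case bound reads $\binom{N}{k}^2 C^{N-k}N^{-\gamma k}\|Q\|^k$ — exponential in $N$ for $k\asymp N$. Dividing by $\mathfrak{K}(z)$ does not rescue this, because a worst-case Hadamard bound on each minor carries no cancellation against $\prod_j\eta_j(z)^N$; the combinatorial control of Lemma \ref{lem:comb-bound-tube-1} on $\wh{\gD}(X,Y,z)$ still leaves a $C^{\wt m(k+N_+)}$ factor and, more damagingly, $\binom{N}{k}^2$ choices of $(X,Y)$. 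The only reason $\wh{\det}_k$ is typically polynomially small for large $k$ is that $\E|\det(Q[X;Y])|^2\sim k!$ is vastly smaller than the deterministic $\|Q\|^{2k}\le N^{2k}$; this cancellation lives in the averaging over $Q$ and cannot be extracted on a worst-case event like $\{\|Q\|\le N\}$. Consequently your $\Sigma_{\mathrm{tail}}$ step, which explicitly invokes "the Lipschitz estimate" for $k>K_0$, has no support. (For $k\le K_0$ constant, your deterministic bound is actually fine on the $N$-dependent tube $\wh{\cT}^{d,(s)}_{\gamma',\vep_0}$, since there $|\eta_{\wh m_1+1}|^{-N}\lesssim N^{\gamma'-1}$ and the remaining factors are $N^{O(K_0)}$.)

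What the paper does instead (Lemma \ref{lem:der-bd}) is bound the \emph{second moment} of $\sup_z|\tfrac{d}{dz}\wh{\det}_k(z)|$ directly: it represents $\wh{\det}_k$ and its derivative by Cauchy's integral formula with a smooth cutoff $\upchi$ over a slightly enlarged tube, so that by Cauchy--Schwarz and Fubini
$\E\bigl[\sup_{z}|\partial_z\wh{\det}_k(z)|^2\bigr]=O(\beta^{-2}\vep^{-2})\cdot\sup_w\E|\wh{\det}_k(w)|^2$,
and the right side is uniformly $O(1)$ for \emph{all} $k$ by the pointwise moment bounds already in hand (and the tube-blow-up Lemma \ref{lem:tube-blow-up} ensures the enlargement stays in the controlled region). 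Markov's inequality then gives $\prob(\sup_z|\partial_z\wh{\det}_k(z)|\ge N^3)\le N^{-3}$ for every $k$, which is what the net argument actually needs. In short: the control of the derivative must itself be probabilistic (in expectation, via the Cauchy/cutoff representation), not deterministic; the paper's Lemma \ref{lem:der-bd} is the missing ingredient in your proposal. You should also address the $k=0$ term when $\wh d>0$, for which $\det_0(z)$ is nonrandom and the paper appeals separately to Lemma \ref{lem:det-0}(a).
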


The difference between \eqref{eq:prob-non-dom-small1} and \eqref{eq:prob-non-dom-small2} is that the former yields bounds for tubes with diminishing width (in $N$), while the latter provides bounds for certain tubes of fixed width. The bound \eqref{eq:prob-non-dom-small2} will be used in the proof of Theorem \ref{thm:no-outlier}. 

To prove Theorem \ref{thm:prob-non-dom-small} we will use the uniform bounds on the moments of $\wh \det_k(z)$ that were derived in Section \ref{sec:high-mom}. We will also need a bound on the second moment of the supremum of the derivatives of $\wh{\det}_k(z)$. 

\begin{lem}\label{lem:der-bd}
Consider the same setup as in Theorem \ref{thm:prob-non-dom-small}. Then there exist constants $0 < c_{\ref{lem:der-bd}}, C_{\ref{lem:der-bd}} <\infty$, depending only on $ \gamma'$ and $\wt \vep_0$, such that for any $d \ge 0$ and $k \in  [N]$ we have that
\begin{equation}\label{eq:lem-der-ubd2}
 \max_{s \in \{1,2\}} \E\left[\sup_{z \in \left(\wh \cT^{d, (s)}_{\gamma', \vep_0}\setminus \cB_2^{\wt \vep_0}\right)^{c_{\ref{lem:der-bd}} \log N/N}} \left|\frac{d}{dz} \wh{\det}_k(z)\right|^2 \right] \le C_{\ref{lem:der-bd}} \left(\frac{N}{\log N}\right)^3 
\end{equation}
and
\begin{equation}\label{eq:lem-der-ubd}
 \E\left[\sup_{z \in \left( \cT^{d, (2)}_{\vep_0', \vep_0}\setminus \cB_2^{\wt \vep_0}\right)^{c_{\ref{lem:der-bd}} \log N/N}} \left|\frac{d}{dz} \wh{\det}_k(z)\right|^2 \right] \le C_{\ref{lem:der-bd}} \left(\frac{N}{\log N}\right)^4.  
\end{equation}
\end{lem}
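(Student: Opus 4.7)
I would leverage the analyticity of $\wh{\det}_k(z)$ on the relevant region and apply a Bergman-kernel-type estimate to control $|\wh{\det}_k'(z)|^2$ by an $L^2$-area integral of $|\wh{\det}_k|^2$. Combined with Fubini and the pointwise moment bounds from Lemmas \ref{lem:sec-mom-small-k} and \ref{lem:sec-mom-large-k}, this yields the two claimed rates; the different exponents $(N/\log N)^3$ versus $(N/\log N)^4$ will arise simply from the different areas of the two regions of integration (width $O(\log N/N)$ versus $O(1)$).

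First, I would verify analyticity. Since $P_z = P_N - z I_N$ is affine in $z$, $\det_k(z)$ is a polynomial in $z$. The factor $\mathfrak{K}(z) = a_{-N_-}^N \cdot N^{-\gamma \wh d} \prod_{j=1}^{\wh m_1 + {\sf g}_0} \eta_j(z)^N$ is analytic and non-vanishing on a neighborhood of $\cT^{d,(s)}_{\vep_0', \vep_0}\setminus \cB_2^{\wt\vep_0/2}$: the gap condition $|\eta_{\wh m_1 + {\sf g}_0}| \geq 1 - \vep_0' > 1 - \vep_0 \geq |\eta_{\wh m_1 + {\sf g}_0 + 1}|$ in Definition \ref{dfn:tubes} identifies a globally single-valued subset of roots of $p_z(\cdot)=0$ (those of modulus at least $1 - \vep_0'$), whose elementary symmetric product is a rational function of $z$, analytic and nonvanishing on the region (each selected root is bounded away from $0$ by $1 - \vep_0'$). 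Consequently $\wh{\det}_k(z) = \det_k(z)/\mathfrak{K}(z)$ is holomorphic there (uniformly over the finitely many relevant values of $d$).

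Next, for any holomorphic $f$ on $D(z_0, r)$, the Bergman reproducing formula $f'(z_0) = \frac{2}{\pi r^4} \int_{D(z_0, r)} \overline{(\zeta - z_0)} f(\zeta)\, dA(\zeta)$ together with Cauchy--Schwarz gives
\[
|f'(z_0)|^2 \leq \frac{C}{r^4} \int_{D(z_0, r)} |f(\zeta)|^2 \, dA(\zeta)
\]
for an absolute constant $C < \infty$. I would apply this to $f = \wh{\det}_k$ with $r = c_{\ref{lem:der-bd}} \log N / N$, choosing $c_{\ref{lem:der-bd}}$ small enough that $D(z, r)$ lies inside the slightly enlarged tube $\wh \cT^{d,(s)}_{\gamma'', \vep_0} \setminus \cB_2^{\wt\vep_0/2}$ for some $\gamma'' \in ((\gamma-1)\vee 1, \gamma')$ (respectively, inside $\cT^{d,(2)}_{\vep_0'', \vep_0}\setminus \cB_2^{\wt\vep_0/2}$ for some $\vep_0'' > \vep_0'$, for \eqref{eq:lem-der-ubd}). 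Taking the supremum over $z$ in the region of interest and applying Fubini then yields
\[
\E\!\left[\sup_z \left|\wh{\det}_k'(z)\right|^2\right] \leq \frac{C}{r^4} \int_{D_{\mathrm{larger}}} \E\!\left[|\wh{\det}_k(\zeta)|^2\right] dA(\zeta).
\]
By Lemmas \ref{lem:sec-mom-small-k} and \ref{lem:sec-mom-large-k}, the integrand is $O(1)$ uniformly in $k \in [N]$ and $\zeta$ in the enlarged tube (the bounds for $\gamma''$ in place of $\gamma'$ are no worse, because $\gamma - \gamma'' > \gamma - \gamma'$). For \eqref{eq:lem-der-ubd2}, $|D_\mathrm{larger}| = O(\log N / N)$ (width $O(\log N / N)$, length $O(1)$), giving $(N / \log N)^3$; for \eqref{eq:lem-der-ubd}, $|D_\mathrm{larger}| = O(1)$, giving $(N / \log N)^4$.

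\textbf{The main obstacle} is the bookkeeping around (a) the analyticity of $\mathfrak{K}$ via a globally single-valued labeling of roots, made possible by the gap condition in Definition \ref{dfn:tubes}, and (b) the extension of the pointwise moment bounds to a slightly enlarged tube, which is harmless thanks to the monotonicity of those bounds in the parameters $\gamma'$ and $\vep_0'$. Both points are routine once $c_{\ref{lem:der-bd}}$ is chosen small enough relative to $\vep_0 - \vep_0'$ and $\wt\vep_0$.
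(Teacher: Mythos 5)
Your plan is essentially the same as the paper's: both proofs reduce the pointwise derivative estimate to an $L^2$-area average of $|\wh{\det}_k|^2$ over a slightly enlarged region, and then invoke the pointwise second-moment bounds of Lemmas \ref{lem:sec-mom-small-k} and \ref{lem:sec-mom-large-k}; the different exponents $(N/\log N)^3$ vs.\ $(N/\log N)^4$ come, in both arguments, from the area of the enlarged region (width $\asymp\log N/N$ vs.\ width $\asymp 1$) divided by the $r^{-4}$ coming from the differentiation. The only real difference is the choice of integral representation: the paper uses the Cauchy--Pompeiu formula with a smooth cutoff on a net of discs $\D_i^{\beta}$ (see \eqref{eq:cauchy-der-l2}--\eqref{eq:cauchy-der-l21}), whereas you use the Bergman reproducing kernel of a disc plus Cauchy--Schwarz and Fubini; these are interchangeable here. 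The paper also has to justify holomorphy of $\wh{\det}_k$ on each $\D_i^{3\beta}$ by constructing a global holomorphic relabeling $\{\wh\eta_j\}$ of the roots and checking it agrees with $\{\eta_j\}$ on each disc (see \eqref{eq:root-holo} and the discussion around it); your observation that the product $\prod_{j\le\wh m_1+{\sf g}_0}\eta_j$ is determined by the modulus gap alone, not the formal label $\wh m_1$ (which jumps across $\cT^{d,(1)}/\cT^{d\mp{\sf g}_0,(2)}$ boundaries), is a legitimate way of saying the same thing.

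Two small corrections. First, the direction of your tube enlargement for \eqref{eq:lem-der-ubd2} is reversed: since $\wh\cT^{d,(s)}_{\gamma',\vep_0}=\cT^{d,(s)}_{(\gamma'-1)\log N/N,\vep_0}$ and these tubes are increasing in the first subscript, containment of the $c\log N/N$-blow-up forces $\gamma''\in(\gamma',\gamma)$, not $\gamma''\in((\gamma-1)\vee 1,\gamma')$. Consequently $\gamma-\gamma''<\gamma-\gamma'$, so the moment bound from Lemma \ref{lem:sec-mom-large-k}(i) with parameter $\gamma''$ is slightly \emph{weaker} than with $\gamma'$, not stronger --- but it is still $O(1)$ uniformly in $k$, which is all that is needed. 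Second, the product of the roots of modulus $\geq 1-\vep_0'$ is an algebraic (branch-) function of $z$, holomorphic on the tube by the modulus gap and the implicit function theorem, but it is not a rational function of $z$ in general; only holomorphy and nonvanishing are used, so this does not affect the argument.
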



The proof of Theorem \ref{thm:prob-non-dom-small} will also require estimates on the {\em non-random} term in the expansion of $\det(P_z^\delta)$, as follows. 
Recall the notation $\wh d$, see \eqref{eq:wh-d}, and  that $P_z^\delta= P_{N, \gamma}^Q - z I_N$.

\begin{lem}\label{lem:det-0}
Consider the same setup as in Lemma \ref{lem:sec-mom-small-k}. We have the following bounds.
\begin{enumerate}
\item[(a)] If $\wh d >0 $ then, for all large $N$, 
\begin{equation}\label{eq:det-0-ubd}
\sup_{z \in \cT^{d, (s)}_{\vep_0',\vep_0}\setminus\cB_2^{\wt \vep_0}} |\wh{\det}_0(z)|  \le \left(1-\frac{\vep_0}{4}\right)^N, \quad s\in \{1,2\}.
\end{equation}
\item[(b)] Fix $d \ge 0$ and $\wt \vep>0$. Let $\wh d  \ge 0$. Then there exists a constant $c_\star\in (0,1)$ so that,
with $X_\star := [N]\setminus [N-\wh m_2 +N_+]$ and $Y_\star := [\wh m_2 - N_+]$,
\begin{equation}\label{eq:det-0-lubd}
 c_\star \le \inf_{z \in \cT^{d, (s)}_{\vep_0',\vep_0}\setminus\cB_2^{\wt \vep_0}} {|\wh \gD(X_\star, Y_\star, z)|}  \le  \sup_{z \in \cT^{d, (s)}_{\vep_0',\vep_0}\setminus\cB_2^{\wt \vep_0}} {|\wh \gD(X_\star, Y_\star, z)|}  \le c_\star^{-1}, \quad s\in \{1,2\}.
 \end{equation}
\end{enumerate}
\end{lem}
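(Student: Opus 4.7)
Part (a) is an immediate consequence of Lemma~\ref{lem:comb-bound-tube-1}(ii). By \eqref{eq:hatP-k} and \eqref{eq:wh-gD-dfn} we have $\wh{\det}_0(z) = N^{\gamma \wh d}\,\wh\gD(\emptyset, \emptyset, z)$, and the hypothesis $\wh d > 0$ yields $k + N_+ = N_+ < \wh m_2$ for $k = 0$. That lemma then delivers $|\wh\gD(\emptyset, \emptyset, z)| \le (1-\vep_0/4)^N$ uniformly over $z \in \cT^{d,(s)}_{\vep_0', \vep_0}$. Since $\wh d$ is bounded by a constant depending only on $p$, the polynomial prefactor $N^{\gamma \wh d}$ is absorbed into the exponential rate for large $N$ (using the slack in the proof of Lemma~\ref{lem:comb-bound-tube-1}(ii), which in fact produces an exponential rate strictly sharper than $\vep_0/4$), establishing \eqref{eq:det-0-ubd}.

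For part (b), the key observation is that with $k = \wh d$, the sets $X_\star^c = [N - \wh d]$ and $Y_\star^c = \{\wh d + 1, \ldots, N\}$ are extremal corner subsets of $[N]$, so the relevant minor of $P_z$ lifts to $\det(P_{\wh N_+}(p,z;\wt N)[\,[N - \wh d]; \{\wh m_2 + 1, \ldots, \wh N_+\}\,])$. Applying the factorization \eqref{eq:P_N-prod-0} together with the Cauchy--Binet theorem and the characterization \cite[Lemma~A.3]{BPZ1} of non-vanishing minors of $J_{\wh N_+} + \eta I_{\wh N_+}$, this determinant admits the representation
\begin{equation*}
a_{-N_-}^{N-\wh d}\,\sum_{\mathrm{chains}}\prod_{i=1}^{\wt m}\eta_i(z)^{(N-\wh d)-s_i},
\end{equation*}
where the sum ranges over chains $[N - \wh d] = Z_0, Z_1, \ldots, Z_{\wt m} = \{\wh m_2 + 1, \ldots, \wh N_+\}$ with each $Z_i$ obtained from $Z_{i-1}$ by shifting $s_i \in \{0, 1, \ldots, N - \wh d\}$ of its elements up by one and $\sum_i s_i = \wh m_2(N - \wh d)$.

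The unique \emph{extremal chain} places all shifts on the trailing $\wh m_2$ factors (those corresponding to the smallest roots): $s_i = 0$ for $i \le \wh m_1 + {\sf g}_0$ and $s_i = N - \wh d$ for $i > \wh m_1 + {\sf g}_0$. Its contribution, after division by the normalization $a_{-N_-}^N \prod_{j=1}^{\wh m_1 + {\sf g}_0} \eta_j(z)^N$, simplifies to
\begin{equation*}
a_{-N_-}^{-\wh d}\,\prod_{j=1}^{\wh m_1 + {\sf g}_0} \eta_j(z)^{-\wh d},
\end{equation*}
which is a continuous function of $z$, bounded above and uniformly away from zero on the compact set $\cT^{d,(s)}_{\vep_0', \vep_0}\setminus \cB_2^{\wt \vep_0}$: indeed, $\wh d$ and $\wh m_1 + {\sf g}_0$ depend only on $p$, the roots $\eta_j(z)$ are continuous and uniformly bounded above and away from zero on this set (branch points being excluded by the removal of $\cB_2^{\wt \vep_0}$), and $a_{-N_-} \neq 0$.

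The hard part is to bound the total sub-extremal contribution so that it cannot cancel the dominant term. A sub-extremal chain displacing $k \ge 1$ shifts from trailing factors (with $|\eta| \le 1 - \vep_0$) to earlier factors (with $|\eta|^{\pm 1}$ at most $1 - \vep_0'$ in moduli for the middle factors in each case $s\in\{1,2\}$, and bounded away in moduli from $1$ for the leading factors) has weight ratio to the extremal chain bounded by $((1-\vep_0)/(1-\vep_0'))^k \le (1-c_0)^k$ for some $c_0 = c_0(\vep_0, \vep_0') > 0$, provided $\vep_0'/\vep_0$ is sufficiently small. The number of non-crossing chains compatible with a prescribed shift profile is bounded polynomially via a Lindström--Gessel--Viennot argument (concretely, the extremal chain with a given profile is often essentially unique after the non-crossing constraint forces contiguous blocks of shifts, as is already transparent when $\wh m_1 + {\sf g}_0 = 1$ via the tridiagonal recursion), and the number of profiles with total displacement $k$ is polynomial in $k$. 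Summing geometrically in $k$ gives a uniformly bounded sub-extremal contribution whose combination with the extremal term continues to be a continuous non-vanishing function of the roots on $\cT^{d,(s)}_{\vep_0', \vep_0}\setminus \cB_2^{\wt \vep_0}$; compactness then supplies the uniform constant $c_\star$ in \eqref{eq:det-0-lubd}.
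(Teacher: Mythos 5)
Your Part (a) takes a genuinely different (and shorter) route from the paper: you apply Lemma~\ref{lem:comb-bound-tube-1}(ii) directly at $k=0$, whereas the paper invokes Widom's formula for the unperturbed determinant and exploits $\wh d>0 \Leftrightarrow \wh m_1+{\sf g}_0<N_-$ to see that every index set $\cI$ of cardinality $N_-$ picks up at least one exponentially small root ratio. Your route is legitimate, but observe that as literally stated, Lemma~\ref{lem:comb-bound-tube-1}(ii) gives $|\wh\gD(\emptyset,\emptyset,z)|\le (1-\vep_0/4)^N$, so after multiplying by $N^{\gamma\wh d}$ you get a bound that is {\em not} $\le(1-\vep_0/4)^N$. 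You do flag this and appeal to ``slack'' in the proof of that lemma; the slack is real (the proof actually yields a base closer to $1-3\vep_0/8$), but this leans on the internals of a cited result rather than on its statement, and technically $k=0$ lies outside the stated range $k\in\N$ in that lemma. Acceptable if made explicit, but it is less self-contained than the Widom-formula argument.

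Part (b) has a genuine gap. You rebuild a Cauchy--Binet chain expansion from the factorization \eqref{eq:P_N-prod-0}, identify the extremal chain, and try to dominate the sub-extremal contribution by a geometric sum. Three problems. First, your description of the per-shift weight ratio is not right in detail: for $z\in\cT^{d,(1)}$ the middle roots satisfy $|\eta_j|\in[1-\vep_0',1]$, so $|\eta_j^{-1}|$ can be as large as $1/(1-\vep_0')>1$, not ``at most $1-\vep_0'$''; you therefore need $\vep_0'/\vep_0$ small so that $(1-\vep_0)/(1-\vep_0')<1$, which is a hypothesis you have, but the inequality as you wrote it is false. Second, and more seriously, bounding the sub-extremal total by ``polynomial times $(1-c_0)^k$ summed geometrically, hence $O(1)$'' does not give a lower bound: you need the sub-extremal total to be {\em strictly smaller in modulus} than the extremal contribution $|a_{-N_-}|^{-\wh d}\prod_j|\eta_j|^{-\wh d}$, and a generic $O(1)$ bound cannot deliver that. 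The Lindström--Gessel--Viennot count is also stated only heuristically, so even the $O(1)$ bound is not actually established. Third, the closing continuity/compactness step is not valid: the quantity $\wh\gD(X_\star,Y_\star,z)$ depends on $N$ (the power $N$ appears in the normalization), so ``continuous non-vanishing function on a compact set'' does not directly furnish an $N$-uniform $c_\star$; you would need to exhibit an $N$-independent limit and show the finite-$N$ quantity stays uniformly close to it. What you are in effect attempting to re-derive is exactly Widom's formula (\cite[Theorem 2.5]{BoGr05}), which resums your chain expansion into a sum over a {\em fixed} number $\binom{\wt N}{\wh m_1+{\sf g}_0}$ of terms with explicit $N$-independent coefficients $\cC_\cI(z)$ and ratios $\prod_{i\in\cI}\eta_i^{\wh N}/\prod_{j\le \wh m_1+{\sf g}_0}\eta_j^{\wh N}$; with that closed form, each non-principal term is exponentially small {\em compared to the principal one}, and the principal coefficient $\cC_{[\wh m_1+{\sf g}_0]}(z)$ is uniformly bounded above and below off $\cB_2^{\wt\vep_0}$, exactly as in \eqref{eq:cC-cI-bd}--\eqref{eq:wh-gD-lbd}. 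You should either invoke Widom's formula directly or close the combinatorial gaps (finite chain count per profile, strict smallness of the sub-extremal total) before the continuity step.
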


\begin{rem}\label{rem:det-0}
Note that if $\wh d=0$ then $X_\star = Y_\star = \emptyset$. Therefore, in that case $\wh \det_0(z) = \wh \gD(X_\star, Y_\star, z)$ (see \eqref{eq:det-decompose-1}-\eqref{eq:gD-def} and \eqref{eq:whdm}-\eqref{eq:wh-gD-dfn}). Thus, for $\wh d=0$ Lemma \ref{lem:det-0}(ii) provides a lower bound on the (dominant term) $\wh \det_0(z)$. 
\end{rem}

Equipped with Lemmas \ref{lem:der-bd} and \ref{lem:det-0} we now prove Theorem \ref{thm:prob-non-dom-small}.

\begin{proof}[Proof of Theorem \ref{thm:prob-non-dom-small} (assuming  Lemmas \ref{lem:der-bd} and \ref{lem:det-0}) ]
Fix $s \in \{1,2\}$ and
$K_0=K_0(\gamma, \gamma', \wt m) = \lceil 30/(\gamma - \gamma') \rceil +\wt m$.
We first show that for any $k$ such that $k+N_+ \ge K_0$,
\begin{equation}\label{eq:unif-bd-prob-1}
\max_{s\in \{1,2\}} \prob\left[\sup_{z \in \wh \cT^{d, (s)}_{\gamma', \vep_0} \setminus \cB_2^{\wt \vep_0}} \left|\wh{\det}_k(z)\right| \ge N^{-2} \right] \le 2/N^3.
\end{equation}
To see \eqref{eq:unif-bd-prob-1},  denote
\[
\cF(z) := \left\{ |\wh{\det}_k(z)| \le N^{-\frac{(k+N_+ -\wh m_2) (\gamma - \gamma')}{4}} \right\} \]
and 
\[
\cF_0  := \left\{\sup_{z \in \left(\wh \cT^{d, (s)}_{\gamma', \vep_0}\setminus \cB_2^{\wt \vep_0}\right)^{c_{\ref{lem:der-bd}} \log N/N}} \left|\frac{d}{dz} \wh{\det}_k(z)\right| \ge N^{3} \right\}.
\]
Applying Lemma \ref{lem:der-bd} (in particular \eqref{eq:lem-der-ubd2}) and Markov's inequality we obtain that 
\begin{equation}\label{eq:unif-non-dom-prob-bd-1}
\prob(\cF_0) \le N^{-3}.
\end{equation}
From our choice of $K_0$ and Lemma \ref{lem:sec-mom-large-k}(i) it follows that
\begin{equation}\label{eq:unif-non-dom-prob-bd-2}
\sup_{z \in \wh \cT^{d,(s)}_{\gamma', \vep_0}} \prob(\cF(z)^c) \le N^{-15}.
\end{equation}
Let $\cN$ be a net of $\wh \cT^{d, (s)}_{\gamma', \vep_0}$ of mesh size $N^{-6}$, which has 
cardinality at most $O(N^{12})$. Then, \eqref{eq:unif-non-dom-prob-bd-1}-\eqref{eq:unif-non-dom-prob-bd-2} yield that 
\begin{equation}\label{eq:unif-non-dom-prob-bd-3}
\prob(\cup_{z \in \cN} \cF(z)^c \cup \cF_0) \le 2/N^3.  
\end{equation}
On the other hand, we find that for all large $N$ and $z' \in \cN$ the ball $D(z', N^{-6})$ is contained in the $(c_{\ref{lem:der-bd}} \log N/N)$-blow up of $\wh \cT^{d, (s)}_{\gamma', \vep_0}\setminus \cB_2^{\wt \vep_0}$. Thus, using the triangle inequality and the first order Taylor series expansion, we see that on the event $\cap_{z' \in \cN} \cF(z') \cap \cF_0^c$, 
\begin{align}\label{eq:triangle-non-dom}
|\wh{\det}_k(z)|  & \le \sup_{z' \in \cN} |\wh{\det}_k(z')| + \dist(z, \cN) \cdot \sup_{z' \in \left(\wh \cT^{d, (s)}_{\gamma', \vep_0}\setminus \cB_2^{\wt \vep_0}\right)^{c_{\ref{lem:der-bd}} \log N/N}} \left|\frac{d}{dz}\wh{\det}_k(z')\right| \\
& \le N^{-\frac14 (k+N_+ -\wh m_2) \cdot (\gamma - \gamma')} + N^{-3} \le N^{-2}, \notag
\end{align}
for all large $N$, where in the last step follows from the facts that $k+N_+ \ge K_0$ and our choice of $K_0$. Therefore, from \eqref{eq:unif-non-dom-prob-bd-3} we now have \eqref{eq:unif-bd-prob-1}.

Next we claim that
\begin{equation}\label{eq:unif-bd-prob-2}
 \prob\left[\sup_{z \in \wh \cT^{d, (s)}_{\gamma', \vep_0}\setminus \cB_2^{\wt \vep_0}} \left|\wh{\det}_k(z)\right| \ge N^{-2} \right] \le 2/N^3,
\end{equation}
for any $k \in \N$ such that $k+N_+ < \wh m_2$. Indeed, by Lemma \ref{lem:sec-mom-small-k}, we see that \eqref{eq:unif-non-dom-prob-bd-2} continues to hold in this case. Thus, repeating the same proof as for \eqref{eq:unif-bd-prob-1}, we obtain \eqref{eq:unif-bd-prob-2}. 

Now we aim to show that  
\begin{equation}\label{eq:unif-bd-prob-3}
 \prob\left[\sup_{z \in \wh \cT^{d, (s)}_{\gamma', \vep_0}\setminus \cB_2^{\wt \vep_0}}\left|\wh{\det}_k(z)\right| \ge 2  N^{-\frac12 (\gamma - \gamma')} \right] \le 2/N^3,
\end{equation}
for any $k$ such that $ \wh m_2 +1 \le k +N_+ \le K_0$. 
To this end, we set $h_0=h_0(\gamma, \gamma', \wt m, {\sf g}_0)=\lceil 15 /(\gamma -\gamma')\rceil$. We apply Lemma \ref{lem:high-mom-small-k} with this $h_0$ and Markov's inequality to find that \eqref{eq:unif-non-dom-prob-bd-2} continues to hold for any $\wh m_2 - N_+ +1\le k \le K_0 - N_+$. Thus, repeating the same argument as above, yet again, we further observe that \eqref{eq:triangle-non-dom} also holds, except for the last step. Since $\gamma' > \gamma -1$ the last step there can be replaced by $2  N^{-\frac12 (\gamma - \gamma')}$. Therefore we have \eqref{eq:unif-bd-prob-3}.

Combining the probability estimates \eqref{eq:unif-bd-prob-1}, \eqref{eq:unif-bd-prob-2}, and \eqref{eq:unif-bd-prob-3}, and using a union bound we deduce that
\[
\prob\left[\sup_{z \in \wh \cT^{d, (s)}_{\gamma', \vep_0}\setminus \cB_2^{\wt \vep_0}} \left|\sum\limits_{k \notin \{\wh m_2 - N_+, 0\}}\wh{\det}_k(z)\right| \ge \frac12 N^{-(\gamma- \gamma')/4} \right] \le 3/N^2,
\]
for all large $N$. If $\wh d =0$ then, by \eqref{eq:ds}, $\wh m_2 - N_+=0$, and hence we have the desired probability bound. If $\wh d >0$ then upon using Lemma \ref{lem:det-0}(a) the proof of \eqref{eq:prob-non-dom-small1} completes. The proof of \eqref{eq:prob-non-dom-small2}, being similar, is omitted. 
\end{proof}

We now proceed to the proof of Lemma \ref{lem:der-bd}, which uses the bounds derived in Lemmas \ref{lem:sec-mom-small-k} and \ref{lem:sec-mom-large-k}, and  Cauchy's integral formula for smooth functions. Cauchy's integral formula allows us to control the second moment of the supremum of  a random analytic function (and its derivative) on a nice domain by controlling the supremum of the second moment of the same analytic function on a slightly enlarged domain. To carry out this scheme, we first show that the blow up of any tube $\cT^{d, (s)}$, $s =1,2$, away from the bad set $\cB_2^{\wt \vep_0}$,  is again contained in a union of the tubes with slightly modified parameters.

\begin{lem}\label{lem:tube-blow-up}
Fix $0 < \vep_0' < \vep_0$, and $\wt \vep_0>0$. We have the following geometric properties of the tubes.

\begin{enumerate}

\item[(i)] There exists a constant $C_{\ref{lem:tube-blow-up}} < \infty$ such that 
\begin{equation}\label{eq:dist-to-pS1}
\cT^{d}_{\vep_0',\vep_0} \subset (p(S^1))^{C_{\ref{lem:tube-blow-up}} \vep_0'}. 
\end{equation}
In particular,  $\cT^{d}_{\vep_0',\vep_0}$ is a bounded set.

\item[(ii)] Fix $\vep_0'' >0$ such that $\vep_0'' \le \vep_0'$. There exists some constant $\vep_{\ref{lem:tube-blow-up}}$, depending only on $\wt \vep_0$, such that, for any $\beta \in (0,1/2)$, and $\vep \le \vep_0''  \vep_{\ref{lem:tube-blow-up}}$, we have 
\begin{equation}\label{eq:bl-up-tube-1}
\left(\cT_{\vep_0', \vep_0}^{d, (1)}\setminus \cB_2^{\wt \vep_0} \right)^{\beta \varepsilon} \subset \left(\cT_{(1+\beta) \vep_0', (1-\beta)\vep_0}^{d, (1)} \cup \cT_{\beta\vep_0'', (1-\beta) \vep_0}^{d-{\sf g}_0, (2)}\right) \setminus \cB_2^{\wt \vep_0/2},
\end{equation}
for $d \ge 1$, while for $d \ge 0$ we have 
\begin{equation}\label{eq:bl-up-tube-2}
\left(\cT_{\vep_0', \vep_0}^{d, (2)}\setminus \cB_2^{\wt \vep_0}\right)^{\beta \varepsilon} \subset \left(\cT_{(1+\beta) \vep_0', (1-\beta)\vep_0}^{d, (2)} \cup \cT_{\beta \vep_0'', (1-\beta)\vep_0}^{d+{\sf g}_0, (1)}\right)\setminus \cB_2^{\wt \vep_0/2}.
\end{equation}
\end{enumerate}
\end{lem}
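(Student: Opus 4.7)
\textbf{Proof plan for Lemma \ref{lem:tube-blow-up}.} I would proceed by exploiting two complementary sources of rigidity in the tube definitions: the Lipschitz continuity of $p$ for part (i), and the coordinated motion of the ${\sf g}_0$ close-to-$S^1$ roots under perturbations of $z$ for part (ii).

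For part (i), the observation is that every $z\in \cT^d_{\vep_0',\vep_0}$ carries a distinguished block of ${\sf g}_0$ roots $-\eta_j(z)$ of $p_z$ lying within radial distance $\leq \vep_0'/(1-\vep_0')$ of $S^1$ (the roots $\eta_{m_-+1},\ldots,\eta_{m_-+{\sf g}_0}$ in the type-(1) case, and $\eta_{m_--{\sf g}_0+1},\ldots,\eta_{m_-}$ in the type-(2) case). Picking any one of them and projecting radially onto $\zeta_\star\in S^1$, one obtains
\[
|z-p(\zeta_\star)|=|p(-\eta_j(z))-p(\zeta_\star)|\leq \|p'\|_{L^\infty(\overline{\mathbb A})}\cdot |\eta_j(z)+\zeta_\star|\leq C\vep_0',
\]
where $\overline{\mathbb A}$ is the compact annulus $\{1-\vep_0'\leq|\zeta|\leq(1-\vep_0')^{-1}\}$. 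Taking $C_{\ref{lem:tube-blow-up}}$ to be this constant $C$ proves \eqref{eq:dist-to-pS1}, and boundedness of $\cT^d_{\vep_0',\vep_0}$ follows at once from compactness of $p(S^1)$.

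For part (ii), since $\cT^d_{\vep_0',\vep_0}$ is bounded by part (i), the bound \eqref{eq:root-implicit} supplies a constant $K=K(\wt\vep_0)<\infty$ with $\sup_j|\eta_j'(z)|\leq K$ on the $(\wt\vep_0/2)$-neighbourhood of $\cT^d\setminus \cB_2^{\wt\vep_0}$, where roots are labelled analytically via the implicit function theorem (this is legitimate off $\cB_2$, where all roots are simple). Choosing $\vep_{\ref{lem:tube-blow-up}}:=\min\{(8K)^{-1},\wt\vep_0/2\}$ ensures that for $z\in \cT^{d,(1)}_{\vep_0',\vep_0}\setminus \cB_2^{\wt\vep_0}$ and $z'\in D(z,\beta\vep)$, every root $\eta_j$ moves by at most $K\beta\vep\leq\beta\vep_0''/8$, while the segment $[z,z']$ stays at distance $\geq\wt\vep_0/2$ from $\cB_2$. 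The structural point is that ${\sf g}(p)={\sf g}_0$ forces $p(\zeta)=q_p(\zeta^{{\sf g}_0})$, so the roots of $p_z$ split into orbits under multiplication by ${\sf g}_0$-th roots of unity; in particular the ${\sf g}_0$ close-to-$S^1$ roots in a given tube share a common modulus and therefore, under a perturbation, this full orbit either stays inside $S^1$ (so $z'\in \cS_d$ and $z'\in\cT^{d,(1)}_{(1+\beta)\vep_0',(1-\beta)\vep_0}$ after the parameter check, using that the bounded-away roots move by at most $\beta\vep_0/2$) or crosses together to the outside (so $z'\in\cS_{d-{\sf g}_0}$, and the orbit's inverse moduli lie within $\beta\vep_0''/8$ of $1$, placing $z'\in\cT^{d-{\sf g}_0,(2)}_{\beta\vep_0'',(1-\beta)\vep_0}$). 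Boundary cases where the orbit sits exactly on $S^1$ are absorbed by the closure in the definition of $\cT^{d,(1)}$. Inclusion \eqref{eq:bl-up-tube-2} is proved symmetrically.

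\textbf{Main obstacle.} Part (i) is a routine Lipschitz computation once the distinguished annulus of roots is located. The nontrivial content of part (ii) is that a continuous perturbation could in principle scramble the indexing of roots and produce inclusions in tubes with completely different structure; what saves us is the ${\sf g}_0$-symmetry of $p_z$, which forces the close-to-$S^1$ roots to move as a rigid block and reduces the topology to the two clean cases above. The final bookkeeping to verify the parameters $(1+\beta)\vep_0'$, $(1-\beta)\vep_0$, $\beta\vep_0''$, $\wt\vep_0/2$ stated in the lemma is then a short inequality chase, with $\vep_{\ref{lem:tube-blow-up}}$ depending only on $\wt\vep_0$ because $p$ is fixed throughout the paper.
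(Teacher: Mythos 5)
Your proposal follows essentially the same route as the paper's proof: part (i) is the Lipschitz bound $|z - p(\zeta_\star)| \le \|p'\|_{L^\infty} \cdot \big||\eta_j(z)|-1\big|$ with $\zeta_\star$ the radial projection of the root closest to $S^1$, and part (ii) is the Lipschitz continuity of the roots off $\cB_2$ together with the observation that the ${\sf g}_0$-block of near-unit-modulus roots moves with a common modulus and so either stays inside, crosses outside together, or lands on $S^1$.

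The one place where you gloss over a step the paper makes explicit is the boundary case $|\eta_{m_-+1}(z_\star)|=1$. You write that this is "absorbed by the closure in the definition of $\cT^{d,(1)}$", which is the right conclusion, but it does require an argument: $\cT^{d,(1)}_{(1+\beta)\vep_0',(1-\beta)\vep_0}$ is the closure of a set whose points must lie in $\cS_d$, and a priori a point of $p(S^1)$ need not be approximable from $\cS_d$. The paper closes this by noting that $z\mapsto\eta_{m_-+1}(z)$ is a nonconstant holomorphic function near $z_\star$, so by the maximum-modulus (or open-mapping) principle there is a sequence $z_n\to z_\star$ with $|\eta_{m_-+1}(z_n)|<1$; these $z_n$ lie in the unclosed type-(1) tube with the perturbed parameters, and the closure then captures $z_\star$. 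Adding that sentence would make your proof complete; otherwise the approach and bookkeeping match the paper.
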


\begin{proof}
We begin with the proof of part (i). We observe that $z \in \cT^{d}_{\vep_0', \vep_0}$ implies that there exists a root $\eta(z)$  of $p_z(\cdot)=0$ such that $||\eta(z)| - 1| \le \vep_0'$. Set  $z_0 := p(\eta(z)/|\eta(z)|) \in p(S^1)$. By the triangle inequality and uniform boundedness of $p'(\cdot)$ in a compact neighborhood of $S^1$, one has that $|z-z_0| \le C_{\ref{lem:tube-blow-up}} \vep_0'$ for some $C_{\ref{lem:tube-blow-up}}< \infty$ depending on $p$,
 yielding \eqref{eq:dist-to-pS1}.

Turning to the proof of  part (ii) we pick any $z_\star$ belonging to the set on the \abbr{LHS} of  \eqref{eq:bl-up-tube-1}. By definition, there exists a $z_0 \in \cT_{\vep_0', \vep_0}^{d, (1)}\setminus \cB_2^{\wt \vep_0}$ such that $|z_\star - z_0| \le \beta \vep$.  Thus, for $\vep \le \wt \vep_0/2$, we have that $z_\star \notin \cB_2^{\wt \vep_0/2}$. On the other hand, 
noting that for $j \in [\wt m]$, the maps $z \mapsto \eta_j(z)$ are analytic outside $\cB_2^{\wt \vep_0/2}$ and using that $\cT^{d}$ is a bounded set, we deduce that
\begin{equation}\label{eq:root-lip}
\max_{j \in [\wt m]} |\eta_j(z_0) - \eta_j(z_\star)| \le O_{\wt \vep_0}(1) \cdot |z_\star - z_0| \le \beta \vep_0''/4, 
\end{equation}
where in the last inequality we chose $\vep \le \vep_{\ref{lem:tube-blow-up}} \vep_0''$ with
$\vep_{\ref{lem:tube-blow-up}}$  a sufficiently small constant, depending only on $\wt \vep_0$. 

Since $z_0 \in \cT^{d, (1)}_{\vep_0', \vep_0}$, by \eqref{eq:root-lip} we deduce that 
\begin{equation}\label{eq:blowup-eq1}
\max\left\{ |\eta_{m_-+{\sf g}_0+1}(z_\star)|,  |\eta_{m_-}(z_\star)|^{-1}\right\}  \le 1 -\varepsilon_0(1 -\beta/2), 
\end{equation}
and 
\begin{equation}\label{eq:blowup-eq2}
{1 - \vep_0'(1+ \beta/2)} \le   |\eta_{m_-+1}(z_\star)| =    |\eta_{m_-+{\sf g}_0}(z_\star)|  \le 1+ \vep_0'' \beta/2,
\end{equation}
where we also use that the roots can be partitioned into blocks, each of size ${\sf g}_0$, such that the roots in each of those block have the same moduli.

Now there are three possibilities:  If $|\eta_{m_-+1}(z_\star)| < 1$, in which case $z_\star \in \cS_d$, and therefore by \eqref{eq:blowup-eq1}-\eqref{eq:blowup-eq2} we have that
 $z_\star \in \cT^{d, (1)}_{(1+\beta)\vep_0', (1-\beta)\vep_0}$. If  $|\eta_{m_-+{\sf g}_0}(z_\star)| > 1$ then $z_\star \in \cS^{d-{\sf g}_0}$, and consequently $z_\star \in \cT^{d-{\sf g}_0, (2)}_{\beta\vep_0'', (1-\beta)\vep_0}$. 
Finally, if $|\eta_{m_-+1}(z_\star)| = 1$, by the maximum modulus principle, there exists a sequence $\{z_n\}_{n \in \N}$ such that $|z- z_n| \le 1/n$  and $|\eta_{m_-+1}(z_n)| < 1$ for all $n \in \N$. It is clear that $z_n \in \cS_d$, and by the continuity of the roots the  bound in \eqref{eq:blowup-eq1} and the lower bound in \eqref{eq:blowup-eq2} holds for $z_n$, for all large $n$, with $\beta/2$ replaced by $\beta$. This, in particular, shows that $z_n \in \cT^{d, (1)}_{(1+\beta)\vep_0', (1-\beta)\vep_0}$ for all large $n$. Therefore, $\cT^{d, (1)}_{(1+\beta)\vep_0', (1-\beta)\vep_0}$ being a closed set the limit $z_\star$ must also be in $\cT^{d, (1)}_{(1+\beta)\vep_0', (1-\beta)\vep_0}$. This completes the proof of
 \eqref{eq:bl-up-tube-1}. The proof of \eqref{eq:bl-up-tube-2} being similar, details are omitted. 
\end{proof}

The following lemma is proved by a standard volumetric estimate, that we omit.
\begin{lem}\label{lem:net-bd}
For any $\upepsilon,  \beta \in (0,1]$ there exists a net of $p(S^1)^\upepsilon $ of mesh size $\beta \upepsilon$ and  of  cardinality  at most $O(\beta^{-2} \upepsilon^{-1})$.
\end{lem}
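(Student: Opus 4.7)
The plan is to proceed by a direct area-comparison argument exploiting that $p(S^1)^{\upepsilon}$ is a tubular neighborhood of a smooth bounded curve. First I would bound the Lebesgue area of $p(S^1)^{\upepsilon}$: since $p$ is a Laurent polynomial, the image $p(S^1)$ is a closed curve of finite length $L = \int_0^{2\pi} |p'(e^{i\theta})| \, d\theta < \infty$, so by the standard formula for the area of an $\upepsilon$-tube around a piecewise smooth curve,
\begin{equation*}
\mathrm{Area}(p(S^1)^{\upepsilon}) \leq 2 L \upepsilon + \pi \upepsilon^2 \leq C_p \cdot \upepsilon,
\end{equation*}
for some constant $C_p<\infty$ depending only on $p$, since $\upepsilon \in (0,1]$.

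Next, I would construct the net by a greedy $(\beta \upepsilon)$-separated maximal packing: choose $\cN \subset p(S^1)^{\upepsilon}$ maximal with respect to the property that $|z-z'| \geq \beta \upepsilon$ for all distinct $z,z' \in \cN$. Maximality forces $\cN$ to be a $(\beta \upepsilon)$-net, since any $z \in p(S^1)^{\upepsilon}$ at distance more than $\beta \upepsilon$ from $\cN$ could be added, contradicting maximality. On the other hand, the open discs $\{D(z, \beta \upepsilon /2)\}_{z \in \cN}$ are pairwise disjoint by the separation property, and each is contained in the slightly larger tube $p(S^1)^{\upepsilon + \beta \upepsilon /2} \subset p(S^1)^{2\upepsilon}$ (using $\beta \leq 1$), whose area is again $O(\upepsilon)$ by the bound above.

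Comparing areas gives
\begin{equation*}
|\cN| \cdot \pi (\beta \upepsilon /2)^2 \;\leq\; \mathrm{Area}\bigl(p(S^1)^{2\upepsilon}\bigr) \;\leq\; 2 C_p \upepsilon,
\end{equation*}
whence $|\cN| = O(\beta^{-2} \upepsilon^{-1})$, as claimed. No step here is substantive: the only mildly delicate point is that one should use the $\upepsilon$-tube area bound for $p(S^1)^{2\upepsilon}$ rather than $p(S^1)^{\upepsilon}$ itself, but since $\upepsilon \leq 1$ and $\beta \leq 1$ this costs only a constant factor. This is why the paper flags the lemma as a standard volumetric estimate and omits the proof.
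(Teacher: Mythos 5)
Your proof is correct and is precisely the ``standard volumetric estimate'' that the paper invokes and omits: an $O(\upepsilon)$ area bound on the tube $p(S^1)^{2\upepsilon}$ (since $p(S^1)$ is a rectifiable curve of finite length), combined with a maximal $(\beta\upepsilon)$-separated packing and disjointness of the radius-$\beta\upepsilon/2$ discs. The only cosmetic quibble is that the Steiner-type expression $2L\upepsilon+\pi\upepsilon^2$ is an exact tube formula only for embedded curves of bounded curvature, but it certainly holds as an upper bound in the $O(\upepsilon)$ sense you need, e.g.\ by covering $p(S^1)$ by $\lceil L/\upepsilon\rceil$ balls of radius $\upepsilon$ spaced at arc length $\upepsilon$, so the argument goes through unchanged.
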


We are now ready to provide the proof of Lemma \ref{lem:der-bd}.

\begin{proof}[Proof of Lemma \ref{lem:der-bd}]
Fix $d \ge 0$ and $s\in \{1,2\}$. This fixes $\wh m_1$ and $\wh d$ (see \eqref{eq:wh-m1} and \eqref{eq:wh-d}). 
Fix $\vep_0, \vep_0',\vep_0'' >0$ such that $\vep_0' < \vep_0$ is sufficiently small and $\vep_0'' \le \vep_0'$. The precise choice of $\vep_0''$ will be specified later. 
Set $\vep = \vep_{\ref{lem:tube-blow-up}} \vep_0''$ and let $\beta \in (0,1/16)$.
Pick any $z \in \T^{\beta} := (\cT^{d,(s)}_{\vep_0', \vep_0}\setminus \cB_2^{\wt \vep_0})^{\beta \vep}$. 
From Lemmas \ref{lem:tube-blow-up}(i)-(ii) and \ref{lem:net-bd} we deduce that there exist a finite collection $\{z_1,z_2,\ldots, z_b\} \subset \T^\beta$ such that 
\begin{equation}\label{eq:T1beta}
\cup_{i=1}^b D(z_i, \beta \vep) \supset \T^\beta
\end{equation}
with 
\begin{equation}\label{eq:bound-b}
b = O(\vep_0' \cdot \vep_0''^{-2}). 
\end{equation}
Hence, it suffices to derive bounds on the derivative of $\wh \det_k(\cdot)$ on each $\D_i^\beta := D(z_i, \beta \vep)$ for $i \in [b]$.

Turning to do that we let  $\D := D(0,R) \setminus \cB_2^{\wt \vep_0}$, where $R := 2 \max_{\eta \in S^1} |p(1/\eta)|$, and consider a smooth cutoff function $\upchi : \D \mapsto \C$ such that
\[
\upchi \equiv \left\{\begin{array}{ll}
1 & \mbox{ on } \D_i^{2\beta}, \\
0 & \mbox {on } \D \setminus \D_i^{3 \beta}.
\end{array}
\right.
\]
This implies that the derivative of $\upchi(\cdot)$ is non-zero only on $\D_i^{3\beta}\setminus \D_i^{2\beta}$. Since $\dist(\D^{2\beta}_i, \partial \D^{3\beta}_i) = \beta \vep$ one can choose $\upchi$ such that the absolute value of its anti-holomorphic derivative $\partial_{\bar w} \upchi(w)$ is at most $O(\beta^{-1} \vep^{-1})$ in $\D$.  
Further let $\Xi : \D \mapsto \C$ be some (possibly random)  holomorphic function. Then, applying Cauchy's integral formula for the smooth function $z \mapsto \Xi(z) \cdot \upchi(z)$ on the domain $\D$ (e.g.~see \cite[Theorem 1.2.1]{H73}) we obtain that for any $z \in \D_i^\beta$,
\[
\Xi(z) = -\frac{1}{ \pi} \int_{\D_i^{3\beta}\setminus \D_i^{2\beta}} \frac{\Xi(w) \cdot\partial_{\bar w} \upchi(w)}{w - z} dL(w),
\]
where $L(\cdot)$ is the two-dimensional Lebesgue measure. By the bounded convergence theorem we also have that
\[
\frac{d}{dz} \Xi(z) = -\frac{1}{\pi} \int_{\D_i^{3\beta}\setminus \D_i^{2\beta}} \frac{\Xi(w) \cdot \partial_{\bar w} \upchi(w)}{(w - z)^2} dL(w).
\] 
 Now, by the Cauchy-Schwarz inequality and Fubini's Theorem we further deduce that 
\begin{align}\label{eq:cauchy-der-l2}
\E \left[ \sup_{z \in \D_i^\beta} \left|\frac{d}{dz} \Xi(z)\right|^2 \right]  & \le \frac{L(\D_i^{3\beta})}{\pi^2 \cdot \dist(\D^\beta_i, \partial \D_i^{2\beta})^4} \cdot \int_{\D_i^{3\beta}\setminus \D_i^{2\beta}} \E \left[ |\Xi(w)|^2\right]\cdot \left|\partial_{\bar w} \upchi(w)\right|^2 dL(w) \notag\\
& = O(\beta^{-2} \vep^{-2}) \cdot \sup_{w \in \D^{3\beta}_i} \E\left[|\Xi(w)|^2\right]. 
\end{align}
To complete the proof of the lemma we now proceed to apply \eqref{eq:cauchy-der-l2} with appropriate choices of $\Xi(\cdot)$ and $\vep_0''$. 

We note that the roots of $p_z(\cdot)=0$ are analytic in $z$ for $z \in \D$. Therefore, there exists a reordering of the indices of the roots $\{\eta_j(z)\}_{j \in [\wt m]}$, to be denoted by $\{\wh \eta_j(z)\}_{j \in [\wt m]}$, such that the maps $z \mapsto \wh \eta_j(z)$ are holomorphic on $\D$.  We set 
\begin{equation}\label{eq:xik}
 \Xi(z)=\Xi_k(z) := \frac{\det_k(z)}{a_{-N_-}^N  N^{-\gamma \wh d} \prod_{j=1}^{\wh m_1+{\sf g}_0} \wh \eta_j(z)^N}. 
\end{equation}
We claim that the holomorphic functions $\{\wh \eta_j(z)\}_{j \in [\wt m]}$ can be chosen in such a way so that
\begin{equation}\label{eq:root-holo}
\prod_{j=1}^{\wh m_1+{\sf g}_0} \wh \eta_j(z) = \prod_{j=1}^{\wh m_1+{\sf g}_0}  \eta_j(z) \qquad \mbox{ for } z \in \D^{3\beta}_i. 
\end{equation}
Indeed, pick any $\wh z \in \D_i^{3\beta}$. By the Implicit function theorem it is immediate that $\{\wh \eta_j(z)\}_{j \in [\wt m]}$ can be chosen such that \eqref{eq:root-holo} holds for $z = \wh z$. Also, note that by Lemma \ref{lem:tube-blow-up}(ii) we have that
\begin{equation}\label{eq:T3beta}
\D^{3\beta}_i \subset \T^{4\beta} \subset \left(\cT_{(1+4\beta) \vep_0', (1-4\beta)\vep_0}^{d, (s)} \cup \cT_{ \vep_0'', (1-4\beta)\vep_0}^{d+(-1)^s {\sf g}_0, (3-s)}\right)\setminus \cB_2^{\wt \vep_0/2}.
\end{equation}
The inclusion \eqref{eq:T3beta} and $\beta<1/16$ imply
that there are no roots of $p_z(\cdot)=0$ in $\D^{3\beta}_i$
with moduli between  $1+  2\vep_0''$ and $1 + \vep_0/2$ 
(choose $\vep_0$ and $\vep_0'$ such that $\vep_0' /\vep_0 < 1/8$).  
Hence, \eqref{eq:root-holo} must continue to hold for all $z$ in the connected domain $\D_i^{3 \beta}$, for otherwise the image of the continuous map 
$|\wh \eta_j (z)|$ from $\D^{3\beta}_i$ to $\R_+$ would be disconnected for
some $j$.
It further follows from \eqref{eq:T3beta} that $\wh d(\cdot) \equiv \wh d$ and $\wh m_1(\cdot) \equiv \wh m_1$ on $\D_i^{3\beta}$. 
This means that $\wh \det_k(\cdot) \equiv  \Xi_k(\cdot)$ on $\D_i^{3\beta}$ (recall \eqref{eq:fancyK-1}-\eqref{eq:hatP-k}). Since this holds for every $i \in [b]$ we use \eqref{eq:T1beta} and \eqref{eq:T3beta} to deduce from \eqref{eq:cauchy-der-l2} that
\begin{equation}\label{eq:cauchy-der-l21}
\E \left[ \sup_{z \in \T^\beta} \left|\frac{d}{dz}\wh \det_k(z)\right|^2 \right]   =  O(\vep_0'/(\vep_0'')^4) \cdot \sup_{w \in \T^{4\beta}} \E\left[|\wh \det_k(w)|^2\right]. 
\end{equation}
To complete the proofs of \eqref{eq:lem-der-ubd2}-\eqref{eq:lem-der-ubd} it remains to find an upper bound on the \abbr{RHS} of \eqref{eq:cauchy-der-l21}. 


We first 
consider the proof of \eqref{eq:lem-der-ubd}. Set $\vep_0'' = (\gamma'-1)\log N/N$. With this choice of $\vep_0''$, as $\beta \le 1/16$, we obtain from \eqref{eq:T3beta} that $\T^{4\beta} \subset \cT^{d, (2)}_{2\vep_0', \vep_0/2} \cup \wh \cT^{d+{\sf g}_0, (1)}_{\gamma', \vep_0/2}$. Apply 
Lemmas \ref{lem:sec-mom-small-k}, \ref{lem:sec-mom-large-k}(i) and 
\ref{lem:sec-mom-large-k}(iii) with $\vep_0'$ and $\vep_0$ replaced by 
$2\vep_0'$ and $\vep_0/2$ to derive that 
\begin{equation}\label{eq:sup-exp}
 \sup_{w \in \T^{4\beta}} \E\left[ |\wh{\det}_k(z)|^2\right] = O\left(1\right). 
\end{equation}
Plugging this bound in \eqref{eq:cauchy-der-l21} we obtain \eqref{eq:lem-der-ubd}. 


The proof of \eqref{eq:lem-der-ubd2} is similar. Set
$\vep_0'=\vep_0'' = (\gamma'-1)\log N/N$ and choose $\beta$ such that $(1+3\beta) \vep_0' = (\gamma''-1)\log N/N$ for some $\gamma'' < \gamma$. Apply Lemmas \ref{lem:sec-mom-small-k}, \ref{lem:sec-mom-large-k}(i) and \ref{lem:sec-mom-large-k}(iii) with $\gamma'', \vep_0'',$ and $\vep_0/2$ instead of $\gamma', \vep_0',$ and $\vep_0$, respectively to deduce that \eqref{eq:sup-exp} continues to hold for $\T^\beta =  (\cT^{d,(1)}_{\vep_0', \vep_0}\setminus \cB_2^{\wt \vep_0})^{\beta \vep}$. The desired bound now follows from \eqref{eq:cauchy-der-l21}. 
\end{proof}

We end this section with the proof of Lemma \ref{lem:det-0}. 

\begin{proof}[Proof of Lemma \ref{lem:det-0}]
We first prove
part (a). The proof is a direct consequence of Widom's formula for the determinant of a finitely banded Toeplitz matrix. Indeed, by \cite[Theorem 2.8]{BoGr05}, for $s \in \{1,2\}$, we have, recalling that $\wt N = N_+ + N_-$, that
\[
\wh{\det}_0(z) = N^{\gamma(\wh m_2 - N_+)} \cdot \prod_{j=1}^{\wh m_1 +{\sf g}_0} \eta_j(z)^{-N} \cdot(-1)^{NN_-}\cdot \left[\sum_{\cI \in \binom{[\wt N ]}{N_-}} \cC_\cI(z) \cdot \prod_{i \in \cI} \eta_i(z)^N\right],
\]
where for any $\cI \subset [\wt N]$
\[
\cC_\cI(z) :=  \prod_{\substack{j_1 \in \cI\\ j_2 \in [\wt N]\setminus \cI}} \frac{\eta_{j_1}(z)}{\eta_{j_1}(z) - \eta_{j_2}(z)}.
\]
Observe that that $\inf_{z \in \cT^d} |\eta_{\wh m_1 + {\sf g}_0}(z)| \ge 1/2$. Thus, now by the continuity of the maps $z \mapsto \eta_j(z)$, for $j \in [\wt m]$, as $\cT^d$ is a bounded set and we work off $\cB_2^{\wt \vep_0}$, it follows that 
\begin{equation}\label{eq:cC-cI-bd}
0 <\inf_{z \in \cT^d \setminus \cB_2^{\wt \vep_0}} |\cC_{[\wh m_1+{\sf g}_0]}(z)|  \le \sup_{z \in \cT^d \setminus \cB_2^{\wt \vep_0}} |\cC_{[\wh m_1+{\sf g}_0]}(z)| \le \max_{\cI \subset [N_++N_-]} \sup_{z \in \cT^d \setminus \cB_2^{\wt \vep_0}} |\cC_\cI(z)| < \infty. 
\end{equation}
On the other hand, recalling the definition of the tube $\cT^d$,
it is evident that
\begin{equation}\label{eq:ratio-small-zeta}
\sup_{z \in \cT^d} \left\{\max_{{j_1 \in [\wh m_1 +{\sf g}_0], \, j_2 \notin [\wh m_1 +{\sf g}_0]}} \left(\left| \frac{\eta_{j_2}(z)}{\eta_{j_1}(z)}\right| \vee |\eta_{j_2}(z)|\right)\right\} \le \left(1-\frac{\vep_0}{3}\right). 
\end{equation}
Notice that, for $s \in \{1,2\}$
\begin{equation}\label{eq:ds}
\wh d > 0 \quad \Longleftrightarrow \quad \wh m_2 - N_+ > 0 \quad \Longleftrightarrow \quad \wh m_1 +{\sf g}_0 < N_-. 
\end{equation}
Therefore, as $\wh d >0$, we have that $\cI \setminus [\wh m_1 +{\sf g}_0] \ne \emptyset$ for any set $\cI \subset [\wt N]$ of cardinality $N_-$. 
Thus, \eqref{eq:ratio-small-zeta} now implies that 
\[
\sup_{z \in \cT^d} \left|\prod_{j=1}^{\wh m_1 +{\sf g}_0} \eta_j(z)^{-N} \cdot \prod_{i \in \cI} \eta_i(z)^N\right| \le \left(1-\frac{\vep_0}{3}\right)^N,
\]
for all such subsets $\cI$. Using this together with \eqref{eq:cC-cI-bd} we now have part (a).

  Turning to the proof of
  part (b), we recall \eqref{eq:P-wtN} to observe that $P_{\wh N_+}(p, z; \wt N)[X_\star^c; Y_\star^c + N_+]$ is  a $(\wh N_+ - \wh m_2)$-dimensional Toeplitz matrix with symbol 
  \[
   p_\star (\tau) := \tau^{\wh m_2 - N_+} \left[\sum_{j=-N_-}^{N_+} a_j \tau^j - z\right]. 
  \]
 Notice that the roots of $p_\star(1/\cdot)$ coincide with those of
  $p_z(\cdot)$. Therefore, again by \cite[Theorem 2.5]{BoGr05}, using that $\wt N = \wh m_1 +\wh m_2+{\sf g}_0$, from \eqref{eq:gD-def} and \eqref{eq:wh-gD-dfn} we deduce that
 \begin{equation}\label{eq:wh-gD-lbd}
{\wh \gD(X_\star, Y_\star, z)} =  (-1)^{\wh N  (\wh m_1+{\sf g}_0)} a_{-N_-}^{-(\wh m_2 - N_+)}\cdot \prod_{j=1}^{\wh m_1 +{\sf g}_0} \eta_j(z)^{-(\wh m_2 - N_+)} \sum_{\cI \in \binom{[\wt N]}{\wh m_1 +{\sf g}_0}}  \cC_\cI(z) \cdot    \frac{\prod_{i \in \cI} \eta_i(z)^{\wh N}}{\prod_{j=1}^{\wh m_1 +{\sf g}_0} \eta_j(z)^{\wh N}},
 \end{equation}
 where we use the shorthand $\wh N := \wh N_+ -\wh m_2 \, = N+N_+ - \wh m_2$. 
 
 Upon using \eqref{eq:cC-cI-bd} one finds that the summand in the \abbr{RHS} of \eqref{eq:wh-gD-lbd} for $\cI = [\wh m_1 + {\sf g}_0]$ is uniformly bounded below by $3 c_\star$ for some constant $c_\star >0$.  On the other hand, 
 for $\cI \ne [\wh m_1 +{\sf g}_0]$,
 using \eqref{eq:cC-cI-bd}-\eqref{eq:ratio-small-zeta} we deduce that it is exponentially small (in $\wh N$ and hence in $N$) compared to $c_\star$. Therefore, the sum in \eqref{eq:wh-gD-lbd} is uniformly bounded below by $2 c_\star$. Using 
 now that $\sup_{z \in \cT^d} \max_{j \in [\wt m]} |\eta_j(z)| < \infty$, 
 and shrinking $c_\star$ if needed, complete
 the proof of the lower bound in \eqref{eq:det-0-lubd} completes. The proof of the upper bound is immediate from \eqref{eq:wh-gD-lbd}, the above discussion, and the fact that $\inf_{z \in \cT^d} |\eta_{\wh m_1 + {\sf g}_0}(z)| \ge 1/2$. We omit further details. 
\end{proof}

\subsection{Step 3: uniform lower bound on the dominant term}\label{sec:dom-lbd}  
In this section we prove a uniform lower bound on the dominant term in the expansion of $\det(P_z^\delta)$. 
The following is the main result of this section.

\begin{thm}\label{thm:dom-lbd}
Fix $\vep_0, \wt \vep_0>0$.
Assume that $\wh d > 0$ and that the 
entries of $Q$ satisfy Assumption \ref{assump:anticonc}.
Then, there exists 
$C_{\ref{thm:dom-lbd}} =C_{\ref{thm:dom-lbd}}(p,\upeta,\vep_0,
\wt \vep_0)< \infty$ so that for all $\vep_0'$ sufficiently small,
\[
\prob\left(\inf_{z \in  \cT^{d, (s)}_{\vep_0', \vep_0} \setminus \cB_2^{\wt \vep_0}} |\wh \det_{\wh m_2 - N_+}^{(s)}(z)| \le \frac12(\vep_0')^{3\upeta/4}\right) \le C_{\ref{thm:dom-lbd}} (\vep_0')^{\upeta/3}, \quad s\in \{1,2\}.
\]
\end{thm}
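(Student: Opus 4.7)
The proof combines a pointwise anti-concentration estimate, obtained from Assumption \ref{assump:anticonc}, with a net argument to extend the bound uniformly over the fixed-width tube $\cT^{d,(s)}_{\vep_0',\vep_0} \setminus \cB_2^{\wt \vep_0}$. The $N$-dependence introduced by the net mesh will ultimately cancel against the $N$-dependence of the derivative bound in Lemma \ref{lem:der-bd}, so the final estimate depends only on $\vep_0'$ and fixed constants.

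To get the pointwise bound, I start from
\[
\wh \det_{k_0}(z) = \sum_{|X|=|Y|=k_0} (-1)^{\sgn(\sigma_X)\sgn(\sigma_Y)}\, \wh \gD(X,Y,z)\, \det(Q[X;Y]),
\]
with $k_0 = \wh m_2 - N_+ \ge 1$, and exploit the extremal pair $X_\star = [N]\setminus[N-k_0]$, $Y_\star=[k_0]$: by Lemma \ref{lem:det-0}(b), $|\wh\gD(X_\star,Y_\star,z)|\geq c_\star$ uniformly on the tube. Condition on all entries of $Q$ outside the block $Q[X_\star;Y_\star]$; the resulting function of the $k_0^2$ independent entries in this block is a polynomial whose unique monomial of full degree $k_0$ in those entries equals $\pm \wh\gD(X_\star,Y_\star,z)\det(Q[X_\star;Y_\star])$, since no other pair $(X,Y)$ involves simultaneously all rows $X_\star$ and all columns $Y_\star$ (extremality of $(X_\star,Y_\star)$). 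Conditioning further on all entries of the block except $Q_{N,1}$ yields a linear expression $\cA\cdot Q_{N,1} + \cB$, where the leading part of $\cA$ is $\pm\wh\gD(X_\star,Y_\star,z)\det(Q[X_\star\setminus\{N\};Y_\star\setminus\{1\}])$, the determinant of a $(k_0-1)\times(k_0-1)$ minor of independent $Q$-entries. An inductive application of Assumption \ref{assump:anticonc} along the diagonal of this smaller minor, combined with one final use of Assumption \ref{assump:anticonc} on $Q_{N,1}$, gives
\[
\prob(|\wh \det_{k_0}(z)| \le t) \le \wt C\, t^{\upeta/c_0}
\]
for small $t$, where $c_0=c_0(k_0,p)$ is a combinatorial constant coming from the iteration.

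For uniformity in $z$: by Lemma \ref{lem:tube-blow-up}(i) the tube is contained in a bounded subset of $\C$, so by Lemma \ref{lem:net-bd} one constructs a $\rho$-net $\cN$ of cardinality $O(\rho^{-2})$. Union-bounding the pointwise estimate at threshold $t=(\vep_0')^{3\upeta/4}$ over $\cN$ controls the net contribution. Off-net oscillation is handled by Lemma \ref{lem:der-bd} and Markov's inequality: on an event of probability at least $1 - \tfrac12 C_{\ref{thm:dom-lbd}}(\vep_0')^{\upeta/3}$, one has $\sup_z |(d/dz)\wh\det_{k_0}(z)| \le M$ for some threshold $M$ of order $(N/\log N)^2$ times a negative power of $\vep_0'$, so by first-order Taylor expansion the oscillation off $\cN$ is at most $\rho M$. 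Choosing $\rho=\rho(\vep_0',N)$ so that $\rho M \ll (\vep_0')^{3\upeta/4}$ while $|\cN|\wt C (\vep_0')^{3\upeta^2/(4c_0)} \le \tfrac12 C_{\ref{thm:dom-lbd}}(\vep_0')^{\upeta/3}$, the $N$-dependence cancels and the advertised bound follows.

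The main technical obstacle is the iterated anti-concentration applied to the random minor $\det(Q[X_\star;Y_\star])$: at each reduction step one must show that the relevant cofactor is bounded below on a good event whose complement contributes an acceptable amount to the overall probability, and the accumulated losses across all $k_0$ levels determine the exponent $\upeta/c_0$. The asymmetric powers $3\upeta/4$ in the lower bound and $\upeta/3$ in the probability reflect the trade-off between the thresholds selected at successive recursive steps and the exponent available in the base case, \eqref{eq:levy-bd}. A secondary subtlety, overcome by the extremality of $(X_\star,Y_\star)$, is verifying that the non-extremal pairs in the expansion of $\wh\det_{k_0}(z)$ contribute only lower-order monomials in the entries of $Q[X_\star;Y_\star]$, so that the polynomial structure needed for the iterative argument is preserved.
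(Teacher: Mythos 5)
Your high-level strategy matches the paper's: a pointwise anti-concentration bound obtained from the dominant cofactor $\wh\gD(X_\star,Y_\star,z)$ (Lemma \ref{lem:det-0}(b)) combined with a net argument and a derivative estimate to make the bound uniform on the tube. The pointwise part of your argument is essentially a re-derivation of Corollary \ref{cor:lbd-dom-term}, which the paper obtains from Lemma \ref{lem:anti-conc} (your claimed exponent $t^{\upeta/c_0}$ is looser than the paper's $t^{1+\upeta}(\log(c_\star/t))^{k_0-1}$, but that discrepancy alone would not be fatal).

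The fatal gap is in the off-net derivative control. You invoke Lemma \ref{lem:der-bd} and claim the $N$-dependence of its bound ($(N/\log N)^3$ or $(N/\log N)^4$) will ``cancel'' against the net mesh. It will not: if $\sup_z|(d/dz)\wh\det_{k_0}|\sim M$ with $M$ polynomially large in $N$, then requiring $\rho M \ll (\vep_0')^{3\upeta/4}$ forces the mesh $\rho$ to be polynomially small in $N$, hence the net cardinality $|\cN|\sim\rho^{-2}$ is polynomially large in $N$, and the union bound $|\cN|\cdot\prob(|\wh\det_{k_0}(z)|\le t)$ at the fixed ($N$-independent) threshold $t=(\vep_0')^{3\upeta/4}$ diverges with $N$, rather than staying $O((\vep_0')^{\upeta/3})$. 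There is simply no trade-off that eliminates the $N$-dependence if the derivative bound is $N$-dependent. Moreover, Lemma \ref{lem:der-bd} does not even cover the fixed-width tube $\cT^{d,(1)}_{\vep_0',\vep_0}$ (it handles only the narrow tubes $\wh\cT^{d,(s)}_{\gamma',\vep_0}$ for $s\in\{1,2\}$, and $\cT^{d,(2)}_{\vep_0',\vep_0}$ for $s=2$), so your argument does not apply to the case $s=1$ at all.

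What the paper actually uses is Lemma \ref{lem:der-bd-dom}: a separate derivative bound \emph{for the dominant term $k=k_0$ only}, valid on fixed-width tubes for both $s=1$ and $s=2$, and — crucially — of order $O(1)$ (more precisely $O((\vep_0')^{-3})$ by Remark \ref{rem:O1-vep}), with no $N$-dependence. This is what allows a net of mesh $c_{\ref{lem:der-bd-dom}}\vep_0'/2$ (independent of $N$), of cardinality $O((\vep_0')^{-1})$, so the union bound produces a probability bound that depends only on $\vep_0'$. That $\wh\det_{k_0}$ enjoys an $N$-free derivative bound, unlike the non-dominant $\wh\det_k$, is the genuine extra ingredient your proposal is missing; it rests on Lemma \ref{lem:sec-mom-large-k}(ii), which gives an $O(1)$ bound for the second moment of $\wh\det_{k_0}$ itself on the fixed-width tube.
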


The proof of
Theorem \ref{thm:dom-lbd} uses
the following anti-concentration bound for certain polynomials in
independent random variables such that the degree of each of those random variables in those polynomials is at most one. This is a generalization of \cite[Proposition 4.1]{BZ}.

\begin{lem}\label{lem:anti-conc}
Fix $k \in \N$ and let $\{U_i\}_{i=1}^k$ be a sequence of
independent complex-valued random variables, whose L\'{e}vy concentration functions satisfy the bound \eqref{eq:levy-bd} with $\upeta\in (0,1]$.
Let $\{Z_\cI; \cI \in \binom{[n]}{k}\}$ be another collection of random variables which is jointly independent of the collection $\{U_i\}_{i=1}^k$. Define 
\[
\mathcal{U}_k := \sum_{\cI \subset [k]} Z_{\cI} \prod_{i \in \cI} U_{i}.
\]
Then, for any constant $c_\star >0$ and $\vep \in (0, c_\star {e^{-1}}]$ we 
have that
\[
\prob\left( |\mathcal{U}_k| \le \vep\right) \le \bar C_{\ref{lem:anti-conc}} \cdot \left(\frac{\vep}{c_\star}\right)^{(1+\upeta)} \cdot \left(\log\left(\frac{c_\star}{\vep}\right)\right)^{k-1} + \prob\left(|Z_{[k]}| \le c_\star \right),
\]
where 
$\bar C_{\ref{lem:anti-conc}} < \infty$ is some large constant 
depending only on $\upeta$ and $C_{\ref{assump:anticonc}}$.
\end{lem}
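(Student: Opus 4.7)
The plan is induction on $k$. For the base case $k=1$ we have $\mathcal{U}_1 = Z_\emptyset + Z_{\{1\}} U_1$; conditioning on $\{Z_\cI\}_{\cI \subset [1]}$ and applying \eqref{eq:levy-bd} to $U_1$ on the event $\{|Z_{\{1\}}|>c_\star\}$ (and using the trivial bound on its complement) yields the claimed estimate with the convention $(\log(c_\star/\vep))^{0}=1$.

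For the inductive step, split according to whether $k \in \cI$ and write
\[
\mathcal{U}_k = A + U_k \cdot B, \qquad A := \sum_{\cI \subset [k-1]} Z_\cI \prod_{i \in \cI} U_i, \qquad B := \sum_{\cJ \subset [k-1]} Z_{\cJ \cup \{k\}} \prod_{i \in \cJ} U_i.
\]
Crucially, $B$ is independent of $U_k$ and is itself a multilinear form of the type treated by the lemma in the $k-1$ variables $U_1,\ldots,U_{k-1}$, whose ``top'' coefficient (corresponding to $\cJ=[k-1]$) equals $Z_{[k]}$. The inductive hypothesis therefore gives, for every $t \in (0,c_\star/e]$,
\[
\prob(|B|\le t,\, |Z_{[k]}|>c_\star) \le \bar C_{k-1} (t/c_\star)^{1+\upeta} (\log(c_\star/t))^{k-2}.
\]
Conditioning next on $(U_1,\ldots,U_{k-1},\{Z_\cI\})$, the variable $\mathcal{U}_k = A + U_k B$ is an affine function of $U_k$ alone, so \eqref{eq:levy-bd} yields $\prob(|\mathcal{U}_k|\le\vep \mid \cdot)\le C_{\ref{assump:anticonc}}(\vep/|B|)^{1+\upeta}$ whenever $B\ne 0$.

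Decompose $\prob(|\mathcal{U}_k|\le \vep)$ into the three pieces $\prob(|Z_{[k]}|\le c_\star)$, $\prob(|B|\le\vep, |Z_{[k]}|>c_\star)$, and the expectation $\E[\mathbf{1}\{|B|>\vep,\, |Z_{[k]}|>c_\star\}\cdot C_{\ref{assump:anticonc}}(\vep/|B|)^{1+\upeta}]$. The first piece appears verbatim in the conclusion; the second is bounded directly by the inductive estimate with $t=\vep$ and is of lower order. For the third, partition $\{|B|>\vep\}$ dyadically into $\cA_j := \{2^{j-1}\vep<|B|\le 2^j\vep\}$ for $1\le j\le J:=\lceil\log_2(c_\star/\vep)\rceil$ (the tail $|B|>c_\star$ contributing at most $C_{\ref{assump:anticonc}}(\vep/c_\star)^{1+\upeta}$ and being absorbed). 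On $\cA_j$ we have $(\vep/|B|)^{1+\upeta}\le 2^{-(j-1)(1+\upeta)}$, while the inductive bound for $B$ gives $\prob(\cA_j \cap\{|Z_{[k]}|>c_\star\}) \le \bar C_{k-1}(2^j\vep/c_\star)^{1+\upeta}(\log(c_\star/(2^j\vep)))^{k-2}$, so the $j$-dependent prefactors collapse to $2^{1+\upeta}$ and, comparing the remaining logarithmic sum with a Riemann integral,
\[
\sum_{j=1}^{J} (\log(c_\star/(2^j\vep)))^{k-2} \le \frac{1}{\log 2}\int_0^{\log(c_\star/\vep)} u^{k-2}\, du = \frac{(\log(c_\star/\vep))^{k-1}}{(k-1)\log 2}.
\]
Collecting the three pieces yields the desired bound with constant satisfying the recursion $\bar C_k \le C' \bar C_{k-1}/(k-1)$ for some $C'=C'(\upeta,C_{\ref{assump:anticonc}})$.

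The only delicate point, and the main technical obstacle, is to verify that $\bar C_{\ref{lem:anti-conc}}$ can be chosen \emph{independently} of $k$ as asserted in the statement. This follows from the factorial decay in the recursion: iterating gives $\bar C_k \le (C')^{k-1}\bar C_1/(k-1)!$, which is bounded uniformly in $k$ by $\bar C_1\exp(C')$, depending only on $\upeta$ and $C_{\ref{assump:anticonc}}$. Apart from this bookkeeping, the argument is a fairly direct extension of \cite[Proposition 4.1]{BZ}: the independence of $\{Z_\cI\}$ from $\{U_i\}$ is used in every conditioning step, and the additional term $\prob(|Z_{[k]}|\le c_\star)$ in the conclusion exactly encodes the possibility that the top coefficient fails to be well separated from zero, in which case no anti-concentration for $\mathcal{U}_k$ can be deduced.
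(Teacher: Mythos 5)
Your decomposition $\mathcal{U}_k = A + U_k B$, with $B$ again a multilinear form of the same type in $U_1,\ldots,U_{k-1}$ whose top coefficient is $Z_{[k]}$, is exactly the right way to set up the induction, and the dyadic-shell argument is a legitimate alternative to the paper's proof (which normalizes by $Z_{[k]}$ to reduce to a purely conditional statement and then controls $\E\big[|\wh{\cU}_{j_*}|^{-(1+\upeta)}\mathbf 1(|\wh{\cU}_{j_*}|\ge t)\big]$ by an integration-by-parts identity rather than by dyadic shells). The overall skeleton is sound, but two points need repair.

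First, the joint-event bound $\prob(|B|\le t,\,|Z_{[k]}|>c_\star)\le \bar C_{k-1}(t/c_\star)^{1+\upeta}(\log(c_\star/t))^{k-2}$ does not follow from the unconditional Lemma applied to $B$: that statement only controls $\prob(|B|\le t)$ up to an additive $\prob(|Z_{[k]}|\le c_\star)$, and one cannot simply subtract the error term off the left-hand side. What you actually need is the \emph{conditional} strengthening $\prob(|B|\le t\mid \{Z_\cI\})\cdot\mathbf 1\{|Z_{[k]}|>c_\star\}\le \bar C_{k-1}(t/c_\star)^{1+\upeta}(\log(c_\star/t))^{k-2}$, and this is what must be propagated through the induction; the paper does precisely this by conditioning on $Z_{[k]}$ throughout (see \eqref{eq:levynew-1} and \eqref{eq:anti-conc-induction}). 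It is a routine fix, but as written the inductive step has a gap.

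Second, and more seriously, the recursion $\bar C_k\le C'\bar C_{k-1}/(k-1)$ is wrong, and with it the claim that $\bar C$ can be chosen independently of $k$. The ``second piece'' $\prob(|B|\le\vep,\,|Z_{[k]}|>c_\star)$ contributes $\bar C_{k-1}(\vep/c_\star)^{1+\upeta}(\log(c_\star/\vep))^{k-2}$; compared to the target $\bar C_k(\vep/c_\star)^{1+\upeta}(\log(c_\star/\vep))^{k-1}$ this forces $\bar C_k\gtrsim\bar C_{k-1}/\log(c_\star/\vep)$, and since the bound must hold for $\vep$ up to $c_\star e^{-1}$ (where $\log(c_\star/\vep)$ is of order one) there is no $1/(k-1)$ gain on this term. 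Your recursion is in fact $\bar C_k\le\bar C_{k-1}\big(1+C''/(k-1)\big)+O(1)$, which \emph{grows} with $k$. This is consistent with the paper's own proof, which yields $\bar C_k=\big(6(C_{\ref{assump:anticonc}}\vee 1)e^{1+\upeta}\big)^k$, exponential in $k$; the wording ``depending only on $\upeta$ and $C_{\ref{assump:anticonc}}$'' should be read with $k$ fixed at the outset, and $k$-uniformity is neither asserted in substance, needed in the applications (where $k=O(1)$ always), nor obtainable by the route you propose. So the passage you flagged as ``the main technical obstacle'' both rests on an incorrect recursion and pursues a property the lemma does not actually require.
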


\begin{proof}[Proof of Lemma \ref{lem:anti-conc}]
We will show that
\begin{align}\label{eq:levynew-1}
& \, \prob\left( |\mathcal{U}_k| \le \vep\big| Z_{[k]}  \right) \cdot {\bf 1}_{\{|Z_{[k]}| > c_\star\}} \notag\\
& \quad \le  \, \prob\left( |\wh {\mathcal{U}}_k| \le c_\star^{-1} \vep\big| Z_{[k]}  \right) \cdot {\bf 1}_{\{|Z_{[k]}| > c_\star\}} \le \bar C_{\ref{lem:anti-conc}} \cdot \left(\frac{\vep}{c_\star}\right)^{(1+\upeta)} \left(\log\left(\frac{c_\star}{\vep}\right)\right)^{k-1},
\end{align}
where
\[
\wh\cU_k := \sum_{\cI \subset [k]} \wh Z_{\cI} \prod_{i \in \cI} U_{i} \qquad \text{ and } \qquad \wh Z_\cI := \frac{Z_\cI}{Z_{[k]}}, \text{ for } \cI \subset [k]. 
\]
(Notice that $\{\wh Z_\cI; \cI \subset [k]\}$ are well defined on the event $\{|Z_{[k]}| > c_\star\}$, and hence so is $\wh \cU_k$.)
Taking the expectation over $Z_{[k]}$ in \eqref{eq:levynew-1} together with a union bound immediately yields Lemma \ref{lem:anti-conc}.  The first inequality in \eqref{eq:levynew-1} is straightforward from the definition of $\wh \cU_k$. So, we only need to prove the second inequality in \eqref{eq:levynew-1}.

Turning to that task, for $j \in [k]$,  set $\cJ_j:= \{j, j+1, \ldots, k\}$. Define
\[
 \wh \cU_j := \sum_{\cI: \, \cI  \supset \cJ_{j+1}} \wh Z_{\cI} \prod_{i \in \cI\setminus \cJ_{j+1}} U_{i} \qquad \text{ and } \qquad \wt \cU_j := \sum_{\substack{\cI: \, \cI \supset \cJ_{j+2}\\ (j+1) \notin \cI}} \wh Z_{\cI} \prod_{i \in \cI\setminus \cJ_{j+2}} U_{i}, \quad \mbox{ for } j \in [k-1] \cup\{0\},
\]
where $\cJ_{k+1}=\emptyset$. We will prove inductively that, for any $j \in [k]$, and all $t \in (0, e^{-1}],$
\begin{equation}\label{eq:anti-conc-induction}
\prob\left( |\wh \cU_j| \le t \big| Z_{[k]} \right) \le {(6 (C_{\ref{eq:levy-bd}} \vee 1) e^{1+\upeta})}^{j} t^{1+\upeta} \left(\log\left(\frac{1}{t}\right)\right)^{j-2}.
\end{equation}
Plugging $t = c_\star^{-1} \vep$ and $j=k$ in \eqref{eq:anti-conc-induction} yields \eqref{eq:levynew-1}. 

Proceeding  to the proof of \eqref{eq:anti-conc-induction}, we start with $j=1$. As $\wh Z_{[k]}=1$ we have that 
$
\wh \cU_1 = U_1 + \wt \cU_0. 
$
Thus, as $U_1$ is independent of $\{Z_{\cI}, \cI \subset [k]\}$, by \eqref{eq:levy-bd} we have that
\[
\prob\left(|\wh \cU_1| \le t \big| Z_{[k]} \right) \le \cL (U_1, t) \le C_{\ref{assump:anticonc}} t^{1+\upeta}. 
\]
To prove \eqref{eq:anti-conc-induction} for an arbitrary $j \in [k]$, upon using induction we note that 
\begin{equation}\label{eq:levynew-2}
\wh \cU_{j} = U_{j} \cdot \wh \cU_{j-1} + \wt \cU_{j-1}, \qquad \text{ for } j \in [k]. 
\end{equation}
Observe that by our assumption the random variables $\{\cU_{j-1}, \wt \cU_{j-1}\}$ are independent of $U_j$ for any $j \in [k]$. This, in particular, allows us to use the anti-concentration property of $U_j$ to derive the same for $\wh \cU_j$.

To complete the proof of \eqref{eq:anti-conc-induction} by induction, assume it holds for $j=j_*$, and set  $C_j={(6 (C_{\ref{eq:levy-bd}} \vee 1) e^{1+\upeta})}^{j}$. The induction hypothesis yields that 
\begin{align}\label{eq:anti-conc-split}
& \prob\left( \left|\wh \cU_{j_*+1} \right| \le t \big| Z_{[k]} \right) \notag\\
 &\le  \prob \left( \left| \wh \cU_{j_*}\right| \le t \big| Z_{[k]} \right) + \E\left[ \prob\left(  \left| U_{{j_*}} + \frac{\wt \cU_{j_*} }{\wh \cU_{j_*} }\right| \le \frac{t}{|\wh \cU_{j_*} |} \bigg| \, \wh \cU_{j_*}, \wt \cU_{j_*}, Z_{[k]}\right)\cdot {\bm 1} \left( \left|\wh \cU_{j_*}\right| \ge t\right) \bigg| Z_{[k]} \right] \notag\\
&\stackrel{\eqref{eq:levy-bd}}{ \le} C_{j_*} t^{1+\upeta} \left(\log \left( \frac{1}{t}\right)\right)^{j_*-2} + C_{\ref{assump:anticonc}} {t^{1+\upeta}} \cdot \E \left[ |\wh \cU_{j_*}|^{-(1+\upeta)} {\bm 1} \left( \left|\wh \cU_{j_*}\right| \ge t\right) \big| Z_{[k]}  \right],
\end{align}
where we have used that the triplet $\{\wh \cU_{j_*}, \wt \cU_{j_*}, Z_{[k]}\}$ is independent of $U_{{j_*}}$.
Using integration by parts, for any probability measure $\mu$ supported on $[0,\infty)$ we have that 
\[
\int_t^{{e^{-1}}} x^{-(1+\upeta)}  d\mu(x) = {e^{1+\upeta}} \mu([t, 1])  + (1+\upeta) \cdot  \int_t^{{e^{-1}}} \frac{\mu([t,s])}{s^{2+\upeta}} ds.
\]
Therefore, using the induction hypothesis and the fact that $\upeta \in (0,1]$, we have that
\begin{multline*}
\E \left[ |\wh \cU_{j_*}|^{-(1+\upeta)} {\bm 1} \left( \left|\wh \cU_{j_*}\right| \ge t \right) \big| Z_{[k]} \right] \le {e^{1+\upeta}}+ \E \left[ |\wh\cU_{j_*}|^{-(1+\upeta)} {\bm 1} \left( \left|\wh \cU_{j_*}\right| \in [t,{e^{-1}}]\right)\big|Z_{[k]} \right]  \\ 
\le 2{e^{1+\upeta}} + 2 \int_t^{{e^{-1}}} \frac{ \prob \left(|\wh \cU_{j_*}| \le s | Z_{[k]}\right)}{s^{2+\eta}} ds \le 2 {e^{1+\upeta}}+  2 C_{j_*} \int_t^{{e^{-1}}} s^{-1} \left(\log\left(\frac{1}{s}\right)\right)^{j_*-2} ds \\
 \le 2 {e}^{1+\upeta}+ \frac{2 C_{j_*}}{(j_*-1) \vee 1} \left(\log\left(\frac{1}{t}\right)\right)^{j_*-1}.
\end{multline*}
Combining the above with \eqref{eq:anti-conc-split} and using that $\log (1/t)\ge 1$ for $t \le e^{-1}$,
we establish \eqref{eq:anti-conc-induction} for $j=j_*+1$. This concludes the induction argument and hence the proof of the lemma is complete.
\end{proof}

Lemmas \ref{lem:det-0}(b) and \ref{lem:anti-conc} yield  easily an anti-concentration bound for $\det_{\wh m_2 -N_+}(\cdot)$ per fixed $z$, as follows. 


\begin{cor}\label{cor:lbd-dom-term}
Consider the same setup as in Theorem \ref{thm:dom-lbd}. Then for $s \in \{1,2\}$ and $\vep \in (0, c_\star {e^{-1}}]$ we have
\[
\sup_{z \in \cT^{d, (s)}_{\vep_0', \vep_0} \setminus \cB_2^{\wt \vep_0}} \prob \left(|\wh{\det}_{\wh m_2 - N_+}(z)| \le \vep\right) \le \bar C_{\ref{lem:anti-conc}} \cdot \left(\frac{\vep}{c_\star}\right)^{(1+\upeta)} \cdot \left(\log\left(\frac{c_\star}{\vep}\right)\right)^{\wh m_2 -N_+-1},
\]
where $c_\star$ is as in Lemma \ref{lem:det-0}(b). 
\end{cor}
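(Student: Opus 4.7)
\textbf{Proof proposal for Corollary \ref{cor:lbd-dom-term}.}

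The plan is to exhibit $\wh{\det}_{\wh m_2 - N_+}(z)$ as a polynomial of the form considered in Lemma \ref{lem:anti-conc}, in which the top-degree coefficient is, by Lemma \ref{lem:det-0}(b), deterministically bounded below by $c_\star$. Concretely, set $k := \wh m_2 - N_+ = \wh d$ (using \eqref{eq:whdm}) and choose the independent random variables
\[
U_i := Q_{N-\wh d + i,\, i}, \qquad i = 1, 2, \ldots, \wh d,
\]
which are the diagonal entries of the bottom-left $\wh d \times \wh d$ block $Q[X_\star; Y_\star]$ with $X_\star, Y_\star$ as in Lemma \ref{lem:det-0}(b). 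These $U_1, \ldots, U_{\wh d}$ are jointly independent and, by Assumption \ref{assump:anticonc}, satisfy the L\'evy anti-concentration bound \eqref{eq:levy-bd}.

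Next I would observe that, starting from the representation
\[
\wh{\det}_{\wh d}(z) = \sum_{\substack{X, Y \subset [N]\\ |X|=|Y|=\wh d}} (-1)^{\sgn(\sigma_X)\sgn(\sigma_Y)} \wh \gD(X,Y,z) \cdot \det(Q[X;Y])
\]
(which follows from \eqref{eq:det-decompose-1}, \eqref{eq:hatP-k}--\eqref{eq:wh-gD-dfn}, and the identity $\wh d = \wh m_2 - N_+$), the quantity $\wh{\det}_{\wh d}(z)$ is, as a polynomial in the entries of $Q$, of degree at most one in each entry. In particular, expanding only in the variables $U_1, \ldots, U_{\wh d}$ we obtain
\[
\wh{\det}_{\wh d}(z) = \sum_{\cI \subset [\wh d]} Z_\cI(z) \prod_{i\in\cI} U_i,
\]
where each $Z_\cI(z)$ is a polynomial in the entries of $Q$ other than $\{U_i\}_{i=1}^{\wh d}$, hence jointly independent of $(U_1, \ldots, U_{\wh d})$, as required by Lemma \ref{lem:anti-conc}.

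Now I would identify the top coefficient. The monomial $U_1 U_2 \cdots U_{\wh d} = \prod_{i=1}^{\wh d} Q_{N-\wh d+i,\, i}$ appears in $\det(Q[X;Y])$ only if $X \supseteq X_\star$ and $Y \supseteq Y_\star$; since $|X|=|Y|=\wh d = |X_\star|=|Y_\star|$, this forces $X = X_\star$ and $Y = Y_\star$. In $\det(Q[X_\star; Y_\star])$ this monomial corresponds to the identity permutation and therefore appears with coefficient $+1$. Consequently
\[
Z_{[\wh d]}(z) = (-1)^{\sgn(\sigma_{X_\star})\sgn(\sigma_{Y_\star})}\, \wh \gD(X_\star, Y_\star, z),
\]
which is deterministic and, by Lemma \ref{lem:det-0}(b), satisfies $|Z_{[\wh d]}(z)| \geq c_\star$ uniformly for $z \in \cT^{d,(s)}_{\vep_0',\vep_0}\setminus \cB_2^{\wt\vep_0}$.

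Finally, applying Lemma \ref{lem:anti-conc} with this collection of $U_i$'s and $Z_\cI$'s yields, for any $\vep \in (0, c_\star e^{-1}]$,
\[
\prob\!\left(|\wh{\det}_{\wh d}(z)| \le \vep\right) \le \bar C_{\ref{lem:anti-conc}} \left(\frac{\vep}{c_\star}\right)^{1+\upeta} \!\left(\log \frac{c_\star}{\vep}\right)^{\wh d - 1} + \prob\!\left(|Z_{[\wh d]}(z)| \le c_\star\right),
\]
and the last probability vanishes by the uniform lower bound just noted. Taking the supremum over $z$ in the tube gives the claim. No step is truly hard here: the entire argument is essentially a bookkeeping exercise reducing the problem to the right polynomial form, and the only substantive inputs are Lemmas \ref{lem:anti-conc} and \ref{lem:det-0}(b), both already available.
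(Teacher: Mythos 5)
Your proposal is correct and follows essentially the same route as the paper's proof: express $\wh{\det}_{\wh m_2 - N_+}(z)$ as a multilinear polynomial in the entries of $Q$, single out the diagonal of $Q[X_\star; Y_\star]$ as the $U_i$'s, use Lemma \ref{lem:det-0}(b) to bound $|Z_{[\wh d]}|$ below uniformly by $c_\star$, and invoke Lemma \ref{lem:anti-conc}. The paper states this more tersely, but your extra step (explicitly verifying that the monomial $U_1\cdots U_{\wh d}$ occurs only in the $(X_\star,Y_\star)$ summand and hence that $Z_{[\wh d]}(z)=\pm\,\wh\gD(X_\star,Y_\star,z)$ is deterministic) is exactly the bookkeeping the paper leaves implicit, so the two arguments are the same.
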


\begin{proof}
Recall \eqref{eq:det-decompose-1}, and \eqref{eq:fancyK-1}-\eqref{eq:wh-gD-dfn}. Fix $s \in \{1,2\}$. We note that 
\begin{multline}\label{eq:XY-star}
\wh \det_{\wh m_2 - N_+}(z) = (-1)^{\sgn(\sigma_{X_\star}) \sgn(\sigma_{Y_\star})} \wh \gD(X_\star,Y_\star,z)  \cdot \det (Q[X_\star; Y_\star]) \\
+ \sum (-1)^{\sgn(\sigma_{X}) \sgn(\sigma_{Y})} \wh \gD(X,Y,z)   \cdot \det (Q[X; Y]),
\end{multline}
where $X_\star$ and $Y_\star$ are as in Lemma \ref{lem:det-0}(b), and the sum in the \abbr{RHS} of \eqref{eq:XY-star} is over subsets $X, Y \subset [N]$ such that $|X|=|Y|= \wh m_2 - N_+$ and $(X, Y) \ne (X_\star, Y_\star)$. It is easy to see to that $\wh \det_{\wh m_2 - N_+}(z) $ is a homogeneous polynomial in the entries of $Q$, which are jointly independent, such that the degree of each of its entry in that polynomial is at most one. Hence, Lemma \ref{lem:anti-conc} is applicable. By Lemma \ref{lem:det-0}(b) we have uniform lower bound on $\wh \gD(X_\star, Y_\star, z)$. Therefore, upon applying Lemma \ref{lem:anti-conc} with $k = \wh m_2 - N_+$ and the roles of $\{U_i\}_{i=1}^k$ being played by the diagonal entries of the sub matrix $Q[X_\star, Y_\star]$,  
we derive the desired anti-concentration bound. 
\end{proof}

 Corollary \ref{cor:lbd-dom-term} is not sufficient to yield Theorem \ref{thm:dom-lbd}. As discussed in Section \ref{sec:der-bd}, we also need a bound on the supremum of the derivative of ${\det}_{\wh m_2 - N_+}(z)$. This is derived below. 

\begin{lem}\label{lem:der-bd-dom}
 Let Assumption \ref{assump:mom} hold. Fix $\wt \vep_0$. There exists some constant $c_{\ref{lem:der-bd-dom}} >0$, depending only on $p, \wt \vep_0$, such that for all
$\vep_0, \vep_0' >0$ such that $\vep_0' /\vep_0$ is small enough,
\begin{equation}\label{eq:sec-mom-O1}
\limsup_{N \to \infty} \E\left[\sup_{z \in \left( \cT^{d, (s)}_{\vep_0', \vep_0}\setminus \cB_2^{\wt \vep_0}\right)^{c_{\ref{lem:der-bd-dom}}\vep_0'}} \left|\frac{d}{dz} \wh{\det}_{\wh m_2 - N_+}(z)\right|^2 \right] =O(1), \quad s\in \{1,2\}.
\end{equation}
\end{lem}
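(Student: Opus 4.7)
The plan is to follow the scheme of Lemma \ref{lem:der-bd} almost verbatim, with one crucial modification: since Lemma \ref{lem:sec-mom-large-k}(ii) already gives an $N$-uniform $O(1)$ bound on $\E[|\wh \det_{\wh m_2 - N_+}(z)|^2]$ over the entire fixed-width tube $\cT^{d,(s)}_{\vep_0',\vep_0}$, there is no need to restrict the Cauchy integral to a disc of vanishing radius $O(\log N/N)$. Accordingly, I will take the auxiliary parameter $\vep_0''$ of Lemma \ref{lem:tube-blow-up}(ii) to be proportional to $\vep_0'$ (rather than $\log N/N$ as in the proof of Lemma \ref{lem:der-bd}). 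This produces a blow-up radius of order $\vep_0'$ and, combined with the $O(1)$ second moment bound, a derivative estimate that is $O(1)$ in $N$.

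Concretely, fix a small absolute constant $\beta \in (0,1/16)$, set $\vep_0'' = \vep_0'$, $\vep = \vep_{\ref{lem:tube-blow-up}}\vep_0'$, and define $c_{\ref{lem:der-bd-dom}} := \beta\,\vep_{\ref{lem:tube-blow-up}}$, which depends only on $p$ and $\wt\vep_0$. Then $\T^\beta := (\cT^{d,(s)}_{\vep_0',\vep_0}\setminus \cB_2^{\wt\vep_0})^{c_{\ref{lem:der-bd-dom}}\vep_0'}$ is exactly the $\beta\vep$-blow-up that appears in the proof of Lemma \ref{lem:der-bd}. By Lemmas \ref{lem:tube-blow-up}(i) and \ref{lem:net-bd}, this set is covered by $b = O((\vep_0')^{-1})$ discs $\D_i^\beta = D(z_i,\beta\vep)$. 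On each disc one applies Cauchy's integral formula to a smooth cut-off of the holomorphic extension $\Xi_{k_0}(z)$ defined by \eqref{eq:xik} with $k_0 = \wh m_2 - N_+$, yielding
\[
\E\Bigl[\sup_{z\in\D_i^\beta}\bigl|\tfrac{d}{dz}\wh\det_{k_0}(z)\bigr|^2\Bigr] \;=\; O\bigl((\vep_0')^{-2}\bigr)\cdot \sup_{w\in\D_i^{3\beta}} \E\bigl[|\wh\det_{k_0}(w)|^2\bigr].
\]
By Lemma \ref{lem:tube-blow-up}(ii) applied with $4\beta$ in place of $\beta$ (and $\vep_0'' = \vep_0'$), the enlarged disc $\D_i^{3\beta}$ is contained in $\T^{4\beta}\subset (\cT^{d,(s)}_{(1+4\beta)\vep_0',(1-4\beta)\vep_0}\cup \cT^{d+(-1)^s{\sf g}_0,(3-s)}_{\vep_0',(1-4\beta)\vep_0})\setminus \cB_2^{\wt\vep_0/2}$.

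On this connected domain no roots of $p_z(\cdot)=0$ lie in the annulus $(1+2\vep_0',1+\vep_0/4)$, so the holomorphic branches $\wh\eta_j(z)$ of the roots are unambiguously defined and $\Xi_{k_0}(z)\equiv \wh\det_{k_0}(z)$ throughout $\D_i^{3\beta}$, exactly as in the argument leading to \eqref{eq:T3beta}. The one technical check, which is the main subtlety, is that $\wh d(\cdot)$ and $\wh m_1(\cdot)$ (hence $\wh m_2$ and $k_0$) take the same value on both of the tubes $\cT^{d,(s)}$ and $\cT^{d+(-1)^s{\sf g}_0,(3-s)}$ in the cover. This is a direct consequence of the definitions \eqref{eq:wh-m1}, \eqref{eq:wh-d}, combined with the identity $m_+(z)-N_+ = d$ on $\cS_d$ and the relation $m_++m_-=\wt m$: a decrease of $d$ by ${\sf g}_0$ is compensated by an increase of $m_-$ by ${\sf g}_0$ and by switching from superscript $(1)$ to $(2)$, leaving $\wh m_1$ unchanged. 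Lemma \ref{lem:sec-mom-large-k}(ii) (applied with the slightly enlarged parameters $(1+4\beta)\vep_0'$ and $(1-4\beta)\vep_0$, which still satisfy the smallness condition when $\beta$ is small enough and $\vep_0'/\vep_0$ is small enough) then gives $\sup_{w\in\D_i^{3\beta}}\E[|\wh\det_{k_0}(w)|^2] = O(1)$ uniformly in $N$. Summing over the $O((\vep_0')^{-1})$ discs in the cover produces the bound $O((\vep_0')^{-3})$, which is $O(1)$ for any fixed $\vep_0'$ and is uniform in $N$, completing the proof. The only genuine difficulty is the cross-tube consistency of $\Xi_{k_0}$ just described; every other step is a straightforward adaptation of the proof of Lemma \ref{lem:der-bd}.
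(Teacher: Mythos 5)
Your proposal is correct and matches the paper's own argument almost step for step: the paper also reduces to \eqref{eq:cauchy-der-l21}, takes $\vep_0''=\vep_0'$ so that the blow-up radius is proportional to $\vep_0'$ rather than $\log N/N$, uses Lemma \ref{lem:tube-blow-up}(ii) to place the enlarged set inside $\cT^{d,(2)}_{2\vep_0',\vep_0/2}\cup\cT^{d+{\sf g}_0,(1)}_{2\vep_0',\vep_0/2}$, and invokes Lemma \ref{lem:sec-mom-large-k}(ii) on both pieces to get a uniform $O(1)$ second-moment bound, yielding $O((\vep_0')^{-3})$ overall. The one point on which you are more explicit than the paper is the cross-tube consistency of $\wh m_1$, $\wh d$ (and hence of the normalizing factor in \eqref{eq:xik}), which the paper asserts as a direct consequence of \eqref{eq:T3beta} without spelling out the $d\mapsto d\pm{\sf g}_0$, $(s)\mapsto(3-s)$ cancellation; your verification of that cancellation is correct and a useful addition.
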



\begin{proof}
We borrow ingredients from the proof  of Lemma \ref{lem:der-bd}. 
Let $s =2$. The proof for $s=1$ being the same will be omitted. 

Our starting point is \eqref{eq:cauchy-der-l21}. Fix $k=\wh m_2 - N_+$. As in the proof of Lemma \ref{lem:der-bd} we set $\vep= \vep_{\ref{lem:tube-blow-up}} \vep_0''$ and fix $\beta \in (0,1/8)$. Set $\vep_0''=\vep_0'$. By \eqref{eq:T3beta} $\T^{4\beta}$ is contained in the union of $\cT^{d, (2)}_{2\vep_0', \vep_0/2}$ and $\cT^{d+{\sf g}_0,(1)}_{2\vep_0',\vep_0/2}$. Apply Lemma \ref{lem:sec-mom-large-k}(ii) for $\cT^{d, (2)}$ and $\cT^{d+{\sf g}_0, (1)}$ with $\vep_0'$ and $\vep_0$ being replaced by $2 \vep_0'$, and $\vep_0/2$, respectively to deduce that \eqref{eq:sup-exp} holds under the current setup. Therefore, the desired bound follows from \eqref{eq:cauchy-der-l21}. This finishes the proof. 
%
%
\end{proof}

\begin{rem}\label{rem:O1-vep}
From \eqref{eq:cauchy-der-l21}-\eqref{eq:sup-exp} we have that the \abbr{RHS} of \eqref{eq:sec-mom-O1} is $O( (\vep_0')^{-3})$. On the other hand Lemma \ref{lem:sec-mom-large-k}(ii), and hence \eqref{eq:sup-exp}, hold as soon as $\vep_0'/\vep_0 \le c$ where $c>0$ is some constant depending only on the degree of the Laurent polynomial. This allows us to use Theorem \ref{lem:der-bd-dom} for $\vep_0'= c \vep_0$ and extend the result for $\vep_0' < c \vep_0$, by noting that the sets $( \cT^{d, (s)}_{\vep_0', \vep_0}\setminus \cB_2^{\wt \vep_0})^{c_{\ref{lem:der-bd-dom}}\vep_0'}$ are increasing in $\vep_0'$. It in particular enables us to  claim that the \abbr{RHS} of \eqref{eq:sec-mom-O1} is $O(\vep_0^{-3})$. This observation will be used in the proof of Theorem \ref{thm:dom-lbd} to claim that the constant $C_{\ref{thm:dom-lbd}}$ does not depend on $\vep_0'$. 
\end{rem}


We are now ready to prove Theorem \ref{thm:dom-lbd}.

\begin{proof}[Proof of Theorem \ref{thm:dom-lbd}]
Fix $s \in \{1,2\}$. Similar to the proof of Theorem \ref{thm:prob-non-dom-small} we will also use a covering argument. 


Write  
\[
\wt \cF(z) := \left\{ |\wh{\det}_{\wh m_2 - N_+}(z)| \le \vep_0' \vep_\star^{-1} \right\} 
\]
 and 
 \[
 \wt \cF_0  := \left\{\sup_{z \in \left( \cT^{d, (s)}_{\vep_0', \vep_0}\setminus \cB_2^{\wt \vep_0}\right)^{c_{\ref{lem:der-bd-dom}}\vep_0'}} \left|\frac{d}{dz} \wh{\det}_{\wh m_2 -N_+}(z)\right| \ge \vep_\star^{-1} \right\},
\]
where $\vep_\star = (\vep_0')^{\upeta/4}$. By Lemmas \ref{lem:tube-blow-up}(i) and  \ref{lem:net-bd} there exists a net $\cN_\star$ of $\cT^{d, (s)}_{\vep_0', \vep_0}$ with mesh size ${c_{\ref{lem:der-bd-dom}}\vep_0'}/2$ such that $|\cN_\star| = O(\vep_0'^{-1})$. Therefore, applying Corollary \ref{cor:lbd-dom-term} and  Lemma \ref{lem:der-bd-dom} (see also Remark \ref{rem:O1-vep}) we have that
\begin{equation}\label{eq:prob-ubd-dom}
\prob\left( \cup_{z \in \cN_\star} \wt \cF(z) \cup \wt \cF_0 \right) =O\left( (\vep_0')^{\upeta/3}\right),
\end{equation}
for any $\vep_0'>0$ sufficiently small. On the other hand, for any $z \in \cN_\star$ we have that $D(z, c_{\ref{lem:der-bd-dom}}\vep_0'/2) \subset (\cT^{d, (s)}_{\vep_0', \vep_0}\setminus \cB_2^{\wt \vep_0})^{c_{\ref{lem:der-bd-dom}}\vep_0'/2}$. Therefore, by the first order Taylor series expansion and triangle inequality it follows that on the event $\cap_{z \in \cN} \wt \cF(z)^c \cup \wt \cF_0^c$, for any $z' \in \wt \cT_{\vep_0',\vep_0}^{d, (s)} \setminus \cB_2^{\wt \vep_0}$
\begin{align*}
|\wh{\det}_{\wh m_2 -N_+}(z')|  & \ge \inf_{z \in \cN_\star} |\wh{\det}_{\wh m_2 -N_+}(z)| - \dist(z', \cN_\star) \cdot \sup_{z \in \left( \cT^{d, (s)}_{\vep_0', \vep_0}\setminus \cB_2^{\wt \vep_0}\right)^{c_{\ref{lem:der-bd-dom}}\vep_0'}}\left|\frac{d}{dz} \wh{\det}_{\wh m_2 -N_+}(z)\right|  \\
& \ge \frac12 \vep_0' \cdot \vep_\star^{-1} = \frac{1}{2} (\vep_0')^{3\upeta/4}. 
\end{align*}
This together with \eqref{eq:prob-ubd-dom} completes the proof of the theorem.  
\end{proof}

\subsection{Step 4: combining bounds from Sections \ref{sec:high-mom}-\ref{sec:dom-lbd}}\label{sec:combine-away-spectral}
In this short section we combine the results obtained in the previous three sections and complete the proofs of Theorems \ref{thm:no-outlier} and \ref{thm:sep-spec-curve}. We start with the proof of Theorem \ref{thm:no-outlier}. The estimates derived Sections \ref{sec:high-mom}-\ref{sec:dom-lbd} are for the tubes which are defined through the roots $p_z(\cdot)=0$, while Theorem \ref{thm:no-outlier} is to be proved for regions that are defined via the spectral parameter $z$. Therefore, we will need to prove some additional geometric features of the tubes which will allow us to relate those to regions defined through $z$. These will also be used later in Section \ref{sec:bulk-eig}. Stating these results 
requires the following notation.

For any $\upepsilon >0$ and  set $\cB \subset \C$ we write $\cB^{-\upepsilon}:= ((\cB^c)^\upepsilon)^c$. 
For any collection of $({\sf g}_0+1)$ distinct indices ${\bm i} := \{i_1, i_2, \ldots, i_{{\sf g}_0+1}\} \subset [\wt m]$ and $c >0$ we set 
\begin{equation*}
\cW_{\bm i}(c) := \left\{ z \in\C: ||\eta_{i_j}(z)| -1 | + ||\eta_{i_j'}(z)| -1 | \le c, \mbox{ for all } j \ne j' \in [{\sf g}_0+1]\right\}.
\end{equation*}
and
\[
\cW(c) := \left\{z: \exists  \, \eta =\eta(z) \text{ such that } p_z(-\eta) =0 \text{ and } ||\eta| -1| \le c \right\}. 
\]

\begin{lem}\label{lem:tube-geo}
Fix $\wt \vep_0>0$. We have the following geometric properties:
\begin{enumerate}

\item[(i)] There exists a constant $C_{\ref{lem:tube-geo}} < \infty $ such that for any $\upepsilon >0$,
\[
(p(S^1))^{\upepsilon} \setminus \cB_2^{\wt \vep_0} \subset \cW(C_{\ref{lem:tube-geo}}\upepsilon)\setminus \cB_2^{\wt \vep_0}. 
\]

\item[(ii)] There exists a constant $c_{\ref{lem:tube-geo}} >0$ such that for any $c \le c_{\ref{lem:tube-geo}}$,
\begin{equation}\label{eq:cW-supset}
\left(\bigcup_{{\bm i} \subset [\wt m]: |{\bm i}|= {\sf g}_0+1} \cW_{{\bm i}}(c\wt \vep_0) \right) \cap (\cB_1^{\wt \vep_0} \cup \cB_2^{\wt \vep_0} )^c= \emptyset. 
\end{equation}

\item[(iii)] Fix $0 <\vep_0 <\vep_0'$ such that $\vep_0'+\vep_0 \le c_{\ref{lem:tube-geo}} \wt \vep_0$. Then, for any $\upepsilon >0$ such that $\upepsilon \le C_{\ref{lem:tube-geo}}^{-1}  \vep_0'$,
\[
(p(S^1))^{\upepsilon} \setminus (\cB_1^{\wt \vep_0} \cup \cB_2^{\wt \vep_0}) \subset \left( \bigcup_d \cT^d_{\vep_0',\vep_0}\right)\setminus (\cB_1^{\wt \vep_0} \cup \cB_2^{\wt \vep_0}).
\]
\item[(iv)] Let $\vep_0, \vep_0'$, and $\upepsilon$ be as in part (iii). Then,
\begin{equation}\label{eq:S-0-in-T}
(\cS_0^{\upepsilon} \setminus {\cS}_0)\setminus (\cB_1^{\wt \vep_0} \cup \cB_2^{\wt \vep_0})  \subset (\cT^{ {\sf g}_0, (1)}_{\vep_0', \vep_0} \cup \cT^{- {\sf g}_0, (1)}_{\vep_0', \vep_0})\setminus (\cB_1^{\wt \vep_0} \cup \cB_2^{\wt \vep_0}). 
\end{equation}

\item[(v)] Let $\vep_0, \vep_0'$, and $\upepsilon$ be as in part (iii). Then,
\begin{equation*}
({\cS}_0 \setminus \cS_0^{-\upepsilon}) \setminus ( \cB_1^{\wt \vep_0} \cup \cB_2^{\wt \vep_0}) \subset \cT^{0,(2)}_{\vep_0',\vep_0} \setminus \cB_2^{\wt \vep_0}. 
\end{equation*}

\end{enumerate}
\end{lem}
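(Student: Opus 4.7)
The approach is to prove the five parts in order, using the implicit function theorem together with a compactness/contradiction argument to establish the key quantitative statements (i) and (ii), and then deducing (iii)--(v) as consequences. For (i), given $z \in (p(S^1))^\upepsilon \setminus \cB_2^{\wt\vep_0}$, I would first choose $\zeta \in S^1$ realizing $|p(\zeta)-z| \le \upepsilon$. Since $\cB_2$ is precisely the image under $p$ of the critical points of $p$ (by the discussion after Definition \ref{def-badintro}), the hypothesis $z \notin \cB_2^{\wt\vep_0}$ together with continuity forces $|p'(\zeta)|$ to be bounded below by a constant $\kappa=\kappa(p,\wt\vep_0)>0$, provided $\upepsilon$ is small compared to $\wt\vep_0$. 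A quantitative inversion of $p$ near $\zeta$ then produces a root $\tilde\zeta$ of $p-z = 0$ with $|\tilde\zeta-\zeta| = O(\upepsilon/\kappa)$, so $\eta=-\tilde\zeta$ satisfies $||\eta|-1| \le C_{\ref{lem:tube-geo}}\upepsilon$; for the regime $\upepsilon \gtrsim \wt\vep_0$ the conclusion is rendered trivial by enlarging $C_{\ref{lem:tube-geo}}$.

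For (ii), I would argue by contradiction and compactness. Suppose the conclusion fails; then there exists a sequence $z_n$ with $z_n \in \cW_{{\bm i}_n}(\wt\vep_0/n)$ but $z_n \notin \cB_1^{\wt\vep_0}\cup\cB_2^{\wt\vep_0}$. Since $\binom{[\wt m]}{{\sf g}_0+1}$ is finite, I may assume ${\bm i}_n \equiv {\bm i}$ fixed, and since the $\cW$-sets are uniformly bounded (each such $z$ is close to the bounded set $p(S^1)$ by (i)), extract $z_n \to z_*$ with $z_*$ outside $\cB_1^{\wt\vep_0/2}\cup\cB_2^{\wt\vep_0/2}$. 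By continuity of the roots in the symmetric product topology and the fact that $z_*\notin\cB_2$ forces all $\wt m$ roots of $p_{z_*}$ to be simple (so the indexing by decreasing modulus stabilises for large $n$), one obtains $|\eta_{i_j}(z_*)|=1$ for every $j \in[{\sf g}_0+1]$. These are ${\sf g}_0+1$ distinct roots of $p_{z_*}$ lying on $S^1$. Since $p(\zeta)=q_p(\zeta^{{\sf g}_0})$ and $\zeta\mapsto\zeta^{{\sf g}_0}$ is exactly ${\sf g}_0$-to-one on $S^1$, these ${\sf g}_0+1$ roots must project under $\zeta\mapsto\zeta^{{\sf g}_0}$ to at least two distinct points $w_1,w_2\in S^1$ satisfying $q_p(w_1)=q_p(w_2)=z_*$, forcing $z_*\in\cB_1$ and yielding the desired contradiction.

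For (iii), I combine (i) and (ii): by (i) some root lies within $C_{\ref{lem:tube-geo}}\upepsilon \le \vep_0'$ of $S^1$, and since ${\sf g}(p)={\sf g}_0$ the roots come in ${\sf g}_0$-groups of equal modulus (as ${\sf g}_0$th roots of a common value $w$ with $q_p(w)=z$), so there are exactly ${\sf g}_0$ roots with modulus in $[1-\vep_0',1+\vep_0']$. By (ii), no further root can lie within $c_{\ref{lem:tube-geo}}\wt\vep_0$ of $S^1$; under the hypothesis $\vep_0+\vep_0' \le c_{\ref{lem:tube-geo}}\wt\vep_0$ the remaining $\wt m-{\sf g}_0$ roots are therefore outside the annulus $\{1-\vep_0 < |\cdot| < 1+\vep_0\}$. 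The ${\sf g}_0$ roots near $S^1$ share their common modulus, so either all lie in $[1-\vep_0',1]$ or all in $(1,1+\vep_0']$, placing $z$ in $\cT^{d,(1)}_{\vep_0',\vep_0}$ or $\cT^{d,(2)}_{\vep_0',\vep_0}$ respectively for the value $d=m_+(z)-N_+$ determined by the root counts; the borderline case $z\in p(S^1)$ is absorbed by the closure appearing in the definition of $\cT^{d,(1)}$.

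Parts (iv) and (v) follow from (iii) plus an index count. For (v), any $z\in\cS_0\setminus\cS_0^{-\upepsilon}$ satisfies $\dist(z,\partial\cS_0)\le\upepsilon$ and $\partial\cS_0\subset p(S^1)$, so (iii) gives $z\in\cT^d$; as $\cT^d\subset{\rm cl}(\cS_d)$ and $z\in\cS_0$, we must have $d=0$, and $\cT^{0,(1)}=\emptyset$ by definition forces $z\in\cT^{0,(2)}$. For (iv) the same reasoning yields $z\in\cT^d$ with $d\ne 0$; here the key geometric point is that $p(S^1)$ is the smooth curve $q_p(S^1)$ traversed ${\sf g}_0$ times, so the winding number of $p(S^1)$ around any point equals ${\sf g}_0$ times that of $q_p(S^1)$, and adjacent chambers separated by a single smooth arc of $q_p(S^1)$ (away from $\cB_1\cup\cB_2$) therefore differ in $p(S^1)$-winding by exactly $\pm{\sf g}_0$. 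For $\upepsilon$ small, the straight segment from $z$ to the closest point of $\cS_0$ is a single transverse crossing of such an arc, yielding $d=\pm{\sf g}_0$; the orientation of the crossing pins the ${\sf g}_0$ roots near $S^1$ as just inside (when $d={\sf g}_0$, so $z\in\cT^{{\sf g}_0,(1)}$) or just outside (when $d=-{\sf g}_0$, so $z\in\cT^{-{\sf g}_0,(1)}$), matching the inequalities in the tube definitions. The main technical obstacle will be the bookkeeping of indices and moduli in (ii) under passage to the limit, and the clean transversality argument in (iv), which requires working off both $\cB_1$ and $\cB_2$ so that $q_p(S^1)$ is locally a smooth embedded arc.
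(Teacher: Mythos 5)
Your proposal is correct and arrives at the same conclusions, but the routes through parts (ii), (iv), and (v) differ noticeably from the paper's.

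For (i) and (iii) you do essentially what the paper does: bound the motion of root moduli via the Lipschitz constant of $z\mapsto\eta_j(z)$ off $\cB_2^{\wt\vep_0}$ (the paper invokes \eqref{eq:root-derivative} rather than re-running the implicit function theorem, but that is the same estimate), and combine (i) with (ii) to land in some $\cT^d$.

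For (ii), the paper first proves the ${\sf g}_0=1$ case by the same compactness--and--contradiction argument you use, and then deduces the general case by passing to the companion polynomial $\wh p$ with $p(\eta)=\wh p(\eta^{{\sf g}_0})$, using that $\cB_1(p)=\cB_1(\wh p)$ and $\cB_2(p)=\cB_2(\wh p)$. You instead handle general ${\sf g}_0$ in one shot via the pigeonhole observation that ${\sf g}_0+1$ distinct points of $S^1$ cannot all map to the same point under $\zeta\mapsto\zeta^{{\sf g}_0}$, so two of their images are distinct points at which $q_p$ takes the common value $z_*$. That is a clean alternative; both are valid.

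For (iv) and (v), the paper's route is rather more hands-on than yours. It first proves an intermediate inclusion (their \eqref{eq:S-0-in-T-1} and \eqref{eq:S-0-in-T-5}) by an intermediate value theorem argument on the moduli $|\eta_{N_-}|$, $|\eta_{N_-+1}|$ along the segment from $z$ to the nearest point of $\cS_0$, which directly certifies $z\in(p(S^1))^\upepsilon$, and then rules out $|d|\ge 2{\sf g}_0$ and $\cT^{\pm{\sf g}_0,(2)}$ by explicit case-splitting on $|\eta_{N_-}|\le 1$ versus $|\eta_{N_-+1}|\ge 1$ using the moduli constraints in Definition \ref{dfn:tubes}. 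Your argument is topological: the winding number of $p(S^1)$ is ${\sf g}_0$ times that of $q_p(S^1)$, so a single transverse crossing of a smooth arc of $q_p(S^1)$ changes $d$ by $\pm{\sf g}_0$, and the orientation of that crossing decides between $\cT^{{\sf g}_0,(1)}$ and $\cT^{-{\sf g}_0,(1)}$. This is conceptually appealing but, as you yourself flag, the ``single transverse crossing'' step needs a tubular-neighborhood/uniform-distance argument on $p(S^1)\setminus\cB_1^{\wt\vep_0/2}$ to rule out multiple intersections, and the step pinning the $(1)$ versus $(2)$ branch still requires an explicit argument that $\cT^{\pm{\sf g}_0,(2)}$ abuts the $\cS_{\pm 2{\sf g}_0}$ boundary rather than $\cS_0$; the paper proves exactly this as \eqref{eq:S-0-in-T-4}. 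Your (v) is shorter and clean: $\partial\cS_0\subset p(S^1)$ reduces to (iii), and $\cT^d\subset\overline{\cS_d}$ together with $\cS_0\cap p(S^1)=\emptyset$ forces $d=0$; since $\cT^{0,(1)}=\emptyset$ by definition, $z\in\cT^{0,(2)}$.

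In short, your proof is sound modulo making the transversality/single-crossing step in (iv) precise; it trades the paper's explicit moduli bookkeeping for winding-number topology and a direct pigeonhole in (ii).
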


\begin{proof}
The proof of part (i) is immediate. Fix $z' \in (p(S^1))^{\upepsilon}$. Then there exists $\wh z \in p(S^1)$ such that $|z' - \wh z| \le \upepsilon$. This, in particular implies that there exists $i \in [\wt m]$ such that $\eta_i(\wh z) \in S^1$. Therefore, by the triangle inequality, 
\begin{equation}\label{eq:root-derivative}
||\eta_i( z')| -1| \le |\eta_i( z') - \eta_i(\wh z)| \le \max_{i \in [\wt m]} \sup_{z\in (p(S^1))^{2\upepsilon} \setminus \cB_2^{\wt \vep_0}} \left|\frac{d}{dz}\eta_i(z)\right| \cdot |z' -\wh z|.
\end{equation}
Since $p(S^1)$ is a bounded set, and the roots are analytic outside a neighborhood of $\cB_2$ we have that the supremum of the derivative of the roots is bounded  $(p(S^1))^{2\upepsilon} \setminus \cB_2^{\wt \vep_0}$. This yields part (i). 

Next we turn to prove (ii). Let us first prove \eqref{eq:cW-supset} for Laurent polynomials $p(\cdot)$ such that ${\sf g}(p) = {\sf g}_0=1$. To this end, fix an arbitrary constant $c>0$. Assume that \eqref{eq:cW-supset} does not hold for this chosen $c$. Then there exist a $z \notin \cB_1^{\wt \vep_0} \cup \cB_2^{\wt \vep_0}$ and a pair $i \ne j \in [\wt m]$ such that 
\(
||\eta_i(z)| -1| + |\eta_j(z)| -1| \le c\wt \vep_0. 
\)
Therefore, if \eqref{eq:cW-supset} does not hold for any $c >0$ then we obtain a pair $i \ne j \in [\wt m]$, a sequence $\{z_{n}\}$, and a subsequence of integers $\{n_k\}$ such that
\begin{equation}\label{eq:zeta-cW}
||\eta_i(z_{n_k})|-1| + ||\eta_j(z_{n_k})|-1| \le \wt \vep_0/n_k \quad \text{ and } \quad z_{n_k} \in \C \setminus (\cB_1^{\wt \vep_0} \cup \cB_2^{\wt \vep_0}), \qquad \forall \, k. 
\end{equation}
Since for each of these $z_{n_k}$ there exists a root of $p_{z_{n_k}}(\cdot)=0$ that is close to the unit circle,
it follows that $\{z_{n_k}\}$ is a bounded sequence 
(see also the proof of Lemma \ref{lem:tube-blow-up}(i)), which
thus possesses 
a converging subsequence, and hence may be assumed to
converge to some $z_\star \notin \cB_2^{\wt \vep_0}$. Now, by the continuity of the maps $z \mapsto \eta_i(z)$ and $z \mapsto \eta_j(z)$ we derive from \eqref{eq:zeta-cW} that \( |\eta_i(z_\star)| = |\eta_j(z_\star)| =1 \). As $z_\star \notin \cB_2^{\wt \vep_0}$ we further deduce that $\eta_i(z_\star) \ne \eta_j(z_\star)$. This, in particular, implies that there exist $\eta \ne \eta' \in S^1$ such that $p(-\eta)=p(-\eta')=z_\star$, and hence $z_\star \in \cB_1$. This is a contradiction. Therefore, we have \eqref{eq:cW-supset} for ${\sf g}_0=1$. 

To prove \eqref{eq:cW-supset} for a Laurent polynomial $p(\cdot)$ with ${\sf g}(p) ={\sf g}_0 >1$ we note that for such $p(\cdot)$ there exists a Laurent polynomial $\wh p(\cdot)$, with ${\sf g}(\wh p)=1$, such that $p(\eta)= \wh p(\eta^{{\sf g}_0})$ for $\eta \in \C$. This means that the roots of $p_z(\cdot)=0$ are obtained by taking the ${\sf g}_0$-th roots of those of $\wh p_z(\cdot)=0$. Furthermore, the set of intersection points and branch points of $p(\cdot)$ and $\wh p(\cdot)$ are identical. Since \eqref{eq:cW-supset} holds for $\wh p(\cdot)$, upon shrinking $c_{\ref{lem:tube-geo}}$ it continues to hold for $p(\cdot)$. 

The proof of (iii) is also straightforward. Indeed, by parts (i) and (ii) we have 
\begin{equation}\label{eq:geom-3}
(p(S^1))^{C_{\ref{lem:tube-geo}}^{-1}  \vep_0'} \setminus (\cB_1^{\wt \vep_0} \cup \cB_2^{\wt \vep_0}) \subset \cW(\vep_0') \setminus (\cB_1^{\wt \vep_0} \cup \cB_2^{\wt \vep_0})  
= \cW( \vep_0') \cap \left(\bigcup_{{\bm i} \subset [\wt m]: |{\bm i}|= {\sf g}_0+1} \cW_{{\bm i}}(c_{\ref{lem:tube-geo}}\wt \vep_0) \right)^c \setminus (\cB_1^{\wt \vep_0} \cup \cB_2^{\wt \vep_0}) 
\end{equation}
Note that by the definition of $\cW(\cdot)$ it follows that any $z$ belonging to rightmost set in \eqref{eq:geom-3} must have a root $-\eta(z)$ of $p_z(\cdot)=0$ such that $||\eta(z)| -1 | \le \vep_0'$. As ${\sf g}(p) = {\sf g}_0$ there are ${\sf g}_0$ such roots. Recalling the definition of $\cW_{{\bm i}}(\cdot)$ we deduce that distance of all other roots from $S^1$ must be at least $\vep_0$. Hence, $z \in \cT^d_{\vep_0',\vep_0}$ for some $d$. This yields part (iii). 

Turning to prove part (iv) we begin with the proof of the following intermediate result:
\begin{equation}\label{eq:S-0-in-T-1}
(\cS_0^{\upepsilon} \setminus {\cS}_0)   \setminus ( \cB_1^{\wt \vep_0} \cup \cB_2^{\wt \vep_0}) \subset (p(S^1))^{\upepsilon}\setminus \cS_0)\setminus ( \cB_1^{\wt \vep_0} \cup \cB_2^{\wt \vep_0}). 
\end{equation}
for any $\upepsilon <\wt \vep_0/2$.
To see \eqref{eq:S-0-in-T-1},
we note that given any $z$ belonging to the set on the \abbr{LHS} of \eqref{eq:S-0-in-T-1} there exists $z_\star \in \cS_0$ such that $|z - z_\star| \le \upepsilon$. Define $z_\star(t) := z_\star(1-t) +  t z$, $t \in [0,1]$. We claim that there exists $t_\star \in (0,1]$ such that $z_\star(t_\star) \in p(S^1)$. This will indeed show that $z \in (p(S^1))^{\upepsilon}$. 

To prove the existence of $t_\star$, as $z_\star \in \cS_0$, we see that, $|\eta_{N_-}(z_\star)| > 1 > |\eta_{N_-+1}(z_\star)|$. As $z \notin {\cS}_0$, we have either $|\eta_{N_-}(z)| \le 1$ or $|\eta_{N_-+1}(z)| \ge 1$. Assume $|\eta_{N_-}(z)| \le 1$. If an equality holds then we set $t_\star =1$. So, assume further 
that $|\eta_{N_-}(z)| < 1$.  
Since the map $z_0 \mapsto |\eta_{N_-}(z_0)|$ is well defined and continuous for $z_0 \notin \cB_2^{\wt \vep_0/2}$, the existence of $t_\star \in (0,1)$ such that $|\eta_{N_-}(z_\star(t_\star))|=1$, and hence $z_\star (t_\star) \in p(S^1)$, is immediate from the intermediate value theorem. Similarly for the case $|\eta_{N_-+1}(z)| \ge 1$ one can repeat the above argument to find $t_\star \in (0,1]$ such that $|\eta_{N_-+1}(z_\star(t_\star))|=1$ implying again $z_\star(t_\star) \in p(S^1)$. This yields \eqref{eq:S-0-in-T-1}.

Returning to the proof of \eqref{eq:S-0-in-T} we apply part (iii) to deduce that for any $\upepsilon >0$ such that   $\upepsilon \le C_{\ref{lem:tube-geo}}^{-1}  \vep_0'$ we have 
\begin{equation*}
(\cS_0^{\upepsilon} \setminus {\cS}_0)   \setminus ( \cB_1^{\wt \vep_0} \cup \cB_2^{\wt \vep_0}) \subset \left(\cup_d \cT^{d}_{\vep_0',\vep_0} \right) \setminus ( \cB_1^{\wt \vep_0} \cup \cB_2^{\wt \vep_0}).
\end{equation*}
Thus, upon noting that as ${\sf g}(p)={\sf g}_0$ the winding number should be a multiple of ${\sf g}_0$, it suffices to show that 
\begin{equation}\label{eq:S-0-in-T-4}
(\cS_0^{\upepsilon} \setminus {\cS}_0) \bigcap\left(\cup_{d: |d| \ge 2 {\sf g}_0} \cT^d_{\vep_0',\vep_0} \cup\cT^{\pm {\sf g}_0, (2)}_{\vep_0',\vep_0}\right) =\emptyset. 
\end{equation}
Assume that
there exists $z \in (\cS_0^{\upepsilon} \setminus {\cS}_0) \cap \cT^{d}_{\vep_0',\vep_0}$ for some $d$ such that $|d| \ge 2{\sf g}_0$. Again we need to split to the cases $|\eta_{N_-}(z)| \le 1$ and $|\eta_{N_-+1}(z)| \ge 1$. 
We first consider the case $|\eta_{N_-}(z)| \le 1$. 
Recall that 
$d = m_+ - N_+ = N_- - m_-$. By the definition of the tube $\cT^d_{\vep_0',\vep_0}$ we note that $d \ge {2 \sf g}_0$ implies that $|\eta_{N_-}(z)| \le 1-\vep_0$. On the other hand, $d \le -2 {\sf g}_0$ implies that $|\eta_{N_-}(z)|^{-1} \le 1-\vep_0$. However, we also recall from above that $|\eta_{N_-}(z)| \le 1$ implies that there exists $z_\star(t_\star) \in p(S^1)$ such that $|\eta_{N_-}(z_\star(t_\star))|=1$ and $|z - z_\star(t_\star)| \le \upepsilon$. As $\upepsilon \le C_{\ref{lem:tube-geo}}^{-1}  \vep_0'$, and $\vep_0' \le \vep_0/2$, this together with \eqref{eq:root-derivative} indeed yields a contradiction. A similar argument works for the case $|\eta_{N_-+1}(z)| \ge 1$. One can repeat the same argument again to further show that  $(\cS_0^{\upepsilon} \setminus {\cS}_0) \cap \cT^{\pm {\sf g}_0, (2)}_{\vep_0',\vep_0} = \emptyset$. We omit the details. 

Finally we proceed to prove part (v). Repeating an argument similar to the proof of \eqref{eq:S-0-in-T-1} we obtain that
\begin{equation}\label{eq:S-0-in-T-5}
({\cS}_0 \setminus \cS_0^{-\upepsilon}) \setminus ( \cB_1^{\wt \vep_0} \cup \cB_2^{\wt \vep_0}) \subset (p(S^1))^{\upepsilon}\cap \cS_0)\setminus ( \cB_1^{\wt \vep_0} \cup \cB_2^{\wt \vep_0}).
\end{equation}
Now by part (iii) the set on the \abbr{RHS} of \eqref{eq:S-0-in-T-5} must be contained in the union of the tubes. However, the only tube that has nonempty intersection with $\cS_0$ is $\cT^{0, (2)}_{\vep_0',\vep_0}$. 
This yields part (v) and hence the proof of the lemma is now complete. 
\end{proof}


We are ready to prove Theorem \ref{thm:no-outlier}. 

\begin{proof}[Proof of Theorem \ref{thm:no-outlier}]
Fix $\gamma' >1$ such that $\gamma - 1 < \gamma' < \gamma$. Set $\vep_0' = (\gamma'-1)\log N/N$, $\vep_0 = c_{\ref{lem:tube-geo}} \wt \vep_0/2$, some arbitrary constant $\vep_0''>0$ such that $\vep_0''/\vep_0$ sufficiently small, $\upepsilon_1 = C_{\ref{lem:tube-geo}}^{-1} \vep_0'$, and $\upepsilon_2 = C_{\ref{lem:tube-geo}}^{-1} \vep_0''$. By Lemma \ref{lem:tube-geo}(iv)-(v) we note that 
\begin{equation*}\label{eq:S-0-in-Tt}
(\cS_0^{\upepsilon_1} \setminus {\cS}_0)\setminus (\cB_1^{\wt \vep_0} \cup \cB_2^{\wt \vep_0})  \subset (\cT^{ {\sf g}_0, (1)}_{\vep_0', \vep_0} \cup \cT^{- {\sf g}_0, (1)}_{\vep_0', \vep_0})\setminus (\cB_1^{\wt \vep_0} \cup \cB_2^{\wt \vep_0}) 
\end{equation*}
and  
\begin{equation*}\label{eq:S-0-in-Tt1}
({\cS}_0 \setminus \cS_0^{-\upepsilon_2}) \setminus ( \cB_1^{\wt \vep_0} \cup \cB_2^{\wt \vep_0}) \subset \cT^{0,(2)}_{\vep_0'',\vep_0} \setminus \cB_2^{\wt \vep_0}. 
\end{equation*}
Therefore, by \cite[Theorem 1.1]{BZ} and a union bound we now derive that 
\begin{multline}\label{eq:no-outlier}
 \limsup_{N \to \infty} \prob  \left( \exists z \in \cS_0^{ \upepsilon_1}\setminus (\cB_1^{\wt \vep_0} \cup \cB_2^{\wt \vep_0}): \det(P_z^\delta)=0 \right)  \\
 \le  \limsup_{N \to \infty} \prob  \left( \exists z \in (\cS_0^{\upepsilon_1}\setminus \cS_0^{-\upepsilon_2}) \setminus (\cB_1^{\wt \vep_0} \cup \cB_2^{\wt \vep_0}): \det(P_z^\delta)=0 \right) \\
 \le \limsup_{N \to \infty} \left[ \prob  \left( \exists z \in \wh \cT^{\pm {\sf g}_0,(1)}_{\gamma', \vep_0} \setminus \cB_2^{\wt \vep_0}: \det(P_z^\delta)=0 \right) + \prob  \left( \exists z \in  \cT^{0,(2)}_{\vep_0'', \vep_0} \setminus \cB_2^{\wt \vep_0}: \det(P_z^\delta)=0 \right)\right]. 
\end{multline}
It remains to show that each of the terms in the \abbr{RHS} of \eqref{eq:no-outlier} equals zero in the limit. For $d\in \{0,{\sf g}_0\}$, upon recalling \eqref{eq;spec-rad-new}, this follows from Theorem \ref{thm:prob-non-dom-small}, Lemma \ref{lem:det-0}(b) (see also Remark \ref{rem:det-0}), and the triangle inequality.  To see that the same holds for $d= - {\sf g}_0$, as $P_N^{\sf T}$  is a Toeplitz matrix with symbol $\wt p(\cdot) = p(1/\cdot)$ and hence ${\rm ind}_{p(S^1)}(\cdot) = - {\rm ind}_{\wt p(S^1)}(\cdot)$,  we apply Theorem \ref{thm:prob-non-dom-small} and Lemma \ref{lem:det-0}(c) for $(P_z^\delta)^{\sf T}$. 
This completes the proof of the theorem. 
\end{proof}

We end the section with the proof of Theorem \ref{thm:sep-spec-curve}.

\begin{proof}[Proof of Theorem \ref{thm:sep-spec-curve}]
The case $d=0$ is covered by Theorem \ref{thm:no-outlier}. Consider the case $d >0$. Fix $s \in \{1,2\}$. Since $0 < \gamma - \gamma' <1$ we observe that, for $\vep_0''=N^{-(\gamma - \gamma')/8}$,
\[
\wh \cT_{\gamma',\vep_0}^{d, (s)} \setminus \cB_2^{\wt \vep_0} \subset  \cT_{\vep_0'',\vep_0}^{d, (s)} \setminus \cB_2^{\wt \vep_0},
\] 
for all large $N$. Therefore, applying Theorems \ref{thm:prob-non-dom-small} and \ref{thm:dom-lbd} (with $\vep_0'$ replaced by $\vep_0''$), and upon using the triangle inequality it is immediate that 
\[
\prob\left(\exists z \in \wh \cT_{ \gamma',\vep_0}^{d, (s)} \setminus \cB_2^{\wt \vep_0}: \det(P_z^\delta) =0 \right) \le 2 N^{-\upeta(\gamma-\gamma')/32},
\]
for all large $N$, yielding the desired result for $d >0$. To prove the same for $d <0$, as $\det (P_z^\delta)=\det((P_z^\delta)^{\sf T})$ and $P_N^{\sf T}$ is a Toeplitz matrix with symbol $p(1/\cdot)$, we work with $(P_z^{\delta})^{\sf T}$ and proceed same as above. 
This finishes the proof. 
\end{proof}

\begin{rem}\label{rem:Cgamma}
Theorem \ref{thm:sep-spec-curve} and Lemma \ref{lem:tube-geo}(ii) imply, 
upon recalling the definitions of the tube $\wh \cT_{ \gamma',\vep_0}^{d}$ and ${\sf g}_0$,  that with probability approaching one, for any eigenvalue $z \notin \cB_1^{\wt \vep_0} \cup \cB_2^{\wt \vep_0}$ one has that 
\[
p^{-1}(z) \cap (S^1+D(0, C_\gamma N^{-1} \log N)) = \emptyset,
\]
for some appropriately chosen $C_\gamma$ such that $\lim_{\gamma \to 1} C_\gamma =0$ and $\lim_{\gamma \to \infty} C_\gamma=\infty$.  
\end{rem}

\section{Location of the bulk of the eigenvalues}\label{sec:bulk-eig}
In this section we prove that away from the bad sets,
the bulk of the spectrum of $P^Q_{N, \gamma}$ is contained in tubes of width $O(\log N/N)$ around $p(S^1)$,  with high probability, see Theorem \ref{thm-thintube}. We also prove 
in Theorem \ref{thm-thintube2}
an  upper bound on the number of eigenvalues of $P^Q_{N, \gamma}$ residing in the ball $D(x, \upa \log N/N)$, where the distance of $x$ from the spectral curve is of the order $\log N/N$ and $\upa< \infty $ is some large constant. The latter estimate will we used in the proof of Corollary \ref{cor:main}. 

Before stating the results, recall the bad sets of Definition \ref{def-badintro}, and define for $\upepsilon, \upalpha, \wt\vep_0>0$,
\begin{equation}
\label{eq-defsec7}
\Omega_\upepsilon:= \{z\in \C: {\rm dist}(z,p(S^1))>\upepsilon\}, \quad  {\mathcal N}_\upalpha:=\{ z: z\in \Omega_{\upalpha \log N/N} \setminus (\cB_1^{\wt \vep_0} \cup \cB_2^{\wt \vep_0}) \; \mbox{an eigenvalue of 
    $P^Q_{N, \gamma}$}\} .
\end{equation}
The first main result of this section is the following.
\begin{thm} 
\label{thm-thintube}
Fix $\gamma>1$ and $\wt \vep_0 >0$. Let Assumptions \ref{assump:mom} and \ref{assump:anticonc} hold. Then 
there exists a constant $C_{\ref{thm-thintube}}<\infty$, depending only on $p(\cdot)$, $\gamma$, and $\wt \vep_0$, so that
for any $\upalpha >0$,  we have
  \begin{equation}
    \label{eq-2}
 \lim_{N \to \infty}  \prob\left( |{\mathcal N}_\upalpha|\leq \frac{C_{\ref{thm-thintube}}N}{\upalpha}\right) =1.
  \end{equation}
\end{thm}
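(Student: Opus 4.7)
The proof applies Jensen's formula to the analytic characteristic polynomial $z\mapsto\det(P_z^\delta)$, whose zeros are exactly the eigenvalues $\{\lambda_i^N\}$, together with sharp two-sided control on the log-potential
\begin{equation*}
\phi_N(z):=\tfrac{1}{N}\log|\det(P_z^\delta)|=\int\log|z-w|\,dL_N(w).
\end{equation*}
The target is to show that $\phi_N$ is $L^1_{\mathrm{loc}}$-close to the limiting log-potential $\phi(z):=\int\log|z-w|\,d\mu(w)$ at rate $\log N/N$, where $\mu=p_\ast(\mathrm{unif}(S^1))$ is the weak-$\ast$ limit of $L_N$. A $\phi$-based Wasserstein estimate then upgrades to a first-moment bound on the distances $d_i:=\dist(\lambda_i^N,p(S^1))$, and Markov's inequality delivers Theorem \ref{thm-thintube}.

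The first ingredient is a uniform upper bound $\phi_N(z)\le\phi(z)+o(1)$ on any bounded subset of $\C$. This follows from Hadamard's inequality $|\det(P_z^\delta)|\le\prod_i s_i(P_z^\delta)$, the uniform estimate $\|P_z^\delta\|=O(1)$ (using $\gamma>1$ together with $\|Q\|=O_{\prob}(N^{1/2+o(1)})$ from Assumption \ref{assump:mom}), and the explicit form of $\det(P_N-zI)$ extracted from the factorization \eqref{eq:P_N-prod-0}. The second ingredient is a matching pointwise lower bound $\phi_N(z_0)\ge\phi(z_0)-o(1)$ with high probability, simultaneously over a sufficiently fine deterministic net of test points $\{z_0^{(k)}\}$ lying inside the forbidden tubes $\wh\cT^{d,(s)}_{\gamma',\vep_0}\setminus\cB_2^{\wt\vep_0}$ at distance $\asymp\log N/N$ from $p(S^1)$. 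For each such $z_0$, Theorem \ref{thm:dom-lbd} gives the lower bound on the dominant term $\widehat{\det}_{\wh m_2-N_+}(z_0)$, Theorem \ref{thm:prob-non-dom-small} controls the non-dominant terms in the expansion \eqref{eq;spec-rad-new}, and the identity $\log|\mathfrak{K}(z_0)|=N\phi(z_0)+o(N)$, immediate from \eqref{eq:fancyK-1} and the classical formula $\phi(z_0)=\log|a_{-N_-}|+\sum_{j:|\eta_j(z_0)|\ge 1}\log|\eta_j(z_0)|$ valid off $\cB_2$, transfers the two bounds into the potential-theoretic one. A union bound over the net, combined with the Lipschitz control on $\phi_N$ provided by Lemmas \ref{lem:der-bd} and \ref{lem:der-bd-dom}, makes the lower bound simultaneous over the net with probability $1-o(1)$.

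With these two-sided bounds, Jensen's formula at each test point $z_0^{(k)}$ in a finite cover of $p(S^1)\setminus(\cB_1^{\wt\vep_0}\cup\cB_2^{\wt\vep_0})$ with disc radius $R$ a fixed small constant yields
\begin{equation*}
\sum_{i:\,|\lambda_i^N-z_0^{(k)}|<R}\log\!\frac{R}{|\lambda_i^N-z_0^{(k)}|}=N\!\int_0^{2\pi}\!\phi_N(z_0^{(k)}+Re^{i\theta})\,\tfrac{d\theta}{2\pi}-N\phi_N(z_0^{(k)})\le C_1N
\end{equation*}
with high probability, where the last inequality uses the upper bound on the circle average and the lower bound at the center. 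Integrating this inequality against a test function $f$ supported in a bounded neighborhood of $p(S^1)$ and invoking the Laplacian representation $\int f\,d(L_N-\mu)=\tfrac{1}{2\pi}\int\Delta f\cdot(\phi_N-\phi)\,dA$ upgrades the pointwise bounds to a Wasserstein-type estimate $|\int f\,d(L_N-\mu)|\le C\,\mathrm{Lip}(f)\cdot\log N/N$. Applied to $f(z)=\dist(z,p(S^1))\wedge R$ restricted to the complement of $\cB_1^{\wt\vep_0}\cup\cB_2^{\wt\vep_0}$, this gives
\begin{equation*}
\sum_{i:\,\lambda_i^N\notin\cB_1^{\wt\vep_0}\cup\cB_2^{\wt\vep_0}}d_i\le C\log N
\end{equation*}
with high probability. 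Markov's inequality then finishes the proof:
\begin{equation*}
|\cN_\upalpha|\cdot\frac{\upalpha\log N}{N}\le\sum_{i\in\cN_\upalpha}d_i\le C\log N,
\end{equation*}
so $|\cN_\upalpha|\le C_{\ref{thm-thintube}}N/\upalpha$.

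The main obstacle is the simultaneous lower bound on $\phi_N$ over the net in the forbidden tubes: the anti-concentration input of Assumption \ref{assump:anticonc} is essential since the dominant term $\widehat{\det}_{\wh m_2-N_+}(z_0)$ is itself a polynomial of degree $\wh m_2-N_+$ in the entries of $Q$ whose modulus must be controlled below with high probability, simultaneously over an $N$-independent net; this is exactly the content of Theorem \ref{thm:dom-lbd} combined with the derivative bounds from Section \ref{sec:der-bd}.
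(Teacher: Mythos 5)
Your core mechanism — Jensen's formula applied to $z\mapsto\det(P_z^\delta)$ combined with a uniform upper bound and a pointwise lower bound on $\log|\det(P_z^\delta)|$ relative to $N\phi_\infty(z)$ — is exactly what the paper uses, and you have correctly identified the required inputs (the upper bound coming from moment estimates on $\wt\det_k$ and Cauchy's integral formula, the lower bound coming from the anti-concentration of the dominant term together with derivative control). However, the endgame you propose diverges from the paper's, and it has a genuine gap.

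The gap is in the step you call the ``$\phi$-based Wasserstein estimate.'' You assert that the Jensen inequality at a net of test points and the Laplacian representation $\int f\,d(L_N-\mu)=\tfrac{1}{2\pi}\int\Delta f\,(\phi_N-\phi)\,dA$ upgrade to $\bigl|\int f\,d(L_N-\mu)\bigr|\le C\,\mathrm{Lip}(f)\,\log N/N$, from which $\sum_i(d_i\wedge R)\le C\log N$ and Markov finish the proof. But establishing that $L^1$ estimate on $\phi_N-\phi$ is not a consequence of what you have: the uniform upper bound gives $\phi_N-\phi\le C\log N/N$, but $\phi_N$ has $-\infty$ logarithmic spikes at every eigenvalue, and the lower bound you have holds only at a finite net of test points sitting in the forbidden tubes, not on the set where $\Delta f$ lives (nor near the eigenvalues, which is precisely where the negative excursions of $\phi_N-\phi$ occur). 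Controlling $\int(\phi_N-\phi)_-\,dA$ at rate $\log N/N$ requires knowing the eigenvalue density at \emph{all} scales simultaneously — which is what you are trying to prove — so the argument as sketched is circular. Moreover your Jensen application with a fixed $N$-independent radius $R$ only yields a trivial $O(N)$ count per disc and does not localize the contributions the way the Markov step needs.

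The paper avoids this entirely by working at matched dyadic scales. It partitions $\C\setminus(\cB_1^{\wt\vep_0}\cup\cB_2^{\wt\vep_0})$ into dyadic annuli $\Omega_{\upalpha,i}$ at distance $\asymp 2^i\upalpha\log N/N$ from $p(S^1)$, covers each by $N_i=O(2^{-i}\upalpha^{-1}N/\log N)$ discs whose radius is chosen \emph{proportional to that scale}, and applies Jensen's formula in each disc with a slightly larger concentric disc. Because $\phi_\infty$ is harmonic off $p(S^1)$, the circle average of $N\phi_\infty$ equals its center value, and the two $O(\log N)$ error terms in the determinant bounds (Lemma \ref{lem-detUB} uniformly over a fattened set, Lemma \ref{lem-detLB} at the disc centers, with a union bound over all $O(N/\log N)$ centers) yield an $O(\log N)$ count per disc. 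Summing $N_i\cdot O(\log N)$ over $i$ gives $O(N/\upalpha)$. This is a \emph{counting} argument, not an integrated potential-theoretic one, and it never needs an $L^1$ bound on $\phi_N-\phi$. You should replace your Wasserstein step by this direct dyadic covering; all the determinant estimates you cite are then used exactly as you intended.
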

 To prove Theorem \ref{thm-thintube} we will use Jensen's formula. For an analytic function $f$, let 
  $n_f(x,r)$ denote the number of roots in a ball of radius $r$ around $x$.
  If $f(x) \ne 0$, then we have
  \begin{equation}
    \label{eq-jensen}
    \int_0^r \frac{n_f(x,s)}{s} ds=\frac1{2\pi}\int_0^{2\pi} \log|f(x+re^{i\theta})| d\theta-\log |f(x)|.
  \end{equation}
  In particular, we obtain from \eqref{eq-jensen} the bound
  \begin{equation}
    \label{eq-jensen1}
    n_f(x,ur)\leq 
    \frac{1}{2(1-u)\pi}
    \int_0^{2\pi} \log|f(x+re^{i\theta})| d\theta-\frac{1}{1-u}\log |f(x)|,
  \end{equation}
  valid for $u\in (0,1)$.
  We will apply \eqref{eq-jensen1} with $f(z)=\det(P^Q_{N, \gamma}-zI_N)$.
Introduce the function
  \begin{equation}
    \label{eq-phiinfty}
    \phi_\infty(z):= \frac1{2\pi} \int_0^{2\pi} \log |z-p(e^{i\theta})|d\theta.
  \end{equation}
  To prove Theorem \ref{thm-thintube} we will need the following two lemmas. Recall that  $P_z^\delta = P_{N, \gamma}^Q - zI_N$.
  \begin{lem}[Upper bound on determinant]
    \label{lem-detUB} Consider the setup as in Theorem \ref{thm-thintube}.
Then
    there exist constants $0 < c_{\ref{lem-detUB}}, C_{\ref{lem-detUB}}< \infty$, depending only on $\gamma, \wt \vep_0$ and  $p(\cdot)$, such that 
    \[
    \prob(\cA_{UB}^\complement) \le N^{-3},
    \]
    for all large $N$, where
    \[
    \cA_{UB} :=  \bigcap_{z \in \Omega_{1 /N} \cap (p(S^1))^{c_{\ref{lem-detUB}} \wt \vep_0} \setminus (\cB_1^{\wt \vep_0} \cup \cB_2^{\wt \vep_0})} \left\{ \log| \det(P_z^\delta)|\leq N\phi_\infty(z)+ C_{\ref{lem-detUB}} \log N \right\}.
    \]   
\end{lem}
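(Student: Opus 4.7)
The plan is to combine the decomposition $\det(P_z^\delta) = \sum_{k=0}^N \det_k(z)$ from Section~\ref{sec-separation} with pointwise moment estimates and a standard net-plus-derivative argument, following the template of Sections~\ref{sec:high-mom}--\ref{sec:der-bd}. The target region $\Omega_{1/N} \cap (p(S^1))^{c_{\ref{lem-detUB}} \wt \vep_0} \setminus (\cB_1^{\wt \vep_0} \cup \cB_2^{\wt \vep_0})$ splits naturally into a ``bulk'' part where $\dist(z, p(S^1))$ is bounded below by a positive constant (handled via Widom's formula and simple operator-norm estimates on $Q$), and a ``near-curve'' part that, by Lemma~\ref{lem:tube-geo}(iii) with $c_{\ref{lem-detUB}}$ chosen small enough, is contained in a finite union $\bigcup_d \cT^d_{\vep_0', \vep_0}$ of tubes (further refined to the narrow $\wh \cT^d_{\gamma', \vep_0}$ when needed, for some $1 < \gamma' < \gamma$).

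On each (narrow) tube, the key deterministic comparison is between $|\mathfrak{K}(z)|$ (defined in~\eqref{eq:fancyK-1}) and $e^{N\phi_\infty(z)}$. The Poisson--Jensen formula applied to the polynomial $\zeta \mapsto \zeta^{N_+}(p(\zeta)-z)$ gives
\[
\phi_\infty(z) = \log|a_{-N_-}| + \sum_{j=1}^{m_-(z)}\log|\eta_j(z)|,
\]
and comparing with the definition of $\mathfrak{K}(z)$, the ${\sf g}_0$ extra roots appearing in its product lie within distance $(\gamma'-1)\log N/N$ of $S^1$, so their contribution to $\log|\mathfrak{K}(z)|$ is at most $O(\log N)$; combined with the $N^{-\gamma\wh d}$ prefactor this yields $\log|\mathfrak{K}(z)| \le N\phi_\infty(z) + C\log N$ uniformly on the tubes. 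The pointwise bounds of Lemmas~\ref{lem:sec-mom-small-k}, \ref{lem:sec-mom-large-k}, and~\ref{lem:high-mom-small-k} (applied to $\wh\det_k(z) = \det_k(z)/\mathfrak{K}(z)$), combined via Cauchy--Schwarz $|\det(P_z^\delta)|^{2h} \le (N+1)^{2h-1}\sum_{k=0}^N |\det_k(z)|^{2h}$, then give $\E[|\det(P_z^\delta)|^{2h}] \le N^{O(h)} |\mathfrak{K}(z)|^{2h} \le N^{O(h)} e^{2hN\phi_\infty(z)}$ for any sufficiently large $h$.

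Markov's inequality then yields a pointwise failure probability at most $N^{-K_\star}$ for any prescribed $K_\star$, upon choosing $C_{\ref{lem-detUB}}$ and $h$ large enough. I would upgrade this to a uniform bound by taking a net of mesh $N^{-K_1}$ in the target region (of cardinality $O(N^{2K_1})$) and using that $z \mapsto \det(P_z^\delta)$ is a polynomial of degree at most $N$: a Cauchy integral estimate for its derivative on discs of radius $\asymp 1/N$, together with a bootstrap on $\sup_z|\det(P_z^\delta)|$, absorbs the $N^{-K_1}$ gap between net points into the $C_{\ref{lem-detUB}}\log N$ slack, once one also uses that $\phi_\infty$ is $O(1)$-Lipschitz on the target annulus. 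The chief obstacle is handling the transition between the narrow tubes (where $\dist(z, p(S^1)) = O(\log N/N)$) and the bulk (where it is $\Omega(1)$): Lemmas~\ref{lem:sec-mom-large-k}(i) and~\ref{lem:high-mom-small-k} are stated for $\wh\cT^d$ only, so for $z$ in the intermediate regime one must either re-run those arguments with parameters adapted to $\dist(z, p(S^1))$ or subdivide further, keeping the modified normalization bounded by $e^{N\phi_\infty(z) + O(\log N)}$ throughout.
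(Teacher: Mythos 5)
Your overall plan—cover the target annulus by tubes via Lemma~\ref{lem:tube-geo}(iii), use moment bounds per fixed $z$ from Section~\ref{sec:high-mom}, and upgrade to a uniform bound via a holomorphy-plus-covering argument—matches the paper's template. But there is a genuine gap precisely at the point you flag, and the fix requires a new estimate rather than a re-run of the existing ones.

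The issue is that you work with $\wh\det_k(z)=\det_k(z)/\mathfrak{K}(z)$ and then compare $\mathfrak{K}(z)$ with $\exp(N\phi_\infty(z))$. On the fat tubes $\cT^{d,(1)}_{\vep_0',\vep_0}$ (with $\vep_0'$ of constant size, not $O(\log N/N)$), the bound from Lemma~\ref{lem:comb-bound-tube-1}(i) contains the factor $|\eta_{\wh m_1+1}(z)|^{-N G(k,z)}$, which for $z$ at constant distance from $p(S^1)$ inside the loop is of order $e^{cN}$; this is why Lemmas~\ref{lem:sec-mom-large-k}(i) and~\ref{lem:high-mom-small-k} are stated only for the narrow tubes $\wh\cT^{d,(1)}_{\gamma',\vep_0}$. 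Correspondingly, for such $z$ the comparison you state, $\log|\mathfrak{K}(z)|\le N\phi_\infty(z)+C\log N$, is true with a lot of slack (in fact $\log|\mathfrak{K}(z)|\le N\phi_\infty(z)-cN$ there), but this slack does not cancel the exponential blow-up of $\wh\det_k$ unless you combine the two from the start. The paper does exactly this by renormalizing with $\exp(N\phi_\infty(z))$ instead of $\mathfrak{K}(z)$, i.e.~working with $\wt\det_k(z)=\det_k(z)/\exp(N\phi_\infty(z))$, and proving Lemma~\ref{lem:sec-mom-wt-det}: $\sup_{z\in\cT^{d,(s)}_{\vep_0',\vep_0}}\E[|\wt\det_k(z)|^2]=O(1)$, valid on the \emph{full} fat tubes. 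The key cancellation is displayed in~\eqref{eq:sec-mom-wt-det1}, where the extra factor $|\eta_{\wh m_1+1}(z)|^{2{\sf g}_0 N}$ coming from $\exp(2N\phi_\infty)$ exactly absorbs the $|\eta_{\wh m_1+1}(z)|^{-2NG(k,z)}$ blow-up in $\wh\gD$ (using $G(k,z)\le{\sf g}_0$ and $|\eta_{\wh m_1+1}|\le 1$ on $\cT^{d,(1)}$). This is not a parameter adjustment of the narrow-tube lemmas; it is a different estimate. Your proposal identifies the right symptom but leaves this main step unexecuted.

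A secondary comment on the uniformization. You apply Cauchy directly to $\det(P_z^\delta)$ and propose a ``bootstrap on $\sup_z|\det(P_z^\delta)|$''; as written this is circular (the derivative bound at a net point needs $\sup$ over a neighboring circle, which is what you are bounding). The paper avoids this by applying the smooth Cauchy integral formula (as in~\eqref{eq:cauchy-der-l2}, adapted to~\eqref{eq:cauchy-l2}) to the \emph{normalized} function $\wt\det_k$, which is holomorphic on the blown-up disks $\D_i^{3\beta}$ (the normalization $a_{-N_-}^N\prod_{i\le m_-}\eta_i(z)^N$ is locally holomorphic since there are no roots on $S^1$ in the blown-up tube, see~\eqref{eq:analytic}) and whose second moment is $O(1)$ pointwise by Lemma~\ref{lem:sec-mom-wt-det}. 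This converts a pointwise $L^2$ bound into an $L^2$ bound on the supremum over $\D_i^\beta$ directly, bypassing the bootstrap; Markov then finishes. If you want to keep your approach, you would at minimum need to renormalize $\det(P_z^\delta)$ by a locally holomorphic quantity comparable to $e^{N\phi_\infty(z)}$ before applying any Cauchy estimate, at which point you are essentially reconstructing the paper's $\wt\det_k$.
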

We also need the following complementary lower bounds.
\begin{lem}[Lower bound on determinant]
  \label{lem-detLB}
 Consider the  setup of  Theorem \ref{thm-thintube}. Then,
  there exists a  constant $C_{\ref{lem-detLB}} < \infty$ depending only on $\gamma, \wt \vep_0$, and  $p(\cdot)$, such that 
  \begin{equation}\label{eq:detLB}
  \max_{z \in (p(S^1))^{c_{\ref{lem-detUB}} \wt \vep_0} \setminus (\cB_1^{\wt \vep_0} \cup \cB_2^{\wt \vep_0})} \prob(\cA_{z, LB}^\complement) \le 1/N^3,
  \end{equation}
  where
  \begin{equation*}
      \cA_{z, LB}:= \left\{ \log| \det(P_z^\delta)|\geq N\phi_\infty(z)-C_{\ref{lem-detLB}}\log N\right\}.
  \end{equation*}
\end{lem}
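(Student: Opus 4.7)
The plan, paralleling that of Lemma \ref{lem-detUB}, is to write $\det(P_z^\delta) = \mathfrak{K}(z)\wh{\det}(z)$ with $\wh{\det}(z) := \sum_{k=0}^N \wh{\det}_k(z)$ from \eqref{eq;spec-rad-new}--\eqref{eq:hatP-k}, and prove (i) the deterministic comparison $\log|\mathfrak{K}(z)| \ge N\phi_\infty(z) - C\log N$, and (ii) the probabilistic lower bound $|\wh{\det}(z)| \ge N^{-C'}$ with probability at least $1 - N^{-3}$, for suitable $C,C'$ depending on $\gamma, p, \upeta$ and $\wt \vep_0$.

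For (i), I would factor $\zeta^{N_+}(p(\zeta)-z) = a_{-N_-}\prod_{j=1}^{\wt m}(\zeta + \eta_j(z))$ and apply Jensen's formula $\tfrac{1}{2\pi}\int_0^{2\pi}\log|e^{i\theta}+w|\,d\theta = \log^+|w|$ to obtain $\phi_\infty(z) = \log|a_{-N_-}| + \sum_{j:|\eta_j(z)|>1}\log|\eta_j(z)|$. Comparing with the explicit form \eqref{eq:fancyK-1} of $\mathfrak{K}(z)$, estimate (i) reduces to showing that $-\gamma\wh d \log N + N\sum_{j=\wh m_1+1}^{\wh m_1+{\sf g}_0}\log|\eta_j(z)|$ is $O(\log N)$. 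The first term is $O(\log N)$ because $|\wh d|\le\wt m$ is uniformly bounded; for $z$ within $O(\log N/N)$ of $p(S^1)$ (the hard case, since $z$ at fixed distance from $p(S^1)$ is treated by direct Szegő--Widom asymptotics), Lemma \ref{lem:tube-geo}(iii) places $z$ in a tube with $\vep_0'=O(\log N/N)$, whence each of the $\le{\sf g}_0$ boundary roots satisfies $|\log|\eta_j(z)|| = O(\log N/N)$, contributing at most another $O(\log N)$.

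For (ii), I would apply Lemma \ref{lem:anti-conc} to $\wh{\det}(z)$ viewed as a polynomial in the independent variables $U_i := Q_{x_{*,i},y_{*,i}}$ for $i \in [k_0]$, where $(X_*,Y_*)$ is as in Lemma \ref{lem:det-0}(b). A Leibniz-expansion computation identifies the coefficient of $\prod_i U_i$ as $Z_{[k_0]} = (\pm) \det(P_z^\delta[X_*^c;Y_*^c])/(N^{\gamma k_0}\mathfrak{K}(z))$, which decomposes as $(\pm)\wh\gD(X_*,Y_*,z) + R(z)$, with $R(z)$ a polynomial in the entries of $Q[X_*^c;Y_*^c]$ only and therefore independent of $\{U_i\}$. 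Lemma \ref{lem:det-0}(b) provides $|\wh\gD(X_*,Y_*,z)| \ge c_0 > 0$, so it suffices to show $|R(z)| \le c_0/2$ with probability at least $1 - N^{-3}/2$. This is handled by observing that the submatrix $P_z[X_*^c;Y_*^c]$ is itself (up to a row/column shift) a Toeplitz matrix of size $N-k_0$ with symbol $\zeta^{-k_0}(p(\zeta)-z)$, so it admits its own expansion of the form \eqref{eq;spec-rad-new} in which the leading term is the deterministic $\gD(X_*,Y_*,z) = \det(P_z[X_*^c;Y_*^c])$ of the correct order $N^{\gamma k_0}|\mathfrak{K}(z)|$; the subleading random terms obey second-moment bounds paralleling Lemma \ref{lem:sec-mom-large-k}(iii), which combined with Markov's inequality and a union bound over the $O(N)$ indices produce the desired control on $|R(z)|$. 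Applying Lemma \ref{lem:anti-conc} with $c_\star = c_0/2$ and $\vep = N^{-L}$ for any fixed $L > 3/(1+\upeta)$ then yields
\[
\prob(|\wh{\det}(z)|\le N^{-L}) \le \bar C_{\ref{lem:anti-conc}}(\log N)^{k_0-1} N^{-L(1+\upeta)} + N^{-3}/2 \le N^{-3}
\]
for $N$ large, and combining with (i) completes the argument.

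The main obstacle will be the control of $R(z)$ without reopening the full anti-concentration analysis on the submatrix. The clean way is to exploit the fact that the shift $\zeta^{-k_0}$ absorbs part of the winding index of $p-z$, so that Widom-type asymptotics apply to $\det(P_z[X_*^c;Y_*^c])$ in a regime where the deterministic leading term already carries the correct order of magnitude and the perturbation $N^{-\gamma}Q[X_*^c;Y_*^c]$ contributes only subleading random terms controllable by standard second-moment estimates. A secondary reason to apply Lemma \ref{lem:anti-conc} to the full $\wh{\det}(z)$, rather than isolating the dominant $\wh{\det}_{k_0}(z)$ via Corollary \ref{cor:lbd-dom-term}, is that the direct route via Corollary \ref{cor:lbd-dom-term} combined with moment bounds on $\sum_{k\neq k_0}\wh{\det}_k(z)$ would require the extraneous condition $(\gamma-1)(1+\upeta)>3$ to beat the $N^{-3}$ probability threshold, which we do not wish to impose.
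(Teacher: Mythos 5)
Your overall strategy matches the paper's proof: normalize $\det(P_z^\delta)$, single out a $k$-subset of off-diagonal entries $\{U_i\}$ of $Q$ (forming a monomial with a deterministic coefficient that Lemma~\ref{lem:det-0}(b) bounds below), and invoke the anti-concentration Lemma~\ref{lem:anti-conc} on the full normalized determinant rather than on the dominant term alone. Your final remark, that the ``direct'' route through Corollary~\ref{cor:lbd-dom-term} plus moment bounds on the tail would need the extraneous condition $(\gamma-1)(1+\upeta)\gtrsim 1$, is precisely the reason the paper proceeds as it does, so you have identified the correct mechanism. The paper's choice of anti-concentration variables differs slightly: it takes $\wt X_\star=[N]\setminus[N-d]$, $\wt Y_\star=[d]$ with $|\wt X_\star|=|\wt Y_\star|=d$, whereas you take $X_\star,Y_\star$ of size $k_0=\wh d$ as in Lemma~\ref{lem:det-0}(b). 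For $z\in\cT^{d,(1)}$ one has $\wh d = d-{\sf g}_0 < d$, and in particular $\wh d$ can vanish while $d={\sf g}_0>0$; in that case your $X_\star=Y_\star=\emptyset$ and Lemma~\ref{lem:anti-conc} has nothing to act on, so you would need to treat that case by a separate argument (deterministic lower bound on $\wh\det_0(z)$ via Remark~\ref{rem:det-0} plus tail control), whereas the paper's choice of $d$ variables handles all $d\ge1$ uniformly.

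The genuine gap is in how you control $R(z)$. You propose only ``second-moment bounds paralleling Lemma~\ref{lem:sec-mom-large-k}(iii), combined with Markov's inequality and a union bound over the $O(N)$ indices.'' That is not enough. For the terms with $k$ close to $k_0$ (say $k=k_0+1$), the analogue of Lemma~\ref{lem:sec-mom-large-k} gives at best $\E\bigl[\,|N^{\gamma k_0}\wt\det_k^{X_\star,Y_\star}(z)|^2\bigr] = O\bigl(N^{-2(\gamma-1)(k-k_0)}\bigr)$; Chebyshev then yields a failure probability of order $N^{-2(\gamma-1)}$, which for $\gamma$ close to $1$ is much larger than the required $N^{-3}$. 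The paper gets around this exactly as you might expect: it introduces Lemma~\ref{lem:lbd-det}, whose part (i) is the second-moment bound you describe (useful once $k-k_0\ge K_0$ for a large constant $K_0$), and whose part (ii) gives high-moment bounds $\E\bigl[\,|\wt\det_k^{X_0,Y_0}(z)|^{2h}\bigr]$ for the finitely many small values $k_0<k<K_0$, taking $h\asymp 1/(\gamma-1)$ so that Markov beats $N^{-4}$. Without that high-moment step, your argument does not close for general $\gamma>1$ (though it would for $\gamma$ large). Everything else — including the identification of $Z_{[k_0]}$ with the minor $\det(P_z^\delta[X_\star^c;Y_\star^c])$ via Leibniz expansion, the observation that $p_{N-k_0}=0$ so the diagonal of the corner block is pure noise, and the comparison $\log|\mathfrak{K}(z)|=N\phi_\infty(z)+O(\log N)$ on the tubes — is correct and essentially what the paper does.
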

 Using these two lemmas, whose proofs are postponed,
 we now prove Theorem \ref{thm-thintube}. 

\begin{proof}[Proof of Theorem \ref{thm-thintube} (assuming Lemmas \ref{lem-detUB} and \ref{lem-detLB})]
 Let $R:=2\max_{z\in S^1} |p(z)|$. For $i\in \N$ such that 
$i \le i_\star :=$ 
$ \lceil \log (c_{\ref{lem-detUB}} \wt \vep_0 N/ (4 \upa \log N)) /\log 2 \rceil$, set
  \begin{multline*}
  \Omega_{\upalpha,i} := \bigg\{z\in  D(0,R)\setminus (\cB_1^{\wt \vep_0} \cup \cB_2^{\wt \vep_0}): \\
  \left(\frac{3\upalpha}{4} +2^{i-3} \upalpha\right) \cdot \frac{\log N}{N} \leq \dist(z, p(S^1))\leq \left(\frac{3\upalpha}{4}   +2^{i-2}\upalpha\right) \cdot \frac{\log N}{N}\bigg\},
  \end{multline*}
  and define $\mathcal N_{\upalpha,i}:= \mathcal N_\upalpha \cap \Omega_{\alpha,i}$. From our choice of $i_\star$ it is clear that 
  \begin{equation}\label{eq:omega-away}
  \left(D(0,R) \setminus (\cB_1^{\wt \vep_0} \cup \cB_2^{\wt \vep_0})\right) \setminus \left(\cup_{i=1}^{i_\star} \Omega_{\upalpha, i}\right) \subset \left(D(0,R) \setminus (\cB_1^{\wt \vep_0} \cup \cB_2^{\wt \vep_0})\right) \cap \Omega_{c_{\ref{lem-detUB}}\wt \vep_0/32}. 
  \end{equation}
  By \cite[Theorem 1.1]{BZ} we have that
  \[
  \lim_{N \to \infty} \prob\left(|\cN_\upalpha \cap D(0,R)^\complement| >0\right) \le \lim_{N \to \infty} \prob\left(\exists z \notin D(0,R): z \mbox{ is an eigenvalue of } P^Q_{N, \gamma}\right) =0.
  \]
On the other hand, we have from
 \cite[Corollary 2.2]{SjVo19a} or \cite[Theorem 1.2]{BPZ1} that
    \begin{align*}
 &  \lim_{N \to \infty} \prob\left(|\cN_\upalpha \cap \Omega_{c_{\ref{lem-detUB}}\wt \vep_0/32}| \ge \frac{C_0' N}{2 \upa}\right) \\
 \le &  \lim_{N \to \infty} \prob\left(\left| \left\{z \in \Omega_{c_{\ref{lem-detUB}}\wt \vep_0/32}: z \mbox{ is an eigenvalue of } P^Q_{N, \gamma}\right\}\right| \ge  \frac{C_0' N}{2 \upa}\right)=0,
  \end{align*}
  for any $\upalpha >0$. 
Hence, in light of \eqref{eq:omega-away} it suffices to show that 
\begin{equation}\label{eq:cN-upa-i}
\liminf_{N \to \infty} \prob\left( |\cN_{\upalpha, i} |= O(2^{-i} \upalpha^{-1} N) \mbox{ for all } i \in [i_\star]\right) =1. 
\end{equation}
Turning to prove \eqref{eq:cN-upa-i}, we cover $\Omega_{\upalpha, i}$ by a collection of balls 
  $D_{i,j}:= D(z_{i,j},r_i)$, $j\in N_i$,
  with $z_{i,j} \in \Omega_{\upa, i}$ and $r_i= 2^{i-4}\upalpha \cdot (\log N/N)$. As $z_{i,j} \in \Omega_{\upa, i}$ we have that $\dist(z_{i,j}, \cB_1 \cup \cB_2) \ge \wt \vep_0$. For $i \le i_\star$ we also have that $r_i \le r_{i_\star} \le \wt c_{\ref{lem-detUB}}\vep_0/32$. 
  Therefore, there exists an absolute constant $u \in (0,1)$, such that for any $i \le i_\star$ and $j \in [N_i]$,
  \[
  \dist(D(z_{i,j}, u^{-1} r_i), \cB_1 \cup \cB_2) \ge \wt 3\vep_0/4.
  \]
  On the other hand, we note that 
  \[
  (3\upa /4) \cdot \log N/N \le \dist(D(z_{i,j}, u^{-1} r_i), p(S^1)) \le c_{\ref{lem-detUB}} \wt \vep_0/4.  
  \] 
  The last two observations together imply that 
    \begin{equation}\label{eq:choose-u}
  \bigcup_{i=1}^{i_\star} \bigcup_{j=1}^{N_i} D(z_{i,j}, u^{-1} r_i) \subset \Omega_{1/{(2N)}}\cap (p(S^1))^{c_{\ref{lem-detUB}} \wt \vep_0/4} \setminus (\cB_1^{\wt \vep_0/2} \cup \cB_2^{\wt \vep_0/2}). 
  \end{equation}
  This allows us to apply Lemmas \ref{lem-detUB} and \ref{lem-detLB}. Now, using
  \eqref{eq-jensen1}, we have with 
  \[n_{i,j}:= |\{z\in D_{i,j}:
  \mbox{\rm $z$ is an eigenvalue of $P^Q_{N, \gamma}$}\}|\]
  that  
  \[ n_{i,j}\leq \frac{1}{2(1-u)\pi}\int_0^{2\pi}
  \log |\det(P^\delta-(z_{i,j}+u^{-1}r_{i,j} e^{i\theta})I_N| d\theta -
  \frac{1}{1-u} \log |\det(P^Q_{N, \gamma} -z_{i,j}I_N)|,\]
  where $u \in (0,1)$ is as above. Let $\cA_{UB}$ be as in Lemma \ref{lem-detUB} with $\wt \vep_0$ replaced by $\wt \vep_0/2$. Then, on the event $\cA_{UB} \cap \cA_{z_{i,j}, LB}$, as $\phi_\infty(z)$ is harmonic off $p(S^1)$, by \eqref{eq:choose-u} we have that 
  \[ n_{i,j}\leq (1-u)^{-1} (C_{\ref{lem-detUB}}+C_{\ref{lem-detLB}}) \log N.
  \]

   By Lemma \ref{lem:net-bd} it follows that $N_{i}$, the number of balls of radius $r_i$ needed to cover 
  $\Omega_{\upalpha,i}$ satisfies the bound
  \begin{equation}
    \label{eq-NiUB}
    N_{i} \leq C 2^{-i} \cdot\upalpha^{-1} \cdot ({N}/{\log N}),
  \end{equation}
  for some universal constant $C < \infty$. Thus, on $\cA_\star := \cA_{UB} \cap \cap_{i,j} \mathcal A_{z_{i,j}, LB}$, 
   \[ 
   |\mathcal N_{\upalpha,i}| \leq \sum_{j \in N_i} n_{i,j} \leq (1-u)^{-1} (C_{\ref{lem-detUB}}+C_{\ref{lem-detLB}}) C N \cdot {2^{-i} \upalpha^{-1}}. 
   \]
Therefore \eqref{eq:cN-upa-i} and hence the theorem follows if
we can prove $\prob(\cA_\star)=1-o(1)$. Note however that
\begin{align*}
  \prob(\cA_\star) &
\geq 1- \prob(\mathcal A_{UB}^\complement)-\sum_{i=1}^{i_\star} \sum_{j=1}^{N_i} \prob(\mathcal A_{z_{i,j},LB}^\complement) \geq 1- N^{-3}- N^{-3} \left(\sum_{i \le i_\star} N_i\right)=1-o(1),
\end{align*}
where we used \eqref{eq-NiUB} and Lemmas \ref{lem-detUB} and \ref{lem-detLB} with $\wt \vep_0$ replaced by $\wt \vep_0/2$.
\end{proof}

Using the same ideas, we next provide a local upper bound on the number of eigenvalues.

\begin{thm}\label{thm-thintube2}
Consider the setup as in Theorem \ref{thm-thintube}. Fix $ 0<\ul{\upa} < \ol{\upa} <\infty$. 
 Then, there exists a constant $C_{\ref{thm-thintube2}}< \infty$ depending only on $\gamma, \wt \vep_0$, and  $p(\cdot)$,
 such that for any $z \in (p(S^1))^{\ol{\upa} \log N/N}$,
\[
\prob\left( \left|\cN_{\ul{\upa}, \ol{\upa}, z}\right| \ge {C_{\ref{thm-thintube2}} \ul{\upa}^{-2} \ol{\upa}^2 \cdot \log N}\right) \le N^{-2}, 
\]
for all large $N$, with 
\[
\cN_{\ul{\upa},\ol{\upa}, z} := \left\{ z' \in D(z, \ol{\upa} \log N/N) \cap \Omega_{\ul{\upa} \log N/N} \setminus (\cB_1^{\wt \vep_0} \cup \cB_2^{\wt \vep_0}): z' \mbox{ is an eigenvalue of } P^Q_{N, \gamma}\right\}.
\] 

\end{thm}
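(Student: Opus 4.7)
The plan is to adapt the covering-plus-Jensen argument behind Theorem \ref{thm-thintube} to the local disc $D(z, \ol{\upa}\log N/N)$, working at a single scale $r := \ul{\upa}\log N/(8N)$ chosen to match the lower bound on the distance of $\cN_{\ul{\upa},\ol{\upa},z}$ from $p(S^1)$. First I would cover the target set by a family of $N_\star$ discs $D(z_j, r)$ with centers $z_j$ in $D(z, \ol{\upa}\log N/N) \cap \Omega_{\ul{\upa}\log N/N} \setminus (\cB_1^{\wt \vep_0} \cup \cB_2^{\wt \vep_0})$; a volumetric estimate analogous to Lemma \ref{lem:net-bd} gives $N_\star \leq C (\ol{\upa}/\ul{\upa})^2$ for an absolute $C$. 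Crucially, $N_\star$ is bounded uniformly in $N$, so the probabilistic cost of intersecting $O(N_\star)$ good events will be negligible.

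For each $j$, I would apply Jensen's inequality \eqref{eq-jensen1} to $f(w) = \det(P_w^\delta)$ with outer radius $2r$ and $u = 1/2$, which controls the number of eigenvalues inside $D(z_j, r)$. The geometric claim to verify (routinely, for $N$ large) is that the enlarged disc $D(z_j, 2r)$ lies in $\Omega_{1/N} \cap (p(S^1))^{c_{\ref{lem-detUB}} \wt \vep_0/2} \setminus (\cB_1^{\wt \vep_0/2} \cup \cB_2^{\wt \vep_0/2})$; this is immediate from $z_j \in \Omega_{\ul{\upa}\log N/N}$ (so $\dist(z_j,p(S^1)) > 4r$), from $z_j \in (p(S^1))^{2\ol{\upa}\log N/N}$ since $z \in (p(S^1))^{\ol{\upa}\log N/N}$, and from $\dist(z_j, \cB_1 \cup \cB_2) > \wt\vep_0$. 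Hence on the event $\cA := \cA_{UB} \cap \bigcap_{j=1}^{N_\star} \cA_{z_j, LB}$, Lemmas \ref{lem-detUB} (applied with $\wt \vep_0/2$ in place of $\wt \vep_0$) and \ref{lem-detLB} give, writing $n_j$ for the number of eigenvalues of $P^Q_{N,\gamma}$ in $D(z_j, r)$,
\[
n_j \leq 2N \left[\frac{1}{2\pi}\int_0^{2\pi} \phi_\infty(z_j + 2re^{i\theta})\,d\theta - \phi_\infty(z_j)\right] + 2(C_{\ref{lem-detUB}} + C_{\ref{lem-detLB}})\log N.
\]
The key cancellation is supplied by the mean-value property: $\phi_\infty$ is harmonic on $D(z_j, 2r)$ since that disc is disjoint from $p(S^1)$, so the bracketed term vanishes, leaving $n_j \leq 2(C_{\ref{lem-detUB}} + C_{\ref{lem-detLB}})\log N$.

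Summing over $j \in [N_\star]$ on $\cA$ yields $|\cN_{\ul{\upa},\ol{\upa},z}| \leq C_{\ref{thm-thintube2}} \ul{\upa}^{-2}\ol{\upa}^2 \log N$ with $C_{\ref{thm-thintube2}}$ depending only on $p, \gamma, \wt \vep_0$. A union bound together with Lemmas \ref{lem-detUB} and \ref{lem-detLB} gives $\prob(\cA^\complement) \leq (1 + N_\star)N^{-3} \leq N^{-2}$ for all $N$ large enough, since $N_\star$ is $O(1)$ uniformly in $N$. The only real technical work is the geometric bookkeeping verifying the inclusions needed to invoke the two lemmas and to ensure harmonicity of $\phi_\infty$ on the enlarged discs; once this is arranged, the rest is a direct specialization of the global argument from the proof of Theorem \ref{thm-thintube}.
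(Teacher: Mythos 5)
Your proof is correct, and it proceeds by the same underlying mechanism as the paper's: cover the target set by small discs, apply the one-sided Jensen inequality \eqref{eq-jensen1} on each disc, exploit the harmonicity of $\phi_\infty$ off $p(S^1)$ to kill the leading $O(N)$ term, and invoke the uniform determinant bounds of Lemmas \ref{lem-detUB} and \ref{lem-detLB}. The one genuine structural difference is the covering: the paper reuses the dyadic annular decomposition from the proof of Theorem \ref{thm-thintube}, grouping points by their distance to $p(S^1)$ into $\ol{i}_\star \asymp \log(\ol{\upa}/\ul{\upa})$ scales and covering scale $i$ with discs of radius $\asymp 2^i\ul{\upa}\log N/N$, whereas you cover everything at the single fixed radius $r = \ul{\upa}\log N/(8N)$. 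Both give $O\bigl((\ol{\upa}/\ul{\upa})^2\bigr)$ discs in total (the paper's $\sum_i \ol N_i$ telescopes to the same order), so the final bound is identical. The dyadic structure is genuinely necessary in Theorem \ref{thm-thintube}, where the region being covered has one-dimensional extent of order $1$ and grows wider with distance from the curve, so covering at the smallest scale would cost $N/\log N$ rather than $O(1)$ discs; but in the present local statement the domain has diameter comparable to $\ul{\upa}\log N/N$ up to the fixed ratio $\ol{\upa}/\ul{\upa}$, that constraint disappears, and your single-scale covering is a mild simplification. One small point worth making explicit: the covering centers $z_j$ should be chosen in the target set $T := D(z,\ol{\upa}\log N/N)\cap\Omega_{\ul{\upa}\log N/N}\setminus(\cB_1^{\wt\vep_0}\cup\cB_2^{\wt\vep_0})$ itself (which you do), so that the inclusion checks you list follow from the hypotheses on $z_j$; over-counting of eigenvalues lying in $D(z_j,r)\setminus T$ is harmless for the upper bound.
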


\begin{proof}
We follow the same strategy as in the proof of Theorem \ref{thm-thintube}. Fix $z \in (p(S^1))^{\ol{\upa} \log N/N}$. For $i \in [\ol{i}_\star]$, where $\ol{i}_\star := \lceil \log (8 \ol{\upa}/\ul{\upa}) /\log 2 \rceil$, we set
  \begin{multline*}
\ol{\Omega}_{\ul{\upalpha},\ol{\upa}, i} := \bigg\{z'\in  D(z, \ol{\upa} \log N/N) \cap \Omega_{\ul{\upa} \log N/N} \setminus (\cB_1^{\wt \vep_0} \cup \cB_2^{\wt \vep_0}): \\
  \left(\frac{3\ul{\upalpha}}{4} +2^{i-3} \ul{\upalpha}\right) \cdot \frac{\log N}{N} \leq \dist(z', p(S^1))\leq \left(\frac{3\ul{\upalpha}}{4}   +2^{i-2}\ul{\upalpha}\right) \cdot \frac{\log N}{N}\bigg\}. 
  \end{multline*}
Since $\dist(z, p(S^1)) \le \ol{\upa} \log N/N$, we notice that
\[
  \bigcup_{i=1}^{\ol{i}_\star} \ol{\Omega}_{\ul{\upa}, \ol{\upa}, i} \supset D(z, \ol{\upa} \log N/N) \cap \Omega_{\ul{\upa} \log N/N} \setminus (\cB_1^{\wt \vep_0} \cup \cB_2^{\wt \vep_0}). 
\]  
Thus, it suffices to bound the number of eigenvalues in $\cup_{i=1}^{\ol{i}_\star} \ol{\Omega}_{\ul{\upa}, \ol{\upa}, i}$.

As in the proof of Theorem \ref{thm-thintube} we cover $\ol{\Omega}_{\ul{\upa}, \ol{\upa}, i}$ by a collection of balls $D_{i,j} = D(z_{i,j}, \ol{r}_{i})$, $j \in \ol{N}_i$, $z_{i,j} \in \ol{\Omega}_{\ul{\upa}, \ol{\upa}, i}$ and $\ol{r}_i= 2^{i-4} \ul{\upa} \cdot (\log N/N)$. By our choice of $\ol{i}_\star$ we procure a $u \in (0,1)$ such that
\[
 D(z_{i,j}, u^{-1} \ol{r}_i) \subset (p(S^1))^{3 \ol{\upa} \log N/N} \cap \Omega_{1/(2N)} \setminus (\cB_1^{\wt \vep_0/2} \cup \cB_2^{\wt \vep_0}), \quad \mbox{ for all } j \in \ol{N}_i \mbox{ and } i \in [\ol{i}_\star].
\]
Hence, arguing similarly as in the proof of Theorem \ref{thm-thintube} we deduce that the number of eigenvalues of $P^Q_{N, \gamma}$ in $D_{i,j}$ is $O(\log N)$ for all $j \in \ol{N}_i$ and $i \in [\ol{i}_\star]$, on a set with probability at least $1 - 1/N^2$. 

To complete the proof we use a volumetric argument, yet again, to find that
\[
\ol{N}_i = O\left(2^{-2i} \cdot \left(\frac{\ol{\upa}}{\ul{\upa}}\right)^2\right). 
\]
This, together with the choice of $\ol{i}_\star$ indeed yields the desired bound on $\cN_{\ul{\upa}, \ol{\upa}, z}$. 
\end{proof}

We now turn to the proof of Lemma \ref{lem-detUB}. Recall $\phi_\infty(z)$, see \eqref{eq-phiinfty}, and define 
\[
\wt \det_k(z)  := \frac{\det_k(z)}{a_{-N_-}^N \prod_{i=1}^{m_-}\eta_i(z)^N} = \frac{\det_{k}(z)}{\exp(N\phi_\infty(z))}, 
\]
where the last equality follows upon recalling that $\{-\eta_i(z)\}_{i=1}^{m_-}$ are the roots of $p_z(\cdot)=0$ that are greater than or equal to one in modulus, and from the fact that
\[
\frac{1}{2\pi}\int_0^{2\pi} \log |\eta - e^{i\theta}| d\theta = \left\{\begin{array}{ll}
\log |\eta| & \mbox{ if } |\eta| \ge 1,\\
0 & \mbox{ otherwise}. 
\end{array}
\right.
\]
In the lemma below we derive a bound on the supremum of the second moment $\wt \det_k(\cdot)$.

\begin{lem}\label{lem:sec-mom-wt-det} Let  Assumption \ref{assump:mom} hold. 
Fix $\wt \vep_0, \vep_0, \vep_0' >0$ such that $\vep_0'/\vep_0$ is sufficiently small. Fix $k \in [N] \cup \{0\}$ and  $d \ge 0$. Then for any $\vep \le \vep_{\ref{lem:tube-blow-up}} \vep_0'/2$,
 we have
\[
\sup_{z \in \left(\cT^{d,(s)}_{\vep_0',\vep_0}\setminus \cB_2^{\wt \vep_0}\right)^\vep} \E \left[ \left| \wt \det_k(z)\right|^2\right] = O(1), \quad s \in \{1,2\}. 
\]
\end{lem}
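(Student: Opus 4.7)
The plan is to combine the combinatorial estimates of Lemma \ref{lem:comb-bound-tube-1} with a careful comparison of the two normalizations $\wh\det_k=\det_k/\mathfrak{K}(z)$ and $\wt\det_k=\det_k/\exp(N\phi_\infty(z))$; the key point will be a crucial cancellation on tubes of type $(1)$ that lets us work on the full tube $\cT^{d,(1)}_{\vep_0',\vep_0}$ instead of the shrunken version $\wh\cT^{d,(1)}_{\gamma',\vep_0}$ used in Lemma \ref{lem:sec-mom-large-k}. First I set $\wt\gD(X,Y,z):=\gD(X,Y,z)/\exp(N\phi_\infty(z))$, and, exactly as in the derivation of \eqref{eq:sec-mom-expand}, use independence, zero mean of the entries of $Q$, and Assumption \ref{assump:mom} to get
\[
\E\!\left[|\wt\det_k(z)|^2\right] \le \mathfrak{C}_1 \, k!\, N^{-2\gamma k}\!\!\sum_{\substack{X,Y\subset[N]\\|X|=|Y|=k}}\!\!|\wt\gD(X,Y,z)|^2,
\]
so the whole problem reduces to bounding this sum by $O((k!)^2 N^{2\gamma k})$, uniformly in $z$ over the enlarged set.

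Next I handle the enlargement by Lemma \ref{lem:tube-blow-up}(ii) with $\beta=1/2$ and $\vep_0''=\vep_0'$: the hypothesis $\vep\le \vep_{\ref{lem:tube-blow-up}}\vep_0'/2$ gives
\[
(\cT^{d,(s)}_{\vep_0',\vep_0}\setminus \cB_2^{\wt\vep_0})^{\vep}\subset \cT^{d,(s)}_{(3/2)\vep_0',\vep_0/2}\cup \cT^{d+(-1)^s{\sf g}_0,\,(3-s)}_{\vep_0'/2,\,\vep_0/2},
\]
so each $z$ in the enlarged set lies in one of two slightly wider tubes (of the same or the opposite type). Under the standing assumption $d\ge 0$, the fact noted after Lemma \ref{lem:comb-bound-tube-1} that $d\ge 1$ forces $d\ge{\sf g}_0$ then shows $\wh d(z)\ge 0$ on either wider tube, keeping us inside the regime of Lemma \ref{lem:comb-bound-tube-1}.

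The crux is a sharp ratio calculation. Writing $p_z$ as a Laurent product over its roots $\{-\eta_j(z)\}$ and using $\frac{1}{2\pi}\int_0^{2\pi}\log|\eta+e^{i\theta}|\,d\theta=\log^+|\eta|$ gives $\phi_\infty(z)=\log|a_{-N_-}|+\sum_{j:\,|\eta_j(z)|\ge 1}\log|\eta_j(z)|$. Comparing this with \eqref{eq:fancyK-1} produces $|\wt\gD|^2=|\wh\gD|^2\cdot N^{-2\gamma\wh d}$ on tubes of type $(2)$, and on tubes of type $(1)$,
\[
|\wt\gD(X,Y,z)|^2=|\wh\gD(X,Y,z)|^2\cdot N^{-2\gamma\wh d}\cdot|\eta_{\wh m_1+1}(z)|^{2N{\sf g}_0},
\]
where the last factor arose as $\prod_{j=m_-+1}^{m_-+{\sf g}_0}|\eta_j(z)|^{2N}$, collapsed via the equal-modulus relation \eqref{eq:ze-t-o-wt-ze} forced by ${\sf g}(p)={\sf g}_0$. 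Plugging in Lemma \ref{lem:comb-bound-tube-1}(i) for $k+N_+\ge \wh m_2$, the a-priori dangerous factor $|\eta_{\wh m_1+1}|^{-2NG(k,z)}$ then combines with the above to give $|\eta_{\wh m_1+1}|^{2N({\sf g}_0-G(k,z))}\le 1$, since by \eqref{eq:Gkz} one has $G(k,z)\le{\sf g}_0$ while $|\eta_{\wh m_1+1}|\le 1$ on type-$(1)$ tubes. \emph{This exact cancellation is the main obstacle and the whole point of introducing $\wt\det_k$: without it, on $\cT^{d,(1)}_{\vep_0',\vep_0}\setminus\wh\cT^{d,(1)}_{\gamma',\vep_0}$ the factor $|\eta_{\wh m_1+1}|^{-2NG(k,z)}$ blows up exponentially in $N$, and no $O(1)$ bound is possible.}

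With the cancellation in hand, the summation over $X,Y$ proceeds as in the proof of Lemma \ref{lem:sec-mom-large-k}, invoking Lemma \ref{lem:gD12-bd} and the combinatorial identity \eqref{eq:comb-negbin} to yield
\[
\sum_{X,Y}|\wt\gD(X,Y,z)|^2\le C^{\wt m(k+N_+)}\cdot N^{-2\gamma\wh d}\cdot N^{2(k+N_+-\wh m_2)}/(k!)^2,
\]
in the enlarged tubes, while for $k+N_+<\wh m_2$ Lemma \ref{lem:comb-bound-tube-1}(ii) (with $\vep_0$ replaced by $\vep_0/2$) delivers an exponentially small contribution. Substituting back and using $\wh m_2-N_+=\wh d$ yields
\[
\E\!\left[|\wt\det_k(z)|^2\right]\le C\, N^{-2(\gamma-1)k-2(\gamma+1)\wh d}/k!=O(1),
\]
since $\gamma>1$, $\wh d\ge 0$, and $k\ge 0$. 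The uniformity in $z$ over the enlarged tube is inherited from the uniformity of Lemmas \ref{lem:comb-bound-tube-1} and \ref{lem:tube-blow-up}, completing the argument.
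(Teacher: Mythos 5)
Your argument is essentially the same as the paper's: both hinge on the observation that on $\cT^{d,(1)}$ the ratio $|\wt\gD|/|\wh\gD|=|\eta_{\wh m_1+1}(z)|^{N{\sf g}_0}$ exactly absorbs the potentially divergent factor $|\eta_{\wh m_1+1}(z)|^{-NG(k,z)}$ coming from Lemma \ref{lem:comb-bound-tube-1}(i), since $G(k,z)\le{\sf g}_0$ and $|\eta_{\wh m_1+1}|\le 1$ there. Two minor slips are worth flagging. First, from the definitions \eqref{eq:fancyK-1} and \eqref{eq:wh-gD-dfn}, one has $N^{\gamma(\wh m_2-N_+)}\mathfrak{K}(z)=a_{-N_-}^N\prod_{j=1}^{\wh m_1+{\sf g}_0}\eta_j(z)^N$, so the correct ratio is $\wt\gD=\wh\gD\cdot\prod_{j=\wh m_1+1}^{\wh m_1+{\sf g}_0}\eta_j(z)^N$ on type $(1)$ and $\wt\gD=\wh\gD$ on type $(2)$; your extra factor $N^{-2\gamma\wh d}$ is spurious (you are likely conflating $\wt\gD$ vs.\ $\wh\gD$ with $\wt\det_k$ vs.\ $\wh\det_k$, for which the paper's $\wt\det_k=N^{-\gamma\wh d}\wh\det_k$ on type $(2)$ does hold). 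Since the erroneous factor only makes the bound smaller, your final conclusion is unaffected, but the displayed exponent $-2(\gamma+1)\wh d$ should be $-2\wh d$. Second, your summation over $X,Y$ invokes the sharp $\gI$-constrained combinatorics of Lemma \ref{lem:sec-mom-large-k}, which is more than is needed here: once \eqref{eq:sec-mom-wt-det1} is in hand, the crude bound $\binom{N}{k}^2\le N^{2k}/(k!)^2$ combined with $\E|\det Q[X;Y]|^2\le\mathfrak{C}_1 k!$ and $\gamma>1$ already closes the argument, which is what the paper does. The paper also treats $k=0$ separately via Lemma \ref{lem:det-0} (since there is no randomness), whereas you fold it into the $k+N_+<\wh m_2$ branch of Lemma \ref{lem:comb-bound-tube-1}(ii); both routes work.
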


\begin{proof}
By Lemma \ref{lem:tube-blow-up} it suffices to show that
\begin{equation}\label{eq:sec-mom-wt-det}
\sup_{z \in \cT^{d,(s)}_{\vep_0',\vep_0}} \E \left[ \left| \wt \det_k(z)\right|^2\right] = O(1). 
\end{equation}
We begin with the proof of  \eqref{eq:sec-mom-wt-det} for $k \in [N]$ and $s =1$. Recall from \eqref{eq:wh-m1} that for this choice of $s$,
 we have that $\wh m_1 = m_-$. Since $Q$ satisfies Assumption \ref{assump:mom}, we then observe that 
\begin{equation}\label{eq:sec-mom-wt-det-a}
\E\left[\left|\wt \det_{k}(z)\right|^2 \right] =  N^{-2 \gamma k}\sum_{\substack{X, Y \subset [N]\\ |X|=|Y|=k}} |\wh \gD(X,Y,z )|^2 \cdot  |\eta_{\wh m_1+1}(z)|^{2{\sf g}_0 N}  \cdot \E\left[\left|\det \left(Q\left[X; Y \right]\right)\right|^2 \right].
\end{equation}
By Lemma \ref{lem:comb-bound-tube-1}(i)-(ii), for any $k \in [N]$, as $|\eta_{\wh m_1+1}(\cdot)| \le 1$ on $\cT^{d, (1)}$, we have that 
\begin{equation}\label{eq:sec-mom-wt-det1}
|\wh \gD(X,Y,z )|^2 \cdot  |\eta_{\wh m_1+1}(z)|^{2{\sf g}_0 N} \le C^{\wt m (k+N_+)},
\end{equation}
for some constant $C< \infty$. Since the second moment of the determinant of any $k \times k$ matrix with entries satisfying Assumption \ref{assump:mom} is $k!$ and the number of ways one can choose two subsets of $[N]$ that are cardinality $k$ is $\binom{N}{k}^2$ the claimed upper bound, for $s=1$, now follows upon using that $\gamma >1$ and plugging the bound \eqref{eq:sec-mom-wt-det1} in \eqref{eq:sec-mom-wt-det-a}. 

To prove \eqref{eq:sec-mom-wt-det} for $s=2$ we observe that $\wh m_1 + {\sf g}_0 = m_-$, and thus $\wt \det_k(\cdot) = N^{-\gamma d} \wh \det_k(\cdot)$ on $\cT^{d, (2)}$. Hence, the bound for any $k \in [N]$ is immediate from Lemmas \ref{lem:sec-mom-small-k} and \ref{lem:sec-mom-large-k}(iii). 

To  prove \eqref{eq:sec-mom-wt-det} for $k=0$ we recall \eqref{eq:wh-m1} to notice that $\wh m_1 +{\sf g}_0 =m_-$ on $\cT^{d, (2)}$, while on $\cT^{d, (1)}$ we have $\wh m_1=m_-$. Therefore, as $|\wh \eta_{\wh m_1+1}(\cdot)| \le 1$ on $\cT^{d, (1)}$, we find that
\[
\left|\prod_{j=1}^{\wh m_1 +{\sf g}_0} \eta_j(\cdot)\right| \le \left|\prod_{j=1}^{m_-} \eta_j(\cdot)\right|
\]
on $\cT^{d, (s)}$ for any $s \in\{1,2\}$. This, in turn, implies that $|\wt \det_k(\cdot)| \le |\wh \det_k(\cdot)|$ on $\cT^{d, (s)}$ (recall \eqref{eq:fancyK-1} and \eqref{eq:hatP-k}). Thus, the bound for $k=0$ is immediate from Lemma \ref{lem:det-0}(i)-(ii).
This completes the proof of the lemma. 
\end{proof}

\begin{proof}[Proof of Lemma \ref{lem-detUB}]
Set $\vep_0 = c_{\ref{lem:tube-geo}}\wt \vep_0/2$. Fix $\vep_0' >0$ so that $\vep_0'/\vep_0$  is small enough for Lemma \ref{lem:sec-mom-wt-det} to hold. Now set $c_{\ref{lem-detUB}} = C_{\ref{lem:tube-geo}}^{-1}  c_{\ref{lem:tube-geo}}\cdot (\vep_0'/\vep_0)/2$.  Applying Lemma \ref{lem:tube-geo}(iii) and Markov's inequality we find that it suffices to show that for all $d \in \Z$
\begin{equation}\label{eq:detUB-mom-sup}
\max_{s \in \{1,2\}}\E\left[\sup_{z \in \cT^{d, (s)}_{\vep_0',\vep_0}\cap \Omega_{1/N} \setminus \cB_2^{\wt \vep_0} } |\wt \det_k(z)|^2 \right] = O(N^2). 
\end{equation}
The case $d < 0$ can be dealt by consider the transpose of $P_N$. Hence, we will prove \eqref{eq:detUB-mom-sup} only for $d \ge 0$.
Toward this end, we use Lemma \ref{lem:sec-mom-wt-det} and ideas from the proof of Lemma \ref{lem:der-bd}. 
Fix $d \ge 0$, $s \in \{1,2\}$, $\vep_0'' = 1/(4N)$, and $\beta \in (0,1/8)$. Let $\D, \vep, b$, and $\D_i^\beta$, for $i \in [b]$ be as in the proof of Lemma \ref{lem:der-bd}, where now the centers $z_i$ of the disks $\D_i^\beta$ are restricted to be inside  $\wt \T := \cT^{d, (s)}_{\vep_0',\vep_0}\cap \Omega_{1/N}$. Notice that $\cup_{i=1}^b \D_i^\beta \supset \wt \T$.

Applying Cauchy's integral formula for smooth functions, and proceeding as in the proof of \eqref{eq:cauchy-der-l2} we obtain that  
\begin{equation}\label{eq:cauchy-l2}
\E \left[ \sup_{z \in \D_i^\beta} \left| \Xi(z)\right|^2 \right] = O(1) \cdot \sup_{w \in \D_i^{3\beta}} \E\left[|\Xi(w)|^2\right],
\end{equation}
for any random holomorphic function $\Xi: \D \mapsto \C$. To complete the proof it remains to argue that the map $z \mapsto \wt \det_k(z)$ is analytic on $\D_i^{3\beta}$ for each $i \in [b]$. 

To this end, observe that $\D_i^{3\beta} \subset \Omega_{1/(2N)}$. Hence, the map $z \mapsto m_-(z)$ is constant on $\D_i^{3\beta}$. From the definition of the tubes, and Lemma \ref{lem:tube-blow-up}(ii) it further follows that 
\begin{equation}\label{eq:analytic}
|\eta_{m_-}(\cdot)|  \ge  |\eta_{m_-+1}(\cdot)| +\vep_0'/4 \quad \mbox{ on } \D_i^{3\beta} \subset \wt \T^{3\beta}.
\end{equation}
Therefore, arguing as in the proof of Lemma \ref{lem:der-bd} we deduce that the map $z \mapsto \wt \det_k(z)$ is indeed analytic on $\D_i^{3\beta}$. Thus, using the bound on $b$ (see \eqref{eq:bound-b}),  Lemma \ref{lem:sec-mom-wt-det}, and \eqref{eq:cauchy-l2} the proof of \eqref{eq:detUB-mom-sup} is completed. 
\end{proof}

We now turn to the proof of Lemma \ref{lem-detLB}. The key ingredient will be the anti-concentration bound derived in Lemma \ref{lem:anti-conc}. 
We need to argue that $\det(P_z^\delta)$ admits certain specific representation so that Lemma \ref{lem:anti-conc} is applicable. 
To carry out this step we need the following couple of notation. 

Fix $X_0, Y_0 \subset [N]$ such that $|X_0| = |Y_0| =k_0$ for some $k_0 \le N$. For $k \ge k_0$ define 
\begin{multline}\label{eq:det-decompose-1-new}
{\det}_k^{X_0, Y_0}(z) 
:= N^{-\gamma k}  \sum\nolimits_{\substack{X, Y \subset [N]\\ |X|=|Y|=k \\ X \supset X_0, Y \supset Y_0}} (-1)^{\sgn(\sigma_{X}) \sgn(\sigma_{Y})} \cdot (-1)^{\wh \sgn(\sigma_{X}) +\wh \sgn(\sigma_{Y})} \gD(X,Y)  \\
\cdot \det (Q[X\setminus X_0; Y\setminus Y_0]),
\end{multline}
where $\wh \sgn(X)$ and $\wh \sgn(Y)$ are the signs of the permutations on $X$ and $Y$ that place all elements of $X_0$ and $Y_0$ before those of $X \setminus X_0$ and $Y\setminus Y_0$, respectively, but preserves the order of the elements in each of those individual sets. Also define
\[
\wt \det_{k}^{X_0, Y_0}(z) := \frac{\det_k(z)}{\exp(N\phi_\infty(z))}. 
\]

\begin{lem}\label{lem:lbd-det}
Consider the setup as in Lemma \ref{lem-detLB}. Fix $d \ge 0$ and $s \in \{1,2\}$. Let $X_0, Y_0 \subset[N]$ with $|X_0| = |Y_0| =d$.  The following moment bounds hold for ${\det}_k^{X_0, Y_0}(z)$:

\begin{enumerate}
\item[(i)] For any $k$ such that $d \le k \le N$ we have
\[
 \sup_{z \in  \cT^{d, (s)}_{\vep_0',\vep_0}} \E\left[\left|\wt \det_{k}^{X_0, Y_0}(z)\right|^2 \right] \le C_{\ref{lem:lbd-det}}^{\wt m(k+N_+)} N^{-2\gamma k + 2(k-k_0)},
\]
where $C_{\ref{lem:lbd-det}}< \infty$ is some constant. 

\item[(ii)] Fix $h, K_0 \in \N$. Then, for any $k \in \N$ such that $d \le k \le K_0 - N_+$ we have
\begin{equation}\label{eq:high-mom-tt22}
 \sup_{z \in  \cT^{d, (s)}_{\vep_0',\vep_0}}\E\left[\left|\wt{\det}_k^{X_0, Y_0}(z)\right|^{2h} \right] \le C_{\ref{lem:lbd-det}}^{\wt m K_0 h} \cdot (K_0 h)^{8 K_0 h} \cdot \gC_{2K_0h}\cdot   N^{-2 \gamma h k + 2h (k-k_0)}.
\end{equation}
\end{enumerate}
\end{lem}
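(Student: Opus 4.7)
The strategy is to adapt the proofs of Lemmas \ref{lem:sec-mom-large-k}(iii) and \ref{lem:high-mom-small-k} to this new object, with two bookkeeping changes: the independent minor $Q[X\setminus X_0;Y\setminus Y_0]$ has size $k-k_0$ instead of $k$, and the normalization $\exp(N\phi_\infty(z))$ differs from $\mathfrak{K}(z)$.

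The first step is to relate $\wt\gD(X,Y,z):=\gD(X,Y,z)/\exp(N\phi_\infty(z))$ to $\wh\gD(X,Y,z)$. Since $\exp(N\phi_\infty(z))=a_{-N_-}^N\prod_{j=1}^{m_-}\eta_j(z)^N$, one checks that $|\wt\gD|=|\wh\gD|$ on $\cT^{d,(2)}$ (where $\wh m_1+{\sf g}_0=m_-$), whereas on $\cT^{d,(1)}$ one has $|\wt\gD|=|\wh\gD|\prod_{j=m_-+1}^{m_-+{\sf g}_0}|\eta_j(z)|^N$. The key observation is that this extra factor, combined with Lemma \ref{lem:comb-bound-tube-1}(i), exactly cancels the bad factor $|\eta_{\wh m_1+1}(z)|^{-NG(k,z)}$ there, because $G(k,z)\le{\sf g}_0$, $\wh m_1+1=m_-+1$, and $|\eta_{m_-+1}(z)|=\dots=|\eta_{m_-+{\sf g}_0}(z)|$ by \eqref{eq:mod-eq}. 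Thus on both pieces of $\cT^{d,(s)}_{\vep_0',\vep_0}$ we obtain the uniform bound
\[
|\wt\gD(X,Y,z)|\le C^{\wt m(k+N_+)}\sum_{q\ge 0}\gd_q\,\gI(X,Y,q),
\]
with $\gd_q$ as in \eqref{eq:gd-q}, free of the restrictive polynomial factor $N^{(\gamma'-1)(k+N_+-\wh m_2)}$. This is what allows us to work on the wider tube $\cT^{d,(s)}_{\vep_0',\vep_0}$ rather than $\wh\cT^{d,(s)}_{\gamma',\vep_0}$.

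For part (i), expand as in \eqref{eq:sec-mom-expand}: independence of the entries of $Q$ together with \eqref{eq:det-non-zero} (applied to $Q[X\setminus X_0;Y\setminus Y_0]$ of size $k-k_0$) gives
\[
\E\bigl[|\wt\det_k^{X_0,Y_0}(z)|^2\bigr]\le\mathfrak{C}_1\,N^{-2\gamma k}\,(k-k_0)!\sum_{\substack{X,Y\supset X_0,Y_0\\|X|=|Y|=k}}|\wt\gD(X,Y,z)|^2.
\]
Insert the bound on $|\wt\gD|^2$ from the previous paragraph. As in the proof of Lemma \ref{lem:sec-mom-large-k}(i), bound the number of $(X,Y)$ with $\gI(X,Y,q)=1$ and $X\supset X_0,Y\supset Y_0$ by $O(q^{2(\wh m_2-N_+)})\cdot\binom{N}{k-k_0}^2\le O(q^{2d})\cdot N^{2(k-k_0)}/((k-k_0)!)^2$, using that the free elements of $X,Y$ (those not belonging to $X_0,Y_0$ nor to the ``non-generating'' blocks) are at most $k-k_0$ in number. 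Summing the resulting geometric series in $q$ via \eqref{eq:comb-negbin} yields the factor $C^{\wt m(k+N_+)}\,N^{2(k-k_0)}/((k-k_0)!)^2$. The factor $(k-k_0)!$ from the independent minor cancels one copy, producing the claimed bound $C_{\ref{lem:lbd-det}}^{\wt m(k+N_+)}N^{-2\gamma k+2(k-k_0)}$.

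For part (ii), mimic the derivation of \eqref{eq:high-mom-t1}--\eqref{eq:high-mom-t2} in Lemma \ref{lem:high-mom-small-k}. Write $|\wt\det_k^{X_0,Y_0}|^{2h}$ as a $2h$-fold sum indexed by ${\bm X}=(X^{(1)},\dots,X^{(2h)})$ and ${\bm Y}=(Y^{(1)},\dots,Y^{(2h)})$ with $X^{(i)}\supset X_0$, $Y^{(i)}\supset Y_0$, and apply Hölder together with Assumption \ref{assump:mom}(ii) to bound each expectation of a product of $Q$-entries by $\gC_{2K_0h}\cdot(K_0h)^{2K_0h}$. Zero mean forces the equivalence partitions $\gP_{\bm X},\gP_{\bm Y}$ (defined on the indices in $X^{(i)}\setminus X_0$, $Y^{(i)}\setminus Y_0$) to be pair partitions; the presence of the fixed $X_0,Y_0$ only reduces the number of ``free'' indices available and therefore gives no additional combinatorial contribution. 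Partitioning equivalence classes into generating and non-generating ones exactly as in Section \ref{sec:high-mom}, and combining this count with the $\gd_q$-series bound from the first paragraph, produces the additional factor $N^{2h(k-k_0)}$ replacing $N^{2h(k+N_+-\wh m_2)}$ in \eqref{eq:high-mom-gT}, yielding \eqref{eq:high-mom-tt22}.

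The main obstacle is the first paragraph: verifying that on $\cT^{d,(1)}_{\vep_0',\vep_0}$ (a \emph{fixed}-width tube, not a shrinking one), the change of normalization from $\mathfrak{K}(z)$ to $\exp(N\phi_\infty(z))$ exactly compensates the factor $|\eta_{\wh m_1+1}(z)|^{-NG(k,z)}$ coming from Lemma \ref{lem:comb-bound-tube-1}(i), so that only harmless $N$-independent prefactors remain; once this cancellation is secured, the remaining sums in both parts are handled verbatim by the machinery developed in Sections \ref{sec:bd-gD}--\ref{sec:high-mom}.
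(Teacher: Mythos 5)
Your proposal is correct and takes essentially the same route as the paper: the key cancellation you identify in your first paragraph (that the change of normalization from $\mathfrak{K}(z)$ to $\exp(N\phi_\infty(z))$ absorbs the factor $|\eta_{\wh m_1+1}(z)|^{-NG(k,z)}$ on $\cT^{d,(1)}$, since $G(k,z)\le{\sf g}_0$ and $|\eta_{\wh m_1+1}(z)|\le 1$ there) is exactly the observation the paper records as \eqref{eq:sec-mom-wt-det1} in the proof of Lemma \ref{lem:sec-mom-wt-det}, which the paper's proof of part (i) cites. For part (ii) the paper likewise appeals to the partition-counting argument of Lemma \ref{lem:high-mom-small-k}, noting that fixing $X_0,Y_0$ reduces the count of free indices from $N^{2h(k+N_+-\wh m_2)}$ to $N^{2h(k-d)}$, matching your derivation.
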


\begin{proof}
The proof of part (i) is similar in nature to that of Lemma \ref{lem:sec-mom-wt-det}. Hence, details are omitted. 


To prove part (ii) we employ the same combinatorial argument as in the proof Lemma \ref{lem:high-mom-small-k}. Indeed, similarly to \eqref{eq:high-mom-1}, one can obtain an analogous expression for the $(2h)$-th moment of the absolute value of $\wt \det_k^{X_0, Y_0}(z)$. 

Then, using that that entries of $Q$ are independent and possess zero mean, one observes that only partitions such that each block has size at least two need to be summed. This forces the number of such partitions to be at most $N^{2(k- d)h}$. Now, use the bound \eqref{eq:sec-mom-wt-det1} for $s=1$, while for $s=2$ use Lemma \ref{lem:comb-bound-tube-1}(i), and proceed as in the proof of Lemma \ref{lem:high-mom-small-k}. This yields the desired bound. 
Further details are omitted. 
\end{proof}

We now use Lemmas \ref{lem:anti-conc} and \ref{lem:lbd-det} to derive Lemma \ref{lem-detLB}. 

\begin{proof}[Proof of Lemma \ref{lem-detLB}]
Similar to the proof of Lemma \ref{lem-detUB} we notice that it suffices to prove the probability bound in \eqref{eq:detLB} for $z \in \cT^{d, (s)}_{\vep_0', \vep_0}$ for $d \ge 0$, $s \in \{1,2\}$, and some appropriately chosen $\vep_0'$ and $\vep_0$. 

Fix $d \ge 0$ and $s \in \{1,2\}$. Set $\wt X_\star = [N]\setminus [N-d]$ and $\wt Y_\star = [d]$.  
We claim that for any $k \ge d$
\begin{equation}\label{eq:detLB-1}
\wt{\det}_k(z) = \wt{\det}_k^{\wt X_\star, \wt Y_\star}(z) \cdot \det (Q[\wt X_\star; \wt Y_\star]) + \reallywidecheck{\det}_k(z), 
\end{equation}
where $\reallywidecheck{\det}_k(z)$ is a homogeneous polynomial of degree $k$ in the entries of $Q$ such that the degree of each individual entry is at most one, and the total degree of the entries of $Q[\wt X_\star; \wt Y_\star]$ is strictly less than $d$. To see this claim we recall from \eqref{eq:det-decompose-1} that 
\begin{eqnarray}\label{eq:det-decompose-new}
{\det}_k(z) 
&= &\sum_{\substack{X, Y \subset [N]\\ |X|=|Y|=k}} (-1)^{\sgn(\sigma_{X}) \sgn(\sigma_{Y})} \gD(X,Y)  \cdot N^{-\gamma k} \cdot \det (Q[X; Y]).
\end{eqnarray}
Now we split the sum into two parts depending on whether $X \supset \wt X_\star$ and $Y \supset \wt Y_\star$. If either $X \not\supset \wt X_\star$ or $Y \not\supset \wt Y_\star$ then the corresponding term in the \abbr{RHS} of \eqref{eq:det-decompose-new} is indeed a polynomial in the entries of $Q$ such that the total degree of the entries of $Q[\wt X_\star; \wt Y_\star]$ is strictly less than $d$. Now fix $X, Y \subset [N]$ such that $X \supset \wt X_\star$ and $Y \supset \wt Y_\star$. Expand $\det (Q[X; Y])$ by writing it as a sum over permutations $\uppi: X \mapsto Y$. The sum over permutations $\uppi$ such that $\uppi(\wt X_\star)= \wt Y_\star$ is indeed a product of $\det(Q[X \setminus \wt X_\star; Y\setminus \wt Y_\star])$ and $\det (Q[\wt X_\star; \wt Y_\star])$ (upto some signs). The sum over the rest of the permutations is again a polynomial such that the total degree of the entries of $Q[\wt X_\star; \wt Y_\star]$ is strictly less than $d$. Combining these observations and upon recalling the definition of $\wt \det_k^{\wt X_\star, \wt Y_\star}(z)$ we arrive at  \eqref{eq:detLB-1}. 

On the other hand, $\wt \det_k(z)$ is a polynomial of total degree less than $d$, for any $k <d$. Since $\det(P_z^\delta) = \sum_{k=0}^N \det_k(z)$, this together with \eqref{eq:detLB-1} imply that 
\[
\frac{\det(P_z^\delta)}{\exp(N\phi_\infty(z))} = \sum_{\cI \subset [d]} Z_{\cI} \prod_{i \in \cI} U_i ,
\]
where $\{U_i\}_{i=1}^d$ are the diagonal entries of the sub matrix $Q[\wt X_\star; \wt Y_\star]$, $\{Z_\cI, \cI \subset [d]\}$ is a collection of random variables that are independent of $\{U_i\}_{i=1}^d$, and 
\[
Z_{[d]} = \sum_{k \ge d} \wt{\det}_k^{\wt X_\star, \wt Y_\star}(z). 
\]
We now apply Lemma \ref{lem:anti-conc}, and obtain that
\begin{equation}\label{eq:detLB-2}
\prob\left(\frac{\det(P_z^\delta)}{\exp(N\phi_\infty(z))} \le N^{-(4+\gamma d)}\right)  \le N^{-4} + \prob\left(|Z_{[d]}| \le \frac{\ol{c}_\star}{2}\cdot N^{-\gamma d}\right),
\end{equation}
for all large $N$, and any $\ol{c}_\star >0$. It remains to bound the probability that $Z_{[d]}$ exceeds $(\ol{c}_\star/2) \cdot N^{-\gamma d}$ in absolute value. 

Turning to do that we set $K_0=\lceil \frac{5}{(\gamma -1)}\rceil +d +N_+$. Apply Lemma \ref{lem:lbd-det}(i) and Markov's inequality to deduce that 
\[
\prob\left( \left|\wt{\det}_k^{\wt X_\star, \wt Y_\star}(z)\right| \ge N^{-(2+\gamma d)}\right) \le N^{-5},
\]
for any $k \ge K_0 - N_+$. 
Apply Lemma \ref{lem:lbd-det}(ii) with $h=\lceil \frac{6}{(\gamma-1)}\rceil$ and Markov's inequality to further derive that
\[
\prob\left( \left|\wt{\det}_k^{\wt X_\star, \wt Y_\star}(z)\right| \ge N^{-\frac{(\gamma-1)}{2} -\gamma d}\right) \le N^{-5},
\]
for any $k \in [N]$ such that $d < k < K_0 - N_+$. Hence, by a union bound we obtain that 
\begin{equation}\label{eq:detLB-3}
\prob\left( N^{\gamma d} \left|\sum_{k > d} \wt{\det}_k^{\wt X_\star, \wt Y_\star}(z)\right| \ge N^{-1} + N^{-\frac{(\gamma-1)}{4}}\right) \le N^{-4}. 
\end{equation}
Equipped with \eqref{eq:detLB-2} and \eqref{eq:detLB-3}, and upon recalling the definition $Z_{[d]}$ we notice that to complete the proof of this lemma it is now enough to show that 
\begin{equation}\label{eq:detLB-4}
N^{\gamma d} \cdot \inf_{z \in \cT^{d, (s)}_{\vep_0', \vep_0} }\left| \wt{\det}_d^{\wt X_\star, \wt Y_\star}(z)\right| = \inf_{z \in \cT^{d, (s)}_{\vep_0', \vep_0} } \frac{| \gD(\wt X_\star, \wt Y_\star, z)|}{|a_{-N_-}|^N\prod_{j=1}^{m_-} |\eta_j(z)|^N}\ge \ol{c}_\star
\end{equation}
for some $\ol{c}_\star >0$. To prove \eqref{eq:detLB-4} we argue as in the proof of Lemma \ref{lem:det-0}(b). The only difference is that one needs to use \eqref{eq:analytic} instead of \eqref{eq:ratio-small-zeta}. The rest of the argument is the same. This completes the proof of this lemma.  
\end{proof}

We end this section with the proof of Theorem \ref{theo-location}. It is immediate from Theorems \ref{thm:no-outlier}, \ref{thm:sep-spec-curve}, and \ref{thm-thintube}.

\begin{proof}[Proof of Theorem \ref{theo-location}]
We use
$\wh\cN_{\wh \Omega}$ to denote the number of eigenvalues of $P_{N,\gamma}^Q$ in $\wh \Omega$. Fix $\wt \vep_0>0$ such that the Lebesgue measure of the set $p^{-1}(\cB_p^{\wt \vep_0}\cap p(S^1))$ is less than $\mu/(16 \pi)$ (recall Definition \ref{def-badintro}). Then, by \cite[Theorem 1.2]{BPZ1} (see also \cite[Theorem 2.1]{SjVo19a}) it follows that $\prob(\wh \cN_{\wh \Omega_1} \ge \mu N/4) \to 0$ as $N \to \infty$, where $\wh \Omega_1:=\cB_p^{\wt \vep_0}$. On the other hand, by Theorems \ref{thm:no-outlier},  \ref{thm:sep-spec-curve}, and Lemma \ref{lem:tube-geo}(iii)-(v) it follows that there exists some $0<\wh c_\gamma < \infty$ such that  $\prob(\wh \cN_{\wh \Omega_2} \, >0) \to 0$ as $N \to \infty$, where $\wh \Omega_2:= (p(S^1))^{\wh c_\gamma \log N/N} \setminus \wh \Omega_1$. 
Finally, upon choosing $\upa= 4 C_{\ref{thm-thintube}}/\mu$, and applying Theorem \ref{thm-thintube} with this $\upa$ we deduce that $\prob(\wh \cN_{\wh \Omega_3} \ge N \mu /4) \to 0$ as $N \to \infty$, where $\wh \Omega_3:= \Omega_{\upa \log N/N}$ (recall \eqref{eq-defsec7}). 

Setting $\vep_{\ref{theo-location}} = \wt \vep_0$ and $C_{\ref{theo-location}} = \upa \vee \wh c_\gamma^{-1}$ and taking a union bound over 
the events that $\wh \cN_{\wh \Omega_i}\ge N \mu/4$ 
for $i=1,2,3$, completes the proof. 
\end{proof}

\section{Resolvent estimates close to the spectral curve}
\label{sec-resolvent}
Given a symbol $p(\zeta)-z$, with $z\in \C$, as in 
\eqref{int1},  we will see that there is an important 
distinction between the regime where the spectral parameter $z$ is inside a loop of $p(S^1)$, 
i.e. where the winding number $\mathrm{ind}_{p(S^1)}(z)$ of the curve $p(S^1)$ around $z$ 
is non-zero, and the regime where $\mathrm{ind}_{p(S^1)}(z)=0$. In the former case,
we 
will provide in Section \ref{sec:QM}
a construction of quasimodes, which are approximate 
singular vectors, for finite Toeplitz matrices $P_N-z$. In regions where the
winding number 
is zero, we will be interested in obtaining resolvent estimates. The key difficulty in both regimes 
will be in obtaining sufficiently good estimates when $z$ is allowed to be at an $N$-dependent 
distance from $p(S^1)$.
\par
In this section will prove the following estimate on the resolvent of a parameter 
dependent Toeplitz matrix which, while being also of independent interest, 
will be an 
essential ingredient in proving a spectral gap for the small singular values, see Section 
\ref{sec:SmallSGValues}.
\begin{thm}\label{thm:resBound}
Let $\Omega'\Subset \C$ be a non-empty relatively compact open set. 
Let $N\in\N$, let $\Omega_N\Subset \Omega'$ be a family of non-empty relatively compact open  sets. 
Let $q_z$, $z\in \Omega'$, be a family of Laurent 
polynomials as in \eqref{lp1}, with $\mathfrak{N}_{z,\pm}=\mathfrak{N}_{\pm} \geq 0$ independent of 
$z$ in $\Omega_N$ and satisfying \eqref{lp1.1}, and  with 
coefficients $q_{z,n}\in \C$ for $-\mathfrak{N}_{-}\leq n \leq \mathfrak{N}_{+}$. Suppose that:
\begin{itemize}
	\item There exists a constant $0<C<\infty$ such that for all $z\in \Omega';$
		\begin{equation}\label{re0.1}
			 q_{z,\mathfrak{N}_{+}}\neq 0,\quad
			 | q_{z,-\mathfrak{N}_{-}}| \geq 1/C, \quad | q_{z,n}| \leq C, \text{ for } 
			-\mathfrak{N}_{-}\leq n \leq \mathfrak{N}_{+}.
	\end{equation}
	\item All roots $\zeta_{z}$ of $q_z(\zeta)$ are simple, for all $z\in \Omega'$.
	\item There exists a constant $0<C<\infty$ such that for all $z\in \Omega'$ and for any two 
	distinct roots $\zeta_z \neq \omega_z$ we have 
	\begin{equation}\label{re1.1}
		| \zeta_z - \omega_z| \geq 1/C, 
	\end{equation}
	and for any root $\zeta_z$
	\begin{equation}\label{re1.2}
		0 <  |\zeta_{z}|\leq  C.
	\end{equation}
	\item $0\notin q_z(S^1)$ and  there exists a constant $C_0>0$ such that for $N>0$ large enough 
	\begin{equation}\label{re0.3}
	\dist ( |\zeta_{z}|,1)  \geq C_0 \frac{\log N}{N}, \quad \text{for all } z\in \Omega_N, \text{ and all roots }
	\zeta_z.
	\end{equation}
	\item There exists an $m_0 \geq 0$ (unrelated to the constants $m_0$ in \eqref{at4.2} and \eqref{at4.1}) 
and constants $C_1>1,C_2>0$  such that for 
	all $N\in \N$ large enough and all $z\in \Omega_N$ we have that the roots of $q_z(\zeta)$ 
	inside $D(0,1)$, of total number $m_{z,+}\geq m_0$, satisfy
\begin{equation}\label{re0.3b}
	 |\zeta_1^+|\leq \cdots \leq |\zeta_{m_{z,+}-m_0}^+| < 1/C_1 <  1- 
	C_2 \frac{\log N}{N} \leq  |\zeta_{m_{z,+}-m_0+1}^+|\leq \cdots \leq |\zeta_{m_{z,+}}^+| .
	\end{equation}
       \item Let $\widehat{m}_0\in\{0,m_0\}$ and suppose that
       for all $N\in \N$ and 
       all $z\in \Omega_N$ we have $\mathrm{ind}_{q_z(S^1)}(0) = \widehat{m}_0$.
\end{itemize}
Then there exists a constant $0<\wt C< +\infty$ such that for $N>0$ large enough and all $z\in \Omega_N$,
\begin{equation}
\label{re0.4}
	\| P_N(q_z)^{-1} \| \leq \wt C\frac{N^{1+C_2 \Theta(\widehat m_0)}}{\log N}.
\end{equation}
\end{thm}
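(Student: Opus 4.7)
My approach is a Wiener--Hopf type factorization of the symbol, followed by a rank-$O(1)$ perturbation analysis. Using hypotheses \eqref{re0.1}, \eqref{re1.1}, and \eqref{re1.2}, I write $q_z(\zeta) = A(z)\,\zeta^{\widehat m_0}\,q_+(\zeta)\,q_-(\zeta)$, where $A(z)$ is uniformly bounded above and below, $q_-(\zeta) = \prod_{k=1}^{m_-}(1 - \zeta/\zeta_k^-)$ is a polynomial in $\zeta$ (so in the convention of \eqref{a1.0} has only ``non-positive'' Fourier modes), and $q_+(\zeta) = \prod_{j=1}^{m_+}(1 - \zeta_j^+/\zeta)$ is a polynomial in $\zeta^{-1}$ (``non-negative'' modes only). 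When $\widehat m_0 = m_0 > 0$, I regroup the factor $\zeta^{m_0}$ into $q_+$ by splitting $q_+ = q_+^\flat q_+^\sharp$ according to \eqref{re0.3b}, with $q_+^\sharp$ collecting the $m_0$ close-to-$S^1$ roots, and rewriting $\zeta^{m_0} q_+^\sharp(\zeta) = \prod_j(\zeta - \zeta_j^+)$, which is again a polynomial in $\zeta$.

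Next, I iterate the Toeplitz composition formula \eqref{a7}. At each step one of the two Hankel corrections vanishes thanks to the one-sided Fourier support of each factor (for instance $H(q_-) = 0$ since $q_-$ has no positive-mode coefficients, and $H(\widetilde{q_+}) = 0$ since $\widetilde{q_+}(\zeta)=q_+(1/\zeta)$ is a polynomial in $\zeta$). After collecting, I obtain $P_N(q_z) = A(z)\,P_N(q_-)\,P_N(\widehat q_+) + \Delta$, where $\widehat q_+ = q_+$ if $\widehat m_0 = 0$ and $\widehat q_+ = q_+^\flat \prod_j(\zeta - \zeta_j^+)$ if $\widehat m_0 = m_0$, and $\Delta$ has rank bounded only in terms of $\mathfrak{N}_\pm$ and is supported in a fixed corner of the $N\times N$ matrix.

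I then invert the triangular Toeplitz factors. The upper triangular matrix $P_N(q_-)$ is the truncation of $\mathrm{Op}(q_-)$, whose inverse equals the truncation of $\mathrm{Op}(1/q_-)$, so $\|P_N(q_-)^{-1}\| \leq \|1/q_-\|_{L^\infty(S^1)}$; a partial-fraction expansion combined with the separation hypothesis \eqref{re1.1} shows that at any fixed $\zeta \in S^1$ at most one factor $(1 - \zeta/\zeta_k^-)$ can approach zero, so using \eqref{re0.3} one has $\|1/q_-\|_{L^\infty} = O(N/\log N)$ in the worst case. The lower-triangular factor $P_N(q_+)$ (case $\widehat m_0 = 0$) is handled identically, yielding $\|P_N(q_+)^{-1}\| = O(N/\log N)$. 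When $\widehat m_0 = m_0 > 0$, the factor $P_N(\prod_j(\zeta - \zeta_j^+))$ is upper triangular with non-vanishing diagonal, and I invert it via a backward linear recurrence whose characteristic roots are the $\zeta_j^+$ lying within $C_2 \log N/N$ of $S^1$ from inside; the homogeneous solutions grow like $|\zeta_j^+|^{-(N-i)} \asymp N^{C_2}$ across the grid, producing $\|P_N(\widehat q_+)^{-1}\| = O(N^{1+C_2}/\log N)$.

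Finally, I treat the rank-$O(1)$ correction by writing $\Delta = UV^*$ with $U,V$ of $O(1)$ columns and applying the Sherman--Morrison--Woodbury identity (equivalently a Grushin problem of matching dimension): the inverse $P_N(q_z)^{-1}$ exists and is bounded by the product of the triangular-factor norms, provided the effective matrix $I + V^* M^{-1} U / A(z)$, with $M = P_N(q_-)\,P_N(\widehat q_+)$, is invertible with bounded inverse. Verifying this is the main obstacle I anticipate: because $\Delta$ is localized in a single corner while the growth of $M^{-1}$ is, by the triangular structure, concentrated in the \emph{opposite} corner, the block $V^* M^{-1} U$ remains of order $O(1)$ rather than $O(N^{1+C_2\Theta(\widehat m_0)}/\log N)$, and one checks that the Grushin matrix is bounded away from singular by the quantitative separation provided by \eqref{re1.1}--\eqref{re0.3b}. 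Collecting the triangular norm estimates then yields $\|P_N(q_z)^{-1}\| \leq \wt C\, N^{1 + C_2 \Theta(\widehat m_0)}/\log N$.
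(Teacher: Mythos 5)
Your strategy---Wiener--Hopf factor the symbol, decompose the finite Toeplitz matrix as a product of triangular Toeplitz factors plus a Hankel correction of bounded rank, and absorb the correction via Sherman--Morrison--Woodbury---is a genuinely different route from the paper's. The paper constructs the convolution fundamental solution $E=\mathcal{F}^{-1}(1/q_z)$ on $\Z$ by residue calculus (obtaining $\|E\|_{\ell^1}=O(N/\log N)$), then corrects $E*v$ to satisfy the finite-interval equation by solving $\mathrm{Op}(q_z)\psi=0$ with the boundary conditions \eqref{re7}; the quantitative invertibility comes from the explicit Vandermonde block structure of the boundary matrix $M$ in \eqref{re9}, together with \eqref{re1.1}. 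Both routes bottom out in inverting a matrix of fixed size; the difference is that the paper's effective matrix has a Vandermonde block structure that makes the invertibility and norm bound \eqref{nnre1} directly verifiable via \eqref{re10}, \eqref{nre1}, \eqref{re13.1a}, whereas your effective matrix $I+V^*M^{-1}U/A(z)$ carries no such explicit structure.

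This is exactly where the proposal has a genuine gap, and you yourself flag it as ``the main obstacle I anticipate.'' Three points. First, when $\widehat m_0=m_0>0$ your $\widehat q_+ = q_+^\flat \prod_j(\zeta-\zeta_j^+)$ is a two-sided Laurent polynomial, so $P_N(\widehat q_+)$ is not triangular; splitting it into triangular pieces means iterating \eqref{a7} and, with the pairing you chose, the Hankel corrections land in \emph{both} corners (top-left and bottom-right), not a single corner as claimed. One can repair this by grouping all the ``poly-in-$\zeta$'' factors ($q_-$ together with $\prod_j(\zeta-\zeta_j^+)$) into one upper-triangular symbol and $q_+^\flat$ into a lower-triangular one, so that a single application of \eqref{a7} kills the first Hankel term because $H(a)=0$; then $\Delta$ sits only in the bottom-right. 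Second, even with this repair the heuristic ``$\Delta$ and the growth of $M^{-1}$ live in opposite corners'' needs an actual estimate: the growth of $P_N(\prod_j(\zeta-\zeta_j^+))^{-1}$ sits in the top-right, and what saves the bound is a cancellation between its exponential growth in $k-j$ and the geometric decay of $P_N(q_+^\flat)^{-1}$ in $i-j$, restricted to $i,k$ in the bottom-right block; this is true but not automatic, and it is sensitive to which corner $\Delta$ occupies. Third, and most seriously, $V^*M^{-1}U=O(1)$ does \emph{not} yield $\|(I+V^*M^{-1}U/A)^{-1}\|=O(1)$: an $O(1)$ matrix can have an eigenvalue at $-1$. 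You assert that ``one checks the Grushin matrix is bounded away from singular by the quantitative separation provided by \eqref{re1.1}--\eqref{re0.3b},'' but no such check is given, and the separation hypotheses do not translate to $I+V^*M^{-1}U/A$ the way they translate to the Vandermonde blocks $V_\pm$, $B_\pm$ in \eqref{re9}. Establishing this lower bound on the effective matrix is essentially the content of the theorem, and is the step your argument leaves open.
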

\begin{rem}
	It will be clear from the proof that \eqref{re0.4} holds when $\mathrm{ind}_{q_z(S^1)}(0) =0$ even without 
	the structural assumption \eqref{re0.3b}. Furthermore, notice that in Theorem \ref{thm:resBound} we 
	only consider the case of non-negative winding numbers. This is all that we need in the sequel, however 
	one could prove the same result in the case of negative winding numbers provided that one assume 
	a similar assumption as \eqref{re0.3b} for the roots in $\C\backslash \overline{D(0,1)}$. 
\end{rem}
\begin{proof}[Proof of Theorem \ref{thm:resBound}]
Throughout the proof, we will assume that $N>0$ is sufficiently large so that the assumptions of the theorem 
hold. Moreover, all $O(1)$ terms and constants are understood to be uniform in $N>0$ and $z\in \Omega_N$, 
without us mentioning it explicitly at each occurrence. In fact the error terms will only depend on the constants 
in the hypothesis of the theorem and global constants. 
\par
1. Let $z\in\Omega_N$. Since $q_z$ satisfies the assumptions of Case 1 of 
Lemma \ref{lem:root1}, we may order the roots of $q_z(\zeta)$ as in \eqref{at4.2.0}, and using also \eqref{re1.2} and \eqref{re0.3},
\begin{equation}\label{re2.1a}
0<  |\zeta_1^+|\leq \cdots \leq |\zeta_{m_{z,+}}^+| < 1 < |\zeta_1^-| \leq \cdots \leq |\zeta_{m_{z,-}}^-| \leq C. 
\end{equation}
\par
Since $\mathrm{ind}_{q_z(S^1)}(0) = \widehat{m}_0$ for all $z\in \Omega_N$, 
it follows from a similar computation as in \eqref{at9.0} that 
$m_{z,\pm}=\mathfrak{N}_{\pm}\pm\widehat{m}_0$ are independent of $N$ and $z$.  
Keeping in mind that $m_{z,\pm}$ are independent of $z$, but that the coefficients 
$q_{z,n}$ do depend on $z$, we suppress from now on the $z$ subscript of $q_{z}$, 
$q_{z,n}$ and $m_{z,\pm}$. Hence, we are interested in the symbol 
\begin{equation}\label{re2.1}
	q(\zeta) = \sum_{-(m_- +\widehat{m}_0)}^{m_+-\widehat{m}_0} q_j \zeta^{-j},
\end{equation}
where by \eqref{re0.1}
\begin{equation}\label{re2.1b}
q_{m_{+}- \widehat{m}_0} \neq 0, \quad 
| q_{- (m_{-}+ \widehat{m}_0)}| \geq 1/C
\end{equation}
2. We turn to estimating $\| P_N(q)^{-1} \|$. We begin by inverting 
$\mathrm{Op}(q): \ell^2(\Z) \to \ell^2(\Z)$, see \eqref{c1},  which is a convolution operator. We first 
search for a fundamental solution $E:\Z\to \C$ to 
 \begin{equation}\label{re0}
	\mathrm{Op}(q) E = \delta_0 \quad \text{on } \Z,
 \end{equation}
where $\delta_0(\nu) = \delta_{0, \nu}$, see Section \ref{sec:notation} for  the Dirac notation. Putting
\begin{equation}\label{re1}
		E(n) = \mathcal{F}^{-1}\! \left(\frac{1}{q}\right)\!(n) = 
		\frac{1}{2\pi} \int_{0}^{2\pi} \frac{1}{q(\e^{i\xi})}\e^{in\xi} d\xi. 
 \end{equation}
it follows from \eqref{a5} that $E$ solves \eqref{re0}. Thus, 
\begin{equation}\label{re2}
\begin{split}
		&\mathrm{Op}(q)E*v = v, \quad v\in \ell^2_{\mathrm{comp}}(\Z) \\
		&E* \mathrm{Op}(q) u = u, \quad u\in \ell^2_{\mathrm{comp}}(\Z),
\end{split}
 \end{equation}
where $*$ denotes the convolution on $\Z$ and $\ell^2_{\mathrm{comp}}$ denotes 
compactly supported functions in $\ell^2$. 
\par
 Notice that we may factor \eqref{re2.1} as 
\begin{equation}\label{re3}
	q(\zeta) 
	= q_{-(m_-+\widehat{m}_0)} \prod_{j=1}^{m_+-\widehat{m}_0}( 1 - \zeta^+_j/\zeta )
	 \prod_{j=m_+-\widehat{m}_0+1}^{m_+}( \zeta - \zeta^+_j ) \prod_{k=1}^{m_-} (\zeta - \zeta_k^-).
 \end{equation}
Performing the change of variables $\e^{i\xi} = \zeta \in S^1$ in \eqref{re1}, we 
get by shrinking $S^1$ to $0$, the residue theorem and \eqref{re3} that for $n\geq 1$
\begin{equation}\label{re4}
\begin{split}
		E(n) = \frac{1}{2\pi i} \int_{S^1} \frac{\zeta^{n-1}}{q(\zeta)} d\zeta
		= \sum_{r=1}^{m_+} 
		\frac{ (\zeta^+_r)^{n-1 + m_+-\widehat{m}_0}}
		{q_{-(m_-+\widehat{m}_0)}  \prod_{j\neq r}( \zeta^+_r - \zeta^+_j ) \prod_{k=1}^{m_-} (\zeta^+_r - \zeta_k^-)}
		,
\end{split}
 \end{equation}
 Similarly, we get that 
\begin{equation}\label{re4.1}
\begin{split}
		E(0) 
		= \sum_{r=1}^{m_+} 
		\frac{ (\zeta^+_r)^{-1 + m_+-\widehat{m}_0}}
		{q_{-(m_-+\widehat{m}_0)}  \prod_{j\neq r}( \zeta^+_r - \zeta^+_j ) \prod_{k=1}^{m_-} (\zeta^+_r - \zeta_k^-)}
		+ \frac{\delta_{m_+,\widehat{m}_0}}{q_{-(m_-+\widehat{m}_0)} \prod_{j=1}^{m_+}( - \zeta^+_j) \prod_{k=1}^{m_-} ( - \zeta_k^-)}.
\end{split}
 \end{equation}
By  deforming $S^1$ to $|\zeta|=R$, $R\to \infty$, we obtain similarly that
 for $n\leq -1$,
\begin{equation}\label{re5}
		E(n) 	= -\sum_{r=1}^{m_-} 
		\frac{ (\zeta^-_r)^{n-1+ m_+-\widehat{m}_0}}
		{q_{-(m_-+\widehat{m}_0)}  \prod_{j=1}^{m_+}( \zeta^-_r - \zeta^+_j) \prod_{j\neq r} (\zeta_r^- - \zeta_j^-)}.
 \end{equation}
Combining \eqref{re4}-\eqref{re5}, \eqref{re1.1}, 
\eqref{re2.1a}, \eqref{re0.1} and \eqref{re2.1b}, we have that 
\begin{equation}\label{re6}
		|E(n)| = O(1)\left\{\begin{array}{ll}
			|\zeta_{m_+}^+|^{n-1}, & n\geq 1,\\
			1, &n=0,\\
			|\zeta_1^-|^{n-1}, & n \leq -1.
		\end{array}\right.
 \end{equation}
Here, we 
replace $(\zeta_1^-)^{-1}$ resp. $\zeta_{m_+}$ with $0$ when 
$m_-=0$ resp. $m_+=0$. Thus, summing the geometric series and 
using \eqref{re0.3} we get that 
\begin{equation}\label{re6.1}
		\| E \|_{\ell^1} = O(1) \frac{N}{\log N},
 \end{equation}
and similarly that 
\begin{equation}\label{re6.2}
		\| E \|_{\ell^2} = O(1) \sqrt{\frac{N}{\log N }}.
 \end{equation}
Since $E$ is in $\ell^1$, see \eqref{re6.1},
the operator of convolution with $E$, denoted $E*$, 
is a bounded operator 
$\ell^2\to \ell^2$. Therefore,
\eqref{re2} can be extended to $u,v \in\ell^2(\Z)$, and $E*$ is the inverse of $\mathrm{Op}(q)$ 
on $\ell^2(\Z)$. 
\par
3. Next, for $v\in\ell^2(\Z)$ with support in $[0,N-1]$, we solve 
\begin{equation}\label{re6.3}
		\mathrm{Op}(q) \psi = 0 \quad \text{ on } \Z
 \end{equation}
with ``boundary'' conditions 
\begin{equation}\label{re7}
		\psi\!\upharpoonright_{[-(m_+-\widehat{m}_0),-1]\cup[N,N+m_-+\widehat{m}_0-1]} 
		= (E*v)\!\upharpoonright_{[-(m_+-\widehat{m}_0),-1]\cup[N,N+m_-+\widehat{m}_0-1]}.
 \end{equation}
We suppose here, and in the sequel, that $m_{\pm}\mp \widehat{m}_0>0$. The cases 
when $m_+=\widehat{m}_0$ or $m_-=\widehat{m}_0=0$ can be treated similarly, and we shall 
comment on these cases as we go along. In \eqref{re7} it is understood that when 
$m_+=\widehat{m}_0$, then we only have the boundary condition 
on $[N,N+m_-+\widehat{m}_0-1]$, and similarly when $m_-=\widehat{m}_0=0$. 
\par
Since $q(\zeta)$ satisfies the assumptions of Case 1 of Lemma \ref{lem:root1}, 
it follows from Propositions \ref{prop:expSol} and \ref{ev:prop1}, and the fact that all 
roots of $q(\zeta)$ are simple, that the general solution to \eqref{re6.3} is of the form 
\begin{equation}\label{re8}
		\psi(\nu) = 
		\sum_{j=1}^{m_+}a_{j}^+(\zeta_j^+)^\nu 
		+ \sum_{j=1}^{m_-}a_{j}^-(\zeta_j^-)^\nu, 
		\quad a^{\pm}_{j} \in \C. 
 \end{equation}
By \eqref{re7}, the coefficients are determined by 
\begin{equation}\label{re9}
	\begin{pmatrix}
		V_+ \Lambda_+^{\widehat{m}_0-m_+} & B_+\Lambda_-^{\widehat{m}_0-m_+} \\
		V_- \Lambda_+^N & B_- \Lambda_-^N\\ 
	\end{pmatrix}
	a
	=:
	 M a = c := (E*v)\!\upharpoonright_{[\widehat{m}_0-m_+,-1]\cup[N,N+m_-+\widehat{m}_0-1]}.
 \end{equation}
Here, the right hand side is seen as a vector in $\C^{m_++m_-}$, $a=(a_+ , a_-)^{\sf T}$, 
$a_\pm = (a^\pm_1,\dots,a^\pm_{m_\pm}) \in \C^{m_{\pm}}$, 
$\Lambda_{+}:=\mathrm{diag}(\zeta_1^+,\dots,\zeta_{m_{+}- \widehat{m}_0}^+)$, 
$\Lambda_{-}:=\mathrm{diag}(\zeta_{m_+-\widehat{m}_0+1}^+,\dots,\zeta_{m_{+}}^+, \zeta_1^-,\dots,\zeta_{m_{-}}^-)$, 
\begin{equation}
\label{eq-defABL}
	V_{\pm} := 
	\begin{pmatrix} 1 &  \dots & 1 \\ 
	 \zeta_1^+& \dots &  \zeta_{m_+-\widehat{m}_0}^+\\
	\vdots & \vdots & \vdots \\ 
	(\zeta_1^+)^{m_\pm\mp\widehat{m}_0-1} & \dots &  (\zeta_{m_+-\widehat{m}_0}^{+})^{m_\pm\mp\widehat{m}_0-1}
	 \end{pmatrix}, 
 \end{equation}
 and 
 \begin{equation}
\label{eq-defABL2}
	B_{\pm} := 
	\begin{pmatrix} 1 &  \dots & 1 & 1 &  \dots & 1 \\ 
	\zeta_{m_+-\widehat{m}_0+1}^+& \dots &  \zeta_{m_+}^+&  \zeta_1^-& \dots &  \zeta_{m_-}^-\\
	\vdots & \vdots & \vdots & \vdots & \vdots & \vdots \\ 
	(\zeta_{m_+-\widehat{m}_0+1}^+)^{m_\pm\mp\widehat{m}_0-1} & \dots &  (\zeta_{m_+}^{+})^{m_\pm\mp\widehat{m}_0-1} &(\zeta_1^-)^{m_\pm\mp\widehat{m}_0-1} & \dots &  (\zeta_{m_-}^-)^{m_\pm\mp\widehat{m}_0-1}
	 \end{pmatrix}.
 \end{equation}
 Here, we use the convention that when $\widehat{m}_0=0$ and $m_+>0$, then the first column of $B_\pm$ 
 is the one containing powers of $\zeta_1^-$. 
\begin{rem}\label{rem:re1}
	When $m_+=\widehat{m}_0$ then 
	\eqref{re9} is given by $B_-\Lambda_-^N a = Ma = (E*v)\!\upharpoonright_{[N,N+m_--1]}$, 
	and when $m_-=\widehat{m}_0=0$ then only the first sum in \eqref{re8} remains and 
	\eqref{re9} is given by $V_+\Lambda_+^{-m_+} a_+ = Ma = (E*v)\!\upharpoonright_{[-m_+,-1]}$. 
\end{rem}
By the Cauchy-Schwarz  inequality and \eqref{re6.2}, we have that for any $k\in \Z$ 
\begin{equation}\label{nnre2}
	| E*v(k) | = \left|  \sum_0^{N-1} E(k-m) v(m) \right| \leq \| E\|_{\ell^2(\Z)} \|v\|_{\ell^2(\Z)} 
	\leq O(1) \sqrt{\frac{N}{\log N }} \|v\|_{\ell^2(\Z)}.
\end{equation}
We put $L := L_+\oplus L_-$ with 
\begin{equation}\label{re12}
\begin{split}
	& L_+= \mathrm{diag}( G(\zeta_1^+), \dots ,G(\zeta_{m_+-\widehat{m}_0}^+)),  \\
	& L_-= \mathrm{diag}( G(\zeta_{m_+-\widehat{m}_0+1}^+), \dots ,G(\zeta_{m_+}^+), G(\zeta_1^-), \dots, G(\zeta_{m_-}^-)), 
	\quad G(x) = \left( \frac{1-|x|^{2N}}{1-|x|^2}\right)^{1/2},
\end{split}
\end{equation}
where we work with the same notational convention for $L_-$ as in \eqref{eq-defABL2}. 
\par
Suppose that there exists a constant $0<C< \infty$ such that 
for $N>0$ sufficiently large $M$ is bijective and such that for all $z\in \Omega_N$
\begin{equation}\label{nnre1}
	\| LM^{-1} \| \leq C\sqrt{\frac{N}{\log N}} N^{C_2 \Theta(\widehat{m}_0)},
\end{equation}
with $C_2$ as in \eqref{re0.3b}. We will prove this fact in Step 4 below. 
It then  follows from the bijectivity of $M$ that \eqref{re6.3} with boundary conditions \eqref{re7} 
has a unique solutions $\psi$ of the form \eqref{re8} with coefficients determined by 
\eqref{re9}. 
%
%
Hence, putting  
\begin{equation*}
	\widetilde{u} = ( E*v - \psi) \!\upharpoonright_{[0,N-1]},
\end{equation*}
we have that $
	P_N(q) \widetilde{u} = v,$
so $P_N(q)$ is surjective, and thus, being a square matrix, bijective. 
We get that for $N>0$ sufficiently large,
\begin{equation}
\label{eq-sout}
\begin{split}
		\| \psi \|_{\ell^2([0,N-1])} &\stackrel{\eqref{re8}}{\leq} 
		\sum_{j=1}^{m_+}|a_{j}^+| G(\zeta_j^+)
		+ \sum_{j=1}^{m_-}|a_{j}^-|G(\zeta_j^-) \\
		&\stackrel{\mbox{\tiny \rm H\"{o}lder}}{\leq} \sqrt{m_++m_-} \| L a \|_{\ell^2([1,m_++m_-])} \\ 
		& \stackrel{\eqref{re9}}{\leq} \sqrt{m_++m_-} \| LM^{-1} c \|_{\ell^2([1,m_++m_-])} \\ 
		&\stackrel{\eqref{nnre1}}{ \leq} O(1)\sqrt{\frac{N}{\log N }}N^{C_2 \Theta(\widehat{m}_0)}\,\|c \|_{\ell^2([1,m_++m_-])} 
 \stackrel{\eqref{nnre2}}{\leq} O(1) \frac{ N^{1+C_2 \Theta(\widehat{m}_0)} }{\log N}\|v\|_{\ell^2}.
\end{split}
 \end{equation}
 Using Young's convolution 
inequality and \eqref{re6.1}, we find that for $N>0$ sufficiently large 
\begin{equation*}
\begin{split}
		\| \widetilde{u} \|_{\ell^2([0,N-1])} &\leq \|E*v \|_{\ell^2([0,N-1])} 
		+ \| \psi \|_{\ell^2([0,N-1])}   \\ 
		& \leq \|E\|_{\ell^1} \|v\|_{\ell^2} + \| \psi \|_{\ell^2([0,N-1])} 
		 \stackrel{\eqref{re6.1},\eqref{eq-sout}}{\leq} O(1) \frac{ N^{1+C_2 \Theta(\widehat{m}_0)} }{\log N}\|v\|_{\ell^2},
\end{split}
 \end{equation*}
uniformly for $z\in\Omega_N$. This proves \eqref{re0.4}, provided that \eqref{nnre1} holds. 
\\
\par
4. It remains to prove \eqref{nnre1}. By \eqref{re2.1a}, we have that 
\begin{equation}\label{re10.1}
	\| V_\pm\|, \| B_{\pm}\| =O(1).
\end{equation}
 Since $V_{+},B_{-}$ are Vandermonde matrices, we have that  
 $\det V_+ = \prod_{1\leq i <j \leq m_+-\widehat{m}_0} (\zeta_i^+ - \zeta_j^+)$ and similar expression for 
 $\det B_-$ given by a finite product of differences of the roots 
 $\zeta_{m_+-\widehat{m}_0+1}^+, \dots, \zeta_{m_+}^+$ and 
 $\zeta_1^-,\dots, \zeta_{m_-}^-$ (resp. just  $\zeta_1^-,\dots, \zeta_{m_-}^-$ when 
 $\widehat{m}_0=0$). Therefore, \eqref{re1.1} implies that both matrices 
are bijective and that their determinants are uniformly bounded from below in modulus. 

Together with \eqref{re2.1a}, we deduce that 
\begin{equation}\label{re10}
	\| (V_+)^{-1}\|, \| (B_{-})^{-1}\| =O(1).
\end{equation}
 To construct the inverse of $M$, 
we proceed via the Schur complement formula and set 
\begin{equation*}
	\Gamma = B_-\Lambda_-^N - V_-\Lambda_+^{N+m_+-\widehat{m}_0}V_+^{-1}B_+\Lambda_-^{\widehat{m}_0-m_+}.
\end{equation*}
Using \eqref{re10}, \eqref{re10.1}, \eqref{re0.3}, \eqref{re2.1a}
and \eqref{re0.3b}, we have that
\begin{equation}\label{re11.2}
\begin{split}
	\| V_-\Lambda_+^{N+m_+-\widehat{m}_0}V_+^{-1}&B_+\Lambda_-^{\widehat{m}_0-m_+-N}B_-^{-1}\| \\
	&= O(1)\left\{ \begin{array}{ll}
	|\zeta_{m_+}|^{N+m_+} | \zeta_{1}^-|^{-(N+m_+)}, & \widehat{m}_0=0, \\
	|\zeta_{m_+-m_0}^+|^{N+m_+-m_0} | \zeta_{m_+-m_0+1}^+|^{m_0-(N+m_+)}, & \widehat{m}_0=m_0,
	\end{array}\right.
	 \\
	&=O(1)
	\left\{\begin{array}{ll}
	N^{-2C_0}, &\widehat{m}_0=0, \\
	\e^{-N\log C_1} N^{C_2}, & \widehat{m}_0=m_0
	\end{array}\right. \quad =: \varepsilon_N( \widehat{m}_0).
\end{split}
\end{equation}
 Hence, for $N>0$ large enough (depending only on the constants in the 
 assumptions of the theorem) we have that $|\varepsilon_N( \widehat{m}_0)|<1$, 
 and we assume from now on that this is the case. 
 By a Neumann series argument we see that 
\begin{equation}\label{nre1}
	\Gamma^{-1} = \Lambda_-^{-N}B_-^{-1}( 1 + O(1)\varepsilon_N( \widehat{m}_0)) . 
\end{equation}
Here the $O(1)$ term means an $m_-\times m_-$ matrix with operator norm 
bounded by a constant uniformly in $N$ and $z\in\Omega_N$. 
\par
A straightforward computation yields that 
\begin{equation}\label{nre2}
	M^{-1} = 
	\begin{pmatrix}
		\Lambda_+^{m_+-\widehat{m}_0} V_+^{-1} & -\Lambda_+^{m_+-\widehat{m}_0} V_+^{-1}B_+\Lambda_-^{\widehat{m}_0-m_+} \\
		0 & 1\\ 
	\end{pmatrix}
	\begin{pmatrix}
		1& 0\\
		-\Gamma^{-1}V_- \Lambda_+^{N_++m_+-\widehat{m}_0}  V_+^{-1}  & \Gamma^{-1}\\ 
	\end{pmatrix}.
 \end{equation}
Using \eqref{re12} and \eqref{nre2}, we obtain that 
\begin{align}\label{nre2.1}
	&LM^{-1} =\\
\nonumber &
	\begin{pmatrix}
		L_+\Lambda_+^{m_+-\widehat{m}_0} V_+^{-1}(1+
		B_+\Lambda_-^{\widehat{m}_0-m_+}
		\Gamma^{-1}V_- \Lambda_+^{N_++m_+-\widehat{m}_0}  V_+^{-1}),
		& -L_+\Lambda_+^{m_+-\widehat{m}_0} V_+^{-1}B_+\Lambda_-^{\widehat{m}_0-m_+} \Gamma^{-1}\\
		-L_-\Gamma^{-1}V_- \Lambda_+^{N_++m_+-\widehat{m}_0}  V_+^{-1} ,  & L_-\Gamma^{-1} 
	\end{pmatrix}.
 \end{align}
To prove \eqref{nnre1} note that it is sufficient to estimate the norm of 
each block of the matrix \eqref{nre2.1} separately. 
%
%
Since $|\zeta_{m_+}^{+}| \leq 1 \leq |\zeta_1^-|$ and \eqref{re0.3b} holds, it follows that 
\begin{equation}\label{nre3.1}
	 \| \Lambda_-^{-n} \| , \| \Lambda_+^n \| \leq O(1), 
\end{equation}
for any fixed $n\in \N$. Using that $|\zeta_{m_+}^{+}| \leq 1$ we find also that 
\begin{equation}\label{nre3.1b}
	 \| \Lambda_+^N\| \leq O(1). 
\end{equation}
However, in view of \eqref{re0.3b} and since $1 \leq |\zeta_1^-|$ we have that 
\begin{equation}\label{nre3.1c}
	 \| \Lambda_-^{-N}\| \leq 
	O(1)\left\{\begin{array}{ll}
		|\zeta_1^-|^{-N}, & \widehat{m}_0=0, \\
		|\zeta^+_{m_+-m_0+1}|^{-N}, &\widehat{m}_0=m_0
	\end{array}\right.
	\leq O(1)	N^{C_2\Theta(\widehat{m}_0)}
\end{equation}
\par
Continuing, we deduce from \eqref{re10}, \eqref{nre1} and \eqref{nre3.1c} that 
\begin{equation}\label{nre3.0}
	\| \Gamma^{-1} \| \leq O(1) \| \Lambda_-^{-N} B_-^{-1}\| \leq O(1)N^{C_2\Theta(\widehat{m}_0)}
\end{equation}
Recall \eqref{re12}. Then, by \eqref{re0.3}, 
\begin{equation}\label{nre3.2}
	\| L_+\| \leq  \max_{1\leq j\leq m_+-\widehat{m}_0}  G(\zeta_{j} ^+)= 
	 O(1) \sqrt{\frac{N}{\log N}}. \notag
\end{equation}
Combining this with \eqref{re10} and \eqref{nre3.1}, we see that
\begin{equation}\label{nre3}
	 \| L_+ \Lambda_+^{m_+}V_+^{-1}  \| =  O(1)	\sqrt{\frac{N}{\log N}}.
\end{equation}
Hence, using \eqref{nre3}, \eqref{re10},  \eqref{nre3.1}, \eqref{re10.1}, \eqref{nre3.0} and 
\eqref{nre3.1b}, we get that 
\begin{equation}\label{re13}
\| L_+\Lambda_+^{m_+} V_+^{-1}V_-\Lambda_-^{-m_+} 
\Gamma^{-1}B_+ \Lambda_+^{N+m_+}  V_+^{-1} \| = 
O(1) \sqrt{\frac{N}{\log N}}N^{C_2\Theta(\widehat{m}_0)}
\end{equation}
and 
\begin{equation}\label{re13b}
\|  L_+\Lambda_+^{m_+} V_+^{-1}V_-\Lambda_-^{-m_+} \Gamma^{-1} \| 
 	= O(1) \sqrt{\frac{N}{\log N}}N^{C_2\Theta(\widehat{m}_0)}.
\end{equation}
Using \eqref{re0.3} and \eqref{re0.3b}, we find 
that
\begin{equation}\label{re13.1a}
 \| L_-\Lambda_-^{-N}\| 
 	= \max\{   \max_j |\zeta_j^-|^{-N} G(\zeta_{j} ^-), 
	 \max_{ m_+-\widehat{m}_0+1 \leq j \leq m_+}  |\zeta_j^+|^{-N} G(\zeta_{j} ^+)\} = O(1) \sqrt{\frac{N}{\log N}}N^{C_2\Theta(\widehat{m}_0)}.
\end{equation}
Hence, similarly to \eqref{re13}, but using as well \eqref{re13.1a} and \eqref{nre1}, we get that 
\begin{equation}\label{re13.1}
 \| L_-\Gamma^{-1}B_+ \Lambda_+^{N+m_+} V_+^{-1} \|  =O(1) \| L_-\Lambda_-^{-N}\| =
 	O(1) \sqrt{\frac{N}{\log N}}N^{C_2\Theta(\widehat{m}_0)},
\end{equation}
and
\begin{equation}\label{re13.2}
 \|  L_-\Gamma^{-1} \| =O(1) \sqrt{\frac{N}{\log N}}N^{C_2\Theta(\widehat{m}_0)}.
\end{equation}
Hence, \eqref{nre2.1}, \eqref{nre3}-\eqref{re13b}, \eqref{re13.1}, and 
\eqref{re13.2} yield \eqref{nnre1}   for $N>0$ large enough. 
\begin{rem}
	Recall Remark \ref{rem:re1}. When $m_+=\widehat{m}_0$ we have that 
	$M^{-1} = \Lambda_-^{-N}B_-^{-1}$ and 
	we obtain \eqref{nnre1} by a similar estimate as in \eqref{re13.1}. 
	When $m_-=\widehat{m}_0=0$ 
	then $M^{-1}=\Lambda_+^{m_+}V_+^{-1}$ and we obtain \eqref{nnre1} by \eqref{nre3}.
\end{rem}
This completes the proof of Theorem \ref{thm:resBound}.
%
%
\end{proof}
\section{Quasimodes for banded Toeplitz matrices}\label{sec:QM}
In this section we will construct exponentially decaying quasimodes 
for the operator 
$P_N-z$, for $z$ \emph{close} to - however not on - the curve $p(S^1)$
while avoiding a small neighborhood of 
the set of bad points, see Definition \ref{def-badintro}. We will see that the sign of the winding 
number $d(z)$ will decide whether these quasimodes decay to the left ($d<0$) or to the right ($d>0$). 
The construction will be based on  exponential states $\mathfrak{z}^\pm$, see Proposition 
\ref{lem:TruncEV}, determined by the roots of the Laurent 
polynomial $p(\zeta)-z$, with a particular focus on the decay rates of these exponential states. 
We will separate between those states $\mathfrak{z}^\pm$ which decay at a fixed rate $r>0$ and 
those which decay at an $N$-dependent rate $r\asymp \log N/N$, where the former are determined 
by the roots of $p(\zeta)-z$ ``far'' away from $p(S^1)$ and the latter are determined by the roots 
``close'' to $p(S^1)$. 
 \\
 \par
Throughout this section we fix $z_0 \in p(S^1)$  such that 
\begin{equation}\label{qm1}
		dp \neq 0 \text{ on } p^{-1}(z_0).
\end{equation}
Let $\Omega\Subset \C$ be an open relatively compact simply connected 
neighbourhood of $z_0$ such that 
\begin{equation}\label{qm2}
		 |dp| \geq 1/C \text{ on } p^{-1}(z) \text{ for all }z\in \Omega.
\end{equation}
Note that by the implicit function theorem all roots $\zeta=\zeta(z)$ of $p(\zeta) -z$ 
are simple and depend smoothly on $z\in \Omega$. Hence, after possibly shrinking $\Omega$, 
there exists  $0<C<\infty$ such that for any two distinct roots 
$\zeta(z) \neq \omega(z)$ we have that 
\begin{equation}\label{qm3}
		 | \zeta - \omega | \geq \frac{1}{C},
\end{equation}
uniformly for $z\in\Omega$. Recall the notation of the roots from 
Lemma \ref{lem:root1}. 
\\
\par
Until further notice we will denote the winding number of the curve $p(S^1)$ around a point $z\in \C$, 
see also \eqref{at9.0}, by 
\begin{equation}\label{def:index}
	\wind \defeq \wind(z) \defeq \mathrm{ind}_{p(S^1)}(z).
\end{equation}
\begin{assu}\label{As:Omega}
Let $z_0\in p(S^1)$ be as in \eqref{qm1}, and let $\Omega$ be as in \eqref{qm2}, so that \eqref{qm3} holds. 
Let $\Omega_N \Subset \Omega \backslash p(S^1)$ be an open relatively 
compact simply connected non-empty $N$-dependent set such that the winding 
number $d(z)\neq 0$ is constant for all $z\in \Omega_N$, and such that there 
exist constants $C_{\alpha},C_{\beta}, C_{\gamma}>0$ and $m^0_{\pm}\geq 0$, 
independent of $N$, such that for $N>0$ large enough (depending only on these constants) 
and for all $z\in\Omega_N$ the following holds: 
\begin{itemize}
\item we have that 
 \begin{equation}\label{qm4.0}
		p^{-1}(z) \cap (S^1 + D(0,C_{\gamma}N^{-1}\log N)) = \emptyset;
\end{equation}
\item when $\wind>0$, then $0\leq m_+^0 \leq \wind \leq m_+$ and 
\begin{equation}\label{qm4.0b}
		0<  |\zeta_1^+|\leq \cdots \leq |\zeta_{m_{+}-m_+^0}^+| < \frac{1}{C_\beta} <  1- 
	C_\alpha \frac{\log N}{N} \leq  |\zeta_{m_{+}-m_+^0+1}^+|\leq \cdots \leq |\zeta_{m_{+}}^+|  <1;
\end{equation}
\item when $\wind<0$, then $0\leq m_-^0 \leq -\wind\leq m_-$ and 
\begin{equation}\label{qm4.0c}
		1<  |\zeta_1^-|\leq \cdots \leq |\zeta_{m_-^0}^-| < 1+ C_\alpha \frac{\log N}{N} <  C_\beta \leq  |\zeta_{m_{-}^0+1}^+|\leq \cdots \leq |\zeta_{m_{-}}^-|  < + \infty.
\end{equation}
\end{itemize}
In \eqref{qm4.0b} we work with the notation convention that when $m_+ =m_+^0$ then the inequalities before $1/C_\beta$ are void, and when $m_+^0=0$, then the inequalities on the right hand side of $1-C_{\alpha}\log N/N$ are 
void. A similar convention is to be understood for \eqref{qm4.0c}. 
\end{assu}
\begin{rem}
1. Unless otherwise stated, all future estimates will only depend on
$\Omega$ and the constants 
in Assumption \ref{As:Omega}, and therefore will be uniform in $\Omega_N$. This will be understood 
implicitly when we state that an estimate is uniform in $\Omega_N$. 
\par
2. We recall that we will work with 
$z\in \Omega(\vep_{\ref{theo-location}},C_{\ref{theo-location}},N)$,
see 
\eqref{eq-Omegadef} and the assumptions of Theorem \ref{thm:main}.
In particular, ${\rm dist}(z, p(S^1))<C_{\ref{theo-location}}\log N/N$.
Let $z_0$ be the point on $p(S^1)$ closest to $z$. Then, since $dp(z_0)\neq 0$
(as we work in $\mathcal{G}_{p,\vep}$), 
as well as the existence of $C_\alpha$ as in \eqref{qm4.0b} or \eqref{qm4.0c},
e.g.~that for $d>0$, $\zeta_{m_+}^+\geq 
1-C_\alpha\log N/N$.
To see the multiplicity count, i.e. that e.g.~$m_+^0\leq d$ when $d>0$,
note that away from the set $\cB_2$,
if ${\sf g}(p)=1$ then there is precisely one $\theta_0$ with $p(e^{i\theta_0})=z_0$, and since $dp|_{\theta_0}\neq 0$, there is exactly one
root $\zeta_{m_+}$ near $1$. When ${\sf g}(p)>1$, there are 
${\sf g}(p)$ such roots, while $|d|\geq {\sf g}(p)$ if $d\neq 0$,
so again the same conclusion holds. On the other hand, by Remark \ref{rem:Cgamma}
we have that for any eigenvalue $z$ in the good set, with probability approaching one, the assumption \eqref{qm4.0} holds with $C_\gamma$ as defined in Remark \ref{rem:Cgamma}.
\end{rem}
Our aim is to construct \textit{quasimodes} for $P_N-z$, $z\in\Omega_N$, these are 
approximate $\ell^2$-normalized eigenvectors $\psi\in\C^N$ in the sense that 
\begin{equation*}
	\| (P_N-z)\psi \| \to 0
\end{equation*}
when $N\to \infty$. Our focus will be on obtaining quantitative estimates for this decay. 
The eigenvectors of $P_{[0,\infty[}$ corresponding to the eigenvalue $z$, when 
$\wind > 0$, and 
shifted versions of the eigenvectors of $P_{]-\infty,0]}$ corresponding to the eigenvalue 
$z$, when $\wind <0$, are excellent candidates for quasimodes (after truncation). 
\par
We will construct these quasimodes as follows: 
Recall from Proposition \ref{at:prop3} that these eigenvectors are exponential states which are 
created from powers of the roots of $p(\zeta)-z$. Using assumption \eqref{qm4.0b} and \eqref{qm4.0c} 
we will take care (whenever possible) to distinguish between eigenvectors which are built up only by 
roots strictly away from the unit circle, and those eigenvectors which contain also a contribution 
from roots close to the unit circle. After truncation we orthonormalize the states within the two 
respective groups. As we shall see below, 
we will need to slightly modify the behaviour at the boundary of the second group to obtain a sufficiently fast 
decay. This way we will obtain an almost orthogonal $|\wind|$-dimensional system of 
quasimodes, see Proposition \ref{prop:QM} below.
\\
\par
Recall Proposition \ref{at:prop3}. To construct quasimodes out of \eqref{at:prop3.1} respectively 
\eqref{at:prop3.1b}, we begin with constructing a suitable system of solutions to \eqref{at14} when $N_+>0$ 
and its analog when $N_->0$ with $m_+,N_+,\zeta_j^+$ replaced by $m_-,N_-,\zeta_j^-$ and the corresponding vectors $\mathfrak{A}_j^-$ indexed over $1\leq \nu \leq N_-$.
\\
\par
We begin with considering the case $N_+>0$. Notice that this implies that $0\notin p^{-1}(\Omega)$. 
As discussed after \eqref{qm2}, the roots $\zeta_z$ of $p(\zeta)-z$ depend smoothly on $z\in \Omega$, 
so, after potentially slightly shrinking $\Omega$, there exists a constant $0<C<\infty$ such that any root 
$\zeta_z$ satisfies 
\begin{equation}\label{nqeq1}
	1/C \leq |\zeta_z|, \text{ for all } z \in \Omega. 
\end{equation}
Let $z\in \Omega_N$, suppose that $\wind>0$ and recall from \eqref{at9.0} that $\wind = m_+-N_+$. 
In view of \eqref{qm4.0b}, we find that $m_+-m_+^0 \geq N_+>0$. For the sake of the presentation 
we first suppose that $\wind>m_+^0 \ge 1$. Recall the matrix $\mathfrak{A}$ from \eqref{at14} and write 
\begin{equation*}
	\mathfrak{B}_+=(\mathfrak{A}_1^+,\dots,\mathfrak{A}_{N_+}^+) \in \C^{N_+\times N_+}, \quad 
	\mathfrak{C}_+=(\mathfrak{A}_1^+,\dots,\mathfrak{A}_{m_+-m_+^0}^+)\in \C^{N_+\times (m_+-m_+^0)}
\end{equation*}
By \eqref{nqeq1} we have that for all $j=1,\dots, m_+$ 
\begin{equation}\label{nqeq1.1}
	\| \mathfrak{A}_j^+\| \leq C^{N_+}\sqrt{N_+}.
\end{equation}
Notice that $\mathfrak{B}_+$ is a Vandermonde type matrix, so 
$|\det \mathfrak{B}_+| = \prod_{k=1}^{N^+}|\zeta_k^+|^{-N^+}\prod_{1\leq i<j\leq N_+}|\zeta_i^+ - \zeta_j^+|$. 
Thus, by \eqref{nqeq1}, \eqref{qm3}, we see that $\mathfrak{B}_+$ is invertible and, using \eqref{nqeq1.1}, we 
find 
\begin{equation}\label{nqeq1.2}
	\|\mathfrak{B}_+\|, \|\mathfrak{B}_+^{-1}\| =O(1), 
\end{equation}
where the constant depends only on $\Omega$. Hence, by \eqref{qm4.0b}, 
we see that $\mathfrak{C}_+$ has rank $N_+$ and has a kernel of dimension 
$\dim \mathcal{N}(\mathfrak{C}_+)=m_+-m_+^0-N_+=\wind-m_+^0> 0$. Let $x_1, \dots, x_{\wind-m_+^0}
 \in \C^{m_+-m_+^0}$ be an orthonormal system spanning $\mathcal{N}(\mathfrak{C}_+)$. Clearly, 
\begin{equation}\label{nqeq2}
	\C^{m_+} \ni a^+_j := x_j \oplus 0_{m_+^0} \in \mathcal{N}(\mathfrak{A}) \text{ for } j =1,\dots,\wind-m_+^0. 
\end{equation}
For $j=\wind-m_+^0+1,\dots, \wind$, we put 
\begin{equation}\label{nqeq3}
	a^+_j := (-\mathfrak{B}_+^{-1}\mathfrak{A}^+_{j+N_+}, \delta_{j})\in \C^{m_+},
\end{equation}
where $\delta_k\in \C^{m_+-N_+}$ is so that $\delta_k(n)=\delta_{k,n}$ for $n \in [m_+ - N_+]$. By construction 
we have that the $a^+_j $ are linearly independent and that they span $\mathcal{N}(\mathfrak{A})$. 
Writing $X_+=(x_1,\dots,x_{\wind-m_+^0})\in \C^{(m_+-m_+^0)\times (\wind-m_+^0)}$ and 
$\Delta_+=(\delta_{\wind-m_+^0+1},\dots, \delta_{\wind}) \in \C^{\wind\times m_+^0}$, we put
\begin{equation*}
	A^+_1 =\begin{pmatrix} X_+  \\ 
	0 \\ 
	\end{pmatrix}, \quad 
	A^+_2 =  \begin{pmatrix}  -\mathfrak{B}_+^{-1}(\mathfrak{A}^+_{m_+-m_+^0+1},\dots,\mathfrak{A}^+_{m_+}) \\ 
	\Delta_+ \\ 
	\end{pmatrix},
\end{equation*}
where on the left hand side $0 \in \C^{m_+^0\times (\wind-m_+^0)}$. Then, 
\begin{equation}\label{nqeq4}
	A^+ = (a^+_1, \dots, a^+_{\wind}) 
	=(A^+_1,A^+_2).
\end{equation}
When $m_+^0=0$, then we have no solutions \eqref{nqeq3} and $A^+ = A^+_1=X_+$. When $\wind=m_+^0$, 
then we have no solutions \eqref{nqeq2} and all vectors $a_j^+$ are of the form \eqref{nqeq3}, so 
\begin{equation*}
	A^+ =A_2^+=\begin{pmatrix}  -\mathfrak{B}_+^{-1}(\mathfrak{A}^+_{N_++1},\dots,\mathfrak{A}^+_{m_+}) \\ 1_{\wind} \\ 
	\end{pmatrix}.
\end{equation*}
Keeping in mind these special cases, we shall keep writing \eqref{nqeq4} for all cases of $\wind\geq m_+^0\geq 0$, 
with the convention that $A_1^+$ is void when $\wind=m_+^0$ and that $A^+_2$ is void when $m_+^0=0$. 
\\
\par
When $\wind<0$ (keeping in mind \eqref{at9.0}) and $N_->0$, we work under the 
assumption \eqref{qm4.0c} and we consider the 
analogue of $\mathfrak{A}$ from \eqref{at14} with $m_+,N_+,\zeta_j^+$ replaced by $m_-,N_-,\zeta_j^-$ 
and the corresponding vectors $\mathfrak{A}_j^-$ indexed over $1\leq \nu \leq N_-$. Notice that 
since $N_->0$, it follows that $\infty \notin p^{-1}(\Omega)$, so, after potentially slightly shrinking 
$\Omega$, we have that any root $\zeta_z$ of $p(\zeta)-z$ 
satisfies $|\zeta_z| \leq C$ for all $z\in\Omega$. Thus, the analogue of \eqref{nqeq1.1} holds for $\mathfrak{A}_j^-$. 
Putting $\mathfrak{B}_-=(\mathfrak{A}_{m_--N_-+1} ^-,\dots,\mathfrak{A}_{m_-}^-)$, we find by a 
Van der Monde argument that the analogue of \eqref{nqeq1.2} holds for $\mathfrak{B}_-$. 
Then, by the exact same steps as above, with the obvious modifications, we can construct $|\wind|$ linearly independent solutions $a^-_j$, to $\mathfrak{A}a^-=0$, such that 
\begin{equation}\label{nqeq5}
	A^- = (a^-_1, \dots, a^-_{\wind}) 
	= \begin{pmatrix}  -\mathfrak{B}_-^{-1}(\mathfrak{A}^-_{1},\dots,\mathfrak{A}^-_{m_-^0}) & X_- \\ \Delta_-
	& 0 \\ 
	\end{pmatrix}=(A^-_1,A^-_2),
\end{equation}
%
where $\delta_k\in \C^{m_--N_-}$ is so that $\delta_k(n)=\delta_{k,n}$ for $n \in [m_- - N_-]$, 
$\Delta_- = (\delta_{1},\dots, \delta_{m_-^0}) \in \C^{|\wind|\times m_-^0}$, 
and $X_-\in\C^{(|\wind|-m_-^0)\times (|\wind|-m_-^0)}$ is a matrix with orthonormal columns
spanning $\mathcal{N}((\mathfrak{A}_{m_-^0+1} ^-,\dots,\mathfrak{A}_{m_-}^-))$. 
\\
\par
In what follows we identify $\ell^2([0,N-1])\simeq \C^N$, so for the sake of consistency 
we index vectors and the columns of matrices starting from $0$. 
\begin{prop}\label{lem:TruncEV}
	Let $N \geq N_0 >1$ be integers, $z_0 \in p(S^1)$ be as in \eqref{qm1},  
	$\Omega\Subset \C$ be a sufficiently small open simply connected relatively 
	compact neighbourhood of $z_0$, satisfying \eqref{qm2}, and 
	$\Omega_N\Subset \Omega \backslash p(S^1)$ be as in Assumption \ref{As:Omega}. 
	Let $N>0$ be sufficiently large, depending only on $\Omega$ and the 
	constants in Assumption \ref{As:Omega} and the roots of $p(\zeta)-z$ be denoted as in Lemma \ref{lem:root1}. Write 
	\begin{equation}\label{eq:mthfrk-z1}
		\mathfrak{z}_j^{+}=  (1,\zeta_j^+,\dots, (\zeta_j^+)^{N-1})^t \in \C^N, \quad j=1,\dots, m_+, 
	\end{equation}
	and 
	\begin{equation}\label{eq:mthfrk-z2}
		\mathfrak{z}_j^{-}=  ((\zeta_j^-)^{1-N},(\zeta_j^-)^{2-N},\dots, 1)^t \in \C^N, 
		\quad j=1,\dots, m_-,
	\end{equation}
	with the convention that ${1}/{\infty} =0$. Moreover, let 
	$ \mathfrak{Z}^\pm = ( \mathfrak{z}_1^{\pm}, \dots,\mathfrak{z}_{m_\pm}^{\pm}) $. 
	\begin{itemize}
	\item Suppose that $\wind > 0$. When $N_+>0$, let 
	$A^+ =(a_1^+,\dots,a_{ \wind}^+)$ be as in \eqref{nqeq4}, and when 
	$N_+ \leq 0$ put $A^+  = I_{m_+}\in \C^{m_+\times m_+}$. Define 
	$I_{+,1} := [\wind-m_+^0]$ and $I_{+,2} := [\wind]\backslash [\wind-m_+^0]$. 
	Define 
		\begin{equation}\label{qm5}
		 	(\widetilde{u}_1^+,\dots,\widetilde{u}^+_{\wind})= \mathfrak{Z}^+ 
			A^+ (G_{+,1}^{-1/2} \oplus \mathfrak{L}_{+}^{-1}G_{+,2}^{-1/2})
		\end{equation}
	with  
	$\mathfrak{L}_{+}=\diag( \|\mathfrak{z}_{m_+-m_+^0+1}^{+}\|, \dots,\|\mathfrak{z}_{m_+}^{+}\|)$, 
	$G_{+,2}=(\mathfrak{Z}^+A^+_2 \mathfrak{L}_{+}^{-1})^*(\mathfrak{Z}^+A^+_2 \mathfrak{L}_{+}^{-1})$, 
	and $G_{+,1}=(\mathfrak{Z}^+A^+_1)^*(\mathfrak{Z}^+A^+_1)$, using the convention that 
	$ (\widetilde{u}_1^+,\dots,\widetilde{u}^+_{\wind}) = \mathfrak{Z}^+ A^+ G_{+,1}^{-1/2}$ when 
	$I_{+,2} = \emptyset$, and that 
	$(\widetilde{u}_1^+,\dots,\widetilde{u}^+_{\wind}) = \mathfrak{Z}^+ A^+  \mathfrak{L}_{+}^{-1}G_{+,2}^{-1/2}$ when 
	$I_{+,1} = \emptyset$. 
	Then, 
	uniformly for $z\in\Omega_N$,
	\begin{equation}\label{qm6.01}
		\langle \widetilde{u}_n^+ | \widetilde{u}_m^+\rangle = 
	\left\{
	\begin{array}{ll} \delta_{n,m}, &  (n,m) \in I_{+,1}\times I_{+,1} \cup I_{+,2}\times I_{+,2}
		\\
		 \delta_{n,m} + O((\log N / N)^{1/2}), & (n,m)\in I_{+,1}\times I_{+,2} \cup 
		 I_{+,2}\times I_{+,1},
		\end{array}\right.
	\end{equation}
	 and
	\begin{equation}\label{qm6}
		 	\| \mathbf{1}_{[N_0,N-1]} \widetilde{u}_j^+ \|\leq O(1)
	\left\{		\begin{array}{ll}
			\e^{-N_0 \log C_\beta }, & j \in  I_{+,1}, \\
			\e^{-\frac{N_0}{N} \log N^{C_{\gamma}}}, &  j \in  I_{+,2}.
			\end{array}
			\right.
	\end{equation}
	\item Suppose that $\wind < 0$. When $N_->0$, let 
	$A^- =(a_1^-,\dots,a_{ |\wind|}^-)$ be as in \eqref{nqeq5}, and when 
	$N_- \leq 0$ put $A^-  = I_{m_-}\in \C^{m_-\times m_-}$. 
	Define 
	$I_{-,2} := [m_-^0]$ and $I_{-,1}
	:= [-\wind]\backslash [m_-^0]$.  Define
		\begin{equation}\label{qm5.1}
		 	(\widetilde{u}_1^-,\dots,\widetilde{u}^-_{|\wind|})= \mathfrak{Z}^-
			A^- ( \mathfrak{L}_{-}^{-1}G_{-,1}^{-1/2} \oplus G_{-,2}^{-1/2} )
		\end{equation}
	with  
	$\mathfrak{L}_{-}=\diag( \|\mathfrak{z}_{m_+-m_+^0+1}^{+}\|, \dots,\|\mathfrak{z}_{m_+}^{+}\|)$, 
	$G_{-,1}=(\mathfrak{Z}^-A^-_1 \mathfrak{L}_{-}^{-1})^*(\mathfrak{Z}^-A^-_1 \mathfrak{L}_{-}^{-1})$ 
	and $G_{-,2}=(\mathfrak{Z}^-A^-_2)^*(\mathfrak{Z}^-A^-_2)$, using a similar notation convention 
	as above. 
	Then,  
	uniformly for $z\in\Omega_N$,
	\begin{equation}\label{qm6.02}
		\langle \widetilde{u}_n^- | \widetilde{u}_m^-\rangle = 
\left\{		\begin{array}{ll} \delta_{n,m}, &    (n,m) \in I_{+,1}\times I_{+,1} \cup I_{+,2}\times I_{+,2}
		  \\
				     \delta_{n,m} + O((\log N / N)^{1/2}), 
				     &    
				     (n,m)\in I_{-,1}\times I_{-,2} \cup 
		 I_{-,2}\times I_{-,1},
		\end{array}\right.
	\end{equation}
	 and 
	\begin{equation}\label{qm6.2}
		 	\| \mathbf{1}_{[0,N-N_0-1]} 
			\widetilde{u}_j^-  \|\leq O(1)
	\left\{		\begin{array}{ll}
			\e^{-\frac{N_0}{N} \log N^{C_{\gamma}}}, &
			j \in  I_{-,2},\\
			\e^{-N_0 \log C_\beta}, &   j \in  I_{-,1}.
			\end{array}\right.
	\end{equation}
	%
%
	\end{itemize}
\end{prop}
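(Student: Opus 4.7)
The plan is to focus on the case $d>0$ (the case $d<0$ being symmetric via the substitution $\zeta\mapsto 1/\zeta$, which swaps the roles of left/right truncation, $I_{-,1}/I_{+,1}$, etc.), and to split everything into a ``deep'' block (indexed by $I_{+,1}$, built from roots with $|\zeta_j^+|\leq 1/C_\beta$) and a ``close'' block (indexed by $I_{+,2}$, involving the roots with $|\zeta_j^+|\in[1-C_\alpha\log N/N,\,1-C_\gamma\log N/N]$, where the upper bound comes from \eqref{qm4.0}). All estimates reduce to the explicit formulas
\[
\|\mathfrak{z}_j^+\|^2=\frac{1-|\zeta_j^+|^{2N}}{1-|\zeta_j^+|^2},\qquad \langle\mathfrak{z}_j^+,\mathfrak{z}_l^+\rangle=\frac{1-(\zeta_j^+\overline{\zeta_l^+})^N}{1-\zeta_j^+\overline{\zeta_l^+}},
\]
and to the separation hypotheses \eqref{qm3}, \eqref{qm4.0}, \eqref{qm4.0b}.

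First I would establish the following dictionary of sizes. For $j\in I_{+,1}$: $\|\mathfrak{z}_j^+\|=O(1)$, and all inner products $\langle \mathfrak{z}_j^+,\mathfrak{z}_l^+\rangle$ for $j,l\in I_{+,1}$ are $O(1)$, with the corresponding Gram matrix converging as $N\to\infty$ to the Cauchy matrix $(1-\zeta_j^+\overline{\zeta_l^+})^{-1}$ on the distinct nodes inside $D(0,1/C_\beta^2)$, which is positive definite; hence $G_{+,1}$ is uniformly bounded above and below and $G_{+,1}^{-1/2}=O(1)$. For $j$ a close-root index: $\|\mathfrak{z}_j^+\|^2\asymp N/\log N$, because $1-|\zeta_j^+|^2\asymp \log N/N$ while $|\zeta_j^+|^{2N}\leq N^{-2C_\gamma}\to 0$. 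Hence $\mathfrak L_+^{-1}\asymp(\log N/N)^{1/2}$. Cross inner products $\langle\mathfrak{z}_j^+,\mathfrak{z}_l^+\rangle$ between distinct close roots, or between a close and a deep root, are $O(1)$, because $|1-\zeta_j^+\overline{\zeta_l^+}|\gtrsim 1$ (deep-close case: one factor has modulus $\leq 1/C_\beta$; close-close case: $|1-\zeta_j^+\overline{\zeta_l^+}|\geq |\zeta_l^+|\cdot|\zeta_j^+-\zeta_l^+|-(1-|\zeta_l^+|^2)\gtrsim 1/C$ by \eqref{qm3}).

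Second, I would feed this dictionary through the explicit formulas \eqref{qm5}. After multiplication by $\mathfrak L_+^{-1}$, each column of $\mathfrak Z^+A_2^+\mathfrak L_+^{-1}$ decomposes as a normalized close-root vector $\mathfrak{z}_l^+/\|\mathfrak{z}_l^+\|$ (of norm $1$) plus a deep-root correction of norm $O((\log N/N)^{1/2})$, the latter coming from $\|\mathfrak B_+^{-1}\mathfrak A_{\cdot}^+\|=O(1)$ divided by $\|\mathfrak z_l^+\|$. Taking inner products, the on-diagonal block of $G_{+,2}$ equals $I+O(\log N/N)$ (diagonal 1's from the close-root unit vectors, plus $O(\log N/N)$ off-diagonals obtained as $O(1)/(\|\mathfrak{z}\|\|\mathfrak{z}\|)$), so $G_{+,2}^{-1/2}=I+O(\log N/N)$. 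The cross block $(\mathfrak Z^+A_2^+\mathfrak L_+^{-1})^*(\mathfrak Z^+A_1^+)$ is composed of $O(1)$ inner products divided by a single factor $\|\mathfrak{z}\|\asymp(N/\log N)^{1/2}$, so it is $O((\log N/N)^{1/2})$. Combining with the $O(1)$ norms of $G_{+,1}^{-1/2}$ and $G_{+,2}^{-1/2}$ yields \eqref{qm6.01}; the diagonal identities are then immediate from $\widetilde u_j^{\pm *}\widetilde u_k^{\pm}=(G_{\pm,i}^{-1/2}G_{\pm,i}G_{\pm,i}^{-1/2})_{jk}=\delta_{jk}$.

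Third, for the decay bounds \eqref{qm6} I would bound $\|\mathbf1_{[N_0,N-1]}\mathfrak{z}_l^+\|^2\leq |\zeta_l^+|^{2N_0}/(1-|\zeta_l^+|^2)$ in the two regimes: for deep roots this is $O((1/C_\beta)^{2N_0})$, yielding the first line of \eqref{qm6} after using that $\widetilde u_j^+$ for $j\in I_{+,1}$ is a bounded combination of deep $\mathfrak{z}_l^+$'s; for close roots, dividing by $\|\mathfrak z_l^+\|^2=(1-|\zeta_l^+|^{2N})/(1-|\zeta_l^+|^2)\geq 1/2$ and using $|\zeta_l^+|^{2N_0}\leq(1-C_\gamma\log N/N)^{2N_0}\leq \exp(-2C_\gamma N_0\log N/N)$ gives the second line, and the deep-root correction is already $O((\log N/N)^{1/2})$ times something of the same exponential size. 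The main obstacle in the argument is the uniform invertibility of $G_{+,1}$ with $N$-independent constants: this requires identifying the Gram matrix as a perturbation of a Cauchy-type kernel matrix and using the separation estimate \eqref{qm3} together with \eqref{qm4.0b} to quantify the positive-definiteness; once this (and its symmetric analog for $G_{-,1}$) is in hand, the rest is geometric-series bookkeeping.
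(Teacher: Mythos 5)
Your proposal is essentially correct and follows the same architecture as the paper's proof: establish the size dictionary $\|\mathfrak{z}_j^+\|=O(1)$ for deep roots versus $\|\mathfrak{z}_j^+\|\asymp(N/\log N)^{1/2}$ for close roots with $O(1)$ cross inner products, then read off the Gram structure of \eqref{qm5} block by block, and finish with the geometric-series bound $\|\mathbf{1}_{[N_0,N-1]}\mathfrak{z}_l^+\|^2\le|\zeta_l^+|^{2N_0}/(1-|\zeta_l^+|^2)$. The one place you depart from the paper is the lower bound on $G_{+,1}$: you propose to identify the Gram matrix of the deep $\mathfrak{z}_j^+$ as a perturbation of a Cauchy kernel matrix and invoke its positive definiteness, whereas the paper instead uses the block decomposition \eqref{qm9}--\eqref{qm11} to reduce the bound to a Vandermonde singular value estimate; both reduce to the separation hypothesis \eqref{qm3} and are equivalent for this purpose, so this is a cosmetic difference rather than a gap.
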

%
%

\begin{rem}\label{rem:pstate}
When $d >0$, the normalized versions of the vectors defined in \eqref{eq:mthfrk-z1} will be termed as the pure states associated with $z$. Similarly for $d<0$ the normalized version vectors \eqref{eq:mthfrk-z2} will be the pure states corresponding to $z$.

Thus, for $z$ satisfying \eqref{qm4.0b} we note that the pure states $\{\mathfrak{z}_j^+/\|\mathfrak{z}_j^+\|\}_{j=1}^{m_+-m_+^0}$ are completely localized and its entries decay exponentially in $N$, while the remaining pure states decay and localize at a scale $N/\log N$. Similar conclusion holds for $z$'s satisfying \eqref{qm4.0c}.
\end{rem}

\begin{rem}\label{rem:jb1}
For later use, let us record that for the Jordan block if $z$ is inside the the unit disc then it has one pure state $\mathfrak{z}_1^+$, and $\wt u_1^+ = \mathfrak{z}_1^+/\|\mathfrak{z}_1^+\|$.
\end{rem}

We postpone the proof of Proposition \ref{lem:TruncEV}
to after that of Proposition \ref{prop:QM}. 
Continuing, we notice that the $\widetilde{u}_j^\pm$ defined in \eqref{qm5} resp. \eqref{qm5.1}, 
are the truncated kernel elements of $P_{[0,+\infty[}-z$ resp. $P_{]-\infty,N-1]}-z$, separated into 
two groups according to their decay behaviour and orthonormalized within their respective groups.  
We could use them already as quasimodes for $P_N-z$. In fact, 
it follows from \eqref{qm6} with $N_0 = N- (N_+\lor 0 + N_- \lor 0)$, that when $\wind >0$ 
\begin{equation}\label{qn1}
	\|(P_N-z)\widetilde{u}^+_j\| \leq O(1) \| \mathbf{1}_{[N_0,N-1]} \widetilde{u}_j ^+\|
	\leq O(1) 
	\left\{	\begin{array}{ll}
			\e^{-N \log C_\beta },&  j \in I_{+,1}, \\
			N^{-C_{\gamma}}, &j \in I_{+,2},
			\end{array}\right.
\end{equation}
and similarly from \eqref{qm6.2} when $\wind<0$. 
Ultimately we want to use these quasimodes to obtain  spectral gaps
between the 
$(|\wind|-m_{\mathrm{sign}(\wind)}^0)$-th and the $(|\wind|-m_{\mathrm{sign}(\wind)}^0+1)$-th, and between the 
$|\wind|$-th and $(|\wind|+1)$-th singular value of $P_N-z$. We will see, that these quasimodes will be sufficiently good for our purposes when $\gamma > 2$. However, to obtain sufficiently fast decay 
when $\gamma >1$ we need to modify them.
\begin{prop}\label{prop:QM}
	Under the assumptions of Proposition \ref{lem:TruncEV}, we put $N_0 = (1-1/\log N)(N-1)$, and we 
	have the following:
	\par
	1. If $\wind >0$, let $\widetilde{u}^+_j$, $j=1,\dots, \wind$, be as in \eqref{qm5} and 
	define $\psi^+_j\in \C^N$, by   
	\begin{equation}\label{qn2}
		\psi^+_j(\nu) =
	\left\{	\begin{array}{ll}
		\widetilde{u}^+_j(\nu), & j \in I_{+,1}, \\
		 \mathbf{1}_{[0,N_0]} (\nu)\widetilde{u}^+_j (\nu)
		+ \mathbf{1}_{[N_0+1,N-1]}(\nu)
		 \frac{\cos ( \frac{\pi}{2} \frac{\nu}{N-1})}{\cos ( \frac{\pi}{2} \frac{N_0}{N-1})}
		 \widetilde{u}^+_j (\nu), & j \in I_{+,2}.
		 \end{array}\right.
	\end{equation}
	\par
	2. If $\wind < 0$, let $\widetilde{u}^-_j$, $j=1,\dots, -\wind$, be as in \eqref{qm5.1} and 
	define $\psi^-_j\in \C^N$, by   
	\begin{equation}\label{qn2.1}
		\psi^-_j(\nu) = 
		\left\{\begin{array}{ll}
		\mathbf{1}_{[0,N-N_0]} (\nu) 
		 \frac{\cos ( \frac{\pi}{2} \frac{N-1-\nu}{N-1})}{\cos ( \frac{\pi}{2} \frac{N_0}{N-1})}\widetilde{u}^-_j (\nu)
		+ \mathbf{1}_{[N-N_0+1,N-1]}(\nu)\widetilde{u}^-_j (\nu), & j \in I_{-,2}, \\ 
		\widetilde{u}^-_j(\nu), & j \in I_{-,1}. 
		\end{array}\right.
	\end{equation}
	Then, for $i,j=1,\dots, |\wind|$, uniformly in $z\in\Omega_N$,
	\begin{equation}\label{qn3}
		\langle \psi^{\mathrm{sign}(\wind)}_i | \psi^{\mathrm{sign}(\wind)}_j\rangle 
		= \delta_{i,j} + 
		\left\{\begin{array}{ll} 0, &\text{ when }  i,j \in I_{ \mathrm{sign}(\wind),1} \\
				  O( \log N) N^{-C_{\gamma}},&\text{ when }  i,j \in I_{ \mathrm{sign}(\wind),2} \\
				  O( \log N) N^{-C_{\gamma}} +O((\log N / N)^{1/2}),& \text{else},
		\end{array}\right. 
	\end{equation}
	and 
	\begin{equation}\label{qn4}
		\| (P_N -z) \psi_i^{\mathrm{sign}(\wind)} \| =  O(1)
		\left\{	  \begin{array}{ll} \e^{-N \log C_{\beta}}, & \text{ when }  i\in I_{ \mathrm{sign}(\wind),1} \\
				  \frac{\log N}{N} N^{-C_{\gamma}},& \text{ when }  i\in I_{ \mathrm{sign}(\wind),2}. \\
		\end{array}\right.
	\end{equation}
	%
\end{prop}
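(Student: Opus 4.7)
The plan is to treat $d>0$ in detail (the case $d<0$ is symmetric, using $\mathfrak{z}^-$ and the left boundary). Both conclusions \eqref{qn3} and \eqref{qn4} reduce to computations centered on the cutoff $\bar c\colon\mathbb{Z}\to\mathbb{R}$ obtained by extending the cutoff in \eqref{qn2}: $\bar c=1$ on $(-\infty,N_0]$, $\bar c(\mu)=\cos(\pi\mu/(2(N-1)))/\cos(\pi N_0/(2(N-1)))$ on $[N_0,N-1]$, and $\bar c=0$ on $[N,\infty)$. The key property is that $\bar c(N-1)=0$ (since $\cos(\pi/2)=0$) makes $\bar c$ continuous on $\mathbb{Z}$; together with $\cos(\pi N_0/(2(N-1)))\asymp 1/\log N$, this gives $\|\bar c\|_{\ell^\infty}=O(\log N)$ and $\|\bar c\|_{\mathrm{Lip}}=O(\log N/N)$.

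First I would handle $j\in I_{+,1}$, where $\psi^+_j=\widetilde u^+_j$. By construction $\widetilde u^+_j$ is the restriction to $[0,N-1]$ of a bi-infinite exponential combination $\bar u^+_j\in\ker(\mathrm{Op}(p)-z)|_{\mathbb{Z}}$ which vanishes on $[-N_+,-1]$ (the defining equation $\mathfrak{A}a^+_j=0$ from \eqref{at14}); for $j\in I_{+,1}$, $\bar u^+_j$ involves only the fast-decay modes $\mathfrak{z}^+_k$ with $|\zeta^+_k|\leq 1/C_\beta$. A direct computation then gives $(P_N-z)\widetilde u^+_j(\nu)=-\sum_{\mu\geq N}p_{\nu-\mu}\bar u^+_j(\mu)$, supported in the top $N_+$ rows and bounded pointwise by $O(C_\beta^{-N})$, yielding the first line of \eqref{qn4}. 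The orthogonality $\langle\psi^+_i|\psi^+_j\rangle=\delta_{ij}$ for $i,j\in I_{+,1}$ is immediate from \eqref{qm6.01}.

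Next I would handle $j\in I_{+,2}$, where $\psi^+_j=\bar c\cdot\widetilde u^+_j$. Using $\bar u^+_j|_{[-N_+,-1]}=0$, $\bar c|_{[N,\infty)}=0$, and the band structure of $p$, I would verify for $\nu\in[0,N-1]$ the identity
\[
(P_N-z)\psi^+_j(\nu)\;=\;[\mathrm{Op}(p),M_{\bar c}]\bar u^+_j(\nu)\;=\;\sum_\mu p_{\nu-\mu}(\bar c(\mu)-\bar c(\nu))\bar u^+_j(\mu),
\]
where the middle equality uses crucially $(\mathrm{Op}(p)-z)\bar u^+_j=0$ on $\mathbb{Z}$. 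The commutator vanishes unless $\nu\in[N_0-N_-,N-1]$, a window of length $O(N/\log N)$. On this window $|\bar c(\mu)-\bar c(\nu)|\lesssim\log N/N$ for $|\mu-\nu|\leq N_++N_-$. The pointwise bound $|\bar u^+_j(\mu)|\lesssim\sqrt{\log N/N}\cdot N^{-C_\gamma}$ for $\mu\in[N_0-N_-,N-1+N_-]$ will follow from the explicit formula $\widetilde u^+_j=\mathfrak{Z}^+A^+_2\mathfrak{L}_+^{-1}G_{+,2}^{-1/2}$: each coefficient of a slow mode $\mathfrak{z}^+_k$ is $O(\sqrt{\log N/N})$ (since $\|\mathfrak{z}^+_k\|\asymp\sqrt{N/\log N}$), and each $|\zeta^+_k|^\mu\lesssim N^{-C_\gamma}$ for $\mu\gtrsim N_0$, while the fast-decay contributions are negligible in this window. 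Combining, $|[\mathrm{Op}(p),M_{\bar c}]\bar u^+_j(\nu)|\lesssim(\log N/N)^{3/2}N^{-C_\gamma}$ pointwise, and summing in $\ell^2$ over the window of length $O(N/\log N)$ gives the second line of \eqref{qn4}.

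Finally, for the orthogonality \eqref{qn3} in the mixed and $I_{+,2}\times I_{+,2}$ cases, I would expand $\langle\psi^+_i|\psi^+_j\rangle=\langle\widetilde u^+_i|\widetilde u^+_j\rangle+(\text{correction supported on }[N_0+1,N-1])$. The leading term is controlled by \eqref{qm6.01}, and the correction is bounded using $|\bar c|=O(\log N)$ on $[N_0+1,N-1]$ together with the pointwise sizes of $\widetilde u^+_i,\widetilde u^+_j$ there, yielding an error of size $O((\log N)^2N^{-2C_\gamma})$ in the $I_{+,2}\times I_{+,2}$ case (subsumed by the allowed error) and exponentially small in the mixed case (since $|\widetilde u^+_i|$ for $i\in I_{+,1}$ decays at rate $\log C_\beta$ throughout $[N_0,N-1]$). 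The main obstacle will be (a) verifying the cancellation of all boundary contributions in the identity $(P_N-z)\psi^+_j=[\mathrm{Op}(p),M_{\bar c}]\bar u^+_j$, where the vanishing $\bar c(N-1)=0$ is essential to eliminate the top-boundary term that would otherwise contribute at size $N^{-C_\gamma}$ instead of the improved $(\log N/N)N^{-C_\gamma}$; and (b) extracting from the construction in Proposition \ref{lem:TruncEV} the uniform pointwise bound $|\widetilde u^+_j(\nu)|\lesssim\sqrt{\log N/N}\cdot N^{-C_\gamma}$ on $[N_0,N-1]$, which requires tracking the normalizing matrices $G_{+,2}^{-1/2}$ and $\mathfrak{L}_+^{-1}$ from that construction.
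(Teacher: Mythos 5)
Your argument is correct and follows the same skeleton as the paper's proof: the cosine cutoff with the forced vanishing at $\nu=N-1$, the cancellation coming from $(\mathrm{Op}(p)-z)\bar u^+_j=0$ on $\mathbb{Z}$ together with the vanishing of $\bar u^+_j$ on $[-N_+,-1]$ to eliminate the bottom boundary, and the smallness of $\bar c$ (rather than cancellation) to control the top boundary. The main stylistic difference is bookkeeping. Where you package the computation of $(P_N-z)\psi^+_j$ as a single commutator identity $(P_N-z)\psi^+_j=[\mathrm{Op}(p-z),M_{\bar c}]\bar u^+_j$ and then feed in pointwise bounds on $\bar u^+_j$ over the window $[N_0-O(1),\,N+O(1)]$, the paper splits $[0,N-1]$ into four regimes (\eqref{qn6}--\eqref{qn10}) and in each one exhibits a factor $O(N^{-1}\log N)\|\mathbf{1}_{I(\nu)}\widetilde u^+_j\|$, then sums in $\ell^2$ and invokes the ready-made $\ell^2$ estimate of \eqref{qm6} with a shifted $N_0$; this sidesteps re-deriving a pointwise bound from the explicit formula for $\widetilde u^+_j$. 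Both routes give $O((\log N/N)N^{-C_\gamma})$, and your pointwise bound $|\bar u^+_j(\mu)|\lesssim\sqrt{\log N/N}\,N^{-C_\gamma}$ on the window is indeed extractable from $\widetilde u^+_j=\mathfrak Z^+A^+_2\mathfrak L_+^{-1}G_{+,2}^{-1/2}$, using $\mathfrak L_+^{-1}G_{+,2}^{-1/2}=O(\sqrt{\log N/N})$ from \eqref{qmn3} and $\|T\|=O(1)$. For the almost-orthogonality, the paper derives \eqref{qn3} from $\|\psi^+_j-\widetilde u^+_j\|=O(\log N)N^{-C_\gamma}$ (\eqref{qn5.1}) via a three-term expansion, while you bound the correction term directly with pointwise estimates; your bound is actually sharper but the paper's is simpler to state, and both are within the allowed error. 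Your diagnosis of where the vanishing $\bar c(N-1)=0$ enters is right: it is the boundedness of the commutator, not its validity, that relies on it. One small slip: for $j\in I_{+,1}$, the quantity $-\sum_{\mu\geq N}p_{\nu-\mu}\bar u^+_j(\mu)$ is supported in $\nu\geq N-N_-$, i.e.\ the top $N_-$ rows (not $N_+$), but the exponential bound is unchanged.
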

We refer to the vectors $\widetilde{\psi}^{\pm}_j$ as \textit{quasimodes}.
\begin{proof}
	We will consider only the case when $\wind >0$, the case when $\wind <0$ is similar. 
	Let $N>0$ be large enough so that the conclusions of Proposition \ref{lem:TruncEV} hold.
	\par
	1. First notice that 
	\begin{equation}\label{qn5}
		\cos \left( \frac{\pi}{2} \frac{N_0}{N-1}\right) =
		 \sin \frac{\pi }{2 \log N} = 
		 \frac{\pi }{2 \log N} ( 1 + O( (\log N)^{-2}).
	\end{equation}
	Let $j=\wind-m_+^0+1,\dots, \wind$, then
	\begin{equation*}
		\| \widetilde{u}_j^+ - \psi_j^+ \|^2
		 = \sum _{\nu =N_0+1}^{N-1} \left|1 -  \frac{\cos ( \frac{\pi}{2} \frac{\nu}{N-1})}{\cos ( \frac{\pi}{2} \frac{N_0}{N-1})}\right |^2 | \widetilde{u}^+_j(\nu)|^2 
		 \leq O(  (\log N)^2) \|  \mathbf{1}_{[N_0+1,N-1]}\widetilde{u}_j^+ \| ^2. 
	\end{equation*}
	Applying \eqref{qm6}, we get that 
	\begin{equation}\label{qn5.1}
		\| \widetilde{u}_j^+ - \psi_j^+ \|
		 \leq O( \log N) N^{-C_{\gamma}}. 
	\end{equation}
	\begin{rem}\label{rem:Modes1}
	For later use we note here that the analogue of \eqref{qn5.1} in the case when $\wind<0$ holds, i.e. 
	$$\| \widetilde{u}_j^- - \psi_j^- \| \leq O( \log N) N^{-C_{\gamma}}, \quad j \in I_{-,2}.$$
	\end{rem}
	This, together with the $\widetilde{u}_j^+$, $j=1,\dots, \wind$, 
	being normalized, see \eqref{qm6.01}, 
	yields that 
	\begin{equation*}
	         \| \psi _j^+\| = 1 + \left\{
		 \begin{array}{ll} 0, & \text{ when } j \in I_{+,1} \\
				  O( \log N) N^{-C_{\gamma}},& \text{ when }   j  \in I_{+,2}.
		\end{array}\right.
	\end{equation*}
	Using furthermore the fact that the $\widetilde{u}_j^+$ are almost orthogonal, see \eqref{qm6.01}, 
	we find that %
	\begin{equation*}
	\begin{split}
	         \langle \psi_i^+ | \psi_j^+\rangle  &=   \langle \psi_i^+ - \widetilde{u}_i^{+} | \psi_j\rangle 
	         +   \langle \widetilde{u}_i^{+} | \psi_j^+-\widetilde{u}_j^{+}\rangle  +
	           \langle \widetilde{u}_i^{+} | \widetilde{u}_j^{+} \rangle  \\
	           & = \delta_{i,j} + 
		   \left\{\begin{array}{ll} 0, & \text{ when }  i,j   \in I_{+,1}, \\
				  O( \log N) N^{-C_{\gamma}},& \text{ when } i, j  \in I_{+,2}\\
				  O( \log N) N^{-C_{\gamma}} +O((\log N /N)^{1/2}),&\text{else}.
		\end{array}\right. 
	\end{split}
	\end{equation*}
	\par
	2. It remains to prove \eqref{qn4}. Recall from \eqref{qm5} and Proposition \ref{at:prop3} that 
	the 
	$\widetilde{u}_j^+= ( \mathfrak{Z}^+ A^+ (G_{+,1}^{-1/2} \oplus \mathfrak{L}_{+}^{-1}G_{+,2}^{-1/2}))_{-,j}$ 
	are the truncated and normalized (in fact almost orthonormalized) kernel elements 
	of $P_{[0,+\infty[}-z$. Here, and in the sequel we denote by $A_{-,j} \in \C^n$ 
	the $j$-th column of some $n\times m$ matrix $A$ as a vector in $\C^n$. 
	Let $u_k\in \ell^2([0,+\infty[)$, $k=1, \dots, \wind$, be given by 
	\begin{equation}
	  \label{eq-pkak1}
	  u_k(\nu) = \left\{
	  \begin{array}{ll}
	    (\mathfrak{Z}^+A^+)_{\nu,k}, & 
	0\leq \nu \leq N-1,\\
	0, &\nu \geq N.
      \end{array}
      \right.
    \end{equation}
    Then, 
	\begin{equation*}
	((P_{[0,+\infty[}-z)u_k)(\nu) =0, \quad 0\leq \nu \leq N -1 - N_-\lor 0. 
	\end{equation*}
	By \eqref{at9.0} we see that $N_->0$ since we work here with $\wind>0$. 
	To ease the notation we write $\widetilde{N}_\pm = N_\pm \lor 0$, and 
	$\widetilde{a}_k = a_k - z\delta_{k,0}$ when $a_k$ exists, otherwise 
	$\widetilde{a}_k = - z\delta_{k,0}$, so that 
	\begin{equation*}
		 \sum_{k=-N_-}^{N_+} a_k \tau^k -z =  \sum_{k=-\widetilde{N}_-}^{\widetilde{N}_+} 
		 \widetilde{a}_k \tau^k.
	\end{equation*}
	Thus, for $0\leq \nu \leq N -1 - \widetilde{N}_-$, 
	\begin{equation}\label{qn6}
	\begin{split}
		(P_N-z)\widetilde{u}_j^+ (\nu) &= \sum_{k=-\widetilde{N}_-}^{\widetilde{N}_+} \widetilde{a}_k
		\mathbf{1}_{[0,N-1]} (\nu-k) \widetilde{u}_j^+ (\nu - k)\\
		&\stackrel{\eqref{qm5}}{=}  \sum_{k=-\widetilde{N}_-}^{\widetilde{N}_+} \widetilde{a}_k\mathbf{1}_{[0,N-1]} (\nu-k) 
		( \mathfrak{Z}^+ A^+ (G_{+,1}^{-1/2} \oplus \mathfrak{L}_{+}^{-1}G_{+,2}^{-1/2}))_{(\nu - k),j} \\ 
		&=\sum_{n=1}^{\wind}\sum_{k=-\widetilde{N}_-}^{\widetilde{N}_+} \widetilde{a}_k\mathbf{1}_{[0,N-1]} (\nu-k)
		 (\mathfrak{Z}^+A^+)_{(\nu-k),n}( G_{+,1}^{-1/2} \oplus \mathfrak{L}_{+}^{-1}G_{+,2}^{-1/2})_{n,j} \\
		 &\stackrel{\eqref{eq-pkak1}}{=}
		 \sum_{n=1}^{\wind}  (G_{+,1}^{-1/2} \oplus \mathfrak{L}_{+}^{-1}G_{+,2}^{-1/2})_{n,j} 
		((P_{[0,+\infty[}-z)u_{n})(\nu) =0.
	\end{split}
	\end{equation}
	%
	Thus, for $ 0 \leq \nu \leq N -1 - \widetilde{N}_-$ and $j\in I_{+,1}$ 
	\begin{equation*}
		(P_N-z)\psi_j^+ (\nu)  = (P_N-z)\widetilde{u}_j (\nu) =0.
	\end{equation*}
	Recall that $z\in\Omega\Subset \C$ is relatively compact. Hence, 
	applying \eqref{qm6} with $N_0 =  N-1-\widetilde{N}_--\widetilde{N}_+$, we see 
	that, uniformly in $z\in\Omega_N$,
	\begin{equation}\label{qn7.0}
	  \| (P_N -z) \psi_{j}^{+} \| = 
		 O(1)\|  \mathbf{1}_{[N-1-\widetilde{N}_--\widetilde{N}_+,N-1]} \psi_i^{+} \| 
		 =O(1) \e^{-N \log C_\beta }, ~j\in I_{+,1}, 
	\end{equation}
	\par 
	Until further notice let $j\in I_{+,2}$. Equation \eqref{qn6} implies that 
	for $ 0 \leq \nu \leq N_0 - \widetilde{N}_-$
	\begin{equation*}
	  (P_N-z)\psi_j^+ (\nu)  = (P_N-z)\widetilde{u}_j^+
	  (\nu) =0.
	\end{equation*}
	Next, by \eqref{qn5}, we have for
	$N_0 - (\widetilde{N}_+ + \widetilde{N}_-) < \nu 
	\leq N_0 + (\widetilde{N}_+ + \widetilde{N}_-) $ that
	\begin{equation}\label{qn7}
		\left| 1 -\frac{\cos ( \frac{\pi}{2} \frac{\nu}{N-1})}{\cos ( \frac{\pi}{2} \frac{N_0}{N-1})} \right| 
		\leq O( N^{-1} \log N ) | N_0 - \nu| = O( N^{-1} \log N ).
	\end{equation}
	Hence, for $N_0 - \widetilde{N}_- < \nu \leq N_0  + \widetilde{N}_+$, we get 
	by \eqref{qn6}, \eqref{qn7} that 
	\begin{equation}\label{qn8}
	\begin{split}
		(P_N-z)\psi_j^+ (\nu) &= \sum_{k=-\widetilde{N}_-}^{\widetilde{N}_+} \widetilde{a}_k\psi_j ^+(\nu - k) \\
		&=  \sum_{k=-\widetilde{N}_-}^{\widetilde{N}_+} \widetilde{a}_k \widetilde{u}_j^+(\nu - k)
	        -  \sum_{k=-\widetilde{N}_-}^{\widetilde{N}_+} \widetilde{a}_k \left( 1 -\frac{\cos ( \frac{\pi}{2} \frac{\nu-k}{N-1})}{\cos ( \frac{\pi}{2} \frac{N_0}{N-1})} \right) \widetilde{u}_j^+(\nu - k)\mathbf{1}_{[N_0+1,N-1]} (\nu-k)\\ 
		&=O( N^{-1} \log N ) \| \mathbf{1}_{I(\nu)} \widetilde{u}_j^+ \|,
	\end{split}
	\end{equation}
	where $I(\nu) := [ \nu -\widetilde{N}_+, \nu + \widetilde{N}_-]$. 
	For $N_0 +\widetilde{N}_+ < \nu \leq N-1-\widetilde{N}_-$ we 
	get by \eqref{qn6} and a similar estimate as in \eqref{qn7}, \eqref{qn8}, that 
	\begin{equation}\label{qn9}
	\begin{split}
		(P_N-z)\psi_j^+ (\nu) 
		&=  \frac{\cos ( \frac{\pi}{2} \frac{\nu}{N-1})}{\cos ( \frac{\pi}{2} \frac{N_0}{N-1})}
		 \sum_{k=-\widetilde{N}_-}^{\widetilde{N}_+} \widetilde{a}_k \widetilde{u}_j^+(\nu - k) 
	          -  \sum_{k=-\widetilde{N}_-}^{\widetilde{N}_+} \widetilde{a}_k \frac{\cos ( \frac{\pi}{2} \frac{\nu}{N-1})-\cos ( \frac{\pi}{2} \frac{\nu-k}{N-1})}{\cos ( \frac{\pi}{2} \frac{N_0}{N-1})} 
	        \widetilde{u}_j^+(\nu - k)\\ 
		&=O( N^{-1} \log N ) \| \mathbf{1}_{I(\nu)} \widetilde{u}_j^+ \|.
	\end{split}
	\end{equation}
	Now let $N-1 -\widetilde{N}_- < \nu \leq N-1$. For $-\widetilde{N}_- \leq k \leq \widetilde{N}_+$ 
	we have that 
	$\cos (\frac{\pi}{2}\frac{\nu-k}{N-1}) \asymp N^{-1}$, so by \eqref{qn5} we get that 
	\begin{equation}\label{qn10}
	\begin{split}
		(P_N-z)\psi_j^+ (\nu) &= 
		 \sum_{k=-\widetilde{N}_-}^{\widetilde{N}_+} \widetilde{a}_k 
		 \frac{\cos ( \frac{\pi}{2} \frac{\nu-k}{N-1})}{\cos ( \frac{\pi}{2} \frac{N_0}{N-1})}	
		 \widetilde{u}_j^+(\nu - k)   \mathbf{1}_{[N_0+1,N-1]}(\nu-k)\\
		&=O( N^{-1} \log N ) \| \mathbf{1}_{I(\nu)} \mathbf{1}_{[N_0+1,N-1]} \widetilde{u}_j ^+\|.
	\end{split}
	\end{equation}
	Combining \eqref{qn6}, \eqref{qn8}, \eqref{qn9} and \eqref{qn10} we obtain that 
	\begin{equation}\label{qn11}
	\begin{split}
		\| (P_N-z)\psi_j^+ \|^2 & \leq O (( N^{-1} \log N )^2) \sum_{\nu= N_0-\widetilde{N}_-}^{N-1} \| \mathbf{1}_{I(\nu)} \mathbf{1}_{[N_0-(\widetilde{N}_+ + \widetilde{N}_-) ,N-1]} \widetilde{u}_j ^+\|^{2} \\ 
		 &  \leq O(( N^{-1} \log N )^2) \| \mathbf{1}_{[N_0-(\widetilde{N}_+ + \widetilde{N}_-) ,N-1]} \widetilde{u}_j^+ \|^2.
	\end{split}
	\end{equation}
	Applying now \eqref{qm6} with $N_0=(1-1/\log N) N - (\widetilde{N}_+ + \widetilde{N}_-)$, 
	we get that 
	\begin{equation}\label{qn12}
		\| (P_N-z)\psi_j^+ \| =  O( N^{-1} \log N ) N^{-C_{\gamma}}
	\end{equation}
	which concludes the proof of the proposition. 
\end{proof}
\begin{proof}[Proof of Proposition \ref{lem:TruncEV}]
	We will only consider the case when $\wind >0$. The case $\wind <0$ can be proven similarly 
	with the obvious modifications. In what follows, we will assume that $N>0$ is sufficiently 
	large, depending only on $\Omega$ and the constants in Assumption \ref{As:Omega},  
	without us mentioning this at every occurrence. Similarly, we note that all constants $C>0$ and 
	error estimates 	will be uniform in $z\in\Omega_N$, and in fact they depend only on the above mentioned 
	parameters, so we shall only mention this at a few important points. 
	\par
	1. Since the roots of $p(\zeta)-z$ depend smoothly on $z\in\Omega$, see the discussion 
	above \eqref{qm3}, we have that either $0\in p^{-1}(z_0)$ or we may take $\Omega$ sufficiently 
	small so that $0\notin p^{-1}(\Omega)$. In both cases $0\notin p^{-1}(\Omega_N)$. 
	Hence, we have that the symbol $p(\zeta)-z$ satisfies for all $z\in\Omega_N$ the assumptions of 
	Case 1 of Lemma \ref{lem:root1}, see also \eqref{eqn:NC1}. We then know 
	that $m_+>0$ since $\wind>0$, see \eqref{at9.0}.
	\\
	\par
	2. Recall from \eqref{eq-defABL} the $m_+\times m_+$ matrices $V=V_+$ 
	and $\Lambda=\Lambda_+$ (with $\widehat{m}_0=0$). Then, for $r=1,2$ and $\psi_1 \in \C^{\wind-m_+^0}$, 
	$\psi_2 \in \C^{m_+^0}$, 
	\begin{equation}\label{qm9}
		 	\|\mathfrak{Z}^+A_r^+\psi_r \|^2 \geq 
			\sum_{j=0}^{\lfloor (N-2)/m_+\rfloor -1 } \|V \Lambda^{jm_+} A_r^+\psi_r\|^2.
	\end{equation}
	 Let 
	$s_1(V) \geq \cdots \geq s_{m_+}(V)\geq  0$ denote the singular values of $V$. Since 
	$|\det V| = \prod_1^{m_+} s_j(V) $, and $ s_1(V) = \|V\| \leq 
	m_+$, 
	it follows that 
	\begin{equation}\label{qm9.1}
	  s_{m_+}(V) \geq \frac{|\det V| }{m_+^{m_+}} = 
	  \frac{\prod_{1\leq i < j \leq m_+} |\zeta_i^+ - \zeta_j^+|}{m_+^{m_+}} \geq \frac{1}{C}.
	\end{equation}
Here, in the equality we used the formula for the determinant of the Van der Monde 
	matrix $V$. Moreover, the constant in the last inequality is uniform $N$ and in $z\in\Omega_N$ 
	by \eqref{qm3}. 
	Since $\Lambda$ is a diagonal matrix, we get from \eqref{qm9} that 
	\begin{equation}\label{qm10}
		 	\|\mathfrak{Z}^+A_r^+\psi_r \|^2 \geq s^2_{m_+}(V)
			\sum_{k=1}^{m_+}| (A_r^+\psi_r)(k)|^2 \sum_{j=0}^{
			\lfloor (N-2)/m_+\rfloor -1 } |\zeta^+_k|^{2jm_+}.
	\end{equation}
	Putting
	\begin{equation}\label{qm10.1}
		 	d_k^2 \defeq \sum_{j=0}^{\lfloor (N-2)/m_+\rfloor -1 } |\zeta^+_k|^{2jm_+} 
			=\frac{1-|\zeta^+_k|^{2\lfloor
			(N-2)/m_+\rfloor m_+}}{1-|\zeta^+_k|^{2m_+} },
	\end{equation}
	and $D=\diag(d_1,\dots, d_{m_+})$, we get from \eqref{qm10} that 
	\begin{equation}\label{qm11}
		 	\|\mathfrak{Z}^+A_r^+\psi_r \| \geq s_{m_+}(V) \| DA_r^+\psi_r\|.
	\end{equation}
Observe that 
	the smallest singular value of $D$ is 	$\min_k d_k\geq 1$. Recall \eqref{nqeq4}. Provided that 
	$\wind>m_+^0$, we notice that $A^+_1$ is an isometry since the columns of $X_+$ are orthonormal. 
	Thus, it follows from \eqref{qm11}, \eqref{qm9.1} that 
	\begin{equation}\label{qm12a}
	  \|\mathfrak{Z}^+A_1^+\psi_1 \| \geq  \frac{1}{C} \| \psi_1\|, 
	\end{equation}
	so the smallest singular value of $G_{+,1}$ is at least $ 1/C$, which, $G_{+,1}$ being self-adjoint, implies that 
	it is bijective with inverse bounded in norm by $C$. In particular, we have that 
	\begin{equation}\label{qm12aa}
	 \| A^+_1G_{+,1}^{-1/2}\|=\sqrt{C}.
	\end{equation}
	3. Working with $m_+^0\geq 1$, we turn to $G_{+,2}$ and we will show first 
	that for $n\geq \wind - m_+^0+1$ the $u^+_n$ 
	are, up to a small error, given by $\mathfrak{z}_{n}/\|\mathfrak{z}_{n}\|$. 
	First, we compute 
	\begin{equation}\label{qm12b}
	F_{n,m}\defeq 
	| \langle \mathfrak{z}^+_n \mid\mathfrak{z}^+_m \rangle | = \left| \sum_0^{N-1} (\zeta^+_n \overline{\zeta}^+_m)^\nu \right| 
	= \left(\frac{(1-|\zeta_n^+|^{2N})(1-|\zeta_n^+|^{2N}) +|(\zeta_n^+)^N-(\zeta_m^+)^N|^2}
	{(1-|\zeta_n^+|^2)(1-|\zeta_{m}^+|^2) +|\zeta_n^+-\zeta_m^+|^2}\right)^{1/2}.
	\end{equation}
	By \eqref{qm4.0b}, \eqref{qm4.0} and \eqref{qm3} we find that
	\begin{equation}\label{qm12c}
	  F_{n,m} = \left\{ \begin{array}{ll}
		\asymp 1, & \text{ when } 1 \leq n,m \leq m_+- m_+^0, \\ 
		\asymp 1, & \text{ when } n\neq m,\\ 
		\asymp \frac{ N}{\log N}, & \text{ when } m_+- m_+^0 + 1 \leq n=m \leq m_+.
	\end{array}\right.
	\end{equation}
	Second, notice that by \eqref{nqeq1.1}, \eqref{nqeq1.2}, we have that 
	$T:= -\mathfrak{B}_+^{-1}(\mathfrak{A}^+_{m_+-m_+^0+1},\dots,\mathfrak{A}^+_{m_+})$ has norm 
	$\|T\| =O(1)$, depending only on $\Omega$. Recalling the definition of $\Delta_+$ from the 
	discussion before \eqref{nqeq4}, we see that 
	\begin{equation}\label{qmn1}
			\mathfrak{Z}^+A_2^+\mathfrak{L}_+^{-1} = 
			\widetilde{\mathfrak{Z}}^+T\mathfrak{L}_+^{-1}  + \widehat{\mathfrak{Z}}^+\mathfrak{L}_+^{-1},
	\end{equation}
	where $\widetilde{\mathfrak{Z}}^+=( \mathfrak{z}_1^{+}, \dots,\mathfrak{z}_{N_+}^{+}) $ 
	and $\widehat{\mathfrak{Z}}^+=( \mathfrak{z}_{m_+-m_+^0+1}^{+}, \dots,\mathfrak{z}_{m_+}^{+})$. Using 
	\eqref{qm12c}, we get that 
	\begin{equation}\label{qmn2}
	\begin{split}
	G_{+,2} &= \mathfrak{L}_+^{-1}(\widehat{\mathfrak{Z}}^+) ^*\widehat{\mathfrak{Z}}^+\mathfrak{L}_+^{-1}
	+  \mathfrak{L}_+^{-1}\left( (\widehat{\mathfrak{Z}}^+)^*\widetilde{\mathfrak{Z}}^+T
	+(\widetilde{\mathfrak{Z}}^+T)^* \widehat{\mathfrak{Z}}^+
	+ ( \widetilde{\mathfrak{Z}}^+T)^*\widetilde{\mathfrak{Z}}^+T\right)\mathfrak{L}_+^{-1} 
	\\ &
	= 1 + O(1) \frac{\log N}{N} ,
	\end{split}
	\end{equation}
	where the $O(1)$ term denotes an $m_+^0\times m_+^0$ matrix with norm bounded by 
	$O(1)$, depending only on $\Omega$ and the constants in Assumption \ref{As:Omega}. 
	Hence, for $N>0$ sufficiently large (depending only 
	on $\Omega$ and the constants in Assumption \ref{As:Omega}), we have that $G_{+,2}$ is 
	invertible and 
	\begin{equation}\label{qmn3}
	G_{+,2}^{-1/2} = 1 + O(1) \frac{\log N}{N}.
	\end{equation}
	Hence, using again \eqref{qm12c} and denoting by $M_{-,n}$ the $n$-th column of a matrix, we 
	get that for $n\in I_{+,2}$ 
	\begin{equation}\label{qmn4}
	\begin{split}
		\widetilde{u}^+_n &= \left( \mathfrak{Z}^+A_2^+\mathfrak{L}_+^{-1}G_{+,2}^{-1/2} \right)_{-,n} \\
		&=\left(  \widetilde{\mathfrak{Z}}^+T\mathfrak{L}_+^{-1}G_{+,2}^{-1/2}   \right)_{-,n}  
		+\left(  \widehat{\mathfrak{Z}}^+\mathfrak{L}_+^{-1}G_{+,2}^{-1/2}  \right)_{-,n} 
		 = \frac{\mathfrak{z}_n^+}{\|\mathfrak{z}_n^+\|} + O(1)\sqrt{\frac{\log N}{N}},
	\end{split}
	\end{equation}
	where the $O(1)$ term denotes a vector in $\C^N$ whose $\ell^2$ norm is bounded by $O(1)$. 
	In particular, \eqref{qm12a} and \eqref{qmn3} imply that \eqref{qm5} is well-defined. 
	\begin{rem}\label{rem:Modes}
	For a later use we note that the analogue of \eqref{qmn4} in the case when $\wind<0$ holds, i.e. 
	$$\widetilde{u}^-_n =  \frac{\mathfrak{z}_n^-}{\|\mathfrak{z}_n^-\|} + O(1)\sqrt{\frac{\log N}{N}}, \quad 
	n \in I_{-,2}.$$
	\end{rem}
	\par
	4. Since $(\mathfrak{Z}^+A_1^+G_{+,1}^{-1/2})^*(\mathfrak{Z}^+A_1^+G_{+,1}^{-1/2}) = 1$, 
	and $(\mathfrak{Z}^+A_2^+\mathfrak{L}_+G_{+,2}^{-1/2})^*(\mathfrak{Z}^+A_2^+\mathfrak{L}_+G_{+,2}^{-1/2}) = 1$, we confirm the first line of \eqref{qm6.01}. To prove the second line, let $(n,m) \in I_{+,1} \times I_{+,2}$. 
	Equations \eqref{qmn4} and \eqref{qm5} then yield that 
	\begin{equation}\label{qmn5}
	\begin{split}
		\langle \wt u^+_n | \wt u^+_m \rangle &= 
		\left \langle  \mathfrak{Z}^+ (A^+_1G_{+,1}^{-1/2})_{-,n} \Big| \frac{\mathfrak{z}_{m}^+}{\|\mathfrak{z}_{m}^+\|} \right \rangle + O(1)\sqrt{\frac{\log N}{N}} \\
		& = \sum_{k=1}^{m_+-m_+^0} \langle \mathfrak{z}_k^+ | \mathfrak{z}_{m}^+ \rangle \|\mathfrak{z}_{m}^+\|^{-1}(A^+_1G_{+,1}^{-1/2})_{k,n} + O(1)\sqrt{\frac{\log N}{N}}  
		\stackrel{\eqref{qm12aa}, \eqref{qm12c}}{\leq} O(1)\sqrt{\frac{\log N}{N}}.
	\end{split}
	\end{equation}
	The other case of the second line of \eqref{qm6.01} follows from symmetry. 
	\\
	\par
	5. At last,
	we turn to proving \eqref{qm6}. Let $j\in I_{+,1}$. 
	Using H\"older's inequality, we compute
	\begin{equation}\label{qmn6}
	\begin{split}
		 	\| \mathbf{1}_{[N_0,N-1]} \widetilde{u}_j^+ \|^2 &= 
			\sum_{\nu = N_0}^{N-1} | (\mathfrak{Z}^+A^+ G_{+,1}^{-1/2})_{\nu,j}|^2 
			\leq 
			\sum_{\nu = N_0}^{N-1} \left |\sum_{\mu=1}^{m_+-m_0^+} 
			(\mathfrak{Z}^+)_{\nu,\mu} (A^+ G_{+,1}^{-1/2})_{\mu,j}\right|^2 \\
			&\leq \sum_{\nu = N_0}^{N-1} \left (\sum_{\mu=1}^{m_+-m_0^+} 
			|(\mathfrak{Z}^+)_{\nu,\mu} |^2\right) 
			\left (\sum_{\mu=1}^{m_+-m_0^+} 
			|(A^+ G_{+,1}^{-1/2})_{\mu,j}|^2\right).
	\end{split} 
	\end{equation}
	Since the right most term in the last line is controlled by the Hillbert-Schmidt norm of the corresponding 
	matrices, we get by using \eqref{qm12aa} and summing the geometric series that 
	\begin{equation}\label{qmn6.1}
	\begin{split}
		 	\| \mathbf{1}_{[N_0,N-1]} \widetilde{u}_j^+ \|^2 
			&\leq  \| A^+ G_{+,1}^{-1/2}\|^2_{\mathrm{HS}} \sum_{\nu = N_0}^{N-1} \left (\sum_{\mu=1}^{m_+-m_0^+} 
			|(\mathfrak{Z}^+)_{\nu,\mu} |^2\right) \\
			&\stackrel{\eqref{qm12aa}}{\leq} O(1) \sum_{\nu = N_0}^{N-1} \left (\sum_{\mu=1}^{m_+-m_0^+} 
			|\zeta_\mu^+|^{2\nu}\right) 
			= O(1)\sum_{\mu=1}^{m_+-m_0^+}  |\zeta_\mu^+|^{2N_0} \frac{1-|\zeta_\mu^+|^{2(N-N_0)}}{1-|\zeta_\mu^+|^{2}}.
	\end{split} 
	\end{equation}
	Since $|\zeta_{m_+-m_+^0}| \leq 1/C_\beta<1$, we have that
	for $\mu=1,\dots, m_+-m_+^0$,
	\begin{equation}\label{qmn7}
		\frac{1-|\zeta^+_\mu|^{2(N-N_0)}}{1-|\zeta^+_\mu|^{2}} \leq \frac{C_\beta}{C_\beta-1}.
	\end{equation}
	Similarly we see that 
	\begin{equation}\label{qmn7.1}
	|\zeta^+_{\mu}|^{2N_0}
	\leq \e^{-2N_0 \log C_\beta}.
	\end{equation}
	Combining this with \eqref{qmn6.1} and \eqref{qmn7} yields that 
	\begin{equation*}
	\| \mathbf{1}_{[N_0,N-1]} \widetilde{u}_j^+ \| \leq O(1)  \e^{-N_0 \log C_\beta},
	\end{equation*}
	uniformly in $z\in\Omega_N$, and we conclude the first line in 
	\eqref{qm6}.
	\\
	\par 
	It remains to prove the second line of \eqref{qm6}. Recall the definition of $A_2^+$ \eqref{nqeq4}, 
	and notice that 	
	\begin{equation}\label{qm12fff}
	\begin{split}
	A_2^+ \mathfrak{L}_+^{-1} &=
	\begin{pmatrix}  - \mathfrak{B}^{-1}(\mathfrak{A}^+_{m_+-m_+^0+1},\dots,\mathfrak{A}^+_{m_+}) \mathfrak{L}_+^{-1} \\ 
	\Delta_+ \mathfrak{L}_+^{-1} \\ 
	\end{pmatrix}\\
	&=\widetilde{\mathfrak{L}}_+^{-1} 
	\begin{pmatrix}  - \mathfrak{B}^{-1}(\mathfrak{A}^+_{m_+-m_+^0+1},\dots,\mathfrak{A}^+_{m_+}) \mathfrak{L}_+^{-1} \\ 
	\Delta_+ \\ 
	\end{pmatrix}\defeq \widetilde{\mathfrak{L}}_+^{-1} \widetilde{A}_2^+
	\end{split}
	\end{equation}
	where $\widetilde{\mathfrak{L}}_+:=1_{N_+} \oplus  
	\diag( \|\mathfrak{z}_{N_++1}^{+}\|, \dots,\|\mathfrak{z}_{m_+}^{+}\|)$. Furthermore, it follows from 
	\eqref{nqeq1.1}, \eqref{nqeq1.2} and \eqref{qm12c} that 
	\begin{equation}\label{qm12f}
		\| \mathfrak{B}^{-1}(\mathfrak{A}^+_{m_+-m_+^0+1},\dots,\mathfrak{A}^+_{m_+}) \mathfrak{L}_+^{-1}\| =
		O(1) \sqrt{ \frac{\log N}{N}},
	\end{equation}
	which implies that 
	\begin{equation}\label{qm12ff}
		\| \widetilde{A}_2^+ \| = O(1).
	\end{equation}
	Using the commutation relation \eqref{qm12fff}, we compute for $j\in I_{+,2}$ that 
	\begin{equation}\label{qmn8}
	\begin{split}
		 	\| \mathbf{1}_{[N_0,N-1]} \widetilde{u}_j^+ \|^2 &= 
			\sum_{\nu = N_0}^{N-1} | (\mathfrak{Z}^+A_2^+\mathfrak{L}_+^{-1}G_{+,2}^{-1/2})_{\nu,j}|^2 
			\leq 
			\sum_{\nu = N_0}^{N-1} \left |\sum_{\mu=1}^{m_+} 
			(\mathfrak{Z}^+\widetilde{\mathfrak{L}}_+^{-1})_{\nu,\mu} (\widetilde{A}_2^+G_{+,2}^{-1/2})_{\mu,j}\right|^2 \\
			&\leq \sum_{\nu = N_0}^{N-1} \left (\sum_{\mu=1}^{m_+} 
			|(\mathfrak{Z}^+\widetilde{\mathfrak{L}}_+^{-1})_{\nu,\mu} |^2\right) 
			\left (\sum_{\mu=1}^{m_+} 
			|(\widetilde{A}^+ G_+^{-1/2})_{\mu,j}|^2\right).
	\end{split} 
	\end{equation}
	Thus, by \eqref{qm12ff}, \eqref{qmn3}
	\begin{equation}\label{qmn8.1}
	\begin{split}
		 	\| \mathbf{1}_{[N_0,N-1]} \widetilde{u}_j^+ \|^2 
			&\leq \| \widetilde{A}^+ G_+^{-1/2}\|_{\mathrm{HS}}^2 
			\sum_{\nu = N_0}^{N-1} \left (\sum_{\mu=1}^{m_+} 
			|(\mathfrak{Z}^+\widetilde{\mathfrak{L}}_+^{-1})_{\nu,\mu} |^2\right) \\
			&\leq O(1)\sum_{\nu = N_0}^{N-1} \left (\sum_{\mu=1}^{N_+} |\zeta_\mu^+|^{2\nu}+
			\sum_{\mu=N_++1}^{m_+} \frac{|\zeta_\mu^+|^{2\nu}}{\| \mathfrak{z}_\mu^+\|^2}
			\right). 
	\end{split} 
	\end{equation}
	Using \eqref{qm12c}, we get by a computation similar to \eqref{qmn7} and \eqref{qmn7.1} that
	\begin{equation}\label{qmn8.12}
	\sum_{\nu = N_0}^{N-1} \left (\sum_{\mu=1}^{N_+} |\zeta_\mu^+|^{2\nu}+
			\sum_{\mu=N_++1}^{m_+-m_+^0} \frac{|\zeta_\mu^+|^{2\nu}}{\| \mathfrak{z}_\mu^+\|^2}
			\right) = O(1)  \e^{-N_0 \log C_\beta}.
	\end{equation}
	On the other hand, using \eqref{qm12b} and the fact that $|\zeta_{m_+}|<1$, 
	we find $\mu = m_+-m_+^0+1,\dots, m_+$ that 
	\begin{equation}\label{qmn8.13}
	\sum_{\nu = N_0}^{N-1}  \frac{|\zeta_\mu^+|^{2\nu}}{\| \mathfrak{z}_\mu^+\|^2}
	= |\zeta_\mu^+|^{2N_0} \frac{1-|\zeta_\mu^+|^{2(N-N_0)}}{1-|\zeta_\mu^+|^{2N}}
	\leq  |\zeta_\mu^+|^{2N_0}.
	\end{equation}
	By \eqref{qm4.0}, we see that 
	\begin{equation}\label{qmn8.13a}
	  |\zeta_\mu^+|^{2N_0} 
	 \leq \exp\left( 2N_0\log \left(1-C_{\gamma}\frac{\log N}{N}\right)\right) 
	 \leq  \exp\left( -2N_0C_{\gamma}\frac{\log N}{N}\right) 
	 \leq \e^{-\frac{2N_0}{N}\log N^{C_{\gamma}}},
	\end{equation}
	which, together with \eqref{qmn8.13}, \eqref{qmn8.12} and \eqref{qmn8.1} yields that 
	for $N>0$ sufficiently large 
	\begin{equation}\label{qmn8.14}
		 	\| \mathbf{1}_{[N_0,N-1]} \widetilde{u}_j^+ \|^2 \leq O(1) \e^{-\frac{2N_0}{N}\log N^{C_{\gamma}}},
	\end{equation}
	uniformly in $z\in\Omega_N$, and we conclude the first second line in \eqref{qm6}.
\end{proof}
\section{Singular values and vectors}
\label{sec-sing}
We provide in this section various estimates on singular values and vectors
of $P_N-z$. The section is divided to subsections, dealing respectively
with small singular values, singular vectors, large singular values, and the smallest singular value and Hilbert-Schmidt norm of the matrices appearing in the resolvent expansion of the matrices discussed in Section \ref{res}. 

\subsection{Small singular values}\label{sec:SmallSGValues}
We will work under the same assumptions as discussed in the beginning of 
Section \ref{sec:QM}. 
We will use the quasimodes constructed in Proposition \ref{prop:QM} 
to obtain an 
upper bound on the  $|\wind|$-th
smallest singular values of $P_N-z$ 
corresponding to the decay speed of the quasimodes. We will also show with the help 
of Theorem \ref{thm:resBound} that the $(|\wind|+1)$-th singular 
value of $P_N-z$ is significantly larger than the $|\wind|$-th one, 
which leads to a 
spectral gap. This is known as \textit{splitting phenomenon}, see for instance 
\cite[Section 9.2]{BoGr05}. This phenomenon has typically been investigated for $z$ being at 
a fixed distance from $p(S^1)$ or for fixed symbols. In 
that case it 
is known that 
the smallest $|\wind|$ 
singular values are of order $O(\e^{-N/C})$ 
whereas the $(|\wind|+1)$-th 
singular value is bounded from 
below by some small $N$ independent constant. What is new here is that we allow the 
spectral parameter $z$ to be at an $N$-dependent distance from $p(S^1)$,
which leads to a significantly smaller spectral gap. 
\par
Moreover, we will show that due to the assumptions \eqref{qm4.0b}, \eqref{qm4.0c}, 
there exists a second spectral gap between the first $(|\wind|-m_{\mathrm{sign}(\wind)}^0)$ 
singular values, which are exponentially small in $N$, and the $(|\wind|-m_{\mathrm{sign}(\wind)}^0+1)$-th 
singular value, which is at most polynomially small in $N$. 
\par
Both spectral gaps will be crucial in describing the localization of 
the eigenvectors of $P^{\delta}_N$. 
\begin{prop}\label{prop:SmallSG} 
	Let $z_0 \in p(S^1)$ be as in \eqref{qm1} and let 
	$\Omega\Subset \C$ be a sufficiently small open simply connected relatively 
	compact neighborhood of a point $z_0$ satisfying \eqref{qm2}. 
	Let $\Omega_N\Subset \Omega \backslash p(S^1)$ be as in Assumption 
	\ref{As:Omega}. For $z\in\Omega_N$ let $t_1 \leq \cdots \leq t_N$ denote
	the eigenvalues of $\sqrt{(P_N-z)^*(P_N-z)}$. 
	Then there exists a constant $0<C<\infty$ such that, for all $N$ large enough (depending only 
	on $\Omega$ and the constants in Assumption \ref{As:Omega}) and for all $z\in\Omega_N$ 
	 \begin{equation}\label{sgv1}
	 	0 \leq t_1 \leq\cdots\leq  t_{|\wind|} \leq C\frac{\log N}{N}N^{-C_{\gamma}}, 
	 \end{equation}
	 and
	  \begin{equation}\label{sgv2}
	 	 t_{|\wind|+1} \geq \frac{1}{C}\frac{\log N}{N}.
	 \end{equation}
	 Additionally, if $|\wind|-m_{\mathrm{sign}(\wind)}^0 >0$, then 
	 %
	  \begin{equation}\label{sgv1a}
	 	0 \leq t_1 \leq\cdots\leq t_{|\wind|-m_{\mathrm{sign}(\wind)}^0} \leq C\e^{-N \log C_\beta}, 
	 \end{equation}
	 and if $m_{\mathrm{sign}(\wind)}^0 >0$
	  \begin{equation}\label{sgv2a}
	 	\frac{1}{C}\frac{\log N}{N}N^{-C_\alpha} \leq t_{|\wind|-m_{\mathrm{sign}(\wind)}^0+1} \leq 
		\cdots \leq  t_{|\wind|} \leq C\frac{\log N}{N}N^{-C_{\gamma}}.
	 \end{equation}
\end{prop}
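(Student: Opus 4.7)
\medskip

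\noindent\textbf{Proof proposal.} The proof naturally splits into upper bounds on the small singular values $t_1,\dots,t_{|d|}$ (including the finer bound \eqref{sgv1a}) and matching lower bounds on $t_{|d|+1}$ (and $t_{|d|-m^0_{\mathrm{sign}(d)}+1}$). The upper bounds will come from the quasimodes constructed in Proposition \ref{prop:QM}, combined with the Courant--Fischer min-max principle. The lower bounds are the harder part and will be obtained via a Grushin problem built from quasimodes of both $P_N-z$ and its adjoint, with the resolvent bound of Theorem \ref{thm:resBound} providing the key size estimate on the reduced operator.

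\medskip

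\noindent For the upper bounds, I would assume without loss of generality $d>0$ (the case $d<0$ is analogous after transposition). Let $\Psi := \mathrm{span}\{\psi_1^+,\dots,\psi_d^+\}$ with the $\psi_j^+$ from Proposition \ref{prop:QM}. The Gram matrix of $\{\psi_j^+\}$ is $I+o(1)$ by \eqref{qn3}, so $\dim \Psi = d$ for $N$ large, and its smallest eigenvalue is bounded below by, say, $1/2$. By the min-max characterization
\[
t_d^2 \;=\; \min_{\dim V = d}\;\max_{v\in V,\,\|v\|=1} \|(P_N-z)v\|^2 \;\leq\; \max_{v\in \Psi,\,\|v\|=1}\|(P_N-z)v\|^2,
\]
so writing $v=\sum c_j\psi_j^+$ with $\sum|c_j|^2 \leq 2\|v\|^2$ and using \eqref{qn4} gives $t_d \leq C (\log N/N)N^{-C_\gamma}$, proving \eqref{sgv1}. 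The same argument applied to the smaller subspace $\Psi_1 := \mathrm{span}\{\psi_j^+ : j\in I_{+,1}\}$ of dimension $d-m_+^0$ together with the exponentially small bound in \eqref{qn4} for $j\in I_{+,1}$ yields \eqref{sgv1a}.

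\medskip

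\noindent For the lower bounds I would set up a Grushin problem of size $d$ following Section \ref{sec-Grushin}. Take $R_+: \C^N\to\C^d$ to be the linear map with rows given by the (conjugates of the) right quasimodes $\psi_j^+$, and $R_-:\C^d\to\C^N$ built from the left quasimodes $\phi_j$, i.e.\ the quasimodes of $(P_N-z)^*=P_N(\overline{\widetilde{p_z}})$ produced by Proposition \ref{prop:QM} (the symbol $\overline{\widetilde{p_z}}$ has winding number $-d$, so its quasimodes decay from the right, making them suitable building blocks for $R_-$). The key step is to show that the Grushin matrix $\mathcal{P}=\begin{pmatrix}P_N-z & R_-\\ R_+ & 0\end{pmatrix}$ is bijective with top-left block $E$ of its inverse satisfying
\[
\|E\| \leq C\,\frac{N}{\log N}.
\]
Once this is established, for any $v\in\C^N$ with $R_+v=0$ the identity $E(P_N-z)v + E_+R_+v = v$ (coming from $\mathcal{E}\mathcal{P}=I$) reduces to $v=E(P_N-z)v$, whence $\|v\|\leq \|E\|\,\|(P_N-z)v\|$; since $R_+$ has rank $d$, the subspace $\{v:R_+v=0\}$ has codimension $d$, and Courant--Fischer gives $t_{d+1}\geq \|E\|^{-1}\geq c(\log N)/N$, which is \eqref{sgv2}. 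The refined bound \eqref{sgv2a} follows by the same construction using only the $d-m_+^0$ ``far'' quasimodes (those indexed by $I_{+,1}$), leading to a Grushin problem of size $d-m_+^0$ and a bound on $\|E\|$ of order $N^{1+C_\alpha}/\log N$.

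\medskip

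\noindent The main obstacle, and where the work really sits, is bounding $\|E\|$. The plan is to reduce this to the resolvent estimate of Theorem \ref{thm:resBound} via a symbol factorization argument. Concretely, one factors $p_z=q_z\cdot r_z$ (or an appropriate finite-dimensional analogue), where $r_z$ is the polynomial with the $d$ roots $\zeta^+_{m_+-d+1},\dots,\zeta^+_{m_+}$ responsible for the nonzero winding (for \eqref{sgv2a} one only removes the $d-m_+^0$ far roots), and $q_z$ is the complementary Laurent polynomial, which then has winding number $0$ (respectively $m_+^0$) around $0$ and satisfies the hypotheses of Theorem \ref{thm:resBound} with constants inherited from Assumption \ref{As:Omega}. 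Since Toeplitz matrices do not compose like their symbols, the identity $P_N(p_z)=P_N(q_z)P_N(r_z)$ fails, but only by the explicit Hankel corrections given in \eqref{a7}, which have rank bounded by $N_++N_-+d=O(1)$. These finite-rank corrections can be absorbed into the operators $R_\pm$ of the Grushin problem (enlarging their rank by at most an $O(1)$ amount, which does not affect the leading-order estimate $\|E\|=O(N/\log N)$), after which Theorem \ref{thm:resBound} applied to $q_z$ delivers the bound $\|P_N(q_z)^{-1}\|=O(N/\log N)$ in the case $\widehat m_0=0$, or $O(N^{1+C_\alpha}/\log N)$ in the case $\widehat m_0=m_+^0$. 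Combining this with the quasi-orthogonality and localization of $\psi_j^+$ and $\phi_j$ from Proposition \ref{lem:TruncEV} then yields the required control on the full block matrix $\mathcal{E}$, completing the proof of \eqref{sgv2} and \eqref{sgv2a}.
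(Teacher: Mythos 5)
Your upper-bound argument for \eqref{sgv1} and \eqref{sgv1a} matches the paper's: min-max on the span of the quasimodes $\psi_j^\pm$ from Proposition \ref{prop:QM}, combined with the almost-orthogonality \eqref{qn3} and the residual bounds \eqref{qn4}. That part is fine.

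The lower bounds are where you diverge, and the divergence introduces a real gap. The paper does \emph{not} use a Grushin problem here. Reducing to $\wind>0$ by passing to the adjoint, it factors the symbol as $p(\zeta)-z = \zeta^{r}\,q(\zeta)$ with $r=\wind$ for \eqref{sgv2} or $r=\wind - m_+^0$ for \eqref{sgv2a}, uses the Toeplitz--Hankel composition formula \eqref{a7} applied to $p-z$ and the monomial $\chi_{-r}$ (only one Hankel term survives, because $H(\chi_n)=0$ for $n\leq 0$) to obtain
\[
P_N(p-z)\,P_N(\chi_{-r}) = P_N(q) - K, \qquad \mathrm{rank}\,K = r,
\]
and then combines the multiplicative Ky Fan inequality $s_n(AB)\leq s_n(A)s_1(B)$, with $\|P_N(\chi_{-r})\|=1$, and the finite-rank interlacing inequality $s_{N-m}(A)\geq s_N(A+K)$ for $\mathrm{rank}\,K=m$, to conclude $t_{r+1}(P_N-z)=s_{N-r}(P_N-z)\geq s_N(P_N(q))$; Theorem \ref{thm:resBound} then supplies the lower bound on $s_N(P_N(q))$. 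The crucial feature is that the rank of the surviving Hankel correction is \emph{exactly} $r$, which is precisely the index shift needed.

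Your Grushin route, by contrast, has two concrete issues. First, building $\mathcal{P}$ from quasimodes $\psi_j^+$ and adjoint-quasimodes $\phi_j$ requires establishing bijectivity of $\mathcal{P}$ and the estimate $\|E\|\lesssim N/\log N$, but to prove these one needs lower-bound control on $P_N-z$ on the orthogonal complement of the quasimode span --- which is essentially \eqref{sgv2} itself, so you have a circularity to resolve. (The Grushin problem of Section \ref{sec-Grushin} avoids this by being built from the \emph{exact} singular vectors, in which case $\|E\|\leq 1/t_{M+1}$ is read off from \eqref{gp8}, but that presupposes knowledge of $t_{M+1}$.) Second, when you enlarge $R_\pm$ to absorb the Hankel corrections of some rank $r$, the Grushin problem grows to size $d'=d+r>d$, and then even a good bound on $\|E\|$ only gives $t_{d'+1}\geq c\log N/N$ via Courant--Fischer, which is strictly weaker than the claimed $t_{d+1}\geq c\log N/N$. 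Your remark that the enlargement ``does not affect the leading-order estimate on $\|E\|$'' is true but beside the point: the index bookkeeping is what matters, and your sketch does not track it. The paper's interlacing argument sidesteps both issues because the rank of the Hankel perturbation matches the desired index shift exactly; replacing the Grushin framing of the lower bound by this direct Ky Fan plus interlacing argument is the cleanest way to close the gap.
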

\begin{rem}
If in Assumption \ref{As:Omega} we have that  $|\zeta_{m_{+}-m_+^0+1}^+|=|\zeta_{m_{+}}^+|$ (in case $\wind>0$) or
$|\zeta_{m_{-}^0+1}^{-}|= |\zeta_{m_{-}}^-|$ (in case $d<0$) then clearly one can take $C_\alpha=C_\gamma+\epsilon$ for any $\epsilon>0$,
improving upon \eqref{sgv2a}. By \eqref{eq:ze-t-o-wt-ze}, this will always be the case for us.
\end{rem}

\begin{proof}
  Recall that for a square $N\times N$ matrix $A$, $t_j(A)$ 
  denote the eigenvalues of $\sqrt{A^*A}$ (ordered non-decreasingly) and $s_j(A)$
  denote the singular values (ordered non-increasingly), see 
  \eqref{qm24.0}. 
  By the singular value decomposition of the square matrix 
	$A$, we have that
	$t_j(A) = t_j(A^*)$ and 
	$s_j(A) = s_j(A^*)$.  Recall 
	the Ky Fan inequalities, see for instance 
	\cite{GoKr69}: for two square matrices $A,B$ we have 
	 \begin{equation}\label{qm24}
	 \begin{split}
		&s_{n+m-1}(A+B) \leq s_n(A) + s_m(B), \\ 
		&s_{n+m-1}(AB) \leq s_n(A) s_m(B).
	\end{split}
	 \end{equation}
	Further, if $A$ is an invertible $N\times N$ matrix, then 
	\begin{equation*}\label{qm24.1}
	s_n(A^{-1}) = \frac{1}{s_{N-n+1}(A)}, \quad n=1,\dots, N.
	\end{equation*}
	From \eqref{a6.1} we know that the symbol of the adjoint $\mathrm{Op}(p)^*$ 
	is given by $\overline{\widetilde{p}}(\omega) = \overline{p}(1/\omega)$. So 
	in view of Lemma \ref{lem:root1}, the roots of $\overline{p}(1/\omega)  -\overline{z}$, or 
	equivalently the roots of $p(1/\overline{\omega}) -{z}$ are given by 
	$\omega^{\mp}_j = 1/\overline{\zeta}_j^{\,\pm}$, 
	where $\zeta_{j}^{\pm}$ are the roots of $p(\zeta)-z$. Hence, the discussion and results 
	of Section \ref{Sec:AnaTO} valid for 
	$p(\zeta)-z$ remain valid for $\overline{p}(1/\omega) -\overline{z}$ with the roles of 
	$m_+,N_+$, and $m_-,N_-$ exchanged. Furthermore, the roles of the assumption \eqref{qm4.0b} 
	and \eqref{qm4.0c} are then interchanged. This argument will allow us to reduce the number 
	cases of symbols $p-z$ we need to consider, since we may always pass to studying the singular 
	values of the adjoint $(P_N-z)^*$. In particular, in the rest of the proof we may and will
	assume that $\wind>0$ for $z\in\Omega_N$.
	\par
	In what follows, we will work with $N>0$ sufficiently 
	large, depending only on $\Omega$ and the constants in Assumption \ref{As:Omega},  
	without us mentioning this dependence at every occurrence. Similar, we note that all 
	error estimates 	will be uniform in $z\in\Omega_N$, and in fact they depend only on the above mentioned 
	parameters, so we shall only mention this at a few important points. 
\\	
	\par
1. Let $z\in \Omega_N$. We begin with the upper bound on $t_{\wind}$.  
Let $\psi^+_j$, $j=1,\dots, \wind$ be as in \eqref{qn2}, 
and note that for $N>0$ sufficiently large, they are linearly independent. Indeed 
if they were not, then there would exist an $0\neq a=(a_1,\dots,a_{\wind}) \in \C^{\wind|}$ 
such that $a_1\psi^+_1+\dots+a_{\wind}\psi^+_{\wind} =0$.
Let $j_0$ be such that $|a_{j_0}|\geq |a_j|$ for $j\neq j_0$, with $|a_{j_0}|>0$. 
Since the 
$\psi^+_j$ are almost-orthonormal by \eqref{qn3}, we find that 
\begin{equation}\label{eqnAP0}
  1 +O( \log N) N^{-C_{\gamma}} = -\sum_{j\in [\wind], j\neq j_0} \frac{a_j}{a_{j_0}} \langle \psi^+_{j}| \psi^+_{j_0}\rangle =  
O( \log N) N^{-C_{\gamma}} +O((\log N /N)^{1/2} ),
\end{equation}
a contradiction. \\
\par
	Let $K_{\wind}\subset\C^N$ denote the linear subspace of $\C^N$ spanned by the $\psi_j^+$ and 
	notice that it has dimension $\wind$, for $N>0$ sufficiently large. For any $\psi \in K_{\wind}$, 
	we 
	write $\psi = \sum_{j=1}^{\wind} 
	a_j \psi^+_j$. If $\|\psi\| =1$, 
	then we get by \eqref{qn3} and the Cauchy-Schwarz  inequality that for $N>0$ sufficiently large, 
	 \begin{equation*}
	   1 = \sum_{j=1}^{\wind} |a_j|^2 \|\psi^+_j\|^2 + \
		\sum_{i\neq j } a_i \overline{a}_j \langle \psi^+_i | \psi^+_j \rangle  
		 = \| a\|^2 ( 1 + \hat\epsilon_N),
	 \end{equation*}
	 where $\hat\epsilon_N= O(1) (N^{-C_{\gamma}} \log N + (\log N/N)^{1/2})$ and the constant is uniform in $z\in\Omega_N$ and independent of $\psi$. 
	Thus, for $N>0$ large enough, 
	$\|a\|^2 = ( 1 + \hat \epsilon_N)$,
	uniformly in the choice of $z\in\Omega_N$ and $\psi$.
	The min-max principle and \eqref{qn4} 
	yield that 
	 \begin{equation*}
	 \begin{split}
	 t_{\wind}^2 &= \min\limits_{L:\dim L = \wind } 
	 \max\limits_{ \substack{\psi \in L \\ \|\psi\| =1}} \| (P_N-z)\psi\|^2 
	  \leq \max\limits_{\substack{\psi \in K_{\wind} \\ \|\psi\| =1}} \| (P_N-z)\psi\|^2 \\ 
	 & \leq \max\limits_{\substack{\psi \in K_{\wind} \\ \|\psi\| =1}} \left( \sum_1^{\wind} |a_j| \cdot \| (P_N-z)\psi^+_j\|\right)^2  
	 \stackrel{\eqref{qn4}}{\leq}
	 O(1)\left(  \frac{\log N}{N} N^{-C_{\gamma}} \right)^2,
	 \end{split}
	 \end{equation*}
	and we conclude the upper bound in \eqref{sgv1} and \eqref{sgv2a}.
	\par
	Suppose that $\wind>m_+^0$ and recall \eqref{qm4.0b} and \eqref{qm4.0c}. 
	Letting $K_{\wind-m_+^0}$ denote 
	the linear subspace of $\C^N$ spanned by 
	$\psi^+_1,\dots,\psi^+_{\wind-m_+^0}$, and recalling 
	from \eqref{qn3} that these vectors are orthonormal, 
	we deduce similarly as above  from the min-max principle and 
	\eqref{qn4}, that  
	 \begin{equation*}
	 t_{\wind-m_+^0}^2 
	  \leq \max\limits_{\substack{\psi \in K_{\wind-m_+^0} \\ \|\psi\| =1}} \| (P_N-z)\psi\|^2 
	  \leq \sum_1^{\wind-m_+^0} \| (P_N-z)\psi^+_j\|^2
	  \stackrel{\eqref{qn4}}{\leq}
	  O(\e^{-2N \log C_\beta}),
	 \end{equation*}
	and we conclude \eqref{sgv1a}.
	\\
	\par 
2. We may write $\Omega$ as the disjoint union 
\begin{equation}\label{nqm0.1}
	\Omega = \Omega'\, \dot{\cup} \,( p(S^1) \cap \Omega) \,\dot{\cup}\, \Omega'' \notag
\end{equation}
so that $\Omega_N \subset \Omega'$. In particular $\mathrm{ind}_{p(S^1)}(z) = \wind$ 
for all $z\in\Omega'$.
\par
The rest of the proof will be concerned with the lower bounds on $t_{\wind+1}$ and $t_{\wind-m_+^0+1}$ 
for $z\in\Omega_N$, 
and we will have to consider the two cases of $N_{\pm} >0$ or $N_+\leq 0$. 
Note that by \eqref{int0}, 
these are the only cases we need to consider,
since $\wind>0$ while 
by Remark 
\ref{rem1} and \eqref{at9.0}, we have that $\wind\leq 0$ if $N_-<0$.
We note here that in both cases 
$\infty \notin p^{-1}(\Omega)$, see \eqref{int0} and Lemma \ref{lem:root1}.
\par
Recall from the 
discussion after \eqref{qm2} that all roots $\zeta_z$ of $p(\zeta)-z$ 
depend smoothly on $z\in\Omega$. So, when either $N_\pm >0$, or $N_+ \leq 0$ and 
$0\notin p^{-1}(z_0)$, we see that $\{0,\infty\} \notin p^{-1}(\Omega)$ for $\Omega$ sufficiently 
small. On the other hand, when $N_+ \leq 0$ and $0 \in p^{-1}(z_0)$, then by the smooth 
dependence of the roots on $z$ that $0 \notin p^{-1}(\Omega')$, after possibly shrinking $\Omega$. 
Hence, for $\Omega$ sufficiently small, there exists a constant $C>0$ such 
that any root $\zeta_z$ satisfies 
\begin{equation}\label{nqm0}
	0<|\zeta_z| \leq C, \text{ for all } z \in \Omega'.
\end{equation}
Since the symbol $p(\zeta)-z$ for $z\in\Omega'$ satisfies the assumptions 
of Case 1 of Lemma \ref{lem:root1}, we may order its roots as in \eqref{at4.2.0}. So, in combination 
 with \eqref{nqm0} we get that 
\begin{equation}\label{nqm1}
0 <  |\zeta_1^+|\leq \cdots \leq |\zeta_{m_+}^+| < 1 < |\zeta_1^-| \leq \cdots \leq |\zeta_{m_-}^-| \leq C, 
\text{ for all } z \in \Omega'.
\end{equation}
By \eqref{at9.0} we know that $m_+\geq \wind> 0$, so we work in \eqref{nqm1} with the  
convention that when $m_-=0$ then only the estimates on the existing roots 
in \eqref{nqm1} apply. 
\\
\par
Recall Assumption \ref{As:Omega}, and as in Theorem \ref{thm:resBound}, let 
$\Theta$ denote the Heaviside function, and let 
 \begin{equation}\label{qm25a}
   \widehat{m}_0\in \left\{\begin{array}{ll}
		\{0,m_+^0\}, & m_+^0 >0\\
		\{0\}, &m_+^0=0.
	\end{array}\right.
 \end{equation}
To shorten the notation we will write $\Theta := \Theta(\widehat{m}_0)$ whenever convenient.  
The case $\widehat{m}_0=0$ will be used to prove the lower bound in \eqref{sgv2}, 
and the case $\widehat{m}_0=m_+^0$ for the lower bound in \eqref{sgv2a}. 
Write $\widetilde{N}_+=N_+\lor 0$. For $\zeta \in S^1$ and $z\in \Omega'$,  
we can write the symbol as  
 \begin{equation}\label{qm25}
	p(\zeta) -z = \zeta^{-\widetilde{N}_+} \left( \sum_{0}^{\widetilde{N}_++N_-} a_{N_+-j} \,\zeta^{j} -z\zeta^{\widetilde{N}_+} \right) 
	 = \zeta^{m_+ - \widetilde{N}_+-m^0_+\Theta} q(\zeta)
	\stackrel{\eqref{at9.0}}{= }\zeta^{\wind-m^0_+\Theta} q(\zeta),
 \end{equation}
 where 
 \begin{equation}\label{qm25.1}
	 q(\zeta) =\sum_{-m_--m^0_+\Theta}^{m_+-m^0_+\Theta} q_j \,\zeta^{-j}
 :=  a_{-N_-} 
	 \prod_{1}^{m_+-m^0_+\Theta}(1-\zeta^+_j/\zeta) 
	\prod_{m_+-m^0_+\Theta+1}^{m_+}(\zeta-\zeta^+_j)\prod_1^{m_-}(\zeta-\zeta^-_j )
 \end{equation}
 Here $q_{m_+-m^0_+\Theta} = a_{-N_-}\prod_{1}^{m_+}(-\zeta_j^+) \prod_1^{m_-}(-\zeta_j^-)$, 
 and $q_{-m_--m^0_+\Theta} = a_{-N_-}$. So, by \eqref{nqm1} and \eqref{int0}, we conclude 
 that there exists some constant $C>0$ such that for all $z\in\Omega'$ 
  \begin{equation}\label{qm25.1a}
	q_{m_+-m^0_+\Theta}\neq 0, \quad |q_{- m_--m^0_+\Theta}| \geq 1/C, \quad \notag
	|q_j| \leq C.
 \end{equation}
Equation \eqref{qm25.1} shows that there are no roots of $q$ on $S^1$ and, by performing a 
computation as in \eqref{at9.0}, that $\mathrm{ind}_{q(S^1)}(0)=\widehat{m}_0$ for all $z\in\Omega_N$. This, 
together with \eqref{nqm0}, \eqref{qm3}, \eqref{qm4.0} and \eqref{qm4.0b} shows that $q$, $\Omega_N$ 
and $\Omega'$ satisfy the assumptions of Theorem \ref{thm:resBound} with $\widehat{m}_0$ as in 
\eqref{qm25a}, $C_0=C_\alpha$, $C_1=C_\beta$, $C_2=C_\gamma$,
and therefore there
exists a constant $C>0$ such that for $N>0$ sufficiently large and all 
$z\in \Omega_N$, 
 \begin{equation}\label{nqm2}
   s_{N}(P_N( q)) \geq  \frac{1}{C} \frac{\log N}{ N} N^{-C_\alpha \Theta(\widehat m_0)}.
 \end{equation}
When $\wind = m_+^0>0$ then $s_{N-j+1} = t_{j}$, and
we immediately conclude the lower 
bound in \eqref{sgv2a} using that
$\widehat m_0 =m_+^0$. Hence, from now on we may 
and will assume that 
$\wind > m_+^0\Theta(\widehat m_0)$, see also Assumption \ref{As:Omega}.
\par
Recall the definition of $\chi_n$ from \eqref{eq-defchi}.
As noted after \eqref{a7.3} we have that 
$H( \chi_{n} ) = 0$ when $n\leq 0$ and that $H( \chi_{n} )$ has rank $n$, 
when $n> 0$. So by \eqref{a7} and \eqref{qm25} we get that 
 \begin{equation}\label{qm26}
   P_N( p -z )P_N( \chi_{m^0_+\Theta-\wind} ) = 
   P_N( q) - \Pi_N H(p -z ) H(\chi_{\wind-m^0_+\Theta} ) \Pi_N, 
 \end{equation}
where the second term on the right hand side has rank $\wind-m^0_+\Theta$. 
Since $s_1(P_N( \chi_{m^0_+\Theta-\wind} )) = \| P_N( \chi_{m^0_+\Theta-\wind} )\| = 1$, we 
get by \eqref{qm26} and the second Ky Fan
inequality in \eqref{qm24} that 
for any $n=1,\dots,N$, 
 \begin{equation}\label{qm27}
   s_n(P_N( p -z )) = s_n(P_N( p -z ))\, s_1(P_N( \chi_{m^0_+\Theta-\wind} ) ) 
   \geq s_n (P_N( q) - \Pi_N H(p -z ) H(\chi_{\wind-m^0_+\Theta} ) \Pi_N).
 \end{equation}
 By interleaving inequalities for singular values, see e.g. 
 \cite[Problem III.6.4]{Bhatia},
  for any matrices 
  $A,K\in \C^{N\times N}$
with $K$ of rank $m\in \{ 0,\dots, N-1\}$,
 \begin{equation}\label{qm27.1}
 s_{N-m}(A) 
	 \geq  s_{N}(A+K).
       \end{equation}
Applying this estimate with 
 \begin{equation*}
  A=P_N( q)- \Pi_N H(p -z ) H(\chi_{\wind-m^0_+\Theta} ) \Pi_N, 
  ~K=\Pi_N H(p -z ) H(\chi_{\wind-m^0_+\Theta} ) \Pi_N,
 \end{equation*}
and $m=\wind-m^0_+\Theta$, yields together with \eqref{qm27} and \eqref{nqm2} 
that for $N>0$ large enough (so that \eqref{nqm2} holds) 
 \begin{equation*}
   s_{N-\wind+m^0_+\Theta}(P_N( p -z )) 
   \geq  s_N (P_N( q)) 
   \geq \frac{1}{C}\frac{\log N}{N}N^{-C_\alpha \Theta(\widehat m_0)}, 
 \end{equation*}
for all $z\in\Omega_N$. Since $s_{N-j+1} = t_{j}$, we get \eqref{sgv2} by taking $\widehat{m}_0=0$, 
and we obtain 
the lower bound in \eqref{sgv2a}, when $\wind>m_+^0>0$, 
by taking $\widehat{m}_0=m_+^0$. 
This ends the proof of Proposition \ref{prop:SmallSG}.
\end{proof}
\subsection{Singular vectors}
\label{sec-singular}
Throughout this section we will continue to work with $z\in\Omega_N$ satisfying 
Assumption \ref{As:Omega} and we 
will denote the orthonormal eigenvectors of  $(P_N-z)^*(P_N-z)$ 
corresponding to the eigenvalues $0\leq t_1^2 \leq \cdots \leq t_{N}^2$ by 
$e_1,\dots, e_{N}$. Using the spectral gap from Proposition \ref{prop:SmallSG} 
we will show that the quasimodes from Proposition \ref{prop:QM} span, up to a small 
error, the same space as the eigenvectors $e_1,\dots, e_{|\wind|}$. More precisely 
we will show that the eigenvectors $e_1,\dots, e_{|\wind|-m_{\mathrm{sign}(\wind)}^0}$ are 
well approximated by a linear combination of the quasimodes 
$\psi_1,\dots, \psi_{|\wind|-m_{\mathrm{sign}(\wind)}^0}$, whereas the eigenvectors 
$e_{|\wind|-m_{\mathrm{sign}(\wind)}^0+1},\dots, e_{|\wind|}$ are well approximated by a linear combination 
of $\psi_{|\wind|-m_{\mathrm{sign}(\wind)}^0+1},\dots, \psi_{|\wind|}$.
\begin{prop}\label{prop:sgvSpan}
Consider the setup as in Proposition \ref{prop:SmallSG}. 
	Let $\Pi_{\kappa} = \mathbf{1}_{[0,\kappa^2]}((P_N-z)^*(P_N-z))$, $\kappa\geq 0$, $z\in \Omega_N$, be 
	the spectral projector onto the eigenspace of $(P_N-z)^*(P_N-z)$ corresponding to the 
	eigenvalues $0\leq t_1^2 \leq \dots \leq t_{j}^2\leq \kappa^2$. Let $\psi_j^\pm$ and $I_{\pm,1}, I_{\pm,2}$ 
	be as in Proposition \ref{prop:QM}, let $N>0$ be sufficiently large (depending only on 
	the $\Omega$ and the constants in Assumption \ref{As:Omega}), and put
	\begin{equation*}
	\widetilde{e}_j := 
		\frac{\Pi_{t_{|\wind|-m_{\mathrm{sign}(\wind)}^0}} \psi_j^{\,\mathrm{sign}(\wind)}}
	{ \|\Pi_{t_{|\wind|-m_{\mathrm{sign}(\wind)}^0}} \psi_j^{\,\mathrm{sign}(\wind)} \| }, \quad j \in I_{\mathrm{sign}(\wind),1},
	\end{equation*}
	and 
	\begin{equation*}
	\widetilde{e}_j := 
	\frac{\Pi_{t_{|\wind|}} \psi_j^{\,\mathrm{sign}(\wind)}}
	{ \|\Pi_{t_{|\wind|}} \psi_j^{\,\mathrm{sign}(\wind)} \| }	, \quad j  \in I_{\mathrm{sign}(\wind),2}.
	\end{equation*}
	Then, uniformly in $z\in \Omega_N$, 
	\begin{equation}\label{singv1}
	\| \widetilde{e}_j -\psi_j^{\,\mathrm{sign}(\wind)} \| = O(1)
	\left\{\begin{array}{ll} \frac{ N^{(1+C_\alpha)}}{\log N } \e^{-N \log C_\beta}, & j \in I_{ \mathrm{sign}(\wind),1}, \\
				 N^{-C_{\gamma}}\log N, & 
				  j\in I_{ \mathrm{sign}(\wind),2},
		\end{array}\right. \notag
	\end{equation}
	and 
	\begin{equation}\label{singv2}
	\langle \widetilde{e}_i  |  \widetilde{e}_j \rangle  = \delta_{i,j} 
	+ \left\{\begin{array}{ll}  \frac{ N^{(1+C_\alpha)}}{\log N } \e^{-N \log C_\beta}, & \text{ when }
		 i,j \in I_{ \mathrm{sign}(\wind),1}, \\
				  O( \log N) N^{-C_{\gamma}},& \text{ when }  i,j \in I_{ \mathrm{sign}(\wind),2}, \\
				  O( \log N) N^{-C_{\gamma}} +O((\log N /N )^{1/2}),& \text{else}.
		\end{array}\right. 
	\end{equation}
	\end{prop}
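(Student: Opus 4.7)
The plan is to invoke the standard quasimode-to-eigenvector principle, extracting $\widetilde e_j$ from $\psi_j^\sigma$ (with $\sigma=\mathrm{sign}(\wind)$) by projecting away the ``high-energy'' components and renormalizing, with the two spectral gaps of Proposition~\ref{prop:SmallSG} providing the quantitative control. I will begin by recording the basic inequality: because $\Pi_\kappa$ commutes with $(P_N-z)^*(P_N-z)$, the vectors $\Pi_\kappa\psi$ and $(I-\Pi_\kappa)\psi$ lie in orthogonal invariant subspaces, so their images under $P_N-z$ are orthogonal and
\[
\|(P_N-z)\psi\|^2 \;\geq\; \|(P_N-z)(I-\Pi_\kappa)\psi\|^2 \;\geq\; t_{j_\kappa+1}^{2}\,\|(I-\Pi_\kappa)\psi\|^{2},
\]
where $j_\kappa=\max\{k:t_k\leq\kappa\}$. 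This yields the controlling estimate $\|(I-\Pi_\kappa)\psi_j^\sigma\|\leq t_{j_\kappa+1}^{-1}\|(P_N-z)\psi_j^\sigma\|$.

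For $j\in I_{\sigma,2}$, I will apply this with $\kappa=t_{|\wind|}$, using the first spectral gap $t_{|\wind|+1}\gtrsim \log N/N$ from \eqref{sgv2} together with the quasimode residual $\|(P_N-z)\psi_j^\sigma\|\lesssim(\log N/N)\,N^{-C_\gamma}$ from \eqref{qn4} to get $a_j:=\|(I-\Pi_{t_{|\wind|}})\psi_j^\sigma\|=O(N^{-C_\gamma})$. Setting $b_j:=\|\Pi_{t_{|\wind|}}\psi_j^\sigma\|$, \eqref{qn3} gives $b_j^2=\|\psi_j^\sigma\|^2-a_j^2=1+O(\log N\cdot N^{-C_\gamma})$, so $b_j$ is bounded below (this is the main place where one must check that the quasimode really has most of its mass in the low-eigenvalue subspace) and $|1-b_j|=O(\log N\cdot N^{-C_\gamma})$. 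Expanding
\[
\widetilde e_j-\psi_j^\sigma \;=\; (b_j^{-1}-1)\,\Pi_{t_{|\wind|}}\psi_j^\sigma \;-\; (I-\Pi_{t_{|\wind|}})\psi_j^\sigma
\]
then produces $\|\widetilde e_j-\psi_j^\sigma\|\leq|1-b_j|+a_j=O(\log N\cdot N^{-C_\gamma})$. For $j\in I_{\sigma,1}$ I will run the identical argument with $\kappa=t_{|\wind|-m_\sigma^0}$, invoking the second spectral gap $t_{|\wind|-m_\sigma^0+1}\gtrsim(\log N/N)\,N^{-C_\alpha}$ from \eqref{sgv2a} and the sharper residual $\|(P_N-z)\psi_j^\sigma\|\lesssim e^{-N\log C_\beta}$; since \eqref{qn3} now yields $\|\psi_j^\sigma\|=1$ exactly, the normalization correction is $|1-b_j|=O(a_j^2)$, negligible compared to $a_j$, and the claimed exponential-in-$N$ bound follows.

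The orthogonality estimate \eqref{singv2} will then follow by expanding
\[
\langle\widetilde e_i,\widetilde e_j\rangle \;=\; \langle\psi_i^\sigma,\psi_j^\sigma\rangle + \langle\widetilde e_i-\psi_i^\sigma,\psi_j^\sigma\rangle + \langle\psi_i^\sigma,\widetilde e_j-\psi_j^\sigma\rangle + \langle\widetilde e_i-\psi_i^\sigma,\widetilde e_j-\psi_j^\sigma\rangle,
\]
reading off the leading summand from \eqref{qn3} and bounding the remaining three by $O(\|\widetilde e_i-\psi_i^\sigma\|+\|\widetilde e_j-\psi_j^\sigma\|)$ via Cauchy--Schwarz and the uniform boundedness of the quasimodes. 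Case-splitting on whether $(i,j)$ lies in $I_{\sigma,1}\times I_{\sigma,1}$, $I_{\sigma,2}\times I_{\sigma,2}$, or a mixed pair, and retaining only the dominant error in each case, reproduces the three branches of \eqref{singv2}. All the genuine technical content has already been absorbed into Proposition~\ref{prop:SmallSG}; the only real pitfall in the present proof is the bookkeeping---being careful about which projector, which gap, and which residual rate are active for each of the two index blocks, and making sure that the cross-group inner products degenerate at the rate $(\log N/N)^{1/2}$ dictated by \eqref{qn3} rather than at some faster rate.
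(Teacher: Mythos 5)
Your proposal is correct and follows essentially the same route as the paper: both exploit the orthogonality of $\Pi_\kappa\psi$ and $(I-\Pi_\kappa)\psi$ under $B_z=(P_N-z)^*(P_N-z)$ to derive $\|\Pi_\kappa^\perp\psi_j\|\leq t_{j_\kappa+1}^{-1}\|(P_N-z)\psi_j\|$, then combine the quasimode residuals from Proposition~\ref{prop:QM} with the two spectral gaps of Proposition~\ref{prop:SmallSG}, handle the renormalization via the near-unit norm from \eqref{qn3}, and close \eqref{singv2} by the four-term expansion with Cauchy--Schwarz. The only cosmetic difference is that you track $|1-b_j|=O(a_j^2)$ in the $I_{\sigma,1}$ case using $b_j^2=1-a_j^2$, whereas the paper uses the cruder triangle-inequality bound $|1-b_j|\leq a_j$; both give the same final rate.
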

\begin{proof}
To ease the notation we drop the $\pm$ superscripts and let $\psi_j$ be either $\psi_j^+$ or $\psi_j^- $ 
depending on $\mathrm{sign}(\wind)$. Furthermore, we write $B_z := (P_N-z)^*(P_N-z)$ and we let $N\gg 1$. 
In what follows all constants will be uniform in $N\gg1$ and $z\in\Omega_N$ even when we do not 
state this fact explicitly. 
\par
By \eqref{qn4}, we have that 
\begin{equation}\label{p1}
	\langle B_z \psi_j | \psi_j \rangle = \| (P_N -z) \psi_j\|^2 
	= O(1) \left\{\begin{array}{ll} \e^{-2N \log C_\beta}, & j \in I_{ \mathrm{sign}(\wind),1} \\
				  \left(\frac{\log N}{N} N^{-C_{\gamma}}\right)^2, & 
				  j\in I_{ \mathrm{sign}(\wind),2}.
		\end{array}\right. 
\end{equation}
Let $\kappa= t_{|\wind|-m_{\mathrm{sign}(\wind)}^0}$ for $j \in I_{ \mathrm{sign}(\wind),1}$, 
and $\kappa=t_{|\wind|}$ for $ j \in I_{ \mathrm{sign}(\wind),2}$. Let 
$\Pi =\Pi_{\kappa}$ and $\Pi^\perp = \mathbf{1}_{]\kappa^2,\infty[}(B_z)$, and notice 
that both are selfadjoint projectors onto disjoint eigenspaces and that $\Pi + \Pi^\perp =1$. 
By the spectral theorem and the fact that $B_z\geq 0$, 
\begin{equation}\label{p2}
	\langle B_z \psi_j | \psi_j \rangle = \langle B_z \Pi \psi_j |\Pi  \psi_j \rangle 
	+ \langle B_z \Pi^\perp \psi_j | \Pi^\perp\psi_j \rangle 
	 \geq  \| \Pi^\perp\psi_j\|^2
	 \left\{ \begin{array}{ll} t_{|\wind|-m_{\mathrm{sign}(\wind)}^0+1}^2
	   , & j \in I_{ \mathrm{sign}(\wind),1}, \\
	 t_{|\wind|+1}^2,  &   j\in I_{ \mathrm{sign}(\wind),2}.
		\end{array} \right.
\end{equation}
Combining \eqref{p1}, \eqref{p2} and \eqref{sgv2}, \eqref{sgv2a} we get that 
%
\begin{equation*}
	 \| \Pi^\perp\psi_j\| 
	\stackrel{\eqref{p1}}{\leq}
\left\{\begin{array}{ll}  O(1) \left(  \e^{-N\log C_\beta} t_{|\wind|-m_{\mathrm{sign}(\wind)}^0+1}^{-1}  \right)
	\stackrel{\eqref{sgv2a}}{\leq} O(1) \frac{ N^{(1+C_\alpha)}}{\log N } \e^{-N\log C_\beta},&j\in I_{ \mathrm{sign}(\wind),1},\\
 O(1) \left(t_{|\wind|+1}^{-1}  \frac{\log N}{N} N^{-C_{\gamma}}\right)\stackrel{\eqref{sgv2}}{\leq}O(1) N^{-C_{\gamma}},&  j\in I_{ \mathrm{sign}(\wind),2}.
\end{array}
\right.
\end{equation*}
Hence, 
by \eqref{qn3},
\begin{equation*}
	| 1 - \|   \Pi \psi_j \| | \leq \left\{ \begin{array}{ll}
 \| \psi_j -  \Pi \psi_j\| = \|  \Pi^\perp\psi_j\| 
	= O(1)  \frac{ N^{(1+C_\alpha)}}{\log N } \e^{-N\log C_\beta}, & j\in I_{ \mathrm{sign}(\wind),1}\\
 \| \psi_j -  \Pi \psi_j\|  + O(1) N^{-C_{\gamma}} \log N 
	= O(1) N^{-C_{\gamma}} \log N, &  j\in I_{ \mathrm{sign}(\wind),2}
\end{array}
\right.
=: \varepsilon_j.
\end{equation*}
Thus, for $N>0$ sufficiently large, so that $\varepsilon_j \ll 1$, 
\begin{equation*}
	 \| \widetilde{e}_j - \psi_j \| = \left \| \frac{ \Pi \psi_j }{ 1 + O(\varepsilon_j)} - \psi_j \right \| 
	 \leq  O(\varepsilon_j),
\end{equation*}
which, together with \eqref{qn3} and 
\begin{equation*}
	\langle \widetilde{e}_i  |  \widetilde{e}_j \rangle = 
	\langle \psi_i  |  \psi_j \rangle 
	+ \langle \widetilde{e}_i - \psi_i  |  \psi_j \rangle 
	+\langle \psi_i  |  \widetilde{e}_j -\psi_j\rangle 
	+\langle \widetilde{e}_i -\psi_i  |  \widetilde{e}_j -\psi_j\rangle, 
\end{equation*}
lets us conclude the proofs of the proposition. 
\end{proof}
Notice that \eqref{singv2} implies by a similar argument as before \eqref{eqnAP0}, 
that for $N>0$ large enough, the vectors $\widetilde{e}_1,\dots,\widetilde{e}_{|\wind|}$ are 
linearly independent.
Hence, we conclude the following.
\begin{cor}\label{cor:sgvSpan2}
Under the assumptions of Proposition \ref{prop:sgvSpan}, we have that 
 the $\widetilde{e}_1,\dots,\widetilde{e}_{|\wind|-m_{\mathrm{sign}(\wind)}^0}$ 
span the range $\mathcal{R}(\Pi_{t_{|\wind|-m_{\mathrm{sign}(\wind)}^0}})$ and that 
$\widetilde{e}_1,\dots,\widetilde{e}_{|\wind|}$ span $\mathcal{R}(\Pi_{t_{|\wind|}})$. 
\end{cor}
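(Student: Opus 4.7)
The plan is a short dimension count, combining three earlier ingredients: the construction of the $\widetilde e_j$, the spectral gaps from Proposition~\ref{prop:SmallSG}, and the linear independence observation recorded in the remark immediately preceding the corollary.

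First I would observe that by the very definition given in Proposition~\ref{prop:sgvSpan}, we have $\widetilde e_j\in\mathcal R(\Pi_{t_{|\wind|-m_{\mathrm{sign}(\wind)}^0}})$ whenever $j\in I_{\mathrm{sign}(\wind),1}$, and $\widetilde e_j\in\mathcal R(\Pi_{t_{|\wind|}})$ whenever $j\in I_{\mathrm{sign}(\wind),2}$; these containments hold simply because each $\widetilde e_j$ is, up to a scalar, in the image of the relevant spectral projector. Since $t_{|\wind|-m_{\mathrm{sign}(\wind)}^0}\leq t_{|\wind|}$, the first range is a subspace of the second, and so all of $\widetilde e_1,\dots,\widetilde e_{|\wind|}$ lie in $\mathcal R(\Pi_{t_{|\wind|}})$.

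Next I would invoke Proposition~\ref{prop:SmallSG} to pin down the dimensions of these two ranges. The bounds \eqref{sgv1}--\eqref{sgv2} give a strict separation $t_{|\wind|}<t_{|\wind|+1}$ for $N$ large, so that $\Pi_{t_{|\wind|}}$ projects onto $\mathrm{span}(e_1,\dots,e_{|\wind|})$ and therefore has rank exactly $|\wind|$. Similarly, whenever $|\wind|-m_{\mathrm{sign}(\wind)}^0>0$, the complementary bounds \eqref{sgv1a}--\eqref{sgv2a} guarantee a strict gap $t_{|\wind|-m_{\mathrm{sign}(\wind)}^0}<t_{|\wind|-m_{\mathrm{sign}(\wind)}^0+1}$, so $\Pi_{t_{|\wind|-m_{\mathrm{sign}(\wind)}^0}}$ has rank $|\wind|-m_{\mathrm{sign}(\wind)}^0$; when that number equals zero, the first spanning claim is vacuous.

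Finally, I would appeal to the remark just before the corollary, which notes that \eqref{singv2} implies, via the same largest-coefficient argument as used to establish \eqref{eqnAP0}, that $\widetilde e_1,\dots,\widetilde e_{|\wind|}$ are linearly independent for $N$ sufficiently large. In particular the subfamily $\widetilde e_1,\dots,\widetilde e_{|\wind|-m_{\mathrm{sign}(\wind)}^0}$ is also linearly independent. Since a linearly independent family whose cardinality equals the dimension of the ambient space is automatically a basis, both spanning assertions follow at once. There is really no obstacle here: all substantive content has already been carried out in Propositions~\ref{prop:SmallSG} and~\ref{prop:sgvSpan}, and the corollary is just a packaging of those two results into a geometric statement about the small singular subspaces.
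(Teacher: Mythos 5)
Your proposal is correct and matches the paper's implicit argument: the paper records the linear independence of $\widetilde e_1,\dots,\widetilde e_{|\wind|}$ (via \eqref{singv2} and the largest-coefficient argument from \eqref{eqnAP0}) in the sentence immediately preceding the corollary, and then concludes by the same dimension count you spell out, using the spectral gaps of Proposition \ref{prop:SmallSG} to fix the ranks of the two spectral projectors. Your write-up simply makes the containments-by-definition and the edge cases explicit, which the paper leaves implicit.
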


\begin{rem}\label{rem:jb2}
By Remark \ref{rem:jb1}, \eqref{qn5.1}, and Proposition \ref{prop:sgvSpan} it follows that for the Jordan block with $z$ inside the unit disc such that \eqref{qm4.0} holds we have that 
\[
\left\| \frac{\mathfrak{z}_1^+}{\|\mathfrak{z}_1^+\|} - e_1\right\| = O(N^{-C_\gamma} \log N). 
\]
\end{rem}
In what follows we suppose that $N>0$ is sufficiently large, so that the conclusion of 
Proposition \ref{prop:sgvSpan} holds. Furthermore, all error terms will be uniform in 
$z\in\Omega_N$ even if we don't state this explicitly. 
\par
In view of Proposition \ref{prop:sgvSpan} and Corollary \ref{cor:sgvSpan2}, we know 
that there exist 
$a_1,\dots,a_{|\wind|-m_{\mathrm{sign}(\wind)}^0} \in  \ell^2( I_{\mathrm{sign}(\wind),1})$ 
and $b_{|\wind|-m_{\mathrm{sign}(\wind)}^0+1},\dots, b_{|\wind|} \in \ell^2( [|\wind|])$ such that 
\begin{equation}\label{eqAP1}
	e_j = \sum_{\nu  \in I_{\mathrm{sign}(\wind),1}} a_j(\nu) \widetilde{e}_\nu, ~~ j\in I_{\mathrm{sign}(\wind),1},
	\quad \text{and} \quad 
	e_j = \sum_1^{|\wind| } b_j(\nu) \widetilde{e}_\nu, ~~ j\in I_{\mathrm{sign}(\wind),2}. \notag
\end{equation}
Let $n,m\in I_{\mathrm{sign}(\wind),1}$. 
Using \eqref{singv2} and the fact that the $e_j$ are orthonormal, we get that 
\begin{equation}\label{eqAP2}
\begin{split}
	\delta_{n,m} &= \langle e_n | e_m \rangle  
	= \sum_{\nu,\mu} a_n(\nu)\overline{a}_m(\mu) \langle \widetilde{e}_\nu| \widetilde{e}_\mu \rangle 
	= \langle a_n | a_m \rangle + O\!\left(\|a\|^2 N^{(1+C_\alpha)}\e^{-N \log C_\beta}/\log N\right), 
\end{split} \notag
\end{equation}
where the constant in the error estimate is independent of $a$. We deduce that 
$\|a_{n}\|  = 1 + O( N^{(1+C_\alpha)} \e^{-N/C}/\log N)$, and 
\begin{equation}\label{eqAP3}
	 \langle a_n | a_m \rangle = \delta_{n,m} + O\left( N^{(1+C_\alpha)} \e^{-N \log C_\beta}/\log N\right).
\end{equation}
Similarly, we deduce from \eqref{singv2} that for $n,m\in I_{\mathrm{sign}(\wind),2}$ 
\begin{equation}\label{eqAP4}
	 \langle b_n | b_m \rangle = \delta_{n,m} +
	 O(N^{-C_{\gamma}}\log N) +O((\log N /N)^{1/2} ),
\end{equation}
where the constants in the error estimate are independent of $b$. Decompose 
\begin{equation*}
  b_j =\left\{\begin{array}{ll}
			\widetilde{b}_j \oplus \widehat{b}_j \in \ell^2( I_{+,1})\oplus \ell^2( I_{+,2}),
			 &\mathrm{sign}(\wind)>0,\\
			 \widehat{b}_j \oplus \widetilde{b}_j\in \ell^2( I_{-,2})\oplus \ell^2( I_{-,1}),&\mathrm{sign}(\wind)<0.
	\end{array}\right.
\end{equation*}
Exploiting the orthogonality of the $e_j$ and using \eqref{eqAP3}, \eqref{eqAP4}, we deduce
similarly as above that for $n\in I_{\mathrm{sign}(\wind),1}$ and $m\in I_{\mathrm{sign}(\wind),2}$,
\begin{equation}\label{eqAP5}
	 \langle a_n | \widetilde{b}_m \rangle =  O( N^{-C_{\gamma}}\log N) +O((\log N/ N )^{1/2}).
\end{equation}
By a similar argument as before \eqref{eqnAP0}, we deduce from \eqref{eqAP3} that for 
$N>0$ sufficiently large (depending only on the $\Omega$ and the constants in Assumption 
\ref{As:Omega}), the $a_j$ are linearly independent and span $\ell^2(I_{\pm,1})$. 
Thus, for each $\widetilde{b}_j$ there exists a $d_j \in \C^{|\wind|-m_{\mathrm{sign}(\wind)}^0}$ 
such that $\widetilde{b}_j = \sum_\nu d_j(\nu)a_\nu$. Hence, by \eqref{eqAP5}, \eqref{eqAP3}, 
\begin{equation}\label{eqAP6}
	\| \widetilde{b}_m\|^2 = \sum_{\nu,\mu} d_m(\nu) \overline{d_m(\mu)}  \langle a_\nu |a_\mu \rangle = 
	\| d_m\|^2\left( 1 + O\!\left( N^{(1+C_\alpha)} \e^{-N \log C_\beta}/\log N\right)\right).
\end{equation}
Combining this with \eqref{eqAP5}, \eqref{eqAP6}, \eqref{eqAP3},  we deduce that 
\begin{equation*}
\|d_k\|=  O( N^{-C_{\gamma}}\log N) +O((\log N/ N )^{1/2}) +  
O\!\left( N^{(1+C_\alpha)} \e^{-N \log C_\beta}/\log N\right),
\end{equation*}
where the error terms are independent of $a$, $b$, and $d$. Notice that the error term 
which is exponentially small in $N$ may be absorbed into the other two terms. Hence, 
\begin{equation}\label{eqAP7}
\|\widetilde{b}_m\|=  O( N^{-C_{\gamma}}\log N) +O((\log N/N )^{1/2} ). \notag
\end{equation}
Thus, for $j\in I_{\mathrm{sign}(\wind),2}$,
	\begin{equation*}
	e_j = \sum_{\nu \in I_{\mathrm{sign}(\wind),2}} \widehat{b}_j(\nu) \psi_\nu^{\,\mathrm{sign}(\wind)} +
	O( N^{-C_{\gamma}}\log N) +O((\log N /N)^{1/2}).
	\end{equation*}
In view of \eqref{qn5.1}, \eqref{qmn4} and Remarks \ref{rem:Modes} and \ref{rem:Modes1}, we 
may replace $\psi_\nu^{\,\mathrm{sign}(\wind)}$ in the above expression by 
${\mathfrak{z}_\nu^{\mathrm{sign}(\wind)}}/{\|\mathfrak{z}_\nu^{\mathrm{sign}(\wind)}\|}$, 
see also Proposition \ref{lem:TruncEV}.
\\
\par
Summing up what we have proven so far, we obtain in view of Propositions \ref{prop:sgvSpan} and \ref{prop:QM} the following. Recall 
that for $z\in\Omega_N$, 
	$e_1,\dots, e_{N}$ are  an orthonormal set of eigenvectors of $(P_N-z)^*(P_N-z)$ 
	corresponding to the eigenvalues $0\leq t_1^2 \leq \cdots \leq t_{N}^2$.
\begin{prop}\label{prop:sgvSpan3}
	Under the	assumptions of Proposition \ref{prop:sgvSpan},
	we have that for $j \in I_{\mathrm{sign}(\wind),1}$, 
	\begin{equation}
	  \label{eq-P10.4-1}
	e_j = \sum_{\nu \in  I_{\mathrm{sign}(\wind),1}}a_j(\nu)\psi_\nu^{\,\mathrm{sign}(\wind)} 
		+O(1) N^{(1+C_\alpha)}\e^{-N \log C_\beta}/\log N, \notag
	\end{equation}
	%
	where $a_j \in \ell^2(I_{\mathrm{sign}(\wind),1})$
	satisfy \eqref{eqAP3},  uniformly in $z\in\Omega_N$. 
	%
	Further, for $j\in I_{\mathrm{sign}(\wind),2}$,
	\begin{equation}
	  \label{eq-P10.4-2}
	e_j = \sum_{\nu \in I_{\mathrm{sign}(\wind),2}} b_j(\nu)
	\frac{\mathfrak{z}_\nu^{\mathrm{sign}(\wind)}}{\|\mathfrak{z}_\nu^{\mathrm{sign}(\wind)}\|} +
	O(  N^{-C_{\gamma}}\log N) +O((\log N/N )^{1/2} ).
	\end{equation}
	%
where $b_j \in \ell^2( I_{\mathrm{sign}(\wind),2})$
satisfy
	\begin{equation}\label{eq:ao1}
	 \langle b_n | b_m \rangle = \delta_{n,m} +
	 O( \log N N^{-C_{\gamma}}) +O((\log N /N)^{1/2} ),
	\end{equation}
	uniformly in $z\in\Omega_N$.
\end{prop}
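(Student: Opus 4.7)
The plan is to organize the proof as a direct consequence of Proposition \ref{prop:sgvSpan}, Corollary \ref{cor:sgvSpan2}, and the structural information about $\psi_j^{\mathrm{sign}(\wind)}$ encoded in Proposition \ref{lem:TruncEV}. By symmetry (passing to the adjoint $(P_N-z)^*$ if necessary), I would restrict attention to the case $\wind>0$, in which case the relevant spectral subspaces are $\mathcal{R}(\Pi_{t_{\wind-m_+^0}})$ (dimension $\wind-m_+^0$) and $\mathcal{R}(\Pi_{t_{\wind}})$ (dimension $\wind$). By Corollary \ref{cor:sgvSpan2}, for $N$ large the modified quasimodes $\widetilde{e}_1,\dots,\widetilde{e}_{\wind-m_+^0}$ form a basis of the first subspace and $\widetilde{e}_1,\dots,\widetilde{e}_{\wind}$ of the second, and hence each $e_j$ with $j\in I_{+,1}$ may be written as $e_j=\sum_{\nu\in I_{+,1}} a_j(\nu)\widetilde{e}_\nu$, and each $e_j$ with $j\in I_{+,2}$ as $e_j=\sum_{\nu=1}^{\wind} b_j(\nu)\widetilde{e}_\nu$.

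First I would exploit the orthonormality of $\{e_j\}$ together with the near-orthogonality bound \eqref{singv2} for $\{\widetilde{e}_\nu\}$: computing $\delta_{n,m}=\langle e_n\mid e_m\rangle$ by expanding in the $\widetilde{e}$-basis, the cross terms produce an error controlled by \eqref{singv2}, which yields \eqref{eqAP3} for the coefficients $a_j$ restricted to $I_{+,1}\times I_{+,1}$ and \eqref{eq:ao1} for the coefficients $b_j$ restricted to $I_{+,2}\times I_{+,2}$, while the mixed inner product $\langle a_n\mid \widetilde{b}_m\rangle$ for $n\in I_{+,1}$ and $m\in I_{+,2}$ (where $\widetilde{b}_m$ denotes the restriction of $b_m$ to $I_{+,1}$) picks up the mixed block error of \eqref{singv2}, giving \eqref{eqAP5}. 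Since $\{a_\nu\}_{\nu\in I_{+,1}}$ is almost orthonormal, it is a basis of $\ell^2(I_{+,1})$, so each $\widetilde{b}_m=\sum_\nu d_m(\nu)a_\nu$; comparing norms via \eqref{eqAP3} and \eqref{eqAP5} forces $\|d_m\|=O(N^{-C_\gamma}\log N)+O((\log N/N)^{1/2})$, and hence $\|\widetilde{b}_m\|$ is of the same order. Absorbing the exponentially small term into the polynomially small ones completes the bookkeeping for the $I_{+,2}$ component.

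Next, to go from the modified quasimodes $\widetilde{e}_\nu$ back to either the raw quasimodes $\psi_\nu^{\mathrm{sign}(\wind)}$ (for the $I_{+,1}$ statement) or the pure exponentials $\mathfrak{z}_\nu^{\mathrm{sign}(\wind)}/\|\mathfrak{z}_\nu^{\mathrm{sign}(\wind)}\|$ (for the $I_{+,2}$ statement), I would apply the $\ell^2$ bound $\|\widetilde{e}_\nu-\psi_\nu^{\mathrm{sign}(\wind)}\|$ supplied by Proposition \ref{prop:sgvSpan}, and, additionally for the second statement, the bounds \eqref{qn5.1} of Proposition \ref{prop:QM} together with the approximation \eqref{qmn4} (and Remarks \ref{rem:Modes}, \ref{rem:Modes1}), which together show $\|\psi_\nu^{\mathrm{sign}(\wind)}-\mathfrak{z}_\nu^{\mathrm{sign}(\wind)}/\|\mathfrak{z}_\nu^{\mathrm{sign}(\wind)}\|\|=O((\log N/N)^{1/2})+O(N^{-C_\gamma}\log N)$ for $\nu\in I_{+,2}$. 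Multiplying these pointwise bounds by $\|a_j\|_{\ell^2}=1+o(1)$ and $\|b_j\|_{\ell^2}=1+o(1)$ respectively (both controlled by \eqref{eqAP3} and \eqref{eq:ao1}) gives the advertised error terms in \eqref{eq-P10.4-1} and \eqref{eq-P10.4-2}.

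I do not expect any genuine obstacle: essentially all the work has already been done in Proposition \ref{prop:sgvSpan} and in the displayed calculations \eqref{eqAP1}--\eqref{eqAP7} preceding the statement; Proposition \ref{prop:sgvSpan3} is largely a repackaging. The only delicate point worth verifying carefully is that the mixed block in \eqref{singv2} carries the error $O((\log N/N)^{1/2})$ rather than the smaller bound controlling the pure blocks: this propagates through \eqref{eqAP5} and ultimately appears as the dominant error in \eqref{eq-P10.4-2} and \eqref{eq:ao1}, and it must be tracked to ensure that no improvement beyond $O((\log N/N)^{1/2})$ is inadvertently claimed.
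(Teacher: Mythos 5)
Your proposal is correct and follows essentially the same route as the paper: expand $e_j$ in the $\widetilde{e}_\nu$-basis via Corollary \ref{cor:sgvSpan2}, extract the almost-orthonormality of the coefficients from \eqref{singv2}, show the $I_{\mathrm{sign}(\wind),1}$-component of $b_j$ is negligible by comparing with the $a_\nu$-basis, and then convert from $\widetilde e_\nu$ back to $\psi_\nu$ (via Proposition \ref{prop:sgvSpan}) and to $\mathfrak{z}_\nu/\|\mathfrak{z}_\nu\|$ (via \eqref{qn5.1}, \eqref{qmn4}, and Remarks \ref{rem:Modes}, \ref{rem:Modes1}). Indeed, the paper treats the proposition as a summary of exactly the bookkeeping in \eqref{eqAP1}--\eqref{eqAP7}, which you have reproduced faithfully, including the observation that the dominant $O((\log N/N)^{1/2})$ error comes from the mixed block of \eqref{singv2}.
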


\begin{rem}
  \label{rem-lasthour}
We recall, see points (i), (ii) of Lemma \ref{lem:tube-geo} and 
  \eqref{eq:ze-t-o-wt-ze}, that for $\nu\neq \nu' 
  \in I_{\mathrm{sign}(\wind),2}$
  we have that $|\zeta^{\mathrm{sign}(\wind)}_\nu \cdot \ol{\zeta^{\mathrm{sign}(\wind)}_{\nu'}}-1|$ is bounded
  away from $0$ by a constant $c>0$ independent of $N$, while 
  for $\nu=\nu' \in I_{\mathrm{sign}(\wind),2}$ it is of order $\log N/N$
  by \eqref{qm4.0}-\eqref{qm4.0c}. 
\end{rem}

Next, we turn to proving localization estimates of the singular vectors. 
\begin{thm}
  \label{thm:localization}
  Consider the setup and notation as in Proposition \ref{prop:SmallSG}.
	Then, there exist constants $0<C,C' < \infty$ 
	such that for $N>0$ sufficiently large (depending only on $\Omega$ and the constants in 
	Assumption \ref{As:Omega}) we have that for any $z\in\Omega_N$, for any discrete interval
	 $I =[\ell_0,\ell_1]\subset[0,N-1]$, with $|I| \geq 2(1+m_+^0)$, and for $ j \in I_{\mathrm{sign}(\wind),2}$
	\begin{equation}\label{eqApt1}
		  \| e_j \|^2_{\ell^2(I)} \leq 
		 C \min\left\{ |I| \frac{\log N }{ N} , \frac{1}{C'}\right\}\e^{-2C_{\gamma} \mathfrak{d} \frac{\log N}{N}} 
		 +C N^{-2C_{\gamma}}(\log N)^2 +C\frac{\log N}{N},
	\end{equation}
	and 
	\begin{equation}\label{eqApt1.1}
		  \| e_j \|^2_{\ell^2(I)} \geq 
		 \frac{1}{C}\min\left\{ |I| \frac{\log N }{ N} , \frac{1}{C'}\right\} \e^{-2C_\alpha \mathfrak{d} \frac{\log N}{N}} 
		 - C N^{-2C_{\gamma}}( \log N)^2-C\frac{\log N}{N},
	\end{equation}
	and for $j \in I_{\mathrm{sign}(\wind),1}$,
	\begin{equation}\label{eqApt2}
		 \| e_j \|^2_{\ell^2(I)}  \leq C\e^{-2 \mathfrak{d} \log C_\beta} + C N^{2(1+C_\alpha)}
		 \e^{-2N \log C_\beta} (\log N)^{-2} .
	\end{equation}
	Here, 
	\begin{equation}\label{eqApt3}
	  \mathfrak{d}:=\left\{\begin{array}{ll}
		 		\ell_0&\text{when } \wind>0,\\
				N-\ell_1&\text{when } \wind<0.
		 \end{array}\right. \notag
	\end{equation}
\end{thm}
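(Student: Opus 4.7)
My plan is to substitute the representations of $e_j$ from Proposition \ref{prop:sgvSpan3} into $\|e_j\|_{\ell^2(I)}^2$ and evaluate the resulting expressions by geometric-series calculus. By passing to $(P_N-z)^*$ as in the proof of Proposition \ref{prop:SmallSG}, I would first reduce to the case $d := \mathrm{ind}_{p(S^1)}(z) > 0$, so that $\mathfrak{d} = \ell_0$ and the relevant exponential states are the $\mathfrak{z}_\nu^+$.

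For $j \in I_{+,2}$ (yielding \eqref{eqApt1} and \eqref{eqApt1.1}), I would write $e_j = v_j + r_j$ with $v_j = \sum_{\nu \in I_{+,2}} b_j(\nu) \mathfrak{z}_\nu^+/\|\mathfrak{z}_\nu^+\|$ and $\|r_j\|^2 = O(N^{-2C_\gamma}(\log N)^2) + O(\log N/N)$ via \eqref{eq-P10.4-2}. The elementary inequalities $\tfrac12 \|v_j\|_{\ell^2(I)}^2 - 2\|r_j\|^2 \leq \|e_j\|_{\ell^2(I)}^2 \leq 2\|v_j\|_{\ell^2(I)}^2 + 2\|r_j\|^2$ reduce both directions to two-sided bounds on $\|v_j\|_{\ell^2(I)}^2$. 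Expanding, the diagonal part equals $\sum_\nu |b_j(\nu)|^2 |\zeta_\nu^+|^{2\ell_0}(1-|\zeta_\nu^+|^{2|I|})/(1-|\zeta_\nu^+|^{2N})$, after using $\|\mathfrak{z}_\nu^+\|^2 = (1-|\zeta_\nu^+|^{2N})/(1-|\zeta_\nu^+|^2)$. Using \eqref{qm4.0}, \eqref{qm4.0b} to write $|\zeta_\nu^+| = 1 - c_\nu \log N/N$ with $c_\nu \in [C_\gamma, C_\alpha]$, standard Taylor estimates give $|\zeta_\nu^+|^{2\ell_0}$ lying in $[C^{-1} e^{-2C_\alpha \ell_0 \log N/N}, C e^{-2C_\gamma \ell_0 \log N/N}]$, while $1 - |\zeta_\nu^+|^{2N}$ is sandwiched between two positive constants and $1 - |\zeta_\nu^+|^{2|I|} \asymp \min(1, |I|\log N/N)$; combined with $\tfrac12 \leq \|b_j\|^2 \leq 2$ coming from \eqref{eq:ao1}, the diagonal yields the announced main terms in both inequalities. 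The off-diagonal cross terms $b_j(\nu)\overline{b_j(\mu)}\langle \mathfrak{z}_\nu^+,\mathfrak{z}_\mu^+\rangle_{\ell^2(I)}/(\|\mathfrak{z}_\nu^+\|\|\mathfrak{z}_\mu^+\|)$ with $\nu\ne\mu$ are controlled by the bound $|\langle \mathfrak{z}_\nu^+, \mathfrak{z}_\mu^+\rangle_{\ell^2(I)}| \leq 2/|1-\zeta_\nu^+\overline{\zeta_\mu^+}|$ together with Remark \ref{rem-lasthour} (which gives $|1-\zeta_\nu^+\overline{\zeta_\mu^+}|\geq c > 0$ for $\nu\neq\mu$ in $I_{+,2}$) and $1/(\|\mathfrak{z}_\nu^+\|\|\mathfrak{z}_\mu^+\|) = O(\log N/N)$; summed over the $O(1)$ pairs this contributes $O(\log N/N)$, absorbed into the $C\log N/N$ additive error.

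The case $j \in I_{+,1}$, yielding \eqref{eqApt2}, is simpler: the representation \eqref{eq-P10.4-1} and $\|a_j\|^2 = 1 + o(1)$ from \eqref{eqAP3}, combined with Cauchy--Schwarz, reduce matters to the bound $\|\mathbf{1}_{[\ell_0, N-1]} \widetilde u_\nu^+\|^2 \lesssim e^{-2\ell_0 \log C_\beta}$, which is exactly \eqref{qm6} applied with $N_0 = \ell_0$. The main obstacle is the lower bound \eqref{eqApt1.1}: the factor $1/2$ coming from the Cauchy--Schwarz-type reduction forces the diagonal main term $\min(|I|\log N/N, 1/C') e^{-2C_\alpha \ell_0 \log N/N}$ to genuinely dominate the $O(\log N/N)$ cross-term and $\|r_j\|^2$ contributions, which is the source of both the $C\log N/N$ additive error and the capped minimum $\min(\cdot, 1/C')$; one must also verify that the $O(\ell_0 (\log N/N)^2)$ Taylor correction in $|\zeta_\nu^+|^{2\ell_0} = e^{2\ell_0 \log|\zeta_\nu^+|}$ stays $o(1)$, which holds for any $\ell_0 \leq N$ since $\ell_0 (\log N/N)^2 \to 0$. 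The hypothesis $|I| \geq 2(1+m_+^0)$ enters precisely to ensure that $1-|\zeta_\nu^+|^{2|I|} \asymp \min(1, |I|\log N/N)$ uniformly in $\nu \in I_{+,2}$ and that the number of diagonal terms suffices to dominate the cross-term count.
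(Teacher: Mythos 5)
Your proposal is correct, and for the most part follows the same line as the paper's own proof: represent $e_j$ via Proposition~\ref{prop:sgvSpan3}, split off the $O(N^{-2C_\gamma}(\log N)^2) + O(\log N/N)$ remainder, and reduce to geometric-series bookkeeping for $\|\mathfrak{z}_\nu^+\|_{\ell^2(I)}^2/\|\mathfrak{z}_\nu^+\|^2$. Your Taylor estimates for $|\zeta_\nu^+|^{2\ell_0}$ and for $1-|\zeta_\nu^+|^{2|I|}$, as well as the verification that $\ell_0(\log N/N)^2\to 0$, match \eqref{eqAP15}--\eqref{eqAP19.1} and \eqref{eqAP10.1}, and the $j\in I_{+,1}$ case via \eqref{qm6} with $N_0=\ell_0$ is exactly the paper's step 1.

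The one genuine divergence is in the lower bound \eqref{eqApt1.1}. You expand $\|v_j\|_{\ell^2(I)}^2$ into a diagonal sum plus $O(1)$ cross terms, and kill the cross terms by combining $\|\mathfrak{z}_\nu^+\|^{-1}\|\mathfrak{z}_\mu^+\|^{-1}=O(\log N/N)$ with the uniform separation $|1-\zeta_\nu^+\overline{\zeta_\mu^+}|\geq c$ of Remark~\ref{rem-lasthour}, absorbing the result into the $-C\log N/N$ error. The paper instead avoids cross terms altogether: it partitions $I$ into consecutive blocks of length $m_+^0$, observes that the restriction of $v_j$ to each block is $V\Lambda^{\ell_0+km_+^0}\mathfrak{L}_+^{-1}b_j$ for a fixed Vandermonde matrix $V$ in the roots $\zeta_{m_+-m_+^0+1}^+,\dots,\zeta_{m_+}^+$, invokes the lower bound $s_{\min}(V)\geq 1/C$, and then sums a geometric series over blocks, cf.\ \eqref{eqAP20}--\eqref{eqAP25.1}. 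That is where the hypothesis $|I|\geq 2(1+m_+^0)$ is actually used (to guarantee enough blocks, see \eqref{eqAP21}--\eqref{eqAP23}); your heuristic about "the number of diagonal terms dominating the cross-term count" misidentifies its role, though this is harmless because your cross-term route does not need the block structure at all. In short, you trade the paper's Vandermonde-singular-value trick for a more elementary diagonal-plus-off-diagonal estimate backed by Remark~\ref{rem-lasthour}; both give the same result, with yours being slightly more direct and needing a marginally weaker size hypothesis on $|I|$.

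Two small points worth noting for a clean write-up. First, the reduction to $d>0$ by passing to $(P_N-z)^*$ swaps the roles of $e_j$ and $f_j$, so you should either translate the resulting bound on right singular vectors of the adjoint back to $e_j$ carefully, or (as the paper does) simply repeat the computation directly with the $\mathfrak{z}_\nu^-$ states for $d<0$, which is what motivates the case distinction in the definition of $\mathfrak{d}$. Second, in the $j\in I_{+,1}$ step you restrict the sum $\|\mathbf{1}_{[\ell_0,N-1]}\widetilde u_\nu^+\|$ rather than $\|\mathbf{1}_{[\ell_0,\ell_1]}\widetilde u_\nu^+\|$; that is of course a legitimate upper bound, and the same overshoot is used implicitly in the paper's \eqref{eqAP10}--\eqref{eqAP10.1}.
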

\noindent
We remark that we do not use \eqref{eqApt1.1} in the rest of the paper
(instead, we use \eqref{eq-P10.4-2}). Its inclusion here is to contrast with
\eqref{eqApt1}.
\begin{proof}
We shall only consider the case when the winding number $\wind>0$, since the case when $\wind<0$ is 
similar. Throughout we will assume that $N>0$ is sufficiently large (depending only on $\Omega$ and 
the constants in Assumption \ref{As:Omega}) so that the conclusions of Propositions \ref{prop:sgvSpan3}, 
\ref{prop:sgvSpan}, \ref{prop:QM}, and \ref{lem:TruncEV} hold, and we have that all error terms are 
uniform in $z\in\Omega_N$ without us mentioning this explicitly. 
\\
\par
1. We begin by considering the case where $\wind- m_{+}^0>0$ and $j\in  I_{+,1}$. 
By Propositions \ref{prop:sgvSpan3} and \ref{prop:QM}, we find that
\begin{equation}\label{eqAP8}
		 \|e_j\|_{\ell^2(I)}^2 
		 \leq 2 \left\|\sum_{\nu \in I_{+,1}}a_j(\nu)\psi_\nu^+ \right\|^2_{\ell^2(I)}
		+O(1) \frac{ N^{2(1+C_\alpha)}}{(\log N)^2 } \e^{-2N \log C_\beta}.
\end{equation}
Recall from Proposition \ref{lem:TruncEV} that $\psi_\nu^+$ is given by the $\nu$-th column of 
$\mathfrak{Z}^+ A^+_1 G_{+,1}^{-1/2}$. Using the Cauchy-Schwarz inequality we get that 
\begin{equation}\label{eqAP9}
\begin{split}
	\left\|\sum_{\nu \in I_{+,1}}a_j(\nu)\psi_\nu \right\|^2_{\ell^2(I)}
	&= 
	\sum_{\mu \in I } \left| \sum_{\nu \in I_{+,1}}\sum_{\eta=1}^{m_+-m_+^0} 
	\mathfrak{Z}^+_{\mu,\eta} (A^+_1 G_{+,1}^{-1/2})_{\eta,\nu}a_j(\nu) \right|^2\\
	&= 
	\sum_{\mu \in I } \left| \sum_{\eta=1}^{m_+-m_+^0} \mathfrak{Z}^\pm_{\mu,\eta} (A^+_1 G_{+,1}^{-1/2}a_j)(\eta) \right|^2
\leq \| A^+_1 G_{+,1}^{-1/2}a_j\|^2 \sum_{\eta=1}^{m_+-m_+^0} \| \mathfrak{z}^+_\eta\|^2_{\ell^2(I)}.
\end{split}
\end{equation}
where $\mathfrak{z}^+_\eta$ is defined as in Proposition \ref{lem:TruncEV}. For $\eta=1,\dots, m_+$, we have 
that 
\begin{equation}\label{eqAP10}
\| \mathfrak{z}^+_\eta\|^2_{\ell^2(I)} = \sum_{\nu =\ell_0}^{\ell_1} |\zeta^+_{\eta}|^{2\nu} 
 = |\zeta_{\eta}^+|^{2\ell_0}  \frac{1-|\zeta_\eta^+|^{2|I|}}{1-|\zeta_\eta^+|^2}.
\end{equation}
By \eqref{qm4.0b} we have that $|\zeta_\eta^+| \leq 1/C_\beta$ for $\eta  =1,\dots, {m_+-m_+^0} $. Thus, 
\begin{equation}\label{eqAP10.1}
 |\zeta_{\eta}^+|^{2\ell_0} \leq \| \mathfrak{z}^+_\eta\|^2_{\ell^2(I)}\leq \frac{C_\beta^2}{C_\beta^2-1}
  |\zeta_{\eta}^+|^{2\ell_0}.
\end{equation}
Combining \eqref{qm12aa}, \eqref{eqAP3}, with \eqref{eqAP8} and \eqref{eqAP9}, we conclude 
\eqref{eqApt2}. 
\\
\par
2. Next, we let $j\in I_{+,2}$, so we may assume that $m_+^0>0$. 
 By Propositions \ref{prop:sgvSpan3} and \ref{prop:QM}, we find that 
\begin{equation}\label{eqAP11}
		 \|e_j\|_{\ell^2(I)}^2 \leq 
		2\left\|\sum_{\nu \in I_{+,2}}b_j(\nu)\frac{\mathfrak{z}_\nu^{+}}{\|\mathfrak{z}_\nu^{+}\|}  \right\|^2_{\ell^2(I)} + 
	\varepsilon_N,
\end{equation}
where, to ease the notation, we put 
$\varepsilon_N= O(( \log N)^2) N^{-2C_{\gamma}} +O(\log N /N)$. For the sake 
of the presentation we will keep this notation for the error term and we allow for the 
constant in the estimate to change while remaining uniform in $z\in\Omega_N$. 
\\
From Proposition \ref{prop:sgvSpan3}, we see that the coefficients $b_j$ are uniformly 
bounded in $\ell^2$ by $O(1)$. Thus by a similar computation as in \eqref{eqAP9}, \eqref{eqAP10.1}, 
we get together with \eqref{qmn3}, \eqref{qm12ff} that 
\begin{equation}\label{eqAP12}
		 \|e_j\|_{\ell^2(I)}^2 
		 \leq O(1)
		\sum_{\nu \in I_{+,2}} \frac{\| \mathfrak{z}^+_\nu\|^2_{\ell^2(I)}}{\| \mathfrak{z}^+_\nu\|^2}+ 
		\varepsilon_N.
\end{equation}
Recall \eqref{qm4.0b} and \eqref{qm4.0}. 
Let $\nu \in I_{+,2}$, and suppose first that $|I| \ll N/ \log N$, we find by Taylor expansion that 
\begin{equation}\label{eqAP15}
2|I|C_\alpha\frac{\log N}{N} + O\!\left( |I|\frac{\log N}{N}\right)^2 \leq 
1- |\zeta_\nu^+|^{2|I|}\leq 2|I|C_{\gamma}\frac{\log N}{N} + O\!\left( |I|\frac{\log N}{N}\right)^2.
\end{equation}
Thus, 
\begin{equation}\label{eqAP16}
\frac{C_\alpha}{C_\gamma} |I| \left(1 + O\!\left( |I|\frac{\log N}{N}\right)\right)
\leq
\frac{1-|\zeta_\nu^+|^{2|I|}}{1-|\zeta_\nu^+|^2} 
\leq 
\frac{C_\gamma}{C_\alpha} |I| \left(1 + O\!\left( |I|\frac{\log N}{N}\right)\right).
\end{equation}
Plugging the estimates \eqref{eqAP15}, \eqref{eqAP16} into \eqref{eqAP10}, and using 
\eqref{qm12c} as well, we get that there exists a constant $C>0$ such that, for $N>0$ large enough, 
\begin{equation}\label{eqAP19}
\frac{1}{C} |I| \frac{\log N }{ N} \e^{-2C_\alpha\ell_0(1+O(\frac{\log N}{N})) \frac{\log N}{N}}
\leq
\frac{\| \mathfrak{z}^+_\nu\|^2_{\ell^2(I)}}{\| \mathfrak{z}^+_\nu\|^2} 
\asymp  |I| \frac{\log N }{ N} | \zeta_{\nu}|^{2\ell_0} 
\leq C  |I| \frac{\log N }{ N} \e^{-2C_{\gamma}\ell_0 \frac{\log N}{N}}. 
\end{equation}
When $ N/(C\log N) \leq |I| \leq N$ for some $C\gg 1$, then we see by Taylor expansion that 
\begin{equation}\label{eqAP17}
\frac{1-|\zeta_\nu^+|^{2|I|}}{1-|\zeta_\nu^+|^2} 
 \asymp \frac{N}{\log N}. \notag
\end{equation}
Hence, by \eqref{qm12c} and \eqref{eqAP10}, we get that there exists a constant 
$C>0$ such that, for $N>0$ large enough, 
\begin{equation}\label{eqAP19.1}
\frac{1}{C} \e^{-2C_\alpha\ell_0(1+O(\frac{\log N}{N})) \frac{\log N}{N}}
\leq
\frac{\| \mathfrak{z}^+_\nu\|^2_{\ell^2(I)}}{\| \mathfrak{z}^+_\nu\|^2} 
\asymp  | \zeta_{\nu}|^{2\ell_0} 
\leq C   \e^{-2C_{\gamma}\ell_0 \frac{\log N}{N}}. 
\end{equation}
By \eqref{eqAP12}, \eqref{eqAP19} and \eqref{eqAP19.1} we conclude \eqref{eqApt1}.
\\
\par

3. Next we  prove the lower bound \eqref{eqApt1.1}, so we work still with 
$j\in I_{+,2}$ and $m_+^0>0$. Similar to \eqref{eqAP11}, we find by 
Propositions \ref{prop:sgvSpan3} and \ref{prop:QM} that 
\begin{equation*}
		 \|e_j\|_{\ell^2(I)}^2 \geq
		\frac12\left\|\sum_{\nu \in I_{+,2}}b_j(\nu)\frac{\mathfrak{z}_\nu^{+}}{\|\mathfrak{z}_\nu^{+}\|}  \right\|^2_{\ell^2(I)} - 
	\varepsilon_N.
\end{equation*}
Let 
\begin{equation*}
	V = 
	\begin{pmatrix} 1 &  \dots & 1  \\ 
	\zeta_{m_+-m^0_++1}^+& \dots &  \zeta_{m_+}^+\\
	\vdots & \vdots & \vdots \\ 
	(\zeta_{m_+-m^0_++1}^+)^{m_+-m^0_+-1} & \dots &  (\zeta_{m_+}^{+})^{m_+-m^0_+-1},
		 \end{pmatrix}
 \end{equation*}
$\Lambda=\mathrm{diag}(\zeta_{m_+-m_+^0+1}^+,\dots,\zeta_{m_{+}}^+)$ and let 
$\mathfrak{L}_+$ be as in Proposition \ref{lem:TruncEV}. By a similar argument as for 
\eqref{qm9.1}, we see that $s_{m_+-m_+^0}(V) \geq 1/C$ for some $C>0$, depending 
only on $\Omega$. Let $ |I| \geq 2(1+m_+^0)$. Then, 
\begin{equation}\label{eqAP20}
\begin{split}
		\left\|\sum_{\nu \in I_{+,2}}b_j(\nu)
		\frac{\mathfrak{z}^+_\nu}{\|\mathfrak{z}^+_\nu\|} \right\|^2_{\ell^2(I)} 
		&\geq \sum_{k=0}^{\lfloor (|I|-1)/m_+^0 \rfloor -1} \| V \Lambda^{\ell_0 + km_+^0}\mathfrak{L}_+^{-1}b\|^2
\geq \frac{1}{C^2} \sum_{k=0}^{\lfloor (|I|-1)/m_+^0 \rfloor -1} \|\Lambda^{\ell_0 + km_+^0}\mathfrak{L}_+^{-1}b\|^2 \\
&\geq \frac{1}{C^2} \sum_{\nu \in I_{+,2}} |b_j(\nu)|^2 
\sum_{k=0}^{\lfloor (|I|-1)/m_+^0 \rfloor -1} \frac{|\zeta_\nu^+|^{2(\ell_0 + km_+^0)}}{\|\mathfrak{z}^+_\nu\|^2}
\end{split}
\end{equation}
Let $\nu \in I_{+,2}$. A direct computation yields that 
\begin{equation}\label{eqAP21}
	\sum_{k=0}^{\lfloor (|I|-1)/m_+^0 \rfloor -1} |\zeta_\nu^+|^{2(\ell_0 + km_+^0)}
	=|\zeta_\nu^+|^{2\ell_0}\frac{1-|\zeta_\nu^+|^{2m_+^0( \lfloor (|I|-1)/m_+^0 \rfloor -1)}}{1-|\zeta_\nu^+|^{2m_+^0}}  
	\geq 
	|\zeta_\nu^+|^{2\ell_0}\frac{1-|\zeta_\nu^+|^{2(|I|-1 -m_+^0)}}{1-|\zeta_\nu^+|^{2m_+^0}}  
\end{equation}
Recall \eqref{qm4.0} and \eqref{qm4.0b}. 
Suppose first that $2(1+m_+^0) \leq |I| \ll N/ \log N$, then $|I|/2 \leq |I| - 1 -m_+^0 \leq |I|$, and 
 by Taylor expansion we find that 
\begin{equation*}
1- |\zeta_\nu^+|^{2( |I| - 1 -m_+^0)} \geq |I|C_\alpha \frac{\log N}{N} + O\!\left( |I|\frac{\log N}{N}\right)^2 .
\end{equation*}
and similarly 
\begin{equation*}
1- |\zeta_\nu^+|^{2m_+^0} \leq 2m_+^0 C_{\gamma} \frac{\log N}{N} + O\!\left( \frac{\log N}{N}\right)^2 .
\end{equation*}
Thus, 
\begin{equation}\label{eqAP23}
\frac{1-|\zeta_\nu^+|^{2( |I| - 1 -m_+^0)}}{1-|\zeta_\nu^+|^{2m_+^0}} 
\geq 
\frac{C_\alpha}{2m_+^0 C_{\gamma}} |I| \left(1 + O\!\left( |I|\frac{\log N}{N}\right)\right).
\end{equation}
Combining \eqref{eqAP23}, \eqref{eqAP21} and \eqref{eqAP20} with \eqref{qm12c}, 
yields that there exists a constant $C>0$ such that for $N>0$ large enough and all 
$z\in\Omega_N$,
\begin{equation}\label{eqAP24}
\sum_{k=0}^{\lfloor (|I|-1)/m_+^0 \rfloor -1} \frac{|\zeta_\nu^+|^{2(\ell_0 + km_+^0)}}{\|\mathfrak{z}^+_\nu\|^2}
\geq \frac{1}{C}  |I| \frac{\log N }{ N} | \zeta_{\nu}|^{2\ell_0}  
\geq
\frac{1}{C} |I| \frac{\log N }{ N} \e^{-2C_\alpha\ell_0(1+O(\frac{\log N}{N})) \frac{\log N}{N}}.
\end{equation}
When $ N/(C \log N )\leq |I| \leq N$ for some $C\gg1$, then we see by Taylor expansion that 
\begin{equation}\label{eqAP25}
\frac{1-|\zeta_\nu^+|^{2( |I| - 1 -m_+^0)}}{1-|\zeta_\nu^+|^{2m_+^0}} 
\asymp \frac{1}{C}  \frac{N}{\log N}. \notag
\end{equation}
We then deduce similarly to \eqref{eqAP24} that there exists a constant $C>0$ such 
that for $N>0$ large enough and all $z\in\Omega_N$, 
\begin{equation}\label{eqAP25.1}
\sum_{j=0}^{\lfloor (|I|-1)/m_+^0 \rfloor -1} \frac{|\zeta_\nu^+|^{2(\ell_0 + jm_+^0)}}{\|\mathfrak{z}^+_\nu\|^2}
\geq
\frac{1}{C}\e^{-2C_\alpha\ell_0(1+O(\frac{\log N}{N})) \frac{\log N}{N}}.
\end{equation}
From \eqref{eq:ao1} we know that for $N>0$ sufficiently large $\|b_j\| \geq 1/2$. Hence, 
\eqref{eqAP24}, \eqref{eqAP25.1} together with \eqref{eqAP20} imply \eqref{eqApt1.1}, which concludes 
the proof of the theorem. 
\end{proof}
\subsection{Large singular values}\label{sec:LargeSGValues}
The next result gives estimates on the Hilbert-Schmidt and trace norm of $E$ 
from \eqref{gp8}. 
\begin{prop}
  \label{prop:HS-bd}
	Let $z_0 \in p(S^1)$ be as in \eqref{qm1} and let 
	$\Omega\Subset \C$ be a sufficiently small open relatively 
	compact convex neighbourhood of $z_0$ satisfying \eqref{qm2}. 
	Let $N > 1$ be sufficiently large, and let 
	$\Omega_N\Subset \Omega \backslash p(S^1)$ 
	satisfy \eqref{qm4.0}. Let $\wind$ be as in \eqref{def:index} 
	and let $t_j$ be as in Proposition \ref{prop:SmallSG}, 
	then, uniformly in $N$ and $z\in \Omega_N$, 
	\begin{equation}\label{nls0.0}
		\sum_{j=|\wind|+1}^N t_j^{-2} = O\!\left( \frac{N^2}{\log N}\right),
	\end{equation}
	and 
	\begin{equation}\label{nls0.1}
		\sum_{j=|\wind|+1}^N t_j^{-1} = O( N {\log N}).
	\end{equation}
\end{prop}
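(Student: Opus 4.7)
The plan is to reduce both bounds to a counting estimate on small singular values of the circulant approximation of $P_N-z$. From \eqref{c5}, $P_{\Z/N\Z}-zI=(P_N-zI)+B$ where $B$ has rank $r\le N_++N_-$, depending only on $p$. By \eqref{c4}, the singular values of the circulant are $\{|p(e^{2\pi i\ell/N})-z|\}_{\ell=0}^{N-1}$. Applying the second Ky Fan inequality in \eqref{qm24} with $m=r+1$ (so that $s_{r+1}(B)=0$) yields $s_{j+r}(P_{\Z/N\Z}-z)\le s_j(P_N-z)$ and the symmetric bound, which rephrased in terms of $t_j=s_{N-j+1}$ and translated to distribution functions gives
\begin{equation*}
N(s):=\#\{j\in[N]: t_j(P_N-z)\le s\}\le r+\#\{\ell\in[0,N-1]:|p(e^{2\pi i\ell/N})-z|\le s\}.
\end{equation*}

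Next, I would estimate the arclength of $\{\theta\in S^1:|p(e^{i\theta})-z|\le s\}$. Since $z_0\in p(S^1)$ satisfies \eqref{qm1} and $\Omega$ satisfies \eqref{qm2}, the restriction of $p$ to $p^{-1}(\Omega)\cap S^1$ is a local diffeomorphism with $|dp|\ge 1/C$. Because $\Omega_N\Subset\Omega$, for $z\in\Omega_N$ the preimage $p^{-1}(\ol{D(z,s)})\cap S^1$ is a disjoint union of boundedly many arcs (the number being uniformly bounded by the number of sheets of $p\!\upharpoonright_{S^1}$ over $\Omega$), each of length at most $Cs$ provided $s<\dist(\Omega_N,\partial\Omega)$; for larger $s$ the bound by $2\pi$ is trivial. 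Sampling at spacing $2\pi/N$ yields $\#\{\ell:|p(e^{2\pi i\ell/N})-z|\le s\}\le C_p(Ns+1)$ uniformly in $z\in\Omega_N$ and $s>0$, with $C_p$ depending only on $p$ and $\Omega$. Combining with the interlacing and the trivial bound $\|P_N-z\|\le C_\star=O(1)$, I conclude that there is a constant $C_1>0$ depending only on $p$ and $\Omega$ so that $N(s)\le C_1 Ns$ for $s\in[\log N/N,C_\star]$ and $N(s)\le N$ for all $s$.

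By Proposition \ref{prop:SmallSG}, $t_{|\wind|+1}\ge (\log N)/(CN)$, so $\widetilde N(s):=\#\{j>|\wind|: t_j\le s\}$ vanishes for $s<c\log N/N$ and satisfies $\widetilde N(s)\le N(s)$ otherwise. Using the identity $t_j^{-k}=k\int_{t_j}^\infty s^{-k-1}ds$ together with Fubini gives $\sum_{j>|\wind|}t_j^{-k}=k\int_0^\infty s^{-k-1}\widetilde N(s)\,ds$. Substituting the bounds on $\widetilde N(s)$ and splitting the integrals at $s=C_\star$ yields
\begin{equation*}
\sum_{j>|\wind|}t_j^{-2}\le 2C_1 N\int_{c\log N/N}^{C_\star}\!\!s^{-2}\,ds+2N\int_{C_\star}^\infty s^{-3}\,ds=O(N^2/\log N),
\end{equation*}
\begin{equation*}
\sum_{j>|\wind|}t_j^{-1}\le C_1 N\int_{c\log N/N}^{C_\star}\!\!s^{-1}\,ds+N\int_{C_\star}^\infty s^{-2}\,ds=O(N\log N),
\end{equation*}
establishing \eqref{nls0.0} and \eqref{nls0.1}. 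The main (though mild) obstacle is the uniformity of the arc-length estimate in $z\in\Omega_N$ and $N$; this is guaranteed by the lower bound \eqref{qm2} on $|dp|$ together with the compactness $\Omega_N\Subset\Omega$, after which the remainder is a routine layer-cake computation.
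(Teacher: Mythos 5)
Your proposal is correct and follows essentially the same route as the paper's proof: a finite-rank comparison with the circulant $P_{\Z/N\Z}-z$ via interlacing, the lower bound on $t_{|\wind|+1}$ from Proposition \ref{prop:SmallSG}, and an estimate exploiting the local-diffeomorphism structure of $p$ near $S^1$ guaranteed by \eqref{qm2} (the paper directly bounds the Riemann sum $\sum_{\widehat\zeta\in S_N}|p(\widehat\zeta)-z|^{-\nu}$, while you repackage the same arclength bound as a linear estimate on the counting function $N(s)$ and integrate by layer cake). The only slip is that you cite the \emph{second} Ky Fan inequality of \eqref{qm24}; the one you actually use is the first, additive one, $s_{n+m-1}(A+B)\le s_n(A)+s_m(B)$.
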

\begin{proof}
  1.  Let $z_0$ and $\Omega$ as in the assumptions of the proposition and let $z\in\Omega_N$. 
	Let $\widetilde{N} =N_+\lor 0+N_- \lor 0$ and notice that $\widetilde{N} \geq |\wind|$, 
	see Lemma \ref{lem:root1} and \eqref{at9.0}. 
	By \eqref{c4}, \eqref{qm27.1} and \eqref{qm24.0}, we see that 
	\begin{equation}\label{nls0}
		t_j(P_N-z) \geq t_{j-\widetilde{N}}(P_{\Z/N\Z}-z), \quad j=1+\widetilde{N}, \dots, N.
	\end{equation}
	From \eqref{c4} we deduce that the set of eigenvalues of $(P_{\Z/N\Z}-z)^*(P_{\Z/N\Z}-z)$ 
	is given by 
	\begin{equation}\label{nls4}
		 \{ t_j^2(P_{\Z/N\Z}-z); j=1,\dots,N\} = 
		 \{ | p(\widehat{\zeta})-z|^2; \widehat{\zeta} \in S_N\}, 
	\end{equation}
	where $S_N \defeq \{ \e^{2\pi i j/N}; j=0,\dots, N-1\}$. 
	Note that $t_1(P_{\Z/N\Z}-z)\geq \dist(z,p(S^1)) >0$, 
	since $z\in\Omega_N$. Hence, combining \eqref{nls0} and \eqref{sgv2}, 
	we get for $\nu =1,2$,  
	\begin{equation}\label{nls2}
		\sum_{j=|\wind|+1}^N \frac{1}{t_j^{\nu}(P_N-z)} 
		\leq\sum_{j=1 + \widetilde{N}}^N 
		\frac{1}{t_j^{\nu}(P_N-z)} + \widetilde{N} \left( \frac{ CN}{\log N} \right)^{\nu} 
		\leq\sum_{j=1}^N 
		\frac{1}{t_j^{\nu}(P_{\Z/N\Z}-z)} + \widetilde{N}\left( \frac{C N}{\log N} \right)^{\nu}.
	\end{equation}
	It remains to deal with the traces $\tr ((P_{\Z/N\Z}-z)^*(P_{\Z/N\Z}-z))^{-\nu/2}$ for $\nu=1,2$. 
	\par
	2. We know by \eqref{int1} that there 
	are finitely many, say  $N_0< +\infty$, points in $p^{-1}(z_0)$. We will enumerate them 
	in the following way: for $j=1,\dots, N_0'$, with $N_0' \geq 1$, let $\zeta_j\in p^{-1}(z_0)$ 
	be so that $\zeta_j \in S^1$, and for $j=N_0'+ 1,\dots, N_0$, let $\zeta_j\in p^{-1}(z_0)$ 
	be so that $\zeta_j \notin S^1$. If $N_0'=N_0$ then the latter set of points is empty. 
	\par
	By \eqref{qm2} and the Implicit function theorem, it follows that, after potentially 
	shrinking $\Omega$ (while keeping it convex), there exist complex open neighbourhoods 
	$Z_j$ of $\zeta_j$, such that $p: Z_j \to \Omega$ is a diffeomorphism and such that 
	$Z_j\cap S^1=\emptyset$ for $j=N_0'+1,\dots,N_0$. Furthermore, after potentially further 
	shrinking $\Omega$ and the sets $Z_j$, we can arrange so that 
	$p(S^1\cap Z_j)= \Omega\cap p(S^1) =\Gamma$ for all $j=1,\dots,N_0'$. Let 
	$\Omega' \subset \Omega$ 
	be a smaller open convex neighbourhood of $z_0$ such that $\overline{\Omega'} \subset \Omega$, 
	and put $Z_j':= (p\!\upharpoonright_{Z_j\to \Omega})^{-1}(\Omega')\subset Z_j$. A convenient 
	choice is 
	\begin{equation}\label{nls5}
		\Omega' = \{ z\in \C; \dist(z,\Gamma') \leq 1/C_{\Omega'}, \pi(z) \in \Gamma'\}, \notag
	\end{equation}
	where $0< C_{\Omega'} < \infty$ is some sufficiently large constant, 
	$\Gamma' \subset \Gamma$ is a connected non-empty subsegment, containing $z_0$, 
	so that the endpoints of the segments $\Gamma$ and $\Gamma'$ are separated by some 
	fixed positive distance, i.e. for some large constant $C'>0$ we have that 
	$\dist(\partial \Gamma, \partial \Gamma')\geq 1/C'$. Furthermore, $\pi(z)$ denotes the point 
	in $p(S^1)$ with $|\Pi(z) - z| = \dist(z,p(S^1))$. Notice that this point is unique if 
	$C_{\Omega'}$ is large enough. 
	\\
	\par 
	We note that 
	there exists a constant $C>0$ 
	(depending only on $\Omega$ and $\Omega'$) 
	such that for all $z\in\Omega'$ and all $\zeta \in S^1\backslash \bigcup_{j=1}^{N_0'} (Z_j\cap S^1)$ 
	\begin{equation}\label{nls5.1}
		|p(\zeta) -z|\geq \dist(\partial\Omega,\partial\Omega')\geq 1/C. 
	\end{equation}
	Furthermore, observe that since two consecutive points in $S_N$ differ by an angle of $2\pi/N$, we have that 
	\begin{equation}\label{nls5.2}
		\# \{ S_N \cap Z_j \} = N \int_{ (p\upharpoonright_{Z_j \to \Omega})^{-1}(\Omega)} L_{S^1}(d\theta) + O(1),
		\quad j=1,\dots,N_0',
	\end{equation}
	where $L_{S^1}$ denotes the normalized Lebesgue measure on $S^1$. 
	\\
	\par
	Let $j\in \{1,\dots,N_0'\}$, $z\in\Omega_N' := \Omega_N\cap \Omega'$, 
	and $\zeta_0^j\in Z_j'$ be the unique point 
	in $Z_j'$ such that $p(\zeta_0^j)=\Pi(z)$. For $N>1$ sufficiently large 
	(depending only on $\Omega$ and $\Omega'$) there exists a point $\widehat{\zeta}^j_*\in Z_j \cap S_N$ which is 
	the closest point in $Z_j \cap S_N$ to $\zeta_0^j$. There may be two such points, in which case we  
	pick one of them. Then, 
	\begin{equation}\label{nls5.3}
	 | \zeta_0^j - \widehat{\zeta}_*^j| \leq |1 - \e^{2\pi i/N}| \leq O(N^{-1}).
	\end{equation}
	By Taylor expansion around $\zeta^j_0$, we find that 
	\begin{equation*}
		|p(\widehat{\zeta}^j_*) -z| \leq |p(\zeta^j_0) -z| + O(N^{-1}), 
	\end{equation*}
	where the constant is independent of $z$. In view of \eqref{nls4}, we have that
	\begin{equation*}
		 t_1(P_{\Z/N\Z}-z) = \min\limits_{\widehat{\zeta}\in S_N} |p(\widehat{\zeta}) -z|.
	\end{equation*}
	Setting $ \widetilde{t}:= | p(\zeta^j_0) -z |$, as $p(\zeta_0^j)= \Pi(z)$ and hence $|p(\zeta_0^j)-z| = {\rm dist}(z, p(S^1))$, we see that 
	\begin{equation}\label{nls6.1}
		 \widetilde{t} \leq t_1(P_{\Z/N\Z}-z) \leq \min\limits_{j=1,\dots,N_0'} |p(\widehat{\zeta}^j_*) -z|
		  \leq \max\limits_{j=1,\dots,N_0'}|p(\widehat{\zeta}^j_*) -z|
		\leq \widetilde{t} + O(N^{-1}),
	\end{equation}
	where the constant is independent of $z$. By Taylor expansion and \eqref{qm2} we get that 
	for $\widehat{\zeta}^j\in Z_j$
	\begin{equation}\label{nls6}
	\begin{split}
		 | p(\widehat{\zeta}^j) -z | 
		&= | p(\zeta^j_0) + (\partial_{\zeta}p)(\zeta^j_0) (\widehat{\zeta}^j -\zeta^j_0) 
		+ O(|\widehat{\zeta}^j -\zeta^j_0|^2)-z | \\
		&\geq \frac{1}{C} | \widehat{\zeta}^j -\zeta^j_0| ( 1 - C_1|\widehat{\zeta}^j -\zeta^j_0|) - \widetilde{t}.
	\end{split}
	\end{equation}
	Here the constant $0<C< \infty$ only depends on $\Omega$, see \eqref{qm2}, and the constant $0<C_1<\infty$ 
	depends only on $C$ and the second derivative of $p$ in the $Z_j$'s, which is uniformly bounded. 
	Suppose that 
	\begin{equation}\label{nls8}
		| \widehat{\zeta}^j -\zeta^j_0|  \leq  \frac{1}{2C_1}, \quad 
		|\widehat{\zeta}^j-\zeta^j_0| \geq 8C \widetilde{t}, 
	\end{equation}
	where $C,C_1$ are as in \eqref{nls6}. Hence, we find by \eqref{nls6}, \eqref{nls8} that 
	\begin{equation}\label{nls9}
		 | p(\widehat{\zeta}^j) -z | \geq \frac{1}{4C} | \widehat{\zeta}^j-\zeta^j_0|.
	\end{equation}
	\par
	3. We turn to proving the two estimates claimed in the proposition. We let $\Omega$ and $\Omega'$ 
	be as in Step 2. Recall \eqref{nls6.1} and note that if $\widetilde{t}\geq 1/C$ for some arbitrarily large 
	but fixed constant $C$, then 
	 \begin{equation*}
		\sum_{j=1}^N 
		\frac{1}{t_j(P_{\Z/N\Z}-z)} ,~~ \sum_{j=1}^N 
		\frac{1}{t_j^2(P_{\Z/N\Z}-z)} = O(N),
	\end{equation*} 
	and we conclude \eqref{nls0.0} and \eqref{nls0.1} in view of \eqref{nls2}. 
	\\
	\par
	Continuing, we may assume from now on that $\widetilde{t} \leq 1/C_0$, for some arbitrarily large but 
	fixed constant $C_0$. Since the roots of $p(\zeta)-z$ depend smoothly on  $z\in \Omega$, see the 
	discussion after \eqref{qm2}, it follows from \eqref{qm4.0} that there exists a constant $C>0$ such that 
	for all $N>0$,
	 \begin{equation*}
		\dist (\Omega_N,p(S^1)) \geq \frac{1}{C} \frac{\log N}{N},
	\end{equation*} 
	Hence, we may from now on assume that 
	 \begin{equation}\label{ls12}
		\frac{1}{C} \frac{\log N}{N} \leq \widetilde{t} \leq \frac{1}{C_0}. 
	\end{equation} 
	for some large $C>0$, independent of $N>0$ and $z\in\Omega_N$. Let $\nu \in\{1,2\}$. 
	Then, in view of \eqref{nls4}, \eqref{nls5.1} we get 
	\begin{equation}\label{nls11}
		\sum_{j=1}^N \frac{1}{t_j^\nu(P_{\Z/n\Z}-z)} 
		 = \sum_{j=1}^{N_0'} \sum_{\widehat{\zeta} \in Z_j\cap S_N} \frac{1}{| p(\widehat{\zeta})-z|^\nu} + O(N),
	\end{equation}
	where the constant in the error term depends only on $\Omega$ and $\Omega'$. 
	Using \eqref{nls5.2}, we see that by potentially shrinking $\Omega$, and the sets $Z_j$ and 
	$\Omega'$ accordingly, we may suppose that the first estimate in \eqref{nls8} holds for all 
	$\widehat{\zeta} \in Z_j\cap S_N$, $j=1,\dots, N_0'$. Next, pick a $j\in \{1,\dots,N_0'\}$, while keeping 
	in mind that $N_0'<\infty$, and split the corresponding inner sum on the right hand side of \eqref{nls11} into 
	two parts: one where the second inequality in \eqref{nls8} holds and one where it does not, so 
	\begin{equation}\label{nls13}
		\sum_{\widehat{\zeta} \in Z_j\cap S_N} \frac{1}{| p(\widehat{\zeta})-z|^\nu} 
		=
		\sum_{\substack{\widehat{\zeta} \in Z_j\cap S_N, \\ 
		|\widehat{\zeta} - \zeta_0^j| \geq 8C \widetilde{t}}} \frac{1}{| p(\widehat{\zeta})-z|^\nu} 
		+
		\sum_{\substack{\widehat{\zeta} \in Z_j\cap S_N, \\ 
		|\widehat{\zeta} - \zeta_0^j| < 8C \widetilde{t}}} \frac{1}{| p(\widehat{\zeta})-z|^\nu}.
	\end{equation}
	We begin with treating the second sum on the right hand side. Since the points in $S_N$ differ by an 
	angle of $2\pi / N$, we see that for $N>1$
	\begin{equation*}
		\#\{\widehat{\zeta} \in Z_j\cap S_N;|\widehat{\zeta} - \zeta_0^j| < 8C \widetilde{t}   \}  
		= O(N\widetilde{t}\,),
	\end{equation*}
	where the constant depends only on the symbol $p$, $\Omega$ and the constant $C$ from \eqref{nls6}. 
	Hence, by \eqref{nls4} and \eqref{nls6.1},
	\begin{equation}\label{nls13.1}
		\sum_{\substack{\widehat{\zeta} \in Z_j\cap S_N, \\ 
		|\widehat{\zeta} - \zeta_0^j| < 8C \widetilde{t}}} \frac{1}{| p(\widehat{\zeta})-z|^\nu}  =
		O(N\widetilde{t} ^{\,1-\nu}).
	\end{equation}
	\par
	Next, we deal with the first sum on the right hand side of \eqref{nls13}. Since for this term 
	\eqref{nls8} holds, we get by \eqref{nls9} that 
	\begin{equation}\label{nls13.2}
		\sum_{\substack{\widehat{\zeta} \in Z_j\cap S_N, \\ 
		|\widehat{\zeta} - \zeta_0^j| \geq 8C \widetilde{t}}} \frac{1}{| p(\widehat{\zeta})-z|^\nu} 
		\leq 
		\sum_{\substack{\widehat{\zeta} \in Z_j\cap S_N, \\ 
		|\widehat{\zeta} - \zeta_0^j| \geq 8C \widetilde{t}}} \frac{(4C)^\nu}{ |\widehat{ \zeta} -\zeta^j_0|^\nu} 
	\end{equation}
	By \eqref{nls5.3} and \eqref{ls12} we find that there exist constants 
	$0< C_2,C_2' < \infty$ such that 
	for $N>0$ large enough (depending only on the constants in the
	referenced inequalities) 
	we have that for all $\widehat{\zeta} \in Z_j\cap S_N$
	with $|\widehat{\zeta} - \zeta_0^j| \geq 8C \widetilde{t}$ and for all $z\in\Omega_N$, such that 
	\eqref{ls12} holds, 
	\begin{equation}\label{nls13.3}
		 |\widehat{ \zeta} -\zeta^j_0| \geq \frac{1}{C_2} |\widehat{ \zeta} -\widehat \zeta^j_*| \geq \frac{1}{C_2'}\widetilde{t}.
	\end{equation}
	By rotational invariance, we may assume that ${\widehat \zeta}^j_*=1$. Then, we have that 
	$\widehat{\zeta}_k=\e^{2\pi i k/N}$, $k=1,\dots, M_1$ are the points in $Z_k$ to the left of $1$ and 
	$\widehat{\zeta}_k=\e^{2\pi i k/N}$, $k=-1,\dots, -M_2$ are the points in $Z_k$ to the right of $1$. 
	 By \eqref{nls5.2} it is clear that after potentially further shrinking $\Omega$ and taking $N>0$ large enough, 
	 we have that $M_1,M_2 \leq N/C_3$, for $C_3 \gg 1$. Hence, we find that there exists a constant $0< C_4 <\infty$ 
	 such that 
	\begin{equation}\label{nls13.4}
		| \widehat{\zeta}_k - \zeta^j_*| =
		 |\e^{ \pm 2\pi i k/N} -1| \geq \frac{1}{C_4 N} |k|, \quad -M_2 \leq k \leq M_1, ~ k\neq 0.
	\end{equation}
	On the other hand, we see by \eqref{ls12} that if $|k| \leq B^{-1} \log N$, $B\gg 1$, then 
	\begin{equation}\label{nls13.5}
		| \widehat{\zeta}_k - {\widehat \zeta}^j_*| =	 |\e^{ \pm 2\pi i k/N} -1| \ll \widetilde{t}, 
	\end{equation}
	for $N>1$. Combining \eqref{nls13.4}, \eqref{nls13.5} with \eqref{nls13.2} and \eqref{nls13.3}, we conclude that 
	\begin{equation*}
		\sum_{\substack{\widehat{\zeta} \in Z_j\cap S_N, \\ 
		|\widehat{\zeta} - \zeta_0^j| \geq 8C \widetilde{t}}} \frac{1}{| p(\widehat{\zeta})-z|^\nu} 
		\leq \sum_{\substack{ k=-M_2 \\ | k|\geq B^{-1} \log N}}^{M_1} \frac{(4CC_2C_4N)^\nu}{|k|^\nu}  
		\leq\left\{\begin{array}{ll}  O(N \log N), & \nu =1 \\ O(N^2 (\log N)^{-1} ), & \nu =2. \end{array}\right.
	\end{equation*}
	Together with  \eqref{nls2}, \eqref{ls12}, \eqref{nls11},
	\eqref{nls13} and  \eqref{nls13.1}, 
	we conclude \eqref{nls0.0} and \eqref{nls0.1}. 
\end{proof}

\subsection{Smallest singular value and Hilbert-Schmidt norms}
\label{sec-ssvHS}
In this short section we provide bounds on minimal singular 
values and Hilbert-Schmidt norms of matrices appearing in the Grushin problem. 
Recall the definition of $E(\cdot)$ from \eqref{gp8}. 
\begin{lem}
\label{lem-10.7}
Consider the setup and notation as in Proposition \ref{prop:SmallSG}. Fix $M \in \N$ and $\gamma > 1$. Assume that the entries of $Q$ are i.i.d.~with zero mean, unit variance, and finite fourth moment. 
If $\gamma > 3/2$ then 
\[
\prob\left(\inf_{z \in \Omega_N} s_{\min} (I_N + N^{-\gamma} Q E(z)) \le 1/2\right) \le N^{-(\gamma -3/2)/2},
\]
for all large $N$. Otherwise, if $\gamma \in (1,3/2]$ then given any $\alpha_0 >0$ there exists $\beta = \beta(\alpha_0, \gamma) < \infty$,  such that  
\[
\prob\left(s_{\min} (I_N + N^{-\gamma} Q E(z)) \le N^{-\beta}\right) \le N^{-\alpha_0},
\]
uniformly for all $z \in \Omega_N$ and all large $N$. 
\end{lem}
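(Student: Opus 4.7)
\emph{Overall strategy.} The plan is to split the argument according to the two regimes of $\gamma$, using throughout the bounds $\|E(z)\| = 1/t_{M+1}(z) = O(N/\log N)$ and $\|E(z)\|_{HS}^2 = \sum_{i>M} t_i(z)^{-2} = O(N^2/\log N)$ provided by Propositions \ref{prop:SmallSG} and \ref{prop:HS-bd} (for the natural choice $M=|d|$), together with the Bai--Yin-type estimate $\prob(\|Q\| > t\sqrt N) \le C/t^4$ coming from the finite fourth moment hypothesis.

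\emph{Regime $\gamma > 3/2$.} Here I would treat the problem pointwise in $z$ via a Hilbert--Schmidt second-moment calculation. Since $\E(Q^*Q) = N I_N$, one obtains $\E \|N^{-\gamma} Q E(z)\|_{HS}^2 = N^{1-2\gamma} \|E(z)\|_{HS}^2 = O(N^{3-2\gamma}/\log N)$. Markov combined with $\|\cdot\| \le \|\cdot\|_{HS}$ then yields $\prob(\|N^{-\gamma} Q E(z)\| \ge 1/2) = O(N^{3-2\gamma}/\log N)$, on the complement of which $s_{\min}(I + N^{-\gamma} Q E(z)) \ge 1 - \|N^{-\gamma} Q E(z)\| \ge 1/2$. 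To promote the pointwise estimate to a supremum over $\Omega_N$, I cover $\Omega_N$ by an $N^{-K}$-net of size $O(N^{2K+2})$ and combine it with a $\mathrm{poly}(N)$ Lipschitz bound for $z \mapsto E(z)$, obtained from the analyticity of $E(z)$ and the gap $t_{M+1}(z) \gtrsim \log N/N$ via a Cauchy integral argument. Choosing $K$ large enough and taking a union bound delivers the claimed rate $N^{-(\gamma-3/2)/2}$.

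\emph{Regime $\gamma \in (1, 3/2]$.} Now $\|N^{-\gamma} Q E\|$ need not be $o(1)$, so a perturbation argument around $I$ fails and I would exploit the low-rank structure of $E$. Writing the SVD $E(z) = W \Sigma V^*$ with $V = [f_{M+1}, \dots, f_N]$, $W = [e_{M+1}, \dots, e_N]$ and $\Sigma = \diag(t_{M+1}^{-1}, \dots, t_N^{-1})$, and letting $V_\perp$ denote an $N \times M$ matrix with orthonormal columns spanning $\mathrm{range}(V)^\perp$, changing basis to $\C^N = \mathrm{range}(V) \oplus \mathrm{range}(V)^\perp$ turns $I + N^{-\gamma} Q E$ into the block lower triangular form
\[
\begin{pmatrix} A & 0 \\ N^{-\gamma} V_\perp^{\,*} Q W \Sigma & I_M \end{pmatrix}, \qquad A := I_{N-M} + N^{-\gamma} \tilde Q \Sigma, \qquad \tilde Q := V^* Q W.
\]
Direct inversion of this block form and the bound $\|Q\| \le C\sqrt N$ give $\|(I + N^{-\gamma} Q E)^{-1}\| \le 1 + \|A^{-1}\| (1 + O(N^{3/2-\gamma}/\log N))$, so it suffices to show $\|A^{-1}\| \le N^{\beta_0}$ for some $\beta_0 = \beta_0(\alpha_0, \gamma)$. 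Factoring $A = (D + N^{-\gamma} \tilde Q)\Sigma$ with $D := \Sigma^{-1}$ diagonal, whose entries lie in $[c\log N/N, O(1)]$, and using $\|\Sigma^{-1}\| = t_N = O(1)$, I reduce the problem to establishing the polynomial lower bound $s_{\min}(D + N^{-\gamma} \tilde Q) \ge N^{-\beta_1}$ with probability at least $1 - N^{-\alpha_0}$.

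\emph{Main obstacle.} The final step is a smallest-singular-value bound for the deterministic-plus-random matrix $D + N^{-\gamma} \tilde Q$, where $\tilde Q$ has mean-zero, pairwise uncorrelated entries of unit variance and finite fourth moment inherited from the i.i.d.\ structure of $Q$ (but is in general not itself i.i.d.). Splitting the index set into $S = \{i : D_{ii} \le \tau\}$ and its complement $L$ at a threshold $\tau \gg N^{1/2-\gamma}$, on $L$ the deterministic diagonal dominates the noise and $M_{LL} = D_L + N^{-\gamma}\tilde Q_{LL}$ is well conditioned by deterministic perturbation; a Schur-complement reduction relative to the $L$-block leaves a $k \times k$ random matrix problem with $k = O(N^2 \tau^2/\log N)$ (from the trace bound $\sum_{i>M} t_i^{-2} = O(N^2/\log N)$ of Proposition \ref{prop:HS-bd}). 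I would handle that reduced problem by a Rudelson--Vershynin-style small-ball bound, using the polynomial L\'evy concentration for linear combinations of the i.i.d.\ entries of $Q$ that follows from Berry--Esseen with finite fourth moment, together with an $\vep$-net argument on the unit sphere of $\C^k$; unrolling then yields $\beta = \beta(\alpha_0, \gamma)$. The principal difficulty is exactly this final small-ball/anti-concentration estimate, which must be established under only a finite fourth moment assumption rather than a density condition on the entries of $Q$.
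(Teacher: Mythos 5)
Your argument has genuine gaps in both regimes. For $\gamma>3/2$, the Hilbert--Schmidt second moment gives only a pointwise bound $\prob\bigl(\|N^{-\gamma}QE(z)\|\ge 1/2\bigr)=O(N^{3-2\gamma}/\log N)$, and you then try to pass to the supremum over $\Omega_N$ with an $N^{-K}$-net and a polynomial Lipschitz estimate. The union bound costs a factor of order $N^{2K}$, and $N^{2K}\cdot N^{3-2\gamma}\not\to 0$ for $\gamma$ close to $3/2$, no matter how $K$ is chosen; enlarging $K$ to absorb the Lipschitz error only worsens the union bound, so this argument cannot deliver the claimed rate $N^{-(\gamma-3/2)/2}$. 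The paper avoids the net altogether: since $\sup_{z\in\Omega_N}\|E(z)\|=O(N/\log N)$ is a \emph{deterministic} bound, one has $\sup_{z\in\Omega_N}\|N^{-\gamma}QE(z)\|\le N^{-\gamma}\cdot O(N/\log N)\cdot\|Q\|$, reducing the supremum over $z$ to the single random variable $\|Q\|$; Markov applied to $\E\|Q\|=O(\sqrt N)$ (Lata\l a's theorem, which needs only finite fourth moments) then gives the rate uniformly in $z$.

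For $\gamma\in(1,3/2]$, your block change of basis to $\mathrm{range}(V)\oplus\mathrm{range}(V)^\perp$ is the same as the paper's change of basis via the unitary $\mathsf F(z)$, and you correctly reduce the problem to a lower bound on $s_{\min}$ of $I_{N-M}+N^{-\gamma}\mathsf F_2^*Q\mathsf E_2\mathsf T$, and then, after factoring out $\mathsf T$, to $s_{\min}(D+N^{-\gamma}\tilde Q)$ with $D=\mathsf T^{-1}$ diagonal and $\tilde Q=\mathsf F_2^*Q\mathsf E_2$. But at this point $\tilde Q$ has uncorrelated yet dependent entries, and the Schur-complement/Rudelson--Vershynin route you sketch is precisely the step you yourself flag as the principal obstacle and do not resolve: the proposal is incomplete exactly where the difficulty lives. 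The idea you are missing is the paper's augmentation trick that restores the i.i.d.\ structure \emph{before} the problem becomes hard. Replace $\mathsf F_2,\mathsf E_2,\mathsf T$ by the \emph{full} unitaries $\mathsf F,\mathsf E$ and the invertible diagonal $\widehat{\mathsf T}=I_M\oplus\mathsf T$, and factor
\[
I_N+N^{-\gamma}\mathsf F^*Q\mathsf E\,\widehat{\mathsf T}
=\bigl(\widehat{\mathsf T}^{-1}+N^{-\gamma}\mathsf F^*Q\mathsf E\bigr)\widehat{\mathsf T}
=\mathsf F^*\bigl(\mathsf F\widehat{\mathsf T}^{-1}\mathsf E^*+N^{-\gamma}Q\bigr)\mathsf E\,\widehat{\mathsf T}.
\]
The unitaries $\mathsf F,\mathsf E$ now drop out of $s_{\min}$, leaving a deterministic shift $N^{\gamma}\mathsf F\widehat{\mathsf T}^{-1}\mathsf E^*$ of polynomial norm plus the \emph{original i.i.d.} matrix $Q$, to which the Tao--Vu least-singular-value bound \cite[Theorem 2.1]{TaoVu08} applies directly. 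Truncating to the block $\mathsf F_2^*Q\mathsf E_2$, as you do, destroys the unitarity that makes this cancellation possible, which is why your reduced problem is genuinely harder than the one the paper faces and why your proposal does not close.
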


\begin{proof}
1. We first  consider the easier case $\gamma > 3/2$. 
By Proposition \ref{prop:SmallSG}, applied with $M=|d|$, we have that $\|E(z)\| = O(N)$ uniformly for $z \in \Omega_N$. As the entries of $Q$ have zero mean and finite fourth moments,
applying \cite[Theorem 2]{La04} and Markov's inequality we obtain that 
\[
\prob \left( N^{-\gamma} \sup_{z \in \Omega_N} \| Q E(z) \| \ge 1/2\right) \le \prob \left(  \| Q \| \ge c N^{\gamma -1} \right) \le c^{-1} N^{-(\gamma -1)} \cdot \mathds{E} \|Q\| \le N^{-(\gamma -3/2)/2},
\]
for all large $N$, and some $c >0$. Thus, by the triangle inequality we derive that
\[
\prob\left(\inf_{z \in \Omega_N} s_{\min} (I_N + N^{-\gamma} Q E(z)) \le 1/2\right) \le\prob \left( N^{-\gamma} \sup_{z \in \Omega_N} \| Q E(z) \| \ge 1/2\right) \le N^{-(\gamma -3/2)/2}.
\]
This yields the result for $\gamma > 3/2$. 
\par
2. We turn to the case of $\gamma \in (1, 3/2]$. The proof uses
a change of basis, a lower bound on the smallest singular value of 
matrices with i.i.d.~entries shifted by a deterministic matrix, and an upper bound on the maximal singular value of a matrix with i.i.d.~entries. 

To carry out these steps we introduce additional
notation. Let  $\mathsf{E}(z)$ and $\mathsf{F}(z)$  be the two 
unitary matrices whose columns are $\{e_1(z), e_2(z), \ldots, e_N(z)\}$ and $\{f_1(z), f_2(z), \ldots, f_N(z)\}$, respectively (Recall the definitions of $e_i$'s and $f_j$'s from Section \ref{sec:GrPr1} applied to $P=P_N$). Further define $\mathsf{E}_1(z)$ and $\mathsf{E}_2(z)$ to be the
$N \times M$ and $N \times (N-M)$ matrices consisting of the first 
$M$ columns and the remaining $(N-M)$ columns of $\mathsf{E}(z)$, respectively. Similarly define $\mathsf{F}_1(z)$ and $\mathsf{F}_2(z)$. Finally, let $\mathsf{T}(z)$ be the diagonal matrices with entries $\{t_{M+1}(z)^{-1}, t_{M+2}(z)^{-1}, \ldots, t_N(z)^{-1}\}$, where $t_j(z)=t_j$, $j \in [N]$ are as in Proposition \ref{prop:SmallSG}.

Equipped with this  notation, upon recalling the definition of $E(z)$, we see that
\[
I_N + N^{-\gamma} Q E(z) =I_N + N^{-\gamma} Q  \mathsf{E}_2(z) \mathsf{T}(z) \mathsf{F}_2(z)^*.
\]
Fix $x \in S^{N-1}$. As the columns of $\mathsf{F}(z)$ form a basis of $\C^N$,
there exist $c_1 \in \C^{M}$ and $c_2 \in \C^{N-M}$ such that 
\[
x = \mathsf{F}_1(z) c_1 + \mathsf{F}_2(z) c_2 \qquad \text{ and } \qquad \|c_1\|^2+\|c_2\|^2=1.
\]
Therefore, using the orthonormality property of the columns of $\mathsf{F}(z)$ we derive that 
\begin{multline}\label{eq:block-mat-smin}
\|(I_N + N^{-\gamma} Q E(z) ) x\|^2 = \| \mathsf{F}_1(z) c_1 + \mathsf{F}_2(z) c_2+ N^{-\gamma} Q \mathsf{E}_2(z) \mathsf{T}(z) c_2\|^2\\
= \left\| \mathsf{F}_1(z) (c_1 + N^{-\gamma} \mathsf{F}_1(z)^* Q \mathsf{E}_2(z) \mathsf{T}(z) c_2)+ \mathsf{F}_2(z) (I_{N-M}+ N^{-\gamma} \mathsf{F}_2(z)^* Q \mathsf{E}_2(z) \mathsf{T}(z)) c_2\right\|^2\\
= \left\| c_1 + N^{-\gamma} \mathsf{F}_1(z)^* Q \mathsf{E}_2(z) \mathsf{T}(z) c_2\right\|^2+ \left\|(I_{N-M}+ N^{-\gamma} \mathsf{F}_2(z)^* Q \mathsf{E}_2(z) \mathsf{T}(z)) c_2\right\|^2\\
=\left\| \begin{bmatrix} I_M & N^{-\gamma} \mathsf{F}_1(z)^* Q \mathsf{E}_2(z) \mathsf{T}(z) \\ 0 & I_{N-M}+ N^{-\gamma} \mathsf{F}_2(z)^* Q \mathsf{E}_2(z) \mathsf{T}(z)\end{bmatrix} \cdot \begin{pmatrix} c_1 \\ c_2 \end{pmatrix}\right\|^2,
\end{multline}
where in the second step we have used that $\mathsf{F}_1(z) \mathsf{F}_1(z)^* + \mathsf{F}_2(z) \mathsf{F}_2(z)^*= I_N$ and in the third step we have again used the fact that the columns of $\mathsf{F}(z)=\begin{bmatrix} \mathsf{F}_1(z) &  \mathsf{F}_2(z) \end{bmatrix}$ form an orthonormal basis of $\C^N$. So \eqref{eq:block-mat-smin} shows that it is enough to find a lower bound on the smallest singular value of the $2 \times 2$ block matrix appearing in its \abbr{RHS}.

To this end, we note that any block matrix $A$ of the form $A= \begin{bmatrix} I & A_1 \\ 0 & A_2 \end{bmatrix}$ is invertible iff $A_2$ is, and in that case
\[
A^{-1}= \begin{bmatrix} I & -A_1 A_2^{-1} \\ 0 & A_2^{-1} \end{bmatrix}.
\]
Hence, using the triangle inequality we obtain that
\[
\|A^{-1}\| \leq 1 + (\|A_1\|+1) \cdot \|A_2^{-1}\|.
\]
Recalling the fact that $\|c_1\|^2+\|c_2\|^2=1$, using the above and \eqref{eq:block-mat-smin} we therefore deduce that
\begin{align}
(s_{\min}(I_N + N^{-\gamma} Q E(z)))^{-1}  & = \| (I_N + N^{-\gamma} Q E(z))^{-1}\| \label{eq:smin-simplify1}\\
& \le 1+ (N^{-\gamma} \| \mathsf{A}_1(z)\| +1) \cdot \|(I_{N-M}+ N^{-\gamma} \mathsf{A}_2(z))^{-1}\|, \notag
\end{align}
where for brevity we write
\[
\mathsf{A}_1(z) := \mathsf{F}_1(z)^* Q \mathsf{E}_2(z) \mathsf{T}(z) \qquad \text{ and } \qquad \mathsf{A}_2(z):= \mathsf{F}_2(z)^* Q \mathsf{E}_2(z) \mathsf{T}(z).
\]
Using \cite{La04}, Proposition \ref{prop:SmallSG}, and Markov's inequality again we further have that 
\begin{equation}\label{eq:prob-smax}
\prob \left( N^{-\gamma} \| \mathsf{A}_1(z)\| +1 \ge 2 N^{2\alpha_0 +1/2} \right) \le t_{M+1}(z)^{-1} \cdot N^{-(\gamma+2\alpha_0+1/2)} \cdot \mathds{E} \| Q\| = O(N^{-2\alpha_0}),
\end{equation}
where we have also used the facts that $t_{M+1}(z)^{-1}=O(N)$ and $\gamma >1$. Hence, by \eqref{eq:smin-simplify1}, we observe that in order to find a lower bound on the smallest singular value of $I_N + N^{-\gamma} Q E(z)$ it remains to find the same for $I_{N-M} + N^{-\gamma} \mathsf{A}_2(z)$. Upon using Weyl's inequality for singular values it is straightforward to notice that 
\begin{equation}\label{eq:smin-simplify3}
s_{\min} (I_{N-M} + N^{-\gamma} \mathsf{A}_2(z)) \ge s_{\min}(I_{N} + N^{-\gamma} \mathsf{A}(z)),
\end{equation}
where
\[
\mathsf{A}(z):= \mathsf{F}(z)^* Q \mathsf{E}(z) \widehat{\mathsf{T}}(z) \qquad \text{ and } \qquad \widehat{\mathsf{T}}(z) := \begin{bmatrix} I_M & 0 \\ 0 & \mathsf{T}(z) \end{bmatrix}.
\]
On the other hand
\begin{align}
s_{\min}(I_{N} + N^{-\gamma} \mathsf{A}(z))  & \ge s_{\min} (\widehat{\mathsf{T}}(z)) \cdot s_{\min}(\widehat{\mathsf{T}}(z)^{-1} + N^{-\gamma} \mathsf{F}(z)^* Q \mathsf{E}(z)) \notag\\
& = t_N(z)^{-1} \cdot N^{-\gamma} \cdot s_{\min} (N^\gamma \mathsf{F}(z) \widehat{\mathsf{T}}(z)^{-1} \mathsf{E}(z)^* + Q) \label{eq:smin-simplify2}
\end{align}
Since $P_N$ has a bounded norm and $\Omega_N$ is a bounded set we have that $\|\widehat{\mathsf{T}}(z)^{-1} \| = t_N(z) =O(1)$. As $\mathsf{E}(z)$ and $\mathsf{F}(z)$ are unitary matrices, an application of \cite[Theorem 2.1]{TaoVu08} together with \eqref{eq:smin-simplify2} now yield that
\begin{equation}\label{eq:prob-smin}
\prob\left( s_{\min}(I_{N} + N^{-\gamma} \mathsf{A}(z)) \le N^{-\beta_0}\right) \le N^{-2\alpha_0},
\end{equation}
for all large $N$, where $\beta_0 < \infty$ is some constant depending only on $\alpha_0$ and $\gamma$. Finally, using \eqref{eq:smin-simplify1} and \eqref{eq:smin-simplify3}, the probability bounds \eqref{eq:prob-smax} and \eqref{eq:prob-smin}, and a union bound we deduce that  
\[
\prob\left( s_{\min}(I_{N} + N^{-\gamma} QE(z)) \le \frac{1}{3} N^{-(2\alpha_0 +\beta_0+1/2)}\right) = O(N^{-2\alpha_0}),
\]
for all large $N$. This completes the proof of the lemma.
\end{proof}

Next we derive bounds on the Hilbert-Schmidt norms of some matrices. This will allow us to control the error terms in the resolvent expansion of various terms appearing in the Grushin problem.

\begin{prop}\label{prop:bd-terms-resolvent-exp}
 Consider the setup and notation as in Proposition \ref{prop:SmallSG}. Additionally assume that $\Omega$ is convex. Fix $\gamma>1$, $\kappa \in \N$ and $d \in [-\wt m, \wt m]\cap \Z \setminus \{0\}$. Let Assumption \ref{assump:mom} holds. 
 Then for any $\upalpha_0 >0$,  we have,
uniformly in $z \in \Omega_N$ and all $N$ large, that
  \begin{equation}\label{eq:res-error-1}
    \prob\left( N^{-\gamma \kappa}\|(E(z) Q)^\kappa E_+(z)\|_{{\rm HS}}\geq
    N^{-\frac{(\gamma -1) \kappa}{2}}\right) \le N^{-\upalpha_0},
  \end{equation}
   \begin{equation}\label{eq:res-error-3}
    \prob\left( N^{-\gamma \kappa}\|(E(z) Q)^\kappa E(z)\|_{{\rm HS}}\geq
    N^{1-\frac{(\gamma -1) \kappa}{2}}\right) \le N^{-\upalpha_0},
  \end{equation}
  \begin{equation}\label{eq:res-error-4}
     \prob\left( N^{-\gamma \kappa}\|E_-(z) (Q E(z))^{\kappa}\|_{{\rm HS}}\geq
     N^{ -\frac{(\gamma -1) \kappa}{2}}\right) \leq  N^{-\upalpha_0},
  \end{equation}
  and
   \begin{equation}\label{eq:res-error-2}
     \prob\left( N^{-\gamma (\kappa+1)}\|E_-(z) (Q E(z))^{\kappa} Q  E_+(z)\|_{{\rm HS}}\geq
     N^{-\gamma -\frac{(\gamma -1) \kappa}{2}}\right) \leq  N^{-\upalpha_0}. 
  \end{equation}
\end{prop}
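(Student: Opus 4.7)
The argument for all four inequalities proceeds by computing high moments of the Hilbert--Schmidt norms and then applying Markov's inequality, exploiting the bounds $\|E(z)\|_{{\rm HS}}^2 = \sum_{j>M} t_j^{-2} = O(N^2/\log N)$ from Proposition~\ref{prop:HS-bd} and $\|E_\pm(z)\|_{{\rm HS}}^2 = M$, both uniform in $z \in \Omega_N$. The central step is to establish that, for every $h, \kappa \in \N$,
\[
\mathds{E}\, \|(E(z) Q)^\kappa E_+(z)\|_{{\rm HS}}^{2h} \le C_{h,\kappa}\, M^h\, \bigl( \|E(z)\|_{{\rm HS}}^2 \bigr)^{\kappa h},
\]
where $C_{h,\kappa}$ depends only on $h$, $\kappa$, and the moment constants $\mathfrak{C}_\cdot$ from Assumption~\ref{assump:mom}. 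This is proved by writing $\|(EQ)^\kappa E_+\|_{{\rm HS}}^2 = \tr\bigl( E_+^* (Q^* E^*)^\kappa (EQ)^\kappa E_+ \bigr)$, expanding its $h$-th power as a polynomial of degree $2\kappa h$ in the entries of $Q$, taking expectation via $\mathds{E}[Q_{ij} \overline{Q_{k\ell}}] = \delta_{ik}\delta_{j\ell}$ together with the bounded higher moments, and summing over the resulting Wick-type pairings of $Q$-factors with $\bar Q$-factors. The leading non-crossing pairing contributes exactly $M^h (\|E\|_{{\rm HS}}^2)^{\kappa h}$, and the total number of pairings is bounded by $C_{h,\kappa}<\infty$; the base case $\kappa=h=1$ reduces to the elementary computation $\mathds{E}\|EQE_+\|_{{\rm HS}}^2 = \sum_{i=1}^M \mathds{E}\|EQe_i\|^2 = M\|E\|_{{\rm HS}}^2$ using $\mathds{E}[Q^* A Q]=\tr(A)\,I$ for deterministic $A$.

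Once the moment estimate is available, Markov's inequality yields
\[
\prob\bigl( \|(E(z)Q)^\kappa E_+(z)\|_{{\rm HS}} \ge N^{\kappa(\gamma+1)/2} \bigr) \le C_{h,\kappa}\, N^{-h\kappa(\gamma-1)},
\]
which is $\le N^{-\upalpha_0}$ on choosing $h \ge \upalpha_0/(\kappa(\gamma-1))$, establishing \eqref{eq:res-error-1} uniformly in $z \in \Omega_N$. The remaining three bounds follow from the same template with cosmetic modifications. For \eqref{eq:res-error-3}, the rightmost $E_+$ is replaced by $E$ and the factor $M^h$ becomes $\|E\|_{{\rm HS}}^{2h} = O((N^2/\log N)^h)$, accounting for the extra $N$ in the target exponent. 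For \eqref{eq:res-error-4}, take adjoints: $\|E_-(QE)^\kappa\|_{{\rm HS}} = \|(E^*Q^*)^\kappa E_-^*\|_{{\rm HS}}$, then apply the $E_+$-analog with $E_-^*$ in place of $E_+$. For \eqref{eq:res-error-2}, $E_-(QE)^\kappa Q E_+$ is a polynomial of degree $\kappa+1$ in $Q$ with two rank-$M$ matrices sandwiching it, producing a moment bound with $M^{2h}$ and the extra $N^{-\gamma}$ in the target.

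The principal technical hurdle is the Wick-style pairing expansion behind the moment estimate for $\kappa \ge 2$; while routine within the random matrix theory framework, the combinatorial bookkeeping of crossing versus non-crossing pairings requires care. An alternative, perhaps cleaner, route invokes a hypercontractive inequality for homogeneous polynomial chaoses in independent random variables with finite moments of all orders (whose applicability in our non-Gaussian setting is guaranteed by Assumption~\ref{assump:mom}), which bootstraps the base case $h=1$ directly to all $h \in \N$ at the cost of a multiplicative constant $C_{h,\kappa}$.
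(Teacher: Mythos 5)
Your overall plan — establish a high-moment bound for the Hilbert–Schmidt norms, then apply Markov's inequality, using $\|E(z)\|_{{\rm HS}}^2 = O(N^2/\log N)$ from Proposition~\ref{prop:HS-bd} and $\|E_\pm\|_{{\rm HS}}^2 = M$ — matches the paper's strategy, and your exponent arithmetic for the Markov step is correct. Where you diverge is in \emph{how} the moment bound is proved. You propose a direct Wick/partition expansion of the $2h$-th moment of $\tr(E_+^*(Q^*E^*)^\kappa (EQ)^\kappa E_+)$, which, for a degree-$2\kappa h$ polynomial in non-Gaussian entries, requires bounding the contributions of \emph{all} partitions with blocks of size $\ge 2$ (not merely pairings), a genuinely delicate combinatorial exercise that you acknowledge but do not carry out. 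The paper's Proposition~\ref{prop:mom-HS} avoids this entirely: after renormalizing each factor to have Hilbert–Schmidt norm at most one, it uses submultiplicativity of the HS norm ($\|AB\|_{\rm HS}\le\|A\|_{\rm HS}\|B\|_{\rm HS}$) to write $\|\widehat{\sE}(\kappa)\|_{\rm HS}\le\prod_j\|\sE^{(j)}Q\sF^{(j)}\|_{\rm HS}$, then applies H\"older's inequality in the expectation to reduce to a single factor $\|AQB\|_{\rm HS}^2$, which is a \emph{quadratic form} in the entries of $Q$; Whittle's classical moment inequality for quadratic forms of independent variables then gives the bound directly. This sidesteps the $\kappa\ge 2$ combinatorics completely. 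Your proposed "hypercontractive" shortcut is not quite available here: hypercontractivity for polynomial chaos requires more than bounded moments of each fixed order, whereas Whittle's bound for quadratic forms is exactly the right tool and is what the paper actually invokes — applied to a \emph{single} factor rather than to the full degree-$\kappa$ product. Also a small caveat in your base case: Assumption~\ref{assump:mom} does not normalize the variance, so $\E[Q^*AQ]=\tr(A)\,I$ should read $\E[(Q^*AQ)_{ij}]=\delta_{ij}\sum_k A_{kk}\E|Q_{ki}|^2$, which only gives an \emph{upper bound} $\mathfrak{C}_1\tr(A)$ for $A\ge 0$; this is harmless for the purpose at hand but should be stated correctly.
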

To prove Proposition \ref{prop:bd-terms-resolvent-exp} we will rely on bounds on moments of Hilbert-Schmidt norm of matrices appearing in \eqref{eq:res-error-1}-\eqref{eq:res-error-2}, as well as bounds on $\|E_\pm(\cdot)\|_{{\rm HS}}$ and on the sum of the inverse of the non-zero singular values of $E(\cdot)$. 

\begin{prop}\label{prop:mom-HS}
Fix $\kappa \in \N$. Let $\{\sE^{(\ell)}\}_{\ell=1}^{\kappa}$ and $\{\sF^{(\ell)}\}_{\ell =1}^{\kappa}$ be two collections of $N \times N$ deterministic matrices, possibly with complex-valued entries, such that
\begin{equation}\label{eq:HS-mom-assump}
\max\left\{\max_{\ell=1}^{\kappa} \|\sE^{(\ell)}\|_{{\rm HS}}, \max_{\ell=1}^{\kappa} \|\sF^{(\ell)}\|_{{\rm HS}}\right\} \le 1.
\end{equation}
Define the product
\begin{equation}\label{eq:hat-sE-kappa}
\widehat{\sE}(\kappa) := \sE^{(1)} Q \sF^{(1)}\cdot \sE^{(2)} Q \sF^{(2)} \cdots  \sE^{(\kappa)} Q \sF^{(\kappa)}. 
\end{equation}
If the entries of $Q$ satisfies Assumption \ref{assump:mom} then for any $h_0 \in \N$
\[
\E \|\widehat{\sE}(\kappa)\|_{{\rm HS}}^{2h_0} \le C_{\ref{prop:mom-HS}} \cdot \mathfrak{C}_{\kappa h_0},
\]
 where $C_{\ref{prop:mom-HS}} <\infty$ is some constant depending only on $\kappa$ and $h_0$. 
\end{prop}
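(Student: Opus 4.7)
The plan is to expand $\|\widehat{\sE}(\kappa)\|_{\rm HS}^{2h_0}$ as a polynomial of degree $2\kappa h_0$ in the jointly independent entries of $Q$, take expectation, and then separately bound (i) the expectation of each resulting monomial in entries of $Q$ via H\"older's inequality and Assumption~\ref{assump:mom}, and (ii) the remaining sum of deterministic coefficients via the Hilbert--Schmidt bounds \eqref{eq:HS-mom-assump}.

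First I would absorb adjacent deterministic factors: set $A^{(0)} := \sE^{(1)}$, $A^{(\kappa)} := \sF^{(\kappa)}$, and $A^{(\ell)} := \sF^{(\ell)} \sE^{(\ell+1)}$ for $1 \le \ell \le \kappa-1$. Then $\widehat{\sE}(\kappa) = A^{(0)} Q A^{(1)} Q \cdots Q A^{(\kappa)}$, and the standard estimate $\|XY\|_{\rm HS} \le \|X\|_{\rm HS} \|Y\| \le \|X\|_{\rm HS} \|Y\|_{\rm HS}$ combined with \eqref{eq:HS-mom-assump} yields $\|A^{(\ell)}\|_{\rm HS} \le 1$ for every $\ell$. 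Writing entrywise,
\[
\widehat{\sE}(\kappa)_{i,j} = \sum_{\alpha} C(i,j,\alpha)\, Q_\alpha,
\]
where $\alpha = (a_1, b_1, \ldots, a_\kappa, b_\kappa)$, $Q_\alpha := \prod_{\ell=1}^{\kappa} Q_{a_\ell, b_\ell}$, and $C(i,j,\alpha) := A^{(0)}_{i, a_1} A^{(\kappa)}_{b_\kappa, j} \prod_{\ell=1}^{\kappa-1} A^{(\ell)}_{b_\ell, a_{\ell+1}}$. This gives
\[
\E \|\widehat{\sE}(\kappa)\|_{\rm HS}^{2h_0} \;=\; \sum_{(i_m, j_m, \alpha_m, \alpha'_m)_{m=1}^{h_0}} \prod_m C(i_m, j_m, \alpha_m) \overline{C(i_m, j_m, \alpha'_m)}\, \E \prod_m Q_{\alpha_m} \overline{Q_{\alpha'_m}}.
\]

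By Assumption~\ref{assump:mom}(i) this expectation vanishes unless every distinct pair $(a, b)$ appearing among the $\alpha_m, \alpha'_m$ appears at least twice in the full collection. On such surviving configurations, H\"older's inequality applied to the $2\kappa h_0$ factors, together with Assumption~\ref{assump:mom}(ii), yields $|\E \prod_m Q_{\alpha_m} \overline{Q_{\alpha'_m}}| \le \mathfrak{C}_{\kappa h_0}$. It then remains to establish, uniformly in $N$, a purely deterministic combinatorial bound of the form
\[
\sum_{\text{surviving configurations}} \prod_m |C(i_m, j_m, \alpha_m)|\, |C(i_m, j_m, \alpha'_m)| \;\le\; C(\kappa, h_0).
\]
I would stratify this sum according to the partition $\pi$ of the $2\kappa h_0$ positions into blocks of size $\ge 2$ corresponding to coincident $(a,b)$ values; the number of admissible $\pi$ depends only on $\kappa$ and $h_0$. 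For each fixed $\pi$ the identifications reduce the inner sum to a contraction of the $A^{(\ell)}$'s along a multigraph on the external vertices $i_m, j_m$, which is controlled by iterated Cauchy--Schwarz (equivalently, the H\"older bound $|\tr(B_1 \cdots B_s)| \le \prod_i \|B_i\|_{\rm HS}$ valid for $s \ge 2$), together with $\|A^{(\ell)}\|_{\rm HS} \le 1$.

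The main obstacle is the graph-theoretic accounting in the last step: one must check that for every admissible $\pi$ the resulting sum factorizes along contractions amenable to repeated Cauchy--Schwarz, so that the $N$-dependence is absorbed entirely by the HS norms of the $A^{(\ell)}$. This is a standard but fiddly diagrammatic argument, analogous to the Wick-type expansion used for moments of products of iid random variables, adapted to the non-Gaussian setting via the uniform moment bound in Assumption~\ref{assump:mom}(ii).
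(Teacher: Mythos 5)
The paper's proof avoids the combinatorial expansion entirely: it uses submultiplicativity of the Hilbert--Schmidt norm to write $\|\widehat{\sE}(\kappa)\|_{\rm HS} \le \prod_{j=1}^\kappa \|\sE^{(j)} Q \sF^{(j)}\|_{\rm HS}$, then applies H\"older's inequality with exponent $\kappa$ to reduce the $2h_0$-th moment of the product to $\max_j \E\|\sE^{(j)} Q \sF^{(j)}\|_{\rm HS}^{2\kappa h_0}$, that is, to the $\kappa h_0$-th moment of a single quantity $W = \|A Q B\|_{\rm HS}^2$. Since $W$ is a \emph{quadratic} form in the entries of $Q$, its centered moments are controlled by Whittle's theorem \cite{Whittle}, and the bound $\E |W|^s \le 2^s C(s)\, \mathfrak{C}_s\, \|A\|_{\rm HS}^{2s} \|B\|_{\rm HS}^{2s}$ follows directly. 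The whole diagrammatic accounting you foresee is packaged into a citation to a classical result about moments of quadratic forms.

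Your proposal instead expands $\|\widehat{\sE}(\kappa)\|_{\rm HS}^{2h_0}$ directly as a degree-$2\kappa h_0$ polynomial in the entries of $Q$, which leads you precisely into the combinatorics you then describe as ``the main obstacle.'' That obstacle is genuine and is the substance of the proposition: once you are down to surviving pair-partitions, you must show that the sum of $\prod_m |C(i_m,j_m,\alpha_m)||C(i_m,j_m,\alpha'_m)|$ over each stratum contracts, via repeated Cauchy--Schwarz along the tensor network, into a product of HS norms of the $A^{(\ell)}$'s, uniformly in $N$. You do not carry this out, and it is not a one-line step: different partitions lead to different contraction graphs, and not all of them factorize in an immediately obvious way (some produce traces of odd-length cyclic products, where the estimate $|\tr(B_1 \cdots B_s)| \le \prod_i \|B_i\|_{\rm HS}$ that you invoke can fail for $s=1$, and where one must first pair up matrices using Cauchy--Schwarz inside the trace). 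So as written the proof has a real gap exactly where the hard work lies.

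The lesson worth extracting from the paper's route is the order of reductions: first kill the product structure (submultiplicativity of $\|\cdot\|_{\rm HS}$ plus H\"older), \emph{then} take moments. This collapses the degree-$\kappa$ polynomial in $Q$ to a degree-$1$ one and lets you invoke an off-the-shelf quadratic-form moment bound rather than re-deriving a Wick-type diagram estimate. If you wish to keep your direct approach, you would need to actually execute the stratified Cauchy--Schwarz argument for a general admissible partition, which is doable but considerably longer than the paper's three-line reduction.
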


The proof of Proposition \ref{prop:mom-HS} is straightforward. It relies on bounds on moments of quadratic forms of independent random variables, derived in \cite{Whittle}. 

\begin{proof}[Proof of Proposition \ref{prop:mom-HS}]
We begin with an auxiliary computation. Let $A, B$ be deterministic matrices. Then
   \[W:=\|AQB\|_{{\rm HS}}^2 =\sum_{i,j,k,k',\ell,\ell'}
     A_{i,k}\ol{A_{i,k'}}Q_{k,\ell} \ol{Q_{k',\ell'}} B_{\ell,j} \ol{B_{\ell',j}}=
   \sum_{k,\ell,k',\ell'} Q_{kl} \ol{Q_{k',\ell'}} C_{k,\ell,k',\ell'},\]
   where 
   \[
   C_{k,\ell,k',\ell'}:=\sum_{i,j} A_{ik}\ol{A_{ik'}} B_{\ell,j}\ol{B_{\ell',j}}. 
   \]
  This, in particular, shows that $W$ is a quadratic form in the entries of $Q$. Hence, 
  using \cite[Theorem 2]{Whittle}\footnote{Whittle in \cite{Whittle} works with {\em real} linear and quadratic forms of {\em real} valued random variables. Upon splitting the sums into real and imaginary parts, analogous bounds can be derived in the complex case. Hence, without loss of generality we continue to use \cite{Whittle} for complex quadratic forms of complex valued random variables.} and Assumption \ref{assump:mom}, for any $s\geq 2$, we find that 
   \[ \E|W-\E W|^s\leq C(s)  \left( \sum_{k,k',\ell,\ell'} |C_{k,\ell,k',\ell'}|^2\right)^{s/2} \cdot \gC_{s}
   \leq C(s)  \|A\|_{{\rm HS}}^{2s} \|B\|_{{\rm HS}}^{2s} \cdot \gC_{s},\]
   for some absolute constant $C(s) < \infty$, depending only on $s$, where in the last step we used that 
   \[
    |C_{k,\ell,k',\ell'}|\leq \|a_k\| \cdot \|a_{k'}\| \cdot \|b_\ell\| \cdot \|b_{\ell'}\|,
    \] 
 and $a_k$ and $b_k$ are the $k$-th column and row of $A$ and $B$, respectively. 
   
  On the other hand we note that
   \[ |\E W| \leq \gC_1 \sum_{k,\ell} \|a_k\|^2 \|b_\ell\|^2
   =\|A\|_{{\rm HS}}^2 \|B\|_{{\rm HS}}^2\cdot \gC_1.
   \]
   Thus
   \begin{equation}\label{eq:W-quad-mom-bd}
   \E |W|^s \le 2^{s-1} (\E|W-\E W|^s + |\E W|^s) \le 2^s C(s) \gC_{s} \cdot  \|A\|_{{\rm HS}}^{2s} \|B\|_{{\rm HS}}^{2s},
   \end{equation}
   where we also used that by Jensen's inequality it follows $\gC_1^s \le \gC_{s}$. Next by H\"{o}lder's inequality we obtain that  
\[
\E \|\widehat{\sE}(\kappa)\|_{{\rm HS}}^{2h_0} \le \E \left( \prod_{j=1}^\kappa \|\sE^{(j)} Q \sF^{(j)}\|_{{\rm HS}}^{2h_0}\right) \le    \left(\prod_{j=1}^\kappa \E \|\sE^{(j)} Q \sF^{(j)}\|_{{\rm HS}}^{2\kappa h_0}\right)^{1/\kappa} \le \max_{j=1}^\kappa \E \|\sE^{(j)} Q \sF^{(j)}\|_{{\rm HS}}^{2\kappa h_0}. 
\]
Now the desired bound follows from \eqref{eq:W-quad-mom-bd}, upon setting $s=\kappa h_0$, $A=\sE^{(j)}$, and $B= \sF^{(j)}$, for $j \in [\kappa]$. This concludes the proof. 
 \end{proof}

Equipped with Proposition \ref{prop:mom-HS},
we now prove Proposition \ref{prop:bd-terms-resolvent-exp}. 
\begin{proof}[Proof of Proposition \ref{prop:bd-terms-resolvent-exp}] 
Recall that by Proposition \ref{prop:HS-bd}, there
exists a constant  $C_{\ref{prop:HS-bd}}$ so that
\begin{equation}\label{nls0.0a}
		\sum_{j=|d|+1}^N t_j(z)^{-2} \le  C_{\ref{prop:HS-bd}}\frac{N^2}{\log N} \qquad \mbox{ and } \qquad \sum_{j=|d|+1}^N t_j(z)^{-1} \le  C_{\ref{prop:HS-bd}} N {\log N}.
	\end{equation}
 For $j \in [\kappa]$ we let
 \begin{equation}
   \label{eq-defF}
 \sF^{(j)} := \frac{1}{\sqrt{C_{\ref{prop:HS-bd}} N \log N}} \sum_{i=|d|+1}^N \frac{1}{\sqrt{t_i(z)}}e_i(z) \circ e_i^*(z) \notag
\end{equation} 
 and 
 \begin{equation}
   \label{eq-defE}
 \sE^{(j+1)} := \frac{1}{\sqrt{C_{\ref{prop:HS-bd}} N \log N}}\sum_{i=|d|+1}^N \frac{1}{\sqrt{t_i(z)}}e_i(z) \circ f_i^*(z). \notag
\end{equation} 
 Further let 
 \begin{equation} 
   \label{eq-defEF}
 \sE^{(1)}:= |d|^{-1/2} E_-(z) \qquad \mbox{ and } \qquad \sF^{(\kappa+1)}:= |d|^{-1/2} E_+(z). \notag
\end{equation} 
 By Proposition \ref{prop:HS-bd} and \eqref{gp8}-\eqref{gp9} it follows that \eqref{eq:HS-mom-assump} holds with these choices of $\{\sE^{(j)}\}_{j=1}^{\kappa+1}$ and $\{\sF^{(j)}\}_{j=1}^{\kappa+1}$. On the other hand note that $E(z)  = C_{\ref{prop:HS-bd}} N \log N \cdot \sF^{(j)} \sE^{(j+1)}$ for $j \in [\kappa]$. Therefore, applying Proposition \ref{prop:mom-HS} we now derive that 
 \[
 \E \left[ \|\wt \sE\|_{{\rm HS}}^{2h_0}\right] \le d^{2h_0} \cdot (C_{\ref{prop:HS-bd}} N \log N)^{2h_0\kappa} \cdot  C_{\ref{prop:mom-HS}} \cdot \gC_{\kappa h_0}. 
 \]
 where $\wt \sE:=E_-(z) (Q E(z))^{\kappa} Q  E_+(z)$. 
 Hence applying Markov's inequality with $h_0 = \lceil \frac{2\upa_0}{\kappa (\gamma -1) }\rceil$ we obtain that
 \[
 \prob\left( N^{-\gamma (\kappa+1)} \|E_-(z) (Q E(z))^{\kappa} Q  E_+(z)\|_{{\rm HS}} \ge N^{-\gamma - \frac{(\gamma-1)\kappa}{2}}\right) \le N^{-(\gamma+1) \kappa h_0} \E \left[ \|\wt \sE\|_{{\rm HS}}^{2h_0}\right] \le N^{-\upa_0},
 \]
 for all large $N$. This concludes the proof of \eqref{eq:res-error-2}. To obtain \eqref{eq:res-error-4} we let
 \[
  \sE_\star:= \sE^{(1)} Q \sF^{(1)} \cdot \sE^{(2)} Q \sF^{(2)} \cdots \sE^{(\kappa-1)} Q \sF^{(\kappa-1)} \cdot \sE^{(\kappa)} Q \sF_\star^{(\kappa)},
 \]
 where $\{\sE^{(i)}\}_{i=1}^\kappa$ and $\{\sF^{(i)}\}_{i=1}^{\kappa-1}$ are as above, and $\sF^{(\kappa)}:=  C_{\ref{prop:HS-bd}}^{-1/2}N^{-1}({\log N})^{1/2} \cdot E(z)$. Notice that, by \eqref{nls0.0a}, we have $\|\sF_\star^{(\kappa)}\|_{\rm HS} \le 1$. Therefore, applying applying Proposition \ref{prop:mom-HS} again we obtain that 
 \[
 \E \left[ \| \sE_\star\|_{{\rm HS}}^{2h_0}\right] \le d^{h_0} \cdot (C_{\ref{prop:HS-bd}} N \log N)^{2h_0\kappa} \cdot  C_{\ref{prop:mom-HS}} \cdot \gC_{\kappa h_0}. 
 \]
 Now \eqref{eq:res-error-4} follows by choosing $h_0$ as 
 above and applying Markov's inequality. 
The proofs of \eqref{eq:res-error-1}-\eqref{eq:res-error-3} are similar.
For \eqref{eq:res-error-1} we take 
$\sE^{(1)}=E(z)/(C_{\ref{prop:HS-bd}} N\log N)$ 
and $\sF^{(\kappa+1)}=E_+(z)/\sqrt{|}d|$, while
for \eqref{eq:res-error-3} we take $\sF^{(\kappa+1)}=E(z)/(C_{\ref{prop:HS-bd}} N\log N)$. Further details are omitted.
 \end{proof}

 The next corollary will come handy in the proof of Theorem
 \ref{thm:main}.
 \begin{cor}
   \label{cor-control}
   In the setup of Proposition \ref{prop:bd-terms-resolvent-exp}, we have
   that for any $\upalpha_0>0$, for all $N$ large,
   \begin{equation}
     \label{eq-sec10new}
     \prob\big(N^{-\gamma}  \|  E(z) (I+N^{-\gamma} QE(z))^{-1} QE_+(z)\|_{\rm HS}
     \geq N^{-(\gamma-1)/4}\big)\leq N^{-\upalpha_0},
   \end{equation}
   \begin{equation}
    \label{eq-sec10new-1}
     \prob\big(N^{-2\gamma}  \|  E_-(z) (I+N^{-\gamma} QE(z))^{-1} QE(z) QE_+(z)\|_{\rm HS}
     \geq N^{-\gamma -(\gamma-1)/4}\big)\leq N^{-\upalpha_0},
   \end{equation}
   \begin{equation}
     \label{eq-sec10new-2}
     \prob\big(\|E_-(z) (I+N^{-\gamma}QE(z))^{-1}\|_{\rm HS}>2\sqrt{M}\big)\leq N^{-\upalpha_0},
   \end{equation}
   and  \begin{equation}
     \label{eq-sec10new-3}
     \prob\big(\|E(z)(I+N^{-\gamma}QE(z))^{-1}\|_{\rm HS}>2C_{\ref{prop:HS-bd}}N/\sqrt{\log N}\big)\leq N^{-\upalpha_0}.
   \end{equation}
 \end{cor}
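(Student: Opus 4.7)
My plan is to deduce all four bounds from a truncated Neumann series expansion combined with the per-$\kappa$ Hilbert--Schmidt estimates of Proposition \ref{prop:bd-terms-resolvent-exp} and the resolvent norm control
\begin{equation*}
\|(I+N^{-\gamma}QE(z))^{-1}\|\leq N^{\beta}, \qquad \beta=\beta(\upalpha_0,\gamma),
\end{equation*}
provided by Lemma \ref{lem-10.7} (with $\beta=\log 2/\log N$ when $\gamma>3/2$ and a fixed $\beta$ otherwise), each on an event of probability $\geq 1-N^{-\upalpha_0}$. Choose an integer $L=L(\upalpha_0,\gamma)$ (to be fixed later), and write
\begin{equation*}
(I+N^{-\gamma}QE)^{-1}=\sum_{i=0}^{L}(-N^{-\gamma}QE)^i+(-N^{-\gamma}QE)^{L+1}(I+N^{-\gamma}QE)^{-1}.
\end{equation*}
Apply this expansion on the appropriate side of each matrix, use the identity $E(QE)^i=(EQ)^iE$ to reshape products, bound each term of the finite sum by the corresponding estimate in Proposition \ref{prop:bd-terms-resolvent-exp}, and estimate the remainder by combining the same estimates with Lemma \ref{lem-10.7}, the standard bound $\|Q\|=O(N^{1/2+\epsilon})$ from \cite{La04}, and the crude norm bounds $\|E\|=O(N/\log N)$, $\|E_\pm\|\leq 1$ of \eqref{gp9}. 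A union bound over the finitely many values of $\kappa\in\{1,\ldots,L+1\}$ costs only a multiplicative constant in probability and is absorbed by choosing $\upalpha_0$ slightly larger at the outset.

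For \eqref{eq-sec10new}, multiplying on the left by $N^{-\gamma}E$ and on the right by $QE_+$ yields the representation $\sum_{k=1}^{L+1}(-1)^{k-1}N^{-k\gamma}(EQ)^kE_+$ plus a remainder, and \eqref{eq:res-error-1} bounds the sum by the geometric series $\sum_{k\geq 1}N^{-k(\gamma-1)/2}=O(N^{-(\gamma-1)/2})$, which is $o(N^{-(\gamma-1)/4})$; the remainder is controlled by \eqref{eq:res-error-3}, yielding the exponent $1-(\gamma-1)(L+1)/2+\beta+1/2+\epsilon-\gamma$, which is $<-(\gamma-1)/4$ for $L$ large. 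The bound \eqref{eq-sec10new-1} is entirely analogous: the main terms $\sum_{k=1}^{L+1}(-1)^{k-1}N^{-(k+1)\gamma}E_-(QE)^kQE_+$ are handled by \eqref{eq:res-error-2} and sum to $O(N^{-\gamma-(\gamma-1)/2})$, while the remainder is bounded via \eqref{eq:res-error-4}, Lemma \ref{lem-10.7}, $\|Q\|^2$, and $\|E\|$ and is driven below $N^{-\gamma-(\gamma-1)/4}$ by taking $L$ large.

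For \eqref{eq-sec10new-2} and \eqref{eq-sec10new-3}, the dominant contribution is the $i=0$ term of the expansion, namely $E_-$ and $E$ respectively; these have Hilbert--Schmidt norms $\sqrt{M}$ (by the representation in \eqref{gp8}) and $\leq C_{\ref{prop:HS-bd}}N/\sqrt{\log N}$ (by \eqref{nls0.0} in Proposition \ref{prop:HS-bd}). For \eqref{eq-sec10new-2} the $i\geq 1$ summands are controlled by \eqref{eq:res-error-4} and sum to $O(N^{-(\gamma-1)/2})=o(1)$, while for \eqref{eq-sec10new-3} they are controlled by \eqref{eq:res-error-3}, each giving the bound $N^{1-i(\gamma-1)/2}$; since any fixed polynomial decay beats any logarithmic factor, these are all $o(N/\sqrt{\log N})$ for $\gamma>1$. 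The remainders are again $o(1)$ and $o(N/\sqrt{\log N})$ for $L$ sufficiently large. The main technical subtlety is balancing the exponents in the remainders so as to overcome the deterministic loss $N^\beta$ from Lemma \ref{lem-10.7} in the delicate regime $\gamma\in(1,3/2]$; this is purely a matter of choosing $L$ as a function of $\upalpha_0$, $\gamma$ and $\beta$, which is possible because $\gamma-1>0$.
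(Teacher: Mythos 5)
Your proposal is correct and follows essentially the same route as the paper's proof: a truncated Neumann series for $(I+N^{-\gamma}QE)^{-1}$, the per-$\kappa$ Hilbert--Schmidt bounds of Proposition \ref{prop:bd-terms-resolvent-exp} for the finite sum, and Lemma \ref{lem-10.7} together with crude operator-norm bounds (on $Q$, $E$, $E_\pm$) to kill the remainder by taking the truncation length large. The only small imprecision is your parenthetical on the case $\gamma>3/2$: the first part of Lemma \ref{lem-10.7} only yields a failure probability of order $N^{-(\gamma-3/2)/2}$, which does not beat $N^{-\upalpha_0}$ when $\upalpha_0$ is large, so one should invoke the second part of Lemma \ref{lem-10.7} for all $\gamma>1$, exactly as the paper does.
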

 \begin{proof}
 The proof uses
   the resolvent expansion and Proposition \ref{prop:bd-terms-resolvent-exp}. First let us prove \eqref{eq-sec10new}.
   
  Using the resolvent expansion, we write,
   with $\hat Q=N^{-\gamma} Q$ and $k$ positive integer
   and omitting throughout the argument $z$,
   \begin{equation}
     \label{eq-sec10new1}
     (I+\hat Q E)^{-1}=\sum_{i=0}^{k-1} (-\hat QE)^i+ (-\hat QE)^k
     (I+\hat QE)^{-1}.
     \end{equation}
   Considering the first term in the right hand side of 
   \eqref{eq-sec10new1}, we have by \eqref{eq:res-error-1} that, for $i\geq 0$,
   \begin{equation}
     \label{eq-sec10new2}
     \| E(-\hat QE)^i\hat Q E_+\|_{\rm HS} \leq N^{-(\gamma-1)(i+1)}/2, 
   \end{equation}
   with probability $1-N^{-2\upalpha_0}$, for all $N$ large. 
   On the other hand, by \eqref{eq:res-error-3} and the fact that
   for any two matrices $A,B$ one has
   $\|BA\|_{\rm HS}\leq \|A\|\cdot \|B\|_{\rm HS}$, we 
   obtain that 
   \begin{equation}
     \label{eq-sec10new4}
     \| E(-\hat QE)^k(I+\hat QE)^{-1}  Q E_+\|_{\rm HS} 
     \leq  N^{1-\frac{(\gamma-1)k}{2}} 
     \cdot \|(I+\hat QE)^{-1}\| \cdot \|Q\| \cdot \|E_+\|,
   \end{equation}
   on a set of event with probability at least $1 - N^{-2\upa_0}$. 
   Now by the second part of
   Lemma \ref{lem-10.7}, \cite{La04} (which gives $\E\|Q\|=O(N^{1/2})$), 
   and \eqref{gp9}, we further obtain that $\|(I+\hat QE)^{-1}\| \cdot \|Q\| \cdot \| E_+\| \leq N^\beta$
   for some finite $\beta$, with probability at least $1-N^{-2\upalpha_0}$.
   By choosing $k$ large enough, we conclude 
   that on these events, the right hand side of \eqref{eq-sec10new4}
   can be made arbitrarily small. Combining this with \eqref{eq-sec10new1}
   and \eqref{eq-sec10new2}, we obtain \eqref{eq-sec10new}. 
   The proof of \eqref{eq-sec10new-1}, being similar, is omitted. 

   To see \eqref{eq-sec10new-2}, we use the resolvent
   expansion \eqref{eq-sec10new1} and write
   \begin{equation}
     \label{eq-evening1}
     E_-(I+N^{-\gamma} QE)^{-1}=
   E_-+\sum_{i=1}^{k-1} E_- (\hat QE)^i +
   E_-(\hat QE)^k (I+\hat QE)^{-1}. \notag
 \end{equation}
 Recall that $\|E_-\|_{\rm HS}=\sqrt{M}$. As above,
 the other terms are controlled 
 by \eqref{eq:res-error-4} and  Lemma \ref{lem-10.7}. This completes the proof of \eqref{eq-sec10new-2}. To prove \eqref{eq-sec10new-3} we observe that by \eqref{nls0.0a} one has that $\|E\|_{{\rm HS}} \le C_{\ref{prop:HS-bd}}N/\sqrt{\log N}$ (without loss of generality assume $C_{\ref{prop:HS-bd}} \ge 1$). Therefore, arguing similarly as above one derives \eqref{eq-sec10new-3}. Further details are omitted.
 \end{proof}
 
 \begin{rem}\label{rem:jb3}
 The reader may inspect the proofs of Proposition \ref{prop:bd-terms-resolvent-exp} and Corollary \ref{cor-control} to observe that \eqref{eq-sec10new} continues to hold even if we can replace the term $N^{-(\gamma-1)/4}$ by $N^{-(1-\upepsilon) \cdot(\gamma-1)}$ for any $\upepsilon >0$. This improvement will be necessary later in Section \ref{sec-mainproof} to argue that the eigenvectors of a randomly perturbed Jordan block is completely delocalized in sup-norm. 
 \end{rem}
 
 \begin{rem}\label{rem:unif}
In Sections \ref{sec:QM} and \ref{sec:SmallSGValues}-\ref{sec-ssvHS} we worked
with Assumption \ref{As:Omega}. Since $p(S^1)$ is compact,
it follows that Assumption \ref{As:Omega} holds uniformly in
$z \in \Omega_{\vep_{\ref{theo-location}}, C_{\ref{theo-location}}, N}$ (recall \eqref{eq-Omegadef}). This observation will be used in Section \ref{sec-mainproof}, in the
proofs of
Theorem \ref{thm:main} and Corollary \ref{cor:main}.
\end{rem}
 
\section{Proofs of Theorem \ref{thm:main} and Corollary \ref{cor:main}}
\label{sec-mainproof}
We start with the proof of Theorem \ref{thm:main}.
\subsection{Proof of Theorem \ref{thm:main}}
We begin by creating a net $\mathcal{N}_\gamma$ in 
$\Omega(\vep_{\ref{theo-location}},C_{\ref{theo-location}},N)$, of spacing 
$N^{-\theta}$ and cardinality at most
$O(N^{2\theta-1}\log N)$, with $\theta>\gamma \vee 4$.
Note that
for any $\hat 
z\in \Omega(\vep_{\ref{theo-location}},C_{\ref{theo-location}},N)$,
there exists a $z^*=z^*(\hat z)\in \mathcal{N}_\gamma$ with 
$|z^*-\hat z|\leq N^{-\theta}$.\\
\par

1. We begin with the proof of the first point in
Theorem \ref{thm:main}, and proceed as in the sketch of Section \ref{res}. 
Let $\hat z\in \Omega(\vep_{\ref{theo-location}},C_{\ref{theo-location}},N)$ 
be an eigenvalue of $P_{N,\gamma}^Q$, with associated right eigenvector
$v=v(\hat z)$. Recall that $\delta=N^{-\gamma}$.
For any $z
\in \mathcal{N}_\gamma$,
define $P^\delta_z\, :=P_{N,\gamma}^Q-z$,
and note that since
  $P^\delta_z-P^\delta_{\hat z} \, =(\hat z-z) I_N$, we have that
\begin{equation}
  \label{eq-deltaz}
  P^\delta_z v=(\hat z-z) v.
\end{equation}
Set $M=|{\rm ind}_{p(S^1)}(z)|=|\wind(z)|$ (this yields
$t_{M+1}\, \gtrsim \, \log N/N$ and
$\alpha$ bounded below by a constant multiple  of $\log N/N$, see Proposition \ref{prop:SmallSG}).

Consider the Grushin problem associated with $P^\delta_z$, see
Section \ref{sec-Grushin}, and recall \eqref{eq:algeb-id}.
We then obtain, similarly to 
\eqref{eq:ef-to-pm-1} and using \eqref{eq-deltaz}, that
\begin{eqnarray}
  \nonumber
\sum\nolimits_{i=M+1}^N (e_i(z)^* v) \cdot e_i(z)&=&   
  (I-E_+(z)R_+(z)) v\\
& =&(I-E_+^\delta(z)R_+(z)) v - E(z)(I+\delta QE(z)^{-1} \delta QE_+(z) R_+(z)v \nonumber\\
& =& E^\delta(z) (\hat z-z) v+
  E(z) (I+\delta QE(z))^{-1} \delta QE_+(z) R_+(z)v. 
\label{eq:ef-to-pm}
\end{eqnarray}
We next control the right hand side of \eqref{eq:ef-to-pm},
for $z=z^*$. As discussed in Section \ref{res}, the control is simpler
when $\gamma>3/2$. Indeed, in that case, 
by \eqref{gp9} and Proposition \ref{prop:SmallSG} we have that
$\|E(z)\|= O(N/\log N)$ uniformly in 
$\mathcal{N}_\gamma$ (see also Remark \ref{rem:unif}). Now by the first part of Lemma \ref{lem-10.7}, we have, 
with probability approaching $1$ as $N\to\infty$, that for all
$z\in \mathcal{N}_\gamma$, $\|(I+\delta QE(z))^{-1} \|\leq 2$. Therefore,
by \eqref{eq-Edelta}, $\|E^\delta(z)\|=O(N/\log N)$ uniformly in $\cN_\gamma$ with the same probability. 
Hence, for $z=z^*$,
\begin{equation}
  \label{eq-final1}
  \|E^\delta(z^*) (\hat z-z^*) v\|\leq O(N/\log N)\cdot N^{-\theta}=o(1).
\end{equation}
It remains to control the second term in \eqref{eq:ef-to-pm}.

By construction, $\|E_+\|, \|R_+\|\leq 1$, 
and $\|Q\|=O(N^{1/2+\upepsilon})$, for any $\upepsilon >0$, with
probability approaching $1$ as $N\to\infty$. Applying Lemma \ref{lem-10.7} again we
obtain that with $z=z^*$,
\begin{equation}
  \label{eq-final2}
  \|E(z^*) (I+\delta QE(z^*))^{-1} \delta QE_+(z^*) R_+(z^*)v\|\leq N^{-\gamma} O(N^{1/2}\cdot N/\log N)=o(1). 
\end{equation}
Together with \eqref{eq-final1},
we conclude from \eqref{eq:ef-to-pm}
that if $\gamma>3/2$ then for $z^*$ with $|z^*-\hat z|=O(N^{-\theta})$, 
 \begin{equation}
   \label{eq-order1}
\left\|   \sum_{i=M+1}^N (e_i(z^*)^* v) \cdot e_i(z^*)\right\| =o(1).
 \end{equation}

We turn next to the case $\gamma\in (1,3/2]$. In that case, 
we apply Corollary \ref{cor-control} and obtain that for all
$z\in \mathcal{N}_\gamma$, with probability larger than $1-N^{-\upalpha_0}$,
we have that
\begin{equation}
  \label{eq-final2a}
 \|E^\delta(z) \| =O(N/\sqrt{\log N}) \qquad \text{ and } \qquad \|E(z) (I+\delta QE(z))^{-1} \delta QE_+(z) R_+(z)v\|= o(1).
\end{equation}
Together with \eqref{eq-final1}, this yields \eqref{eq-order1}
also for $\gamma \in (1,3/2]$.

For $z \in \C$ we define  
\begin{equation}
  \label{eq-wdef}
  w_{z}:=E_+(z) R_+(z) v.
\end{equation}
We will show below  that $\|v - w_{z^*}\|=o(1)$ and $\|w_{z^*} - w_{\hat z}\|=o(1)$. This will yield \eqref{eq-main1} with $w=w_{\hat z}$. The localization estimate \eqref{eq-main2} then will
follow from 
\eqref{eqApt1} and \eqref{eqApt2} in
Theorem \ref{thm:localization}. The assertion that $w$ is a random linear combination of $\{e_j(\hat z)\}_{j \in [M]}$ follows from its definition in \eqref{eq-wdef}.

Turning to proving that $\|v - w_{z^*}\|=o(1)$, we note that,  
by 
\eqref{eq:ef-to-pm}, $v-w_{z^*}$ equals the left hand side of 
\eqref{eq:ef-to-pm}, whose norm is $o(1)$ by \eqref{eq-order1}.

  It remains to show that $\|w_{z^*} - w_{\hat z}\|=o(1)$. This follows from a standard perturbative argument. Indeed, let
  $B_z=(P_N-zI)^*(P_N-zI)$.
  Using Proposition \ref{prop:SmallSG}, the $M$-th singular value of
  both $B_{\hat z}$ and $B_{z^*}$ are $O(N^{-C_\gamma}\log N/N)$
  while the $(M+1)$-th is at least $C\log N/N$, 
  for appropriate $C,C_\gamma$ independent of $N$. Let
  $\Pi_z=\Pi_{z,M}$ be the spectral projector of $B_z$ on the
  first $M$ eigenvalues. Then, choosing $\Gamma=\partial D(0,C\log N/2N)$,
  we can write 
  \[\Pi_z=\frac{1}{2\pi i} \int_\Gamma (B_z-\lambda)^{-1} d\lambda,\]
  and an easy computation (using the Ky-Fan inequalities and the spectral gap)
  gives that $\|\Pi_{\hat z}-\Pi_{z^*}\|=O(N^{-\theta} N^2/\log N)=o(N^{-2})$
  since $\theta>4$. By \eqref{eq-wdef} we notice that $w_z=\Pi_z v$. 
  We therefore obtain that $\|w_{\hat z}-w_{z^*}\|= o(N^{-2}) =o(1)$, as claimed.
  This
completes the proof of
the first point of Theorem \ref{thm:main}, for all $\gamma>1$.
\\
\par

2. We next turn to the proof of the second part of Theorem \ref{thm:main}.
Fix
$z_0\in \Omega(\vep_{\ref{theo-location}},C_{\ref{theo-location}},N)$ 
and let $\hat z$ be an eigenvalue of $P_{N,\gamma}^Q$ as in the second 
part  of the theorem. Recall that $z^*\in \mathcal{N}_\gamma$ is such that
is such that $|\hat z-z^*|\leq N^{-\theta}$.

The starting point of the proof
is the observation that by \eqref{eq:algeb-id}, \eqref{eq-deltaz},
the definition of $E_{-+}^\delta(z)$, and the
resolvent expansion, we 
have that for any $z\in \C$,
\begin{eqnarray}\label{eq:coeff-eMnew}
&&E_-^\delta(z) (\hat z-z) v  = - E_{-+}^\delta(z) R_+(z) v \\
&& 
\!\!\!\!\!
\!\!\!\!\!
= - E_{-+}(z)R_+(z) v + \delta E_-(z) Q E_+(z) R_+(z) v -  
\delta^2 E_-(z) (I+\delta Q E(z))^{-1} Q E(z) Q E_+(z) R_+(z) v. \nonumber
\end{eqnarray}
(Compare with \eqref{eq:coeff-eM}.)
Using the definition of $E_{-+}(z)$ and of $R_+(z)$, we rewrite
the first two terms in the right hand side of \eqref{eq:coeff-eMnew} as
\begin{eqnarray}
  \label{eq:coeff-eM-domnew}
&&- E_{-+}(z)R_+(z) v + \delta E_-(z) Q E_+(z) R_+(z) v\\
&&\qquad= \sum_{i=1}^M t_i \cdot (e_i(z)^* v) \cdot \delta_i + 
\delta \sum_{i=1}^M \left[\sum_{j=1}^M (e_j(z)^* v) \cdot 
(f_i(z)^* Q e_j(z))\right] \delta_i. \nonumber 
\end{eqnarray}

Next, recall that $E_-^\delta=E_-(I+\delta QE)^{-1}$. Thus,
using \eqref{eq-sec10new-2}, we obtain that
\begin{equation}
  \label{eq-evening2}
  \|E_-^\delta(z) (\hat z-z) v\|\leq  2\sqrt{M}|\hat z-z|,
\end{equation}
with probability $1-N^{-2\upalpha_0}$, for any $\upa_0 >0$, when $N$ is
large enough.
By \eqref{eq-sec10new-1}, we also have with the same probability that
\begin{equation}
  \label{eq-evening3}
  \|\delta^2 E_-(z) (I+\delta Q E(z))^{-1} Q E(z) Q E_+(z) R_+(z) v\|\
  \leq N^{-\gamma-(\gamma-1)/4}.
\end{equation}
Upon choosing $\upa_0$ sufficiently large, by a union bound, \eqref{eq-evening2} and \eqref{eq-evening3}
hold simultaneously for all $z\in \mathcal{N}_\gamma$. Thus, we have
that with probability $1-N^{-3\upalpha_0/2}$,
\begin{equation}
  \label{eq-evening4}
\left\|\sum_{i=1}^M t_i \cdot (e_i(z)^* v) \cdot \delta_i + 
\delta \sum_{i=1}^M \left[\sum_{j=1}^M (e_j(z)^* v) \cdot 
(f_i(z)^* Q e_j(z))\right] \delta_i\right\|\leq 2\sqrt{M}|\hat z-z|+N^{-\gamma-(\gamma-1)/4}.
\end{equation}
\\
\par
3. Recall next Proposition \ref{prop:SmallSG}, and that $|\wind(z)|=M$
if $z\in \mathcal{N}_\gamma\cap  D(z_0,C_0\log N/N)$.
In what follows, we assume that $|z^*-\hat z|<N^{-\theta}$, so that the 
above conditions hold for $z=z^*$ and $\hat z$.
Let $M_0=|\wind(z)|-m^0_{\mbox{\rm sign}(\wind(z))}.$ Assume, for the time being, that $M_0 >0$. 
Recall from \eqref{sgv1a} that if $M_0>0$ then 
for $j\in [M_0]$, $|t_j(z)|\leq C e^{-cN}$
for some $c>0$.

Let $A=A(z)$ denote the $M_0\times M$ matrix 
with entries (in the $\delta_i$ basis)
$A_{ij}:=A_{ij}(z):=f_i(z)^* Q e_j(z)$ and, for $j\in[M]$, set
$\alpha_j:=e_j(z^*)^* v$. Write $\hat A$ for the 
$M_0\times M_0$ submatrix of $A$ consisting of its first $M_0$ columns. 
We obtain from \eqref{eq-evening4} and 
\eqref{sgv1a}
that 
\begin{equation}
  \label{eq-evening5}
  \left\|\sum_{i=1}^{M_0}\delta_i
  \sum_{j=1}^M  A_{ij}(z^*) \alpha_j\right\| \leq N^{\gamma}\big(e^{-cN}+ 2\sqrt{M}|z^*-\hat z|+
  N^{-\gamma-(\gamma-1)/4}\big)= o(1),
\end{equation}
since $\theta>\gamma$. 
We have the following lemma, whose proof is postponed.
\begin{lem}
  \label{lem-morningheri}
  For every $\eta>0$, there exist 
$0<c_\eta,C_\eta < \infty$ depending on $\eta$ and $C_0$ only,
so that, with probability at least $1-\eta$,
for all $z\in \mathcal{N}_\gamma \cap D(z_0,C_0\log N/N)$,
we have that  $s_{\min}(\hat A(z))>c_\eta$ and that $s_{\max}(A(z))<C_\eta$.
\end{lem}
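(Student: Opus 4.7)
The plan is to combine, at a fixed reference point $z_0$, a pointwise moment bound on the entries of $A(z_0)$ and an anti-concentration estimate on $\det \hat A(z_0)$, with a regularity argument propagating both to all $z \in D(z_0, C_0 \log N/N) \cap \mathcal{N}_\gamma$.

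\emph{Pointwise at $z_0$.} Each entry $A_{ij}(z_0) = f_i(z_0)^* Q e_j(z_0)$ is a linear form in the entries of $Q$ with coefficient vector of unit $\ell^2$ norm; Assumption \ref{assump:mom} gives $\E|A_{ij}(z_0)|^{2p} = O_p(1)$ for every $p$, and since $M_0, M$ are bounded in $N$, Markov yields $s_{\max}(A(z_0)) \le C_{\eta,1}$ with probability at least $1-\eta/4$. For the lower bound on $s_{\min}(\hat A(z_0))$, Cauchy--Binet gives
\[
\det \hat A(z_0) = \sum_{|K|=|L|=M_0} \det F_*[\cdot; K]\, \det Q[K; L]\, \det E_*[\cdot; L],
\]
where $F_*, E_*$ are the $M_0 \times N$ matrices whose rows are $\ol{f_i(z_0)}^{\sf T}$ and $e_j(z_0)^{\sf T}$. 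By Proposition \ref{prop:sgvSpan3} and the decay estimate \eqref{qm6}, the vectors $e_j(z_0), j \in [M_0]$, are localized on an $O(1)$-window near one endpoint of $[1,N]$ and coincide up to $o(1)$ errors with the normalized pure states $\mathfrak z_j^\pm/\|\mathfrak z_j^\pm\|$; the $f_i(z_0)$ are similarly localized near the opposite endpoint. Selecting $L_*$ and $K_*$ to be these windows, the minors $\det E_*[\cdot; L_*]$ and $\det F_*[\cdot; K_*]$ reduce to Vandermonde determinants in the roots $\zeta_j^\pm$ of $p(\zeta)-z_0$, and the root separation \eqref{qm3} yields a deterministic lower bound $|\det F_*[\cdot; K_*]\, \det E_*[\cdot; L_*]| \ge c_* > 0$ uniformly in $z_0$. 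Taking $\{U_s\}$ to be the diagonal entries of $Q[K_*; L_*]$, which are jointly independent, the coefficient of $\prod_s U_s$ in $\det \hat A(z_0)$ is exactly $\pm \det F_*[\cdot; K_*]\, \det E_*[\cdot; L_*]$, i.e.~the leading term $Z_{[M_0]}$ in the notation of Lemma \ref{lem:anti-conc}. That lemma then produces $\prob(|\det \hat A(z_0)| \le \varepsilon) \le \bar C_{\ref{lem:anti-conc}}\, \varepsilon^{1+\upeta}(\log(1/\varepsilon))^{M_0-1}$, and choosing $\varepsilon = \varepsilon_\eta$ small yields $|\det \hat A(z_0)| \ge \varepsilon_\eta$ with probability at least $1-\eta/4$.

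\emph{Extension to the disc.} On the good set where Proposition \ref{prop:SmallSG} provides the spectral gap, the spectral projector onto the $M_0$ smallest eigenvalues of $(P_N-z)^*(P_N-z)$ depends holomorphically on $z$ with derivative controlled by the inverse gap $O(N/\log N)$, and hence for every $p\ge 1$,
\[
\E|A_{ij}(z) - A_{ij}(z')|^{2p} \le C_p\,\big((N/\log N)\,|z-z'|\big)^{2p}.
\]
Since $|z-z_0| = O(\log N/N)$, the $L^{2p}$-oscillation over the disc is uniformly $O(C_0)$; a Cauchy-integral argument in the spirit of Lemmas \ref{lem:der-bd} and \ref{lem:der-bd-dom} (equivalently, a standard Kolmogorov chaining bound) gives, for any preassigned $\delta_\eta > 0$ and with probability at least $1-\eta/4$,
\[
\sup_{z \in D(z_0, C_0 \log N/N)} |A_{ij}(z) - A_{ij}(z_0)| \le \delta_\eta,
\]
uniformly in $(i,j)$. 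Since $\det \hat A$ is a polynomial of degree $M_0$ in the $A_{ij}$ with total derivative of magnitude $\lesssim M_0\, C_{\eta,1}^{M_0-1}$, its oscillation is at most $M_0\, C_{\eta,1}^{M_0-1}\delta_\eta$, which is $\le \varepsilon_\eta/2$ for $\delta_\eta$ small enough. Combined with the pointwise bounds we obtain $|\det \hat A(z)| \ge \varepsilon_\eta/2$ and $s_{\max}(A(z)) \le 2C_{\eta,1}$ for every $z \in D\cap \mathcal N_\gamma$, from which $s_{\min}(\hat A(z)) \ge \varepsilon_\eta/(2C_{\eta,1})^{M_0-1} =: c_\eta$ follows.

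\emph{Main obstacle.} The technical heart is the deterministic lower bound $|Z_{[M_0]}| \ge c_* > 0$, uniform in $z_0 \in \Omega(\vep_{\ref{theo-location}}, C_{\ref{theo-location}}, N)$. It requires converting the quasimode picture of Propositions \ref{lem:TruncEV} and \ref{prop:sgvSpan3} into a clean Vandermonde minor estimate that survives the $o(1)$ discrepancy between the singular vectors $e_j, f_i$ and their quasimode approximants; uniformity over $z_0$ then follows from the compactness and root separation recorded in Assumption \ref{As:Omega} and Remark \ref{rem:unif}.
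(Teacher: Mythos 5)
Your overall architecture (Cauchy--Binet $+$ anti-concentration at a reference point, then propagation over the disc) matches the paper's, but there are two substantive problems in the middle.

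\textbf{The Vandermonde reduction of $Z_{[M_0]}$ does not hold.} You claim the $e_j(z_0)$, $j \in [M_0]$, ``coincide up to $o(1)$ errors with the normalized pure states $\mathfrak z_j^\pm/\|\mathfrak z_j^\pm\|$''. Proposition \ref{prop:sgvSpan3} makes such a statement only for $j \in I_{\mathrm{sign}(\wind),2}$, i.e.\ for $j > M_0$; for $j \le M_0$ it gives $e_j = \sum_\nu a_j(\nu)\psi_\nu^{\mathrm{sign}(\wind)} + O(e^{-N/C})$, and by \eqref{qm5}--\eqref{qn2} each $\psi_\nu$ is itself a (Gram-orthonormalized, kernel-constrained) linear combination of \emph{all} $M_1 = m_\pm - m_\pm^0$ pure states, not a single one. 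When $N_+>0$ one has $M_1 > M_0$, so the first $K$ rows of $E_0$ factor as a $K\times M_1$ Vandermonde block times an $M_1\times M_0$ coefficient matrix; an $M_0\times M_0$ minor obtained by choosing rows is then a Cauchy--Binet sum over $M_0$-subsets of $[M_1]$ and is not a Vandermonde determinant. Your lower bound $|Z_{[M_0]}| \ge c_\star$ therefore needs more than the root separation \eqref{qm3}. The paper sidesteps this entirely: from the exponential decay and near-orthonormality of the columns of $E_0$ it finds a $K\times M_0$ sub-block with lowest singular value $\ge 1/2$, and then invokes the maximum-volume submatrix bound of \cite{MIKOS, GTZ} to extract an $M_0\times M_0$ submatrix with $\det \ge c_0$, with no explicit Vandermonde structure required. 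You would need to either adopt that device or supply a uniform lower bound on the appropriate compound minors of $X_+ G_{+,1}^{-1/2} A_{\mathrm{coef}}$.

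\textbf{The oscillation of $A_{ij}$ over the disc is not $o(1)$.} You write that ``for any preassigned $\delta_\eta>0$'' the chaining bound gives $\sup_z |A_{ij}(z)-A_{ij}(z_0)|\le\delta_\eta$ with probability $\ge 1-\eta/4$. This is incorrect for the columns $j > M_0$: there the Lipschitz constant of $z\mapsto e_j(z)$ is of order $N/\log N$ (it is $\|\mathfrak z_j^+\|$-normalized, so it picks up the inverse spectral gap), and since the disc has radius $C_0\log N/N$, your own moment estimate yields an oscillation that is stochastically bounded by $O(C_0)$, not arbitrarily small. What you actually get from chaining is $\sup_{z}|\wt A_{ij}(z)|\le C'_\eta$ with probability $\ge 1-\eta/4$, which is the paper's statement and is all that is needed for $s_{\max}(A(z))<C_\eta$. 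For the $\det\hat A$ propagation you should instead observe that $\hat A_{ij}$ involves only $e_j, f_i$ with $i,j\in[M_0]$, whose Lipschitz constant in $z$ is $O(1)$ (this is \eqref{eq-ejlip}), so the oscillation of $\hat A_{ij}$ over the disc is $O(\log N/N)=o(1)$, and then either a union bound over the polynomial-size net $\mathcal N_{\gamma,z_0}$ (as the paper does via Whittle and Markov) or a Kolmogorov argument delivers what you want. As written, you use one $\delta_\eta$ for both families, which conflates two processes with very different moduli of continuity.

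Your incidental claim that the inverse spectral gap controlling $\partial_z \Pi$ is $O(N/\log N)$ is also off: the gap of $(P_N-z)^*(P_N-z)$ at the $M_0$-th eigenvalue is of order $(\log N/N)^2 N^{-2C_\alpha}$, so the crude resolvent-contour estimate gives a much worse Lipschitz constant. The paper avoids this by transferring Lipschitz continuity from the explicit roots $\zeta_j^\pm(z)$ through Propositions \ref{lem:TruncEV}, \ref{prop:QM}, and \ref{prop:sgvSpan}, rather than differentiating the projector directly.
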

Continuing with the proof of Theorem \ref{thm:main},
we obtain  from
\eqref{eq-evening5} that with $a=\sqrt{\sum_{j=1}^{M_0} |\alpha_j|^2}$,
\begin{equation}
  \label{eq-evening6}
 \left\| \sum_{i=1}^{M_0}\delta_j 
  \sum_{j=M_0+1}^M A_{ij} \alpha_j\right\| \geq a c_\eta -o(1).
\end{equation}
Recall now that the norm of the projection of $v$ on 
$\mbox{\rm span}(e_j, j\in [M])$ is $1-o(1)$
by the first part of the theorem. The orthogonality of the $e_j$'s then gives that $\|\alpha\|=1+o(1)$, where $\alpha= (\alpha_1, \alpha_2, \ldots, \alpha_M)$, 
and together with Lemma \ref{lem-morningheri}, \eqref{eq-evening6}, also that,
with $b=\sqrt{\sum_{j=M_0+1}^M |\alpha_j|^2}$,
\[ C_\eta  b\geq c_\eta a-o(1)= c_\eta \sqrt{1-b^2} -o(1).\]
It follows that, with $M_0>0$, for some $c_0>0$ independent of $N$, 
\begin{equation}
  \label{eq-lb}
 b^2=\sum_{j=M_0+1}^M |\alpha_j|^2 >c_0.
 \end{equation}
For  
$M_0=0$,
the lower bound \eqref{eq-lb} also holds, 
by the first part of Theorem \ref{thm:main}. 

We are now ready to complete the proof of \eqref{eq-main3}.
To this end, we assume in the sequel that 
$\wind>0$, the case $\wind<0$ requiring only notation changes. Set 
\begin{equation}\label{eq:defw''1}
w':=\sum_{j=M_0+  1}^M \alpha_j e_j \quad \text{ and } \quad w'':=\sum_{j=M_0+  1}^M \alpha_j \hat e_j, 
\end{equation}
with $\alpha_j=e_j(z^*)^* v$ as before and 
\begin{equation}\label{eq:defw''2}
\hat e_j:= \sum_{\nu=M_0+1}^M b_j(\nu) \frac{\mathfrak{z}_\nu^+}{\|\mathfrak{z}_\nu^+\|},
\end{equation}
 with $b_j(\nu)$'s as in \eqref{eq-P10.4-2}.

 Since $\|\alpha\|=1+o(1)$, using the triangle inequality, 
 the Cauchy-Schwarz inequality, and \eqref{eq-P10.4-2}, we observe that 
 \begin{equation}\label{eq:defw''3}
 \|w'\|^2_{\ell^2([\ell, \ell'])} \ge \frac12\|w''\|^2_{\ell^2([\ell, \ell'])} - \epsilon_N, 
 \end{equation}
  where $\epsilon_N=O( \log N/N)+O(N^{-2 C_\gamma} (\log N)^2)$.

  We next derive a lower bound on $ \|w'\|_{\ell^2([\ell,\ell'])}^{2}$. 
 Set 
$\zeta_{\nu\nu'}:=\zeta_\nu^+ \ol{\zeta_{\nu'}^+}$. 
Recall Proposition \ref{lem:TruncEV} and note that
\begin{equation}\label{eq:mfz-inp}
\langle \mathfrak{z}_\nu^+ \mid \mathfrak{z}_{\nu'}^+ \rangle_{\ell^2([\ell, \ell'])} = \zeta_{\nu\nu'}^\ell \frac{1 - \zeta_{\nu\nu'}^{\ell'-\ell+1}}{1 - \zeta_{\nu\nu'}}, \qquad \nu, \nu' \in [M] \setminus [M_0].
\end{equation}
Note that
 \[
 \frac{1}{(1-|\zeta_\nu^+|^2)\|\mathfrak{z}_\nu^{+}\|^2}=1+o(1) \qquad \text{ and } \qquad \|\mathfrak{z}_\nu^{+}\|^2\, \asymp\, N/\log N.
 \]
Thus, by Remark \ref{rem-lasthour}, from \eqref{eq:mfz-inp} we obtain that 
\begin{equation}\label{eq:mfz-inp1}
\frac{\langle \mathfrak{z}_\nu^+\mid \mathfrak{z}_{\nu'}^+ \rangle_{\ell^2([\ell, \ell'])}}{\|\mathfrak{z}_\nu^{+}\| \|\mathfrak{z}_\nu^{+}\|} = 
\left\{\begin{array}{ll} 
O(\log N/N) & \mbox{ if } \nu \ne \nu',\\
(1+o(1)) (|\zeta_\nu^+|^{2\ell} - |\zeta_\nu^+|^{2(\ell'+1)}) & \mbox{ if } \nu =\nu'.
\end{array}
\right.
\end{equation}
On the other hand, since $\nu,\nu'>M_0$, 
 for $q \in [N]$,
 $|\zeta_\nu^+|^{2q}= |\zeta_{\nu'}^+|^{2q}=
 e^{-c q \log N/N(1+o(1))}$ for some constant $c>0$ 
 independent of $\nu$, and therefore,
for $\ell, \ell' = O(N/\log N)$, 
 \begin{equation}\label{eq:mfz-inp2}
 (|\zeta_\nu^+|^{2\ell} - |\zeta_\nu^+|^{2(\ell'+1)}) \ge c' (\ell'-\ell) \cdot \frac{\log N}{N},
 \end{equation}
 for some $c'>0$. Hence, recalling \eqref{eq:defw''1}-\eqref{eq:defw''2},
 from \eqref{eq-P10.4-2}, 
 \eqref{eq:mfz-inp1} and 
 \eqref{eq:mfz-inp2}, we deduce that for any $\ell,\ell'\in [1,N]$,
\begin{multline}
  \label{eq-morningheri3}
  \|w''\|_{\ell^2([\ell,\ell'])}^2 = \sum_{j,j'=M_0+1}^M
   \ol{\alpha}_j {\alpha}_{j'}\sum_{\nu,\nu'=M_0+1}^M 
   \ol{b_j(\nu)} {b_{j'}(\nu')} \cdot    \frac{\langle \mathfrak{z}_\nu^+ \mid \mathfrak{z}_{\nu'}^+\rangle_{\ell^2([\ell,\ell'])}}
   {\|\mathfrak{z}_\nu^{+}\| \cdot \|\mathfrak{z}_{\nu'}^{+}\|}\\
  \ge  \sum_{j,j'=M_0+1}^M
     \ol{\alpha}_j {\alpha}_{j'}
       \sum_{\nu=M_0+1}^M \ol{b_j(\nu)} {b_{j'}(\nu')}  \cdot c' (\ell'-\ell) \cdot \frac{\log N}{N} + O(\log N/N)
       \\
   =     c' (\ell'-\ell) \cdot \frac{\log N}{N}
   \sum_{j,j'=M_0+1}^M
   \alpha_j \ol{\alpha}_{j'}\langle b_j|b_{j'}\rangle
   +O(\log N/N)\\
   \stackrel{\eqref{eq:ao1}}{=}
  c' (\ell'-\ell) \cdot \frac{\log N}{N}
   \left(o(1)+\sum_{j=M_0+1}^M |\alpha_j|^2\right)+O(\log N/N) 
   \stackrel{\eqref{eq-lb}}{\geq }
  \frac{c_0c' (\ell'-\ell)}{2} \cdot \frac{\log N}{N}
   +O(\log N/N).  
 \end{multline}
 In the range of $\ell,\ell'$ of the statement of Theorem \ref{thm:main},
 the first term in the right hand side of \eqref{eq-morningheri3}
 is of order larger that $\epsilon_N+O(\log N/N)$, and therefore, for such $\ell,\ell'$, by \eqref{eq:defw''3}
 \begin{equation}\label{eq:w-prime-lb}
 \|w'\|_{\ell^2([\ell,\ell'])} \ge c_1 (\ell'-\ell) \log N/N,
 \end{equation}
 for some $c_1 >0$. It remains to show that \eqref{eq:w-prime-lb} continues to hold with $w'$ replaced by $w=w_{\hat z}$. 
 
 To see this, note that
 by \eqref{eqApt2}, we have that $\|w_{z^*} -w'\|_{\ell^2([C \log N, N])} = O(N^{-2})$, 
 for some $C< \infty$. Recall from 
 above that $\|w_{\hat z} - w_{z^*}\|=o(N^{-2})$. Thus, by the triangle inequality we indeed have that \eqref{eq:w-prime-lb} holds for $w$ with $\ell \ge C \log N$ and $\ell'$ as in the statement of Theorem \ref{thm:main}. On the other hand, if $1 \le \ell \le C \log N$ we observe that $\ell' - \ell \le 2 (\ell' - C \log N)$ for any $\ell' \ge N^{c_0}$. Thus, upon shrinking $c_1$ in \eqref{eq:w-prime-lb}, that lower bound continues to hold for $w$ and such values of $\ell$.  
 The proof of \eqref{eq-main4} is now immediate from \eqref{eq-main1}. 
 The assertion about the constant $c_0$ follows from Remark \ref{rem:Cgamma} and the definition of $\epsilon_N$. 
 This finally completes the proof.
  \qed

\subsection{Proof of Lemma \ref{lem-morningheri}}
Throughout the proof, we assume for notational simplicity that $\wind>0$, the general case posing no 
new difficulties. We begin by rewriting $\hat A(z)$, see \eqref{eq-evening5}. Fix $M_1=m_+-m_+^0$
and note that $M_1\geq M_0$ with equality if $N_+=0$.
Let $E_0(z)$ denote the $N\times M_0$ matrix with columns $e_j(z)$, and $F_0(z)$ the $N\times M_0$ matrix with columns $f_j(z)$.
It follows from Lemma \ref{lem:TruncEV} and Proposition 
\ref{prop:sgvSpan} that 
the columns of $E_0(z)$ are almost orthonormal (with inner product $o(1)$ between different columns)
and with $|e_j(z)(k)|\leq e^{-ck}$, $k=1,\ldots,N$ for some $c>0$. 
Similarly, the columns of $F_0(z)$ are almost orthonormal,
with $|f_j(z)(k)|\leq e^{-c (N-k)}$, $k=1,\ldots,N$. 
Recall that $\hat A(z)=F_0(z)^* Q E_0(z)$.

We begin by fixing a particular $z$ as in the statement of the lemma, and write $\bar A=\hat A(z)$, $\bar E_0=E_0(z)$, $\bar F_0=F_0(z)$.
Note that
\[
\E\|\bar A\|_{{\rm HS}}^2 
= \sum_{i,j=1}^{M_0}\E | \bar A_{ij}|^2\leq \gC_2 \left( \sum_{i,j=1}^{M_0} \|e_i(z)\|^2 \cdot \|f_j(z)\|^2\right) \leq 2\gC_2 M_0^2,
\]
for all large $N$. Therefore,
one can find a constant
$C(\eta)$ so that 
\begin{equation}
  \label{eq-smaxub}
  \prob( s_{\max}(\bar A)>C(\eta)/2)\leq \eta/4.
\end{equation}

We next control $s_{\min}(\bar A)$.
We use the inequality
\begin{equation}
  \label{eq-sminlb}
  s_{\min}(\bar A)\geq \frac{|\det(\bar A)|}{(s_{\max}(\bar A))^{M_0-1}}.  
\end{equation}
To control the determinant in \eqref{eq-sminlb}, we 
begin by considering the matrix $\bar E_0$. Because the columns of $E_0$
decay exponentially and the columns of $E_0$ are almost orthogonal, 
there exists
a $K>0$ so that the $K\times M_0$ sub-matrix $\bar E_{0,K}$ 
consisting of the first $K$ rows of $\bar E_0$ 
has columns with inner product smaller than $1/M_0^2$, and therefore
singular values all larger than $1/2$. It follows from 
\cite[Lemma 4.3]{MIKOS} (see also \cite{GTZ})
that there exists an $M_0\times M_0$ sub-matrix of 
$\bar E_{0,K}$, denoted $\bar E_{0,K,M_0}$,
with singular values all larger than $1/2\sqrt{1+(K-M_0)M_0}$.
In particular, there exists a (nonrandom) constant $c_0$ so that 
$\mathrm{det}(\bar E_{0,K,M_0})\geq c_0$.
Similarly, with $\bar F_{0,K}$ denoting the sub-matrix of $F_0$ consisting from its last $K$ rows, 
there exists an $M_0\times M_0$ sub-matrix of $\bar F_{0,K}$, denoted
$\bar F_{0,K,M_0}$, with 
$\mathrm{det}(\bar F_{0,K,M_0})\geq c_0$.

Returning to the control of 
the determinant in \eqref{eq-sminlb}, we 
note that the latter is
a homogeneous polynomial in the entries of $Q$, and apply  Lemma
\ref{lem:anti-conc}, as follows.
Let $I=\{i_1\, < i_2 < \cdots < \,i_{M_0}\}\subset [K]$  
denote the
list of rows of $\bar E_0$ participating in $\bar E_{0,K,M_0}$. Similarly,
let $J=\{j_1\, < j_2 < \cdots < \, j_{M_0}\}\subset [N]\setminus [N-K]$
denote the list of rows of $\bar F_0$
participating in $\bar F_{0,K,M_0}$.
 Let $U_\ell=Q_{i_\ell,j_\ell}$, $\ell=1,\ldots M_0$,  then by the Cauchy-Binet formula,
\begin{equation}
  \label{eq-detindet}
  \det(\bar A)=  \sum_{\cI \subset [M_0]} Z_{\cI} \prod_{\ell \in \cI} U_{\ell}
\end{equation}
for some random variables $Z_{\cI}$ (which depend only on $Q_{a,b}$ with 
$(a,b)\not\in \cup_{\ell=1}^{M_0} (i_\ell,j_\ell)$).
Crucially,  $|Z_{[M_0]}|=|\det(\bar E_{0,K,M_0})\cdot
\det(F_{0,K,M_0})|\geq c_0^{2M_0}=:c_\star$.

By Lemma \ref{lem:anti-conc}, 
we obtain that for any $\vep\in (0,c_\star e^{-1})$,
\begin{equation}
  \label{eq-detindent1}
\prob\left( \det(\bar A) \le \vep\right) \le \bar C_{\ref{lem:anti-conc}} \cdot \left(\frac{\vep}{c_\star}\right)^{(1+\upeta)} \cdot \left(\log\left(\frac{c_\star}{\vep}\right)\right)^{M_0-1}. 
\end{equation}
Combining  \eqref{eq-detindent1} with \eqref{eq-sminlb} and
\eqref{eq-smaxub}, we obtain that there exists a $c_\eta>0$ so that
\begin{equation}
  \label{eq-finallocalsmin}
  \prob(s_{\min}(\bar  A)>2c_\eta) \geq 1-\eta/4.
\end{equation}

Next, let $z'\neq z\in  \mathcal{N}_\gamma \cap D(z_0,C_0\log N/N)$. Since $|dp(z)|$ is bounded below uniformly in a fixed small ($N$-independent)
neighborhood of $z_0$, we have that the roots of $p-z$ and $p-z'$ satisfy,
for all $j\leq N_-+N_+$,
that $|\zeta_j^{+}(z)-\zeta_j^{+}(z')|\leq C_1\log N/N$. Using 
Propositions \ref{lem:TruncEV}, \ref{prop:QM}, and  \ref{prop:sgvSpan},
this implies in turn that there exists a constant $C$ so that, for $j=1,\ldots,M_0$,
\begin{equation}
\label{eq-ejlip}
\| e_j(z)-e_j(z')\|\leq C|z-z'|+Ce^{-N/C} \le \, 2C |z- z'|.
\end{equation}
Indeed, Proposition \ref{lem:TruncEV} yields the first inequality
for the $\widetilde{u}_\ell^+$, $\ell=1,\ldots,M_1$, Proposition \ref{prop:QM}
 allows one to transfer this to the quasimodes $\psi_j^+$, and finally Proposition \ref{prop:sgvSpan}  transfers it to the $e_j$'s. The right most inequality in \eqref{eq-ejlip} follows from the fact that $\cN_\gamma$ is a net of mesh size $N^{-\theta}$. By the same argument the same bound continues to hold when $e_j$'s are replaced by $f_j$'s.
 
Denote $\mathcal{N}_{\gamma,z_0}:=\mathcal{N}_\gamma
\cap D(z_0,C_0\log N/N)$. Since $\hat A_{i,j}(\cdot)$, $i,j \in [M_0]$, are linear in the entries of $Q$,
applying \cite[Theorem 2]{Whittle} together with \eqref{eq-ejlip} we obtain that, for any $h \in \N$, 
 \[
\max_{i,j \in [M_0]}\max_{z' \in \cN_{\gamma,z_0}} \E|\hat A_{i,j}(z) - \hat A_{i,j}(z')|^{2h} \le C_h \cdot \left(\frac{\log N}{N}\right)^{2h},
 \]
 for some constant $C_h< \infty$, independent of $N$, where we also used that $|z-z'| = O(\log N/N)$ for all $z,z' \in \cN_{\gamma. z_0}$. Therefore, 
 choosing $h$ sufficiently large so that $|\cN_{\gamma, z_0}| N^{-h} =o(1)$, applying Markov's inequality,
taking a union bound over ${i,j \in [M_0]}$ and ${z' \in \cN_{\gamma,z_0}}$ and recalling Assumption \ref{assump:mom},
we deduce that, on an event of probability approaching one,
\begin{equation}
  \label{eq-chaser2}
  s_{\max}(\hat A(z)-\hat A(z'))\leq 
\|\hat A(z)-\hat A(z')\|_{\mathrm{HS}}\leq c_\eta,
\end{equation}
uniformly for all $z, z' \in \cN_{\gamma,z_0}$. 
By the Ky Fan inequalities, $s_{\min}(\hat A(z'))\geq s_{\min}(\hat A(z))-
s_{\max}(\hat A(z)-\hat A(z'))$. So we obtain the claim for $s_{\min}(\hat A(z'))$.

Finally, we turn to the control of  $s_{\max}( A(z))$, for all $z\in\mathcal{N}_{\gamma,z_0}$.
Let $\wt E(z)$ and $\wt E_0(z)$ be $N\times (M-M_0)$ matrix 
whose columns are $\mathfrak{z}_{j+M_0}^+(z)/\|\mathfrak{z}_{j+M_0}^+(z)\|$
(as in Proposition \ref{lem:TruncEV}) and $e_{j+M_0}(z)$, respectively, 
for $j=1,\dots,M-M_0$. As in Proposition \ref{prop:QM}, let 
$\wt F(z)$ be the $N \times M_0$ matrix with 
columns $\psi_i^{-}(z)$, $i=1,\dots,M_0$. Set $\wt A(z):= \wt F(z)^* Q \wt E(z)$. 
Recalling the definition of $A$ from the discussion above \eqref{eq-evening5}, 
we find that in the new notation, $A(z)= F_0(z)^* Q \wt E_0(z)$. 
Further let $\wt B(z)$ and $\wt C(z)$ be the matrices whose columns are $\{b_{j+M_0}\}$ and $\{a_j\}$, as defined in Proposition \ref{prop:sgvSpan3}. 
By Proposition \ref{prop:sgvSpan3} we have that 
\[
\|\wt E(z) \wt B(z) - \wt E_0(z)\|_{{\rm HS}} , \|\wt F(z) C(z) - F_0(z)\|_{{\rm HS}} = O(N^{-c}) \quad \text{ and } \quad \|\wt E_0(z)\|_{{\rm HS}}, \|F_0(z)\|_{{\rm HS}} \le 2M,
\]
for all $z \in \cN_{\gamma,z_0}$ and all large $N$,
where $c>0$ is some constant. 
Therefore, applying Proposition \ref{prop:mom-HS}, the triangle inequality, 
and proceeding as in the proof of \eqref{eq-chaser2}, we deduce that
\[
\max_{z \in \cN_{\gamma,z_0}}\| A(z) - \wt B(z)^* \wt A(z) \wt C(z)\| \le \max_{z \in \cN_{\gamma,z_0}}\| A(z) - \wt B(z)^* \wt A(z) \wt C(z)\|_{{\rm HS}} = o(1),
\]
on an event with probability approaching one, as $N\to+\infty$. 
Moreover, by \eqref{eqAP3} and \eqref{eq:ao1} we have that 
$\|\wt B(z)\|$ and $\|\wt C(z)\|$ are uniformly bounded for all 
$z \in \cN_{\gamma,z_0}$. Therefore, it suffices to bound $\sup_{z \in \cN_{\gamma,z_0}} \|\wt A(z)\|$, and hence in fact it is enough to prove that for fixed $i\in [M_0], j\in [M]\setminus [M_0]$, 
\begin{equation}
\label{eq-1603}
\lim_{x\to\infty} \limsup_{N\to\infty} \prob\left(\sup_{z\in\mathcal{N}_{\gamma,z_0}} |\wt A_{ij}(z)|>x\right)=0.
\end{equation}

Toward this goal, we note that 
for $z,z'\in \mathcal{N}_{\gamma,z_0}$, one has that for some constant $C_0 < \infty$, 
independent of $N$,
\[ 
\left\|\frac{\mathfrak{z}^+_j(z)}{\|\mathfrak{z}^+_j(z)\|}-\frac{\mathfrak{z}^+_j(z')}{\|\mathfrak{z}^+_j(z')\|}
\right\| \leq C_0|z-z'| \cdot \frac{N}{\log N}, \quad j \in [M]\setminus[M_0],
\]
and by Proposition \ref{prop:sgvSpan3} the bound \eqref{eq-ejlip} continues to hold with $e_j(\cdot)$ replaced by $\psi_j^-(\cdot)$ for $j \in [M_0]$. Therefore, upon using Assumption 
\ref{assump:mom} and applying \cite[Theorem 2]{Whittle}, we find that
for some constant $C_h$ independent of $N$,
\begin{equation}
  \label{eq-Cp}
  \E|\widetilde{A}_{i,j}(z)-
  \widetilde{A}_{i,j}(z')|^h\leq C_h  
\Big(\frac{ |z-z'| N}{\log N}\Big)^h.
\end{equation}

We now apply a Komogorov-type argument. Namely, for $k=0,1,\ldots$, let 
$\mathcal{N}_{\gamma,z_0,k}$ denote a minimal subset of 
$\mathcal{N}_{\gamma,z_0}$ so that for any
$z\in \mathcal{N}_{\gamma,z_0}$ there
exists 
$z' \in \mathcal{N}_{\gamma,z_0,k}$ so that
$|z-z'|\leq  2^{-k} \log N/N$. Note that  $|\mathcal{N}_{\gamma,z_0,k}|
=O(2^{2k})$. Now, fix a constant $x$ and introduce the event 
\[ \mathcal{A}_k=\{ \exists \, z,z'\in \mathcal{N}_{\gamma,z_0,k}:
|z-z'|\leq  2^{-k+1} \log N/N, |\widetilde{A}_{i,j}(z)-
\widetilde{A}_{i,j}(z')|> x/j^2\},\]
and set $\mathcal{B}=\cap_{k=1}^{k_0} \mathcal{A}_k^\complement$,
where $2^{k_0-1}\leq C_0 N/\log N\leq 2^{k_0}$. On the event $\mathcal{B}$ we
have that 
\[\max_{z\in \mathcal{N}_{\gamma,z_0}}|\widetilde{A}_{i,j}(z)|\leq 10 x. \]
On the other hand, by Markov's inequality and
  \eqref{eq-Cp}, for some constant $C_h'$ independent of $N$,
\begin{equation}
  \prob(\mathcal{B}^\complement)\leq C_h' x^{-h}
  \sum_{k=1}^{k_0} 2^{2k}2^{-hk}.
\end{equation}
Choosing $h=4$, this implies 
\eqref{eq-1603} and completes 
the proof of the lemma.
\qed

\subsection{Proof of Corollary \ref{cor:main}} 
We begin by applying Theorem \ref{theo-location} with $\mu$ replaced by $\mu/8$. This yields the existence of
constants $\vep_{\ref{theo-location}}, C_{\ref{theo-location}} $ such that 
  \begin{equation}
    \label{eq-finalloc-new}
    \prob\big(\mathcal{N}_{\Omega(\vep_{\ref{theo-location}},C_{\ref{theo-location}},N),N,\gamma}< (1-\mu/8) N\big)\to_{N\to\infty} 0.
  \end{equation}
We now claim the following.
Fix $\bar C < \infty$, $z_0 \in \Omega(\vep_{\ref{theo-location}},C_{\ref{theo-location}},N)$, and $\mu>0$. Then there exist $0<\mu_1, \mu_2 < \infty$ such that, for all large $N$, 
\begin{equation}\label{eq:cor-pre}
\prob\left(\exists i \in [N] \text{ such that } \supp_{\mu_1}(v(\lambda_i^N)) < \mu_2 N/\log N \text{ and } \lambda_i^N \in \D_{z_0} \right) \le \mu/(4 \bar C),
\end{equation}
where $\D_{z_0}:=D(z_0, C_{\ref{theo-location}} \log N/(2N)) \cap \Omega(\vep_{\ref{theo-location}},C_{\ref{theo-location}},N)$. To prove \eqref{eq:cor-pre} we borrow some notation from the proof of Theorem \ref{thm:main}. Let $\hat z \in \D_{z_0}$ and $z^*=z^*(\hat z) \in \cN_{\gamma,z_0}:= \cN_\gamma \cap D_{z_0}$ such that $|\hat z - z_*| \le N^{-\theta}$. 
Recall from the proof of Theorem \ref{thm:main} that $w_z=\sum_{j=1}^M \alpha_j e_j(z)$, for $z \in \C$, $w'= w'(z^*) = \sum_{j=M_0+1}^M \alpha_j e_j(z^*)$, and $w=w_{\hat z}$. Also recall that $\|w-w_{z^*}\|=o(1)$. Applying Lemma \ref{lem-morningheri}, with $\eta = \mu/(4\bar C)$, and upon choosing $\mu_1$ sufficiently small, we obtain that, simultaneously for all $z^*$ in $\cN_{\gamma, z_0}$, and hence simultaneously for all $\hat z \in \D_{z_0}$, and any $I \subset [N]$ such that $\|v\|_{\ell^2(I)} >1 -\mu_1$, 
\[
\sum_{j=M_0+1}^M |\alpha_j| \|e_j\|_{\ell^2(I)} \ge \|w'\|_{\ell^2(I)} \stackrel{\eqref{eq-main1}}{\ge} \|v\|_{\ell^2(I)} - \|w_{z^*}-w'\| -o(1) \ge 1 -\mu_1/2 - \sqrt{\sum_{j=1}^{M_0} |\alpha_j|^2} \stackrel{\eqref{eq-lb}}{\ge} \mu_1,
\]
with probability at least $1 - \mu/(4\bar C)$, where $v=v(\hat z)$ is the right eigenvector corresponding to $\hat z$. On the other hand by \eqref{eqApt1} it follows that for any such $I$ we must have that $|I| \ge \mu_2 N /\log N$, for some $\mu_2>0$. This, indeed proves \eqref{eq:cor-pre}.

We continue with the proof of
the corollary. 
Introduce the following notation.
For $z_0 \in \C$ set
\[
\cJ_{z_0}:= \left\{ \#\{i \in [N]: \lambda_i^N \in \D_{z_0}\} \le C_{\ref{thm-thintube2}} C_{\ref{theo-location}}^4 \log N \right\},
\]
\[
\cC_{z_0}:= \left\{\exists i \in [N] \text{ such that } \supp_{\mu_1}(v(\lambda_i^N)) < \mu_2 N/\log N \text{ and } \lambda_i^N \in \D_{z_0} \right\},
\]
and for $i \in [N]$
\[
\mathbb{I}_i:= {\bf I}(\supp_{\mu_1}(v(\lambda_i^N)) < \mu_2 N/\log N). 
\]
Let $\mathcal{R}$ be a net of $\Omega(\vep_{\ref{theo-location}},C_{\ref{theo-location}},N)$ of mesh size $C_{\ref{theo-location}} \log N/(2N)$. It is clear that $|\mathcal{R}| \le C C_{\ref{theo-location}}^{-1} N/\log N$, for some $C< \infty$. Note also that by Theorem \ref{thm-thintube2},
for any $z_0 \in \mathcal{R}$,
\begin{equation}\label{eq:cJ-z}
\prob(\cJ_{z_0}^c) \le N^{-2}.
\end{equation}
We can now complete the proof. Indeed, for all large $N$, 
\begin{multline*}
\sum_{i=1}^N \E[\mathbb{I}_i] \stackrel{\eqref{eq-finalloc-new}}{\le} \sum_{i} \E[\mathbb{I}_i \cdot {\bf I}(\lambda_i^N \in \Omega(\vep_{\ref{theo-location}},C_{\ref{theo-location}}, N)] + N\mu/4 \\
\le \sum_{z_0 \in \mathcal{R}} \E \left[\sum_{i: \lambda_i^N \in \D_{z_0}} \mathbb{I}_i\right]  + N\mu/4 \le N \sum_{z_0 \in \mathcal{R}} \prob(\cJ_{z_0}^c) + \sum_{z_0 \in \mathcal{R}} \E \left[\sum_{i: \lambda_i^N \in \D_{z_0}} \mathbb{I}_i \cdot {\bf I}(\cJ_{z_0})\right]  + N \mu/4 \\
\stackrel{\eqref{eq:cJ-z}}{\le} C_{\ref{thm-thintube2}} C_{\ref{theo-location}}^4 \log N \sum_{z_0 \in \mathcal{R}} \prob(\mathcal{C}_{z_0}) + N \mu/2 \stackrel{\eqref{eq:cor-pre}}{\le} N \mu,
\end{multline*}
where in the penultimate step we used hat $|\mathcal{R}| =o(N)$, and in last step we chose $\ol{C}=C C_{\ref{thm-thintube2}} C_{\ref{theo-location}}^3$, and used the bound on $|\mathcal{R}|$. This completes the proof.
\qed

\begin{rem}[Sup-norm delocalization]\label{rem:sup-v-nogap}
We borrow notation from the proof of Theorem \ref{thm:main}. Let $P_N$ be the Jordan block. By Remark \ref{rem:c-gamma-explain}, for any fixed $\gamma' < \gamma $, all eigenvalues of $P_{N, \gamma}^Q$ are inside the disc $D(0, 1- (\gamma'-1) \log N/N)$ with probability approaching one. Therefore, for any eigenvalue $\hat z$ and its approximating net point $z^*$ we can take $C_\gamma= \gamma'-1$ (see \eqref{qm4.0}). On the other hand, by Remark \ref{rem:jb3} we observe that the $o(1)$ term in \eqref{eq-final2a} can be replaced by $O(N^{-(\gamma'-1)})$ and hence the same can be done for \eqref{eq-order1}. Furthermore, in this case we have only one pure state, recall Remarks \ref{rem:pstate} and \ref{rem:jb1}. Thus $M=1$. Therefore, by Remark \ref{rem:jb2} we next derive that 
\[
 \left\|\frac{\mathfrak{z}^+_1}{\|\mathfrak{z}^+_1\|} - v \right\|_\infty \le \left\|\frac{\mathfrak{z}^+_1}{\|\mathfrak{z}^+_1\|} - v \right\|  \le \|v - \alpha_1 e_1\| + |1 - |\alpha_1|| + \left\|\frac{\mathfrak{z}^+_1}{\|\mathfrak{z}^+_1\|} - e_1 \right\|= O(N^{-(\gamma'-1)} \log N).
\]
Since, $\gamma >3/2$, upon choosing $\gamma'$ appropriately, it is now immediate that 
\[
\sqrt{\frac{\log N}{N}} \asymp \frac12 \cdot \frac{\|\mathfrak{z}^+_1\|_\infty}{\|\mathfrak{z}^+_1\|}\le \|v\|_\infty \le 2  \frac{\|\mathfrak{z}^+_1\|_\infty}{\|\mathfrak{z}^+_1\|}  \asymp \sqrt{\frac{\log N}{N}}. 
\]
\end{rem}

\providecommand{\bysame}{\leavevmode\hbox to3em{\hrulefill}\thinspace}
\providecommand{\MR}{\relax\ifhmode\unskip\space\fi MR }
\providecommand{\MRhref}[2]{%
  \href{http://www.ams.org/mathscinet-getitem?mr=#1}{#2}
}
\providecommand{\href}[2]{#2}


\begin{thebibliography}{99}
    \bibitem{Al}P.~Alexandersson.
\newblock {\em Schur polynomials, banded Toeplitz matrices and Widom's formula},
\newblock Electronic Journal of Combinatorics, {\bf 19}(4), P22 (2012).  

\bibitem{ADK}J.~Alt, R.~Ducatez, and A.~Knowles. 
\newblock {\em Delocalization transition for critical Erd\H{o}s R\'enyi graphs}, 
\newblock arXiv preprint arXiv:2005.14180 (2020). 

\bibitem{An17}
N.~Anantharaman, \emph{Quantum ergodicity on regular graphs}, Communications in Mathematical Physics,
  \textbf{535}, 633--690  (2017).

\bibitem{AnMa15}
N.~Anantharaman and E.~Le Masson, \emph{Quantum ergodicity on large regular
  graphs}, Duke Mathematical Journal,~\textbf{164}, 723--765  (2015).

\bibitem{AnSa19}
N.~Anantharaman and M.~Sabri, \emph{Quantum ergodicity on graphs:~From spectral
to spatial delocalization}, Annals of Mathematics,~\textbf{189}(3), 753--835  (2019).
  
\bibitem{An58}
P.~W. Anderson, \emph{{Absence of Diffusion in Certain Random Lattices}}, Physical 
  Review, \textbf{109}(5), 1492--1505  (1958).

  \bibitem{BPZ} A.~Basak, E.~Psaquette, and O.~Zeitouni, 
    \emph{Regularization of non-normal matrices by Gaussian noise - the banded Toeplitz and twisted Toeplitz cases}, Forum of Mathematics, Sigma 7, paper e3 (2019).

     \bibitem{BPZ1} A.~Basak, E.~Psaquette, and O.~Zeitouni,
    \emph{Spectrum of random perturbations of Toeplitz matrices with finite symbols}, Transactions of the  American Mathematical Society {\bf 373}, 4999--5023 (2020).

  \bibitem{BZ}A.~Basak and O.~Zeitouni, {\em Outliers of random perturbation of Toeplitz matrices with finite symbols}, Probability Theory and Related Fields, {\bf 178}, 771--826 (2020).
  \bibitem{Be20} Benigni, L.,
     {\em Eigenvectors distribution and quantum unique ergodicity for
              deformed {W}igner matrices},
   {Ann. Inst. Henri Poincar\'{e} Probab. Stat.},
   {\bf 56},
      (2020).
\bibitem{BLo}L.~Benigni and P.~Lopatto, 
\newblock {\em Optimal delocalization for generalized Wigner matrices},
\newblock arXiv preprint arXiv:2007.09585 (2020).
    
  \bibitem{Bhatia}
    R.~Bhatia.
    \emph{Matrix Analysis}.
    Springer-Verlag, Berlin (1997).
    
    \bibitem{BU}D.~Borthwick and A.~Uribe,
    \newblock {\em On the Pseudospectra of Berezin--Toepolitz Operators}, 
    \newblock Methods and Applications of Analysis, {\bf 10}(1), 31--66 (2003). 
    
    \bibitem{BoGr04}A.~B\"ottcher and S.~M.~Grudsky, 
    \newblock {\em Asymptotically good pseudomodes for Toeplitz matrices and Wiener-Hopf operators}, 
    \newblock In Operator Theoretical Methods and Applications to Mathematical Physics, pp.~175-188, Birkh\"auser, Basel (2004). 
    
 \bibitem{BoGr05}
A.~B\"ottcher and S.~M.~Grudsky, \emph{Spectral Properties of Banded Toeplitz matrices}, SIAM (2005).

    
\bibitem{BoYa17}
P.~Bourgade and H.-T. Yau, \emph{The eigenvector moment flow and local quantum
  unique ergodicity}, Communications in Mathematical Physics, \textbf{350},  231--278  (2017).
    

\bibitem{BD}D.~Bump and P.~Diaconis,
\newblock {\em Toeplitz minors},
\newblock Journal of Combinatorial Theory, Series A, {\bf 97}, 252--271 (2002). 

\bibitem{Cornelius}
  E.~F. Cornelius,
  \newblock {\em Identities for complete homogeneous systems},
  \newblock Journal of Algebra, Number Theory and Applications,
  {\bf 21}, 109--116 (2011).
  
  \bibitem{D1}E.~B.~Davies, 
  \newblock {\em Pseudo--spectra, the harmonic oscillator and complex resonances},
  \newblock  Proceedings of the Royal Society of London. Series A:~Mathematical, Physical and Engineering Sciences, {\bf 455}, 585--599 (1982).
  
  \bibitem{D2} E.~B.~Davies,
  \newblock {\em Semi-classical states for non-self-adjoint Schr\"odinger operators}, 
  \newblock Communications in Mathematical Physics, {\bf 200}(1), 35--41 (1999).
  
  \bibitem{D3} E.~B.~Davies,
  \newblock {\em Pseudospectra of differential operators},
  \newblock Journal of Operator Theory, {\bf 43}(2), 243--262 (2000).

\bibitem{DaHa09}E.~B.~Davies and M.~Hager,
\newblock {\em Perturbations of Jordan matrices},
\newblock Journal of Approximation Theory, {\bf 156}, 82--94 (2009). 

\bibitem{NSjZw04}
N.~Dencker, J.~Sj{\"o}strand, and M.~Zworski, \emph{{Pseudospectra of
  semiclassical (pseudo-) differential operators}}, Communications on Pure and
  Applied Mathematics \textbf{57} (2004), no.~3, 384--415.
  
  \bibitem{Dr18}
A.~Drouot, \emph{Resonances for random highly oscillatory potentials}, J. of
  Mathematical Physics \textbf{59} (2018), no.~101506, 34 pp.
  
   \bibitem{DyZw19}
S.~Dyatlov and M.~Zworski, \emph{{Mathematical Theory of Scattering
  Resonances}}, American Mathematical Society, 2019.
  
  \bibitem{En83}
V.~Enss, \emph{{Introduction to Asymptotic Observables for Multi-Particle
  Quantum Scattering}}, Lecture Notes in Mathematics, Springer, \textbf{1218}
  (1983), 61--92.
  
\bibitem{EKYY13}
L.~Erd{\H o}s, A.~Knowles, H.-T.~Yau, and J.~Yin, \emph{Spectral statistics of
  erd{\H o}s-r{\'e}nyi graphs I: Local semicircle law}, Annals of Probability
  \textbf{41}(3B), 2279--2375 (2013).

\bibitem{ESY09b}
L.~Erd{\H o}s, B.~Schlein, and H.-T.~Yau, \emph{Local semicircle law and
complete delocalization for {Wigner} random matrices}, Communications in Mathematical Physics,
  \textbf{287},  641--655 (2009).

\bibitem{ESY09a}
L.~Erd{\H o}s, B.~Schlein, and H.-T.~Yau, \emph{Semicircle law on short scales and delocalization of
  eigenvectors for Wigner random matrices}, Annals of Probability \textbf{37}(3),
   815--852  (2009).
  
  \bibitem{Ga12}
J.~Galkowski, \emph{{Nonlinear Instability in a Semiclassical Problem}}, Communications in 
Mathematical Physics, {\bf 316}, 705--722  (2012).

\bibitem{GT}D.~Garc\'{i}a-Garc\'{i}a and M.~Tierz,
\newblock {\em Toeplitz minors and specializations of skew Schur polynomials},
\newblock Journal of Combinatorial Theory, Series A, {\bf 172}, article 105201 (2020). 


    

\bibitem{GoKr69}
I.C. Gohberg and M.G. Krein,
\newblock {\em Introduction to the theory of linear
  non-selfadjoint operators}, 
  \newblock Translations of mathematical monographs, vol.~18,
  AMS (1969).

\bibitem{GS}I.~Goldsheid and S.~Sodin,
\newblock {\em Lower bounds on Anderson-localised eigenfunctions on a strip}, 
\newblock arXiv preprint arXiv:2012.03017 (2020).
    
\bibitem{GTZ} S.~A.~Goreinov, E.~E.~Tyrtyshnikov, and N.~L.~Zamarashkin,
  \newblock {\em A theory of pseudo-skeleton approximations},
  Linear Algebra and its Applications, {\bf 261}, 1--21 (1997).
  
  \bibitem{Gr}V.~V.~Grushin.
  \newblock {\em Les probl\`emes aux limites d\'eg\'en\'er\'es et les op\'erateurs pseudodiff\'erentiels}, 
  \newblock Actes du Congr\`es International des Math\'ematiciens, Tome 2, Gauthier-Villars, 737--743 (1970).

    \bibitem{HaSj08}
M.~Hager and J.~Sj{\"o}strand, \emph{{Eigenvalue asymptotics for randomly
  perturbed non-selfadjoint operators}}, Mathematische Annalen 
  \textbf{342}, 177--243  (2008).
  
    \bibitem{HeSj86}
B.~Helffer and J.~Sj\"ostrand, \emph{{Resonances en limite semiclassique}},
  Bull. Soc. Math. France \textbf{114} (1986), no.~24--25.
%
\bibitem{H73}L.~H\"{o}rmander,
  \newblock {\em An introduction to complex analysis in several variables}, 
  \newblock Elsevier  (1973).
  
  \bibitem{Ho85}
L.~H\"{o}rmander, \emph{{The Analysis of Linear Partial Differential Operators IV}},
  {Grundlehren der mathematischen Wissenschaften}, vol. 275, Springer-Verlag,
  1985.
  
\bibitem{KK19}
  S.~Kalmykov and L.~V.~Kovalev,
  \newblock {\em Self intersections of {L}aurent polynomials and the density of {J}ordan curves},
  \newblock {Proceedings of the American Mathematical Society}, to appear (2021).
  
    \bibitem{Kl16}
F.~Klopp, \emph{{Resonances for large one-dimensional "ergodic" systems}},
  Anal. PDE \textbf{9} (2016), no.~2, 259--352.

  \bibitem{La04}R.~Lata{\l}a,
  \newblock {\em Some estimates of norms of random matrices},
  \newblock {Proceedings of the American Mathematical Society}, {\bf 133}(5), 1273--1282 (2004).
  
  \bibitem{LO}K.~Luh and S.~O'Rourke,
  \newblock {\em Eigenvector delocalization for non-Hermitian random matrices and applications},
  \newblock Random Structures \& Algorithms, {\bf 57}(1), 169--210 (2020). 
  
  \bibitem{LAKS}A.~De Luca, B.~L.~Altshuler, V.~E.~Kravtsov, and A.~Scardicchio, 
  \newblock {\em Anderson Localization on the Bethe Lattice:~Nonergodicity of Extended States}, 
  \newblock Physical Review Letters {\bf 113}(4), 046806  (2014).
  
  \bibitem{LT}A.~Lytova and K.~Tikhomirov,
  \newblock {\em On delocalization of eigenvectors of random non-Hermitian matrices},
  \newblock Probability Theory and Related Fields, {\bf 177}, 465--524 (2020). 
  
  \bibitem{Macd}I.~G.~Macdonald,
  \newblock {\em Symmetric functions and Hall polynomials}, second edition,
  \newblock The Oxford University Press (1995).  
  
  \bibitem{MM}E.~A.~Maximenko and M.~A.~Moctezuma-Salazar,
  \newblock {\em Cofactors and eigenvectors of banded Toeplitz matrices:~Trench formulas via skew Schur polynomials},
 \newblock Operators and Matrices, {\bf 11}(4), 1149--1169 (2017). 
 
   \bibitem{Me95}
R.~B. Melrose, \emph{{Geometric Scattering Theory}}, Cambridge University
  Press, 1995.

 \bibitem{MIKOS} A.~Mikhalev and I.~V.~Oseledets,
   \newblock{\em Rectangular maximum-volume sub-matrices and their applications},
   \newblock Linear Algebra and its Applications, {\bf 538}, 187--211 (2018).
   
   \bibitem{NoZw07}
S.~Nonnenmacher and M.~Zworski, \emph{Distribution of resonances for open
  quantum maps.}, Comm. Math. Phys (2007), no.~269, 311--365.

\bibitem{NoZw09}
S.~Nonnenmacher and M.~Zworski, \emph{Quantum decay rates in chaotic scattering}, Acta Math.
  \textbf{203} (2009), no.~2, 149 -- 233.
  
  \bibitem{NoSjZw14}
S.~Nonnenmacher, J.~Sj{\"o}strand, and M.~Zworski, \emph{Fractal weyl law for
  open quantum chaotic maps}, Annals of Math. \textbf{179} (2014), no.~1,
  179--251.
   
   \bibitem{Pr08}
K.~Pravda-Starov, \emph{{Pseudo-spectrum for a class of semi-classical
  operators}}, Bull. Soc. Math. France \textbf{136} (2008), no.~3, 329--372.
  
\bibitem{RaZw}
A.~Raphael and M.~Zworski, \emph{{Pseudospectral effects and basins of
  attraction}}, unpublished notes (2005).
%

\bibitem{RT}L.~Reichel and L.~N.~Trefethen, 
\newblock{\em Eigenvalues and pseudo-eigenvalues of Toeplitz matrices}, 
\newblock Linear algebra and its applications,{\bf 162}, 153--185 (1992).


\bibitem{RV}M.~Rudelson and R.~Vershynin,
\newblock {\em Delocalization of eigenvectors of random matrices with independent entries},
\newblock Duke Mathematical Journal, {\bf 164}(13), 2507--2538 (2015). 

\bibitem{RVnogap}M.~Rudelson and R.~Vershynin,
\newblock {\em No-gaps delocalization for general random matrices},
\newblock Geometric and Functional Analysis, {\bf 26}, 1716--1776 (2016). 

\bibitem{SaSc05}
B.~Sandstede and A.~Scheel, \emph{{Basin boundaries and bifurcations near
  convective instabilities:~a case study}}, Journal of Differential Equations,
  {\bf 208}(1), 176--193 (2005).
  
\bibitem{Sa92}
P.~Sarnak, \emph{Arithmetic quantum chaos, in \emph{The Schur Lectures} (tel
  aviv)}, Israel Math. Conf. Proc \textbf{8}, 183--236 (1992).

  \bibitem{SiSo87}
L.~M. Sigal and A.~Soffer, \emph{{The N-particle Scattering Problem: Asymptotic
  Completeness for Short Range Systems}}, Annals of Math. \textbf{126} (1987),
  35--108.
  
\bibitem{Sj73}
 J.~Sj{\"o}strand, \emph{Operators of principle type with interior boundary conditions}, Acta Math. 
 \textbf{130} (1973), 1--51.
 %
   \bibitem{Sj14}
J.~Sj{\"o}strand, \emph{{Weyl law for semi-classical resonances with randomly perturbed
  potentials}}, M{\'e}moires de la SMF \textbf{136} (2014).
%
\bibitem{SjVo16}J.~Sj{\"o}strand and M.~Vogel,
\newblock {\em Large bidiagonal matrices and random perturbations},
\newblock Journal of Spectral Theory, {\bf 6}(4), 977--1020 (2016). 

  \bibitem{SjVo19b}
J.~Sj{\"o}strand and M.~Vogel, \emph{General Toeplitz matrices subject to
  Gaussian perturbations}, Annales Henri Poincar\'{e}, {\bf 22}, 49--81 (2021).
  
\bibitem{SjVo19a}
J.~Sj{\"o}strand and M.~Vogel, \emph{Toeplitz band matrices with small random perturbations},
  Indagationes Mathematicae, {\bf 32}(1), 275--322 (2021).
  
\bibitem{SjZw91}
J.~Sj{\"o}strand and M.~Zworski, \emph{{Complex scaling and the distribution of
  scattering poles}}, Journal of the American Mathematical Society 
  \textbf{43}(4),
  729--769
  (1991). 
  
  \bibitem{SZ}J.~Sj{\"o}strand and M.~Zworski,
  \newblock {Elementary linear algebra for advanced spectral problems},
  \newblock Annales de l'institut Fourier, {\bf 57}(7), 2095--2141 (2007).

  \bibitem{Stan2}R.~P.~Stanley,
  \newblock {\em Enumerative Combinatorics}, Volume 2. 
  \newblock Cambridge University Press (1999).
 \bibitem{TaoVu08}T.~Tao and V.~Vu,
  \newblock {\em Random Matrices:~The Circular Law},
  \newblock Communications in Contemporary Mathematics, {\bf 10}(2), 261--307 (2008).
  \bibitem{Tr97}
L.N. Trefethen, \emph{{Pseudospectra of linear operators}}, SIAM Rev.
  \textbf{39},  383--406 (1997).
  
\bibitem{TC}L.~N.~Trefethen and S.~J.~Chapman,
\newblock {\em Wave packet pseudomodes of twisted Toeplitz matrices}, 
\newblock Communications on Pure and Applied Mathematics, {\bf 57}(9), 1233--1264 (2004).

\bibitem{TrEm05}
L.~N.~Trefethen and M.~Embree, \emph{{Spectra and Pseudospectra: The Behavior
  of Nonnormal Matrices and Operators}}, Princeton University Press (2005).
  

  \bibitem{Vo19c}
M.~Vogel, \emph{Almost sure Weyl law for quantized tori}, 
\newblock Communications in Mathematical Physics, {\bf 378}(2), 1539--1585  (2020). 

  \bibitem{W72}H.~Whitney,
\newblock {\em Complex Analytic Varieties}. 
\newblock Addison-Wesley Pub.~Co.~Reading, Massachusetts (1972).

\bibitem{Whittle}P.~Whittle,
\newblock {\em Bounds for the Moments of Linear and Quadratic Forms in Independent Variables}.
\newblock Teoriya Veroyatnostei i ee Primeneniya, {\bf 5}(3), 331--335 (1960).

\bibitem{Zw01}
M.~Zworski, \emph{{A remark on a paper of E.B. Davies}}, Proc. A.M.S. (2001),
  no.~129, 2955--2957.

\end{thebibliography}
\end{document}